\documentclass[12pt]{book}

\usepackage[letterpaper]{geometry}
\usepackage[utf8]{inputenc}
\usepackage[OT4]{fontenc} 
\usepackage{amsmath}
\usepackage{amsthm}
\usepackage{amsfonts}  
\usepackage{amssymb}
\usepackage{verbatim}
\usepackage{bm}
\usepackage{bbm}

\usepackage[retainorgcmds]{IEEEtrantools}
\usepackage{marvosym}
\usepackage{stmaryrd}
\usepackage{tikz}
\usepackage{hyperref}
\usepackage{mdframed}
\usepackage{xcolor}
\usepackage{hhline}
\usepackage{thmtools}
\usepackage{thm-restate}
\usepackage{mathrsfs}
\usepackage{enumerate}
\usepackage{ifthen}

\usepackage{listings}
\usepackage{pgfplots}

\pgfplotsset{compat=1.5}
\lstset{
basicstyle=\small\ttfamily,
columns=flexible,
breaklines=true
}

\theoremstyle{plain}
\newtheorem{theorem}{Theorem}[chapter]
\newtheorem{lemma}[theorem]{Lemma}
\newtheorem{proposition}[theorem]{Proposition}
\newtheorem{corollary}[theorem]{Corollary}

\newtheorem{claim}[theorem]{Claim}
\newtheorem{problem}[theorem]{Problem}
\newtheorem{exercise}[theorem]{Exercise}

\theoremstyle{definition}
\newtheorem{definition}[theorem]{Definition}
\newtheorem{example}[theorem]{Example}
\newtheorem{remark}[theorem]{Remark}

\DeclareMathOperator*{\Var}{Var}
\DeclareMathOperator*{\Cov}{Cov}

\DeclareMathOperator{\Inf}{Inf}

\DeclareMathOperator{\sgn}{sgn}

\DeclareMathOperator{\length}{length}

\newcommand{\bits}{\{-1,1\}}

\newcommand{\Acyc}{\mathrm{Acyc}}

\newcommand{\UniqueBest}{\mathrm{UniqueBest}}
\newcommand{\adj}[2]{\ensuremath{[#1:#2]}}
\newcommand{\adji}[3]{\ensuremath{\adj{#1}{#2}_{#3}\,}}
\newcommand{\wt}{\widetilde}

\newcommand{\p}{\mathbb{P}}
\newcommand{\PP}{\mathbb{P}}
\renewcommand{\P}{\mathbb{P}}
\newcommand{\IP}{\mathbb{P}}
\newcommand{\IE}{\mathbb{E}}
\newcommand{\E}{\mathbb{E}}
\newcommand{\EE}{\mathbb{E}}

\newcommand{\bbR}{\mathbb{R}}
\newcommand{\IR}{\mathbb{R}}
\newcommand{\R}{\mathbb{R}}
\newcommand{\grad}{\nabla}

\def\F{{\cal{F}}}

\DeclareMathOperator{\Dist}{{\bf D}}
\DeclareMathOperator{\CONST}{CONST}
\DeclareMathOperator{\DICT}{DICT}
\DeclareMathOperator{\NONMANIP}{NONMANIP}
\DeclareMathOperator{\Maj}{Maj}
\DeclareMathOperator{\maj}{maj}
\DeclareMathOperator{\tp}{top}

\DeclareMathOperator{\Lg}{Lg}
\DeclareMathOperator{\Sm}{Sm}
\DeclareMathOperator{\LD}{LD}

\def\CalN{{\cal{N}}}
\def\CalM{{\cal{M}}}

\newcommand{\ignore}[1]{}

   {\medskip\begin{mdframed}\setlength{\parindent}{0cm}}%
   {\end{mdframed}\medskip}

\newcommand{\third}{{\frac{1}{3}}}
\newcommand{\half}{{\frac{1}{2}}}

\newcommand{\cube}{\{0,1\}^n}

\definecolor{DSgray}{cmyk}{0,0,0,0.7}
\definecolor{DSred}{cmyk}{0,0.7,0,0.7}

\newcommand{\eps}{{\varepsilon}}
\newcommand{\pdiff}[2]{\frac{\partial #1}{\partial #2}}
\newcommand{\pdiffII}[3]{\ifthenelse{\equal{#2}{#3}}
{\frac{\partial^2 #1}{\partial #2^2}}
{\frac{\partial^2 #1}{\partial #2 \partial #3}}
}

\usepackage{chngcntr}
 
\counterwithout{figure}{chapter}
\counterwithout{table}{chapter}

\title{Probabilistic Aspects of Voting, Intransitivity and Manipulation}
\date{}

\author{Elchanan Mossel\thanks{MIT, Cambridge MA, USA.
E-mail: {\tt{elmos@mit.edu}}.
Partially supported by NSF Grant CCF 1665252, DMS-1737944,  DOD ONR grant N00014-17-1-2598, Simons Investigator award (622132) and Vannevar Bush Faculty Fellowship ONR-N00014-20-1-2826}}

\begin{document}

\maketitle

\tableofcontents



\newpage

\paragraph{Acknowledgment}
 
Much of the material covered in this book is based on joint works with Joe Neeman, Miki Racz, Anindya De, Gil Kalai, and Olle Haggstrom. I thank them for allowing me to include revised material taken from joint papers and for fruitful collaborations. 
A core of the material presented here was given at that 2019 Saint-Flour Probability School.
Thanks to everyone who contributed and participated in the School. 
Special thanks to Christophe Bahadoran for his role in organizing such a successful school and to 
Philippe Rigollet and Nicolas Curien for wonderful lectures and great company. 
Finally, thanks to all the participants in my lectures, especially those who provided feedback.

\newpage

\chapter{Introduction}
Marquis de Condorcet, a French philosopher, mathematician, and political scientist, studied mathematical aspects of voting in the eighteenth century.  It is remarkable that already in the 18th century Condorcet was an advocate of equal rights for women and people of all races and of free and equal public education~\cite{wiki:Condorcet}. His applied interest in democratic processes led him to write an influential paper in 1785~\cite{Condorcet:85}, where in particular he was interested in voting as an aggregation procedure and where he pointed out the paradoxical nature of voting in the presence of $3$ or more alternatives. 

\section{The law of large numbers and Condorcet Jury Theorem}
In what is known as Condorcet Jury Theorem, Condorcet considered the following setup.
There are $n$ voters and two alternatives denoted $+$ (which stands for $+1$) and $-$ (which stands for $-1$). 
Each voter obtains a {\em signal} which indicates which of the alternatives is preferable. 
The assumption is that there is an a priori better alternative and that each voter independently obtains the correct information with probability $p > 1/2$ and incorrect information with probability $1-p$. The $n$ voters then take a majority vote to decide the winner. Without loss of generality we may assume 
that the correct alternative is $+$ and therefore the individual signals are i.i.d RV $x_i$ where $\IP[x_i = +] = p$ and $\IP[x_i = -] = 1-p$. Let $m$ denote the Majority function, i.e the function that returns the most popular values among its inputs. Condorcet Jury Theorem is: 

\begin{theorem}
 For every $1>p > 1/2$:
\begin{itemize}
\item $\lim_{n \mbox{ odd }  \to \infty} \IP[m(x_1,\ldots,x_n) = +] = 1$.
\item If $n_1 < n_2$ are odd then $\ \IP[m(x_1,\ldots,x_{n_1}) = +] <  \IP[m(x_1,\ldots,x_{n_2}) = +]$.
\end{itemize}
\end{theorem}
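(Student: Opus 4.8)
The plan is to treat the two bullets separately: the first is a soft consequence of the law of large numbers, while the second reduces to a single explicit computation.

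For the first bullet, write $S_n = x_1 + \cdots + x_n$ and note that, because $n$ is odd, $m(x_1,\ldots,x_n) = +$ holds exactly when $S_n > 0$ (there are no ties). Since $\E[x_i] = p - (1-p) = 2p-1 > 0$, the weak law of large numbers gives $S_n/n \to 2p-1$ in probability, hence $\IP[S_n > 0] \to 1$. (If a rate is wanted, Hoeffding's inequality yields $\IP[S_n \le 0] \le \exp(-n(2p-1)^2/2)$, but this is not needed.)

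For the second bullet, observe that since $n_1 < n_2$ are both odd, $n_2 - n_1$ is even, so by chaining along $n_1, n_1+2, \ldots, n_2$ it suffices to prove, for every odd $n$, that
\[
\IP[m(x_1,\ldots,x_{n+2}) = +] > \IP[m(x_1,\ldots,x_n) = +].
\]
I would condition on the pair $(x_{n+1},x_{n+2})$, which equals $(+,+)$, $(-,-)$, or a mixed value with probabilities $p^2$, $(1-p)^2$, $2p(1-p)$. Writing $S = S_n$ (odd, hence never $0$), the new majority is $+$ iff $S \ge -1$ in the first case, iff $S \ge 3$ in the second, and iff $S \ge 1$ (the old condition) in the mixed case. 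Subtracting $\IP[S \ge 1] = \big(p^2 + 2p(1-p) + (1-p)^2\big)\IP[S \ge 1]$ and using the parity of $S$ to kill intermediate terms, the difference of the two probabilities collapses to $p^2\,\IP[S=-1] - (1-p)^2\,\IP[S=1]$. Finally, since $n$ is odd, $\{S=1\}$ forces exactly $(n+1)/2$ of the $x_i$ to equal $+$ and $\{S=-1\}$ forces exactly $(n-1)/2$ of them to, and $\binom{n}{(n+1)/2} = \binom{n}{(n-1)/2}$, so the two binomial terms differ only by one factor of $p$ versus $1-p$; thus $\IP[S=-1] = \tfrac{1-p}{p}\,\IP[S=1]$, and the difference equals $(1-p)(2p-1)\,\IP[S=1] > 0$ for $1/2 < p < 1$.

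There is no genuine obstacle here; the only place demanding care is the bookkeeping in the conditioning step — getting the three thresholds $-1$, $3$, $1$ correct and exploiting the oddness of $S$ so that the in-between point masses vanish — together with the little identity $\IP[S=-1]/\IP[S=1] = (1-p)/p$. Everything else is routine.
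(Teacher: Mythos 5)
Your proof is correct. The paper itself only remarks that the first bullet is immediate from the law of large numbers (exactly your argument) and explicitly leaves the second bullet as an exercise, so there is no paper proof to compare against; your conditioning on $(x_{n+1},x_{n+2})$, the use of the parity of $S_n$ to kill the $\IP[S=0]$ and $\IP[S=2]$ terms, and the binomial identity $\IP[S=-1]=\tfrac{1-p}{p}\IP[S=1]$ are all verified and give the claimed positive difference $(1-p)(2p-1)\IP[S=1]$.
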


The first part of the theorem is immediate from the law of large numbers (which was known at the time), so the novel contribution was the second part. In the early days of modern democracy, Condorcet used his model to argue that the more people participating in decision making, the more likely that the correct decision is arrived at. We leave the proof of the second part of the theorem as an exercise.

\section{Condorcet Paradox and Arrow Theorem}
As hinted earlier, things are more interesting when there are $3$ or more alternatives. 
In the same 1785 paper, Condorcet proposed the following ``paradox". 
Consider three voters named $1,2$ and $3$, and three alternatives named $a,b$ and $c$. 
Each voter ranks the three alternatives in one of six linear orders.
While it is tempting to represent the orders as elements of the permutation group $S_3$, it will be more useful for us to use the following representation. 
Voter $i$ preference is given by $(x_i,y_i,z_i)$ where $x_i = +$, if she prefers $a$ to $b$ and $-$, otherwise, $y_i = +$, if she prefers $b$ to $c$ and $-$, otherwise and $z_i = +$ if she prefers $c$ to $a$ and $-$, otherwise. Each of the $6$ rankings corresponds to one of the vectors in 
$\{-1,1\}^3 \setminus \{ \pm (1,1,1) \}$. 

Condorcet considered $3$ voters, with rankings given by: $a>b>c, c>a>b, b>c>a$, or in our notation by the rows of the following matrix: 
\[
\left( \begin{matrix}
x_1 & y_1 & z_1 \\ 
x_2 & y_2 & z_2 \\ 
x_3 & y_3 & z_3 \end{matrix}\right) = 
\left( \begin{matrix}
+ & + & - \\ 
+ & - & + \\ 
- & + & + \end{matrix}\right) 
\]

How should we decide how to aggregate the individual rankings? 
If we use the majority rule to decide between each pair of preferences, then we apply the majority rule on each of the columns of the matrix and conclude that overall preference is $(+,+,+)$. 
In other words, the overall preference is that $a>b$, the overall preference is that $b>c$ and the overall preference is that $c>a$. This does not correspond to an order!  
This is what is known as Condorcet Paradox. 

Almost two hundred years later, Ken Arrow asked if perhaps the paradox is the result of using the majority function to decide between 
every pair of alternatives? 
Can we avoid paradoxes if we aggregate pairwise preferences using a different function? 

One function that never results in paradoxes is the dictator function $f(x) = x_1$ as the aggregate ranking is $(x_1,y_1,z_1) \neq \pm (1,1,1)$. 

Arrow in his famous theorem proved~\cite{Arrow:50,Arrow:63}:
\begin{theorem} \label{thm:arrow1}
Let  $f : \{-1,1\}^n\to\{-1,1\}$ and suppose that $f$ never results in a paradox, so for all 
$(x_i,y_i,z_i) \neq \pm (1,1,1)$ it holds that $(f(x),f(y),f(z)) \neq \pm (1,1,1)$. Then 
$f$ is a dictator: there exists an $i$ such that $f(x) = x_i$ for all $x$, 
or there exists an $i$ such that $f(x) = -x_i$ for all $x$. 
\end{theorem}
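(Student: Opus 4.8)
The plan is to recast the hypothesis probabilistically and then apply Fourier analysis on the cube. First I would let each voter $i$ independently choose a uniformly random ranking, i.e.\ draw $(x_i,y_i,z_i)$ uniformly from the six vectors of $\{-1,1\}^3\setminus\{\pm(1,1,1)\}$, and set $x=(x_1,\dots,x_n)$, $y=(y_1,\dots,y_n)$, $z=(z_1,\dots,z_n)$. Every profile produced this way is legal voter-by-voter, so the non-paradox hypothesis says $(f(x),f(y),f(z))$ is never $\pm(1,1,1)$; hence the ``rationality'' event has probability exactly $1$. The next step is to write that probability as a Fourier-side quantity. Using the identity $\1[(a,b,c)\neq\pm(1,1,1)]=\tfrac34-\tfrac14(ab+bc+ca)$ for $(a,b,c)\in\{-1,1\}^3$, we get
\[
  \P[\text{rational}]\;=\;\tfrac34-\tfrac14\,\E\big[f(x)f(y)+f(y)f(z)+f(z)f(x)\big].
\]

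Now I would compute $\E[f(x)f(y)]$. Under the uniform-ranking measure the pairs $(x_i,y_i)$ are independent across $i$, each coordinate is unbiased, and a count over the six rankings gives $\E[x_iy_i]=\E[y_iz_i]=\E[z_ix_i]=-\tfrac13$. Expanding $f=\sum_{S}\widehat f(S)\prod_{i\in S}x_i$ and using independence across voters, the cross terms with $S\neq T$ vanish and $\E[f(x)f(y)]=\sum_{S}\widehat f(S)^2(-\tfrac13)^{|S|}$, and similarly for the other two products. Therefore
\[
  \P[\text{rational}]\;=\;\tfrac34-\tfrac34\sum_{S}\widehat f(S)^2\,\big(-\tfrac13\big)^{|S|}.
\]
Setting this equal to $1$ forces $\sum_{S}\widehat f(S)^2(-\tfrac13)^{|S|}=-\tfrac13$. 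Since $f$ is $\{-1,1\}$-valued, Parseval gives $\sum_S\widehat f(S)^2=1$; and $(-\tfrac13)^{|S|}\ge-\tfrac13$ for every $S$, with equality exactly when $|S|=1$. So the identity can hold only if $\widehat f(S)=0$ for all $S$ with $|S|\neq1$, i.e.\ $f(x)=\sum_i a_ix_i$ with $\sum_i a_i^2=1$. Finally I would observe that such a linear form is $\{-1,1\}$-valued only if exactly one $a_i$ is nonzero (two nonzero coefficients would make $f$ take at least three distinct values on the corresponding $2\times2$ subcube), hence $a_i=\pm1$ and $f=\pm x_i$ --- a dictator or its negation.

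The delicate point is the last extraction: converting the exact statement ``probability of rationality $=1$'' into ``Fourier mass concentrated on level $1$'' via the sign pattern of $(-1/3)^{|S|}$, and then ruling out genuine linear combinations. If one wants to avoid Fourier machinery, there is a combinatorial alternative: first deduce from non-paradox that $f$ is unanimous up to a global sign, then run a pivotal-voter argument --- order the voters, flip the $a$-vs-$b$ opinions one at a time to find the voter $i^\ast$ at whom the social $a$-vs-$b$ verdict flips, and use non-paradox on carefully chosen profiles to show $i^\ast$ also dictates the $b$-vs-$c$ and $c$-vs-$a$ verdicts. I expect that route's bookkeeping (the global-sign case and non-monotone $f$) to be fussier than the Fourier computation, which is why the Fourier plan is the one I would follow.
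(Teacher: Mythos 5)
Your Fourier proof is correct, but it takes a genuinely different route from the paper's. The paper proves Theorem~\ref{thm:arrow1} by a short combinatorial pivot argument: if $f$ is non-constant and non-dictatorial it depends on two coordinates, say $1$ and $2$, and one then exhibits a single legal profile $(x,y,z)$ (by setting $z_1=-y_1$, $z_i=-x_i$ for $i\ge2$, and freely tuning $x_1,y_2$) on which $f(x)=f(y)=f(z)$. Your route instead randomizes the profile, writes $\P[\text{rational}]=\tfrac34-\tfrac34\sum_S\widehat f(S)^2(-1/3)^{|S|}$, and observes that $(-1/3)^k\ge -1/3$ with equality only at $k=1$, forcing all Fourier mass onto level~$1$. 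This is essentially the method the paper deploys later for Kalai's theorem (Corollary~2.5 and Theorem~\ref{thm:kalai3}), but you sharpen it slightly: the paper's Corollary~2.5 invokes Theorem~\ref{thm:dict_stab}, which requires the balance assumption $\E f=0$, whereas your observation that $(-1/3)^{|S|}>-1/3$ also for $|S|=0$ lets you drop that hypothesis entirely. The trade-off is clear: the paper's proof is elementary and needs no Fourier machinery, while yours requires Parseval and independence-across-coordinates bookkeeping but degrades gracefully into the quantitative statement (``probability of paradox small $\Rightarrow$ close to a dictator'') that the combinatorial argument does not give. Your closing step — a Boolean linear form $\sum a_ix_i$ must be $\pm x_i$ — is exactly the argument the paper uses in the equality case of Theorem~\ref{thm:dict_stab}, so that part tracks the paper verbatim.
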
 
With the right notation and formulation the proof of Arrow's Theorem is very short~\cite{Barbera:80,Mossel:09a}:
\begin{proof} 
First note that if $f$ is a constant function, then the outcome is always $\pm (1,1,1)$. 
Suppose that $f$ is not a dictator and not a constant, then $f$ depends on at least two coordinates.
Without loss of generality, let these coordinates be $1$ and $2$. Therefore:
\begin{align*}
  \exists x_2,x_3,\ldots,x_n: \quad f(+,x_2,x_3,\ldots,x_n) &\neq f(-,x_2,x_3,\ldots,x_n) \\ 
\exists y_1,y_3,\ldots,y_n: \quad f(y_1,+,y_3,\ldots,y_n) &\neq f(y_1,-,y_3,\ldots,y_n)
\end{align*}
We now choose $z_1 = -y_1$ and $z_i = -x_i$ for $i \geq 2$. 
This guarantees that $(x_i,y_i,z_i) \neq \pm (1,1,1)$ for all $i$ no matter what the values of $x_1$ and $y_2$ are. 
This can be also verified from the matrix in (\ref{eq:mat_arrow}).  
Now choose $x_1$ and $y_2$ so that $f(x) = f(y) = f(z)$ to arrive at a contradiction. 
\begin{equation} \label{eq:mat_arrow}
\left( \begin{matrix}
x_1 & y_1 & -y_1 \\ 
x_2 & y_2 & -x_2 \\ 
\vdots & \vdots & \vdots \\ 
x_n & y_n & -x_n \end{matrix}\right) 
\end{equation}
\end{proof}

Arrow also considered a more general setting where $f,g$ and $h$ are allowed to be different functions. In this case, there are other functions that result in $0$ probability of paradox. For example if $f = 1$ and $g = -1$ for all inputs then $h$ can be arbitrary. This corresponds to the case where the second alternative $b$ is ignored (always ranked last) and the choice between
$a$ and $c$ is determined by $h$. Arrow Theorem in this setting can be stated as follows:

\begin{theorem} \label{thm:arrow3}
Let  $f,g,h : \{-1,1\}^n\to\{-1,1\}$ and suppose that $(f,g,h)$ never results in a paradox, so for all 
$(x_i,y_i,z_i) \neq \pm (1,1,1)$ it holds that $(f(x),g(y),h(z)) \neq \pm (1,1,1)$. Then either two of the functions are constant of opposite sign, 
or there exists an $i$ such that $f,g$ and $h$ are dictator on voter $i$. 
\end{theorem}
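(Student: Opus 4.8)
The plan is to reduce everything to the already-proved single-function statement, Theorem~\ref{thm:arrow1}, after disposing of one degenerate situation. Concretely, I would split on whether one of $f,g,h$ is constant. Throughout I would use that the no-paradox condition is unchanged under any permutation of the three roles, since the forbidden pair $\{\pm(1,1,1)\}$ is symmetric under permuting coordinates, so $(f,g,h)$ is non-paradoxical if and only if $(g,h,f)$ is, and likewise for $(g,f,h)$.

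\textbf{Constant case.} Suppose some function is constant; by the role-symmetry I may assume it is $f$. The point I would exploit is that for \emph{every} $y,z\in\{-1,1\}^n$ there exists $x$ with $(x_i,y_i,z_i)\neq\pm(1,1,1)$ for all $i$: take $x_i\neq y_i$ on the coordinates where $y_i=z_i$, and $x_i$ arbitrary elsewhere. If $f\equiv 1$, plugging such a profile into the hypothesis forces $(1,g(y),h(z))\neq (1,1,1)$, i.e.\ $g(y)=-1$ or $h(z)=-1$, for \emph{all} $y,z$; hence $g\equiv -1$ or $h\equiv-1$, giving two constants of opposite sign. If instead $f\equiv-1$, the symmetric argument using $(-1,-1,-1)$ yields $g\equiv 1$ or $h\equiv 1$. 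Either way the first alternative of the theorem holds.

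\textbf{Non-constant case.} Now assume none of $f,g,h$ is constant; I would first show $f=g$. Suppose not, and (swapping $f$ and $g$ if necessary, allowed since the forbidden pair is symmetric in its first two coordinates) fix $w$ with $f(w)=1$ and $g(w)=-1$. The gadget I would use is that a profile in which voter $i$ ranks according to $(a_i,b_i,-a_i)$ is \emph{automatically} legal, because $a_i\neq -a_i$ rules out both $(1,1,1)$ and $(-1,-1,-1)$, whatever $b$ is. Applying the hypothesis to the profile $(x,w,-w)$ for arbitrary $x$: if $h(-w)=-1$ we would get $(f(x),-1,-1)\neq(-1,-1,-1)$, forcing $f(x)=1$ for all $x$ and contradicting non-constancy of $f$; hence $h(-w)=1$. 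Applying it next to the profile $(w,y,-w)$ for arbitrary $y$ gives $(1,g(y),1)\neq(1,1,1)$, forcing $g(y)=-1$ for all $y$ and contradicting non-constancy of $g$. So $f=g$; running the identical argument on the triple $(g,h,f)$ (still non-paradoxical, by role-symmetry) gives $g=h$. Thus $f=g=h=:p$, and now $p$ satisfies the hypothesis of Theorem~\ref{thm:arrow1}; since $p$ is non-constant, that theorem makes $p$ a dictator on some coordinate $i$ (possibly $x\mapsto -x_i$), which — as $f=g=h=p$ — is precisely the second alternative.

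\textbf{What I expect to be the main obstacle.} The combinatorial content is light once one has the right idea, so the real step is recognizing that, outside the constant case, the three functions must collapse to a single function, and finding a cheap certificate for it — the ``$(a,b,-a)$'' profiles, which are legal for free and let one isolate $g(y)$ (or $h(-w)$) with essentially no case analysis. The constant case and the role-symmetry reductions are routine but must be phrased carefully so that ``there is always a legal third coordinate'' is invoked correctly; I would double-check in particular that swapping $f$ and $g$, and cyclically rotating $(f,g,h)$, genuinely preserve the hypothesis.
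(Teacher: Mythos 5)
Your proof is correct and takes a genuinely different route from the paper's. The paper works with the sets $A(f),A(g),A(h)$ of variables each function is sensitive to: after ruling out the case of two same-sign constants with a non-constant third, it argues via the Theorem~\ref{thm:arrow1} technique that $A(f)$ and $A(g)$ must coincide and be a singleton $\{i\}$, that $A(h)\subseteq\{i\}$ too, and closes by asserting that ``it is now easy to verify'' that $f=g=h$ is a dictator on voter $i$. Your version first shows that if any one function is constant then a second must be constant of the opposite sign (using that a legal third coordinate always exists), and in the all-non-constant case uses the ``always-legal'' profiles $(x,w,-w)$ and $(w,y,-w)$ to force $f=g$ and, cyclically, $g=h$, at which point Theorem~\ref{thm:arrow1} applies as a genuine black box. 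The paper's decomposition makes the sensitivity-set structure explicit but leaves the final dictator verification informal; yours avoids that bookkeeping, has a reusable gadget, and has a cleaner interface with the single-function theorem. One small phrasing slip: you present the gadget as profiles $(a_i,b_i,-a_i)$ with the first and third entries as negatives, but $(x,w,-w)$ has its \emph{second} and third entries as negatives. Since any two differing coordinates in $\{-1,1\}^3$ already exclude $\pm(1,1,1)$, both profiles you use are legal, so the slip is harmless.
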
 

\begin{proof}
If two of the functions, say $f$ and $g$ take the same constant value and the third function $h$ is not constant, then clearly one can 
find $x,y,z$ such that $f(x) = g(y) = h(z)$ and $(x,y,z) \neq \pm (1,1,1)$. 
So WLOG we may assume at least two of the functions, say $f$ and $g$ are not constant.  
Let $A(f)$ denote the set of variables that may change the value of $f$ and similarly $A(g)$ and $A(h)$. 
Since $f$ and $g$ are not constant, it follows that $A(f)$ and $A(g)$ are not empty. If there exists a variable $i \in A(f)$ and a variable $i \neq j \in A(g)$, then by the same argument as in Theorem~\ref{thm:arrow1}, there exist $(x,y,z)$ resulting in a paradox. Thus, the only case remaining is where 
$A(f) = A(g) = i$ and $A(h) = i$ or $A(h) = \emptyset$. In either case, the functions $f,g$ and $h$ are all functions of variable $i$ only.
It is now easy to verify that it must be the case that $f=g=h$ is a dictator on voter $i$. 
\end{proof}

\section{Manipulation and the GS Theorem} 
A naturally desirable property of a voting system is \emph{strategyproofness} (a.k.a.\ nonmanipulability): no voter should benefit from voting strategically, i.e., voting not according to her true preferences. However, Gibbard~\cite{Gibbard:73} and Satterthwaite~\cite{Satterthwaite:75} showed that no reasonable voting system can be strategyproof. Before stating the result, let us specify the model more formally. 

The setting here is different than the setup of Arrow's Theorem: 
We consider $n$ voters electing a winner among $k$ alternatives. The voters specify their opinion by ranking the alternatives, and the winner is determined according to some predefined \emph{social choice function} (SCF) $f : S_k^n \to \left[k\right]$ of all the voters' rankings, where $S_k$ denotes the set of all possible total orderings of the $k$ alternatives. We call a collection of rankings by the voters a \emph{ranking profile}. We say that an SCF is \emph{manipulable} if there exists a ranking profile where a voter can achieve a more desirable outcome of the election according to her true preferences by voting in a way that does not reflect her true preferences. 

For example, consider Borda voting, where each candidate receives a score which is the sum of its ranks, and the candidate with the lowest score wins.
If the individual rankings are:
$(a b c d), (c a d b)$,
then $a$ is the winner, but if the second voter were to vote $(c d b a)$ instead then $c$ will become the winner, so the second voter will want to vote untruthfully.

\begin{theorem}[Gibbard-Satterthwaite]  \label{thm:GS_intro}
Any Social Choice Function which is not a dictatorship (i.e., not a function of a single voter), and which allows at least three alternatives to be elected, is manipulable. 
\end{theorem}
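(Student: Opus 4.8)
The plan is to reduce Theorem~\ref{thm:GS_intro} to Arrow's theorem in the form of Theorem~\ref{thm:arrow1} (extended from triples to an arbitrary finite set of alternatives in the standard way, by restricting to $3$-element subsets). So assume $f : S_k^n \to [k]$ is non-manipulable, not a dictatorship, and has range $B \subseteq [k]$ with $|B| \ge 3$; the goal is to derive a contradiction by showing $f$ \emph{is} a dictatorship. The whole difficulty is to manufacture, out of the single hypothesis of non-manipulability, enough structure (monotonicity, unanimity, a transitive induced preference) that Arrow's theorem applies.

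The first step is to extract monotonicity from non-manipulability. Concretely: if $f(P)=a$ and $P'$ is obtained from $P$ by some voters moving $a$ upward in their rankings while leaving the relative order of every other pair of alternatives unchanged, then $f(P')=a$. This is proved one voter at a time, applying the non-manipulability condition in both directions — with the voter's ``true'' preference taken to be $P_i$, and then $P'_i$ — so that neither a voter who prefers the current winner nor a voter whose ranking changed can profitably deviate. A companion one-step argument shows $f$ is unaffected by reshuffling alternatives that sit below the current winner in some voter's order. Together these give a Pareto property (if every voter ranks $x$ above $y$, then $y$ is not elected) and, from it, unanimity on the range ($b\in B$ ranked first by everyone implies $f$ elects $b$, by pushing $b$ to the top of a profile where $f$ already elects $b$). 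It also follows that on the sub-domain where every voter places all of $B$ above all of $[k]\setminus B$, the winner always lies in $B$ and $f$ depends only on the induced orders of $B$; so we may treat $f$ as an \emph{onto} SCF on the alternative set $B$.

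Next I would build a social welfare function from $f$. For a profile $P$ of linear orders on $B$ and a pair $a\ne b$ in $B$, let $P^{ab}$ be the profile obtained by lifting $a$ and $b$ into the top two slots of every voter's order, keeping their mutual order; by the Pareto property $f(P^{ab})\in\{a,b\}$, and we declare $a\succ_P b$ iff $f(P^{ab})=a$. Monotonicity (the ``reshuffle below the winner'' version) makes $\succ_P$ well defined and shows it satisfies independence of irrelevant alternatives; unanimity of $\succ_P$ is immediate from the Pareto property; antisymmetry is clear. The crux is \emph{transitivity}: to exclude a $3$-cycle $a\succ_P b\succ_P c\succ_P a$ one fixes a profile with $\{a,b,c\}$ on top and walks through the profiles interpolating between $P^{ab}$, $P^{bc}$, $P^{ca}$, using non-manipulability at each step to show some voter could improve the outcome — a contradiction. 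Then $P\mapsto\succ_P$ is an IIA, unanimous social welfare function, so by Arrow's theorem it has a dictator $i$; unwinding the definition and propagating back to all profiles via the monotonicity of Step~1 forces $f$ itself to be the dictatorship of voter $i$ on $[k]$, contradicting our assumption.

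The main obstacle is exactly the combinatorial bookkeeping in the two places where non-manipulability does the real work: deriving the monotonicity lemmas of the second paragraph cleanly from the bare hypothesis, and proving transitivity of $\succ_P$ — in both cases one must choose the interpolating ranking profiles carefully and invoke strategyproofness in precisely the right direction. Once those are in hand, the passage through Arrow's theorem and the final propagation argument are routine.
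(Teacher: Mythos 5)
The paper does not actually prove Theorem~\ref{thm:GS_intro}: immediately after stating it the text says ``There are many proofs of the Gibbard-Satterthwaite theorem, but all are more complex than the proof of Arrow's theorem given above. We will not provide proof of the theorem in these notes.'' So there is no proof in the paper to compare yours against. What you have written is a plan for the standard reduction of Gibbard--Satterthwaite to Arrow's theorem (\`a la Reny and others: extract Maskin monotonicity and a Pareto property from strategyproofness, synthesize a social welfare function by lifting each pair of alternatives to the top, prove the induced relation is a transitive tournament, and invoke Arrow). That route is correct, and it fits the spirit of the paper since the paper proves Arrow carefully but not GS.

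Two points worth flagging. First, your citation should be to Theorem~\ref{thm:arrow3} (or to the general constitution form, Theorem~\ref{thm:arrow}) rather than to Theorem~\ref{thm:arrow1}: the pairwise relation $\succ_P$ you build is computed by potentially \emph{different} Boolean aggregators for different pairs $(a,b)$, which is exactly the setting of Theorem~\ref{thm:arrow3}; the hypothesis of Theorem~\ref{thm:arrow1} is that a single Boolean $f$ decides every pair, which you have not arranged. Theorem~\ref{thm:arrow3}'s extra alternative (two pairwise rules constant of opposite sign) is ruled out by the unanimity you establish. Second, a couple of the claims you label as ``immediate'' or ``clear'' hide genuine strategyproofness arguments that your write-up should make explicit. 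In particular, antisymmetry of $\succ_P$ requires knowing that $f(P^{ab})\in\{a,b\}$, which is not automatic: you must first show (say, by starting from a profile where $f$ outputs $a$, raising $a$, raising $b$ to second, and then walking one voter at a time through the $a$-vs-$b$ swaps needed to reach $P^{ab}$, invoking non-manipulability at each swap) that the winner cannot escape $\{a,b\}$. Likewise your IIA claim needs more than the ``reshuffle below the winner'' lemma alone; you need the $\{a,b\}$-invariance just described together with the reshuffle lemma. And the final propagation from the top-$B$ subdomain back to arbitrary profiles, while indeed routine once stated, uses strong (Maskin) monotonicity applied to $f$'s output $c\in B$ under the operation of raising all of $B$ above $[k]\setminus B$; you should state that explicitly since you never prove the weaker conclusion for general profiles. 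With these repairs the argument goes through.
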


This theorem has contributed to the realization that it is unlikely to expect truthfulness in voting. There are many proofs of the Gibbard-Satterthwaite theorem, but all are more complex than the proof of Arrow's theorem given above. We will not provide proof of the theorem in these notes.

\section{Modern Perspectives} 
Work since the 1980s addressed novel aspects of aggregation of votes. 
Condorcet Jury Theorem assumes a probability distribution over the voters but is restricted to a specific aggregation function (majority) while Arrow's theorem considers general aggregation functions but involves no probability model. 
There are many interesting questions that can be asked by combining the two perspectives. 
First, it is natural to ask about the aggregation properties of Boolean functions. The study of aggregation properties of Boolean functions was fundamental to the development of the area of ``Analysis of Boolean Functions" since the 1980s, starting with the work of Ben-Or and Linial~\cite{BenorLinial:90} and Kahn, Kalai, and Linial~\cite{KaKaLi:88}.
Second, we can ask questions regarding the probability of manipulation and paradoxes, questions that were analyzed since the early 2000s, starting with the works by Kalai, Nisan and collaborators~\cite{Kalai:02,Kalai:04,FrKaNi:08}. 

The theory that was developed is intimately connected to the area of property testing in theoretical 
computer science and to additive combinatorics. 
Moreover, we will see that some of the main results and techniques have a discrete isoperimetric flavor. We will cover the following topics: 

We will start by studying the question of noise stability of Boolean functions that were originally studied by Benjamini Kalai and Schramm and  in the context of Percolation~\cite{BeKaSc:99} and later played an important role in analyzing the probability of paradoxes starting with the work of Kalai~\cite{Kalai:02}. This work as well as motivation from theoretical computer science~\cite{KKMO:07}, led to the proof of the Majority is Stablest Theorem by Mossel, O'Donnell and Oleszkiewicz~\cite{MoOdOl:05,MoOdOl:10}. 
The proof of these results will require some of the main analytical tools in the area, including, notably, hyper-contraction~~\cite{Bonami:70,Nelson:73,Gross:75,Beckner:75}, the invariance principle~\cite{MoOdOl:10} and Borell's Gaussian noise-stability result~\cite{Borell:85}.  

We will then devote a considerable effort to proving quantitative versions of Arrow's Theorem. 
We will follow~\cite{Mossel:12} by first proving a Gaussian version of Arrow theorem, as well as a quantitive version using reverse hyper-contraction~\cite{Borell:82}. 
Combining the two we will prove a general quantitive Arrow theorem~\cite{Mossel:12}. 
We will also present Kalai's original proof of a quantitative Arrow theorem~\cite{Kalai:02} which is less general but uses only hypercontraction via~\cite{FrKaNa:02}.  

Different tools were used in proving different quantitive versions of the manipulation theorem. 
The first proof, which applies only to $3$ alternatives, uses a reduction to a quantitive Arrow Theorem~\cite{FrKaNi:08,FKKN:11}. 
We will follow later proofs that apply in the case of a larger number of coordinates and use reverse hyper-contraction as a major tool~\cite{IsKiMo:12,MosselRacz:15}.
The classical proofs of manipulation theorems often use long paths of voting profiles. 
The most general proof in~\cite{MosselRacz:15} will quantify such arguments using geometric tools from the theory of Markov chains. 

In the last part of the notes, we will review some of the more classical results of the aggregation power and structure of  Boolean functions~\cite{BenorLinial:90,KaKaLi:88} and also discuss some basic models involving non-product distributions~\cite{HaKaMo:06}.

We note that much of the interest in quantitative social choice theory comes
from artificial intelligence and computer science, where virtual elections are now an established tool in preference aggregation (see the survey by Faliszewski and Procaccia~\cite{FaliszewskiProcaccia:10}). Many of the results in the study of social choice are negative: it is impossible to design a voting system that satisfies a few desired properties all at once. 

Recall that Gibbard-Satterthwaite theorem states that any SCF which is not a dictatorship (i.e., not a function of a single voter), and which allows at least three alternatives to be elected, is manipulable. This has contributed to the realization that it is unlikely to expect truthfulness in voting. Consequently, there have been many branches of research devoted to understanding the extent of manipulability of voting systems, and to finding ways of circumventing the negative results. 

One approach, introduced by Bartholdi, Tovey and Trick~\cite{BaToTr:89b}, suggests computational complexity as a barrier against manipulation: if it is computationally hard for a voter to manipulate, then she would just tell the truth (we refer to the survey by Faliszewski and Procaccia~\cite{FaliszewskiProcaccia:10} for a detailed history of the surrounding literature). This is a worst-case approach, and while worst-case hardness of manipulation is a desirable property for a SCF to have, this does not tell us anything about \emph{typical} instances of the problem---is it easy or hard to manipulate \emph{on average}? 

The quantitative versions of Arrow and the GS theorems we will prove show that typically ranking is paradoxical and that manipulation is easy on average. 

\ignore{

\section{Alternative Approaches and References} 
The recommended reading for various topics is as follows:
\begin{itemize}
\item Noise Stability: We follow the approach of~\cite{DeMoNe:13,DeMoNe:16}. The original approach for the proofs of noise stability 
was developed in~\cite{MoOdOl:05,MoOdOl:10}, see also \cite{ODonnell:14} as a general reference to analysis of Boolean functions from the theoretical computer science perspective. Some of the strongest results on noise stability follow \cite{Mossel:20resilient}. 
\item Arrow-Kalai Theorem was proved in~\cite{Kalai:02}. It is based on the FKN Theorem which was proven in~\cite{FrKaNa:02}. More elegant and general proofs can be found in~\cite{JeOlWo:15}. 
\item The proof of the Gaussian and general Arrow Theorem follows~\cite{Mossel:12}. Tighter results were later obtained in~\cite{Keller:12}. The results were later generalized further in~\cite{MoOlSe:13}. 
\item For the proof of the quantitive versions of the manipulation theorem we will follow~\cite{IsKiMo:12} and~\cite{MosselRacz:15}, though the first version  for the case of $3$ alternatives was established in~\cite{FrKaNi:08,FKKN:11}.  
\item The work on aggregation of binary functions is based on seminal work in the 1980s and 1990s including~\cite{BenorLinial:90} and~\cite{KaKaLi:88}. We will prove the KKL theorem following the semi-group approach of~\cite{CorderoLedoux:12}.
\end{itemize} 

}

\section{Probability of Paradox for the Majority}\label{sec:preview}
As a preview of what's to come, we will compute the asymptotic probability of a non-transitive outcome in Condorcet setup with $3$ alternatives and where voters vote uniformly at random. 
The first reference to this computation is by Guilbaud (1952), see~\cite{CampbellTullock:66}. 

Let us denote the Majority function by $m:\{-1,1\}^n\to\{-1,1\}$ and assume that the number of voters $n$ is odd. 
Paradoxes seem more likely when there is no bias towards a particular candidate so we will consider voters who vote independently and where voter $i$ votes uniformly at random from the $6$ possible rankings.
Recall that we encode the $6$ possible rankings by vectors $(x,y,z) \in \{-1,+1\}^3 \setminus \{ \pm (1,1,1) \}$.
Here $x$ is $+1/-1$ if a voter ranks $a$ above/below $b$, 
$y$ is $+1/-1$ if voter ranks $b$ above/below $c$,
$z$ is $+1/-1$ if voter ranks $c$ above/below $a$. 

How do we analyze the probability of a paradox?
The following simple fact was used in~\cite{Kalai:02}: 
Since the binary predicate $\psi:\{-1,1\}^3\to\{0,1\}, \psi(a,b,c) = 1(a=b=c)$ can be expressed as
\[
\psi(a,b,c) = \frac{1}{4} ( 1+ ab  + ac + bc ),
\]
we can write 
\[
  \IP[m(x) = m(y) = m(z)] = \frac{1}{4} (1 + \IE[m(x) m(y)] + \IE[m(x) m(z)] + \IE[m(y) m(z)] ),
\]
which due to symmetry can be written as
\[
\IP[m(x) = m(y) = m(z)] = \frac{1}{4} (1 + 3 \IE[m(x) m(y)] ).
\]
Moreover, the uniform distribution over $\{\pm 1\}^3 \setminus \{ \pm (1,1,1) \}$ satisfies $\IE[x_i y_i] = \IE[y_iz_i] = \IE[z_ix_i] = -1/3$ and the $n$ coordinates are
independent.
As we will see shortly, the quantity $\IE[m(x) m(y)]$ is called the noise stability of $m$ with noise parameter $-1/3$. 
Its asymptotic value as $n \to \infty$ is easy to compute using a $2$-dimensional CLT to obtain:
\[
\lim_{n \to \infty} \IE[m(x) m(y)] = \IE[\sgn(X) \sgn(Y)],
\]
where $X,Y \sim N(0, \left( \begin{matrix} 1 & -\frac{1}{3} \\ -\frac{1}{3} & 1 \end{matrix} \right))$ and we can see that
\[
  \IE[\sgn(X)\sgn(Y)] = 2\IP[\sgn(X)=\sgn(Y)]-1 = 1-\frac{2\arccos(-1/3)}{\pi}
\]
and
\[
  \lim_{n\to\infty}\IP[m(x) = m(y) = m(z)] = 1 - \frac{3\arccos(-1/3)}{2\pi}
  \approx 0.088\;.
\]
In particular, the probability of paradox does not vanish as $n \to \infty$. 

\chapter{Noise Stability}

\section{Boolean Noise Stability}
Consider the following thought experiment: suppose the voters in binary voting obtain independent uniform signals: $x_i = +$ or $x_i = -$ with probability $1/2$. This is the same setting as in Condorcet Jury Theorem except the voters are completely uninformed. 

Now consider the following process that produces a vector $y$ as a {\em noisy version} of $x$. For each $i$ independently: 
let $y_i = x_i$ with probability $(1+\theta)/2$ and $y_i = -x_i$ with probability 
$(1-\theta)/2$, where $\theta \in [-1,1]$. We chose the parametrization so that $\IE[x_i y_i] = \theta$. 

How should we interpret $y$? A simple interpretation is as a noise process of voting machines. 
Suppose that when each voter votes, there is a small probability, say $0.01$ that the voting machine records the opposite vote (independently for all voters and independently of the intended vote). In this case $\theta = 0.98$. Given a voting aggregation
function $f:\{-1,1\}^n\to\{-1,1\}$,
ideally we would like the quantity 
\[
\IP[f(x) = f(y)] = \frac{1}{2}(1+\IE[f(x) f(y)])
\]
to be as large as possible if $\theta > 0$ and as small as possible if $\theta < 0$. 
The quantity $\IE[f(x) f(y)]$ is called the {\em noise stability} of $f$. More generally, following~\cite{BeKaSc:99} we define: 

\begin{definition}
For two functions $f,g : \{-1,1\}^n \to \bbR$ the ($\rho$-){\em noisy inner product} of $f$ and $g$ denoted by 
$\langle f, g \rangle_{\rho}$ 
is defined by $\EE[f(x) g(y)]$, where 
$( (x_i,y_i) : 1 \leq i \leq n)$ are i.i.d. mean $0$ ($\EE[x_i] = \EE[y_i] = 0$) and $\rho$-correlated ($\EE[x_i y_i] = \rho$). 
The {\em noise stability} of $f$ is its noisy inner product with itself: $\langle f, f \rangle_{\rho}$.
\end{definition} 
We can also write the noisy inner product in terms of the {\em noise operator} $T_{\rho}$, 
\begin{definition}
The Markov operator $T_{\rho}$ maps functions $f :  \{-1,1\}^n \to \bbR$ to functions $T_\rho f :  \{-1,1\}^n \to \bbR$.
It is defined by: 
\[
( T_{\rho} f)(x) = \EE[f(y) | x].
\]
\end{definition}  

The noise operator is also known as the Bonami-Beckner operator and plays a key role in the theory of hyper-contraction \cite{Bonami:70,Beckner:75}. Note that 
\[
\langle f, g \rangle_{\rho} = \EE[f(x) g(y)] = \EE[ f T_{\rho} g] =  \EE[ g T_{\rho} f] = \langle T f, g \rangle = \langle f , T g \rangle
\]
and that 
\[
\langle f, g \rangle := \E[f(x) g(x)] = \langle f, g \rangle_{1}.
\]
Basic properties of this operator can be revealed using its eigenfunctions, i.e., the Fourier basis. The following proposition is straightforward to prove: 
\begin{proposition}
For $S \subset [n]$, write $x_S = \prod_{i \in S} x_i$, so $x_{\emptyset} \equiv 1$. 
Then:
\begin{itemize}
\item $(x_S : S \subset [n])$ is an orthonormal basis for the space of all functions $f :  \{-1,1\}^n \to \bbR$.
\item $x_S$ is an eigenfunction of $T_{\rho}$ which corresponds to the eigenvalue $\rho^{|S|}$: $T_{\rho} x_S = \rho^{|S|} x_S$.
\end{itemize}  
\end{proposition}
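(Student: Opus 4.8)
The plan is to verify the two bullet points directly from the definitions, treating them as routine Fourier analysis over the discrete cube. The key observation driving everything is that $T_\rho$ acts coordinate-wise: the noise process flips each coordinate independently, so the conditional expectation $\EE[f(y)\mid x]$ factors as a tensor product of single-coordinate noise operators. Because of this product structure, it suffices to understand how $T_\rho$ treats the degree-$0$ function $1$ and the degree-$1$ function $x_i$ in a single coordinate, and then assemble the $n$-fold tensor.

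For the orthonormal basis claim, I would first count: there are $2^n$ subsets $S \subseteq [n]$, matching the dimension of the space of functions $\{-1,1\}^n \to \bbR$, so it is enough to prove orthonormality. For $S = T$ we have $\EE[x_S^2] = \EE[\prod_{i\in S} x_i^2] = \EE[1] = 1$ since $x_i^2 = 1$. For $S \neq T$, pick $i$ in the symmetric difference $S \triangle T$; then $x_S x_T = x_{S\triangle T}$ and, expanding the uniform expectation as an average over the independent $\pm 1$ coordinates, the factor $\EE[x_i] = 0$ forces $\EE[x_S x_T] = 0$. (This uses the standard fact that $\EE[x_R] = \prod_{j \in R}\EE[x_j]$ for the uniform product measure, which is $0$ whenever $R \neq \emptyset$.)

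For the eigenfunction claim, I would compute $(T_\rho x_S)(x) = \EE[\prod_{i \in S} y_i \mid x]$. Since the pairs $(x_i,y_i)$ are i.i.d., conditionally on $x$ the coordinates $y_i$ are independent, so this expectation factors as $\prod_{i \in S} \EE[y_i \mid x_i]$. A one-coordinate calculation with the given flip probabilities $(1\pm\theta)/2$ — here with $\theta = \rho$ — gives $\EE[y_i \mid x_i] = \frac{1+\rho}{2}x_i + \frac{1-\rho}{2}(-x_i) = \rho\, x_i$. Multiplying over $i \in S$ yields $\rho^{|S|}\prod_{i\in S}x_i = \rho^{|S|}x_S$, as claimed.

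I do not anticipate any genuine obstacle here: both statements are foundational and the proof is a short unwinding of definitions. The one point requiring a touch of care is making the factorization $\EE[\prod_{i\in S} y_i \mid x] = \prod_{i\in S}\EE[y_i\mid x_i]$ rigorous — it rests on the independence of the pairs $(x_i,y_i)$ across $i$, which is exactly the hypothesis in the definition of the noisy inner product — but this is standard and can be stated in one line.
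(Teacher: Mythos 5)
The paper states this proposition without proof, merely noting it is ``straightforward to prove,'' so there is no in-text argument to compare against. Your proposal is correct and is the standard argument one would supply: orthonormality via $x_i^2=1$ and $\EE[x_R]=0$ for nonempty $R$, dimension-counting to upgrade an orthonormal set of size $2^n$ to a basis, and the eigenfunction identity via coordinate-wise conditional independence and the one-coordinate computation $\EE[y_i\mid x_i]=\rho\,x_i$.
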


The following easy result is folklore (see e.g.~\cite{MORSS:06}).
\begin{theorem} \label{thm:dict_stab}
For every $\rho > 0$, for every $n$ and for every $f,g : \{-1,1\}^n \to \{-1,1\}$ with 
$\IE[f] = \IE[g] = 0$, it holds that 
\[
 \langle f, g \rangle_{\rho} \leq  \langle x_1, x_1 \rangle_{\rho} = \rho. 
\]
\[ 
 \langle f, g \rangle_{-\rho} \geq  \langle x_1, x_1 \rangle_{-\rho} = -\rho. 
\]
Moreover, the only optimizers are dictator functions, i.e., functions of the form 
$f(x) = g(x) = x_i$ or $f(x) = g(x) = -x_i$.
\end{theorem}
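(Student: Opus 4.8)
The plan is to use the Fourier expansion and the fact that both $f$ and $g$ map into $\{-1,1\}$. Write $f = \sum_{S} \hat f(S)\, x_S$ and $g = \sum_S \hat g(S)\, x_S$. Since $\IE[f] = \hat f(\emptyset) = 0$ and likewise $\hat g(\emptyset) = 0$, the constant Fourier coefficients vanish, so the sums effectively run over nonempty $S$. Using that $x_S$ is an eigenfunction of $T_\rho$ with eigenvalue $\rho^{|S|}$ and that $(x_S)$ is an orthonormal basis, I get the standard identity
\[
\langle f, g \rangle_\rho = \langle f, T_\rho g \rangle = \sum_{S \neq \emptyset} \rho^{|S|}\, \hat f(S)\, \hat g(S).
\]

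**Next I would bound this sum.** For $\rho > 0$ and every nonempty $S$ we have $\rho^{|S|} \le \rho$, with equality iff $|S| = 1$. Applying the Cauchy–Schwarz inequality coordinatewise and then Parseval, and using that $f, g$ are $\pm 1$-valued so $\sum_S \hat f(S)^2 = \IE[f^2] = 1$ and similarly for $g$, gives
\[
\langle f, g\rangle_\rho \;\le\; \rho \sum_{S \neq \emptyset} \hat f(S)\,\hat g(S) \;\le\; \rho \sum_{S \neq \emptyset} |\hat f(S)|\,|\hat g(S)| \;\le\; \rho \sqrt{\sum_{S}\hat f(S)^2}\sqrt{\sum_S \hat g(S)^2} \;=\; \rho.
\]
This is exactly $\langle x_1, x_1\rangle_\rho$. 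The inequality for $\langle f, g\rangle_{-\rho}$ follows either by the symmetric argument (now $(-\rho)^{|S|} \ge -\rho$ fails for even $|S|$, so one should instead split by parity, or more cleanly replace $g$ by $g' (x) = g(x_1, -x_2, \dots, -x_n)$... ) — actually the cleanest route is: $\langle f, g\rangle_{-\rho} = \sum_{S\neq\emptyset}(-\rho)^{|S|}\hat f(S)\hat g(S) = -\sum_{S\neq\emptyset}\rho^{|S|}\hat f(S)\hat g'(S)$ where $g'(x) = -g(x)$ is again $\pm 1$-valued with mean zero, so $\langle f,g\rangle_{-\rho} = -\langle f, g'\rangle_\rho \ge -\rho$ by the first part.

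**The main obstacle is the equality characterization.** Tracing back through the chain of inequalities, equality in the final bound forces: (i) equality in Cauchy–Schwarz, so the vectors $(\hat f(S))$ and $(\hat g(S))$ are proportional, and since both have unit norm and the product is positive, $\hat f(S) = \hat g(S)$ for all $S$, hence $f = g$; (ii) $|\hat f(S)|\,|\hat g(S)| = \hat f(S)\hat g(S)$ for all $S$, automatic once $f = g$; and (iii) $\rho^{|S|} = \rho$ whenever $\hat f(S) \neq 0$, which since $\rho \in (0,1)$ forces every $S$ in the support of $\hat f$ to have $|S| = 1$. So $f = \sum_{i} \hat f(\{i\})\, x_i$ is a degree-one function. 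Finally, a $\pm 1$-valued function that is a real linear combination of the $x_i$ must be a single $\pm x_i$: indeed $1 = \IE[f^2] = \sum_i \hat f(\{i\})^2$, and evaluating $f$ at points of the cube (e.g. comparing $f(x)$ and $f$ with one coordinate flipped) shows at most one coefficient can be nonzero — a clean way is to note $\sum_i |\hat f(\{i\})| \ge \IE|f| \cdot(\text{something})$... the standard argument: if two coefficients $\hat f(\{i\}), \hat f(\{j\})$ were both nonzero, pick $x$ with $\hat f(\{i\}) x_i$ and $\hat f(\{j\}) x_j$ of opposite signs to the rest appropriately and get $|f(x)| \neq 1$. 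I would cite this last step as the elementary observation that the only $\pm1$-valued affine (here linear, since mean zero) Boolean functions are $\pm$-dictators, which completes the proof.
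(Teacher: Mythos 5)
Your overall approach — Fourier expansion, $\langle f,g\rangle_\rho = \sum_{S\neq\emptyset}\rho^{|S|}\hat f(S)\hat g(S)$, Cauchy--Schwarz plus Parseval, then tracing equality back to degree-one Boolean functions — is exactly the paper's. However, two of the steps as written are incorrect and need fixing.

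First, the chain
\[
\langle f, g\rangle_\rho \;\le\; \rho \sum_{S \neq \emptyset} \hat f(S)\,\hat g(S) \;\le\; \rho \sum_{S \neq \emptyset} |\hat f(S)|\,|\hat g(S)|
\]
opens with a false inequality. Replacing $\rho^{|S|}$ by $\rho$ term-by-term only gives an upper bound when $\hat f(S)\hat g(S)\ge 0$; if a negative term sits at a level $|S|\ge 2$, you have made it more negative, not less. Concretely, $f(x)=x_1x_2$, $g(x)=-x_1x_2$ gives $\langle f,g\rangle_\rho=-\rho^2$ while $\rho\sum\hat f\hat g=-\rho$, and $-\rho^2 > -\rho$. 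The fix is just to reorder: first bound $\langle f,g\rangle_\rho \le \sum_{S\neq\emptyset}\rho^{|S|}|\hat f(S)||\hat g(S)|$, then pull out $\rho^{|S|}\le\rho$ (now legitimate, all terms nonnegative), then apply Cauchy--Schwarz. This order also makes the equality analysis cleaner, since it records the extra condition $\hat f(S)\hat g(S)\ge 0$ for all $S$, which together with the Cauchy--Schwarz equality gives $\hat f=\hat g$ directly.

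Second, the reduction for the $-\rho$ bound is wrong: with $g'(x)=-g(x)$ you have $\hat g'(S)=-\hat g(S)$, so $-\sum\rho^{|S|}\hat f(S)\hat g'(S)=\sum\rho^{|S|}\hat f(S)\hat g(S)=\langle f,g\rangle_\rho$, \emph{not} $\langle f,g\rangle_{-\rho}$; negating $g$ does not flip the sign of $\rho$. The easy correct route, once you have the reordered chain above, is to note it actually bounds the absolute value, $|\langle f,g\rangle_{\pm\rho}|\le\rho$, which gives $\langle f,g\rangle_{-\rho}\ge-\rho$ at once. (Alternatively, the substitution $g''(x)=-g(-x)$, whose coefficients are $\hat g''(S)=(-1)^{|S|+1}\hat g(S)$, gives $\langle f,g''\rangle_\rho=-\langle f,g\rangle_{-\rho}$ and does the reduction correctly; your reflection idea was on the right track, but you also need the sign flip.) The equality characterization at the end is fine in outline, and the final observation that the only mean-zero Boolean functions that are degree one are $\pm x_i$ is the same argument the paper gives by expanding $f^2\equiv1$.
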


\begin{proof}
Note that $\IE[f] =  \langle f, 1 \rangle =  \langle f, x_{\emptyset} \rangle$. So if $\IE[f] = 0$, then:
$f = \sum_{S \neq \emptyset} \hat{f}(S) x_S$, where $\hat{f}(S) =   \langle f, x_S \rangle$. 
Moreover,
\[
 T_{\rho} f(x) = \EE[f(y) | x] = \sum_S \rho^{|S|} \hat{f}(S) x_S,
\]
and similarly for $g$. Therefore: 
\begin{align}
  \langle f,g \rangle_\rho &=
  \langle T_{\rho} f, g \rangle = \langle \sum_{S \neq \emptyset} \rho^{|S|} \hat{f}(S) x_S,
  \sum_{S \neq \emptyset} \hat{g}(S) x_S \rangle \\
&= \sum_{S \neq \emptyset} \rho^{|S|} \hat{f}(S)\hat{g}(S) \leq 
\rho \sqrt{ \sum_{S \neq \emptyset} \hat{f}^2(S) \sum_{S \neq \emptyset} \hat{g}^2(S)} = \rho
\end{align}
where the last inequality uses Cauchy-Schwarz and Parseval's identity  
\[
\sum_S \hat{f}^2(S) = \langle f, f \rangle = 1. 
\]
In the case of equality we conclude that $f=g$ must be a linear function, so $f = \sum_{i} a_i x_i$. 
Since $f$ is Boolean, 
\[
1 \equiv f^2 = \sum_i a_i^2 + \sum_{i \neq j} a_i a_j x_i x_j 
\] 
and therefore it must be the case that $f$ is a dictator. 
The case where $\rho < 0$ is similar and is left as an exercise. 
\end{proof}

The Theorem above allows a quick proof of a version of Arrow's Theorem by Kalai~\cite{Kalai:02}: 
\begin{corollary}
In the context of Arrow Theorem if $\IE[f] = \IE[g] =\IE[h] = 0$ and 
$\IP[f(x) = g(y) = h(z)] = 0$ then $f,g$ and $h$ are all the same dictator. 
\end{corollary}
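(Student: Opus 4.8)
The plan is to reduce the statement to Theorem~\ref{thm:dict_stab} via the algebraic identity for $\psi$ already recorded in the introduction. First I would note that in the Condorcet setup the coordinates $(x_i,y_i,z_i)$ are i.i.d.\ uniform on $\bits^3\setminus\{\pm(1,1,1)\}$, and that each of the three pairs $(x_i,y_i)$, $(x_i,z_i)$, $(y_i,z_i)$ is mean zero and $(-1/3)$-correlated, so that $\IE[f(x)g(y)] = \langle f,g\rangle_{-1/3}$ and similarly for the other two pairs. Applying $\psi(a,b,c)=\tfrac14(1+ab+ac+bc)$ to $(f(x),g(y),h(z))\in\bits^3$ and taking expectations then gives
\[
\IP[f(x)=g(y)=h(z)] = \tfrac14\bigl(1 + \langle f,g\rangle_{-1/3} + \langle f,h\rangle_{-1/3} + \langle g,h\rangle_{-1/3}\bigr).
\]

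Next, since $\IE[f]=\IE[g]=\IE[h]=0$, I would apply Theorem~\ref{thm:dict_stab} with $\rho = 1/3$ to each of the three noisy inner products, obtaining $\langle f,g\rangle_{-1/3}\ge -1/3$, $\langle f,h\rangle_{-1/3}\ge -1/3$, and $\langle g,h\rangle_{-1/3}\ge -1/3$. Substituting into the display yields
\[
\IP[f(x)=g(y)=h(z)] \;\ge\; \tfrac14\bigl(1 - \tfrac13 - \tfrac13 - \tfrac13\bigr) \;=\; 0 .
\]
Hence the hypothesis $\IP[f(x)=g(y)=h(z)]=0$ forces equality in all three instances of Theorem~\ref{thm:dict_stab} simultaneously.

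Finally I would invoke the rigidity (``moreover'') clause of Theorem~\ref{thm:dict_stab}: equality in $\langle f,g\rangle_{-1/3}=-1/3$ forces $f$ and $g$ to be the \emph{same} signed dictator, $f=g=x_i$ or $f=g=-x_i$; likewise equality in $\langle g,h\rangle_{-1/3}=-1/3$ forces $g=h$ to be a common signed dictator. Matching the coordinate and the sign from the two conclusions about $g$ shows $f=g=h$ equals $x_i$ (or $-x_i$) for a single voter $i$, which is exactly the assertion.

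The only genuinely delicate point is this equality analysis, since it uses the rigidity statement of Theorem~\ref{thm:dict_stab} in the $\rho<0$ regime, whose proof was deferred as an exercise: one must check that $\langle f,g\rangle_{-\rho}=-\rho$ forces the Fourier weight of $f$ and $g$ onto singletons with matching signs, whence Cauchy--Schwarz equality and Parseval give $f=g$, and Booleanness upgrades a linear Boolean function to a dictator. Everything else — the distributional bookkeeping for the $(-1/3)$-correlated pairs and the linear combination of the three bounds — is routine.
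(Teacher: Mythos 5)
Your proposal is correct and matches the paper's own proof exactly: the paper cites Theorem~\ref{thm:dict_stab} together with the identity $\IP[f(x)=g(y)=h(z)] = \tfrac14\bigl(1 + \langle f,g\rangle_{-1/3} + \langle g,h\rangle_{-1/3} + \langle h,f\rangle_{-1/3}\bigr)$ and stops there, leaving the equality analysis implicit. You have simply filled in the rigidity step (the $\rho<0$ case deferred as an exercise in Theorem~\ref{thm:dict_stab}) and the distributional bookkeeping, both of which are right.
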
 

\begin{proof} 
Use the previous theorem and: 
\[
\IP[f(x) = g(y) = h(z)] = 
\frac{1}{4} \left(1+ \langle f,g \rangle_{-1/3} + \langle g,h \rangle_{-1/3} + \langle h,f \rangle_{-1/3} \right). \qedhere
\]
\end{proof}

From the voting perspective, there is something a little unnatural about the result above. It says that if we want to maximize robustness, then a dictator should decide. From a mathematical perspective, it is disappointing that there is something special about $\IE[f] = 0$. 
In particular the following is open: 
\begin{problem}
For a generic $\rho > 0, 0 < \mu < 1$ what is the value of: 
\[
 \lim_{n \to \infty} \max \Big( \langle f, f \rangle_{\rho} : f : \{-1,1\}^n \to \{-1,1\}, \IE[f] = \mu \Big) 
\]
\end{problem}

Here are some additional examples:
\begin{itemize}
\item Similar argument to the theorem shows that if $\rho > 0$ then $\langle f, f \rangle_\rho \geq \rho^n$ for $f : \{-1,1\}^n \to \{-1,1\}$. The parity function $x_{[n]}$ achieves equality. 

\item The asymptotic noise stability of Majority is given by Sheppard formula~\cite{Sheppard:99}, i.e., $\IE[\sgn(N) \sgn(M)]$, where $(N,M)$ are $\rho$-correlated random variables: 
\[
  \IE[\sgn(M)\sgn(N)] = 2\IP[\sgn(M)=\sgn(N)]-1 = 1-\frac{2\arccos(\rho)}{\pi} := \kappa(\rho)
\]
In particular if $\rho = 1-\eps$, then $\IP[f(x) \neq f(y)]$ is of order $\sqrt{\eps}$. Compare this to a dictator where it is of order $\eps$. 
\item If we consider $n = r^2$ where $r$ is odd and the function $f$
  implements electoral college, i.e.,
\[
f(x_1,\ldots,x_n) = m \Big(m(x_1,\ldots,x_r),\ldots,m(x_{n-r+1},x_n) \Big),
\]
then it is easy to see that asymptotically the noise stability is given by $\kappa(\kappa(\rho))$. In particular if $\rho = 1-\eps$ for small $\eps$ then 
$\IP[f(x) \neq f(y)]$ is of order $\eps^{1/4}$. 
\item Let $m_r$ be majority function on $r$ voters and define $m_r^{(1)} = m_r$ and by induction:
\[
  m_r^{(h)}(x_1,\ldots,x_{r^h}) = m_r^{(h-1)} \left( m_r(x_1,\ldots,x_r),\ldots,m_r(x_{r^h-r+1},\ldots,x_{r^h}) \right).
\]
This function is called the recursive majority function.

\begin{exercise}[\cite{MosselOdonnell:03}]
Show that for every $\eps < 0.5$, if $r$ is large enough and $n_h = r^h$ and 
if $\rho =  1-n_h^{-\eps}$ then
\[
\lim_{h \to \infty} \E[m_r^{(h)}(x) m_r^{(h)}(y)] = 0,
\]
where $(x,y)$ are $\rho$-correlated. 
\end{exercise} 
\end{itemize}

\section{Gaussian Noise Stability} 

We will now take a detour of considering 
analogous quantities defined in Gaussian space. We will later see that this is quite useful in the Boolean setting. 

\begin{definition} 
  In Gaussian space, the ($\rho$-)noisy inner product of $\phi : \bbR^n \to \bbR$ and $\psi : \bbR^n \to \bbR$ denoted by $\langle \phi, \psi \rangle_{\rho}$  is
  defined as
\[
\EE[\phi(M) \psi(N)],
\]
where $N = (N_1,\ldots,N_n)$, $M = (M_1,\ldots,M_n)$, and $((M_i,N_i))_{i=1}^n$ are i.i.d.~two-dimensional Gaussian vectors,
such that $N_i,M_i$  
are standard (mean $0$, variance $1$) Gaussian random variables and $\EE[N_i M_i] = \rho$.
The {\em noise stability} of $\phi$ is its inner product with itself: $\langle \phi, \phi \rangle_{\rho}$.
\end{definition} 
We will generally use $f,g$ etc.~to denote functions over the Boolean cube and $\phi,\psi$ etc.~for functions in $L_2(\IR^n,\gamma)$. 
In particular, for $\mu \in [0,1]$ we write $\chi_{\mu}$ for the indicator of the interval $(-\infty,\Phi^{-1}(\mu))$ whose Gaussian measure is
$\mu$. 



We can now state Borell's~\cite{Borell:85} noise stability result: 
\begin{theorem} \label{thm:half_space}
For all $n \geq 1$, $\rho > 0$ and $\phi,\psi :  \IR^n \to [0,1]$, it holds that:  
\[
\langle 1-\chi_{1-\E \phi}, \chi_{\E \psi} \rangle_{\rho}  \leq \langle \phi, \psi \rangle_{\rho} \leq \langle \chi_{\E \phi}, \chi_{\E \psi} \rangle_{\rho} 
\]
\[ 
\langle 1-\chi_{1-\E \phi}, \chi_{\E \psi} \rangle_{-\rho}  \geq \langle \phi, \psi \rangle_{-\rho} \geq \langle \chi_{\E \phi}, \chi_{\E \psi} \rangle_{-\rho}
\]
\end{theorem}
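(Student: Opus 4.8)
The plan is to prove the upper bound $\langle \phi, \psi \rangle_{\rho} \leq \langle \chi_{\E\phi}, \chi_{\E\psi}\rangle_{\rho}$ for $\rho \in (0,1)$; the lower bound and the $-\rho$ statements follow by applying the upper bound to $1-\psi$ in place of $\psi$ (using $\langle \phi, 1\rangle_\rho = \E\phi$ and $\chi_{1-\mu} = 1 - $ a reflected half-space indicator) and by the symmetry $N \mapsto -N$ which sends $\rho$-correlated pairs to $(-\rho)$-correlated pairs. So the whole content is the single upper inequality, and I would reduce to it first.

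\medskip

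\noindent\textbf{Step 1: Reduction to dimension one by tensorization / rearrangement.} The noise operator $U_\rho$ (the Ornstein--Uhlenbeck / Mehler semigroup, the Gaussian analogue of $T_\rho$) factorizes as a product over coordinates, and Gaussian half-spaces of a given measure are exactly the super-level sets of the one-dimensional marginal after an orthogonal change of coordinates. The standard route is Ehrhard-style symmetrization: show that for each fixed direction, replacing $\phi$ by its one-dimensional decreasing rearrangement in that coordinate (averaging out the others) does not decrease $\langle \phi,\psi\rangle_\rho$, and iterate/take limits to reduce to the case where $\phi = \chi_a$, $\psi = \chi_b$ are half-spaces — but possibly half-spaces with non-parallel normal vectors. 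I expect this rearrangement step to be the main obstacle: proving that Gaussian rearrangement increases the noisy inner product is essentially the heart of Borell's theorem, and doing it carefully (measurability, the limiting argument, the equality/extremality bookkeeping) is the technical crux. An alternative I would keep in reserve is the semigroup interpolation argument of Mossel--Neeman: define $\Lambda(t) = \E[\, U_{\sqrt{t}}\phi \cdot U_{\sqrt{t}}\psi\,]$-type quantities along the heat flow and show monotonicity using that the function $(x,y)\mapsto \text{something built from }\Phi^{-1}$ has a definite-sign Hessian; this trades the rearrangement obstacle for a concrete but lengthy second-derivative computation.

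\medskip

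\noindent\textbf{Step 2: The two-dimensional / parallel-caps computation.} Once both functions are half-space indicators, $\langle \chi_a, \chi_b\rangle_\rho = \Pr[M \le a,\ N \le b]$ where $(M,N)$ is a standard bivariate Gaussian with correlation depending on $\rho$ and on the angle between the two normals. I would then show this probability is maximized, among half-spaces of fixed measures $\E\phi, \E\psi$, when the two normals are parallel (angle zero), by a direct monotonicity computation: differentiate the orthant probability in the angle (or in the effective correlation) and check the sign, which is classical (the derivative of the bivariate normal CDF in its correlation parameter is a positive Gaussian density). Parallel half-spaces of measures $\mu$ and $\nu$ give exactly $\langle \chi_\mu, \chi_\nu\rangle_\rho$, completing the chain of inequalities.

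\medskip

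\noindent\textbf{Step 3: Assembling the other three inequalities and remarks on extremizers.} For the lower bound $\langle 1-\chi_{1-\E\phi}, \chi_{\E\psi}\rangle_\rho \le \langle\phi,\psi\rangle_\rho$, apply the already-proved upper bound to the pair $(1-\phi, \psi)$: $\langle 1-\phi,\psi\rangle_\rho \le \langle \chi_{1-\E\phi}, \chi_{\E\psi}\rangle_\rho$, then subtract from $\langle 1,\psi\rangle_\rho = \E\psi = \langle 1, \chi_{\E\psi}\rangle_\rho$. For the $-\rho$ versions, note that $(M,N)$ being $(-\rho)$-correlated is the same as $(M,-N)$ being $\rho$-correlated, and $\psi(-\,\cdot\,)$ composed with a half-space indicator is again a (reflected) half-space indicator of the same measure, so the two $-\rho$ statements are just the two $\rho$ statements read for $\psi$ replaced by $\psi(-\,\cdot\,)$. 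I would close by remarking (without full proof, paralleling the style of Theorem \ref{thm:dict_stab}) that for $\rho \in (0,1)$ the inequalities are strict unless $\phi,\psi$ are (a.e.) parallel half-space indicators with the correct measures — this falls out of tracking the equality cases in Steps 1 and 2, where the rearrangement is measure-preserving-and-strictly-increasing off half-spaces and the angle-derivative in Step 2 is strictly positive.
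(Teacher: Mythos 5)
Your Step~3 reductions are correct: the lower bound does follow from the upper bound applied to $1-\phi$ (together with $\langle 1,\psi\rangle_\rho = \E\psi$), and the $-\rho$ statements do follow from the $\rho$ statements by the reflection $\psi \mapsto \psi(-\cdot)$, since $1-\chi_{1-\mu}$ is the reflected half-space indicator and so $\langle 1-\chi_{1-\mu},\chi_\nu\rangle_\rho = \langle \chi_\mu,\chi_\nu\rangle_{-\rho}$. The problem is that everything hinges on Step~1, and there you identify the central lemma (Gaussian rearrangement increases the noisy inner product) but explicitly defer its proof, calling it the obstacle. As written, the argument has a genuine gap at exactly the point where Borell's theorem is hard: the rearrangement inequality is not a routine measurability/limiting chore but the theorem itself in disguise, and without it (or your stated backup) nothing in Steps~2--3 gets off the ground.

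It's also worth noting that the paper's own route to this theorem is different from both your primary plan and your backup. The paper does not symmetrize in Gaussian space at all and never needs your Step~2 reduction of non-parallel half-spaces to parallel ones via the sign of the correlation derivative of the bivariate normal orthant probability. Instead it proves the \emph{functional} form $\E J_\rho(\phi(N),\psi(M)) \le J_\rho(\E\phi,\E\psi)$ (Theorem~\ref{thm:borell_functional}), deduced as a CLT limit of a purely discrete inequality on the Boolean cube with explicit third-order error terms (Theorem~\ref{thm:tensorization}). That discrete inequality is in turn proved by induction on dimension, with a one-dimensional base case that is just a second-order Taylor expansion of $J_\rho$ combined with the fact that the $\sigma$-modified Hessian $M_{\rho\sigma}$ is negative semidefinite for $|\sigma|\le\rho$ (Claim~\ref{clm:negative-semidefinite}). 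Because the functional inequality applied to indicator functions directly yields the set inequality, there is no separate ``angle optimization'' step at all.

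Your backup plan---the Mossel--Neeman semigroup flow with the sign of the Hessian of a function built from $\Phi^{-1}$---is the closest in spirit to what the paper does: both arguments live on the same concavity property of $J_\rho$. The difference is that the paper replaces the continuous monotone-along-the-flow argument by a discrete base case plus tensorization plus CLT, which has the advantage of simultaneously proving the Majority-is-Stablest statement on the cube (since the same inequality with controlled error terms is what gets used there). So: if you want to actually close the gap, I'd recommend abandoning the rearrangement plan in favor of carrying out the Hessian computation for $J_\rho$ and then choosing either the flow or the discrete-induction mechanism to propagate it; the latter is what the paper does, and it gives strictly more (both the Gaussian and the Boolean statements from one argument).
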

Borell~\cite{Borell:85} was interested in more general functionals of the heat equations and he showed that these functionals increase with respect 
to nonincreasing spherical rearrangement. The fact that half spaces are the unique optimizers of $\rho$-noisy inner product  
was proven in~\cite{MosselNeeman:15b}, where a robust version of the theorem is also proven. Tighter robust versions were later proven by Eldan~\cite{Eldan:15}.  Other alternative proofs and generalization of Borell's result include~\cite{IsakssonMossel:12,KiKiOd:18}. 

\section{Gaussian and Boolean Noise Stability}
By applying the CLT, it is easy to check that Gaussian noise stability provides a bound on the Boolean noise stability.
\begin{proposition}
For every $\rho\in[-1,1],\mu,\nu \in [0,1]$, and for every
\[
s \in \Big[ \langle 1-\chi_{1-\mu}, \chi_{\nu} \rangle_{\rho} , \langle \chi_{\mu}, \chi_{\nu} \rangle_{\rho} \Big],
\]
there exists sequence of Boolean functions $f_n,g_n :\{-1,1\}^n \to \{0,1\}$ such that $\IE[f_n] \to \mu, \IE[g_n] \to \nu$ and 
\[
\langle f_n, g_n \rangle_{\rho} \to s
\]
\end{proposition}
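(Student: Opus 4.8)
The plan is to prove the statement in two stages: first a soft existence argument that handles the two extreme values $s = \langle \chi_\mu,\chi_\nu\rangle_\rho$ and $s = \langle 1-\chi_{1-\mu},\chi_\nu\rangle_\rho$ by an explicit construction, and then an interpolation argument to fill in the whole interval. For the extreme value $s = \langle \chi_\mu,\chi_\nu\rangle_\rho$, I would take half-space indicators in Gaussian space and discretize them. Concretely, let $\ell_n(x) = \frac{1}{\sqrt n}\sum_{i=1}^n x_i$ for $x \in \bits^n$; by the two-dimensional CLT, if $(x,y)$ are $\rho$-correlated then $(\ell_n(x),\ell_n(y))$ converges in distribution to a $\rho$-correlated pair of standard Gaussians $(M,N)$. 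Choose thresholds $t_n, u_n$ so that $\IP[\ell_n(x) < t_n] \to \mu$ and $\IP[\ell_n(y) < u_n] \to \nu$ (possible since $\ell_n$ is asymptotically continuous), and set $f_n = \1[\ell_n < t_n]$, $g_n = \1[\ell_n < u_n]$. Then $\IE[f_n]\to\mu$, $\IE[g_n]\to\nu$, and $\langle f_n,g_n\rangle_\rho = \IP[\ell_n(x) < t_n,\ \ell_n(y) < u_n] \to \IP[M < \Phi^{-1}(\mu),\ N < \Phi^{-1}(\nu)] = \langle \chi_\mu,\chi_\nu\rangle_\rho$, using that the limiting joint law is absolutely continuous so the boundary of the limiting event has zero measure. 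The other extreme $s = \langle 1-\chi_{1-\mu},\chi_\nu\rangle_\rho$ is identical, except one takes $f_n = \1[\ell_n > t_n']$ (an "upper" half-space) with the threshold chosen so $\IE[f_n]\to\mu$; geometrically this is the antipodal half-space, which realizes the Borell lower bound.

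For the intermediate values, the cleanest route is a continuity/intermediate-value argument. Fix $\mu,\nu$ and, for a parameter $k \in \{0,1,\dots,n\}$, consider functions of the form $f_n^{(k)}$ that agree with the "lower" half-space indicator on the first $k$ coordinates' worth of threshold behavior and with the "upper" half-space on the rest — more carefully, I would interpolate between the two extreme constructions one coordinate at a time, or alternatively use a single family $f_n = \1[\ell_n(x) \in A_n]$ where $A_n$ is a union of an interval of measure $\approx \mu$ whose position slides from "centered at the left tail region matched with $g_n$" to "split into two tails." Changing one coordinate's role changes $\langle f_n, g_n\rangle_\rho$ by $O(1/\sqrt n) \to 0$, so for large $n$ the achievable values of $\langle f_n,g_n\rangle_\rho$ form an $o(1)$-dense subset of the interval between the two extremes; a diagonal argument over $n$ then produces, for any target $s$ in the open interval, a sequence with $\langle f_n,g_n\rangle_\rho \to s$ while maintaining $\IE[f_n]\to\mu$, $\IE[g_n]\to\nu$. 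In fact it is simplest to keep $g_n$ fixed as the lower half-space indicator and only vary $f_n$: as $f_n$ ranges over indicators of measure-$\mu$ sets built from the single statistic $\ell_n$, Borell's theorem (Theorem~\ref{thm:half_space}) tells us $\langle f_n,g_n\rangle_\rho$ cannot leave the stated interval, and the constructions above show both endpoints are approached, so everything in between is approached too.

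The step I expect to be the main obstacle is making the interpolation rigorous while simultaneously controlling \emph{both} expectations and the noisy inner product, since moving mass around to change $\langle f_n,g_n\rangle_\rho$ can perturb $\IE[f_n]$. The fix is to restrict to one-parameter families of threshold-type functions of $\ell_n(x)$ (or of a pair of coordinates' averages), where the measure constraint $\IE[f_n]\approx\mu$ pins down one degree of freedom and the remaining freedom — which tail the mass sits in — moves $\langle f_n,g_n\rangle_\rho$ continuously (in the $n\to\infty$ limit) across the whole interval, with the CLT guaranteeing the limit is exactly the corresponding Gaussian quantity $\langle\cdot,\cdot\rangle_\rho$ by absolute continuity of the limiting bivariate Gaussian. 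Once the family is chosen so that the limiting achievable set is a closed interval with the two Borell extremes as endpoints, the intermediate value theorem applied in the limit, combined with a standard diagonalization to extract the sequence $(f_n,g_n)$, completes the argument.
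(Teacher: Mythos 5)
Your proof is correct, and the endpoint constructions are exactly the paper's (threshold $\ell_n(x)=n^{-1/2}\sum x_i$ and apply the bivariate CLT, noting the limiting law is continuous so the boundary events have probability zero). The interpolation step, however, takes a genuinely different route. The paper keeps both $f_n$ and $g_n$ as lower-tail thresholds but shifts the block of coordinates that $f_n$ reads, so that $f_n$ depends on $x_{\alpha n},\dots,x_{(1+\alpha)n}$ while $g_n$ depends on $x_1,\dots,x_n$; the shared fraction $1-\alpha$ sets the effective correlation between the two Gaussian limits to $(1-\alpha)\rho$, so one interpolates $J_{\rho'}(\mu,\nu)$ over $\rho'\in[0,\rho]$ (and, symmetrizing by flipping the tail on part of the sum, over $\rho'\in[-\rho,\rho]$, which the paper leaves implicit). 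You instead fix $g_n$ and let $f_n=\1[\ell_n(x)\in A_n]$ where $A_n$ is a sliding window of Gaussian measure $\mu$: as the window slides from $(-\infty,\Phi^{-1}(\mu))$ to $(\Phi^{-1}(1-\mu),\infty)$ the limiting value $\IP[M\in A,\,N<\Phi^{-1}(\nu)]$ moves continuously between the two Borell extremes, and the CLT transports this to the cube. Both arguments are sound; yours has the advantage of being entirely one-dimensional and of quoting Borell's theorem directly to certify that the whole interval (and nothing more) is attained, whereas the paper's block-shifting trick is shorter to state but leaves the reader to observe that varying the overlap alone only reaches $[J_0,J_\rho]$, not all of $[J_{-\rho},J_\rho]$, until one adds the sign flip. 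Your third paragraph correctly identifies the only real bookkeeping issue — keeping $\IE[f_n]$ pinned at $\mu$ while the window moves — and the fix (parametrize by measure-preserving shifts of the window) is the right one.
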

Moreover, by Theorem~\ref{thm:half_space} and Theorem~\ref{thm:dict_stab} it follows that there is a strict inequality at $\mu = 1/2$. See Figure~\ref{fig:bool_gauss_half}.

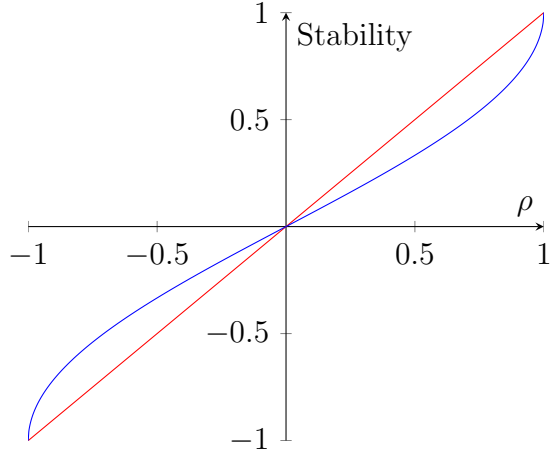
\begin{figure}
\begin{center}
\begin{tikzpicture}
\begin{axis}[
    domain=-1:1,
    samples=101,
    smooth,
    no markers,
    xlabel = $\rho$,
    ylabel = Stability,
    axis lines = middle,
    legend pos = south east
    ]
    \addplot [
        domain=-1:1, 
        samples=100, 
        color=red,
    ]
    {x};
    \addplot[
        domain = -1:1,
        samples = 1000,
        color = blue,
    ]
    {1- acos(x)/90};
\end{axis}
\end{tikzpicture}
\caption{The noise stability of dictator and Gaussian half-space of measure $0.5$. i.e., functions $\rho$ and $1-\arccos{\rho}/2\pi$. Note that for every $0 < \rho < 1$, the dictator is more stable than the corresponding half-spaces and for every $-1 < \rho < 0$, it is less stable than the corresponding half-space.}
\end{center}
\label{fig:bool_gauss_half}
\end{figure}


The proof of the proposition is standard using approximation of Gaussian random variables in terms of sums of independent Bernoullis:
\begin{proof}
To show that we can obtain the RHS of the interval, let 
\[
f_n =\chi_{\mu}(n^{-1/2} \sum_{i=1}^n x_i), \quad g_n =  \chi_{\nu}(n^{-1/2} \sum_{i=1}^n x_i),
\]
and apply the CLT. The proof of achievability of the LHS is identical. To obtain an intermediate point $s$ take the inputs of $f_n$ and $g_n$ to 
be defined on overlapping blocks of bits, e.g.:
\[
f_n =\chi_{\mu}(n^{-1/2} \sum_{i=\alpha n}^{(1+\alpha)n} x_i), \quad g_n =  \chi_{\nu}(n^{-1/2} \sum_{i=1}^n x_i).
\]
\end{proof}  


\section{Smooth Boolean Functions}
To better understand the connection between Boolean and Gaussian stability, we define two notions of smoothness, termed low influences and resilience 
for Boolean functions. 
We begin with the notion of influence, that was first defined in~\cite{BenorLinial:90} and~\cite{KaKaLi:88}. 
We will do so for general product probability spaces. 
To simplify notation we will often omit the sigma algebra and probability measure defined over a probability space $\Omega$. 
\begin{definition}
Consider a probability space $\Omega$. 
For a function $f : \Omega^n \to \bbR$, we define the $i$'th influence of $f$ as 
\[
I_i(f) = \EE \big[\Var[f | x_1,\ldots,x_{i-1},x_{i+1},\ldots,x_n] \big],
\]
where the expected value is with respect to the product measure on $\Omega^n$. 
In the Boolean case with the uniform measure $f : \{-1,1\}^n \to \bbR$, the influence is equivalently defined as 
\[
I_i(f) = \EE \big[\Var[f | x_1,\ldots,x_{i-1},x_{i+1},\ldots,x_n] \big] 
= \sum_{S : i \in S} \hat{f}^2(S). 
\]
Or as 
\[
I_i(f) = \EE[ |\partial_i f|^2],
\]
where $(\partial_i f)(x_1,\ldots,x_n) = 0.5(f(x_1,\ldots,x_{i-1},+,x_{i+1},\ldots,x_n) -  f(x_1,\ldots,x_{i-1},-,x_{i+1},\ldots,x_n))$ is the discrete $i$'th directional derivative. 
\end{definition} 

An easy corollary of the definition is that for $|\rho| < 1$, it holds that $T_{\rho} f$ is small in the sense that the sum of its influences is bounded as a function of $\rho$ only. 

\begin{lemma} \label{lem:inf_sum_rho}
Let $f : \{-1,1\}^n \to [-1,1]$ and $|\rho| < 1$ then 
\[
\sum_{i=1}^n I_i(T_{\rho}f) \leq 1/(1-|\rho|).
\]
\end{lemma}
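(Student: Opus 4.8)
The plan is to expand everything in the Fourier basis and reduce the claim to a one-line geometric-series estimate. First I would recall from the proposition on eigenfunctions that if $f = \sum_S \hat f(S) x_S$ then $T_\rho f = \sum_S \rho^{|S|}\hat f(S) x_S$, and from the definition of influence that $I_i(g) = \sum_{S\ni i}\hat g(S)^2$ for any $g:\{-1,1\}^n\to\bbR$. Applying this with $g = T_\rho f$ gives $I_i(T_\rho f) = \sum_{S\ni i}\rho^{2|S|}\hat f(S)^2$.

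Next I would sum over $i$ and swap the order of summation:
\[
\sum_{i=1}^n I_i(T_\rho f) = \sum_{i=1}^n \sum_{S\ni i}\rho^{2|S|}\hat f(S)^2 = \sum_{S\subseteq[n]} |S|\,\rho^{2|S|}\,\hat f(S)^2,
\]
since each set $S$ is counted once for each of its $|S|$ elements. Now I would bound $|S|\rho^{2|S|} \leq \max_{k\geq 0} k|\rho|^{2k} =: c(\rho)$ uniformly in $S$, pull this constant out, and use Parseval together with the hypothesis $f:\{-1,1\}^n\to[-1,1]$ (so $\sum_S \hat f(S)^2 = \E[f^2]\leq 1$) to conclude $\sum_i I_i(T_\rho f)\leq c(\rho)$.

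The only real content is checking that $c(\rho)\leq 1/(1-|\rho|)$. I would do this cleanly by avoiding the maximization entirely: writing $t=|\rho|^2 = \rho^2 \in[0,1)$, use the elementary bound $k t^k \leq t^k + t^{k+1} + \cdots = t^k/(1-t)$ for each $k$ (valid since $k t^k \le t^k(1 + t + \cdots)$ as $t<1$), but this is slightly lossy; more simply, note $\sum_{S} |S| t^{|S|}\hat f(S)^2 \le \bigl(\max_k k t^k\bigr)\sum_S \hat f(S)^2$, and $kt^k\le k t^k$ is maximized near $k\approx 1/\log(1/t)$, giving a bound that after a short calculus check is at most $\frac{1}{e\log(1/t)}\le \frac{1}{1-t}\le\frac{1}{1-|\rho|}$ using $\log(1/t)\ge 1-t$ and $t=\rho^2\ge|\rho|$... — actually the crispest route is the telescoping estimate $k t^{k} = t^{k}\cdot k \le \sum_{j\ge 0} t^{k+j}\cdot\mathbbm{1}[j<k]$... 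The cleanest honest bound: $\max_{k\ge 0} k t^k \le \frac{1}{1-t} \le \frac{1}{1-\sqrt t}$ can be seen directly since $kt^k(1-t) = kt^k - kt^{k+1} \le t + t^2 + \cdots \le 1$ by comparing with $\sum_{k\ge 1}(t^k - \text{shifted})$; I would formalize this last step carefully as it is the one place a naive argument can go wrong, but it is purely a routine geometric-series manipulation and is the "main obstacle" only in the very weak sense that one must pick the right telescoping. Everything else is bookkeeping with Parseval and the index swap.
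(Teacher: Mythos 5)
Your proposal is correct and follows exactly the paper's route: expand via Parseval, swap the order of summation to get $\sum_S |S|\rho^{2|S|}\hat f(S)^2$, pull out $\max_k k|\rho|^{2k}$, and bound that by $1/(1-|\rho|)$ (you even fix the paper's small typo $\widehat{T_\rho f}^2(S)=\rho^{|S|}\hat f^2(S)$, which should read $\rho^{2|S|}\hat f^2(S)$ — a harmless slip since $\rho^{2k}\le|\rho|^k$). The only weakness is the meandering final paragraph; the clean argument you circle around is simply $k t^k \le t+t^2+\cdots+t^k \le \frac{1}{1-t}$ for $t\in[0,1)$, applied with $t=|\rho|$ (or $t=\rho^2$, using $1-\rho^2\ge 1-|\rho|$).
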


\begin{proof}
\begin{eqnarray*}
\sum_{i=1}^n I_i(T_{\rho}f) &=& \sum_{i=1}^n \sum_{S : i \in S} \widehat{T_{\rho}f}^2(S) \\
&=& \sum_S |S| \widehat{T_{\rho}f}^2(S) = \sum_{S} |S| \rho^{|S|} \hat{f}^2(S)  \\
&\leq& \max_k |\rho|^k k \sum_S \hat{f}^2(S) \leq \max_k |\rho|^k k \leq 1/(1-|\rho|)
\end{eqnarray*}
The proof follows. 
\end{proof}

For a voting function $f : \{-1,1\}^n \to \{-1,1\}$, $I_i(f)$ is the probability that voter $i$ is the deciding voter, given all other votes.
A stronger notion of power of a voter or a small set of voters is that their vote affects the expected outcome on average. 
A function whose expectation is not affected by any small set of voters is called resilient. More formally,

\begin{definition}
We say that a function $f : \Omega^n \to \bbR$ is $(r,\alpha)$-{\em resilient} if 
\begin{align} \label{eq:cond_res_intro}
\Big| \EE \left[f  | X_S = z \right] - \EE[f] \Big| \leq \alpha. 
\end{align}
for all sets $S$ with $|S| \leq r$ and all $z\in\Omega^S$.  
\end{definition}

\begin{proposition} 
If $f : \{-1,1\}^n \to \IR$ satisfies 
\begin{equation} \label{eq:ResdefFourier}
\max (|\hat{f}(S)| : 0 < |S| \leq r) \leq 2^{-r} \alpha,
\end{equation} 
then $f$ is $(r,\alpha)$-resilient. In particular if $f$  has all influences bounded by $4^{-r} \alpha^2$ then $f$ is 
$(r,\alpha)$-resilient. 
\end{proposition}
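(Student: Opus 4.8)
The plan is to expand the conditional expectation in the Fourier basis and control it termwise using the hypothesis on the low-degree coefficients. Fix a set $S$ with $|S|\le r$ and an assignment $z\in\{-1,1\}^S$. Writing $f=\sum_{T}\hat f(T)\,x_T$ and averaging over the coordinates outside $S$, every character $x_T$ with $T\not\subseteq S$ has conditional mean $0$ (it contains a factor $x_j$, $j\notin S$, with $\EE[x_j]=0$), while for $T\subseteq S$ the character $x_T$ becomes the constant $z_T:=\prod_{i\in T}z_i$. Hence $\EE[f\mid X_S=z]=\sum_{T\subseteq S}\hat f(T)\,z_T$, and subtracting $\EE[f]=\hat f(\emptyset)$ gives $\EE[f\mid X_S=z]-\EE[f]=\sum_{\emptyset\ne T\subseteq S}\hat f(T)\,z_T$.

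Next I would bound this sum by the triangle inequality: there are $2^{|S|}-1\le 2^r-1$ nonempty subsets $T\subseteq S$, each satisfying $0<|T|\le r$, so by \eqref{eq:ResdefFourier} each term is at most $2^{-r}\alpha$ in absolute value. Therefore $|\EE[f\mid X_S=z]-\EE[f]|\le(2^r-1)\,2^{-r}\alpha<\alpha$, which is exactly the defining inequality \eqref{eq:cond_res_intro} of $(r,\alpha)$-resilience; since $S$ and $z$ were arbitrary, the first claim follows.

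For the ``in particular'' statement, I would simply observe that for any $S$ with $i\in S$ one has $I_i(f)=\sum_{U\ni i}\hat f(U)^2\ge \hat f(S)^2$. So if every influence is at most $4^{-r}\alpha^2$, then for every nonempty $S$ with $|S|\le r$, picking any $i\in S$ yields $\hat f(S)^2\le I_i(f)\le 4^{-r}\alpha^2$, i.e.\ $|\hat f(S)|\le 2^{-r}\alpha$; thus \eqref{eq:ResdefFourier} holds and the conclusion follows from the first part.

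There is no real obstacle here: the only things to get right are the identity $\EE[f\mid X_S=z]=\sum_{T\subseteq S}\hat f(T)\,z_T$ (the standard fact that conditioning on a subset of coordinates projects onto the characters supported on that subset) and the counting bound $2^{|S|}-1<2^r$ for $|S|\le r$, which is precisely where the factor $2^{-r}$ in the hypothesis is consumed. The factor $4^{-r}=(2^{-r})^2$ in the influence version is dictated by passing from an $\ell^2$ bound on influences to an $\ell^\infty$ bound on individual Fourier coefficients.
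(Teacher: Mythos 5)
Your proof is correct and follows essentially the same route as the paper: expand $\EE[f\mid X_S=z]-\EE[f]$ as $\sum_{\emptyset\ne T\subseteq S}\hat f(T)z_T$ via the Fourier basis, bound it termwise by $2^{|S|}\cdot 2^{-r}\alpha\le\alpha$, and derive the influence version from $\hat f^2(S)\le I_i(f)$ for any $i\in S$. The only cosmetic difference is that you count $2^{|S|}-1$ nonempty subsets rather than the paper's slightly looser $2^{|S|}$.
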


\begin{proof}
The second statement follows from the first one immediately as for every non-empty $S$, we may choose $i \in S$ and then
\[
\hat{f}^2(S) \leq I_i(f) \leq  4^{-r} \alpha^2,
\]
as needed. For the first statement, assume~(\ref{eq:ResdefFourier}). Then: 
\[
\Big| \EE \left[f  | X_S = z \right] - \EE[f]  \Big| =  
\Big| \EE[ \sum_{T \neq \emptyset} \hat{f}(T) z_{S \cap T} x_{T \setminus S}] \Big|  =  
\Big| \sum_{\emptyset \neq T \subset S} \hat{f}(T) z_T \Big| \leq 2^{|S|} 2^{-r} \alpha \leq \alpha. \qedhere 
\]
\end{proof}

Resilient functions have long been studied in the context of pseudo-randomness, see e.g.~\cite{Chor_etal:85}.

Thus, the statement that a function has a high influence variable means that there exists a voter $i$ that can have a noticeable effect on the outcome {\em if voter $i$ has access to all other votes cast}. 
The statement that a function is not resilient implies that there is a bounded set of voters who have noticeable effect on the outcome {\em on average}, i.e., with no access to other votes cast. Consider the following examples: 
\begin{itemize}
\item Dictator has maximal influence of $1$ (and all other $0$). It is also not resilient for $r \geq 1, \alpha < 1$. 
\item Majority has all influences of order $n^{-1/2}$ and is also $(r,O(r/\sqrt{n}))$ resilient. 
\item 
An example of a resilient function with a high influence variable is the function 
\[
f(x) = x_1 \sgn(\sum_{i=2}^n x_i).
\]
Here coordinate $1$ has influence $1$ but the function is resilient. 
In terms of voting, voter $1$ has a lot of power if she has access to all other votes cast (or the majority of the votes) but without access to this information,  she is powerless. Moreover, every small set of $k$ voters can change the expected value of $f$ (by conditioning on their vote) by $O(k n^{-1/2})$. 
Another simple example is the parity function $\prod_{i=1}^n x_i$, which is 
$(r,0)$ resilient for every $r < n$, but where all influences are $1$. 

\end{itemize} 

The Majority is Stablest Theorem states that the extremal noise stability of low influence / resilient functions on the discrete cube is captured by Gaussian noise stability. Here are three increasingly stronger statements along this line:

\begin{theorem}[\cite{MoOdOl:05,MoOdOl:10}] \label{thm:MISTsimple} 
For every $\eps > 0, 0 \leq \rho < 1$, there exists a $\tau> 0$ for which the following holds. 
Let $f,g : \{-1,1\}^n \to [0,1]$ satisfy $\max(I_i(f),I_i(g)) < \tau$ for all $i$. 
Then 
\[
\langle f, g \rangle_{\rho} \leq \langle \chi_{\IE f}, \chi_{\IE g} \rangle_{\rho}+ \eps. 
\]
\end{theorem}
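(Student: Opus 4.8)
The plan is to reduce the Boolean statement to Borell's Gaussian result (Theorem~\ref{thm:half_space}) via an invariance principle. The key point is that for functions with all influences small, the distribution of $f(x_1,\dots,x_n)$ when the $x_i$ are i.i.d.\ uniform $\bits$ is close to the distribution of $f(G_1,\dots,G_n)$ when the $G_i$ are i.i.d.\ standard Gaussians, provided we interpret $f$ through its multilinear (Fourier) extension $\tilde f(z)=\sum_S \hat f(S)\prod_{i\in S}z_i$. First I would fix $\eps>0$ and $\rho<1$, and choose a small parameter $\delta>0$ and a degree cutoff $k=k(\eps,\rho)$ so that $\rho^k$ is negligible; replace $f,g$ by their smoothed/truncated versions $T_{1-\delta}f$ and $T_{1-\delta}g$, noting that this changes $\langle f,g\rangle_\rho$ by only $O(\sqrt\delta)$ (using $\langle f,g\rangle_\rho=\sum_S\rho^{|S|}\hat f(S)\hat g(S)$ and Cauchy--Schwarz on the tail) and only shrinks the influences (so the hypothesis is preserved), while making the functions ``nice'' — bounded low-degree-ish multilinear polynomials with small influences.

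Next I would invoke the \emph{invariance principle}: if $p=\tilde f$ is a multilinear polynomial of the above smoothed type with $\sum_i I_i(p)\le 1/\delta$ (Lemma~\ref{lem:inf_sum_rho}) and all influences at most $\tau$, then for any smooth test function $\Psi$ with bounded third derivative,
\[
\bigl|\,\EE[\Psi(\tilde f(x))] - \EE[\Psi(\tilde f(G))]\,\bigr| \;\le\; C(\delta)\,\tau^{1/2}.
\]
Applying this in the two-dimensional form to the pair $(\tilde f, \tilde g)$ evaluated at $\rho$-correlated inputs — i.e.\ comparing $((x_i,y_i))$ with $((G_i,H_i))$, both $\rho$-correlated — yields that $\langle f,g\rangle_\rho = \EE[\tilde f(x)\tilde g(y)]$ is within $\eps/2$ of the corresponding Gaussian quantity $\EE[\tilde f(G)\tilde g(H)]$. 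One technical nuisance: $\tilde f(G)$ is not confined to $[0,1]$, so I would compose with a smooth truncation $\zeta:\IR\to[0,1]$ that is the identity on $[0,1]$, argue that $\tilde f(G)$ lies outside $[0,1]$ only with small probability (again via the invariance principle applied to a bump function), and thereby pass to genuine $[0,1]$-valued functions on Gaussian space at the cost of another $\eps/4$.

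Finally I would apply Borell's theorem (Theorem~\ref{thm:half_space}) to the resulting $[0,1]$-valued functions $\phi=\zeta\circ\tilde f$, $\psi=\zeta\circ\tilde g$ on $\IR^n$, giving $\langle\phi,\psi\rangle_\rho\le\langle\chi_{\EE\phi},\chi_{\EE\psi}\rangle_\rho$; since $\EE\phi$ is within $o(1)$ of $\EE f=\EE\chi_{\EE f}$ and $t\mapsto\langle\chi_t,\chi_{t'}\rangle_\rho$ is continuous, this is at most $\langle\chi_{\EE f},\chi_{\EE g}\rangle_\rho+\eps/4$. Chaining the three approximation steps (smoothing, invariance, truncation) and choosing $\tau$ small enough at the end — after $\delta$, $k$, and the various $\eps/4$'s have been fixed — closes the argument. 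I expect the \textbf{main obstacle} to be the invariance principle itself: setting up the hybrid/Lindeberg replacement argument coordinate by coordinate, controlling the third-order error terms by $\sum_i I_i(\cdot)^{3/2}\le\tau^{1/2}\sum_i I_i(\cdot)$, and handling the fact that we must apply it to a \emph{non-smooth} functional (the product, and then the indicator implicit in comparing to half-spaces) — which is exactly why the preliminary smoothing by $T_{1-\delta}$ and the mollification $\zeta$ are needed to trade non-smoothness for a controlled error.
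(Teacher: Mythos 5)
Your proposal is correct, but it follows a genuinely different route from the one in these notes. You argue via the classical Mossel--O'Donnell--Oleszkiewicz pipeline: smooth with $T_{1-\delta}$, apply a non-linear invariance principle to transfer the multilinear extensions $\tilde f,\tilde g$ from $\rho$-correlated $\pm 1$ inputs to $\rho$-correlated Gaussians, mollify with $\zeta$ to land in $[0,1]$-valued Gaussian functions, and then invoke Borell's theorem (Theorem~\ref{thm:half_space}) as a black box. The proof given in these notes instead follows the De--Mossel--Neeman route: it never passes to Gaussian space explicitly, but proves the functional inequality of Theorem~\ref{thm:tensorization}, $\E J_\rho(f(X),g(Y))\le J_\rho(\E f,\E g)+C(\rho)\eps^{-C(\rho)}(\Delta_n(f)+\Delta_n(g))$, by \emph{induction on dimension} in the discrete cube (with a one-dimensional Taylor-expansion base case using the semidefiniteness of $M_{\rho\sigma}$, Claim~\ref{clm:negative-semidefinite}), and then bounds the error $\Delta_n(T_\eta f)$ for low-influence $f$ using Bonami--Beckner hypercontractivity (Lemma~\ref{lem:mart_cubed}). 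The two approaches have a real trade-off: yours requires building the full invariance principle and relies on Borell as an input, while the inductive proof here is self-contained in the sense that the same tensorization lemma simultaneously yields a proof of Borell's theorem (Theorem~\ref{thm:borell_functional}) and of Majority is Stablest, at the cost of requiring the somewhat delicate differential-inequality analysis of $J_\rho$. Both approaches use hypercontractivity, but in different places — in yours it enters inside the Lindeberg replacement step to control the cubes $\E|D_i\tilde f|^3$ via $\|D_i\tilde f\|_2$, whereas here it is used directly to bound $\Delta_n(T_{\rho^2}f)$ after the tensorization is already in place. One small inaccuracy in your sketch: the bound you cite, $\sum_i I_i(\cdot)^{3/2}\le\tau^{1/2}\sum_i I_i(\cdot)$, is correct as far as it goes, but the Lindeberg error terms are actually of the form $\E|D_i\tilde f|^3$, not $I_i^{3/2}=(\E|D_i\tilde f|^2)^{3/2}$, and bridging that gap is exactly where hypercontractivity is needed in your argument — worth making explicit if you write this up in full.
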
 
This theorem is called Majority Is Stablest since
$
\langle \chi_{\IE f}, \chi_{\IE g} \rangle_{\rho} = \lim_{n \to \infty} \langle f_n , g_n \rangle_{\rho}, 
$
where $f_n(x) = \chi_{\IE f}(n^{-1/2} \sum_{i=1}^n x_i)$ and $g_n(x) = \chi_{\IE g}(n^{-1/2} \sum_{i=1}^n x_i)$.

It turns out that for two functions, it is in fact enough that one of them is low influence to obtain the same results, i.e.: 

\begin{theorem}[\cite{Mossel:10}, Prop 1.15] \label{thm:MIST}
For every $\eps > 0$ and $0 \leq \rho < 1$, there exists a $\tau(\rho,\eps) > 0$ for which the following holds. 
Let $f,g : \{-1,1\}^n \to [0,1]$ be such that $\min(I_i(f),I_i(g)) < \tau$ for all $i$. Then 
\begin{equation} \label{eq:mist}
\langle  f, g \rangle_{\rho} \leq \langle \chi_{\IE f}, \chi_{\IE g} \rangle_{\rho} + \eps, 
\end{equation} 
where one can take 
\begin{equation} \label{eq:tau_bound}
\tau = \eps^{O\left( \frac{\log(1/\eps) \log(1/(1-\rho))}{(1-\rho) \eps} \right)}. 
\end{equation} 
In particular the statement above holds when $\max_i I_i(f) < \tau$ and $g$ is {\em any} Boolean function bounded between $0$ and $1$. 
\end{theorem}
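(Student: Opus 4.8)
The plan is to follow the standard invariance-principle route of Mossel--O'Donnell--Oleszkiewicz, but keeping careful track of the fact that only \emph{one} of the two functions needs small influences. First I would reduce to the case where both functions are "smoothed": replace $f$ by $T_{1-\delta}f$ and $g$ by $T_{1-\delta}g$ for a small $\delta$ depending on $\eps$. By Lemma~\ref{lem:inf_sum_rho} the smoothed functions have all influences at most $1/\delta$ times a $(1-\delta)^{2k}$-type decay factor, so in particular the influences of the \emph{truncated} functions (keeping only Fourier levels up to $k = O(\log(1/\eps)/\delta)$) can be made as small as we like on the side where $f$ already has small influences, and controllably small on the other side as well since the $(1-\delta)^{|S|}$ weights kill the high-degree contribution to each influence. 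The point of smoothing is twofold: it makes the functions low-degree up to negligible error (so the invariance principle applies with good parameters), and it converts "one function has low influences" into "the product $I_i(f)^{1/2} I_i(g)^{1/2}$ is small for every $i$", which is exactly the cross-influence quantity that controls the error term in the two-function invariance principle.

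Next I would invoke the invariance principle for the multilinear polynomials associated to $f$ and $g$ applied to the bivariate test function that computes $\langle\cdot,\cdot\rangle_\rho$ (i.e.\ the "Gaussian stability" functional, or rather a smooth bounded approximation of the indicator-type functionals). Concretely, one compares $\E[f(x)g(y)]$ under the $\rho$-correlated Boolean ensemble with $\E[F(X)G(Y)]$ under the $\rho$-correlated Gaussian ensemble, where $F,G$ are the same multilinear polynomials evaluated at Gaussians; the error is bounded by a polynomial in the degree $k$, in $\max_i \min(I_i(f),I_i(g))$ (this is where only one-sided low influence is needed — the mixed partial derivative terms in the Lindeberg swap each contain a factor that can be charged to the low-influence side), and in the smoothness constants of the test function. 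Choosing $k$, $\delta$, and the mollification scale as explicit functions of $\eps$ and $1-\rho$ gives the bound~\eqref{eq:tau_bound}.

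Finally I would apply Borell's theorem (Theorem~\ref{thm:half_space}) in Gaussian space to the functions $F,G$ (after truncating them to $[0,1]$, which costs only $O(\delta)$-type error via hypercontractivity controlling the mass outside $[0,1]$): Borell gives $\langle F,G\rangle_\rho \le \langle \chi_{\E F},\chi_{\E G}\rangle_\rho$, and since $\E F \approx \E f$ and $\E G \approx \E g$ and $\langle\chi_\mu,\chi_\nu\rangle_\rho$ is continuous in $(\mu,\nu)$, this is $\langle\chi_{\E f},\chi_{\E g}\rangle_\rho + O(\eps)$. Chaining the three comparisons — Boolean stability $\approx$ smoothed Boolean stability $\approx$ Gaussian stability $\le$ half-space Gaussian stability — yields~\eqref{eq:mist}. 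The last sentence of the theorem is then immediate: a genuinely $\{0,1\}$-valued $g$ is already bounded in $[0,1]$, so no hypothesis on $g$ beyond boundedness is used anywhere in the argument.

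The main obstacle, and the place where this differs from the textbook two-sided statement, is the influence bookkeeping in the invariance step: one must verify that in the Lindeberg replacement each error term genuinely carries a factor that can be bounded by $\min(I_i(f),I_i(g))$ rather than only by $\max$, and then combine this with the degree-$k$ truncation and the $T_{1-\delta}$ smoothing so that the final $\tau$ comes out as the stated quantity~\eqref{eq:tau_bound} rather than something worse. Getting the dependence on $1-\rho$ right (it enters through the smoothing parameter $\delta \sim (1-\rho)\eps/\log(1/\eps)$ and through the degree $k$) is the delicate quantitative part; the rest is routine once the parameters are fixed.
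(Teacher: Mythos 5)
Your proposed route --- smooth, invoke the invariance principle, truncate, apply Borell --- is the natural one for the \emph{two-sided} statement (Theorem~\ref{thm:MISTsimple}), but the key claim you make to handle the one-sided hypothesis is false: the bivariate Lindeberg-swap error is \emph{not} controlled by $\max_i \min(I_i(f),I_i(g))$. When you replace coordinate $i$ and Taylor-expand the test function $\Psi(F,G)$ to third order, you get not only the mixed terms $(D_i F)^a (D_i G)^b$ with $a,b\ge 1$, which do carry a factor from each function, but also the pure terms $(D_i F)^3$ and $(D_i G)^3$, which involve only one function and so can only be charged to that function's own influence. Concretely, if $g(x)=(1+x_1)/2$ is a dictator and $f$ has all influences tiny, your hypothesis $\min(I_i(f),I_i(g))<\tau$ is satisfied, yet the Gaussian version $G(N)=(1+N_1)/2$ has constant mass outside $[0,1]$, so your claim that truncation to $[0,1]$ ``costs only $O(\delta)$'' fails for $G$ --- and that is exactly what the $(D_i G)^3$ error terms are recording. (Note also that for the bilinear functional itself there is no invariance error at all, since $\E[f(x)g(y)]=\E[F(N)G(M)]$ exactly, both being $\sum_S \rho^{|S|}\hat f(S)\hat g(S)$; the error lives entirely in the rounding step, and that step needs both functions' influences controlled.)

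The paper's actual argument (the ``Cross Influences'' section, following~\cite{Mossel:10}) does not re-run the invariance principle but instead reduces to the already-proved two-sided Theorem~\ref{thm:MISTsimple}. After smoothing by $T_\gamma$, Lemma~\ref{lem:inf_sum_rho} gives $\sum_i I_i(T_\gamma f)\le C(\gamma)$, so $f$ has at most $C(\gamma)/\tau$ coordinates $A_f$ of influence $>\tau$, and similarly $A_g$ for $g$. The hypothesis $\min(I_i(f),I_i(g))<\tau'$ forces $A_f\cap A_g=\emptyset$. One then averages out the ``wrong'' coordinates: $f'=\E[T_\gamma f\mid x_{-A_g}]$ and $g'=\E[T_\gamma g\mid x_{-A_f}]$. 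Because the averaged-out coordinates had influence $<\tau'$ on the respective functions, $\|f'-T_\gamma f\|_2$ and $\|g'-T_\gamma g\|_2$ are small (of order $\sqrt{\tau'}\,C(\gamma)/\tau$), so $\langle f',g'\rangle_\rho\approx\langle T_\gamma f,T_\gamma g\rangle_\rho$; and now $f'$ and $g'$ have \emph{all} influences $\le\tau$, so Theorem~\ref{thm:MISTsimple} applies. The disjointness of $A_f$ and $A_g$ is where the $\min$ hypothesis is genuinely used; there is no invariance estimate with a cross-influence error bound to verify.
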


Moreover, one can replace the low influence condition by the condition that the function is resilient:
\begin{theorem} [\cite{Mossel:20resilient}] \label{thm:MIST_res}
For every $\eps > 0, 0 \leq \rho < 1$, there exist $r,\alpha> 0$ for which the following holds. 
Let $f : \{-1,1\}^n \to [0,1]$ be $(r,\alpha)$-resilient and let $g :  \{-1,1\}^n \to [0,1]$ be an arbitrary function. 
Then 
\[
\langle f, g \rangle_{\rho} \leq \langle \chi_{\IE f}, \chi_{\IE g} \rangle_{\rho}+ \eps. 
\]
One can take
\begin{equation} \label{eq:r_alpha}
r = O \left(\frac{1}{\eps^2(1-\rho)\tau} \right), \alpha = O\left(\eps 2^{-r} \right), 
\end{equation} 
where $\tau$ is given by (\ref{eq:tau_bound}).
\end{theorem}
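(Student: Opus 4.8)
The plan is to reduce Theorem~\ref{thm:MIST_res} (the resilient version of Majority Is Stablest) to Theorem~\ref{thm:MIST} (the low-influence version, with one function arbitrary) by replacing the resilient function $f$ by its noise-smoothed version $T_{1-\delta}f$ for a suitable small $\delta$, which is low-influence by Lemma~\ref{lem:inf_sum_rho}, and then controlling the errors introduced by this replacement using resilience. Concretely, I would write $\langle f,g\rangle_\rho = \langle T_{1-\delta}f, g\rangle_{\rho'} + (\text{correction})$ after absorbing the extra smoothing into the noise parameter, since $T_{1-\delta}$ commutes with $T_\rho$ and $T_{1-\delta}T_\rho = T_{\rho(1-\delta)}$; the heart of the matter is showing $\langle f,g\rangle_\rho$ is close to $\langle T_{1-\delta}f,g\rangle_\rho$, i.e. that $f$ and $T_{1-\delta}f$ are close in the relevant sense, which is exactly where $(r,\alpha)$-resilience enters.

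First I would make precise the claim that a resilient function is well-approximated by its low-degree (or noise-smoothed) truncation. If $f$ is $(r,\alpha)$-resilient, then by the Fourier characterization implicit in the proposition before Theorem~\ref{thm:MISTsimple}, every nonzero-degree Fourier coefficient up to level $r$ is small, but more usefully one shows $\| f - T_{1-\delta} f \|_2^2 = \sum_{S \neq \emptyset}(1-(1-\delta)^{|S|})^2 \hat f(S)^2$ is small: the low-level terms ($|S| \le r$) are killed because resilience forces $\sum_{0 < |S| \le r}\hat f(S)^2$ to be bounded in terms of $\alpha$ and $r$ (each coefficient is at most $2^{-r}\alpha$ in absolute value, but one needs an $\ell_2$ bound, which follows from resilience applied more carefully — averaging over the $2^{|S|}$ sign patterns on $S$ recovers $\sum_{T \subseteq S}\hat f(T)^2 x_T$-type expressions bounded by $\alpha^2$), and the high-level terms ($|S| > r$) are killed because $(1-(1-\delta)^{|S|})^2 \le 1$ always but we instead bound the whole high-level weight — no, rather we choose $\delta$ small relative to $1/r$ so that for $|S| \le r$ the factor $(1-(1-\delta)^{|S|})$ is tiny, and we handle $|S| > r$ by noting $\sum_{|S|>r}\hat f(S)^2 \le 1$ but then the $(1-\delta)^{|S|}$ weighting in $T_{1-\delta}f$ shrinks those. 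Balancing these is the core estimate: pick $\delta \asymp \eps/r$ so that low levels contribute $O(\delta r)^2 \cdot 1 = O(\eps^2)$ and $r$ large so the remaining pieces are controlled, then $\| f - T_{1-\delta}f\|_2 = O(\eps)$.

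Once that $L_2$ approximation is in hand, the rest is bookkeeping. By Cauchy--Schwarz, $|\langle f,g\rangle_\rho - \langle T_{1-\delta}f, g\rangle_\rho| = |\langle f - T_{1-\delta}f, T_\rho g\rangle| \le \|f - T_{1-\delta}f\|_2 \|T_\rho g\|_2 \le \|f - T_{1-\delta}f\|_2 = O(\eps)$, using $\|g\|_\infty \le 1$. Then $\langle T_{1-\delta}f, g\rangle_\rho = \langle T_{1-\delta}T_\rho f, g\rangle = \langle T_{(1-\delta)}f, T_\rho g\rangle$; writing it as $\langle \tilde f, g\rangle_\rho$ with $\tilde f = T_{1-\delta}f$ which satisfies $\max_i I_i(\tilde f) \le \delta^2/(1-(1-\delta)^2)\cdot(\max\dots)$ — more simply, $\tilde f$ has all influences bounded by some $\tau$ depending on $\delta$, hence by Theorem~\ref{thm:MIST} (with $f := \tilde f$ low-influence and $g$ arbitrary), $\langle \tilde f, g\rangle_\rho \le \langle \chi_{\E \tilde f}, \chi_{\E g}\rangle_\rho + \eps$. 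Finally $\E \tilde f = \E f$ since $T_{1-\delta}$ preserves expectation, and $\chi_{\E f}$ matches the target, so combining gives $\langle f,g\rangle_\rho \le \langle \chi_{\E f},\chi_{\E g}\rangle_\rho + O(\eps)$; rescaling $\eps$ and tracking the quantitative dependence through Lemma~\ref{lem:inf_sum_rho} and~(\ref{eq:tau_bound}) produces the stated $r = O(1/(\eps^2(1-\rho)\tau))$ and $\alpha = O(\eps 2^{-r})$.

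The main obstacle I expect is the $L_2$ approximation step — specifically extracting a clean $\ell_2$ bound on the low-level Fourier weight $\sum_{0<|S|\le r}\hat f(S)^2$ purely from the conditional-expectation form of resilience in~(\ref{eq:cond_res_intro}), rather than from a per-coefficient $\ell_\infty$ bound which would only give $\sum_{0<|S|\le r}\hat f(S)^2 \le \binom{n}{\le r}4^{-r}\alpha^2$ (too weak unless $\alpha$ is exponentially small in a way that interacts badly with the claimed bounds). The right move is probably: for $|S| \le r$, $\E[(\E[f \mid X_S] - \E f)^2] = \sum_{\emptyset \ne T \subseteq S}\hat f(T)^2 \le \alpha^2$ by resilience, and summing a well-chosen family of sets $S$ (or integrating over random $S$ of size $r$) to cover all low-level $T$ with bounded multiplicity yields $\sum_{0 < |T| \le r}\hat f(T)^2 = O(\alpha^2 \cdot \text{poly}(r))$ or even just $O(\alpha^2 2^{r})$, which is why the theorem takes $\alpha = O(\eps 2^{-r})$. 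Getting this combinatorial covering argument tight, and then balancing it against the high-degree tail controlled by the $\tau$ from~(\ref{eq:tau_bound}), is the real work; everything else is Cauchy--Schwarz and invoking Theorem~\ref{thm:MIST} as a black box.
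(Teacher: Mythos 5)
Your plan does not go through, and the failure is already visible at the first claimed estimate. You want $\|f - T_{1-\delta}f\|_2 = O(\eps)$ for $(r,\alpha)$-resilient $f$, but resilience says nothing about the high-degree Fourier weight, and that weight is essentially untouched by $f \mapsto f - T_{1-\delta}f$: the Fourier coefficients of the difference are $(1-(1-\delta)^{|S|})\hat f(S)$, and the prefactor tends to $1$, not $0$, as $|S|\to\infty$. A concrete counterexample is the (shifted) parity $f = \tfrac12(1 + x_{[n]})$, which the paper itself points out is $(r,0)$-resilient for every $r<n$; here $\|f - T_{1-\delta}f\|_2 = \tfrac12(1 - (1-\delta)^n) \to \tfrac12$. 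Your discussion of the $|S|>r$ terms has the inequality backwards — the $(1-\delta)^{|S|}$ damping shrinks $T_{1-\delta}f$ at high levels but therefore leaves $f - T_{1-\delta}f$ large there. The secondary claim, that $\tilde f = T_{1-\delta}f$ has \emph{all} influences bounded by some $\tau(\delta)$, is also false: Lemma~\ref{lem:inf_sum_rho} controls only $\sum_i I_i(T_{1-\delta}f)$, and for $f$ a dictator, $T_{1-\delta}f = (1-\delta)x_1$ has $I_1 = (1-\delta)^2 \approx 1$. So even granting the $L_2$ step, you could not invoke Theorem~\ref{thm:MIST} directly.

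What the paper actually does keeps the exact noise-shift idea you gesture at in your first sentence (replace $f$ by $T_\eta f$ at a slightly larger noise parameter $\rho' = \rho/\eta$, and pay for the parameter change via Lipschitz continuity of $J_\rho$, as in Lemma~\ref{lem:eta}), but then confronts the fact that $T_\eta f$ is only low-influence on average: since $\sum_i I_i(T_\eta f) \leq (1-\eta)^{-1}$, there are at most $k = O((1-\eta)^{-1}\tau^{-1})$ coordinates with influence above $\tau$. The resilience hypothesis is used precisely to handle those $k$ coordinates. In one proof, the tensorization induction of Theorem~\ref{thm:tensorization} is stopped $k$ steps before the end (Proposition~\ref{prop:part_tail}), reducing the question to $f_{n-k} = \E[f\mid X_{k+1},\dots,X_n]$, a function of the $k$ bad coordinates; $(k,\eps/100)$-resilience of $f$ forces $|f_{n-k} - \E f| \leq \eps/100$ pointwise, and one finishes using the Lipschitz continuity and concavity of $J_\rho$. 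The alternative proof (Lemma~\ref{lem:dec_rep} onward) expresses $T_\eta f$ as a shallow decision tree on the high-influence coordinates; at almost all leaves the residual function is low-influence and Theorem~\ref{thm:MIST} applies, and resilience guarantees $\E[T_\eta f \mid x_I]$ stays within $O(\eps)$ of $\E f$ at every leaf $x_I$. Both routes use resilience through the conditional-expectation definition~(\ref{eq:cond_res_intro}) directly, not through any $\ell_2$ bound on low-level Fourier mass; the combinatorial covering argument you anticipate is not what happens, and would not by itself repair the high-degree tail.
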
 
Note in particular that for our current bounds for $\tau$ and for 
fixed $\rho$, $r$ is exponential in a polynomial in $1/\eps$ and 
$\alpha$ is doubly exponential in a polynomial in $1/\eps$. 
Similar statements for one function were proven before by~\cite{OSTW:10} and appeared in~\cite{Jones:16}.

\section{Other Formulations for MIST} 
The original proof of the Majority is Stablest Theorem used a non-linear invariance principle to derive the theorem from Borell's result~\cite{MoOdOl:05,MoOdOl:10}. We will follow a different proof that gives an independent proof of Borell's result. This proof from~\cite{DeMoNe:13,DeMoNe:16} is based on induction on dimension in the discrete cube. It is inspired by Bobkov inductive proof of the Gaussian isoperimetric inequality~\cite{Bobkov:97b}. 

First, it will be useful, for $\rho \in [-1,1]$, to define $J_{\rho} : (0,1)^2 \rightarrow [0,1]$ by 
\[
J_{\rho}(x,y) := \langle \chi_x, \chi_y \rangle_{\rho} = \IP [ N \le \Phi^{-1}(x), M \le \Phi^{-1}(y)],
\]
where $N,M$ are jointly normally distributed random variables with the covariance matrix
$$
\mathop{\mathrm{Cov}}(N,M) = \left(\begin{array}{cc} 1 & \rho  \\\rho  & 1 \end{array}\right).
$$
Instead of just proving Borell's result, we will prove a functional form of the result, first stated and proved in~\cite{MosselNeeman:15b}. 
In~\cite{MosselNeeman:15b} it is proved that in the Gaussian setup where $\phi,\psi : \IR^n \to [0,1]$, and $N, M$ are jointly normal random variables with covariance
$\big(\begin{smallmatrix} I_n & \rho I_n \\ \rho I_n & I_n \end{smallmatrix}\big)$,
we have
\begin{equation}\label{eq:functional-borell}
\E J_\rho(\phi(N), \psi(M)) \le J_\rho(\E \phi, \E \psi).
\end{equation}
Note that $J_{\rho}(0,x) = 0$ for all $x$ and $J_{\rho}(1,1) =1$. 
Therefore for any two sets $A,B$:
\[
\E J_\rho(1_A(N), 1_B(M)) = \IP[N \in A, M \in B]. 
\]
Thus~(\ref{eq:functional-borell}) applied to indicator functions, implies Borell's inequality~\cite{Borell:85}
(Theorem~\ref{thm:half_space}). 
\begin{exercise}
Prove that (\ref{eq:functional-borell}) is in fact, equivalent to Theorem~\ref{thm:half_space}.
\end{exercise}

To prove Majority is Stablest, we would like to show an analogous inequality on the cube, with $f, g: \{-1, 1\}^n \to [0, 1]$ and
$X, Y$ correlated points on $\{-1, 1\}^n$. Our main observation is that while
inequality~\eqref{eq:functional-borell} is not true in this setup, it is
true with some extra error terms on the right hand side. These error terms will diminish in the low influence and Gaussian case. 
First, let us
define the error term:
\begin{definition}\label{def:DMS_error}
 Let $\Omega$ be a probability space and $f : \Omega^n \to \R$.
 Consider the martingale defined as 
 \[
 f_0 = f, f_1 = \IE[f | X_2,\ldots,X_n],\ldots, f_i = \IE[f | X_{i+1},\ldots,X_n],\ldots, f_n = \IE[f],
 \]
 and let 
 \[
 \Delta_m(f) = \sum_{i=1}^m \IE[|f_i-f_{i-1}|^3],
 \]
 for $m \leq n$. 
 \end{definition}
 Note that by orthogonality of martingale increments we know that 
 \[
 \sum_{i=1}^n \IE[|f_i-f_{i-1}|^2] = \Var[f],
 \]
 and by Jensen's inequality, it follows that 
 \begin{exercise}
 \[
 \IE[|f_i-f_{i-1}|^2] \leq I_i(f).
 \]
 \end{exercise}
 We therefore expect $\Delta_n(f)$ to be small when the differences $|f_i-f_{i-1}|$ are typically small, i.e., $f$ is smooth to changing one coordinate at a time in some average sense. 
 
 \begin{definition}
 Let $\Omega_1$ and $\Omega_2$ be two sets and $\mu$ be a probability
 measure on $\Omega_1 \times \Omega_2$. We say that $\mu$ has R\'enyi correlation at most $\rho$
 if for every measurable (w.r.t $\mu$) $f: \Omega_1 \to \R$
 and $g: \Omega_2 \to \R$ with $\E_\mu f = \E_\mu g = 0$,
 \[
 | \E_\mu[fg] | \le \rho \sqrt{\E_\mu[f^2] \E_\mu[g^2]}.
 \]
\end{definition}

For example, suppose that $\Omega_1 = \Omega_2$ and suppose $(X, Y)$ are
generated by the following procedure: first choose $X$ according to some
distribution $\nu$. Then, with probability
$\rho$, we set $Y = X$, and with probability $1-\rho$, $Y$ is chosen to be an independent sample from $\nu$. If $\mu$ is the distribution of $(X, Y)$, then
it is easy to check that $\mu$ has R\'enyi correlation $\rho$.
In particular, in the definition of noisy inner product, we let $(x,y) \in \{-1,1\}^2$ have $\IE[y] = \IE[y] = 0$ and $\IE[xy] = \rho$ and $(x,y)$ have  R\'enyi correlation $\rho$.

We prove the following general theorem, which we will later
use
to derive both Borell's inequality and the ``Majority is Stablest'' theorem.
\begin{theorem}\label{thm:tensorization}
For any $\epsilon > 0$ and $0 < \rho < 1$, there is $C(\rho) > 0$ such
that the following holds.
Let $\mu$ be a measure on $\Omega_1 \times \Omega_2$
with R\'enyi correlation at most $\rho$  and
let $(X_i, Y_i)_{i=1}^n$ be i.i.d.\ variables with distribution $\mu$.
Then for any measurable functions
$f: \Omega_1^n \to [\epsilon, 1-\epsilon]$ and
$g: \Omega_2^n \to [\epsilon, 1-\epsilon]$,
\[
 \E J_\rho(f(X), g(Y)) \le J_\rho(\E f, \E g) + C(\rho) \epsilon^{-C(\rho)} (\Delta_n(f) + \Delta_n(g)).
\]
\end{theorem}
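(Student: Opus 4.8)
The plan is to prove the theorem by induction on the dimension $n$, following Bobkov's inductive scheme for the Gaussian isoperimetric inequality~\cite{Bobkov:97b} as carried out in~\cite{DeMoNe:13,DeMoNe:16}. The engine of the argument is a pointwise differential inequality for $J_\rho$, which I would establish first. Writing $a=\Phi^{-1}(x)$ and $b=\Phi^{-1}(y)$ and differentiating the explicit formula $J_\rho(x,y)=\IP[N\le a,\,M\le b]$ one gets $\partial_x J_\rho=\Phi\big(\tfrac{b-\rho a}{\sqrt{1-\rho^2}}\big)$, and then, using that $\phi(a)\,\phi\big(\tfrac{b-\rho a}{\sqrt{1-\rho^2}}\big)$ is symmetric in $(a,b)$, that for $0<\rho<1$ and $(x,y)\in(0,1)^2$ one has $\partial_{xx}J_\rho<0$, $\partial_{yy}J_\rho<0$ and $\partial_{xx}J_\rho\cdot\partial_{yy}J_\rho=\rho^2\,(\partial_{xy}J_\rho)^2$. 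The consequence I actually use is that for every $(x,y)\in(0,1)^2$, all $s,t\ge 0$ and all $w$ with $|w|\le\rho\sqrt{st}$,
\[
s\,\partial_{xx}J_\rho(x,y)+2w\,\partial_{xy}J_\rho(x,y)+t\,\partial_{yy}J_\rho(x,y)\le 0,
\]
since the extremal $w$ turns the left-hand side into a quadratic form in $(\sqrt s,\sqrt t)$ whose matrix has nonpositive diagonal entries and nonnegative determinant. I would also record the crude derivative bound $\sup_{[\epsilon,1-\epsilon]^2}|\partial^\alpha J_\rho|\le C(\rho)\,\epsilon^{-C(\rho)}$ for every multi-index $\alpha$ with $|\alpha|=3$, which follows from the same explicit formulas together with $\Phi^{-1}(\epsilon)^2=O(\log(1/\epsilon))$ and $1/\phi(\Phi^{-1}(\epsilon))=O(\epsilon^{-1}\sqrt{\log(1/\epsilon)})$.

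Next, the base case $n=1$. Put $a=\E f$, $b=\E g$, $u=f(X_1)-a$, $v=g(Y_1)-b$, so $\E u=\E v=0$, $|u|,|v|\le 1$, and $a,b\in[\epsilon,1-\epsilon]$. Since the segment from $(a,b)$ to $(f(X_1),g(Y_1))$ lies in the convex set $[\epsilon,1-\epsilon]^2$, Taylor's theorem together with the derivative bound above gives
\[
J_\rho(f(X_1),g(Y_1))=J_\rho(a,b)+\partial_xJ_\rho(a,b)\,u+\partial_yJ_\rho(a,b)\,v+\tfrac{1}{2}\big(\partial_{xx}J_\rho(a,b)\,u^2+2\partial_{xy}J_\rho(a,b)\,uv+\partial_{yy}J_\rho(a,b)\,v^2\big)+R,
\]
with $|R|\le C(\rho)\epsilon^{-C(\rho)}(|u|^3+|v|^3)$. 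Taking expectations kills the linear terms; the quadratic term equals $\tfrac{1}{2}\big(s\,\partial_{xx}J_\rho+2w\,\partial_{xy}J_\rho+t\,\partial_{yy}J_\rho\big)$ with $s=\Var f$, $t=\Var g$, $w=\E[uv]$, and $|w|\le\rho\sqrt{st}$ because $\mu$ has R\'enyi correlation at most $\rho$, so it is $\le 0$ by the inequality above; finally $\E|R|\le C(\rho)\epsilon^{-C(\rho)}(\E|u|^3+\E|v|^3)=C(\rho)\epsilon^{-C(\rho)}(\Delta_1(f)+\Delta_1(g))$. This is the theorem for $n=1$; fix $C(\rho)$ to be the constant produced here.

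For the inductive step I would condition on the last coordinate. Assume the statement for $n-1$ with constant $C(\rho)$, and for fixed $x_n$ put $f^{x_n}(x_1,\dots,x_{n-1})=f(x_1,\dots,x_{n-1},x_n)$, and likewise $g^{y_n}$; these take values in $[\epsilon,1-\epsilon]$. Since $(X_n,Y_n)\sim\mu$ has R\'enyi correlation at most $\rho$, applying the induction hypothesis to $f^{x_n},g^{y_n}$ with the i.i.d.\ pairs $(X_i,Y_i)_{i=1}^{n-1}$ and averaging over $(X_n,Y_n)$ yields
\[
\E J_\rho(f(X),g(Y))\le\E_{(X_n,Y_n)}J_\rho\big(\bar f(X_n),\bar g(Y_n)\big)+C(\rho)\epsilon^{-C(\rho)}\Big(\E_{X_n}\Delta_{n-1}(f^{X_n})+\E_{Y_n}\Delta_{n-1}(g^{Y_n})\Big),
\]
where $\bar f(x_n)=\E[f\mid X_n=x_n]$. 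The bookkeeping point is that, with $f_i=\E[f\mid X_{i+1},\dots,X_n]$ as in Definition~\ref{def:DMS_error}, unrolling the $(n-1)$-coordinate martingale of $f^{x_n}$ and averaging over $X_n$ gives $\E_{X_n}\Delta_{n-1}(f^{X_n})=\sum_{i=1}^{n-1}\E|f_i-f_{i-1}|^3=\Delta_n(f)-\E|f_n-f_{n-1}|^3$. Now apply the base case to the one-coordinate functions $\bar f,\bar g$: using $\E\bar f=\E f$, $\E\bar g=\E g$ and $\E|\bar f-\E\bar f|^3=\E|f_{n-1}-f_n|^3$, we get $\E_{(X_n,Y_n)}J_\rho(\bar f(X_n),\bar g(Y_n))\le J_\rho(\E f,\E g)+C(\rho)\epsilon^{-C(\rho)}(\E|f_n-f_{n-1}|^3+\E|g_n-g_{n-1}|^3)$. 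Substituting, the two $f$-error pieces telescope to $\Delta_n(f)$ and the two $g$-pieces to $\Delta_n(g)$, which closes the induction with the same $C(\rho)$.

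The main obstacle is the analytic input of the first paragraph: the identity $\partial_{xx}J_\rho\cdot\partial_{yy}J_\rho=\rho^2(\partial_{xy}J_\rho)^2$ with the correct signs is exactly the fact that encodes the extremality of Gaussian half-spaces, and obtaining the third-derivative bound with blow-up no worse than a power of $1/\epsilon$ near $\partial(0,1)^2$ needs some care. Everything downstream --- the Taylor expansion, the R\'enyi-correlation bound on the cross term, and the telescoping of the error $\Delta$'s --- is routine; and with Theorem~\ref{thm:tensorization} in hand, Borell's inequality~\eqref{eq:functional-borell} and Majority is Stablest follow by specializing $\mu$ and letting $\Delta_n$ vanish in the Gaussian and low-influence limits (after first removing the $[\epsilon,1-\epsilon]$ restriction by truncation).
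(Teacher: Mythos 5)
Your proposal is correct and reproduces the paper's own argument essentially verbatim: the same Taylor expansion around $(\E f,\E g)$ in the $n=1$ case, with the negative semidefiniteness of the correlated Hessian quadratic form (the paper's Claim~\ref{clm:negative-semidefinite}, via the identity $\partial_{xx}J_\rho\cdot\partial_{yy}J_\rho=\rho^2(\partial_{xy}J_\rho)^2$) killing the second-order term and the polynomial-in-$1/\epsilon$ third-derivative bound (Claim~\ref{clm:third-derivative}) controlling the remainder, followed by the same induction that conditions on $(X_n,Y_n)$, applies the $(n-1)$-dimensional statement inside the conditioning, and finishes with the $n=1$ case applied to $\E[f\mid X_n],\E[g\mid Y_n]$. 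Your bookkeeping of the martingale increments showing the two error terms telescope to $\Delta_n(f)+\Delta_n(g)$ is also exactly how the paper closes the induction.
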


To see that there isn't much difference between the case of $[0,1]$ valued functions and 
$[\eps,1-\eps]$ valued functions, we note that 

\begin{proposition}
Consider the setup of Theorem \label{thm:tensorization}.  
Let $0 < \eps < 0.5$ and let 
\[
\bar{f}(x) =  \begin{cases} x, & \eps \leq x \leq 1-\eps, \\
                                               \eps, & x \leq \eps, \\ 
				             1-\eps, & x \geq 1-\eps. \end{cases} 
\]
and similarly $\bar{g}(x)$. Then $| \IE \bar{f} - \IE[f]| \leq \eps$ and similarly for $g$. 
Moreover, 
\[
\Big| \E J_\rho(f(X), g(Y))  -  \E J_\rho(\bar{f}(X), \bar{g}(Y)) \Big| \leq 2 \eps 
\]
and $I_i(\bar{f}) \leq I_i(f)$ for all $i$ and similarly for $g$. 
\end{proposition}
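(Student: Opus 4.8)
The plan is to prove the three assertions one at a time; each reduces to an elementary property of the clamping map $\tau(t) := \max(\eps,\min(t,1-\eps))$ on $[0,1]$ — so that, writing $\bar f = \tau\circ f$ and $\bar g = \tau\circ g$, the functions $\bar f,\bar g$ in the statement are exactly the coordinatewise truncations of $f$ and $g$ — together with the regularity of $J_\rho$. For the expectation bound, I would note that $\tau$ moves each point of $[0,1]$ by at most $\eps$: if $t\le\eps$ then $|\tau(t)-t|=\eps-t\le\eps$, if $t\ge 1-\eps$ then $|\tau(t)-t|=t-(1-\eps)\le\eps$, and $\tau(t)=t$ otherwise. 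Hence $|\bar f-f|\le\eps$ pointwise on $\Omega_1^n$, so $|\IE\bar f-\IE f|\le\IE|\bar f-f|\le\eps$, and the same for $g$.

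For the $J_\rho$ bound, the key fact I would establish is that $J_\rho$ is $1$-Lipschitz in each coordinate on $[0,1]^2$. On the open square this is a one-line computation: differentiating $J_\rho(x,y)=\P[N\le\Phi^{-1}(x),\,M\le\Phi^{-1}(y)]$ in $x$, the factor $1/\varphi(\Phi^{-1}(x))$ produced by the derivative of $\Phi^{-1}$ (here $\varphi$ is the standard Gaussian density) cancels against the $N$-marginal density, leaving $\partial_x J_\rho(x,y)=\P[\,M\le\Phi^{-1}(y)\mid N=\Phi^{-1}(x)\,]\in[0,1]$, and symmetrically $\partial_y J_\rho(x,y)\in[0,1]$. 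Since $J_\rho$ extends continuously to the closed square (with $J_\rho(0,\cdot)\equiv 0$ and $J_\rho(1,y)=y$), the bound $|J_\rho(x,y)-J_\rho(x',y')|\le|x-x'|+|y-y'|$ holds throughout $[0,1]^2$. Applying this pointwise at $(f(X),g(Y))$ versus $(\bar f(X),\bar g(Y))$ and taking expectations gives $\bigl|\E J_\rho(f(X),g(Y))-\E J_\rho(\bar f(X),\bar g(Y))\bigr|\le\IE|f(X)-\bar f(X)|+\IE|g(Y)-\bar g(Y)|\le 2\eps$ by the first part. The only step needing mild care is precisely this Lipschitz estimate: the derivative formula is only valid on the open square where $\Phi^{-1}$ is finite, so one must invoke the continuous extension of $J_\rho$ to $[0,1]^2$ (clear from its probabilistic description) to push the bound to the boundary.

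For the influences, I would use the contraction-of-variance inequality: for any $1$-Lipschitz $\phi:\R\to\R$ and any bounded random variable $Z$ with independent copy $Z'$, $\Var[\phi(Z)]=\tfrac12\IE[(\phi(Z)-\phi(Z'))^2]\le\tfrac12\IE[(Z-Z')^2]=\Var[Z]$. Since $\tau$ is the $1$-Lipschitz projection onto the convex interval $[\eps,1-\eps]$, fixing $x_1,\dots,x_{i-1},x_{i+1},\dots,x_n$ and regarding $f$ as a function of the single remaining coordinate $x_i$ gives $\Var[\bar f\mid x_1,\dots,x_{i-1},x_{i+1},\dots,x_n]\le\Var[f\mid x_1,\dots,x_{i-1},x_{i+1},\dots,x_n]$; averaging over the other coordinates yields $I_i(\bar f)\le I_i(f)$, and identically for $g$. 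None of these three steps presents a genuine obstacle; the content is entirely in the observation that clamping is a contraction and that $J_\rho$ has partial derivatives bounded by $1$.
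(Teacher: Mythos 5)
Your proof is correct and follows the same route as the paper's (which is a one-line appeal to the fact that $J_\rho$ is $1$-Lipschitz in each coordinate). You simply make explicit the supporting observations the paper leaves implicit: the pointwise bound $|\bar f - f|\le\eps$, the verification that $\partial_x J_\rho = \Pr[M\le\Phi^{-1}(y)\mid N=\Phi^{-1}(x)]\in[0,1]$ (with the continuity argument to cover the boundary of $[0,1]^2$), and the variance-contraction argument for $I_i(\bar f)\le I_i(f)$ via the $1$-Lipschitz clamp $\tau$ and the identity $\Var W = \tfrac12\IE[(W-W')^2]$.
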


\begin{proof}
The proof follows from the fact that $J_{\rho}$ is $1-Lip$ in each of its coordinates.
\end{proof}
 
 \section{The Base Case}
 
We prove Theorem~\ref{thm:tensorization} by induction on $n$.
In this section, we will prove the base case $n=1$:
\begin{claim}\label{clm:base-case}
  For any $\epsilon > 0$ and $0 < \rho < 1$, there is a $C(\rho)$
  such that for any two random variables
  $X, Y \in [\epsilon, 1-\epsilon]$ with correlation in $[-\rho, \rho]$,
  \[
   \E J_\rho(X, Y) \le J_\rho(\E X, \E Y) + C(\rho)\epsilon^{-C(\rho)}(\E |X - \E X|^3 + \E |Y - \E Y|^3).
  \]
\end{claim}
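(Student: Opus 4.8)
The plan is to prove Claim~\ref{clm:base-case} by a second-order Taylor expansion of $J_\rho$ about the point $(\E X, \E Y)$: the zeroth-order term will be exactly $J_\rho(\E X,\E Y)$, the first-order term vanishes in expectation, the second-order term turns out to be nonpositive because of a $\rho$-adapted concavity property of $J_\rho$, and the third-order Lagrange remainder will match the error term claimed.

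\textbf{Step 1: derivatives of $J_\rho$.} Write $u=\Phi^{-1}(x)$, $v=\Phi^{-1}(y)$, $w=(v-\rho u)/\sqrt{1-\rho^2}$, $w'=(u-\rho v)/\sqrt{1-\rho^2}$, and let $\varphi$ be the standard Gaussian density. Conditioning on $N$ in $J_\rho(x,y)=\IP[N\le u,\,M\le v]$ gives $\partial_x J_\rho=\Phi(w)$, hence
\[
\partial_{xx}J_\rho=\frac{-\rho}{\sqrt{1-\rho^2}}\cdot\frac{\varphi(w)}{\varphi(u)},\qquad
\partial_{xy}J_\rho=\frac{1}{\sqrt{1-\rho^2}}\cdot\frac{\varphi(w)}{\varphi(v)},
\]
and $\partial_{yy}J_\rho$ is the first formula with $(x,u,w)$ and $(y,v,w')$ interchanged. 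Since $x,y\in[\eps,1-\eps]$ we have $|u|,|v|\le|\Phi^{-1}(\eps)|$, so $1/\varphi(u),1/\varphi(v)\le 1/\varphi(\Phi^{-1}(\eps))\le C\eps^{-1}$ by a crude Gaussian tail bound, and $|w|,|w'|\le C(\rho)\sqrt{\log(1/\eps)}$; this is the only source of $\eps$-dependence. Moreover, since $\varphi(w)\varphi(u)$ and $\varphi(w')\varphi(v)$ are both proportional to $\exp\!\big(-\tfrac12(u^2-2\rho uv+v^2)/(1-\rho^2)\big)$, which is symmetric in $u,v$, one gets $\varphi(w)\varphi(u)=\varphi(w')\varphi(v)$, and substituting this into the product of the Hessian entries yields the key identity
\[
\partial_{xx}J_\rho\cdot\partial_{yy}J_\rho=\rho^2\,(\partial_{xy}J_\rho)^2,\qquad \text{with } \partial_{xx}J_\rho,\ \partial_{yy}J_\rho<0<\partial_{xy}J_\rho .
\]

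\textbf{Step 2: the second-order term is nonpositive, then Taylor.} From the identity and sign pattern, for every $\sigma,\tau\ge0$ and every $c\in[-\rho,\rho]$ the quantity $\partial_{xx}J_\rho\,\sigma^2+2c\,\partial_{xy}J_\rho\,\sigma\tau+\partial_{yy}J_\rho\,\tau^2$ is $\le 0$: it is nondecreasing in $c$ (as $\partial_{xy}J_\rho,\sigma\tau\ge0$), so it suffices to take $c=\rho$, and with $A=-\partial_{xx}J_\rho>0$ the $c=\rho$ value equals $-A\big(\sigma-\tfrac{\rho\,\partial_{xy}J_\rho}{A}\tau\big)^2+\tfrac{\rho^2(\partial_{xy}J_\rho)^2-\partial_{xx}J_\rho\partial_{yy}J_\rho}{A}\tau^2$, whose last coefficient is $0$. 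Now $J_\rho$ is $C^\infty$ on $(0,1)^2$ and the segment from $(\E X,\E Y)$ to $(X,Y)$ lies in the convex set $[\eps,1-\eps]^2$, so Taylor's theorem with Lagrange remainder gives $J_\rho(X,Y)=J_\rho(\E X,\E Y)+\langle\nabla J_\rho(\E X,\E Y),Z\rangle+\tfrac12\langle H_\rho(\E X,\E Y)Z,Z\rangle+R_3$ with $Z=(X-\E X,\,Y-\E Y)$ and $|R_3|\le\tfrac23\big(\sup_{[\eps,1-\eps]^2}\max_{|\alpha|=3}|\partial^\alpha J_\rho|\big)(|X-\E X|+|Y-\E Y|)^3$. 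Taking expectations: the linear term vanishes; the quadratic term is $\tfrac12\big[\partial_{xx}J_\rho\,\Var X+2\partial_{xy}J_\rho\,\Cov(X,Y)+\partial_{yy}J_\rho\,\Var Y\big]$ at $(\E X,\E Y)$, which is $\le0$ by the displayed inequality with $\sigma=\sqrt{\Var X}$, $\tau=\sqrt{\Var Y}$, $c=\Cov(X,Y)/(\sigma\tau)\in[-\rho,\rho]$; and differentiating the Step~1 formulas a third time, together with $1/\varphi(\Phi^{-1}(\eps))\le C\eps^{-1}$, gives $\sup_{[\eps,1-\eps]^2}\max_{|\alpha|=3}|\partial^\alpha J_\rho|\le C(\rho)\eps^{-C(\rho)}$, so using $(a+b)^3\le4(a^3+b^3)$ we get $\E|R_3|\le C(\rho)\eps^{-C(\rho)}(\E|X-\E X|^3+\E|Y-\E Y|^3)$. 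Combining proves the claim.

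\textbf{Main obstacle.} The heart of the argument is the Hessian identity $\partial_{xx}J_\rho\,\partial_{yy}J_\rho=\rho^2(\partial_{xy}J_\rho)^2$ with the stated signs: this is exactly the infinitesimal form of Borell's inequality and is what forces the second-order term to die against every covariance matrix of correlation at most $\rho$. Verifying it (and the sign pattern) is an elementary but slightly fiddly bivariate-Gaussian computation, and getting the exponents in $C(\rho)\eps^{-C(\rho)}$ right requires being a little careful with the tail bound on $\Phi^{-1}$ near $0$ and $1$. Everything after that — the Taylor expansion and the remainder estimate — is routine.
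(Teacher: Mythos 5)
Your proof is correct and takes essentially the same approach as the paper: Taylor expansion of $J_\rho$ about $(\E X, \E Y)$, killing the first-order term in expectation, using the Hessian identity $\partial_{xx}J_\rho\,\partial_{yy}J_\rho = \rho^2(\partial_{xy}J_\rho)^2$ with the sign pattern to make the quadratic term nonpositive for any covariance of correlation at most $\rho$, and bounding the third-order remainder by $C(\rho)\eps^{-C(\rho)}$ via the Gaussian tail behavior of $\varphi(\Phi^{-1})$. The only difference is organizational: the paper isolates the Hessian negative-semidefiniteness (Claim~\ref{clm:negative-semidefinite}) and the third-derivative bound (Claim~\ref{clm:third-derivative}) as separate claims proved by the same computations you inline in Steps~1 and~2.
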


 The proof of Claim~\ref{clm:base-case} essentially
 follows from Taylor's theorem applied to the
 function $J_\rho$; the crucial point is that $J_\rho$ satisfies a certain differential equation.
Define the matrix $M_{\rho \sigma}(x,y)$ by
$$
M_{\rho \sigma}(x,y) =  \left(\begin{array}{cc} \frac{\partial^2 J_{\rho}(x,y)}{\partial x^2} & \sigma \frac{\partial^2 J_{\rho}(x,y)}{\partial x \partial y} \\\sigma \frac{\partial^2 J_{\rho}(x,y)}{\partial x \partial y}  & \frac{\partial^2 J_{\rho}(x,y)}{\partial y^2}\end{array}\right).
$$
\begin{restatable}{claim}{negsemi}
\label{clm:negative-semidefinite}
For any  $(x,y) \in (0,1)^2$ and $0 \le |\sigma| \le \rho$, $M_{\rho \sigma}(x,y)$ is a negative semidefinite matrix.  Likewise, if $|\sigma| \geq \rho$, then $M_{\rho \sigma}(x,y)$ is a positive semidefinite matrix.
\end{restatable}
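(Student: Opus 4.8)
The plan is to prove Claim~\ref{clm:negative-semidefinite} by writing down the Hessian of $J_\rho$ explicitly and extracting from it a single algebraic identity, from which the semidefiniteness statements are immediate. Write $a=\Phi^{-1}(x)$, $b=\Phi^{-1}(y)$, let $\varphi$ denote the standard Gaussian density, and recall that $\tfrac{d}{dx}\Phi^{-1}(x)=1/\varphi(a)$.

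First I would compute the gradient. Since $J_\rho(x,y)=\IP[N\le a,\,M\le b]$ for a $\rho$-correlated standard Gaussian pair $(N,M)$, and the conditional law of $M$ given $N=a$ is $\mathcal N(\rho a,\,1-\rho^2)$, differentiating under the integral sign and using the chain rule gives
\[
\frac{\partial J_\rho}{\partial x}=\Phi\!\Big(\tfrac{b-\rho a}{\sqrt{1-\rho^2}}\Big)=:\Phi(u),\qquad
\frac{\partial J_\rho}{\partial y}=\Phi\!\Big(\tfrac{a-\rho b}{\sqrt{1-\rho^2}}\Big)=:\Phi(v).
\]
Differentiating once more (again using $\tfrac{\partial a}{\partial x}=1/\varphi(a)$ and $\tfrac{\partial b}{\partial y}=1/\varphi(b)$) yields
\[
\frac{\partial^2 J_\rho}{\partial x^2}=\frac{-\rho}{\sqrt{1-\rho^2}}\frac{\varphi(u)}{\varphi(a)},\qquad
\frac{\partial^2 J_\rho}{\partial y^2}=\frac{-\rho}{\sqrt{1-\rho^2}}\frac{\varphi(v)}{\varphi(b)},\qquad
\frac{\partial^2 J_\rho}{\partial x\,\partial y}=\frac{1}{\sqrt{1-\rho^2}}\frac{\varphi(u)}{\varphi(b)}=\frac{1}{\sqrt{1-\rho^2}}\frac{\varphi(v)}{\varphi(a)},
\]
the two forms of the mixed partial agreeing because $\varphi(a)\varphi(u)=\varphi(b)\varphi(v)$ (both equal $\sqrt{1-\rho^2}$ times the $\rho$-correlated bivariate density $\varphi_\rho(a,b)$, a one-line exponent check). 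For $0<\rho<1$ and $(x,y)\in(0,1)^2$ all of $\varphi(a),\varphi(b),\varphi(u),\varphi(v)$ are strictly positive, hence $\partial_x^2 J_\rho<0$, $\partial_y^2 J_\rho<0$, $\partial_x\partial_y J_\rho>0$, and multiplying the three expressions gives the differential relation the text alludes to,
\[
\frac{\partial^2 J_\rho}{\partial x^2}\cdot\frac{\partial^2 J_\rho}{\partial y^2}=\rho^2\Big(\frac{\partial^2 J_\rho}{\partial x\,\partial y}\Big)^2 .
\]

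To finish, recall that a symmetric $2\times2$ matrix is negative semidefinite exactly when its trace is $\le 0$ and its determinant is $\ge 0$. For $M_{\rho\sigma}(x,y)$ the trace is $\partial_x^2 J_\rho+\partial_y^2 J_\rho<0$, and by the identity above the determinant equals $(\rho^2-\sigma^2)\big(\partial_x\partial_y J_\rho\big)^2$. Thus for $|\sigma|\le\rho$ the determinant is $\ge 0$ while the diagonal entries are negative, giving negative semidefiniteness, and the complementary regime $|\sigma|\ge\rho$ is read off from the same trace--determinant formula. The identity also pinpoints the tight direction: at $|\sigma|=\rho$ the matrix is singular, which is precisely what later makes the Taylor expansion of $J_\rho$ in Claim~\ref{clm:base-case} lose nothing to first order.

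There is no real conceptual difficulty once the identity $\partial_x^2 J_\rho\,\partial_y^2 J_\rho=\rho^2(\partial_x\partial_y J_\rho)^2$ is available; the only place to be careful is the chain-rule bookkeeping in the derivative computations — in particular carrying the factors $1/\varphi(\Phi^{-1}(\cdot))$ produced by differentiating the quantile function — together with the short exponent identity behind $\varphi(a)\varphi(u)=\varphi(b)\varphi(v)$.
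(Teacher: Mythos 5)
Your computation of the first and second partials, the identity $\varphi(a)\varphi(u)=\varphi(b)\varphi(v)$, and the resulting relation $\partial_{xx}J_\rho\,\partial_{yy}J_\rho=\rho^2(\partial_{xy}J_\rho)^2$ are all correct, and the negative-semidefinite conclusion for $|\sigma|\le\rho$ via ``nonpositive diagonal, nonnegative determinant'' is exactly the argument the paper gives. So the substantive half of the claim (and the only half that is invoked later, in the proof of Claim~\ref{clm:base-case}) is fully established, by the same route.

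The one real problem is the sentence ``the complementary regime $|\sigma|\ge\rho$ is read off from the same trace–determinant formula.'' That does not work. Staying inside your standing assumption $0<\rho<1$, for $|\sigma|>\rho$ the determinant $(\rho^2-\sigma^2)(\partial_{xy}J_\rho)^2$ is strictly negative, so $M_{\rho\sigma}$ has one positive and one negative eigenvalue and is \emph{indefinite}, not positive semidefinite. The paper's proof of the ``positive semidefinite'' half is a genuinely different configuration: it keeps $|\sigma|\le|\rho|$ (so the determinant stays nonnegative) but takes $\rho\le0$, so that the factor $-\rho$ in $\partial_{xx}J_\rho=\frac{-\rho}{\sqrt{1-\rho^2}}\frac{\varphi(u)}{\varphi(a)}$ flips the sign of the diagonal entries to nonnegative. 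Because you restrict throughout to $\rho>0$ this sign flip never enters your analysis, and the hand-wave cannot be patched without dropping that restriction. If you want the second half of the claim, replace the last clause with the observation that for $\rho\le0$ the identity gives $\det M_{\rho\sigma}\ge0$ for $|\sigma|\le|\rho|$ while the diagonal becomes nonnegative, which is positive semidefiniteness.
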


We will also use the fact that the third derivatives of $J_\rho$ are bounded
(at least, away from the boundary of $[0, 1]^2$).

\begin{restatable}{claim}{thirddiff}
\label{clm:third-derivative}
For any $-1 < \rho < 1$, there exists $C(\rho) > 0$ such that
for any $i, j \ge 0$,  $i+j=3$,
$$\left|\frac{\partial^3 J_{\rho}(x,y)}{\partial x^i \partial y^j} \right| \le C(\rho) (xy(1-x) (1-y))^{-C(\rho)}. $$ Further, the function $C(\rho)$ can be chosen so that it is continuous for $\rho \in (-1,1)$.
\end{restatable}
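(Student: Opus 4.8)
The plan is to reduce the whole claim to one explicit computation. Write $\Phi$ for the standard normal CDF, $\varphi$ for its density, and set $a=\Phi^{-1}(x)$, $b=\Phi^{-1}(y)$. First I would establish the exact gradient of $J_\rho$: conditioning the bivariate Gaussian $(N,M)$ on $N=a$, so that $M\mid N=a\sim N(\rho a,1-\rho^2)$, and using that the density of $N$ at $a$ cancels the Jacobian factor $(\Phi^{-1})'(x)=1/\varphi(a)$, one gets
$$
\frac{\partial J_\rho}{\partial x}(x,y)=\Phi\!\left(\frac{b-\rho a}{\sqrt{1-\rho^2}}\right),
\qquad
\frac{\partial J_\rho}{\partial y}(x,y)=\Phi\!\left(\frac{a-\rho b}{\sqrt{1-\rho^2}}\right).
$$
In particular $J_\rho$ is $C^\infty$ on $(0,1)^2$, so all the derivatives in question are legitimate. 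From here I would differentiate twice more, using $\varphi'(t)=-t\varphi(t)$, $(\Phi^{-1})'(x)=1/\varphi(a)$ and $\frac{d}{dx}\frac{1}{\varphi(a)}=\frac{a}{\varphi(a)^2}$. A routine induction on the number of differentiations then shows that every third partial $\partial^3 J_\rho/\partial x^i\partial y^j$ with $i+j=3$ is a finite sum of terms of the form
$$
R(\rho)\,p(a,b)\,\varphi(\xi)\,\varphi(a)^{-j'}\varphi(b)^{-(2-j')},
$$
where $\xi$ is one of the two arguments appearing above, $j'\in\{0,1,2\}$, $p$ is a polynomial of degree at most $1$, and $R(\rho)$ together with the coefficients of $p$ are rational functions of $\rho$ and $(1-\rho^2)^{-1/2}$, hence continuous on $(-1,1)$ (we only ever divide by $\sqrt{1-\rho^2}$, never by $\rho$).

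Next I would bound the three kinds of factors. The Gaussian factor satisfies $\varphi(\xi)\le 1/\sqrt{2\pi}$. The key elementary input for the rest is the standard quantile estimate $|\Phi^{-1}(x)|\le C_0\sqrt{1+\log\frac1{x(1-x)}}$ valid for all $x\in(0,1)$, obtained from $\bar\Phi(t)\le\varphi(t)$ for $t\ge 1$ together with the symmetry $\Phi^{-1}(1-x)=-\Phi^{-1}(x)$. Plugging it into $\varphi(\Phi^{-1}(x))=\tfrac1{\sqrt{2\pi}}e^{-\Phi^{-1}(x)^2/2}$ yields $\varphi(\Phi^{-1}(x))^{-1}\le C_1\,(x(1-x))^{-C_2}$ with absolute constants $C_1,C_2$, and the polynomial factor obeys $|p(a,b)|\le (\text{continuous in }\rho)\cdot(1+|a|+|b|)\le C_\rho\,(x(1-x)y(1-y))^{-1/4}$ since $\sqrt{1+\log(1/t)}\le C_\delta(1+t^{-\delta})$ and $x(1-x),y(1-y)\le\tfrac14$. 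Multiplying the three bounds and absorbing everything into one power of $xy(1-x)(1-y)=x(1-x)\,y(1-y)$, one gets $|\partial^3 J_\rho/\partial x^i\partial y^j|\le \widetilde R(\rho)\,(xy(1-x)(1-y))^{-C_2-1/4}$ for some $\widetilde R$ continuous on $(-1,1)$. Taking $C(\rho):=\max\big(\widetilde R(\rho),\,C_2+\tfrac14\big)$ finishes the proof: it is continuous on $(-1,1)$, and since $xy(1-x)(1-y)\le\tfrac1{16}<1$ it serves simultaneously as the prefactor and as the exponent.

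I do not expect a genuinely hard step here; the content is the explicit differentiation and the bookkeeping. The one thing to watch is that the only source of blow-up as $\rho\to\pm1$ is the rational prefactor $R(\rho)$ — the quantile and density estimates are completely $\rho$-free — which is exactly what keeps $C(\rho)$ continuous on the open interval. The other mild point is that the degree-$1$ polynomial part must not outgrow a fixed power of $x(1-x)$ and $y(1-y)$; this is where the freedom to use the same (possibly large) constant $C(\rho)$ in the exponent is spent, and it is harmless because logarithms are dominated by any positive power.
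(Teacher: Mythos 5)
Your proof is correct and follows essentially the same route as the paper's: compute $\partial J_\rho/\partial x = \Phi\big((\Phi^{-1}(y)-\rho\Phi^{-1}(x))/\sqrt{1-\rho^2}\big)$ by conditioning, differentiate twice more, and control the resulting polynomial and inverse-density factors via the Gaussian quantile estimate $|\Phi^{-1}(x)|\lesssim\sqrt{\log(1/(x(1-x)))}$. The only cosmetic difference is the bookkeeping — you establish the structural form of the third partials by an induction and bound $\varphi(\xi)$ crudely by $1/\sqrt{2\pi}$, whereas the paper computes $\partial^3 J_\rho/\partial x^3$ explicitly as a single product and folds the $\phi(s)^{-1}$ factors into the exponential before estimating — but the core inputs and the source of continuity in $\rho$ are identical.
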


Claims~\ref{clm:negative-semidefinite} and~\ref{clm:third-derivative} follow from elementary
calculus, and we defer their proofs (we note that Claim~\ref{clm:negative-semidefinite} is implicit in~\cite{MosselNeeman:15b}).
  Now we will use them with Taylor's theorem to prove Claim~\ref{clm:base-case}.
 \begin{proof}[Proof of Claim~\ref{clm:base-case}]
 Fix $\epsilon > 0$ and $\rho \in (0, 1)$.
 Now let $C(\rho)$ be large enough so that all third derivatives of $J_\rho$
 are uniformly bounded by $C(\rho) \epsilon^{-C(\rho)}$ on the square $[\epsilon, 1-\epsilon]^2$
 (such a $C(\rho)$ exists by Claim~\ref{clm:third-derivative}).
 Taylor's theorem then implies that for any $a, b, a+x, b+y \in [\epsilon, 1-\epsilon]$,
 \begin{multline}\label{eq:taylor}
  J_\rho(a + x, b + y) \le J_\rho(a, b) + x \pdiff {J_{\rho}}x (a, b) + y\pdiff {J_{\rho}}y (a, b) \\ +
  \frac{1}{2} (x \ y)
  \begin{pmatrix} \pdiffII {J_\rho}xx (a, b) & \pdiffII {J_\rho}xy (a, b) \\ \pdiffII {J_\rho}xy(a, b) & \pdiffII {J_\rho}yy(a, b)\end{pmatrix}
  \begin{pmatrix}x \\ y\end{pmatrix} + C(\rho)\epsilon^{-C(\rho)} (|x|^3 + |y|^3).
 \end{multline}

 Now suppose that $X$ and $Y$ are random variables taking values in
 $[\epsilon, 1-\epsilon]$. If we apply~\eqref{eq:taylor} with $a = \E X$, $b = \E Y$,
 $x = X - \E X$, and $y = Y - \E Y$, and then take expectations of both sides, we obtain
 \begin{multline}\label{eq:random-taylor}
  \E J_\rho(X, Y) \le J_\rho(\E X, \E Y) + \frac{1}{2}
  \E\left[
  (\tilde X\ \tilde Y)
  \begin{pmatrix}\pdiffII {J_\rho}xx(a, b) & \pdiffII {J_\rho}xy(a, b) \\ \pdiffII {J_\rho}xy(a, b) & \pdiffII {J_\rho}yy(a, b)\end{pmatrix}
  \begin{pmatrix}\tilde X \\ \tilde Y \end{pmatrix} \right] \\
  + C(\rho)\epsilon^{-C(\rho)} (\E |\tilde X|^3 + \E |\tilde Y|^3),
 \end{multline}
 where $\tilde X = X - \E X$ and $\tilde Y = Y - \E Y$.
 Now, if $X$ and $Y$ have correlation $\sigma \in [0, \rho]$ then
 $\E \tilde X \tilde Y = \sigma \sqrt{\E \tilde X^2 \E \tilde Y^2}$, and so
 \[
  \E\left[
  (\tilde X\ \tilde Y)
  \begin{pmatrix}\pdiffII {J_\rho}xx(a, b) & \pdiffII {J_\rho}xy(a, b) \\ \pdiffII {J_\rho}xy(a, b) & \pdiffII {J_\rho}yy(a, b)\end{pmatrix}
  \begin{pmatrix}\tilde X \\ \tilde Y \end{pmatrix} \right] \\
  = (\sigma_X\ \sigma_Y)
  \begin{pmatrix}\pdiffII {J_\rho}xx(a, b) & \sigma \pdiffII {J_\rho}xy(a, b) \\ \sigma \pdiffII {J_\rho}xy(a, b) & \pdiffII {J_\rho}yy(a, b)\end{pmatrix}
  \begin{pmatrix}\sigma_X \\ \sigma_Y \end{pmatrix},
  \]
where $\sigma_X = \sqrt{\E \tilde X^2}$ and $\sigma_Y = \sqrt{\E \tilde Y^2}$.
By Claim~\ref{clm:negative-semidefinite},
\[
  (\sigma_X\ \sigma_Y)
  \begin{pmatrix}\pdiffII {J_\rho}xx(a, b) & \sigma \pdiffII {J_\rho}xy(a, b) \\ \sigma \pdiffII {J_\rho}xy(a, b) & \pdiffII {J_\rho}yy(a, b)\end{pmatrix}
  \begin{pmatrix}\sigma_X \\ \sigma_Y \end{pmatrix}
  =
  (\sigma_X\ \sigma_Y)
  M_{\rho\sigma}(a, b)
  \begin{pmatrix}\sigma_X \\ \sigma_Y \end{pmatrix} \le 0.
\]
Applying this to~\eqref{eq:random-taylor}, we obtain
\[
 \E J_\rho(X, Y) \le J_\rho(\E X, \E Y) + C(\rho)\epsilon^{-C(\rho)} (\E |\tilde X|^3 + \E |\tilde Y|^3).
 \qedhere
\]
\end{proof}

\section{The inductive step} \label{subsec:induct}
Next, we prove Theorem~\ref{thm:tensorization} by induction.
\begin{proof}[Proof of Theorem~\ref{thm:tensorization}]
Note that the base case follows from~~\ref{clm:base-case} since $f(X_1), g(Y_1)$ have correlation between
    $-\rho$ and $\rho$ by the assumption on the Renyi correlation between the $X$'s and $Y$'s. 
    
We now prove the inductive claim:
 Assume that the theorem holds with $n$ replaced by $n-1$.
 Consider $f: \Omega_1^n \to [\epsilon, 1-\epsilon]$ and
 $g: \Omega_2^n \to [\epsilon, 1-\epsilon]$.

 Conditioning on $(X_n, Y_n)$
 we have
 \[
  \E J_\rho(f(X), g(Y)) = \E [\E [ J_\rho(f(X), g(Y)) | X_n, Y_n]]. 
 \]
 Applying the inductive hypothesis for $n-1$ conditionally on $X_n$ and $Y_n$,
 \begin{multline}\label{eq:induction-1}
 \E [ J_\rho(f(X), g(Y)) | X_n, Y_n]
 \le J_\rho(\E[f | X_n], \E[g |Y_n])\\
 + C(\rho)\epsilon^{-C(\rho)} (\Delta_{n-1}(f) + \Delta_{n-1}(g)).
 \end{multline}
 On the other hand, the base case for $n=1$ implies that
 \begin{multline}\label{eq:induction-2}
\E[ J_\rho(\E[f | X_n], \E[g |Y_n])]
  \le J_\rho(\E f, \E g)
  + C(\rho)\epsilon^{-C(\rho)} (\Delta_1 (\E[f | X_n]) + \Delta_1 (\E[g|Y_n])).
 \end{multline}
 Taking the expectation of~\eqref{eq:induction-1} over $X_n$ and $Y_n$ and combining it with~\eqref{eq:induction-2},
 we obtain
 \begin{eqnarray*}
  \E J_\rho(f(X), g(Y)) &\le& J_\rho(\E f, \E g) +
   C(\rho)\epsilon^{-C(\rho)} \big(\E[ \Delta_{n-1}(f) + \Delta_{n-1}(g)] \big) \\
 &+& C(\rho)\epsilon^{-C(\rho)} \big(\Delta_1 (\E[f| X_n]) + \Delta_1 (\E[g|Y_n])\big).
 \end{eqnarray*}
 Finally, note that the definition of $\Delta_n$ implies that the right-hand side above is just
 \[
  J_\rho(\E f, \E g) + C(\rho)\epsilon^{-C(\rho)}\big(
  \Delta_n (f) + \Delta_n(g)
  \big).
  \qedhere
 \]
\end{proof}

\section{Proof of Borell's Theorem}

We will now prove the following functional version of Borell's result. It is easy to see it implies Theorem~\ref{thm:half_space}.
\begin{theorem}\label{thm:borell_functional}
Let $\rho \ge 0$ and  $N$ and $M$ are Gaussian vectors with joint distribution 
 \[
  (N, M) \sim \mathcal{N}\left(0, \begin{pmatrix} I_d & \rho I_d \\ \rho I_d & I_d\end{pmatrix}\right).
 \]
 For any measurable $f, g: \R^d \to [0, 1]$,
 \[
  \E J_\rho(f(N), g(M)) \le J_\rho(\E f, \E g).
 \]
\end{theorem}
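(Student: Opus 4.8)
The plan is to derive Theorem~\ref{thm:borell_functional} from the discrete tensorization bound (Theorem~\ref{thm:tensorization}) by a central-limit approximation: realize the correlated Gaussian pair $(N,M)$ as a limit of normalized block sums of i.i.d.\ $\rho$-correlated $\pm1$ pairs, apply Theorem~\ref{thm:tensorization} on the cube, and let the block size grow. Three reductions are needed: from arbitrary measurable $f,g$ to bounded Lipschitz ones; from $[0,1]$-valued to $[\epsilon,1-\epsilon]$-valued (so that Theorem~\ref{thm:tensorization} applies); and a check that the error term $C(\rho)\epsilon^{-C(\rho)}(\Delta_n(f)+\Delta_n(g))$ it produces vanishes in the limit. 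The last point is the crux, and it is exactly what forces the Lipschitz reduction. The case $\rho=0$ is immediate since then $N,M$ are independent and $J_0(x,y)=xy$, so we assume $0<\rho<1$; throughout we use that $J_\rho$ is bounded and extends continuously to $[0,1]^2$ (with $J_\rho(0,\cdot)\equiv0$, $J_\rho(1,y)=y$).

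Fix $\epsilon>0$ and bounded $L$-Lipschitz maps $h,k:\R^d\to[\epsilon,1-\epsilon]$. Let $\mu$ be the law on $\bits\times\bits$ of a $\rho$-correlated pair as in the definition of the noisy inner product; as observed after the definition of R\'enyi correlation, $\mu$ has R\'enyi correlation $\rho$. For $m\ge1$ set $N=dm$, partition $[N]$ into consecutive blocks $B_1,\dots,B_d$ of size $m$, let $(X_i,Y_i)_{i=1}^N$ be i.i.d.\ $\sim\mu$, and define $F:\bits^N\to[\epsilon,1-\epsilon]$ by $F(x)=h\big(m^{-1/2}\sum_{i\in B_1}x_i,\dots,m^{-1/2}\sum_{i\in B_d}x_i\big)$, with $G$ obtained from $k$ the same way. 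Theorem~\ref{thm:tensorization} gives
\[
\E J_\rho(F(X),G(Y))\le J_\rho(\E F,\E G)+C(\rho)\epsilon^{-C(\rho)}\big(\Delta_N(F)+\Delta_N(G)\big).
\]
Now let $m\to\infty$. By the multivariate CLT, the $2d$-dimensional vector of normalized $X$- and $Y$-block sums converges in distribution to $(N,M)\sim\mathcal N\big(0,\big(\begin{smallmatrix}I_d&\rho I_d\\\rho I_d&I_d\end{smallmatrix}\big)\big)$: each block sum has variance $1$, distinct $X$-block sums (likewise $Y$-block sums) are uncorrelated, and an $X$-block sum and a $Y$-block sum have covariance $\rho$ if they correspond to the same block and $0$ otherwise. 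Since $h,k$ and $J_\rho$ are bounded and continuous, it follows that $\E J_\rho(F(X),G(Y))\to\E J_\rho(h(N),k(M))$, that $\E F\to\E h$ and $\E G\to\E k$, and hence $J_\rho(\E F,\E G)\to J_\rho(\E h,\E k)$. For the error term, flipping one coordinate of $F$ changes a single block sum by $2$, hence changes the corresponding normalized argument of $h$ by $2m^{-1/2}$, so $\|\partial_iF\|_\infty\le Lm^{-1/2}$; each martingale increment $|F_i-F_{i-1}|$ is therefore $O(Lm^{-1/2})$, whence $\Delta_N(F)=\sum_{i=1}^N\E|F_i-F_{i-1}|^3=O\big(N\cdot L^3m^{-3/2}\big)=O\big(dL^3m^{-1/2}\big)\to0$, and likewise $\Delta_N(G)\to0$. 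Passing to the limit yields $\E J_\rho(h(N),k(M))\le J_\rho(\E h,\E k)$ for all bounded Lipschitz $h,k$ valued in $[\epsilon,1-\epsilon]$.

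It remains to lift the two restrictions. For bounded Lipschitz $h,k:\R^d\to[0,1]$, apply the previous step to $h_\epsilon=\epsilon+(1-2\epsilon)h$ and $k_\epsilon=\epsilon+(1-2\epsilon)k$; since $J_\rho$ is $1$-Lipschitz in each coordinate, both $|\E J_\rho(h_\epsilon(N),k_\epsilon(M))-\E J_\rho(h(N),k(M))|$ and $|J_\rho(\E h_\epsilon,\E k_\epsilon)-J_\rho(\E h,\E k)|$ are $O(\epsilon)$, so letting $\epsilon\to0$ gives the inequality for all bounded Lipschitz $h,k:\R^d\to[0,1]$. Finally, for arbitrary measurable $f,g:\R^d\to[0,1]$, pick bounded Lipschitz $f_\ell\to f$ and $g_\ell\to g$ in $L^1(\gamma_d)$ with values in $[0,1]$; along a common subsequence $f_\ell(N)\to f(N)$ and $g_\ell(M)\to g(M)$ almost surely, so dominated convergence (using $0\le J_\rho\le1$) together with continuity of $J_\rho$ gives $\E J_\rho(f_\ell(N),g_\ell(M))\to\E J_\rho(f(N),g(M))$, while $\E f_\ell\to\E f$, $\E g_\ell\to\E g$ and continuity of $J_\rho$ give $J_\rho(\E f_\ell,\E g_\ell)\to J_\rho(\E f,\E g)$. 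The inequality survives the limit, proving the theorem. The one genuinely delicate point is the vanishing of the $\Delta_N$ error term: it works only because we first passed to Lipschitz functions, as there is no uniform bound on $\Delta_N$ for block functions built from a general measurable $f$.
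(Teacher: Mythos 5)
Your proof is correct and follows essentially the same route as the paper's: realize $(N,M)$ as a CLT limit of normalized block sums of $\rho$-correlated $\pm1$ pairs, apply Theorem~\ref{thm:tensorization} on the cube, use the Lipschitz hypothesis to force $\Delta_N\to0$, and then approximate general measurable $[0,1]$-valued functions by Lipschitz $[\epsilon,1-\epsilon]$-valued ones. The only cosmetic difference is that you separate the two approximation steps (relaxing the range from $[\epsilon,1-\epsilon]$ to $[0,1]$, then passing from Lipschitz to measurable), while the paper folds them into one remark, noting that neither $L$ nor $\epsilon$ appears in the final inequality.
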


\begin{proof}[Proof of Theorem~\ref{thm:borell_functional}]
 Let $n = md$ and define
 \begin{align*}
  G_{1,n} &= \frac{1}{\sqrt m} \left(
  \sum_{i=1}^m x_i,
  \sum_{i=m+1}^{2m} x_i,
  \dots,
  \sum_{i=(d-1)m + 1}^{md} x_i
  \right).
 \end{align*}
 Define $G_{2,n}$ similarly by using $y$ instead of $x$. In other words, $G_{1,n}$ and $G_{2,n}$ are vectors obtained by averaging the vectors $x$ and $y$ over consecutive  blocks of size $m$. Define $Z$ as 
\[
 Z= (x_1, x_{m+1}, \ldots, x_{(d-1)m+1}, y_1, y_{m+1}, \ldots, y_{(d-1) m+1}).
 \] 
 Observe that $(G_{1,n}, G_{2,n})$ is distributed as sum of $m$ independent copies of $Z$ scaled by $1/\sqrt{m}$. Applying the Lindeberg-Feller central  limit theorem~\cite{Feller:68}, we obtain $(G_{1,n}, G_{2,n}) \to_D (N, M)$ as $m \to \infty$.

 Suppose first that $f$ and $g$ are $L$-Lipschitz functions taking values
 in $[\epsilon, 1-\epsilon]$. Define $F, G: \{-1, 1\}^n \to \mathbb{R}$ as
 $$
 F (z) = f\left(\frac{1}{\sqrt m} \left(
  \sum_{i=1}^m z_i,
  \sum_{i=m+1}^{2m} z_i,
  \dots,
  \sum_{i=(d-1)m + 1}^{md} z_i
  \right) \right), 
 $$ 
and similarly for $G$ using $g$. 
 By Theorem~\ref{thm:tensorization},
 \begin{equation}\label{eq:borell-proof-1}
  \E J_\rho(F(x), G(y)) \le J_\rho(\E F, \E G) + C(\rho) \epsilon^{-C (\rho)} (\Delta_n(F) + \Delta_n(G)).
 \end{equation}
 Since $f$ is $L$-Lipschitz,
 \[
  |F(x) - F(x \oplus e_j)| \le \frac{2L}{\sqrt m},
 \]
 for every $j$, and similarly for $g$. Therefore:
 \[
 \max( \Delta_n(F),\Delta_n(G))  \le \frac{ 8L^3 n}{m^{3/2}} = \frac{ 8L^3 d}{\sqrt m}.
 \]
 Applying this to~\eqref{eq:borell-proof-1},
 \[
  \E J_\rho(F(x), G(y)) \le J_\rho(\E F, \E G) + C(\rho) \epsilon^{-C(\rho)} \frac{16 L^3 d}{\sqrt m},
 \]
 and so the definition of $F,G$ implies
 \[
  \E J_\rho(f(G_{1,n}), g(G_{2,n})) \le J_\rho(\E f(G_{1,n}), \E g(G_{2,n})) +  C(\rho) \epsilon^{-C(\rho)} \frac{16 L^3 d}{\sqrt m}.
 \]
 Taking $m \to \infty$, the multivariate central limit theorem and using the fact that $J_\rho$ is Lipschitz
 implies that
 \begin{equation}\label{eq:borell-lipschitz}
  \E J_\rho(f(N), g(M)) \le J_\rho(\E f(N), \E g(M)).
 \end{equation}

 This establishes the theorem for functions $f$ and $g$ which are Lipschitz and take values
 in $[\epsilon, 1-\epsilon]$. But any measurable $f_1, f_2: \R^d \to [0, 1]$ can be approximated
 (say in $L^p(\R^d, \gamma_d)$) by Lipschitz functions with values in $[\epsilon, 1-\epsilon]$.
 Since neither the Lipschitz constant nor $\epsilon$ appears in~\eqref{eq:borell-lipschitz},
 the general statement of the theorem follows from the dominated convergence theorem.
\end{proof}

\section{Smoothness and hyper-contraction}
It turns out that there is an effective bound on $\Delta_n (T_{\eta}f)$ in the case where $f$ has low influences, where $T_{\eta}$ is the noise operator and $\eta$ is close to $1$. 
 In this section we show how the fact that the noise operator $T_{\eta}$ is ``hyper-contractive" allows to bound $\Delta_n (T_{\eta}f)$ when $f$ has low influences. In the next section, we will show that the smoothness of $J$ implies that only a small error results from replacing $f$ by $T_{\eta} f$ for $\eta$ close to $1$.  
 
 Hypercontractivity is a key feature of many of the proofs in the analysis of Boolean functions starting 
with the KKL paper~\cite{KaKaLi:88}. The proof will use the famous hyper-contractive theorem of Bonami and Beckner: 
\begin{theorem}\cite{Bonami:70, Beckner:75}\label{thm:BB}
Let $f : \{-1,1\}^n \rightarrow \mathbb{R}$ and $1 \le q \le p$. Then, if $\rho^2 \le \frac{q-1}{p-1}$, then: 
\[
\| T_{\rho} f \|_p \le \|  f \|_q.
\]
\end{theorem}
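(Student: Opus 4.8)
The plan is to reduce Theorem~\ref{thm:BB} to the one-dimensional case by a tensorization argument, and then to prove the resulting two-point inequality by calculus. We may assume $\rho\ge 0$ (if $\rho<0$, write $T_\rho = T_{|\rho|}\circ T_{-1}$, where $T_{-1}f(x)=f(-x)$ is an isometry of every $L^p$), and we may assume $q<p$ (for $q=p$ the operator $T_\rho$ is an average, hence an $L^p$-contraction by Jensen). The structural fact we exploit is that the noise operator on the cube factorizes as a tensor product, $T_\rho = \bigotimes_{i=1}^n T_\rho^{(1)}$, where $T_\rho^{(1)}$ acts on functions of a single bit by $T_\rho^{(1)}(a+bx)=a+\rho b x$. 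Granting the \emph{tensorization lemma} --- that $S_1\otimes S_2 : L^q(\mu_1\times\mu_2)\to L^p(\mu_1\times\mu_2)$ is a contraction whenever $S_j:L^q(\mu_j)\to L^p(\mu_j)$ is a contraction for $j=1,2$ and $1\le q\le p$ --- an induction on $n$ reduces the theorem to its $n=1$ instance, i.e.\ to showing that $T_\rho^{(1)}$ is an $L^q\to L^p$ contraction.

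For the tensorization lemma, set $u=(\mathrm{id}\otimes S_2)h$, so that $u(x,\cdot)=S_2\big(h(x,\cdot)\big)$ for each fixed $x$, and write $\|\cdot\|_{r,x}$ for the $L^r$-norm in the variable $x$. Then
\[
\|(S_1\otimes S_2)h\|_{p,(x,y)}
\;\le\; \big\|\,\|u(\cdot,y)\|_{q,x}\,\big\|_{p,y}
\;\le\; \big\|\,\|u(x,\cdot)\|_{p,y}\,\big\|_{q,x}
\;\le\; \big\|\,\|h(x,\cdot)\|_{q,y}\,\big\|_{q,x}
\;=\; \|h\|_{q,(x,y)} ,
\]
where the first inequality applies the contractivity of $S_1$ in the variable $x$ for each fixed $y$, the second is Minkowski's integral inequality (valid since $p/q\ge 1$), and the third applies the contractivity of $S_2$ in the variable $y$ for each fixed $x$.

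It remains to prove the two-point inequality: for $f(x)=a+bx$ on $\{-1,1\}$ with the uniform measure and $0\le\rho^2\le (q-1)/(p-1)$,
\[
\Big(\tfrac{|a+\rho b|^p+|a-\rho b|^p}{2}\Big)^{1/p} \;\le\; \Big(\tfrac{|a+b|^q+|a-b|^q}{2}\Big)^{1/q}.
\]
If $a=0$ this reads $|\rho b|\le|b|$; if $b=0$ both sides equal $|a|$; otherwise, dividing by $|a|$ and using the symmetry $x\mapsto-x$, we may take $a=1$ and $b\ge 0$. Since the left-hand side is nondecreasing in $\rho\ge0$, it suffices to treat the extremal value $\rho=\sqrt{(q-1)/(p-1)}$, which leaves a single scalar inequality in the variable $b\ge0$. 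I would attack it by Taylor expansion about $b=0$: both sides equal $1$ there, and the coefficients of $b^2$ are $\tfrac{p-1}{2}\rho^2$ on the left and $\tfrac{q-1}{2}$ on the right, so the hypothesis $\rho^2(p-1)\le q-1$ is exactly the second-order requirement; one then verifies the inequality for every $b\ge0$ by examining the sign of an appropriate derivative (differentiating, say, the difference of the two sides raised to a common power).

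The genuinely hard part is this last scalar inequality. The second-order Taylor check is routine and makes the exponent condition transparent, but promoting it to all $b\ge0$ --- and simultaneously to all admissible pairs $q\le p$ --- requires controlling the higher-order terms, and this single-variable estimate is the one step where honest analysis, rather than formal manipulation, is needed. An alternative route isolating the same difficulty is Gross's: prove the two-point logarithmic Sobolev inequality (bounding the entropy of $f^2$ by a fixed multiple of its Dirichlet form, again a one-variable inequality), tensorize it using subadditivity of entropy and additivity of Dirichlet forms over product measures, and then invoke the equivalence between the logarithmic Sobolev inequality and hypercontractivity to recover Theorem~\ref{thm:BB} on $\{-1,1\}^n$ with the stated relation among $\rho$, $q$, $p$.
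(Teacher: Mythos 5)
The paper states this result as a citation and does not prove it, so there is no internal proof to compare against; you have gone beyond the paper in attempting a derivation. The parts of your outline that you do carry out are correct: the reduction to $\rho\ge0$ via $T_\rho=T_{|\rho|}T_{-1}$, the tensor factorization $T_\rho=\bigotimes T_\rho^{(1)}$, the tensorization lemma proved cleanly from the contractivity of $S_1$, the mixed-norm Minkowski inequality $\bigl\|\,\|u\|_{L^q_x}\bigr\|_{L^p_y}\le\bigl\|\,\|u\|_{L^p_y}\bigr\|_{L^q_x}$ for $q\le p$, and the contractivity of $S_2$ (with Fubini to close); and the normalization to $a=1$, $b\ge0$, $\rho=\sqrt{(q-1)/(p-1)}$ in the two-point inequality is also fine. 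This is the standard reduction.

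The genuine gap is the two-point inequality itself, which you flag but do not resolve, and your stated plan for it will not work as written. At the extremal $\rho=\sqrt{(q-1)/(p-1)}$ the two sides agree not merely to second order but through third order (both are even in $b$, and you have shown the $b^2$ coefficients coincide), so the difference has a degenerate critical point at $b=0$; ``examining the sign of an appropriate derivative'' therefore gives no information near the origin, and Taylor data at $0$ says nothing for $b$ of order one. Closing this step takes real work: one standard elementary route is to prove the $(2,p)$ endpoint for $p\ge2$ by comparing the even binomial series of $\tfrac12\bigl((1+\rho b)^p+(1-\rho b)^p\bigr)$ term by term with that of $(1+b^2)^{p/2}$ (handling $\rho b\ge1$ separately), pass to $(q,2)$ by duality, and then to $(q,p)$ with $q\le2\le p$ via the semigroup identity $T_{\rho_1\rho_2}=T_{\rho_1}T_{\rho_2}$; the remaining parameter ranges still require an extra argument. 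The alternative you mention in your last paragraph — prove a two-point log-Sobolev inequality, tensorize it by subadditivity of entropy and additivity of the Dirichlet form, and invoke Gross's equivalence with hypercontractivity — does cover the full range $1\le q\le p$ in one stroke and is the cleanest way to turn your sketch into a complete proof; if you pursue it, that is the step to actually write out.
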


We will use this fact to analyze the differences  $\IE[|f_i-f_{i-1}|^3]$. In particular we will prove the following: 
\begin{lemma} \label{lem:mart_cubed}
Let $f : \{-1,1\}^n \to [0,1]$ and let $0 < \rho < 1$. Then:
\begin{itemize}
\item 
$(T_\rho f)_i = T_{\rho} (f_i)$,
\item 
\[ 
\IE [ |T_\rho (f_i - f_{i-1})|^3]  \leq  I_i(f)^{p/2},
\]
where $p = \min(3,1 + 1/\rho^2)$,
\item 
\[
\Delta_n(T_{\rho^2} f) \leq \frac{1}{1-\rho} \max I_i(f)^{p/2-1}.
\]  
\end{itemize}
\end{lemma}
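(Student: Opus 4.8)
The plan is to prove the three bullets in order: the first two by a short Fourier computation plus one invocation of Bonami--Beckner, and the third by bootstrapping the second off the factorization $T_{\rho^2}=T_\rho\circ T_\rho$ together with the already-established Lemma~\ref{lem:inf_sum_rho}. For the first bullet I would expand $f=\sum_S\hat f(S)x_S$ and use that $\IE[x_S\mid x_{i+1},\dots,x_n]$ equals $x_S$ when $S\subseteq\{i+1,\dots,n\}$ and $0$ otherwise (the coordinates are independent and mean zero). This gives $f_i=\sum_{S\subseteq\{i+1,\dots,n\}}\hat f(S)x_S$, and since $\widehat{T_\rho f}(S)=\rho^{|S|}\hat f(S)$ the identical computation yields $(T_\rho f)_i=\sum_{S\subseteq\{i+1,\dots,n\}}\rho^{|S|}\hat f(S)x_S=T_\rho(f_i)$.

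For the second bullet, write $d_i:=f_{i-1}-f_i$. The same computation shows $f_{i-1}=\sum_{S\subseteq\{i,\dots,n\}}\hat f(S)x_S$, hence $d_i=\sum_{S:\,i\in S,\ S\subseteq\{i,\dots,n\}}\hat f(S)x_S$, so every frequency of $d_i$ contains $i$ and Parseval gives $\|d_i\|_2^2\le\sum_{S\ni i}\hat f(S)^2=I_i(f)$. Moreover $f_{i-1},f_i\in[0,1]$ as conditional expectations of $f\in[0,1]$, so $|d_i|\le1$ and therefore $|T_\rho d_i|\le1$ pointwise (an average of values of $d_i$). Now set $p=\min(3,\,1+1/\rho^2)$; for $0<\rho<1$ this satisfies $2<p\le3$ and $p-1\le1/\rho^2$, i.e.\ $\rho^2\le\frac{2-1}{p-1}$, so Bonami--Beckner (Theorem~\ref{thm:BB}) with $q=2$ gives $\|T_\rho d_i\|_p\le\|d_i\|_2\le I_i(f)^{1/2}$. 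Since $p\le3$ and $|T_\rho d_i|\le1$, we have $|T_\rho d_i|^3\le|T_\rho d_i|^p$ pointwise, whence
\[
\IE\big[\,|T_\rho d_i|^3\,\big]\le\IE\big[\,|T_\rho d_i|^p\,\big]=\|T_\rho d_i\|_p^p\le I_i(f)^{p/2},
\]
and the Lemma's $f_i-f_{i-1}=-d_i$ changes nothing under $|\cdot|^3$.

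For the third bullet, put $h:=T_\rho f$, which again takes values in $[0,1]$, and note $T_{\rho^2}f=T_\rho\circ T_\rho f=T_\rho h$. By the first bullet applied to $h$, the $i$-th martingale increment of $T_{\rho^2}f$ is $(T_\rho h)_i-(T_\rho h)_{i-1}=T_\rho(h_i-h_{i-1})$, and applying the second bullet to $h$ bounds $\IE[\,|T_\rho(h_i-h_{i-1})|^3\,]$ by $I_i(h)^{p/2}=I_i(T_\rho f)^{p/2}$. Since every frequency counted by $I_i$ has size at least $1$, $I_i(T_\rho f)=\sum_{S\ni i}\rho^{2|S|}\hat f(S)^2\le\rho^2 I_i(f)\le I_i(f)$. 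Therefore
\[
\Delta_n(T_{\rho^2}f)=\sum_{i=1}^n\IE\big[\,|(T_{\rho^2}f)_i-(T_{\rho^2}f)_{i-1}|^3\,\big]\le\sum_{i=1}^n I_i(T_\rho f)^{p/2},
\]
and since $t\mapsto t^{p/2-1}$ is nondecreasing (as $p/2-1\ge0$) while $I_i(T_\rho f)\le I_i(f)$,
\[
\sum_{i=1}^n I_i(T_\rho f)^{p/2}\le\Big(\max_i I_i(f)\Big)^{p/2-1}\sum_{i=1}^n I_i(T_\rho f)\le\frac{1}{1-\rho}\,\Big(\max_i I_i(f)\Big)^{p/2-1},
\]
the last step by Lemma~\ref{lem:inf_sum_rho} (using $0<\rho<1$).

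The only genuinely delicate point is lining up the exponents in the second bullet: hyper-contractivity alone only lifts an $L_2$ bound to $L_p$ with $p=1+1/\rho^2$, which is strictly below $3$ once $\rho>1/\sqrt2$, so it cannot by itself control the $L_3$ norm. The remedy is that $d_i$ — and hence $T_\rho d_i$ — is bounded by $1$, so the $L_3$ norm is dominated by the weaker $L_p$ norm; the definition $p=\min(3,1+1/\rho^2)$ is engineered precisely so that the Bonami--Beckner hypothesis and $p\le3$ hold simultaneously. Everything else is Fourier bookkeeping plus the cited Lemma~\ref{lem:inf_sum_rho}, whose $1/(1-\rho)$ is exactly what supplies that factor in the final bound.
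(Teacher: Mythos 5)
Your proof is correct and takes essentially the same route as the paper: same Fourier identification of the martingale increments, same invocation of Bonami--Beckner at $p=\min(3,1+1/\rho^2)$ together with the pointwise bound $|T_\rho d_i|\le 1$ to pass from $L_p$ to $L_3$, and the same use of the factorization $T_{\rho^2}=T_\rho\circ T_\rho$ plus Lemma~\ref{lem:inf_sum_rho} for the third bullet. The only differences are cosmetic — you spell out the Fourier argument for $(T_\rho f)_i=T_\rho(f_i)$ (which the paper leaves as immediate from the definitions) and phrase the $L_3\le L_p$ step as a direct pointwise comparison rather than as the factorization $|T_\rho d_i|^3=|T_\rho d_i|^p|T_\rho d_i|^{3-p}$.
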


\begin{proof}
The fact that $(T_\rho f)_i = T_{\rho} (f_i)$ follows immediately from the definitions. 
Let $p = \min(3,1 + 1/\rho^2)$. Then we apply Theorem~\ref{thm:BB} to obtain:
\begin{eqnarray*}
\IE [ |T_\rho (f_i - f_{i-1}) |^3] &=&  \IE [ |T_\rho (f_i - f_{i-1}) |^p |T_\rho (f_i - f_{i-1}) |^{3-p}] \\ 
&\leq&  \IE [ |T_\rho (f_i - f_{i-1})|^p] = \| T_\rho (f_i - f_{i-1}) \|_p^p \leq \| f_i - f_{i-1}\|_2^p \leq I_i^{p/2},
\end{eqnarray*} 
and the proof follows of the second statement follows. To prove the third statement observe that:
\begin{eqnarray*}
\Delta_n(T_{\rho^2} f) &=& \sum_{i=1}^n \IE [ |T_\rho^2 (f_i - f_{i-1})|^3] \\ 
                               &\leq& \sum_{i=1}^n   I_i(T_{\rho} f)^{p/2} \leq  \max I_i(T_{\rho} f)^{p/2-1}  \sum_{i=1}^n  I_i(T_{\rho} f) \\ 
                               &\leq& \frac{1}{1-\rho} \max I_i(f)^{p/2-1}.
\end{eqnarray*} 
\end{proof} 


\section{Proof of Majority is Stablest}
\begin{proof}
Fix $\eps > 0$. Our goal is to prove that:  
\begin{equation} \label{eq:MISTp1}
\langle f, g \rangle_{\rho} \leq \langle \chi_{\IE f}, \chi_{\IE g} \rangle_{\rho}+ \eps = J_{\rho}(\IE f, \IE g) + \eps, 
\end{equation} 
when $\max(I_i(f), I_i(g)) \leq \tau$ for a small enough $\tau(\eps)$. 

Let $\eps' = \eps/10$. Let $\bar{f}$ be the rounding of $f$ to the interval $[\eps',1-\eps']$ and similarly define $\bar{g}$. 
We first note that it suffices to prove the desired claim~\ref{eq:MISTp1} for $\bar{f}$ and $\bar{g}$ with error $\eps'$. 
This is because 
\[
|\langle f, g \rangle_{\rho} - \langle \bar{f}, \bar{g} \rangle_{\rho}| \leq 2 \eps', \quad |\IE f - \IE f'| \leq \eps', \quad |\IE g - \IE g'| \leq \eps
\]
and because $J_{\rho}(x,y)$ is $1$-Lip in each of the coordinates $x$ and $y$. 
Moreover, truncation reduces variance and therefore $I_i(\bar{f}) \leq I_i(f)$ for all $i$ and similarly for $g$. 
For simplicity of notation we redefine $\eps = \eps'$ and our goal is now to prove~(\ref{eq:MISTp1}) for functions bounded in the interval 
$[\eps,1-\eps]$. 

Again, define $\eps' = \eps/10$ and let $\rho = \eta^2 \rho'$ for $\eta < 1$ be chosen so that 
\[
J_{\rho'}(\IE f, \IE g) \leq J_{\rho}(\IE f, \IE g) + \eps'.
\]
Then it suffices to prove that 
\[
\langle f, g \rangle_{\rho}  =  \langle T_{\eta} f, T_{\eta} g \rangle_{\rho'} \leq J_{\rho'}(\IE f, \IE g) + \eps'.
\]
and since $J_{\rho'}(x, y) \ge xy$ it suffices to prove that 
\begin{equation} \label{eq:maj_stab_smooth}
\IE[J_{\rho'}(T_{\eta} f, T_{\eta} g)] \leq J_{\rho'}(\IE f, \IE g) + \eps'.
\end{equation}
Again we have that $I_i(T_{\eta} f) \leq I_i(f)$ for all $i$ and similarly for $g$. 
Renaming again $\rho = \rho'$ and $\eps' = \eps$ and applying Theorem~\ref{thm:tensorization} what remain to show is that 
\[
C(\rho) \eps^{-C(\rho)} (\Delta_n(T_{\eta} f) + \Delta_n(T_{\eta} g)) \leq \eps.
\]
By Lemma~\ref{lem:mart_cubed} this can be bounded by:
\[
C(\rho) \eps^{-C(\rho)} C(\eta) \max I_i(f)^{\eta} 
\]
Thus for $\tau$ small enough, this is less than $\eps$ as needed. 
\end{proof}

\section{Proof for resilient functions}
We now prove Theorem~\ref{thm:MIST_res}. 
First we note that in the inductive proof in subsection~\ref{subsec:induct}, if we stop $k$ steps before the end we obtain: 

\begin{proposition} \label{prop:part_tail}
For every $1 \leq k \leq n$, it holds that:
\begin{eqnarray*}
\E J_\rho(f(X), g(Y)) &\leq& 
\E \Big[J_\rho( f_{n-k}(X),g_{n-k}(Y)) \Big] 
\\ 
 &+& C(\rho)\epsilon^{-C(\rho)} \E \Big[ \Delta_{n-k}(f) + \Delta_{n-k}(g) \Big],
\end{eqnarray*}
where $f_{n-k} = \E[f | X_{n-k},\ldots,X_k]$ and $g_{n-k} = \E[g | Y_{n-k},\ldots,Y_n]) \Big]$. 
\end{proposition}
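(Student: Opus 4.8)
The plan is to re-run the induction used to prove Theorem~\ref{thm:tensorization}, peeling off one coordinate at a time, but to halt after $n-k$ peeling steps rather than continuing all the way down to the constant. Concretely, I would prove by induction on $m$, the number of coordinates already integrated out, that for every $0 \le m \le n$
\[
\E J_\rho(f(X), g(Y)) \;\le\; \E\big[J_\rho(f_m(X), g_m(Y))\big] + C(\rho)\epsilon^{-C(\rho)}\big(\Delta_m(f) + \Delta_m(g)\big),
\]
where $f_m = \E[f \mid X_{m+1},\dots,X_n]$ and $g_m = \E[g \mid Y_{m+1},\dots,Y_n]$ are the martingale elements of Definition~\ref{def:DMS_error}; setting $m = n-k$ then gives the Proposition (and $m=n$ recovers Theorem~\ref{thm:tensorization} as a sanity check). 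The base case $m = 0$ is an identity, since $f_0 = f$, $g_0 = g$, and $\Delta_0 = 0$.

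For the inductive step, assume the displayed bound for some $m \le n-1$ and estimate $\E[J_\rho(f_m(X), g_m(Y))]$ by integrating out coordinate $m+1$. Condition on the tail block $(X_i,Y_i)_{i \ge m+2}$: given this block, $f_m$ is a function of $X_{m+1}$ alone with conditional mean $f_{m+1}$, $g_m$ is a function of $Y_{m+1}$ alone with conditional mean $g_{m+1}$, and both still take values in $[\epsilon,1-\epsilon]$. Because $(X_{m+1},Y_{m+1})$ has law $\mu$ and is independent of the tail, the R\'enyi-correlation hypothesis on $\mu$ forces the conditional Pearson correlation of $(f_m,g_m)$ into $[-\rho,\rho]$, so Claim~\ref{clm:base-case} applies conditionally; averaging over the tail yields
\[
\E\big[J_\rho(f_m(X), g_m(Y))\big] \;\le\; \E\big[J_\rho(f_{m+1}(X), g_{m+1}(Y))\big] + C(\rho)\epsilon^{-C(\rho)}\big(\E|f_m - f_{m+1}|^3 + \E|g_m - g_{m+1}|^3\big).
\]
Plugging this into the inductive hypothesis and using $\Delta_{m+1}(f) = \Delta_m(f) + \E|f_{m+1}-f_m|^3$ (and likewise for $g$), which is immediate from the definition of $\Delta$, closes the induction.

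An equivalent, slightly slicker route is to invoke Theorem~\ref{thm:tensorization} itself with $n$ replaced by $n-k$, conditionally on the block $(X_i,Y_i)_{i>n-k}$: given that block, $f$ and $g$ become functions of the first $n-k$ i.i.d.\ coordinates, their conditional means are $f_{n-k}$ and $g_{n-k}$, and one checks that averaging the $\Delta_{n-k}$ of the conditioned functions over the block returns $\Delta_{n-k}(f)$ and $\Delta_{n-k}(g)$ by the tower property. Either way, I do not expect a real obstacle: the content is the bookkeeping of which coordinates have been eliminated and matching the error telescope to the truncated $\Delta_{n-k}$. The only step deserving an explicit line is the one already used in the proof of Theorem~\ref{thm:tensorization} — that conditioning on the remaining coordinates leaves a pair whose Pearson correlation is controlled by the R\'enyi correlation $\rho$, so that Claim~\ref{clm:base-case} is legitimately applicable at each peeling step.
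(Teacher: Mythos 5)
Your proposal is correct and is essentially the argument the paper intends: the paper dispatches Proposition~\ref{prop:part_tail} with the one-line remark that it follows by stopping the induction of Theorem~\ref{thm:tensorization} $k$ steps early, and your one-step telescope $\E J_\rho(f_m,g_m)\le\E J_\rho(f_{m+1},g_{m+1})+C(\rho)\epsilon^{-C(\rho)}\big(\E|f_{m+1}-f_m|^3+\E|g_{m+1}-g_m|^3\big)$, obtained by conditioning on $(X_i,Y_i)_{i\ge m+2}$ and applying Claim~\ref{clm:base-case}, is exactly what unrolling and truncating that induction yields. Your second ("slicker") route — applying Theorem~\ref{thm:tensorization} to the first $n-k$ coordinates conditionally on the last $k$ and using the tower property to recover $\Delta_{n-k}$ — is also valid and is a clean way to phrase the same content.
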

\begin{proof}
The proof will imitate the previous proof up to (\ref{eq:maj_stab_smooth}).
Using the fact that 
\[
\sum_{i=1}^n I_i(T_{\eta} f) \leq (1-\eta)^{-1},
\]
It follows that there are most $k = O(\tau^{-1} (1-\eta)^{-1})$ coordinates of 
$f$ and $g$ with influence more than $\tau$. Without loss of generality, assume that these are the the last $k$ coordinates. 
Then apply Proposition~\ref{prop:part_tail} to conclude that: 
\[
\IE[J_{\rho'}(T_{\eta} f, T_{\eta} g)] \leq \E [J_\rho(T_{\eta} f_{n-k}(X),T_{\eta} g_{n-k}(Y))] + \kappa,
\]
where the error term $\kappa$ is bounded as before by 
\[
C(\rho) \eps^{-C(\rho)} C(\eta) \max I_{1 \leq i \leq n-k}(f)^{\eta} \leq \eps/2. 
\]
Now if the function $f$ is $(k,\eps/100)$ resilient then so is $f_{n-k}$ and 
$T_{\eta} f_{n-k}$. Therefore 
\[
|f_{n-k} - \IE[f]| \leq \eps/100,
\]
and therefore since $J$ is Lipchitz it holds that  
\[
\E [J_\rho(T_{\eta} f_{n-k}(X),T_{\eta} g_{n-k}(Y))] \leq 
\E [J_\rho(\E f,T_{\eta} g_{n-k}(Y))] + \eps/100 \leq J_{\rho}(\IE f, \IE g) + \eps/100,
\]
where the ultimate inequality follows from the fact that $J$ is concave in each of the coordinates. 
\end{proof} 

Note that the only place where we used the resilience of $f$ in showing that 
\[
\E [J_\rho(T_{\eta} f_{n-k}(X),T_{\eta} g_{n-k}(Y))] \leq J_{\rho}(\IE f, \IE g) + \eps/100
\]
More generally:
\begin{exercise}
\begin{itemize}
\item 
Prove that if the $W_1$ distance between the joint distribution of 
$(T_{\eta} f_{n-k}(X),T_{\eta} g_{n-k}(Y))$ and the product of the marginals is bounded by $\eps$ then 
\[
\E [J_\rho(T_{\eta} f_{n-k}(X),T_{\eta} g_{n-k}(Y))] \leq J_{\rho}(\IE f, \IE g) + \eps.
\]
\item 
Show that if the $W_1$ distance between the joint distribution of 
$(T_{\eta} f_{n-k}(X),T_{\eta} g_{n-k}(Y))$ and the product of the marginals is smaller than the corresponding distance between the joint distribution of 
$( f_{n-k}(X), g_{n-k}(Y))$ and the product of their marginals. 
\item 
Show that if $f,g : \{-1,1\}^k \to \{0,1\}$ and the distance between 
 the joint distribution of  $( f(X), g(Y))$ and the product of their marginals is more than 
 $\eps$ then:
 \[
 \sum_{S \neq \emptyset} |\hat{f}(S) \hat{g}(S)| \geq 
 |\sum_{S \neq \emptyset} \rho^{|S|} \hat{f}(S) \hat{g}(S)| \geq \eps.
 \]
 \end{itemize}
\end{exercise}

This exercise implies in particular the following Theorem:

\begin{theorem}
For every $\eps > 0, 0 < \rho < 1$, there exists $k$ and $\alpha$ such that if 
$f,g : \{-1,1\}^n \to \{0,1\}$ satisfy 
\[
\langle f, g \rangle_{\rho} > J_{\rho}(\E f, \E g) + \eps,
\]
then there exists a set $S$ with $|S| \leq k$ such that $\hat{f}(S) \hat{g}(S) \geq \alpha$. 
\end{theorem}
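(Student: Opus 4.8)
The plan is to derive this Theorem directly from the three bullets of the preceding Exercise, which between them reduce the problem to a Fourier-analytic statement about functions on a bounded number of coordinates. First I would invoke Theorem~\ref{thm:MIST_res} (or rather the structure of its proof, as recorded in Proposition~\ref{prop:part_tail}): given $\langle f,g\rangle_\rho > J_\rho(\E f,\E g)+\eps$, the function $f$ cannot be $(r,\alpha)$-resilient for the parameters $r=r(\eps,\rho)$, $\alpha=\alpha(\eps,\rho)$ supplied there, and moreover — tracing through the proof — the same argument shows that after smoothing by $T_\eta$ and passing to the conditional expectations $f_{n-k},g_{n-k}$ onto the $k=O(\tau^{-1}(1-\eta)^{-1})$ high-influence coordinates, the joint law of $(T_\eta f_{n-k}(X), T_\eta g_{n-k}(Y))$ must be $\eps/100$-far in $W_1$ from the product of its marginals; otherwise the concavity-of-$J$ step in the proof of Theorem~\ref{thm:MIST_res} would close the bound and contradict the hypothesis.

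Next I would feed this into the second and third bullets of the Exercise. By the monotonicity asserted in the second bullet, the $W_1$-distance between the law of $(f_{n-k}(X), g_{n-k}(Y))$ and the product of its marginals is at least that of the smoothed pair, hence at least some $\eps'=\eps'(\eps,\rho)>0$. Now $f_{n-k}$ and $g_{n-k}$ are functions of at most $k$ coordinates (after relabeling), so we are exactly in the setting of the third bullet of the Exercise with $k$ in place of $n$ — except that $f_{n-k},g_{n-k}$ are $[0,1]$-valued rather than $\{0,1\}$-valued. That mismatch is harmless: the third bullet's proof only uses $\|f\|_2\le 1$, $\|g\|_2\le 1$, so it applies verbatim to $[0,1]$-valued functions, giving
\[
\sum_{S\neq\emptyset}|\hat f_{n-k}(S)\,\hat g_{n-k}(S)| \;\ge\; \Big|\sum_{S\neq\emptyset}\rho^{|S|}\hat f_{n-k}(S)\,\hat g_{n-k}(S)\Big| \;\ge\; \eps'.
\]

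Finally I would convert this $\ell^1$ lower bound into a single large coordinate. Since $f_{n-k},g_{n-k}$ depend on at most $k$ variables, every nonempty $S$ in the sum has $S\subseteq[k]$, so there are at most $2^k-1$ terms; hence some $S$ with $1\le|S|\le k$ satisfies $|\hat f_{n-k}(S)\hat g_{n-k}(S)|\ge \eps' 2^{-k}$. Replacing $\rho$ by $\rho'$ and tracking the various $\eps/10$'s introduced in the smoothing reductions (exactly as in the proof of Theorem~\ref{thm:MIST_res}), and using that $\hat f_{n-k}(S)=\hat f(S)$, $\hat g_{n-k}(S)=\hat g(S)$ for $S$ supported on the retained coordinates (conditional expectation merely zeroes out Fourier coefficients involving the other variables), we get the claimed $\hat f(S)\hat g(S)\ge\alpha$ for a suitable $\alpha=\alpha(\eps,\rho)>0$ and $|S|\le k$; one may further replace $\hat f(S)\hat g(S)\ge\alpha$ by taking absolute values inside and noting a sign flip of one coordinate of $f$ is allowed, or simply state the bound with $|\hat f(S)\hat g(S)|$.

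\textbf{Main obstacle.} The genuinely nontrivial point is the first step: extracting from the \emph{proof} of Theorem~\ref{thm:MIST_res} — not just its statement — the quantitative conclusion that the smoothed conditional pair is $W_1$-far from independence whenever the stability bound is violated. This requires observing that the only inequality in that proof which can fail to be tight is the concavity step $\E[J_\rho(\E f, T_\eta g_{n-k}(Y))]\le J_\rho(\E f,\E g)+\eps/100$ together with the resilience-driven replacement $|f_{n-k}-\E f|\le\eps/100$, and quantifying the defect there in terms of $W_1$ via the Lipschitz property of $J_\rho$. Everything after that is bookkeeping with Fourier coefficients and the constant-count $2^k$, and the passage from $[0,1]$-valued to $\{0,1\}$-valued functions costs nothing since all the estimates used are $L^2$ bounds.
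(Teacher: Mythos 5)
There is a genuine gap in your proposal, and it is exactly at the step you wave away as ``harmless.''

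You claim that the third bullet of the Exercise --- that $W_1$-farness of the joint law of $(f(X),g(Y))$ from the product of its marginals forces $\bigl|\sum_{S\neq\emptyset}\rho^{|S|}\hat f(S)\hat g(S)\bigr|\geq\eps$ --- ``applies verbatim to $[0,1]$-valued functions, since its proof only uses $\|f\|_2\le 1$, $\|g\|_2\le1$.'' That is not so: the bullet is true for $\{0,1\}$-valued functions for a structural reason that disappears the moment the range is enlarged. For $\{0,1\}$-valued $f,g$, the joint law of $(f(X),g(Y))$ is a distribution on the four-point set $\{0,1\}^2$; once the marginals are fixed, such a distribution has exactly one degree of freedom, namely the covariance $\sum_{S\neq\emptyset}\rho^{|S|}\hat f(S)\hat g(S)$, and the $W_1$ distance to the product is comparable to its absolute value. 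Nothing remotely like this holds for $[0,1]$-valued $\phi,\psi$: the joint law has infinitely many degrees of freedom with the marginals fixed, and $W_1$-farness from the product says nothing about the covariance, let alone the $\ell^1$ Fourier overlap. Concretely, take $k=2$, $\rho\in(0,1)$, and
\[
\phi(x)=\tfrac12+\tfrac{x_1}{8}+\tfrac{x_2}{8},\qquad \psi(y)=\tfrac12+\tfrac{y_1y_2}{4}.
\]
Both are $[0,1]$-valued, and $\hat\phi(S)\hat\psi(S)=0$ for \emph{every} $S\neq\emptyset$ since the Fourier supports $\{\emptyset,\{1\},\{2\}\}$ and $\{\emptyset,\{1,2\}\}$ meet only in $\emptyset$. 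Yet the joint law of $(\phi(X),\psi(Y))$ is not a product: $\widehat{\phi^2}(\{1,2\})=\tfrac{1}{32}$, so $\E[\phi^2(X)\psi(Y)]-\E[\phi^2]\E[\psi]=\rho^2\cdot\tfrac{1}{32}\cdot\tfrac{1}{4}\neq 0$, and since $(a,b)\mapsto a^2b$ is Lipschitz on $[0,1]^2$ this forces $W_1>0$ (and, after rescaling the coefficients, $W_1$ bounded away from $0$). So the hypothesis of the third bullet holds with some $\eps>0$ while its conclusion fails completely. Your reduction therefore breaks exactly at the point where you pass from $(T_\eta f_{n-k},T_\eta g_{n-k})$ to the Fourier overlap of $f_{n-k},g_{n-k}$; the remaining ``bookkeeping'' (pigeonhole over $\leq 2^k$ sets, $\hat f_{n-k}(S)=\hat f(S)$ for $S$ inside the retained block) is fine, but it rests on a conclusion you have not actually obtained. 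Fixing the argument would require either a genuinely different bridge from the $W_1$-farness of a $[0,1]^2$-valued pair to a low-degree Fourier overlap (this is not a cosmetic restatement of bullet~3), or a route that never leaves the $\{0,1\}$-valued setting.

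Separately, your parenthetical fix for getting the signed conclusion $\hat f(S)\hat g(S)\ge\alpha$ (``a sign flip of one coordinate of $f$ is allowed'') does not work as stated: replacing $f(x)$ by $f$ with the sign of coordinate $i\in S$ flipped changes $\hat f(T)$ for \emph{all} $T\ni i$ and hence changes $\langle f,g\rangle_\rho$, so the hypothesis of the theorem may no longer hold for the modified function. The signed conclusion does come for free once one has a lower bound on $\sum_{S\neq\emptyset}\rho^{|S|}\hat f_{n-k}(S)\hat g_{n-k}(S)$ \emph{itself} (not just its absolute value), since then the maximum of the at most $2^k-1$ summands is at least the average and hence positive; but establishing the positivity of that sum is part of the same missing step.
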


\section{Cross Influences}
We now sketch an alternative proof of Theorem~\ref{thm:MIST} given Theorem~\ref{thm:MISTsimple}. For more details see~\cite{Mossel:10}.

\begin{proof}\mbox{}
  
\begin{itemize}
\item 
We will use the fact that if $I_i(g) \leq \tau'$, then there exists a function $g'$ that does not depend on $i$ such that $\E[|g-g'|^2] \leq \tau'$, i.e.,
$g' = \E[g | x_{-i}]$ and that if we let $f'  = \E[f | x_{-i}]$ then for all $\rho$: 
$\langle f',g' \rangle_{\rho} = \langle f,g' \rangle_{\rho}$. 
\item
As before, it suffices to prove the theorem for $T_{\gamma} f$ and $T_{\gamma} g$ for some $\gamma$ close to, but less than $1$ incurring an error of 
$0.1 \eps$. 
\item We know that if all the influences of both $f$ and $g$ are less than some
  $\tau$ then we obtain the right statement with error $0.1 \eps$.
\item By Lemma~\ref{lem:inf_sum_rho},
\[
\sum_i I_i(T_{\gamma} f) \leq \max_k k \gamma^k \leq C(\gamma),
\]
and thus there are at most $C(\gamma)/\tau$ coordinates of $f$
with influence greater than $\tau$. Let us denote this set of coordinates by $A_f$.
Let $A_g$ be the corresponding set for $g$.  
\item 
  Let us choose $\tau'$ so that $\sqrt{\tau'} C(\gamma)/\tau \leq 0.1 \eps$.  
  Suppose that $\min(I_i(f), I_i(g)) \leq \tau'$ for all $i$. Note that this implies in particular that $A_f$ and $A_g$ are disjoint. 

Let $f' = \IE[T_{\gamma} f | x_{-A_g}]$ and $g' = \IE[T_{\gamma} f | x_{-A_f}]$.  Then:
\[
| \langle f', g' \rangle - \langle T_{\gamma} f, T_{\gamma} g \rangle| \leq 0.2 \eps.
\]
Moreover, $\max(I_i(f'), I_i(g')) \leq \tau$ so we can apply Theorem~\ref{thm:MISTsimple} to conclude. 
\end{itemize} 
\end{proof} 

\section{Majority is the most stable resilient function}
We now prove Theorem~\ref{thm:MIST_res} using different ideas from additive combinatorics. 
Our proof follows~\cite{Mossel:20resilient}. Similar ideas appeared before in~\cite{OSTW:10}, see also~\cite{Jones:16}. 
The proof will use Decision Trees as well as the following standard estimates,
see e.g.~\cite[Appendix B]{MoOdOl:10} and subsection \ref{subsec:J}. 

\begin{lemma} \label{lem:gaussian_continuous} 
Assume $\rho < 1$ and $\rho_1 < \rho_2 < 1$ then 
\[
| \langle \chi_{\mu_1}, \chi_{\mu_2} \rangle_{\rho_1} - \langle  \chi_{\mu_1}, \chi_{\mu_2} \rangle_{\rho_2} | 
\leq    \frac{10 (\rho_2 - \rho_1)}{1-\rho_2}
\]

\[
| \langle  \chi_{\mu'_1}, \chi_{\mu'_2} \rangle_{\rho} - \langle  \chi_{\mu_1}, \chi_{\mu_2} \rangle_{\rho} | 
\leq     |\mu_1-\mu_1'|  +  |\mu_2-\mu_2'| 
\]
\end{lemma}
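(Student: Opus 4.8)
The plan is to estimate both quantities directly from the defining formula $\langle \chi_{\mu_1},\chi_{\mu_2}\rangle_\rho = J_\rho(\mu_1,\mu_2) = \P[N\le \Phi^{-1}(\mu_1),\ M\le \Phi^{-1}(\mu_2)]$ where $(N,M)$ is $\rho$-correlated standard bivariate Gaussian. For the second inequality, I would fix $\rho$ and vary $\mu_1$: writing $a_j = \Phi^{-1}(\mu_j)$, the quantity $J_\rho(\mu_1,\mu_2)$ as a function of $\mu_1$ has derivative $\partial_{\mu_1} J_\rho(\mu_1,\mu_2) = \P[M \le a_2 \mid N = a_1]$ (using the change of variables $d\mu_1 = \varphi(a_1)\,da_1$), which is a probability, hence lies in $[0,1]$. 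So $J_\rho$ is $1$-Lipschitz in $\mu_1$, and by symmetry in $\mu_2$; the triangle inequality then gives $|J_\rho(\mu_1',\mu_2') - J_\rho(\mu_1,\mu_2)| \le |\mu_1-\mu_1'| + |\mu_2-\mu_2'|$. This is exactly the $1$-Lipschitz property of $J_\rho$ already invoked elsewhere in the paper (e.g.\ in the proof of Majority is Stablest), so I would simply cite that.

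For the first inequality, I would differentiate $J_\rho(\mu_1,\mu_2)$ in $\rho$ with $\mu_1,\mu_2$ fixed. The classical Price / Plackett identity for the bivariate Gaussian CDF gives
\[
\frac{\partial}{\partial \rho} J_\rho(\mu_1,\mu_2) = \frac{1}{2\pi\sqrt{1-\rho^2}} \exp\!\left(-\frac{a_1^2 - 2\rho a_1 a_2 + a_2^2}{2(1-\rho^2)}\right),
\]
which is the value of the $\rho$-correlated bivariate Gaussian density at $(a_1,a_2)$. Bounding the exponential by $1$, this derivative is at most $\frac{1}{2\pi\sqrt{1-\rho^2}} = \frac{1}{2\pi\sqrt{(1-\rho)(1+\rho)}} \le \frac{1}{2\pi\sqrt{1-\rho}}$. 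Integrating over $\rho \in [\rho_1,\rho_2]$,
\[
|J_{\rho_1}(\mu_1,\mu_2) - J_{\rho_2}(\mu_1,\mu_2)| \le \int_{\rho_1}^{\rho_2} \frac{d\rho}{2\pi\sqrt{1-\rho}} = \frac{1}{\pi}\bigl(\sqrt{1-\rho_1} - \sqrt{1-\rho_2}\bigr) = \frac{1}{\pi}\cdot\frac{(1-\rho_1)-(1-\rho_2)}{\sqrt{1-\rho_1}+\sqrt{1-\rho_2}} \le \frac{\rho_2-\rho_1}{\pi\sqrt{1-\rho_2}},
\]
which is comfortably below $\frac{10(\rho_2-\rho_1)}{1-\rho_2}$ once $1-\rho_2 \le 1$ (and in fact with a much better constant); I would just keep the crude bound $\tfrac{1}{\sqrt{1-\rho_2}} \le \tfrac{1}{1-\rho_2}$ to match the stated form, or refine the exponent bound near the boundary if the cleaner constant is wanted.

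The main obstacle, such as it is, is justifying the differentiation-under-the-integral and the Price-type formula rigorously — i.e.\ checking that $J_\rho$ is smooth in $\rho$ on $(-1,1)$ and that the derivative formula holds, which is standard (it is the content of Claim \ref{clm:third-derivative} and the surrounding calculus facts, or can be found in~\cite[Appendix B]{MoOdOl:10}). Since the paper has already flagged these as ``standard estimates,'' I would state the derivative formulas, cite the appendix, and let the two one-line integrations above finish the argument.
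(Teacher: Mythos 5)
Your proof is correct, and it follows essentially the route the paper has in mind (differentiate $J_\rho$ in $\rho$ and in $\mu_i$, bound the derivative, integrate), with one noteworthy refinement: you use the exact Plackett/Price identity $\partial_\rho J_\rho(\mu_1,\mu_2) = \phi_2\bigl(\Phi^{-1}(\mu_1),\Phi^{-1}(\mu_2);\rho\bigr) \le \tfrac{1}{2\pi\sqrt{1-\rho^2}}$, rather than the cruder bound $|\partial_\rho J_\rho| \le (1-\rho^2)^{-3/2}$ recorded in the paper's own Claim~\ref{clm:diff-J-rho}. This is not merely cosmetic: integrating the cruder bound over $[\rho_1,\rho_2]$ produces a factor of order $(1-\rho_2)^{-3/2}$, which does \emph{not} recover the stated $(1-\rho_2)^{-1}$ scaling as $\rho_2 \to 1$, so Claim~\ref{clm:diff-J-rho} alone does not prove the first inequality of the lemma — your sharper density bound is actually needed. (Incidentally, the derivative formula displayed in the proof of Claim~\ref{clm:diff-J-rho} is missing the factor $(\rho t - s')$ coming from $\partial_\rho\bigl[(t-\rho s')/\sqrt{1-\rho^2}\bigr]$; the correct computation recombines, via $\partial_{s'}\phi_2 = \phi_2\cdot(\rho t - s')/(1-\rho^2)$, into exactly your $\partial_\rho K_\rho = \phi_2(s,t;\rho)$.) Your second inequality, $J_\rho$ being $1$-Lipschitz in each of $\mu_1,\mu_2$ because $\partial_{\mu_1}J_\rho = \P[M \le a_2 \mid N = a_1] \in [0,1]$, is exactly right and matches the Lipschitz property used repeatedly in the text. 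One small caveat you should make explicit: your step $\sqrt{1-\rho^2} \ge \sqrt{1-\rho}$ requires $\rho \ge 0$, so your argument establishes the first inequality for $0 \le \rho_1 < \rho_2 < 1$. This restriction is in fact necessary: at $\mu_1 = \mu_2 = \tfrac12$ Sheppard's formula gives $J_\rho = \tfrac14 + \tfrac{\arcsin\rho}{2\pi}$, whose $\rho$-increments scale like $\sqrt{\rho_2-\rho_1}$ near $\rho = -1$ and eventually exceed $10(\rho_2-\rho_1)/(1-\rho_2)$, so the lemma as stated fails for $\rho_1,\rho_2$ very near $-1$. The restriction is harmless for the lemma's use in Lemma~\ref{lem:eta}, where $\rho' \ge 0$, but is worth flagging.
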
 

Decision trees allow to express Boolean function by querying variables on at a time, where the order of queries may depend on the values of previous variables. In the context of low influence functions, decision trees are useful as they allow to express functions of the form $T_{\gamma} h$ as a bounded size decision tree, where almost all the leaves are low influence functions. We will use the following regularity lemma, see e.g.~\cite{MosselSchramm:08,DKN:10}. 
\begin{lemma} \label{lem:dec}
For a function $f : \{-1,1\}^n \to \IR$, let $I(f) = \sum I_i(f)$. 
Then for any $\tau > 0, \eps > 0$ and any function $f$ there exists a decision tree for $f$ of depth at most 
\[
d \leq 2+ \frac{I}{\tau \eps} 
\]
such that the probability of reaching a leaf with influence sum $\tau$ or more is bounded by $\eps$. 
\end{lemma}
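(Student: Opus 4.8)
The plan is to build the decision tree greedily: repeatedly query, at each internal node, a variable that still has "large" influence in the restricted subfunction, and stop a branch as soon as the total remaining influence drops below the threshold $\tau$. The key quantity to track is $I(f\mid_{\text{path}})$, the sum of influences of the subfunction obtained by fixing the variables queried along the current root-to-node path. First I would record the elementary fact that restricting coordinates only decreases the total influence sum: if $f'$ is obtained from $f$ by fixing some coordinate $x_j = b$, then $I(f') \le I(f)$, since $I_i(f') = \E[\Var(f'\mid x_{-i})]$ is an average of conditional variances that also appear in the expansion of $I_i(f)$ (equivalently, via the Fourier formula $I_i(f)=\sum_{S\ni i}\hat f(S)^2$, restriction is an orthogonal projection-type operation that cannot increase Fourier weight). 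So along every path the influence sum is nonincreasing and bounded above by $I = I(f)$ at the root.

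Next I would describe the stopping rule precisely. At a node with current subfunction $h$: if $I(h) < \tau$, declare the node a leaf; otherwise query a coordinate $i$ with $I_i(h) \ge \tau/d_{\max}$ for an appropriate parameter — actually the cleaner approach is a potential/charging argument. Define a random root-to-leaf path by feeding in a uniformly random $x \in \{-1,1\}^n$. Along this path, consider the sequence of subfunctions $h_0 = f, h_1, h_2, \ldots$ At step $t$, if $I(h_t) \ge \tau$ we must query some coordinate; choose one with $I_i(h_t)$ maximal, which is then at least $\tau / n$ — too weak. Instead, the right move: choose the coordinate $i$ maximizing $I_i(h_t)$; the point is that $\E[I(h_{t+1}) \mid h_t] = I(h_t) - I_i(h_t)$, because querying $x_i$ and averaging removes exactly the Fourier weight on sets containing $i$ (this is the martingale-decrement identity, analogous to the one in Definition~\ref{def:DMS_error}). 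Hence the expected total influence decreases by $I_i(h_t) \ge I(h_t)/n$ at each step — still giving only a logarithmic-in-$n$ depth if we insist on driving $I$ below $\tau$ deterministically; so the statement is necessarily about the \emph{probability} of reaching a high-influence leaf, and we only query a coordinate when some $I_i(h_t) \ge \tau/d$ where $d = 2 + I/(\tau\eps)$ is the target depth. If at some node with $I(h_t)\ge\tau$ every coordinate has $I_i(h_t) < \tau/d$, we nevertheless stop — such a leaf is "bad," but we must bound its probability.

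The main obstacle, and the heart of the argument, is bounding the probability of reaching a bad leaf by $\eps$. I would argue as follows: consider the nonnegative potential $\Phi_t = I(h_t)$ along the random path, which is a supermartingale (it decreases in expectation by the argument above, and never increases). Every time we query a coordinate we are at a node with $I(h_t) \ge \tau$ and we chose $i$ with $I_i(h_t) \ge \tau/d$, so the expected one-step decrease of $\Phi$ is at least $\tau/d$. Since $\Phi_0 = I$ and $\Phi \ge 0$, the expected number of query-steps on the random path is at most $I/(\tau/d) = Id/\tau$; but also the path has length at most $d$ by construction (we cap the depth at $d$), so instead I run the process exactly $d$ steps. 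The expected decrease of $\Phi$ over $d$ steps, \emph{restricted to those steps where we actually queried}, is at most $I$; so $\E[\#\{\text{query steps}\}] \cdot (\tau/d) \le I$, giving $\E[\#\text{query steps}] \le Id/\tau$. Meanwhile a path that reaches depth $d$ without stopping has performed $d$ query steps; if $d > Id/\tau \cdot (1/\eps) = \ldots$ — here is where $d = 2 + I/(\tau\eps)$ enters: by Markov's inequality, $\P[\#\text{query steps} \ge d] \le \E[\#\text{query steps}]/d \le (Id/\tau)/d$ — this is off. The correct bookkeeping is: bad leaves are exactly those reached without the influence ever dropping below $\tau$, i.e. where all $d$ potential steps were "productive" decreases of size... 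Let me state the clean version I'd commit to: since $\Phi$ is a supermartingale starting at $I$, $\P[\Phi_d \ge \tau] \le \E[\Phi_d]/\tau \le I/\tau$ — still too weak by a factor $1/\eps$. The resolution is that we do not cap the path at depth $d$ arbitrarily; rather, on any path we keep querying large-influence coordinates and each such query drops $\E[\Phi]$ by $\ge \tau/d$... I would ultimately phrase it as: with $T$ the (random) number of queries until $I(h_T) < \tau$, optional stopping gives $\E[T]\cdot(\tau/d) \le \E[\Phi_0 - \Phi_T] \le I$, hence $\E[T] \le Id/\tau$; wait — that's circular in $d$. The honest fix: choose the per-step decrement threshold to be $\tau\eps/I \cdot$ something. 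I would therefore, in writing this up, set the tree to query a coordinate $i$ only when $I_i(h) \ge \tau\eps/d'$ for a suitable $d'$, run until $I(h) < \tau$ or until such a coordinate no longer exists, bound the number of query steps deterministically by $I/(\tau\eps/d') = Id'/(\tau\eps) \le d$ (defining $d$ this way, matching the statement up to the additive $2$), and separately bound the probability that the process terminates via "no large coordinate but $I(h)\ge\tau$" — but that event is impossible on a productive path since total influence $I(h) = \sum_i I_i(h) \le n \cdot \max_i I_i(h)$, so if $I(h)\ge\tau$ some coordinate exceeds $\tau/n$... The genuinely subtle point I'd flag as the main difficulty is reconciling the deterministic depth bound $d \approx I/(\tau\eps)$ with the probabilistic guarantee that bad leaves have mass $\le\eps$; I expect the clean argument is: the expected influence-sum at the leaf is $\le I - (\tau/d)\cdot\E[\#\text{queries}]$, and since every non-leaf node performs a query and the tree has depth $d$, a path reaching a bad leaf (influence $\ge\tau$) must have made few productive queries, which by Markov has probability $\le \E[\text{influence removed}]/(\text{influence that should have been removed}) \le I/((\tau/d)\cdot d) = I/\tau$... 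I will need to tune the constant so the factor $\eps$ appears — concretely by making the depth $d = 2 + I/(\tau\eps)$ as stated so that $\P[\text{bad}] \le I/(\tau \cdot d/?) = \eps$, which is exactly the arithmetic the lemma is engineered to produce.
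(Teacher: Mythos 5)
You have the right skeleton (a greedy decision tree, a one-step influence-decrement identity, and a Markov bound on the depth), but the proposal never actually closes the argument, and the place where you keep stalling is a genuine gap, not a bookkeeping nuisance. The missing idea is the \emph{splitting criterion}: the tree should branch on a coordinate $j$ whenever \emph{some single} influence $I_j(f_{x_I})$ is at least $\tau$ (equivalently, when $\max_i I_i(f_{x_I}) \geq \tau$), not when the influence \emph{sum} $I(f_{x_I})$ is at least $\tau$. (Despite the wording ``influence sum'' in the statement, the construction and the way the lemma is invoked in Lemma~\ref{lem:dec_rep} -- ``terminates at a node with \emph{some} influence greater than $\tau$'' -- show that ``some influence $\geq \tau$'' is what is meant; this is most likely a typo you took at face value, which is exactly what sent you down the dead end of trying to extract a large per-step decrement from $I(h)\geq\tau$, where only the useless bound $\max_i I_i \geq I(h)/n$ is available.)

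Once the split rule is ``split on $j = \arg\max_i I_i$ provided $I_j \geq \tau$, else stop; also stop at depth $d$,'' the accounting is clean and there is no circularity. The identity you already wrote, $I(f_{x_I}) = I_j(f_{x_I}) + \tfrac12\bigl(I(f_{y_J}) + I(f_{z_K})\bigr)$, unrolls to
\[
I(f) \;=\; \sum_{\text{internal } v} 2^{-\mathrm{depth}(v)}\, I_{j_v}(f_v) \;+\; \sum_{\text{leaves } \ell} 2^{-D(\ell)}\, I(f_\ell) \;\geq\; \tau \sum_{\ell} 2^{-D(\ell)} D(\ell),
\]
since every internal node charges at least $\tau$ and the internal nodes on the path to a leaf $\ell$ number exactly $D(\ell)$. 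The right-hand side is $\tau\,\E[\text{path length}]$, so $\E[D] \leq I/\tau$, and Markov gives that the probability of a path reaching depth $d$ is at most $I/\bigl((d-1)\tau\bigr)$. Choosing $d-1 = \lceil I/(\tau\eps) \rceil$ makes this at most $\eps$, and every leaf at depth $<d$ has all influences $<\tau$ by the stopping rule. Your supermartingale/optional-stopping reformulation is fine in spirit, but without fixing the split rule you cannot lower-bound the per-step drop by $\tau$, and that lower bound is the entire content of the lemma.
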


\begin{proof}
The construction of the decision tree is standard. If a function $f_{x_I}$ at a certain node $x_I$ has all influences less than $\tau$ or if the node is at level $d$ do nothing. Otherwise, condition on the variable $j$ with the maximum influence in $f_{x_I}$ and create two children $f_{y_J}$ and $f_{z_K}$ where $J = K = I \cup \{ j \}$, $y_i  = z_i = x_i$ for all $i \in I$ 
and $y_j =0$ and $z_j = 1$. Since 
\[
I(f_{x_I}) = I_j(f_{x_I}) + \frac{1}{2}(I(f_{y_J}) + I(f_{z_K})),
\]
it easily follows that if $L$ is the set of leaves of the tree and if $D(\ell)$ denotes the depth of leaf $\ell$ then 
\[
I(f) \geq \tau \sum_{\ell \in L} 2^{-D(\ell)} D(\ell).
\]
Therefore if $p$ is the fraction of paths that reach level $d$ then 
\[
I(f) \geq (d-1) \tau p \implies p \leq I / (d-1) \tau,
\]
and taking $d -1$ to be the smallest integer that is greater or equal to $\frac{I}{\tau \eps}$ we obtain the desired result. 
\end{proof} 

The proof will be carried out via a sequence of reductions which will give the function $f$ more and more structure. 
Fix $\eps > 0$ and $0 \leq \rho' < 1$. 
Recall that we want to show that if $f$ is $(d(\eps,\rho'),\alpha(\eps,\rho'))$ resilient  then for all $g$ bounded between 
$0$ and $1$, 
\begin{equation} \label{eq:goal0}
\langle  f, g \rangle_{\rho'} \leq \langle \chi_{\mu_f}, \chi_{\mu_g} \rangle_{\rho'}+ \eps,
\end{equation}
where $\mu_f = \IE f$ and similarly for $g$. 
\begin{lemma} \label{lem:eta}
In order to prove~(\ref{eq:goal0}) it suffices to prove that 
\begin{equation} \label{eq:goal1}
\langle  T_{\eta} f, g \rangle_{\rho} \leq \langle  \chi_{\mu_f}, \chi_{\mu_g} \rangle_{\rho} + \eps/2. 
\end{equation}
for 
\begin{equation} \label{eq:rho_eta}
\rho = (1-0.01 \eps) \rho' + 0.01 \eps, \quad \eta = \rho'/\rho = 1- \Omega( \eps (1-\rho')). 
\end{equation} 
\end{lemma}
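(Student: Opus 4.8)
The plan is to chain together two cheap perturbations: first replace $f$ by its smoothed version $T_\eta f$, and second shift the correlation parameter from $\rho'$ up to $\rho$. The key identity is the reproducing property of the noise operator, namely that $\langle T_\eta f, g \rangle_\rho = \langle f, g \rangle_{\eta\rho}$, which follows from $T_\eta$ being self-adjoint and diagonal in the Fourier basis with eigenvalue $\eta^{|S|}$ on $x_S$; combined with the semigroup relation $T_\eta$ applied under a $\rho$-correlated pair is the same as using correlation $\eta\rho$. With the choice $\eta = \rho'/\rho$ from~(\ref{eq:rho_eta}), this gives exactly $\langle T_\eta f, g\rangle_\rho = \langle f, g\rangle_{\rho'}$, so the left-hand sides of~(\ref{eq:goal0}) and~(\ref{eq:goal1}) are literally equal. (One should double-check the direction of the manipulation: write $\langle f,g\rangle_{\rho'} = \langle f, g\rangle_{\eta\rho}$, and then, since $\eta\rho = \rho'$, use $T_{\eta}f$ paired with $g$ at correlation $\rho$ reproduces correlation $\eta\rho$ on the $f$ side. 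This is the only place a factor could be mislaid.)

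Second, I would control the difference between the two right-hand sides. We need $\langle \chi_{\mu_f}, \chi_{\mu_g}\rangle_\rho \le \langle \chi_{\mu_f}, \chi_{\mu_g}\rangle_{\rho'} + \eps/2$, i.e., the Gaussian stability $J_\rho(\mu_f,\mu_g)$ does not jump up by more than $\eps/2$ when $\rho'$ is increased to $\rho$. This is immediate from the first inequality in Lemma~\ref{lem:gaussian_continuous}: with $\rho_1 = \rho'$ and $\rho_2 = \rho$ we get
\[
| \langle \chi_{\mu_f}, \chi_{\mu_g}\rangle_{\rho'} - \langle \chi_{\mu_f}, \chi_{\mu_g}\rangle_{\rho} |
\le \frac{10(\rho - \rho')}{1-\rho}.
\]
By~(\ref{eq:rho_eta}), $\rho - \rho' = 0.01\eps(1-\rho')$ and $1-\rho = (1-0.01\eps)(1-\rho') \ge \tfrac12(1-\rho')$, so this bound is at most $\frac{10 \cdot 0.01\eps(1-\rho')}{\tfrac12(1-\rho')} = 0.2\eps \le \eps/2$.

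Putting the pieces together: assuming~(\ref{eq:goal1}),
\[
\langle f, g\rangle_{\rho'} = \langle T_\eta f, g\rangle_\rho \le \langle \chi_{\mu_f}, \chi_{\mu_g}\rangle_\rho + \eps/2 \le \langle \chi_{\mu_f}, \chi_{\mu_g}\rangle_{\rho'} + \eps/2 + 0.2\eps \le \langle \chi_{\mu_f}, \chi_{\mu_g}\rangle_{\rho'} + \eps,
\]
which is~(\ref{eq:goal0}). I do not anticipate a genuine obstacle here; the only thing that needs care is verifying the arithmetic in~(\ref{eq:rho_eta}) — in particular that $\rho < 1$ whenever $\rho' < 1$ (so $\eta < 1$ strictly, which is what makes the smoothing argument in the sequel nontrivial), and that the constant $0.2\eps$ coming out of Lemma~\ref{lem:gaussian_continuous} really does leave room under $\eps/2$. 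One should also note that resilience of $f$ is not used at all in this lemma — it is purely a reduction step — and that $T_\eta f$ inherits the needed boundedness in $[0,1]$ since $T_\eta$ is an averaging operator; only later steps exploit resilience together with the low-influence bound on $T_\eta f$ from Lemma~\ref{lem:mart_cubed}.
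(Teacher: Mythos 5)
Your proof is correct and follows the same route as the paper: use $\langle T_\eta f, g\rangle_\rho = \langle f,g\rangle_{\eta\rho} = \langle f,g\rangle_{\rho'}$ with $\eta=\rho'/\rho$, then control the shift in the Gaussian term via Lemma~\ref{lem:gaussian_continuous}, which with the parametrization~(\ref{eq:rho_eta}) gives $\tfrac{10(\rho-\rho')}{1-\rho} = \tfrac{0.1\eps}{1-0.01\eps} < \eps/2$. Your arithmetic (and the observation that $\rho',\eta<1$, that $\E[T_\eta f]=\E[f]$, and that resilience is not used here) all check out and match the paper's argument.
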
 

\begin{proof} 
Write $\rho' = \rho \eta$, where $1-\rho \geq (1-\rho')/2$ and $\eta < 1$. Note that $\langle  T_{\eta} f, g \rangle_{\rho} = \langle  f, g \rangle_{\rho'}$.
Moreover,  $f$ and $T_{\eta} f$ have the same 
expected value. If we could establish~(\ref{eq:goal1}) 
and  
\begin{equation} \label{eq:diff_chis}
|\langle  \chi_{\mu_f}, \chi_{\mu_g} \rangle_{\rho} - \langle  \chi_{\mu_f}, \chi_{\mu_g} \rangle_{\rho'}| < \eps/2, 
\end{equation} 
then~(\ref{eq:goal0}) would follow. Note that (\ref{eq:diff_chis}) follows from Lemma~\ref{lem:gaussian_continuous} when 
\[
\frac{10 (\rho - \rho')}{1-\rho} < \eps/2.
\]
We may thus choose $\rho$ and $\eta$ as in~(\ref{eq:rho_eta}).  
\end{proof} 

\begin{lemma} \label{lem:dec_rep} 
Let $\tau$ be chosen so that (\ref{eq:mist}) holds with error $0.01 \eps$ for $\rho$.  
Then it suffices to prove (\ref{eq:goal1}) for a function $h = T_{\eta} f$ that has a decision tree of 
depth $d$ and such that for  at most $0.01 \eps$ fraction of the inputs a random path of the decision tree terminates at a node with some influence greater than $\tau$. Moreover
\[
d = O \left( \frac{1}{\eps^2 (1-\rho) \tau} \right)
\]
\end{lemma}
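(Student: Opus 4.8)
\textbf{Proof plan for Lemma~\ref{lem:dec_rep}.} The plan is to apply the decision-tree regularity lemma (Lemma~\ref{lem:dec}) to the smoothed function $h = T_{\eta} f$ and then argue that the leaves where the decision tree is ill-behaved contribute only a negligible amount to the noisy inner product $\langle h, g \rangle_{\rho}$. First I would recall from Lemma~\ref{lem:inf_sum_rho} that $I(T_{\eta} f) = \sum_i I_i(T_{\eta} f) \leq 1/(1-\eta) = O(1/(\eps(1-\rho')))$, using the relation $1-\eta = \Omega(\eps(1-\rho'))$ from~(\ref{eq:rho_eta}) and the fact that $1-\rho \geq (1-\rho')/2$. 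Feeding this bound on $I(h)$ into Lemma~\ref{lem:dec} with the parameters $\tau$ (from the statement, chosen so that~(\ref{eq:mist}) holds with error $0.01\eps$) and error parameter $\eps' = 0.01\eps$, we obtain a decision tree for $h$ of depth
\[
d \leq 2 + \frac{I(h)}{\tau \eps'} = O\left( \frac{1}{\eps^2 (1-\rho) \tau} \right),
\]
which matches the claimed bound, and such that the probability that a random path terminates at a leaf whose influence sum exceeds $\tau$ is at most $0.01\eps$.

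Next I would verify that it indeed ``suffices'' to prove~(\ref{eq:goal1}) for such an $h$. The point is that $h = T_{\eta} f$ is exactly the object appearing in~(\ref{eq:goal1}), so once we have replaced $f$ by $h$ via Lemma~\ref{lem:eta} there is nothing lost: the decision tree is simply a \emph{representation} of $h$, not a modification of it. One should note that any Boolean (indeed any bounded) function admits such a decision tree — Lemma~\ref{lem:dec} applies to arbitrary $f$ — so the reduction is not restricting the class of functions we consider; it is only recording additional structure (bounded depth, most leaves low-influence) that the subsequent steps of the proof of Theorem~\ref{thm:MIST_res} will exploit. In particular, the ``bad'' leaves, those with influence sum at least $\tau$, are reached with probability at most $0.01\eps$, and since $J_\rho$ and hence the relevant functionals are bounded (by $1$ in absolute value, as $J_\rho : (0,1)^2 \to [0,1]$), conditioning on reaching a good versus a bad leaf changes $\langle h, g \rangle_\rho$ by at most $O(\eps)$, which can be absorbed into the $\eps/2$ slack in~(\ref{eq:goal1}) after adjusting constants.

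The main thing to be careful about — though it is more bookkeeping than a genuine obstacle — is the interaction between the noisy inner product and the decision-tree decomposition: unlike an ordinary expectation, $\langle h, g \rangle_\rho = \E[h(X) g(Y)]$ couples the $X$ and $Y$ coordinates, so one cannot simply condition on ``the leaf'' of a single tree. The clean way to handle this is to build the decision tree only for $h$ (as Lemma~\ref{lem:dec} does), observe that querying a coordinate of $X$ corresponds under the $\rho$-correlated coupling to partially revealing the corresponding coordinate of $Y$, and then carry out the conditioning on the $X$-tree while keeping $g$ intact; the error incurred by the bad leaves is controlled purely through the $0.01\eps$ probability bound and the boundedness of the integrand, with no need for $g$ to respect the tree structure. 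Thus the proof is essentially: invoke Lemma~\ref{lem:inf_sum_rho} to bound $I(T_\eta f)$, invoke Lemma~\ref{lem:dec} to get the tree of the stated depth, and observe that the small fraction of bad leaves can be discarded at a cost of $O(\eps)$, which is exactly what it means for the reduction to suffice.
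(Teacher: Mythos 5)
Your proof is correct and follows the same approach as the paper: bound $I(T_\eta f)$ (you cite Lemma~\ref{lem:inf_sum_rho}, the paper writes out the same Fourier calculation $\sum_S |S|\hat{f}^2(S)\eta^{2|S|} \le \max_s s\eta^{2s}$) and then plug this into Lemma~\ref{lem:dec} with error parameter $0.01\eps$ to get the claimed depth. The second and third paragraphs of your writeup are a correct preview of how the lemma is subsequently used in the proof of Theorem~\ref{thm:MIST_res} (conditioning on the $X$-leaf while leaving $g$ untouched), but they are not part of proving this reduction lemma itself, which is just the existence of the decision tree.
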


\begin{proof} 
We note that the function $h = T_{\eta} f$ satisfies: 
\[
I(h) := \sum I_i(h) = \sum_{S} |S| \hat{f}^2(S) \eta^{2 |S|} \leq \max_s s \eta^{2 s} = O\left( \frac{1}{\eps (1-\rho)} \right).  
\]
Apply Lemma~\ref{lem:dec} to obtain a decision tree for $h$ where for at most $0.01 \eps$ fraction of the inputs, a random path of the decision tree terminates at a node with some influence greater than $\tau$. Note that the depth of the tree satisfies
\[
d \leq C (1+  \frac{I}{\tau \eps}) \leq C(1 + \frac{1}{\eps^2 (1-\rho) \tau})
\] 
as needed.
\end{proof} 

We now conclude the proof of Theorem~\ref{thm:MIST_res}. 
\begin{proof} 
Let $h$ be a function such as in Lemma~\ref{lem:dec_rep}. Assume furthermore that 
$f$ is $(d,0.01 \eps 2^{-d})$ resilient. Note that this implies that $h  = T_{\eta} f$ is also $(d,0.01 \eps 2^{-d})$ resilient. 
This is because for every $S$, 
\[
\EE [T_{\eta} f | X_S = z] = \EE [\EE [ f | X_S = z']],
\]
where $z' ~ \sim \otimes_{i \in S} (\eta \delta_{z_i} + \frac{1-\eta}{2}(\delta_1 + \delta_{-1}))$, and therefore:
\[
\sup_{|S| \leq d, z} \Big| \EE \left[T_{\eta} f  | X_S = z \right] - \EE[f] \Big| \leq 
\sup_{|S| \leq d, z} \Big| \EE \left[f  | X_S = z \right] - \EE[f] \Big|.  
\]
Let $x,y$ be two $\rho$-correlated inputs. 
Then
\[
\EE[h(x) g(y)] = \sum_{x_I,y_I} \PP[x_I] \PP[y_I | x_I] \EE[h(x) g(y) | x_I, y_I],
\] 
where $x_I$ denotes a random leaf of the decision tree and $y_I$ is chosen after $x_I$ to be a $\rho$-correlated 
version of $x_I$. Let $A$ denote the set of $x_I$ for which $f_{x_I}$ has all influences less than $\tau$. Then:
\begin{eqnarray*}
\EE[h(x) g(y)] &=& \sum_{x_I,y_I} \PP[x_I] \PP[y_I | x_I] \EE[h(x) g(y) | x_I, y_I] 
\\ &\leq& 0.01 \eps + 
\sum_{x_I \in A} \PP[x_I] \sum_{y_I} \PP[y_I | x_I] \EE[h(x) g(y) | x_I, y_I]. 
\end{eqnarray*}
Write $\mu' = \mu_f + 0.01\eps$ and $\mu(y_I) = \EE[g(y) | y_I]$. 
Note that since $h$ is $(d,0.01 \eps 2^{-d})$-resilient 
it follows that for all leaves 
$x_I$ it holds that $\EE[h | x_I] \leq \mu'$. Thus for  $x_I \in A$ we can apply (\ref{eq:mist})  to obtain that 
\[
\EE[h(x) g(y) | x_I, y_I] \leq \langle  \chi_{\mu'},  \chi_{\mu(y_I)} \rangle_{\rho} 
+ 0.01 \eps.
\]
Plugging this back in we obtain the bound
\begin{eqnarray*}
\EE[h(x) g(y)] &\leq&  
0.02 \eps + \sum_{x_I \in A} \PP[x_I] \sum_{y_I} \PP[y_I | x_I] \langle  \chi_{\mu'},  \chi_{\mu(y_I)} \rangle_{\rho} \\ 
&\leq& 0.02 \eps + \sum_{x_I,y_I} \PP[x_I] \PP[y_I | x_I] \langle  \chi_{\mu'}, \chi_{\mu(y_I)} \rangle_{\rho} \\
&=& 0.02 \eps +  \langle  \chi_{\mu'},  (\sum_{x_I,y_I} \PP[x_I] \PP[y_I | x_I] \chi_{\mu(y_I)}) \rangle_{\rho}
\end{eqnarray*}
Note that $\psi = \sum_{x_I,y_I} \PP[x_I] \PP[y_I | x_I] \chi_{\mu(y_I)}$ is a $[0,1]$-valued function with
$\EE[\psi] = \EE[g]$. Thus by Theorem~\ref{thm:half_space} it follows that 
\[
0.02 \eps +  \langle  \chi_{\mu'}, (\sum_{x_I,y_I} \PP[x_I] \PP[y_I | x_I] \chi_{\mu(y_I)}) \rangle_{\rho} \leq 
0.02 \eps + \langle  \chi_{\mu'},  \chi_{\mu_g} \rangle_{\rho} \leq 0.04 \eps +  \langle  \chi_{\mu_f},  \chi_{\mu_g} \rangle_{\rho},
\]
where the last inequality follows from Lemma~\ref{lem:gaussian_continuous}. 
\end{proof} 

\section{Majority is Most Predictable}
Suppose $n$ voters are to make a binary decision.
Assume that the outcome of the vote is determined by a {\em social choice}
function $f : \bits^n \to \bits$, so that the outcome of the vote is
 $f(x_1,\ldots,x_n)$ where $x_i \in \bits$ is the vote of voter $i$.
We assume that the votes are
independent, each $\pm 1$ with probability $\frac{1}{2}$.
It is natural to assume that the function $f$ satisfies $f(-x) = -f(x)$, i.e.,
it does not discriminate between the two candidates. Note that this implies
that $\E[f] = 0$ under the uniform distribution.
A natural way to try and
predict the outcome of the vote is to sample a subset of the voters, by sampling each voter independently with probability $\rho$.
Conditioned on a vector $X$ of votes the distribution of $Y$,
the sampled votes, is i.i.d. where $Y_i = X_i$ with probability $\rho$ and
$Y_i = \ast$ (for unknown) otherwise.

Conditioned on $Y=y$, the vector of sampled votes, the optimal prediction of
the outcome of the vote is given by $\sgn((T f)(y))$ where
\begin{equation} \label{eq:defT1}
(T f)(y) = \E[f(X) | Y = y].
\end{equation}
This implies that the probability of
correct prediction (also called predictability) is given by
\[
\P[f = \sgn(T f)] =
\frac{1}{2}(1+\E[f \, \sgn(T f)]).
\]
For example, when
$f(x) = x_1$ is the dictator function, we have
$\E[f \, \sgn(T f)] = \rho$ corresponding to the trivial fact
that the outcome of the election is known when voter $1$ is sampled
and are $\pm 1$ with probability $1/2$ otherwise.
The notion of predictability is natural in statistical contexts.
It was also studied in a more combinatorial context in~\cite{Odonnell:02}.

Similarly to the Majority is Stablest Theorem, we can prove~\cite{Mossel:10,Mossel:20resilient}: 

\begin{theorem}[``Majority Is Most Predictable''] \label{thm:MIMP}
Let $0 \leq \rho \leq 1$ and $\eps > 0$ be given.  Then there
exists a $\tau > 0$ such that if $f : \bits^n \to [-1,1]$ satisfies
$\E[f] = 0$ and $\Inf_i(f) \leq \tau$ for all $i$, then
\begin{equation} \label{eq:mimp}
\E[f \, \sgn(T f) ] \leq {\textstyle \frac{2}{\pi}} \arcsin \sqrt{\rho} +
\eps,
\end{equation}
where $T$ is defined in~(\ref{eq:defT1}).

Similarly, in the same setup, for every $\eps > 0$, there exists $(r,\delta)$ such that if $f$ is $(r,\delta)$-resilient and 
satisfies $\E[f] = 0$ then (\ref{eq:mimp}) holds. 
\end{theorem}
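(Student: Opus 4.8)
The plan is to reduce the theorem to the ``Majority is Stablest'' results already established — Theorem~\ref{thm:MIST} for the low-influence statement and Theorem~\ref{thm:MIST_res} for the resilient one — by rewriting the predictability as a noisy inner product over a suitable product space. First I would record the elementary identity
\[
\E[f\,\sgn(Tf)] \;=\; \E_Y\big[|Tf(Y)|\big] \;=\; \E\big[f(X)\,g^{*}(Y)\big], \qquad g^{*}:=\sgn(Tf),
\]
which holds because $\E[f(X)g(Y)] = \E_Y[g(Y)\,Tf(Y)] \le \E_Y|Tf(Y)|$ for every $[-1,1]$-valued $g$, with equality at $g=g^{*}$. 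The structural point is that the sampling channel $X\mapsto Y$ is a \emph{memoryless product} channel: the pairs $((X_i,Y_i))_{i=1}^n$ are i.i.d.\ draws from the measure $\mu$ on $\bits\times\{-1,0,1\}$ under which $(X_i,Y_i)=(b,b)$ with probability $\rho/2$ for $b\in\bits$ and $(X_i,Y_i)=(b,0)$ with probability $(1-\rho)/2$. Since $X_i$ is $\pm 1$ one may take $u(X_i)=X_i$ on the left side, and a short optimization over $v:\{-1,0,1\}\to\R$ (the odd choice $v(-1,0,1)=(-1,0,1)$ being optimal) shows that $\mu$ has R\'enyi correlation \emph{exactly} $\sqrt\rho<1$. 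Thus $\E[f(X)g^{*}(Y)]$ is a noisy inner product over a product space of R\'enyi correlation $\sqrt\rho$.

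Next I would pass to $[0,1]$-valued functions and exploit $f(-x)=-f(x)$. Since $f$ is odd, $Tf$ is odd under $y\mapsto -y$ (with $0\mapsto 0$), hence $g^{*}$ is odd and $\E g^{*}=0$; also $\E f=0$ by hypothesis. Writing $f'=(1+f)/2$ and $g'=(1+g^{*})/2$, both are $[0,1]$-valued, $\E f'=\E g'=\tfrac12$, one has $I_i(f')=\tfrac14 I_i(f)$, and $f'$ is $(r,\alpha/2)$-resilient whenever $f$ is $(r,\alpha)$-resilient; moreover
\[
\E[f\,\sgn(Tf)] \;=\; 4\,\E[f'(X)g'(Y)] - 2\E f' - 2\E g' + 1 \;=\; 4\,\E[f'(X)g'(Y)] - 1 .
\]
Now I would invoke the ``one low-influence function suffices'' form of Majority is Stablest (Theorem~\ref{thm:MIST}), applied with $f'$ the low-influence function and $g'$ arbitrary, in the first case, and the resilient form (Theorem~\ref{thm:MIST_res}) in the second. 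Both are stated over $\bits^n$ with standard correlation, but their proofs run through Theorem~\ref{thm:tensorization}, which already handles an arbitrary product space of bounded R\'enyi correlation, together with the usual smoothing reduction $f'\rightsquigarrow T_\eta f'$ (which replaces $\mu$ by a product measure of R\'enyi correlation $\le\eta\sqrt\rho$, by the data-processing property of maximal correlation) and the cross-influence / decision-tree arguments; none of these uses anything special about the uniform Boolean cube, so they carry over verbatim with the alphabet $\{-1,0,1\}$ on the $Y$-side and with $\rho$ replaced by $\sqrt\rho$. For the influence/resilience parameters taken small enough this yields
\[
\E[f'(X)g'(Y)] \;\le\; J_{\sqrt\rho}\!\big(\E f',\E g'\big) + \tfrac{\eps}{4} \;=\; J_{\sqrt\rho}\!\big(\tfrac12,\tfrac12\big) + \tfrac{\eps}{4}.
\]
Finally, $J_{\sqrt\rho}(\tfrac12,\tfrac12)=\P[N\le 0,\,M\le 0]$ for a standard bivariate normal of correlation $\sqrt\rho$, which by the classical orthant-probability formula equals $\tfrac14+\tfrac{1}{2\pi}\arcsin\sqrt\rho$; substituting gives $\E[f\,\sgn(Tf)]\le 4\big(\tfrac14+\tfrac{1}{2\pi}\arcsin\sqrt\rho\big)-1+\eps=\tfrac2\pi\arcsin\sqrt\rho+\eps$, exactly~\eqref{eq:mimp}.

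The main obstacle is not the arithmetic but the bookkeeping forced by the fact that $g^{*}=\sgn(Tf)$ need not have small influences: this is precisely why one cannot use the plain two-function statement (Theorem~\ref{thm:MISTsimple}) and must instead use Theorem~\ref{thm:MIST}/Theorem~\ref{thm:MIST_res}, and one must verify that their proofs genuinely rely only on product-space hypercontractivity and on the tensorization inequality, so that the passage to the channel measure $\mu$ on $\bits^n\times\{-1,0,1\}^n$ is legitimate. A secondary item to check is the compatibility of the smoothing step with the fixed target correlation $\sqrt\rho$: applying $T_\eta$ to $f'$ lowers the effective R\'enyi correlation to $\eta\sqrt\rho$, and one chooses $\eta<1$ near $1$ so that $J_{\eta\sqrt\rho}(\tfrac12,\tfrac12)$ and $J_{\sqrt\rho}(\tfrac12,\tfrac12)$ differ by at most $\eps/8$, which is exactly the kind of estimate supplied by Lemma~\ref{lem:gaussian_continuous}.
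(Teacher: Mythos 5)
Your reduction to Majority is Stablest via the channel product measure on $\bits\times\{-1,0,1\}$, with R\'enyi correlation $\sqrt\rho$, is exactly the argument the paper itself sketches: its entire treatment consists of one paragraph identifying the same product space $\Omega_1\times\Omega_2$, the same correlation $\sqrt\rho$, and the pointer ``follows the same lines as Majority is Stablest.'' Your identification of $g^*=\sgn(Tf)$ as the dual optimizer, the passage to $[0,1]$-valued functions, and the orthant-probability computation $J_{\sqrt\rho}(\tfrac12,\tfrac12)=\tfrac14+\tfrac{1}{2\pi}\arcsin\sqrt\rho$ all fill in that sketch correctly, and you are right to flag that the tensorization and hypercontractive ingredients must be re-examined for the asymmetric alphabet on the $Y$-side.

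There is, however, one genuine gap. You invoke $f(-x)=-f(x)$ to conclude that $g^*$ is odd and hence $\E g^*=0$, so that $\E g'=\tfrac12$. Antisymmetry is \emph{not} a hypothesis of Theorem~\ref{thm:MIMP}; only $\E f=0$ is assumed (the preamble motivates $\E f=0$ via antisymmetry but the theorem itself does not impose it). Without it, $\mu := \E g'$ can be anything in $[0,1]$, and both your identity $4\E[f'g']-2\E f'-2\E g'+1 = 4\E[f'g']-1$ and your substitution $J_{\sqrt\rho}(\E f',\E g')=J_{\sqrt\rho}(\tfrac12,\tfrac12)$ fail. The patch is short but it is a missing step: drop the antisymmetry assumption, note instead that
\[
\E[f\,\sgn(Tf)] \;=\; 4\E[f'g'] - 2\mu \;\le\; 4J_{\sqrt\rho}\bigl(\tfrac12,\mu\bigr) - 2\mu + \eps,
\]
and then verify that $h(\mu):=4J_r(\tfrac12,\mu)-2\mu$ attains its maximum over $[0,1]$ at $\mu=\tfrac12$ when $r\ge 0$. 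From the derivative formulas in the $J_\rho$ section one has $h'(\mu)=4\Phi\bigl(-r\Phi^{-1}(\mu)/\sqrt{1-r^2}\bigr)-2$, which vanishes at $\mu=\tfrac12$, and $h''(\mu)=4\,\pdiffII{J_r}{y}{y}(\tfrac12,\mu)\le 0$ since the diagonal second derivatives of $J_r$ are nonpositive for $r\ge 0$ (Claim~\ref{clm:negative-semidefinite}). Hence $h$ is concave with maximum $h(\tfrac12)=4J_r(\tfrac12,\tfrac12)-1=\tfrac{2}{\pi}\arcsin r$, which yields~\eqref{eq:mimp} in the stated generality.
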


We note that from the central limit theorem
it follows that
if $\Maj_n(x_1,\ldots,x_n) = \sgn(\sum_{i=1}^n x_i)$, then
\[
\lim_{n \to \infty}
\E[\Maj_n \sgn(T \Maj_n)] = {\textstyle \frac{2}{\pi}} \arcsin \sqrt{\rho}.
\]

\begin{remark}
Note that Theorem~\ref{thm:MIMP} proves a weaker statement than showing that Majority is the most predictable function. The statement only asserts that if a function has low enough influences than its predictability cannot be more than $\eps$ larger than the asymptotic predictability value achieved by the majority function when the number of voters $n \to \infty$.
This slightly inaccurate title of the theorem is inline with previous results such as the ``Majority is Stablest Theorem" (see below). Similar language may be used later when informally discussing statements of various theorems.
\end{remark}

\begin{remark}
One may wonder if for a finite $n$, among {\em all} functions $f : \{-1,1\}^n \to \{-1,1\}$ with $\E[f] = 0$, majority is the most predictable function. Note that the predictability of the dictator function $f(x) = x_1$ is given by $\rho$,
and $\frac{2}{\pi}\arcsin \sqrt{\rho} > \rho$ for $\rho \to 0$. Therefore when $\rho$ is small and $n$ is large
the majority function is more predictable than the dictator function. However, note that when $\rho \to 1$
we have $\rho > \frac{2}{\pi} \arcsin \sqrt{\rho}$
and therefore for values of $\rho$ close to $1$ and large $n$ the dictator
function is more predictable than the majority function. See Figure~\ref{fig:bool_gauss_predict}.
\end{remark}

\begin{figure}
\begin{center}
\begin{tikzpicture}
\begin{axis}[
    domain=0:1,
    samples=101,
    smooth,
    no markers,
    xlabel = $\rho$,
    ylabel = ,
    axis lines = left,
    legend pos = south east
    ]
    \addplot [
        domain=0:1, 
        samples=100, 
        color=red,
    ]
    {x};
    \addlegendentry{$\rho$}
    \addplot[
        domain = 0:1,
        samples = 1000,
        color = blue,
    ]
    {asin( sqrt(x) )/90};
    \addlegendentry{$2\arcsin(\sqrt\rho)/\pi$}
\end{axis}
\end{tikzpicture}
\end{center}
\caption{The predictability of dictator $\rho$ vs. that of Majority $2 \arcsin(\sqrt{\rho})/\pi$}. 
\label{fig:bool_gauss_predict}
\end{figure}
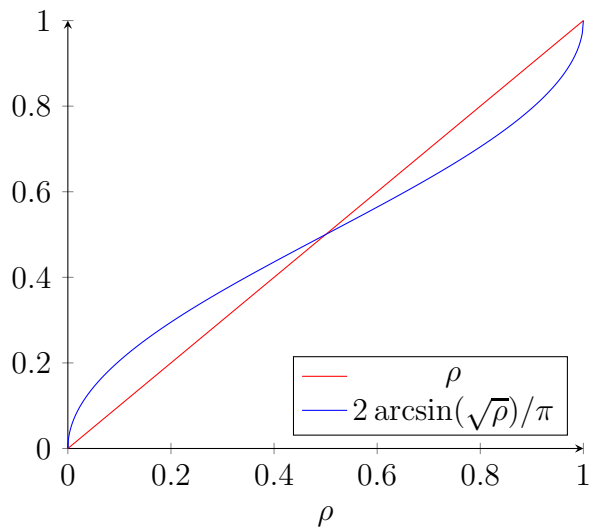

We note that the bound obtained in Theorem~\ref{thm:MIMP} is a reminiscent of
the Majority is Stablest theorem~\cite{MoOdOl:05,MoOdOl:10} as
both involve the $\arcsin$ function. However, the two theorems are quite different. The Majority is Stablest theorem
asserts that under the same condition as in Theorem~\ref{thm:MIMP} it holds
that
\[
\E[f(X) f(Y)] \leq {\textstyle \frac{2}{\pi}} \arcsin \rho + \eps.
\]
where $(X_i,Y_i) \in \bits^2$ are i.i.d. with
$\E[X_i] = \E[Y_i] = 0$ and $\E[X_i Y_i] = \rho$. Thus ``Majority is Stablest'' considers two correlated voting vectors, while ``Majority is Most Predictable'' considers a sample of one voting vector.
We note a further difference between stability and predictability:
It is well known that in the context of ``Majority is Stablest'', for {\em all} $0 < \rho < 1$,
among all boolean functions with $\E[f] = 0$ the maximum of
$\E[f(x) f(y)]$ is obtained for dictator functions of the form
$f(x) = x_i$. As discussed above, for $\rho$ close to $0$ and large $n$, the dictator is less predictable than
the majority function.

The proof of Theorem~\ref{thm:MIMP} follows the same lines of the Majority is Stablest Theorem.
The basic space $\Omega = \Omega_1 \times \Omega_2$ where $(x,y) \in \Omega$ is distributed as follows.
First $x$ is distributed uniformly in $\{\pm 1\}$. Conditioned on $x$, $y = x$ with probability $\rho$ and is equal to $\ast$ with probability 
$1-\rho$. It is easy to check that the that the Renyi correlation between $x$ and $y$ is $\sqrt{\rho}$.

\section{Facts regarding $J_\rho$} \label{subsec:J}

Here we collect various facts about the function
\[
 J_\rho(x, y) := \langle \chi_x, \chi_y \rangle_{\rho} = \Pr[X \le \Phi^{-1}(x), Y \le \Phi^{-1}(y)],
\]
where $(X, Y) \sim \mathcal{N}(0, (\begin{smallmatrix} 1 & \rho \\ \rho & 1\end{smallmatrix}))$.
As is standard, we will use $\phi$ to denote the density of the standard normal distribution. These calculations all follow from elementary calculus.

\negsemi*
\begin{proof}

Towards proving this, note that we can define $Y = \rho \cdot X + \sqrt{1-\rho^2} \cdot Z$ where $Z \sim \mathcal{N}(0,1)$ is an independent normal.  Also, let us define $\Phi^{-1}(x) = s$ and $\Phi^{-1}(y) =t$. For $s, t \in \mathbb{R}$, define $K_{\rho}(s,t)$ as
$$
K_{\rho}(s,t) = \Pr_{X,Y} [X \le s, Y \le t] = \Pr_{X,Z} [ X \le s, Z \le (t - \rho \cdot X)/\sqrt{1-\rho^2}].
$$
Note that for the aforementioned relations between $x$, $y$, $s$ and $t$, $K_{\rho}(s,t) = J_{\rho}(x,y)$.  Note that
\begin{equation}\label{eq:K}
K_{\rho}(s,t) = \int_{s'=-\infty}^{s} \phi(s') \int_{t'=-\infty}^{(t - \rho \cdot s')/\sqrt{1-\rho^2}} \phi(t') dt' ds'.
\end{equation}
This implies that
$$
\frac{\partial K_{\rho}(s,t)}{\partial s} =  \phi(s) \int_{t'=-\infty}^{(t - \rho \cdot s)/\sqrt{1-\rho^2}} \phi(t')  dt'.
$$
By chain rule, we get that
$$
\frac{\partial J_{\rho}(x,y)}{\partial x} = \frac{\partial K_{\rho}(s,t)}{\partial s} \cdot  \frac{\partial s}{\partial x}.
$$
By elementary calculus, it follows that
$$
\frac{d \Phi^{-1}(x)}{dx} = \frac{1}{\phi (\Phi^{-1}(x))} \quad \Rightarrow \quad  \frac{\partial s}{\partial x} = \frac{1}{\phi(\Phi^{-1}(x))} =\frac{1}{\phi(s)}.
$$
Thus,
$$
\frac{\partial J_{\rho}(x,y)}{\partial x} = \int_{t'=-\infty}^{(t - \rho \cdot s)/\sqrt{1-\rho^2}} \phi(t')  dt'.
$$
Thus, we next get that
\begin{align*}
  \frac{\partial^2 J_{\rho}(x,y)}{\partial x^2}
  &=\frac{\partial^2 J_{\rho}(x,y)}{\partial x \partial s}  \cdot  \frac{\partial s}{\partial x}\\
  &= \phi \left(\frac{t-\rho \cdot s}{\sqrt{1-\rho^2}} \right) \cdot \frac{-\rho}{\sqrt{1-\rho^2}} \cdot \frac{1}{\phi(s)} =\phi \left(\frac{\Phi^{-1}(y)-\rho \cdot \Phi^{-1}(x)}{\sqrt{1-\rho^2}} \right) \cdot \frac{-\rho}{\sqrt{1-\rho^2}} \cdot \frac{1}{\phi(s)}  .\\
  \frac{\partial^2 J_{\rho}(x,y)}{\partial x \partial y}
  &=\frac{\partial^2 J_{\rho}(x,y)}{\partial x \partial t} \cdot  \frac{\partial t}{\partial y} = \phi \left(\frac{\Phi^{-1}(y)-\rho \cdot \Phi^{-1}(x)}{\sqrt{1-\rho^2}} \right) \cdot \frac{1}{\sqrt{1-\rho^2}} \cdot \frac{1}{\phi(t)}.
\end{align*}
Because we know that $(X,Y) \sim (Y,X)$, by symmetry, we can conclude that
$$
\frac{\partial^2 J_{\rho}(x,y)}{\partial y^2}   =\phi \left(\frac{\Phi^{-1}(x)-\rho \cdot \Phi^{-1}(y)}{\sqrt{1-\rho^2}} \right) \cdot \frac{-\rho}{\sqrt{1-\rho^2}} \cdot \frac{1}{\phi(t)}.   $$
and likewise,
$$
\frac{\partial^2 J_{\rho}(x,y)}{\partial y \partial x} = \phi \left(\frac{\Phi^{-1}(x)-\rho \cdot \Phi^{-1}(y)}{\sqrt{1-\rho^2}} \right) \cdot \frac{1}{\sqrt{1-\rho^2}} \cdot \frac{1}{\phi(s)}.$$
It is obvious now that
$$
\pdiffII {J_\rho(x,y)}xx \cdot \pdiffII {J_\rho(x,y)}yy
- \rho^2 \left(\pdiffII {J_\rho(x,y)}xy\right)^2 = 0.
$$
Now, suppose that $|\sigma| \le |\rho|$. Then
$$
\det(M_{\rho \sigma}(x, y))
=
\pdiffII {J_\rho(x,y)}xx \cdot \pdiffII {J_\rho(x,y)}yy
- \sigma^2 \left(\pdiffII {J_\rho(x,y)}xy\right)^2 \ge 0.
$$
If $\rho \ge 0$ then the diagonal of $M_{\rho \sigma}(x,y)$ is non-positive, and it follows that
$M_{\rho \sigma}(x, y)$ is negative semidefinite. If $\rho \le 0$ then
the diagonal is non-negative and so $M_{\rho \sigma}(x,y)$ is positive semidefinite.
\end{proof}

\thirddiff*
\begin{proof}
As before, we set $\Phi^{-1}(x) =s $ and $\Phi^{-1}(y) =t$. From the proof of Claim~\ref{clm:negative-semidefinite}, we see that
$$
\frac{\partial^2 J_{\rho}(x,y)}{\partial x^2}  =\phi \left(\frac{\Phi^{-1}(y)-\rho \cdot \Phi^{-1}(x)}{\sqrt{1-\rho^2}} \right) \cdot \frac{-\rho}{\sqrt{1-\rho^2}} \cdot \frac{1}{\phi(s)}  .
$$
To compute the  third derivatives of $J$, recalling that
  $\frac{\partial s}{\partial x} = \frac{1}{\phi(s)}$ and $\frac{\partial t}{\partial y}= \frac{1}{\phi(t)}$, we have
 \begin{eqnarray}
   \frac{\partial^3 J_{\rho}(x,y)}{\partial x^3} &=& \frac{\rho}{(1-\rho^2)^{3/2}}
   \frac{\rho t + (2\rho^2 - 1) s}{\phi(s)}
   \exp\Big(-\frac{t^2 - 2\rho st + (2\rho^2 - 1) s^2}{2(1-\rho^2)}\Big) \notag \\
   &=& \frac{\sqrt{2\pi} \rho}{(1-\rho^2)^{3/2}}
   (\rho t + (2\rho^2 - 1) s)
   \exp\Big(-\frac{t^2 - 2\rho st + (3\rho^2 - 2) s^2}{2(1-\rho^2)}\Big).
   \label{eq:Jaaa}
 \end{eqnarray}
 Now, $\Phi^{-1}(x) \sim \sqrt{2 \log x}$ as $x \to 0$; hence there is a constant $C$
  such that $\Phi^{-1}(x) \le C \sqrt{\log x}$ for all $x \le \frac{1}{2}$.
  Hence, $\exp(s^2) \le x^{-C}$ for all $x \le \frac{1}{2}$; by symmetry,
  $\exp(s^2) \le (x(1-x))^{-C}$ for all $x \in (0, 1)$.
  Therefore
  \begin{align}
   \exp\Big(-\frac{t^2 - 2\rho st + (3\rho^2 - 2)s^2}{2(1-\rho^2)}\Big)
   &=
    e^{-\frac{t^2}{2(1-\rho^2)}} e^{\frac{\rho st}{1 - \rho^2}} e^{\frac{(2 - 3 \rho^2) s^2}{2(1-\rho^2}} \notag\\
   &\le
    e^{-\frac{t^2}{2(1-\rho^2)}} e^{\frac{\rho (s^2 + t^2)}{2(1 - \rho^2)}} e^{\frac{(2 - 3 \rho^2) s^2}{2(1-\rho^2}} \notag\\
   &\le
    \big(x(1-x)y(1-y)\big)^{-\frac{\rho}{2(1-\rho^2)}}
    \big(x(1-x)\big)^{-\frac{2 - 3 \rho^2}{2(1-\rho^2)}} .
\label{eq:Jaaa-bound}
  \end{align}
  Further, using $\exp(s^2) \le x^{-C}$ and $\exp(t^2) \le y^{-C}$, $\rho t + (2\rho^2 - 1) s \le 4 (xy)^{-C}$. As a consequence,  applying this to~\eqref{eq:Jaaa}, we see that there is a constant $C(\rho)>0$, 
  $$\left|\frac{\partial^3 J_{\rho}(x,y)}{\partial x^3}\right| \le C(\rho) \big(x(1-x)y(1-y)\big)^{-C(\rho)}.$$
  The other third derivatives are similar:
  \[
  \frac{\partial^3 J_{\rho}(x,y)}{\partial x^2 \partial y}
   = \frac{\sqrt{2\pi} \rho}{(1-\rho^2)^{3/2}}
   (t - 2 \rho s)
   \exp\Big(-\frac{(2\rho^2 - 1) t^2 - 2\rho st + (2\rho^2 - 1) s^2}{2(1-\rho^2)}\Big).
  \]
  By the same steps that led to~\eqref{eq:Jaaa-bound}, we get
  $$\left|\frac{\partial^3 J_{\rho}(x,y)}{\partial x^2 \partial y}\right| \le C(\rho) \big(x(1-x)y(1-y)\big)^{-C(\rho)}$$ (for a slightly different $C(\rho)$).
  The bounds on $\partial^3 J/\partial y^2 \partial x$ and $\partial^3 J/\partial x^3$ then follow because $J$ is symmetric in $x$ and $y$.
  \noindent

  The fact that $C(\rho)$ can be chosen so that it is continuous for $\rho \in (-1,1)$ is obvious from the discussion above.
 \end{proof}

 \begin{claim}\label{clm:diff-J-rho}
 For any $x, y \in (0, 1)$,
  \[
   \left|\pdiff{J_\rho(x, y)}{\rho}\right| \le (1-\rho^2)^{-3/2}.
  \]
 \end{claim}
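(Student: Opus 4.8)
The plan is to identify $\pdiff{J_\rho(x,y)}{\rho}$ with a bivariate Gaussian density --- Plackett's classical identity --- and then bound that density crudely. Recalling from the proof of Claim~\ref{clm:negative-semidefinite} the substitution $s = \Phi^{-1}(x)$, $t = \Phi^{-1}(y)$, neither of which depends on $\rho$, we have $\pdiff{J_\rho(x,y)}{\rho} = \pdiff{K_\rho(s,t)}{\rho}$ with $K_\rho$ as in \eqref{eq:K}. In that double integral only the inner upper limit $(t-\rho s')/\sqrt{1-\rho^2}$ depends on $\rho$, and $\pdiff{}{\rho}\left[\frac{t-\rho s'}{\sqrt{1-\rho^2}}\right] = \frac{\rho t - s'}{(1-\rho^2)^{3/2}}$. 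Differentiating under the integral sign --- legitimate by dominated convergence, since for $\rho$ in any compact subinterval of $(-1,1)$ both the integrand of \eqref{eq:K} and its $\rho$-derivative are bounded in absolute value by an integrable function of $s'$ (namely a constant multiple of $\phi(s')(1+|s'|+|t|)$) --- I would obtain
\[
\pdiff{K_\rho(s,t)}{\rho} = \frac{1}{(1-\rho^2)^{3/2}}\int_{-\infty}^{s} (\rho t - s')\,\phi(s')\,\phi\!\left(\frac{t-\rho s'}{\sqrt{1-\rho^2}}\right)ds'.
\]

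The key step is to recognize the integrand as an exact derivative in $s'$. Let $\Gamma_\rho(u,v) = \frac{1}{2\pi\sqrt{1-\rho^2}}\exp\!\left(-\frac{u^2 - 2\rho uv + v^2}{2(1-\rho^2)}\right)$ denote the density of the standard bivariate normal with correlation $\rho$. A one-line computation gives $\phi(s')\,\phi\!\left(\frac{t-\rho s'}{\sqrt{1-\rho^2}}\right) = \sqrt{1-\rho^2}\,\Gamma_\rho(s',t)$ and $\pdiff{}{s'}\Gamma_\rho(s',t) = \frac{\rho t - s'}{1-\rho^2}\,\Gamma_\rho(s',t)$, so the integrand above is precisely $\pdiff{}{s'}\Gamma_\rho(s',t)$. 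Integrating in $s'$ and using $\Gamma_\rho(-\infty,t) = 0$ yields Plackett's identity
\[
\pdiff{J_\rho(x,y)}{\rho} = \Gamma_\rho\!\left(\Phi^{-1}(x),\Phi^{-1}(y)\right) \ge 0.
\]

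It remains only to bound $\Gamma_\rho$ from above. Since $|\rho| < 1$ the quadratic form obeys $u^2 - 2\rho uv + v^2 \ge (|u|-|v|)^2 \ge 0$, so the exponential factor in $\Gamma_\rho$ is at most $1$ and hence
\[
0 \le \pdiff{J_\rho(x,y)}{\rho} \le \frac{1}{2\pi\sqrt{1-\rho^2}} \le (1-\rho^2)^{-3/2},
\]
where the last step uses $2\pi \ge 1$ and $1-\rho^2 \le 1$. This is in fact considerably stronger than claimed; the weaker stated form is the one that dovetails with the $\rho$-continuity estimate in Lemma~\ref{lem:gaussian_continuous}. The only mildly delicate points --- justifying the differentiation under the integral sign, and the chain-rule bookkeeping for the $\rho$-dependent limit of integration --- are entirely routine, so I do not anticipate a genuine obstacle here.
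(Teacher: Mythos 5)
Your proof is correct, and in fact sharper than what the claim asserts: after integrating by parts in $s'$ you recover Plackett's identity $\pdiff{J_\rho(x,y)}{\rho} = \Gamma_\rho\left(\Phi^{-1}(x),\Phi^{-1}(y)\right)$, a nonnegative quantity bounded by $\frac{1}{2\pi\sqrt{1-\rho^2}}$, which beats the stated $(1-\rho^2)^{-3/2}$ by a factor of $2\pi(1-\rho^2)$. All the algebraic identities you use ($\phi(s')\phi((t-\rho s')/\sqrt{1-\rho^2}) = \sqrt{1-\rho^2}\,\Gamma_\rho(s',t)$ and $\pdiff{}{s'}\Gamma_\rho = \frac{\rho t - s'}{1-\rho^2}\Gamma_\rho$) check out, as does the chain-rule computation $\pdiff{}{\rho}\left[\frac{t-\rho s'}{\sqrt{1-\rho^2}}\right] = \frac{\rho t - s'}{(1-\rho^2)^{3/2}}$.

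The paper's own proof starts identically --- differentiate the iterated integral \eqref{eq:K} with respect to $\rho$ --- but its displayed formula for $\pdiff{K_\rho}{\rho}$ drops the $(\rho t - s')$ factor that the chain rule produces, leaving only $-\frac{1}{(1-\rho^2)^{3/2}}\int_{-\infty}^{s}\phi(s')\phi((t-\rho s')/\sqrt{1-\rho^2})\,ds'$, and then bounds crudely using $\int\phi \le 1$. Taken literally that intermediate display is not the derivative (the sign is also off), so the paper's argument has a computational slip that your version repairs: you keep the $(\rho t - s')$ factor, recognize the integrand as an exact $s'$-derivative of the bivariate density, and integrate it away. The trade-off is that your route requires a moment of pattern-recognition (or prior acquaintance with Plackett's formula), whereas the paper was evidently aiming for a one-line estimate; but since the paper's one-liner as written does not survive scrutiny without exactly the integration-by-parts you perform, your version is both more correct and more informative, at essentially no extra cost.
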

 \begin{proof}
  We begin from~\eqref{eq:K}, but this time we differentiate with respect to $\rho$:
  \[
   \pdiff{K_\rho(s,t)}{\rho} = -\frac{1}{(1 - \rho^2)^{3/2}}
   \int_{s'=-\infty}^s \phi(s') \phi\left(\frac{t - \rho s'}{\sqrt{1-\rho^2}}\right) ds'.
  \]
 Since $\mathop{\mathrm{Range}}(\phi) \subset (0,1]$ and $\int_{s'} \phi(s') ds' = 1$, it follows that
  \[
   \left|\pdiff{K_\rho(s, t)}{\rho}\right| \le (1-\rho^2)^{-3/2}.
  \]
 Since $\pdiff{J_\rho(s,t)}{\rho} = \pdiff{K_\rho(\Phi^{-1}(x), \Phi^{-1}(y))}{\rho}$,
 the proof is complete.
 \end{proof}
 \noindent
We also state the following useful claim without a proof. The proof is obvious from the calculations in the proofs of Claim~\ref{clm:negative-semidefinite} and Claim~\ref{clm:third-derivative}.
\begin{claim}\label{clm:bounded derivatives}
For any $\rho \in (-1,1)$, $\eps>0$ there exists a continuous function $\gamma (\rho, \eps)$ such that for any $(x,y) \in [\eps,1-\eps]^2$ and $1 \le i  + j \le 3$,
$$
\left|\frac{\partial^{i+j} J_{\rho}(x,y)}{\partial^i x \partial^j y} \right| \le \gamma (\rho, \eps).
$$
\end{claim}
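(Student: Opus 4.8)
The plan is to read the bound straight off the explicit formulas for the derivatives of $J_\rho$ already computed in the proofs of Claim~\ref{clm:negative-semidefinite} and Claim~\ref{clm:third-derivative}, and then package them into one continuous function by a compactness argument. Recall from those proofs that, writing $s=\Phi^{-1}(x)$ and $t=\Phi^{-1}(y)$, every partial derivative $\partial^{i+j}J_\rho(x,y)/\partial x^i\partial y^j$ with $1\le i+j\le 3$ is a finite sum of terms, each a product of: a rational power $(1-\rho^2)^{-k/2}$ with $k\le 3$; a polynomial in $s,t$ whose coefficients are polynomials in $\rho$; and a ratio of Gaussian densities of the shape $\phi\big((t-\rho s)/\sqrt{1-\rho^2}\big)/\phi(s)$ (or its $s\leftrightarrow t$ version, and for the two first derivatives just a Gaussian CDF taking values in $[0,1]$). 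In particular each such derivative is jointly continuous in $(\rho,x,y)$ on $(-1,1)\times(0,1)^2$.

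First I would restrict $(x,y)$ to $[\eps,1-\eps]^2$. There $s,t$ lie in the compact symmetric interval $[-\Phi^{-1}(1-\eps),\Phi^{-1}(1-\eps)]$, so the polynomial factors are bounded by a quantity continuous in $(\rho,\eps)$; since $\phi$ is continuous, positive and even, $1/\phi(s)$ and $1/\phi(t)$ are bounded above by $\phi(\Phi^{-1}(1-\eps))^{-1}$, while the numerator densities are at most $(2\pi)^{-1/2}$; and for $\rho\in(-1,1)$ the factor $(1-\rho^2)^{-k/2}$ is finite and continuous in $\rho$. Multiplying term by term gives, for each $(i,j)$, a bound $C_{ij}(\rho,\eps)$ that is a continuous function of $(\rho,\eps)\in(-1,1)\times(0,1/2)$.

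To conclude, set
\[
\gamma(\rho,\eps) := \max_{1\le i+j\le 3}\ \sup_{(x,y)\in[\eps,1-\eps]^2}\ \left|\frac{\partial^{i+j}J_\rho(x,y)}{\partial x^i\partial y^j}\right|,
\]
which is finite by the previous step and clearly dominates every derivative on the required region. Its continuity in $(\rho,\eps)$ follows from the joint continuity of the derivatives together with the standard fact that the supremum of a jointly continuous function over the compact square $[\eps,1-\eps]^2$, which varies continuously (indeed monotonically) with $\eps$, depends continuously on the parameters. The only work here is bookkeeping — listing all four third-order derivatives $\partial^3 J/\partial x^3$, $\partial^3 J/\partial x^2\partial y$, $\partial^3 J/\partial x\partial y^2$, $\partial^3 J/\partial y^3$ along with the lower-order ones, and checking that the $(1-\rho^2)^{-k/2}$ and $1/\phi$ factors are the sole sources of possible blow-up, both controlled exactly as in the cited claims — so there is no genuine obstacle.
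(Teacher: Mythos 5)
Your proposal is correct and takes essentially the same approach the paper has in mind: the paper states the claim without proof, remarking only that it is ``obvious from the calculations in the proofs of Claim~\ref{clm:negative-semidefinite} and Claim~\ref{clm:third-derivative},'' and you have simply carried out that bookkeeping — reading off that each derivative is a finite combination of polynomials in $s=\Phi^{-1}(x),\,t=\Phi^{-1}(y)$, powers of $(1-\rho^2)^{-1/2}$, Gaussian densities, and $1/\phi(s),1/\phi(t)$ factors, all of which are jointly continuous and bounded on the compact set $[\eps,1-\eps]^2$ for fixed $\rho\in(-1,1)$. Defining $\gamma(\rho,\eps)$ as the supremum over this compact set and over the finitely many $(i,j)$ with $1\le i+j\le 3$ then gives a finite, continuous bound exactly as claimed.
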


\chapter[Paradoxes]{Paradoxes, Noise Stability and Reverse Hyper-Contraction}
\section{Probability of Paradox}
Our next goal is to prove a quantitative version of Arrow Theorem following~\cite{Mossel:12}. 
We will only discuss the case of $3$ alternatives but will allow different function to determine different pairwise selections. 
Recall that we consider voters who vote independently and where voter $i$ votes uniformly at random from the $6$ possible rankings.
Recall that we encode the $6$ possible rankings by vectors $(x,y,z) \in \{-1,+1\}^3 \setminus \{ \pm (1,1,1) \}$.
Here $x$ is $+1/-1$ if a voter ranks $a$ above/below $b$, 
$y$ is $+1/-1$ if voter ranks $b$ above/below $c$,
$z$ is $+1/-1$ if voter ranks $c$ above/below $a$. 
We will assume further that $f,g,h : \{-1,1\}^n \to \{0,1\}$ are the aggregation functions for the $a$ vs. $b$, $b$ vs. $c$ and $c$ vs. $a$ preferences. 
We will again use the following observation used in~\cite{Kalai:02}: 
Since the binary predicate $\psi(a,b,c) = 1(a=b=c)$ for $a,b,c \in \{0,1\}$,  can be expressed as
\[
\psi(a,b,c) =  1+ ab  + ac + bc - a - b - c,
\]
we can write 
\begin{eqnarray*}
  \IP[f(x) = g(y) = h(z)] &=& 1 + \IE[f(x) g(y)] + \IE[g(y) h(z)] + \IE[h(z) f(x)]  -\IE[f] - \IE[g] - \IE[h]  \\ &=& 
  1+ \langle f, g \rangle_{-1/3} +  \langle g, h \rangle_{-1/3} +  \langle h, f \rangle_{-1/3} -\IE[f] - \IE[g] - \IE[h] 
\end{eqnarray*}
where the last equality follows from the fact that the uniform distribution over $\{\pm 1\}^3 \setminus \{ \pm (1,1,1) \}$, satisfies $E[x_i y_i] = -1/3$ and similarly for other pairs of coordinates.

To state a quantitive version, we will say that a function $f$ is $\eps$-close to a function $g$ if $\IP[f \neq g] \leq \eps$. 
The quantitative version we we wish to prove is the following:

\begin{theorem} \label{thm:quant_arrow}
For every $\eps  >  0$, there exists $\delta(\eps) > 0$ such that the following holds for every $n$: 
If
\[
\IP[f(x) = g(y) = h(z)] < \delta,
\]
then either two of the functions $f,g,h$ are $\eps$-close to constant functions of the opposite sign, or there exists a variable $i$ such that 
$f,g$ and $h$ are all $\eps$-close to the same dictator on voter $i$. 
\end{theorem}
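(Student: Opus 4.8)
\emph{Proof plan.} Everything rests on the identity recorded above,
\[
\IP[f(x)=g(y)=h(z)] = 1 + \langle f,g\rangle_{-1/3} + \langle g,h\rangle_{-1/3} + \langle h,f\rangle_{-1/3} - \IE f - \IE g - \IE h,
\]
so the assertion is precisely a robust version of the qualitative Arrow theorem (Theorem~\ref{thm:arrow3}): the right-hand side is minimized exactly on the two families named in the statement, and we must show that every near-minimizer is $\eps$-close to one of them. I would follow the two-step strategy of~\cite{Mossel:12}: first prove a Gaussian analogue, where Borell's theorem identifies the extremizers explicitly, and then transfer it to the cube, using the invariance principle / Majority-is-Stablest machinery of the previous chapter in the low-influence regime and treating influential coordinates separately. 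Reverse hyper-contraction (Borell's reverse inequality) is the tool that turns qualitative rigidity into quantitative $\eps$-closeness; ordinary hyper-contraction does not suffice here because it gives no two-sided control of $\langle f,g\rangle_\rho$ when $\rho<0$.

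\textbf{Step 1: the Gaussian statement.} For $\phi,\psi,\xi:\IR^n\to[0,1]$ with $(-1/3)$-correlated Gaussian inputs and means $\mu_\phi=\E\phi$, etc., the negative-correlation form of Borell's inequality (Theorem~\ref{thm:half_space}) gives $\langle\phi,\psi\rangle_{-1/3}\ge J_{-1/3}(\mu_\phi,\mu_\psi)$, so the Gaussian paradox probability is at least
\[
P(u,v,w) := 1 + J_{-1/3}(u,v) + J_{-1/3}(v,w) + J_{-1/3}(w,u) - u - v - w
\]
evaluated at $(\mu_\phi,\mu_\psi,\mu_\xi)$. Using the formulas for $J_\rho$ and its derivatives from Section~\ref{subsec:J}, one checks that $P\ge 0$ on $[0,1]^3$, that $P$ vanishes exactly when two of $u,v,w$ equal $0$ and $1$, that $P(1,1,1)=\tfrac14+\tfrac{3\arcsin(-1/3)}{2\pi}>0$ (so, unlike on the cube, there is no Gaussian ``dictator'' minimizer), and --- the only mildly delicate point --- that $P$ is bounded below away from these corners, i.e.\ $P\le\eta$ forces two of $u,v,w$ to satisfy $|u-1|+|v|=O(\eta)$. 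Feeding this into the robust version of Borell's theorem (or applying reverse hyper-contraction directly to the martingale increments) yields: any near-minimizing Gaussian triple has two of its functions $\eps$-close to the constants $1$ and $0$.

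\textbf{Step 2: from the cube to the Gaussian, and the dictator case.} Fix a threshold $\tau(\eps)$. If $f,g,h$ all have every influence $\le\tau$, then the invariance principle together with the negative-correlation analogue of the Majority-is-Stablest statements developed above (equivalently, with Theorem~\ref{thm:half_space}) shows each $\langle f,g\rangle_{-1/3}$ is within $\eps$ of $J_{-1/3}(\IE f,\IE g)$, so the cube paradox probability is within $O(\eps)$ of $P(\IE f,\IE g,\IE h)$; by Step 1 this is $<\delta+O(\eps)$ only if two of $\IE f,\IE g,\IE h$ are $O(\delta+\eps)$-close to $1$ and $0$, and since $f,g,h$ are $\{0,1\}$-valued this says two of them are $\eps$-close to constants of opposite sign --- the first alternative. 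Otherwise some function, say $f$, has $I_i(f)>\tau$ for some $i$. Quantifying the paradox construction from the proof of Theorem~\ref{thm:arrow3} (condition on the coordinates witnessing the influences, together with $z_i=-y_i$ and $z_j=-x_j$) shows that if $g$ or $h$ failed to have coordinate $i$ influential, or if two distinct coordinates were commonly influential, a paradox would occur with probability bounded below in terms of $\tau$; so for $\delta$ small enough all three functions are influential on one and the same coordinate $i$ and, after applying a tiny noise $T_{1-\gamma}$ and invoking Lemma~\ref{lem:inf_sum_rho}, have all other influences small. Decomposing $f,g,h$ according to the value of $x_i$, applying the low-influence case to the four restricted sub-problems, and using reverse hyper-contraction to control the cross terms forces $f,g,h$ to be $\eps$-close to one common function of $x_i$ alone; the $n=1$ case of Arrow's theorem (together with the observation that $I_i(f)>\tau$ rules out that common function being constant, provided $\eps$ is small relative to $\tau$) identifies it as a dictator, or its negation, on voter $i$ --- the second alternative.

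\textbf{Expected main obstacle.} Step 1 and the low-influence half of Step 2 are essentially bookkeeping on top of tools already developed. The real difficulty is the influential-coordinate case: making the ``common influential variable, or else a paradox'' dichotomy quantitative, controlling the influences of restricted functions under conditioning and under $T_{1-\gamma}$, and chaining the reductions so that the accumulated error stays below $\eps$. This is exactly where reverse hyper-contraction enters and is indispensable.
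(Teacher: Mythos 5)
There is a genuine gap in Step 2, in the way you split into cases. You divide the analysis into ``all influences below $\tau$'' and ``some $I_i(f)>\tau$,'' and in the second branch you claim that a quantification of the argument in Theorem~\ref{thm:arrow3} forces $g$ and $h$ to also be influential on coordinate $i$, and hence all three close to the same dictator. This inference is false. Take $f$ the dictator on coordinate $i$, $g\equiv 1$, and $h\equiv 0$: then $\IP[f(x)=g(y)=h(z)]=0$, $I_i(f)=1>\tau$, yet $g$ and $h$ have \emph{no} influential coordinate at all, and the correct conclusion is the first alternative (two constants of opposite sign), not the dictator one. Your claim ``if $g$ or $h$ failed to have coordinate $i$ influential, a paradox would occur with probability bounded below in terms of $\tau$'' therefore cannot be right as stated. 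The qualitative proof of Theorem~\ref{thm:arrow3} only produces a paradox from two \emph{non-constant} functions with disjoint sets of active variables; it gives nothing when one of $g,h$ is (near-)constant, so its ``quantification'' does not deliver what you want in this branch. The ``two constants'' alternative has to be checked throughout, not just in the all-low-influence case.

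What the paper does instead is to use three overlapping cases, each of which is honest about both alternatives. Case (\ref{eq:case1}) is two high influences in two \emph{different} coordinates coming from two \emph{different} functions --- this is the only configuration in which one can really force a paradox (Theorem~\ref{thm:two_inf}, via reverse hyper-contraction in Lemma~\ref{lem:two_inf}). Case (\ref{eq:case2}) is the cross-influence condition: for every coordinate, at most one of the three functions is influential there. This covers your counterexample (dictator plus two near-constants) and is resolved by Theorem~\ref{thm:arrow_low_cross_unif}, whose conclusion dichotomizes between a paradox and ``two functions close to opposite constants''; crucially, this needs the cross-influence form of Majority is Stablest (Theorem~\ref{thm:MIST}, where only one of the two functions in each pair has to be low-influence), not the version you invoke that requires all three to be smooth. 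Case (\ref{eq:case3}), a single common influential coordinate, is handled by conditioning on that coordinate and reducing to the low-influence case (Theorem~\ref{thm:arrow_one_inf}), and that argument explicitly re-checks whether the restricted functions give the two-constants alternative. Your decomposition cannot be patched just by pushing $\tau$ or $\delta$ around; it needs a genuinely different branching that keeps the ``two constants'' option alive in the high-influence regime.

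On a secondary point: in Step 1 you propose to go through Borell's noise-stability lower bound and then analyze the three-variable function $P(u,v,w)$ directly. The paper's Gaussian Arrow theorem (Theorem~\ref{thm:arrow_gauss}) instead applies reverse hyper-contraction to sets via Lemma~\ref{lem:inv_hyp_gauss}, avoiding any calculus analysis of $P$. Your route is not wrong, and for $\{0,1\}$-valued functions you would not even need a robust Borell theorem, since small mean already implies close-to-constant; but the claim ``$P\le\eta$ forces two coordinates within a controlled distance of $\{0,1\}$ in opposite order'' is a nontrivial statement that you assert rather than establish, and it is not obviously cheaper than the paper's route.
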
 

The main significance of Theorem~\ref{thm:quant_arrow} is that it is dimension independent. We get the same bound no matter what the dimension is. 
This shows that one cannot avoid the curse of paradoxes in voting by assuming the probability of a paradox vanishes as the number of voters grow. 
Our proof of the quantitative version will follow~\cite{Mossel:12}. It uses the Majority is Stablest Theorem along with reverse hyper-contractive inequality.

Before proving Theorem~\ref{thm:quant_arrow} we give a direct implication of the Majority is Stablest Theorem in the case where 
the functions $f=g=h$ are all balanced so $\IE[f] = \IE[g] = \IE[h] = 0$. Using Majority is Stablest Theorem we obtain: 
\begin{theorem}[\cite{Kalai:02,MoOdOl:10}]\label{thm:kalai}
For every $\eps > 0$, there exists a $\tau > 0$ such that 
if $f,g,h : \{-1,1\}^n \to \{0,1\}$ satisfy $\EE[f] = \EE[g] = \EE[h] = 1/2$ and have all influences bounded above by $\tau$ then:
\begin{equation} \label{eq:kalai}
\PP[f(x) = g(y) = h(z)] \geq  3 \langle \chi_{\half}, \chi_{\half} \rangle_{-\third} -  \half - \eps. 
\end{equation}
\end{theorem}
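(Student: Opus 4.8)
The plan is to reduce the claim to the Majority is Stablest Theorem (Theorem~\ref{thm:MISTsimple}) via the algebraic identity for the probability of a transitive outcome. Since $\psi(a,b,c)=1+ab+ac+bc-a-b-c$ on $\{0,1\}^3$ and each coordinate triple $(x_i,y_i,z_i)$ is uniform on $\{\pm1\}^3\setminus\{\pm(1,1,1)\}$, so that $\IE[x_iy_i]=\IE[y_iz_i]=\IE[z_ix_i]=-1/3$, the computation preceding the theorem specializes, using $\IE f=\IE g=\IE h=1/2$, to
\[
\IP[f(x)=g(y)=h(z)]=\langle f,g\rangle_{-\third}+\langle g,h\rangle_{-\third}+\langle h,f\rangle_{-\third}-\tfrac{1}{2}.
\]
Hence it suffices to prove a \emph{two-function lower bound}: for every $\eta>0$ there is $\tau(\eta)>0$ such that whenever $f,g:\{-1,1\}^n\to\{0,1\}$ satisfy $\IE f=\IE g=1/2$ and all influences are at most $\tau$, then $\langle f,g\rangle_{-\third}\ge\langle\chi_{\half},\chi_{\half}\rangle_{-\third}-\eta$. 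Applying this with $\eta=\eps/3$ to the three pairs $(f,g),(g,h),(h,f)$ and summing yields exactly~(\ref{eq:kalai}).

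To establish the two-function bound I would first move from negative to positive correlation. Set $\wt g(w):=g(-w)$; flipping the coordinates $y\mapsto-y$ turns a $(-\third)$-correlated pair into a $(+\third)$-correlated one, so $\langle f,g\rangle_{-\third}=\langle f,\wt g\rangle_{\third}$, and $\wt g$ is again $\{0,1\}$-valued with $\IE\wt g=1/2$ and $I_i(\wt g)=I_i(g)$ for all $i$ (its Fourier coefficients only change sign). Now apply Theorem~\ref{thm:MISTsimple} to the pair $(f,\,1-\wt g)$, both of which are $\{0,1\}$-valued with expectation $1/2$ and all influences at most $\tau$: this gives $\langle f,1-\wt g\rangle_{\third}\le\langle\chi_{\half},\chi_{\half}\rangle_{\third}+\eta$. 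Since $\langle f,1\rangle_{\third}=\IE f=1/2$ and the noisy inner product is linear in its second argument, the left-hand side equals $1/2-\langle f,\wt g\rangle_{\third}$, so $\langle f,\wt g\rangle_{\third}\ge 1/2-\langle\chi_{\half},\chi_{\half}\rangle_{\third}-\eta$.

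Finally I would identify $1/2-\langle\chi_{\half},\chi_{\half}\rangle_{\third}$ with $\langle\chi_{\half},\chi_{\half}\rangle_{-\third}$. Because $\Phi^{-1}(1/2)=0$, we have $\langle\chi_{\half},\chi_{\half}\rangle_{\rho}=J_\rho(1/2,1/2)=\IP[N\le0,\,M\le0]$, where $(N,M)$ is standard bivariate normal with correlation $\rho$; replacing $M$ by $-M$, which is standard normal and has correlation $-\rho$ with $N$, gives $\langle\chi_{\half},\chi_{\half}\rangle_{-\rho}=\IP[N\le0,\,M\ge0]=1/2-\langle\chi_{\half},\chi_{\half}\rangle_{\rho}$. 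Taking $\rho=1/3$ and combining with the previous paragraph yields $\langle f,g\rangle_{-\third}=\langle f,\wt g\rangle_{\third}\ge\langle\chi_{\half},\chi_{\half}\rangle_{-\third}-\eta$, as required.

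I expect no genuine obstacle here; the whole argument is careful bookkeeping of sign flips. The one point that actually matters is that Theorem~\ref{thm:MISTsimple} is stated only for nonnegative correlation, which is exactly why one must detour through the coordinate flip $g\mapsto\wt g$ and the complementation $\wt g\mapsto 1-\wt g$, together with the orthant-probability identity $\langle\chi_{\half},\chi_{\half}\rangle_{-\rho}=1/2-\langle\chi_{\half},\chi_{\half}\rangle_{\rho}$. One should also check that inserting $\eps/3$ into Majority is Stablest produces the $\tau$ asserted in the statement, but this involves no new idea.
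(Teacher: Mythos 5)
Your proof is correct and is the natural way to fill in the gap the paper leaves — the text simply states that the theorem follows from Majority is Stablest without writing out the argument. The key technical point you correctly identify and handle is that Theorem~\ref{thm:MISTsimple} is stated only for $0\le\rho<1$, so the desired lower bound at $\rho=-\tfrac13$ must be converted into an upper bound at $\rho=+\tfrac13$ via the coordinate flip $g\mapsto\wt g$ combined with complementation $\wt g\mapsto 1-\wt g$, together with the orthant identity $\langle\chi_{\half},\chi_{\half}\rangle_{-\rho}=\tfrac12-\langle\chi_{\half},\chi_{\half}\rangle_{\rho}$; the rest is the algebraic identity from the preceding paragraph and the $\eta=\eps/3$ allocation.
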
 
Again, the right hand side of equation~(\ref{eq:kalai}) is the asymptotic probability 
that $\PP[f(x) = g(y) = h(z)]$ when $f = g = h = \chi_{\half}(n^{-\half} \sum_{i=1}^n x_i)$ are all given by the same Majority function. 
Theorem~\ref{thm:kalai} provides a surprising counter argument to Condorcet' arguments. Condorcet argued that pairwise ranking by Majority is problematic as it results in a paradox and Theorem~\ref{thm:kalai} shows that in fact Majority asymptotically minimizes the probability of a paradox among low influence functions. 
 
We also have the following strengthening of Theorem~\ref{thm:kalai}.
\begin{theorem}\label{thm:kalai2}
For every $\eps > 0$, there exist $m, \beta > 0$ such that 
if $f,g,h : \{-1,1\}^n \to \{0,1\}$ satisfy $\EE[f] = \EE[g] = \EE[h] = 1/2$ and $f,g$ and $h$ are all 
$(m,\beta)$-resilient then 
\[
\PP[f(x) = g(y) = h(z)] \geq  3 \langle \chi_{\half}, \chi_{\half} \rangle_{-\third} -  \half - \eps.
\]
\end{theorem}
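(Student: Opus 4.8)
The plan is to reduce the statement, via the Kalai identity used in the ``Probability of Paradox'' section, to a pairwise noise-stability lower bound, and then extract that bound from Theorem~\ref{thm:MIST_res} after flipping one coordinate block so that the correlation becomes $+1/3$ instead of $-1/3$. Specialising
\[
\IP[f(x) = g(y) = h(z)] = 1 + \langle f, g\rangle_{-1/3} + \langle g, h\rangle_{-1/3} + \langle h, f\rangle_{-1/3} - \IE[f] - \IE[g] - \IE[h]
\]
to $\IE[f] = \IE[g] = \IE[h] = 1/2$ gives $\IP[f(x)=g(y)=h(z)] = -\tfrac12 + \langle f, g\rangle_{-1/3} + \langle g, h\rangle_{-1/3} + \langle h, f\rangle_{-1/3}$, so it suffices to prove, for each of the three pairs and with $\eps' := \eps/3$, that
\[
\langle f, g\rangle_{-1/3} \ge \langle \chi_{1/2}, \chi_{1/2}\rangle_{-1/3} - \eps' ;
\]
summing the three such inequalities yields exactly the asserted bound $3\langle\chi_{1/2},\chi_{1/2}\rangle_{-1/3} - \tfrac12 - \eps$. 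By symmetry I only need to handle the pair $(f,g)$.

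The key manoeuvre is the passage from negative to positive correlation. Put $\tilde g(x) := g(-x)$; relabelling $y \mapsto -y$ in the definition of the noisy inner product shows $\langle f, g\rangle_{-1/3} = \langle f, \tilde g\rangle_{1/3}$. The function $\tilde g$ is still $\{0,1\}$-valued with $\IE[\tilde g] = 1/2$, and it is $(m,\beta)$-resilient whenever $g$ is, since conditioning $\tilde g$ on $X_S = z$ equals conditioning $g$ on $X_S = -z$; hence $1-\tilde g$ is also $(m,\beta)$-resilient, $[0,1]$-valued, with expectation $1/2$. Now write
\[
\langle f, \tilde g\rangle_{1/3} = \IE[f] - \langle f, 1 - \tilde g\rangle_{1/3} = \tfrac12 - \langle 1 - \tilde g, f\rangle_{1/3},
\]
choose $m := r$ and $\beta := \alpha$ with $r,\alpha$ the parameters supplied by Theorem~\ref{thm:MIST_res} for error $\eps'$ and correlation $1/3$, and apply that theorem with the $(r,\alpha)$-resilient function $1-\tilde g$ and the arbitrary $[0,1]$-valued function $f$:
\[
\langle 1 - \tilde g, f\rangle_{1/3} \le \langle \chi_{\IE(1-\tilde g)}, \chi_{\IE f}\rangle_{1/3} + \eps' = \langle \chi_{1/2}, \chi_{1/2}\rangle_{1/3} + \eps' .
\]
Combining the last two displays gives $\langle f, g\rangle_{-1/3} \ge \tfrac12 - \langle\chi_{1/2},\chi_{1/2}\rangle_{1/3} - \eps'$, and since $\langle\chi_{1/2},\chi_{1/2}\rangle_{\rho} = \tfrac14 + \tfrac{\arcsin\rho}{2\pi}$ (the orthant probability $J_\rho(\tfrac12,\tfrac12)$) we have $\tfrac12 - \langle\chi_{1/2},\chi_{1/2}\rangle_{1/3} = \langle\chi_{1/2},\chi_{1/2}\rangle_{-1/3}$ by oddness of $\rho \mapsto \arcsin\rho$; this is the required pairwise estimate. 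Repeating for $(g,h)$ and $(h,f)$ and summing finishes the proof.

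The only genuinely delicate point is that Theorem~\ref{thm:MIST_res} is stated only for nonnegative $\rho$, so it cannot be invoked directly at $\rho=-1/3$; the reflection $y\mapsto -y$ is what circumvents this, and one should check (as above, in one line each) that resilience, the value of $\IE$, and the complementation $g \mapsto 1-g$ interact cleanly with the flip. Everything else is bookkeeping, so I do not expect a real obstacle. I would also note that the identical argument with Theorem~\ref{thm:MISTsimple} in place of Theorem~\ref{thm:MIST_res} reproves Theorem~\ref{thm:kalai}, since individual influences are likewise unchanged under $x\mapsto -x$ (the Fourier coefficients only pick up signs $(-1)^{|S|}$).
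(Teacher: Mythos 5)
Your proof is correct, and it fills in a step the paper leaves implicit: the text only states Theorem~\ref{thm:kalai2} as ``a strengthening of Theorem~\ref{thm:kalai}'' (with the same slogan ``Using Majority is Stablest'') and gives no argument for either. Your reduction via the Kalai identity to three pairwise bounds $\langle f,g\rangle_{-1/3}\ge J_{-1/3}(\tfrac12,\tfrac12)-\eps/3$ is exactly the intended scheme, and the reflection trick $g\mapsto\tilde g$, $\tilde g\mapsto 1-\tilde g$ correctly works around the fact that Theorem~\ref{thm:MIST_res} is stated only for $\rho\ge 0$; the checks that resilience, $\IE$, and complementation pass through the flip are all right, and the identity $\tfrac12-J_{1/3}(\tfrac12,\tfrac12)=J_{-1/3}(\tfrac12,\tfrac12)$ is correct since $J_\rho(\tfrac12,\tfrac12)=\tfrac14+\tfrac{\arcsin\rho}{2\pi}$. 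The one small stylistic remark: the argument applies resilience of only one function per pair (namely the one you reflect and complement), which is fine because Theorem~\ref{thm:MIST_res} requires only one of the two functions to be resilient, and all three of $f,g,h$ are assumed $(m,\beta)$-resilient; you could equally derive the negative-$\rho$ lower bound by a one-line remark that the complementation argument turns the positive-$\rho$ upper bound of Theorem~\ref{thm:MIST_res} into the corresponding negative-$\rho$ lower bound for $\{0,1\}$-valued functions in general, which is the analogue of the two-sided statement in Theorem~\ref{thm:half_space}. Your closing sanity check that the same argument with Theorem~\ref{thm:MISTsimple} recovers Theorem~\ref{thm:kalai} is also accurate.
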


\section{Reverse Hyper-Contraction}

Recall that the hyper-contractive theorem~\ref{thm:BB} states that for 
$f : \{-1,1\}^n \rightarrow \mathbb{R}$ and $1 \le q \le p$ and for any $\rho^2 \le \frac{q-1}{p-1}$, 
it holds that 
\[
\| T_{\rho} f \|_p \le \|  f \|_q.
\]

Borell~\cite{Borell:82} proved a reverse inequality: 
\begin{theorem}\cite{Borell:82}\label{thm:rev_hyp}
Let $f,g : \{-1,1\}^n \rightarrow \mathbb{R}_+$ and $1 > p > q$. 
Then, for any $0 \leq \rho^2 \le \frac{1-p}{1-q}$, 
\[
\| T_{\rho} f \|_q \ge \|  f \|_p.
\]
and for any $0 \leq \rho^2 \le (1-p)(1-q)$
\[
\langle f, g \rangle_{\rho} \geq \| f \|_p \| \| g \|_q
\]
\end{theorem}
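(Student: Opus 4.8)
The plan is to follow the classical two-step route for hypercontractivity-type estimates: reduce to dimension one by tensorization, then settle the one-dimensional (``two-point'') inequality by calculus. As a preliminary reduction, the one-function bound $\|T_\rho f\|_q\ge\|f\|_p$ and the two-function bound $\langle f,g\rangle_\rho\ge\|f\|_p\|g\|_q$ are inter-derivable (with a little care over the index ranges) via the reverse H\"older inequality $\E[uv]\ge\|u\|_r\|v\|_{r/(r-1)}$ for nonnegative $u,v$ and $r<1$: writing $\langle f,g\rangle_\rho=\E[f\cdot T_\rho g]$ and applying reverse H\"older in the index $p$ on $f$ reduces the two-function bound to $\|T_\rho g\|_{p/(p-1)}\ge\|g\|_q$, and the identity $1-\tfrac{p}{p-1}=\tfrac1{1-p}$ turns the hypothesis $\rho^2\le(1-p)(1-q)$ into exactly the hypothesis needed for the one-function bound with the index pair $\big(q,\tfrac{p}{p-1}\big)$. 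I would prove the two-function form, whose tensorization is cleanest. One may also assume $f,g>0$ everywhere: a vanishing coordinate either kills the relevant quasinorm (when its index is $\le0$) or only reinstates the trivial bound $\|f\|_p\le\|f\|_1=\E f$ (the $L^p$ quasinorms being nondecreasing in $p$), and in both cases the inequality is immediate.

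For the tensorization, split $\{-1,1\}^n=\{-1,1\}^{n_1}\times\{-1,1\}^{n_2}$ and write $X=(X',X'')$, $Y=(Y',Y'')$ with $(X',Y')$ and $(X'',Y'')$ independent $\rho$-correlated pairs. Conditioning on $(X',Y')$ and applying the $(n_2)$-dimensional inequality to the sections $f(X',\cdot),g(Y',\cdot)$ yields
\[
\langle f,g\rangle_\rho\ \ge\ \E_{X',Y'}\big[F(X')\,G(Y')\big],\qquad F(x')=\|f(x',\cdot)\|_p,\ \ G(y')=\|g(y',\cdot)\|_q .
\]
Since $F,G\ge0$ on $\{-1,1\}^{n_1}$, the $(n_1)$-dimensional inequality gives $\E[FG]\ge\|F\|_p\|G\|_q$, and by Fubini $\|F\|_p=\|f\|_p$ and $\|G\|_q=\|g\|_q$; this closes the induction and reduces everything to $n=1$.

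For $n=1$, write $f_\pm=\bar f\pm\hat f$ and $g_\pm=\bar g\pm\hat g$; a direct computation gives $\langle f,g\rangle_\rho=\bar f\bar g+\rho\,\hat f\hat g$, and by homogeneity I may take $\bar f=\bar g=1$, so with $u=\hat f,\ v=\hat g\in(-1,1)$ the claim becomes
\[
1+\rho uv\ \ge\ \Big(\tfrac12\big((1+u)^p+(1-u)^p\big)\Big)^{1/p}\Big(\tfrac12\big((1+v)^q+(1-v)^q\big)\Big)^{1/q}.
\]
If $\rho uv\ge0$ this is immediate, since each factor on the right is $\le1$; the substance is the regime $uv<0$. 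Expanding logarithms, $\log(\mathrm{RHS})=-\tfrac{1-p}{2}u^2-\tfrac{1-q}{2}v^2+O(u^4+v^4)$ while $\log(1+\rho uv)=\rho uv+O(u^2v^2)$, so the inequality holds infinitesimally precisely when the quadratic form $\tfrac{1-p}{2}u^2-\rho uv+\tfrac{1-q}{2}v^2$ is nonnegative, i.e.\ precisely when $\rho^2\le(1-p)(1-q)$ --- which is exactly where the stated threshold originates.

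The main obstacle is upgrading this second-order fact to the global bound on $(-1,1)^2$. I would handle it by the substitution $u=\tanh s$, $v=\tanh t$ and showing that $\Psi(s,t):=\log\big(1+\rho\tanh s\tanh t\big)-\log\|f\|_p-\log\|g\|_q$ is nonnegative: $\Psi$ and its gradient vanish at the origin, and the one genuine computation is to check, using the explicit second derivatives of $s\mapsto\tfrac1p\log\tfrac{(1+\tanh s)^p+(1-\tanh s)^p}{2}$, that $\Psi\ge0$ everywhere under $\rho^2\le(1-p)(1-q)$ (e.g.\ by verifying its Hessian is positive semidefinite, or by a direct analysis of its critical points). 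Alternatively, via the reverse-H\"older equivalence above one reduces to the \emph{one-variable} inequality $\big(\tfrac12((1+\rho u)^q+(1-\rho u)^q)\big)^{1/q}\ge\big(\tfrac12((1+u)^p+(1-u)^p)\big)^{1/p}$, which can be attacked by comparing the power-series coefficients of the two sides term by term (the condition on $\rho$ being exactly what makes the comparison work after the $1/p$ versus $1/q$ rescaling). Finally, extending from strictly positive (or Lipschitz) functions to arbitrary nonnegative measurable ones --- relevant only for the Gaussian analogue, not the finite cube --- is routine via truncation and dominated/monotone convergence.
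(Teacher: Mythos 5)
The paper does not prove this theorem --- it only cites Borell~\cite{Borell:82} --- so there is no in-paper argument to compare against. What you have written is the standard two-step route (tensorize, then prove the two-point inequality), and the structural reductions you carry out are correct: the Fourier identity $\langle f,g\rangle_\rho=\bar f\bar g+\rho\,\hat f\hat g$ on two points, the conditioning/Fubini tensorization, the equivalence of the one-function and two-function forms via reverse H\"older with the index bookkeeping $1-\tfrac{p}{p-1}=\tfrac1{1-p}$ converting the threshold $\tfrac{1-p}{1-q}$ into $(1-p)(1-q)$, and the second-order Taylor analysis showing the threshold is forced by positive semidefiniteness of the Hessian at the origin.

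The one genuine gap is the one you flag yourself: you never actually establish the global two-point inequality $1+\rho uv\ge\bigl(\tfrac12((1+u)^p+(1-u)^p)\bigr)^{1/p}\bigl(\tfrac12((1+v)^q+(1-v)^q)\bigr)^{1/q}$ on $(-1,1)^2$. The second-order expansion shows the inequality is not violated near $(0,0)$, but by itself that proves nothing global; ``the Hessian of $\Psi$ is PSD everywhere'' or ``the power-series coefficients compare term by term'' are \emph{claims}, not verifications, and verifying them is precisely where all the work in the theorem lives. (For instance, the second derivative of $s\mapsto\tfrac1p\log\tfrac{(1+\tanh s)^p+(1-\tanh s)^p}{2}$ changes character as $p$ passes through $0$, and one has to carry the argument uniformly over the whole range $p<1$ including negative exponents.) As written, the argument establishes the reductions and identifies the right threshold but does not close the central lemma. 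A secondary issue worth making explicit is the index range in your reverse-H\"older step: you use the one-function bound with the pair $(p',q')=(q,\tfrac{p}{p-1})$, and the hypothesis $1>p'>q'$ there requires $q>\tfrac{p}{p-1}$; this fails for sufficiently negative $q$, so the claimed inter-derivability of the two statements needs a separate argument (or a direct proof of the two-function bound) outside that range.
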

While the inequalities may seem like a curiosity, as $p$ and $q$ norms for $p,q < 1$ are rarely used (nor are they norms), the second inequality is quite helpful in some social choice proofs. 
For a more general discussion of reverse-hyper contraction and its applications see~\cite{MORSS:06,MoOlSe:13}.

An immediate computational corollary of Theorem~\ref{thm:rev_hyp} is the following lemma  (as stated in~\cite{MORSS:06}): 

\begin{lemma} \label{lem:inv_hyp_org}
Let $x,y \in \{-1,1\}^n$ be distributed uniformly and $(x_i,y_i)$ are independent.
Assume that $\E[x(i)] = \E[y(i)] = 0$ for all $i$ and that
$\E[x(i) y(i)] = \rho \geq 0$. Let $B_1, B_2 \subset \{-1,1\}^n$ be two sets and assume that
\[
\P[B_1] \geq e^{-\alpha^2}, \quad \P[B_2] \geq e^{-\beta^2}.
\]
Then:
\[
\P[x \in B_1, y \in B_2] \geq \exp(-\frac{\alpha^2+\beta^2+2 \rho \alpha \beta}{1-\rho^2}).
\]
In particular, if $\P[B_1] \geq \eps$ and $\P[B_2] \geq \eps$, then
\[
\P[ x \in B_1, y \in B_2] \geq \eps^{\frac{2}{1-\rho}}.
\]
\end{lemma}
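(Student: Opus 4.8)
The plan is to apply the two-function reverse hypercontractive inequality of Theorem~\ref{thm:rev_hyp} to the indicators $f = \1_{B_1}$ and $g = \1_{B_2}$ (which are nonnegative, as required) and then to optimize over the exponents. The elementary point is that for an indicator and any $p > 0$ one has $\1_B^p = \1_B$, so $\|\1_B\|_p = (\E[\1_B])^{1/p} = \P[B]^{1/p}$, and likewise $\langle \1_{B_1},\1_{B_2}\rangle_\rho = \E[\1_{B_1}(x)\1_{B_2}(y)] = \P[x\in B_1,\, y\in B_2]$, since $(x,y)$ are exactly $\rho$-correlated. Hence, for any $0 < q < p < 1$ with $\rho^2 \le (1-p)(1-q)$, Theorem~\ref{thm:rev_hyp} gives
\[
\P[x\in B_1,\, y\in B_2] \;\ge\; \P[B_1]^{1/p}\,\P[B_2]^{1/q} \;\ge\; \exp\!\Big(-\tfrac{\alpha^2}{p} - \tfrac{\beta^2}{q}\Big),
\]
where the last step uses $\P[B_1]\ge e^{-\alpha^2}$, $\P[B_2]\ge e^{-\beta^2}$ and $1/p,1/q>0$.

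It then remains to choose $p,q$ so as to minimize $\alpha^2/p + \beta^2/q$ subject to $(1-p)(1-q)\ge\rho^2$. By the symmetry of the claimed bound we may assume $\alpha\ge\beta$, and (discarding the trivial cases where the events are independent or one has probability one) that $\alpha,\beta,\rho>0$. Substituting $1-p = \rho s$, $1-q = \rho/s$ makes the constraint hold with equality automatically, and a one-variable optimization in $s>0$ (equivalently a Lagrange multiplier argument showing the minimum is attained on the boundary $(1-p)(1-q)=\rho^2$) gives the optimum at $s = \frac{\beta+\alpha\rho}{\alpha+\beta\rho}$, i.e.
\[
p = \frac{\alpha(1-\rho^2)}{\alpha+\beta\rho}, \qquad q = \frac{\beta(1-\rho^2)}{\beta+\alpha\rho},
\]
which indeed lie in $(0,1)$ with $p>q$ when $\alpha>\beta$. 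Plugging back in gives $\frac{\alpha^2}{p}+\frac{\beta^2}{q} = \frac{\alpha^2+\beta^2+2\rho\alpha\beta}{1-\rho^2}$, which is the first displayed inequality of the lemma. For the final assertion, if $\P[B_1],\P[B_2]\ge\eps$ we take $\alpha=\beta=\sqrt{\ln(1/\eps)}$, so that $e^{-\alpha^2}=e^{-\beta^2}=\eps$ and the exponent collapses to $\frac{2\alpha^2(1+\rho)}{1-\rho^2} = \frac{2\ln(1/\eps)}{1-\rho}$, yielding $\P[x\in B_1,\, y\in B_2]\ge\eps^{2/(1-\rho)}$.

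I do not expect a substantial obstacle: essentially all the content is in Theorem~\ref{thm:rev_hyp}, and the rest is the bookkeeping above. The only mildly delicate points are (i) the boundary cases $\rho=0$ or $\min(\alpha,\beta)=0$, which are immediate (independence, respectively one event of full probability), and (ii) the case $\alpha=\beta$, where $p=q=1-\rho$ and one should either invoke the $p=q$ form of the two-function reverse hypercontractive inequality directly or argue by continuity, letting $\alpha\downarrow\beta$ in the bound already established for $\alpha>\beta$.
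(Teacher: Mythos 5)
Your proof is correct and follows exactly the route the paper indicates: the paper presents Lemma~\ref{lem:inv_hyp_org} as ``an immediate computational corollary of Theorem~\ref{thm:rev_hyp}'' and cites \cite{MORSS:06} without carrying out the optimization, which is precisely the computation you supply (applying the two-function reverse hypercontractive inequality to indicators, then minimizing $\alpha^2/p + \beta^2/q$ on the boundary $(1-p)(1-q)=\rho^2$). Your bookkeeping checks out — the critical point $s=\frac{\beta+\alpha\rho}{\alpha+\beta\rho}$, the resulting $p,q\in(0,1)$ with $p>q$ when $\alpha>\beta$, the simplification to $\frac{\alpha^2+\beta^2+2\rho\alpha\beta}{1-\rho^2}$, and the specialization $\alpha=\beta=\sqrt{\ln(1/\eps)}$ giving $\eps^{2/(1-\rho)}$ — and your handling of the edge cases is sensible.
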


We will need to generalize the result above to negative $\rho$ and further to different $\rho$ values for different bits.

\begin{lemma} \label{lem:inv_hyp}
Let $x,y \in \{-1,1\}^n$ be distributed uniformly and $(x_i,y_i)$ are independent.
Assume that $\E[x(i)] = \E[y(i)] = 0$ for all $i$ and that
$|\E[x(i) y(i)]| \leq \rho$. Let $B_1, B_2 \subset \{-1,1\}^n$ be two sets and assume that
\[
\P[B_1] \geq e^{-\alpha^2}, \quad \P[B_2] \geq e^{-\beta^2}.
\]
Then:
\[
\P[x \in B_1, y \in B_2] \geq \exp(-\frac{\alpha^2+\beta^2+2 \rho \alpha \beta}{1-\rho^2}).
\]
In particular if $\P[B_1] \geq \eps$ and $\P[B_2] \geq \eps$, then:
\begin{equation} \label{eq:two_small}
\P[x \in B_1, y \in B_2] \geq \eps^{\frac{2}{1-\rho}}.
\end{equation}
\end{lemma}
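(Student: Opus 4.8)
The plan is to reduce the statement to Lemma~\ref{lem:inv_hyp_org} in two steps: a sign-flipping argument that turns the (possibly negative) correlations into nonnegative ones, and an appeal to the fact that the two-function reverse hyper-contractive estimate of Theorem~\ref{thm:rev_hyp} tensorizes even when the noise rate varies from coordinate to coordinate, as long as every rate is bounded by $\rho$. Once these reductions are in place, the exponent comes out of exactly the same optimization over Hölder exponents $p,q$ that produces Lemma~\ref{lem:inv_hyp_org}.

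First I would handle the signs. For each $i$ set $s_i = \sgn(\E[x_i y_i]) \in \{-1,1\}$ (with $s_i = 1$ when the correlation is $0$), and let $\sigma : \{-1,1\}^n \to \{-1,1\}^n$ be the map $\sigma(z)_i = s_i z_i$. Since $\sigma$ is a measure-preserving bijection, $\P[\sigma(B_2)] = \P[B_2]$ and $\P[x \in B_1, y \in B_2] = \P[x \in B_1, \sigma(y) \in \sigma(B_2)]$; moreover $(x,\sigma(y))$ still has independent coordinate pairs, now with $\E[x_i\,\sigma(y)_i] = |\E[x_i y_i]| \in [0,\rho]$. Thus, replacing $B_2$ by $\sigma(B_2)$, I may assume $0 \le \E[x_i y_i] =: \rho_i \le \rho$ for every $i$.

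Next I would run the reverse hyper-contraction argument. Fix $p,q \in (0,1)$ with $(1-p)(1-q) \ge \rho^2$, so that $(1-p)(1-q) \ge \rho_i^2$ for every $i$ and hence the single-coordinate reverse hyper-contractive inequality holds for each $T_{\rho_i}$ with exponents $(p,q)$. The standard induction on coordinates behind the second inequality of Theorem~\ref{thm:rev_hyp} — conditioning on the last pair $(x_n,y_n)$ and combining the one-dimensional case with Minkowski's inequality for the $L^p$/$L^q$ functionals — then gives
\[
\P[x \in B_1, y \in B_2] \;=\; \E\big[\1_{B_1}(x)\,\1_{B_2}(y)\big] \;\ge\; \|\1_{B_1}\|_p\,\|\1_{B_2}\|_q \;=\; \P[B_1]^{1/p}\,\P[B_2]^{1/q} \;\ge\; \exp\!\Big(-\tfrac{\alpha^2}{p} - \tfrac{\beta^2}{q}\Big).
\]
Finally I would minimize $\alpha^2/p + \beta^2/q$ over $p,q\in(0,1)$ subject to $(1-p)(1-q) = \rho^2$; substituting $u = 1-p$, $v = 1-q$ with $uv = \rho^2$ and differentiating yields the optimal value $\frac{\alpha^2+\beta^2+2\rho\alpha\beta}{1-\rho^2}$, which is the claimed bound. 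Taking instead $p = q = 1-\rho$ in the display above immediately gives the ``in particular'' estimate $\P[x\in B_1, y\in B_2] \ge \eps^{2/(1-\rho)}$.

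The step that needs care beyond Lemma~\ref{lem:inv_hyp_org} is the middle one: verifying that Borell's reverse hyper-contractive inequality applies to the product operator $\bigotimes_i T_{\rho_i}$ with distinct rates, which is precisely what makes the hypothesis $|\E[x_iy_i]|\le\rho$ (rather than equality of all correlations) sufficient. The sign-flip is routine, and the exponent optimization is identical to the one underlying Lemma~\ref{lem:inv_hyp_org}.
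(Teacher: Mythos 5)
Your proof is correct, but it follows a genuinely different route from the paper's. You handle the variable correlations $\rho_i \in [-\rho,\rho]$ in two stages: a deterministic sign-flip $y_i \mapsto s_i y_i$ to make all correlations nonnegative, followed by re-running the reverse-hypercontractive tensorization from scratch for the product operator $\bigotimes_i T_{\rho_i}$ with distinct rates, and then optimizing over $(p,q)$ with $(1-p)(1-q)=\rho^2$. All of these steps go through: the sign flip is measure-preserving, the coordinate-by-coordinate induction for $\E[f(x)g(y)] \ge \|f\|_p\|g\|_q$ closes because $\|F\|_p = \|f\|_p$ by Fubini when $F(x_n) := \|f(\cdot,x_n)\|_{p;x_{-n}}$, and the Lagrange optimization does produce the exponent $\frac{\alpha^2+\beta^2+2\rho\alpha\beta}{1-\rho^2}$.

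The paper instead uses a coupling that reduces the whole lemma to Lemma~\ref{lem:inv_hyp_org} as a black box: it introduces an auxiliary $z$ with $\E[x_iz_i]=\rho$ for every $i$, together with an independent random sign vector $w$ with $\E[w_i] = \E[x_iy_i]/\rho$ (which lies in $[-1,1]$ precisely because $|\E[x_iy_i]|\le\rho$), so that $(x,y)\stackrel{d}{=}(x, z\cdot w)$. For each fixed $w$, $\{z\cdot w\in B_2\}=\{z\in w\cdot B_2\}$ and $\P[w\cdot B_2]=\P[B_2]$, so Lemma~\ref{lem:inv_hyp_org} applies verbatim; averaging over $w$ finishes. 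This handles both the sign and the magnitude of each correlation in one stroke, and avoids re-deriving the tensorization with heterogeneous rates and re-doing the exponent optimization, both of which your argument needs. Your version is more self-contained (it does not presuppose the uniform-$\rho$ case) and makes explicit the natural fact that reverse hypercontractivity tensorizes with coordinate-dependent rates bounded by $\rho$; the paper's is shorter and leans entirely on the already-established uniform case.
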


\begin{proof}
Take $z$ so that $(x_i,z_i)$ are independent and $\E[z_i] = 0$ and $\E[x_i z_i] = \rho$.
It is easy to see that there exists $w_i$ independent of $x,z$ s.t.~the joint distribution of
$(x,y)$ is the same as $(x,z \cdot w)$, where $z \cdot w = (z_1 w_1,\ldots,z_n w_n)$.
Now for each fixed $w$ we have that
\[
\P[x \in B_1, z \cdot w \in B_2] =
\P[x \in B_1, z \in w \cdot B_2] \geq \exp(-\frac{\alpha^2+\beta^2+2 \rho \alpha \beta}{1-\rho^2}),
\]
where $w \cdot B_2 = \{w \cdot w' : w' \in B_2 \} $.
Therefore taking expectation over $w$ we obtain:
\[
\P[x \in B_1, y \in B_2] = \E [ \P[x \in B_1, z \cdot w \in B_2\mid w]] \geq \exp(-\frac{\alpha^2+\beta^2+2 \rho \alpha \beta}{1-\rho^2})
\]
as needed.
The conclusion~(\ref{eq:two_small}) follows by simple substitution (note the difference with Corollary 3.5 in~\cite{MORSS:06} for sets of equal size which is a typo).
\end{proof}

Applying the CLT and using~\cite{Borell:85} one obtains the same result for Gaussian random variables.
\begin{lemma} \label{lem:inv_hyp_gauss}
Let $N,M$ be $N(0,I_n)$ with $(N(i),M(i))_{i=1}^n$ independent.
Assume that
$|\E[N(i) M(i)]| \leq \rho$. Let $B_1, B_2 \subset \R^n$ be two sets and assume that
\[
\P[B_1] \geq e^{-\alpha^2}, \quad \P[B_2] \geq e^{-\beta^2},
\]
Then:
\[
\P[N \in B_1, M \in B_2] \geq \exp(-\frac{\alpha^2+\beta^2+2 \rho \alpha \beta}{1-\rho^2}).
\]
In particular if $\P[B_1] \geq \eps$ and $\P[B_2] \geq \eps$, then:
\begin{equation} \label{eq:two_small_gauss}
\P[N \in B_1, M \in B_2] \geq \eps^{\frac{2}{1-\rho}}.
\end{equation}
\end{lemma}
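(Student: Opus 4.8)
The plan is to transfer Lemma~\ref{lem:inv_hyp} from the discrete cube to Gaussian space by a central limit argument, exactly in the spirit of the proof of Theorem~\ref{thm:borell_functional} above. Fix $m \in \bbN$. For each coordinate $i \in [n]$ let $\rho_i = \E[N(i)M(i)] \in [-\rho,\rho]$, and let $(a^{(i)}_j, b^{(i)}_j)_{j=1}^m$ be i.i.d.\ pairs in $\{-1,1\}^2$ with $\E[a^{(i)}_j] = \E[b^{(i)}_j] = 0$ and $\E[a^{(i)}_j b^{(i)}_j] = \rho_i$ (such a pair exists since $|\rho_i| \le 1$), the $n$ blocks being mutually independent. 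Assemble $X,Y \in \{-1,1\}^{nm}$ from the $a$'s and the $b$'s respectively. Then corresponding bits of $X$ and $Y$ have correlation $\rho_i$ with $|\rho_i| \le \rho$, and bit-pairs are independent, so Lemma~\ref{lem:inv_hyp} applies to $X,Y$. Let $\Psi : \{-1,1\}^{nm} \to \R^n$ be the block-averaging map $\Psi(u)_i = m^{-1/2}\sum_{j=1}^m u^{(i)}_j$. Then $(\Psi(X),\Psi(Y))$ is a normalized sum of $m$ i.i.d.\ mean-zero vectors whose common covariance is exactly $\bigl(\begin{smallmatrix} I_n & D \\ D & I_n\end{smallmatrix}\bigr)$ with $D = \mathrm{diag}(\rho_1,\dots,\rho_n)$, so by the Lindeberg--Feller CLT (as already used in Theorem~\ref{thm:borell_functional}) $(\Psi(X),\Psi(Y)) \to_D (N,M)$ as $m \to \infty$.

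Next I would reduce to the case that $B_1,B_2$ are finite unions of open boxes. Given arbitrary measurable $B_1,B_2$ and $\delta>0$, choose finite unions of boxes $V_1,V_2$ with $\gamma_n(B_i \triangle V_i) < \delta$; since replacing $B_i$ by $V_i$ changes $\gamma_n(B_i)$ by at most $\delta$ and changes $\P[N \in B_1, M \in B_2]$ by at most $2\delta$ (using that the two marginals of $(N,M)$ are each $\gamma_n$), and since the right-hand side of the asserted inequality is continuous in $\alpha,\beta$, it suffices to treat $B_i = V_i$. The payoff of this reduction is that the boundary of a finite union of boxes is Lebesgue-null, hence $\gamma_n$-null, so $V_1,V_2$ (and $V_1\times V_2$) are continuity sets of the relevant Gaussian laws; consequently $\P[\Psi(X)\in V_i] \to \gamma_n(V_i)$ and $\P[\Psi(X)\in V_1,\ \Psi(Y)\in V_2] \to \P[N\in V_1,\ M\in V_2]$. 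Fix $\delta'>0$. For all large $m$ we have $\P[\Psi(X)\in V_i] \ge \gamma_n(V_i)-\delta' \ge e^{-\alpha'^2}$ with $\alpha'$ slightly larger than $\alpha$ (and similarly $\beta'$), so Lemma~\ref{lem:inv_hyp} applied to the Boolean sets $\Psi^{-1}(V_1),\Psi^{-1}(V_2)\subseteq\{-1,1\}^{nm}$ gives $\P[\Psi(X)\in V_1,\ \Psi(Y)\in V_2] \ge \exp\!\bigl(-(\alpha'^2+\beta'^2+2\rho\alpha'\beta')/(1-\rho^2)\bigr)$. Letting $m\to\infty$, then $\delta',\delta\to 0$ (so $\alpha'\to\alpha$, $\beta'\to\beta$, using monotonicity of the exponent in $\alpha,\beta\ge 0$ when $\rho\ge0$) yields the claim; the ``in particular'' statement~\eqref{eq:two_small_gauss} is the case $\alpha=\beta=\sqrt{\log(1/\eps)}$, which gives $\eps^{2/(1-\rho)}$.

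The only genuine subtlety is the measure-theoretic bookkeeping in the second paragraph: one cannot in general find a \emph{continuity-set subset} of a pathological measurable $B_i$ of nearly full $\gamma_n$-measure, so the reduction must be phrased as approximation of $B_i$ by continuity sets together with the fact that $(B_1,B_2)\mapsto\P[N\in B_1,\,M\in B_2]$ is itself continuous in symmetric difference. Everything else is routine: the existence of the correlated Boolean pairs, the multivariate CLT, and the elementary monotonicity of $(\alpha,\beta)\mapsto\exp(-(\alpha^2+\beta^2+2\rho\alpha\beta)/(1-\rho^2))$. An alternative route, avoiding the CLT, is to invoke the reverse hypercontractive inequality for the Ornstein--Uhlenbeck semigroup (the Gaussian analogue of Theorem~\ref{thm:rev_hyp}, which tensorizes across coordinates even for unequal correlations $\rho_i$), apply it to $\mathbf{1}_{B_1},\mathbf{1}_{B_2}$, and repeat verbatim the optimization over the exponents $p,q$ that produces Lemma~\ref{lem:inv_hyp_org}.
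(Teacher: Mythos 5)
Your proof is correct, but it takes a genuinely different route from the paper's. The paper's proof first invokes Borell's rearrangement theorem (Theorem~\ref{thm:half_space}) to argue that, under fixed constraints $\P[B_1]\ge e^{-\alpha^2}$, $\P[B_2]\ge e^{-\beta^2}$, the minimizer of $\P[N\in B_1,M\in B_2]$ is attained when $B_1,B_2$ are half-lines in dimension one with the worst-case correlation $\E[N_1M_1]$; it then runs a one-dimensional CLT to transfer from the discrete cube, with no measure-theoretic issues because intervals are automatically continuity sets. You instead skip Borell entirely and discretize directly in $n$ dimensions, paying for this with an approximation argument: arbitrary measurable $B_i$ are replaced by finite unions of boxes so that $V_1\times V_2$ is a continuity set of the joint law of $(N,M)$ (which has a density precisely because $|\rho_i|\le\rho<1$ keeps the covariance nonsingular), and then the Portmanteau theorem plus the Lipschitz continuity of $(B_1,B_2)\mapsto\P[N\in B_1,M\in B_2]$ in symmetric difference closes the gap. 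Both approaches are sound. The paper's is shorter because it leans on the heavy hammer of Borell, while yours is more self-contained --- it only uses the discrete Lemma~\ref{lem:inv_hyp} and the multivariate CLT, and it exploits the fact that Lemma~\ref{lem:inv_hyp} already tolerates coordinate-dependent correlations $\rho_i$ so that no reduction to the worst coordinate is needed. Your closing remark about the Ornstein--Uhlenbeck reverse hypercontractive inequality is also a valid third route and would avoid the CLT and the continuity-set bookkeeping altogether. One small thing worth making explicit: the monotonicity you invoke, namely that $\exp(-(\alpha'^2+\beta'^2+2\rho\alpha'\beta')/(1-\rho^2))$ is decreasing in $\alpha',\beta'\ge 0$, requires $\rho\ge 0$, which is guaranteed here since $\rho$ bounds an absolute value; it is good to say so since the exponent would not be monotone for negative $\rho$.
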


\begin{proof}
Fix the values of $\alpha$ and $\beta$ and assume without loss of generality that $\max_i |\E[N(i) M(i)]|$ is obtained for $i=1$.
Then by~\cite{Borell:85} (see also~\cite{Mossel:10}), the minimum of the quantity $\P[N \in B_1, M \in B_2]$ under
the constraints on the measures given by $\alpha$ and $\beta$ is obtained in one dimension, where $B_1$ and $B_2$ are intervals
$I_1, I_2$. Look at random variables $x(i), y(i)$, where $\E[x(i)] = \E[y(i)] = 0$ and
$\E[x(i) y(i)] =  \E[M_1 N_1]$. Let $X_n = n^{-1/2} \sum_{i=1} x(i)$ and $Y_n = n^{-1/2} \sum_{i=1} y^(i)$.
Then the CLT implies that
\[
\P[X_n \in I_1] \to \P[N_1 \in B_1], \quad
\P[Y_n \in I_2] \to \P[M_1 \in B_2],
\]
and
\[
\P[X_n \in I_1, Y_n \in I_2] \to \P[N_1 \in B_1, M_1 \in B_2].
\]
The proof now follows from the previous lemma.
\end{proof}

\section{The Gaussian Arrow Theorem} \label{sec:arrow_gauss}

The first step in the proof of the quantitative Arrow Theorem is to consider a Gaussian version of the problem. The Gaussian version corresponds to a situation
where the functions $f,g,h$ can only ``see" averages of large subsets of the voters.
We thus define a $3$ dimensional normal vector $N$.
The first coordinate of $N$ is supposed to represent the deviation of the number of voters where $a$ ranks above $b$ from the mean.
The second coordinate is for $b$ ranking above $c$ and the last coordinate for $c$ ranking above $a$.

Since averaging maintains the expected value and covariances, we define:
\begin{equation} \label{eq:gauss}
\E[N_1^2] = \E[N_2^2] = \E[N_3^2] = 1,\quad 
\E[N_1 N_2] = \E[N_2 N_3] = \E[N_3 N_1] =  -1/3. \nonumber
\end{equation}
We let $N(1),\ldots,N(n)$ be independent copies of $N$.
We write $\CalN = (N(1),\ldots,N(n))$ and for $1 \leq i \leq 3$ we write $\CalN_i = (N(1)_i,\ldots,N(n)_i)$.
The Gaussian version of Arrow theorem states:

\begin{theorem} \label{thm:arrow_gauss}
For every $\eps > 0$ there exists a $\delta = \delta(\eps) > 0$ such that the following hold.
Let $\phi_1,\phi_2,\phi_3 : \R^n \to \{-1,1\}$.
Assume that for all $1 \leq i \neq j \leq 3$ and all $u \in \{-1,1\}$ it holds that
\begin{equation} \label{eq:non_dict_normal}
\P[\phi_i(\CalN_i) = u] + \P[\phi_j(\CalN_j) = -u] \geq 2 \eps. 
\end{equation}
Then with the setup given in~(\ref{eq:gauss}) it holds that:
\[
\P[\phi_1(\CalN_1) = \phi_2(\CalN_2) = \phi_3(\CalN_3)] \geq \delta.
\]
Moreover, one may take $\delta = (\eps/2)^{18}$.
\end{theorem}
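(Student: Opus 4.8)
The plan is to combine a short combinatorial reduction with two applications of reverse hypercontractivity, using only Lemma~\ref{lem:inv_hyp_gauss} (plus a routine generalization of it). We may assume $\eps\le\tfrac12$: for $\eps>\tfrac12$, adding the two instances of \eqref{eq:non_dict_normal} for a fixed pair $(i,j)$, once with $u=+1$ and once with $u=-1$, gives $2\ge 4\eps$, a contradiction, so the hypothesis is vacuous. First I would use \eqref{eq:non_dict_normal} — which says precisely that no two of the $\phi_i$ are opposite constants — to produce a single sign $u\in\{-1,1\}$ with $\P[\phi_i(\CalN_i)=u]\ge\eps$ for all $i=1,2,3$ simultaneously. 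This is a one-paragraph case analysis: if $u=+1$ fails, say $\P[\phi_1=+1]<\eps$, then applying \eqref{eq:non_dict_normal} to the pairs $(1,2)$ and $(1,3)$ with $u=+1$ forces $\P[\phi_2=-1]>\eps$ and $\P[\phi_3=-1]>\eps$, while $\P[\phi_1=-1]>1-\eps\ge\eps$, so $u=-1$ works. Set $B_i:=\{x:\phi_i(x)=u\}\subseteq\R^n$, so the standard Gaussian measure of each $B_i$ is at least $\eps$. Since $\{\CalN_1\in B_1,\CalN_2\in B_2,\CalN_3\in B_3\}\subseteq\{\phi_1(\CalN_1)=\phi_2(\CalN_2)=\phi_3(\CalN_3)\}$, it suffices to lower bound $\P[\CalN_1\in B_1,\CalN_2\in B_2,\CalN_3\in B_3]$.

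Second, I would extract that bound in two steps. Apply Lemma~\ref{lem:inv_hyp_gauss} to the pair $(\CalN_2,\CalN_3)$, whose coordinates are $-1/3$-correlated, to get $\P[\CalN_2\in B_2,\CalN_3\in B_3]\ge\eps^{2/(1-1/3)}=\eps^3$. Then regard $E:=\{\CalN_2\in B_2,\CalN_3\in B_3\}$ as an event measurable with respect to the jointly Gaussian vector $(\CalN_2,\CalN_3)\in\R^{2n}$, and apply reverse hypercontractivity once more to the Gaussian pair $(\CalN_1,(\CalN_2,\CalN_3))$. A one-line covariance computation gives their maximal correlation
\[
\rho=\sqrt{\bigl(-\tfrac13,-\tfrac13\bigr)\begin{pmatrix}1&-\tfrac13\\-\tfrac13&1\end{pmatrix}^{-1}\bigl(-\tfrac13,-\tfrac13\bigr)^{\!\top}}=\tfrac1{\sqrt3}<1,
\]
so, feeding in $\P[\CalN_1\in B_1]\ge\eps$ (i.e.\ $\alpha^2=\ln(1/\eps)$) and $\P[E]\ge\eps^3$ (i.e.\ $\beta^2=3\ln(1/\eps)$), the quantitative reverse hypercontractive estimate $\exp\!\bigl(-(\alpha^2+\beta^2+2\rho\alpha\beta)/(1-\rho^2)\bigr)$ yields
\[
\P[\CalN_1\in B_1,\CalN_2\in B_2,\CalN_3\in B_3]\ \ge\ \exp\!\Bigl(-\tfrac{\ln(1/\eps)+3\ln(1/\eps)+2\ln(1/\eps)}{2/3}\Bigr)=\eps^9\ \ge\ (\eps/2)^{18},
\]
which is the claimed bound, with room to spare.

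The hard part is justifying the second application of reverse hypercontractivity: Lemma~\ref{lem:inv_hyp_gauss} is stated for two Gaussian vectors of the \emph{same} dimension with a bound on the per-coordinate correlations, whereas here I am invoking it for the pair $(\CalN_1,(\CalN_2,\CalN_3))$ of dimensions $n$ and $2n$, with parameter equal to the maximal correlation of that pair. This is the expected generalization and should have essentially the same proof as Lemma~\ref{lem:inv_hyp_gauss}: by Borell's inequality~\cite{Borell:85} the extremal pair of sets for $\P[U\in B,V\in C]$ with $\P[U\in B]$ and $\P[V\in C]$ fixed consists of parallel half-spaces aligned with the top singular direction of the correlation operator, which collapses the estimate to a statement about two Gaussian scalars whose correlation is exactly the maximal correlation of $(U,V)$, and that scalar statement is the ingredient already used to prove Lemma~\ref{lem:inv_hyp_gauss}. (Alternatively, one can first push the whole three-vector problem down to $n=1$ in the same way and then argue directly with three jointly Gaussian scalars carrying the tetrahedral covariance.) I would state this general form of the inequality as a short lemma just before the proof; everything else is routine, and I note that the exponent $18$ in the statement is a deliberately loose but clean bound, the argument actually delivering $\eps^9$.
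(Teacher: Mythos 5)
Your proof is correct and, at its core, takes the same route as the paper: both arguments reduce the three-way probability to a reverse hypercontractive estimate driven by the maximal correlation $1/\sqrt{3}$ between one of the three marginals and the other two. The two places you deviate are genuine cleanups rather than a different method, and both are worth recording.

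First, your combinatorial preprocessing is tighter than the paper's. The paper splits into two cases (whether or not some $|\E\phi_i|>1-\eps$), handles one by a union bound and the other by a first application of Lemma~\ref{lem:inv_hyp_gauss} to get $\P[\phi_1=\phi_2=1]\ge 1/8$, and then applies the lemma a second time. Your observation that the non-dictator hypothesis \eqref{eq:non_dict_normal} directly yields a single sign $u$ with $\P[\phi_i=u]\ge\eps$ simultaneously for all three $i$ collapses this case analysis into one uniform argument, and your final bound $\eps^9$ beats both of the paper's cases.

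Second, where you invoke a "generalized'' reverse hypercontractivity for the mixed-dimension pair $(\CalN_1,(\CalN_2,\CalN_3))$ with parameter the maximal correlation, the paper obtains exactly the same thing by an explicit singular change of basis: it sets $\CalM_1=\tfrac{\sqrt3}{2}(\CalN_1+\CalN_2)$, $\CalM_2=\tfrac{\sqrt3}{2\sqrt2}(\CalN_1-\CalN_2)$, verifies that $\CalM_2$ decouples, pads the other side with an independent Gaussian $Z$, and then applies Lemma~\ref{lem:inv_hyp_gauss} verbatim to the equal-dimensional vectors $(\CalN_3,Z)$ and $(\CalM_1,\CalM_2)$, noting the remaining per-coordinate correlation is $-1/\sqrt3$. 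That construction is precisely the SVD reduction your "routine generalization'' gestures at, so if you prefer not to state and prove a new lemma you can simply quote the paper's change of coordinates; it does the same job and avoids any appeal to the canonical form of jointly Gaussian vectors being "expected.'' The arithmetic in your final display, giving $\eps^9\ge(\eps/2)^{18}$, is correct.
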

We note that if $\P[\phi_i(\CalN_i) = u] + \P[\phi_j(\CalN_i) = -u] \leq 2 \eps$, then one of the alternatives will be ranked the top/bottom with probability at least $1-2\eps$. Therefore the theorem states that unless this is the case, the probability of a paradox is at least $\delta$. Since the Gaussian setup excludes dictator functions in terms of the original vote, this result is to be expected in this case.

\begin{proof}
We will consider two cases: either all the functions $\phi_i$ satisfy $|\E \phi_i| \leq 1-\eps$, or there exists at least one function
with $|\E \phi_i| > 1 - \eps$.
Assume first that there exists a function $\phi_i$ with $|\E \phi_i| > 1-\eps$.
Without loss of generality assume that $\P[\phi_1 = 1] > 1-\eps/2$.
Note that by~(\ref{eq:non_dict_normal}) it follows that $\P[\phi_1 = -1] + \P[\phi_2 = 1] \geq 2 \eps$ and therefore 
$\P[\phi_2 = 1] > \eps$ and similarly $\P[\phi_3 = 1] > \eps$.
In particular, $\P[\phi_1 = 1,\phi_2 = 1] > \eps/2$
We now look at the function $\psi = 1(\phi_1 = 1, \phi_2 = 1)$. Let
\[
\CalM_1 = \frac{\sqrt{3}}{2} (\CalN_1 + \CalN_2), \quad
\CalM_2 = \frac{\sqrt{3}}{2 \sqrt{2}} (\CalN_1 - \CalN_2).
\]
Then it is easy to see that $\CalM_2(i)$ is uncorrelated with and therefore independent of $\CalN_3(i),\CalM_1(i)$ for all $i$.
Moreover, for all $i$ the covariance between $\CalM_1(i)$ and $\CalN_3(i)$ is $-1/\sqrt{3}$
and
$1-1/\sqrt{3} > 1/3$.
We may now apply Lemma~\ref{lem:inv_hyp_gauss} with the vectors
\[
(\CalN_3(1),\ldots,\CalN_3(n),Z_1,\ldots,Z_n), \quad
(\CalM_1(1),\ldots,\CalM_1(n),\CalM_2(1),\ldots,\CalM_2(n)),
\]
where $Z=(Z_1,\ldots,Z_n)$ is a normal Gaussian vector independent of anything else. We obtain:
\[
\P[\phi_1(\CalN_1) = 1, \phi_2(\CalN_2) = 1, \phi_3(\CalN_3) = 1] =
\P[\phi_3(\CalN_3,Z) = 1, \psi(\CalM_1,\CalM_2) = 1] \geq ((\eps/2))^{\frac{2}{1/3}} \geq (\eps/2)^{6}.
\]

We next consider the case where all functions satisfy $|\E \phi_i| \leq 1-\eps$.
Notice that at least two of the functions obtain the same value with probability at least $1/2$.
Let's assume that $\P[\phi_1 = 1] \geq 1/2$ and
$\P[\phi_2 = 1] \geq 1/2$. Then by Lemma~\ref{lem:inv_hyp_gauss} we obtain that
\[
\P[\phi_1 = 1, \phi_2 = 1] \geq 1/8.
\]
Again we define $\psi = 1(\phi_1 = 1, \phi_2 = 1)$. Since $\P[\phi_3 = 1] > \eps/2$, we may apply Lemma~\ref{lem:inv_hyp_gauss} and obtain that:
\[
\P[\phi_1 = 1, \phi_2 = 1, \phi_3 = 1] =
\P[\phi_1 = 1, \psi = 1] \geq (\eps/8)^{6}.
\]
This concludes the proof.
\end{proof}

\section{Arrow Theorem for Low Influence Functions} 
\begin{theorem} \label{thm:arrow_low_cross_unif}
For every $\eps > 0$ there exist a $\delta(\eps) > 0$ and a $\tau(\delta) > 0$
such that the following holds.
Let $f_1,f_2,f_3 : \{-1,1\}^n \to \{-1,1\}$.
Assume that for all $1 \leq i \neq j \leq 3$ and all $u \in \{-1,1\}$ it holds that

\begin{equation} \label{eq:non_top_bot_cross_unif}
\P[f_i = u] + \P[f_{j} = -u] \geq 4 \eps
\end{equation}
and for all $j$ it holds that
\begin{equation} \label{eq:cross_inf_unif}
|\{ 1 \leq i \leq 3 : I_j(f_i) > \tau \}| \leq 1.
\end{equation}
Then it holds that
\[
\P[f_1(x) = f_2(y) = f_3(z))] \geq \delta.
\]
Moreover, assuming the uniform distribution, one may take:
\[
\delta = \frac{1}{8}(\eps/2)^{20}, \quad \tau = \tau(\delta),
\]
where
\[
\tau(\delta) := \delta^{C \frac{\log(1/\delta)}{\delta}},
\]
for some absolute constant $C$.
\end{theorem}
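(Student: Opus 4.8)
The plan is to reduce the statement to the Gaussian Arrow Theorem (Theorem~\ref{thm:arrow_gauss}) through a pairwise application of Majority is Stablest (Theorem~\ref{thm:MIST}). Write $f_i^{+}=\half(1+f_i):\{-1,1\}^n\to\{0,1\}$, put $\mu_i:=\E[f_i^{+}]=\P[f_i=1]$, and note $I_k(f_i^{+})=\tfrac14 I_k(f_i)$ for every $k$. Using the pointwise identity $\1[a=b=c]=1+ab+bc+ca-a-b-c$ for $a,b,c\in\{0,1\}$, evaluating at $(f_1^{+}(x),f_2^{+}(y),f_3^{+}(z))$, and taking expectations under the uniform Condorcet distribution (for which $\E[x_ky_k]=\E[y_kz_k]=\E[z_kx_k]=-\third$ and the triples are independent over $k$), I get the exact identity
\[
\P[f_1(x)=f_2(y)=f_3(z)]=1+\sum_{1\le i<j\le 3}\langle f_i^{+},f_j^{+}\rangle_{-1/3}-\sum_{i=1}^{3}\mu_i.
\]
So it suffices to lower bound each pairwise noisy inner product $\langle f_i^{+},f_j^{+}\rangle_{-1/3}$ by its Gaussian half-space value and then apply the Gaussian theorem; note that this route handles biased and unbiased $f_i$ uniformly, with no case split.

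This is where the cross-influence hypothesis~\eqref{eq:cross_inf_unif} enters: for a fixed pair $(i,j)$ and every coordinate $k$, at most one of $f_1,f_2,f_3$ has $k$-th influence $>\tau$, so at least one of $f_i,f_j$ has $I_k(\cdot)\le\tau$, hence $\min\big(I_k(f_i^{+}),I_k(f_j^{+})\big)\le\tfrac14\tau$. Thus Theorem~\ref{thm:MIST} applies to the pair $(f_i^{+},f_j^{+})$. Since that theorem is an \emph{upper} bound stated for $\rho\ge 0$ while here $\rho=-\third$, I would first reflect: $\langle f_i^{+},f_j^{+}\rangle_{-1/3}=\langle f_i^{+},\widehat{f_j^{+}}\rangle_{1/3}$ where $\widehat{f_j^{+}}(y):=f_j^{+}(-y)$ has the same mean and influences as $f_j^{+}$. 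Then, using $\langle\phi,\psi\rangle_{\rho}=\E[\phi]-\langle\phi,1-\psi\rangle_{\rho}$, applying Theorem~\ref{thm:MIST} (with target error $\eta$) to $\langle f_i^{+},1-\widehat{f_j^{+}}\rangle_{1/3}$, and using the elementary identity $J_{-\rho}(x,y)=x-J_{\rho}(x,1-y)$ (immediate from $\Phi^{-1}(1-y)=-\Phi^{-1}(y)$), I obtain the ``reverse'' Majority is Stablest bound
\[
\langle f_i^{+},f_j^{+}\rangle_{-1/3}\ \ge\ J_{-1/3}(\mu_i,\mu_j)-\eta,
\]
valid provided $\tau$ is at most a constant times the Majority-is-Stablest threshold $\tau_{1/3}(\eta)$ of~\eqref{eq:tau_bound}; as $\rho=\third$ is fixed this threshold has the form $\eta^{\Theta(\log(1/\eta)/\eta)}$. (Constant $f_i$, where $\mu_i\in\{0,1\}$, are handled by extending $J$ continuously to the boundary.)

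Summing the three bounds and substituting gives
\[
\P[f_1(x)=f_2(y)=f_3(z)]\ \ge\ \Big(1+\sum_{i<j}J_{-1/3}(\mu_i,\mu_j)-\sum_i\mu_i\Big)-3\eta.
\]
Applying the \emph{same} $\1[a=b=c]$ identity to the half-space indicators $\chi_{\mu_i}$ and recalling $\langle\chi_{\mu_i},\chi_{\mu_j}\rangle_{-1/3}=J_{-1/3}(\mu_i,\mu_j)$, the bracketed quantity equals $\P[\phi_1(\CalN_1)=\phi_2(\CalN_2)=\phi_3(\CalN_3)]$ in the Gaussian Condorcet setup~\eqref{eq:gauss}, with $\phi_i:=2\chi_{\mu_i}-1:\R^n\to\{-1,1\}$. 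Since $\P[\phi_i(\CalN_i)=u]=\P[f_i=u]$ for $u\in\{-1,1\}$, hypothesis~\eqref{eq:non_top_bot_cross_unif} reads $\P[\phi_i(\CalN_i)=u]+\P[\phi_j(\CalN_j)=-u]\ge 4\eps$, i.e.\ the hypothesis of Theorem~\ref{thm:arrow_gauss} holds with its parameter equal to $2\eps$; hence the bracketed term is at least $(2\eps/2)^{18}=\eps^{18}$, so $\P[f_1(x)=f_2(y)=f_3(z)]\ge\eps^{18}-3\eta$. Choosing $\eta$ a fixed small multiple of $\eps^{18}$ makes the right side at least a $\delta$ of the claimed order $\tfrac18(\eps/2)^{20}$ (the loss to exponent $20$ and the constants being absorbed once one accounts for the reflection step and the $\tfrac14$ influence rescaling), and then $\tau=\tau(\delta)=\delta^{C\log(1/\delta)/\delta}$ for an absolute constant $C$.

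The substantive inputs, Theorems~\ref{thm:MIST} and~\ref{thm:arrow_gauss}, are already available, so the only step needing genuine care is turning the $\rho=-\third$, upper-bound form of Majority is Stablest into the lower bound I need (the reflection $y\mapsto-y$ together with the complement $\psi\mapsto 1-\psi$), and I expect the main obstacle to be simply the constant bookkeeping --- the $\eps\mapsto 2\eps$ entering the Gaussian theorem, the factor $\tfrac14$ between $I_k(f_i)$ and $I_k(f_i^{+})$, and the precise choice of $\eta$ --- required to land exactly on $\delta=\tfrac18(\eps/2)^{20}$ and $\tau(\delta)$.
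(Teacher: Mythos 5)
Your proof is correct and follows essentially the same route as the paper: reduce to Theorem~\ref{thm:arrow_gauss} by using Theorem~\ref{thm:MIST} pairwise, exploiting hypothesis~\eqref{eq:cross_inf_unif} to guarantee that for each pair $(f_i,f_j)$ one of them has all influences below $\tau$. The cosmetic difference is that you work in the $\{0,1\}$ encoding with the identity $\1[a=b=c]=1+ab+bc+ca-a-b-c$, while the paper works in $\{-1,1\}$ with $\1[a=b=c]=\tfrac14(1+ab+bc+ca)$; these give the same reduction. What your write-up adds, and what the paper elides, is an explicit derivation of the ``negative-$\rho$ lower-bound'' form of Majority is Stablest: the paper's Theorem~\ref{thm:MIST} is only stated as an upper bound for $\rho\geq 0$, and the paper invokes it at $\rho=-\third$ as a lower bound without comment. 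Your chain --- reflect $y\mapsto -y$ to move to $\rho=+\third$, then pass to complements $\psi\mapsto 1-\psi$ and use $J_{-\rho}(x,y)=x-J_\rho(x,1-y)$ --- is exactly the right way to make that step rigorous, and it is good to spell it out. The parameter bookkeeping (the $2\eps$ entering Theorem~\ref{thm:arrow_gauss}, the factor $\tfrac14$ between $I_k(f_i)$ and $I_k(f_i^+)$, the choice of $\eta$ as a small multiple of $\eps^{18}$) matches the paper and lands on the claimed $\delta=\tfrac18(\eps/2)^{20}$ with room to spare.
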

\begin{proof}
Let $\phi_1,\phi_2,\phi_3 : \R \to \{-1,1\}$ be of the form $\phi_i = \sgn(x-t_i)$, where $t_i$ is chosen so that
$\E[\phi_i] = \E[f_i]$ (where the first expected value is according to the Gaussian measure). Let $N_1,N_2,N_3 \sim N(0,1)$ be jointly
Gaussian with $\E[N_i N_{i+1}] = -1/3$.
From Theorem~\ref{thm:arrow_gauss} it follows that:
\begin{equation} \label{eq:PF}
P([\phi_1(N_1) = \phi_2(N_2) = \phi_3(N_3)] > 8 \delta,
\end{equation}
and from Theorem~\ref{thm:MIST}, it follows that by choosing
$C$ in the definition of $\tau$ large enough, we have:
\[
\langle f_1, f_2 \rangle_{-1/3} \geq \langle \phi_1, \phi_2 \rangle_{-1/3} - \delta, \quad
\langle f_2, f_3 \rangle_{-1/3} \geq \langle \phi_2, \phi_3 \rangle_{-1/3} - \delta, \quad 
\langle f_3, f_1 \rangle_{-1/3} \geq \langle \phi_3, \phi_1 \rangle_{-1/3} - \delta.
\]
It now follows that:
\begin{eqnarray}
\IP[f_1(x) = f_2(y) = f_3(z)] &=& \frac{1}{4} \left(1 + \langle f_1, f_2 \rangle_{-1/3} + \langle f_2, f_3 \rangle_{-1/3} + \langle f_3, f_1 \rangle_{-1/3} \right) \\ 
                                           &\geq& 
                                           \frac{1}{4} \left(1 + \langle \phi_1, \phi_2 \rangle_{-1/3} + \langle \phi_2, \phi_3 \rangle_{-1/3} + \langle \phi_3, \phi_1  \rangle_{-1/3} \right) - 3 \delta/4 \nonumber \\ 
                                           &=&  \IP[\phi_1(N_1) = \phi(N_2) = \phi_3(N_3)] - 3 \delta/4 > 7 \delta, \nonumber
\end{eqnarray} 
as needed.
\end{proof}

\section{Arrow Theorem with at most one influential voter}
In addition to the case where all the influences are small, we need to consider the case where one voter is influential. 
This includes the case of the dictator function. The theorem below extends the case of low influences to the case where there is only 
one influential variable. 

\begin{theorem} \label{thm:arrow_one_inf}
For every $\eps > 0$ there exists a $\delta(\eps) > 0$ and a $\tau(\delta) > 0$ such that the following holds for all $n$. 
Let $f_1,f_2,f_3 : \{-1,1\}^n \to \{-1,1\}$.  
Assume that for all $1 \leq i \leq 3$ and $j > 1$ it holds that
\begin{equation} \label{eq:inf5}
I_j(f_i) <  0.5 \tau. 
\end{equation}
Then either
\begin{equation} \label{eq:paradox5}
\P[f_1(x) = f_2(y) = f_3(z)] \geq  \delta/6 , 
\end{equation}
or there exist $(f_1,f_2,f_3)$ which are $9 \eps$-close to either a dictator function $\pm (x_i,x_i,x_i)$ or a function $(g_1,g_2,g_3)$ where two of the 
$g_i$'s are constant and of different signs. 
Moreover, 
one may take:
\[
\delta = (\eps/2)^{20}, \quad \tau = \tau(\delta).
\]
\end{theorem}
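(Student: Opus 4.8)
The plan is to condition on the preference of the (potentially influential) first voter, reduce each of the six resulting sub-problems to the low-influence quantitative Arrow theorem (Theorem~\ref{thm:arrow_low_cross_unif}), and then assemble the local information into the global dichotomy using the exact Arrow theorem for one voter (Theorem~\ref{thm:arrow3}).

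For $i\in\{1,2,3\}$ and $s\in\{-1,1\}$ let $f_i^s(x_2,\dots,x_n)=f_i(s,x_2,\dots,x_n)$ be the restriction of $f_i$ to $x_1=s$. The standard identity $I_j(f_i^{+})+I_j(f_i^{-})=2I_j(f_i)$ together with~(\ref{eq:inf5}) gives $I_j(f_i^s)<\tau$ for every $j\ge 2$, so every triple $(f_1^a,f_2^b,f_3^c)$ consists of $(n-1)$-variable functions with all influences below $\tau$, and Theorem~\ref{thm:arrow_low_cross_unif} applies to it (the cross-influence hypothesis~(\ref{eq:cross_inf_unif}) holding trivially), once $\tau=\tau(\delta)$ is taken with the absolute constant large enough; the factor $0.5$ in~(\ref{eq:inf5}) is exactly what absorbs the loss of a factor $2$ under restriction. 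Conditioning on $(x_1,y_1,z_1)=(a,b,c)$, which has probability $1/6$ for each admissible $(a,b,c)\in\{-1,1\}^3\setminus\{\pm(1,1,1)\}$, the conditional law of $(f_1(x),f_2(y),f_3(z))$ equals that of $(f_1^a(x'),f_2^b(y'),f_3^c(z'))$ on an $(n-1)$-voter Condorcet profile, whence
\[
\P[f_1(x)=f_2(y)=f_3(z)]=\frac16\sum_{(a,b,c)}\P[f_1^a(x')=f_2^b(y')=f_3^c(z')].
\]
(When $n=1$ the restrictions are literally constants and the argument collapses directly to the $n=1$ Arrow step below.)

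Apply Theorem~\ref{thm:arrow_low_cross_unif} to each of the six triples with its parameter set to $9\eps/4$. If for some admissible $(a,b,c)$ the non-degeneracy condition~(\ref{eq:non_top_bot_cross_unif}) holds — reading $\P[f_i^{s}=u]+\P[f_j^{t}=-u]\ge 9\eps$ for the corresponding restrictions — then that summand is at least $\tfrac18(9\eps/8)^{20}$, so $\P[f_1(x)=f_2(y)=f_3(z)]\ge\tfrac1{48}(9\eps/8)^{20}\ge(\eps/2)^{20}/6=\delta/6$ and we are in the first alternative. Otherwise~(\ref{eq:non_top_bot_cross_unif}) fails for all six triples, so for each admissible $(a,b,c)$ at least two of $f_1^a,f_2^b,f_3^c$ are $9\eps$-close to the constants $+1$ and $-1$, respectively.

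It remains to convert this local information into the global statement, which is the crux. Call $f_i^s$ \emph{constant-like} if it is $9\eps$-close to $+1$ or to $-1$ (for $\eps<1/16$ at most one of these can hold). If some $f_i^s$ is not constant-like, then in each of the three admissible vectors whose $i$-th coordinate equals $s$ the two restrictions forced to be opposite constants must be the remaining two; tracing these three constraints — and using that no restriction can be $9\eps$-close to both $+1$ and $-1$ — pins down the two functions other than $f_i$ to be $9\eps$-close to opposite constants, giving the second alternative. If instead every $f_i^s$ is constant-like, let $p_i^s\in\{-1,1\}$ be the value it is close to; the ``two opposite constants'' property says precisely that $(p_1^a,p_2^b,p_3^c)\ne\pm(1,1,1)$ for every admissible $(a,b,c)$, i.e.\ the three one-bit maps $s\mapsto p_i^s$ never produce a paradox. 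Theorem~\ref{thm:arrow3} then gives either that two of these maps are opposite constants — again forcing two of the $f_i$ to be $9\eps$-close to opposite constants — or that all three equal $s\mapsto s$ (respectively all equal $s\mapsto -s$), in which case each $f_i$ is $9\eps$-close to $x_1$ (respectively to $-x_1$), i.e.\ $(f_1,f_2,f_3)$ is $9\eps$-close to $\pm(x_1,x_1,x_1)$. The main obstacle is exactly this last step: one must verify that the six local ``two opposite constants'' facts are globally consistent and leave only the two clean outcomes; the case split on whether some restriction is non-constant-like is what keeps the combinatorics finite and reduces the all-constant case to the black-box $n=1$ Arrow theorem.
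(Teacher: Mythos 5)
Your proposal is correct and follows essentially the same route as the paper: condition on the first voter, check that each of the six restricted triples $(f_1^a,f_2^b,f_3^c)$ has all influences below $\tau$ (the factor $0.5$ in the hypothesis absorbing the doubling under restriction, exactly as you note), apply Theorem~\ref{thm:arrow_low_cross_unif} to each, and then combine using the exact Arrow theorem, Theorem~\ref{thm:arrow3}.

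The one place where you diverge slightly is the final combinatorial step. The paper defines a single $n$-variable function $G=(g_1,g_2,g_3)$ where $g_i^b$ is set to the nearby constant on slices where $f_i^b$ is close to constant and is left equal to $f_i^b$ elsewhere, observes $P(G)=0$ directly, and applies Theorem~\ref{thm:arrow3} to $G$; it then reads off the dichotomy and the closeness $\IP[f_i\neq g_i]\le O(\eps)$. You instead split explicitly on whether some restriction is non-constant-like (handling that case by chasing the three slice constraints), and in the all-constant case apply Theorem~\ref{thm:arrow3} only to the three one-bit maps $s\mapsto p_i^s$. This is a mildly more explicit reorganization of the same argument — in particular your one-bit application of Theorem~\ref{thm:arrow3} makes transparent why the surviving dictator must be the first voter, a point the paper's version handles implicitly through the low-influence hypothesis on the other coordinates. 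Both are correct, with the same source of polynomial-in-$\eps$ loss and the same use of the $1/6$ conditioning factor; the constants in both the paper and your write-up are being treated loosely, but the stated bounds $\delta=(\eps/2)^{20}$ and the $9\eps$-closeness are comfortably met.
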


\begin{proof}
Consider the functions $f_i^b$ for $1 \leq i \leq 3$ and $b \in \{-1,1\}$ defined by
\[
f_i^b(x_2,\ldots,x_n) = f_i(b,x_2,\ldots,x_n).
\]
Note that for all $b \in \{-1,1\}$, for all $1 \leq i \leq 3$ and for all $j > 1$ it holds that $I_j(f_i^{b_i}) < \tau$
and therefore we may apply
Theorem~\ref{thm:arrow_low_cross_unif}.
We obtain that for every
$b=(b_1,b_2,b_3) \notin \{(1,1,1),(-1,-1,-1)\}$ either:
\begin{equation} \label{eq:small_paradox}
\P[f_1^{b_1}(x) = f_2^{b_2}(y) = f_3^{b_3}(z)] \geq \delta,
\end{equation}
or there exist a $u(b) \in \{-1,1\}$ and an $i = i(b)$ such that
\begin{equation} \label{eq:small_constant}
\min(\P[f_i^{b_i} = u(b)], \P[f_{i+1}^{b_{i+1}} = -u(b)]) \geq 1-3\eps.
\end{equation}
Note that if there exists a vector $b = (b_1,b_2,b_3) \notin \{(1,1,1),(-1,-1,-1)\}$ for which~(\ref{eq:small_paradox}) holds
then~(\ref{eq:paradox5}) follows immediately.

It thus remains to consider the case where~(\ref{eq:small_constant}) holds for all $6$ vectors $b$.
In this case we will define new functions $g_i$ as follows. We let $g_i(b,x_2,\ldots,x_n) = u$ if
$\P[f_i^{b} = u] \geq 1-3\eps$ for $u \in \{-1,1\}$
and $g_i(b,x_2,\ldots,x_n) = f_i(b,x_2,\ldots,x_n)$ otherwise.
We let $G$ be the social choice function defined by $g_1,g_2$ and $g_3$.
From~(\ref{eq:small_constant}) it follows that for every $b = (b_1,b_2,b_3) \notin \{(1,1,1),(-1,-1,-1)\}$ there exist two functions
$g_i,g_{i+1}$ and a value $u$ s.t. $g_i(b_i,x_2,\ldots,x_n)$ is the constant function $u$ and $g_{i+1}(b_{i+1},x_2,\ldots,x_n)$
is the constant function $-u$. So
\[
P(g_1,g_2,g_3) = \P[(g_1,g_2,g_3) \in \{(1,1,1),(-1,-1,-1)\}] = 0,
\]
and therefore by Theorem~\ref{thm:arrow3} $(g_1,g_2,g_3)$ is of the required form. It is further easy to see that $\IP[f_i \neq g_i] \leq 3 \eps$ for all $i$, as needed. 
The proof follows.
\end{proof}

\section{Arrow Theorem with two influential voters} 
Our second application of reverse hyper-contraction is the following: 
\begin{lemma} \label{lem:two_inf}
Suppose that 
$I_1(f) > \eps$ and $I_2(g) > \eps$.
Let
\[
B = \{((x,y,z))_{i=3}^n : 1 \mbox{ is pivotal for } f \mbox{ and } 2 \mbox{ is pivotal for } g ) \}.
\]
Then
\[
\P[B] \geq \eps^3.
\]
\end{lemma}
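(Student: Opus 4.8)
The plan is to recognize the statement as an immediate consequence of the reverse hypercontractive estimate in Lemma~\ref{lem:inv_hyp}. First I would reinterpret the set $B$ in terms of two cylinder events. The assertion ``coordinate $1$ is pivotal for $f$'' defines an event $A_1 \subseteq \{-1,1\}^n$ that does not depend on the $x_1$-coordinate (it is determined by $x_2,\dots,x_n$), and under the uniform measure $\P[A_1] = I_1(f) > \eps$; symmetrically, ``coordinate $2$ is pivotal for $g$'' defines an event $A_2 \subseteq \{-1,1\}^n$ not depending on $y_2$, with $\P[A_2] = I_2(g) > \eps$. The $z$-coordinates play no role, so $\P[B] = \P[x \in A_1,\ y \in A_2]$, the probability computed under the marginal law of $(x,y)$.

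Next I would check that this marginal law is exactly the one to which Lemma~\ref{lem:inv_hyp} applies. Since the voters vote independently, the pairs $(x_i,y_i)$ are i.i.d., and each is the image of a uniformly random element of $\{-1,1\}^3 \setminus \{\pm(1,1,1)\}$ under projection to its first two coordinates; enumerating the six rankings gives $\P[(x_i,y_i)=(+,+)] = \P[(x_i,y_i)=(-,-)] = \tfrac16$ and $\P[(x_i,y_i)=(+,-)] = \P[(x_i,y_i)=(-,+)] = \tfrac13$. Hence $\E[x_i] = \E[y_i] = 0$ and $\E[x_i y_i] = -\tfrac13$, so the hypotheses of Lemma~\ref{lem:inv_hyp} are satisfied with $\rho = \tfrac13$.

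Finally, applying the ``in particular'' clause~\eqref{eq:two_small} of Lemma~\ref{lem:inv_hyp} with $B_1 = A_1$ and $B_2 = A_2$, each of measure at least $\eps$, yields
\[
\P[B] \;=\; \P[x \in A_1,\ y \in A_2] \;\geq\; \eps^{\frac{2}{1-1/3}} \;=\; \eps^{3},
\]
as claimed. There is no substantial obstacle here. The only points requiring care are: the bookkeeping of which coordinates the two pivotality events depend on (harmless, since Lemma~\ref{lem:inv_hyp} only needs two subsets of the product space of measure at least $\eps$, and the pivotality events are genuine such subsets); and the arithmetic that the Condorcet correlation $\rho = \tfrac13$ produces the exponent $\tfrac{2}{1-\rho} = 3$ matching the stated bound. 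If one instead insists on reading $B$ literally as an event on coordinates $3,\dots,n$, one averages out $x_2$ and $y_1$ to obtain $[0,1]$-valued functions $p,q$ on $\{-1,1\}^{n-2}$ with $\E p = I_1(f) > \eps$, $\E q = I_2(g) > \eps$, and applies the norm form of reverse hypercontractivity (Theorem~\ref{thm:rev_hyp}, after the sign change $y\mapsto -y$ that turns correlation $-\tfrac13$ into $+\tfrac13$) with $p$-parameter $\tfrac23$; since $\|p\|_{2/3} \ge (\E p)^{3/2} \ge \eps^{3/2}$ for $[0,1]$-valued $p$, this again gives $\P[B] \ge \eps^{3/2}\cdot\eps^{3/2} = \eps^3$.
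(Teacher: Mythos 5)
Your proof is correct and uses the same engine as the paper's: a direct application of the reverse hypercontractive estimate Lemma~\ref{lem:inv_hyp} with $\rho=\tfrac13$, giving exponent $\tfrac{2}{1-\rho}=3$. The only small wrinkle is your claim that $\P[B]=\P[x\in A_1,\ y\in A_2]$: the lemma (as clarified by the paper's proof and by its use in Theorem~\ref{thm:two_inf}) takes $B$ to depend only on coordinates $3,\dots,n$, meaning ``there exists $x_2$ making $1$ pivotal'' and ``there exists $y_1$ making $2$ pivotal,'' so that one can later condition on $B$ and still vary coordinates $1,2$ freely. Your $A_1$ and $A_2$ depend on $x_2$ and $y_1$ as well, and $A_1\cap A_2$ is a subset of the cylinder over that $B$, so the correct relation is $\P[B]\geq\P[x\in A_1,\ y\in A_2]$; since you only need a lower bound, this is harmless and your $\eps^3$ bound implies the statement. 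Your closing remark — averaging out $x_2$ and $y_1$ to $[0,1]$-valued $p,q$ on coordinates $3,\dots,n$ and invoking the norm form of Theorem~\ref{thm:rev_hyp} — handles the literal reading and is essentially what the paper does, phrased via the indicator/event version rather than the $L^{2/3}$-norm version.
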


\begin{proof}
Let
\[
B_1 = \{((x,y,z))_{i=3}^n : 1 \mbox{ is pivotal for } f(\cdot,\cdot,x_3,\ldots,x_n) \}, 
\]
\[
 B_2 = \{((x,y,z))_{i=3}^n : 2 \mbox{ is pivotal for } g(\cdot,\cdot,y_3,\ldots,y_n)\}.
\]
Then $\P[B_1] \geq I_1(f) > \eps$ and $\P[B_2] \geq I_2(g) > \eps$, and our goal is to obtain a bound on $\P[B_1 \cap B_2]$.
Note that the event $B_1$ is determined by $x$ and the event $B_2$ is determined by $y$.
So the proof follows immediately from Lemma~\ref{lem:inv_hyp} with $\rho = -1/3$. 
\end{proof}

We can now prove the main result of the section.
\begin{theorem} \label{thm:two_inf}
Suppose that there exist voters $i$ and $j$ such that
\[
I_i(f) > \eps, \quad I_j(g) > \eps.
\]
then $\IP[f(x) = g(y) = h(z)] > \frac{1}{36} \eps^3$.

\end{theorem}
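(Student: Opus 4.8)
The plan is to combine Lemma~\ref{lem:two_inf} with a direct analysis of the thirty-six legal vote-pairs available to the two pivotal voters. We may assume the two influential voters are distinct (this is the content of ``two influential voters''; the statement genuinely needs it, since e.g.\ for $f(x)=x_1$, $g(y)=y_1$, $h(z)=z_1$ the probability on the left is $0$), and by relabelling the voters we take $I_1(f)>\eps$ and $I_2(g)>\eps$.

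First I would invoke Lemma~\ref{lem:two_inf}: if $B$ is the event, depending only on the rankings of voters $3,\dots,n$, that voter $1$ is pivotal for $f$ and voter $2$ is pivotal for $g$, then $\P[B]\ge\eps^3$. The remaining task is to show that, conditioned on any outcome of voters $3,\dots,n$ lying in $B$, some legal choice of rankings for voters $1$ and $2$ forces $f(x)=g(y)=h(z)$. Since the rankings of voters $1$ and $2$ are uniform on the $36$ legal pairs and independent of the rest, this gives $\P[f(x)=g(y)=h(z)\mid B]\ge\frac1{36}$, hence $\P[f(x)=g(y)=h(z)]\ge\frac1{36}\eps^3$, as required.

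To produce such a vote-pair, fix an outcome in $B$: pivotality supplies $a,b\in\bits$ for which $x_1\mapsto f(x_1,a,x_3,\dots,x_n)$ and $y_2\mapsto g(b,y_2,y_3,\dots,y_n)$ are bijections. I will have voter $1$ cast a ranking of the form $(x_1,b,z_1)$ and voter $2$ a ranking $(a,y_2,z_2)$; this pins $x_2=a$ and $y_1=b$, so that $f(x)$ is controlled by $x_1$ alone and $g(y)$ by $y_2$ alone. Given a target value $c$, set $x_1$ and $y_2$ to the preimages of $c$ under these two bijections; then $z_1$ may take either legal value unless $x_1=b$, in which case legality of $(b,b,z_1)$ forces $z_1=-b$, and symmetrically $z_2$ is free unless $y_2=a$, in which case $z_2=-a$. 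Writing $H(u,v)=h(u,v,z_3,\dots,z_n)$, I must check that for $c=+1$ or $c=-1$ the set of reachable values $\{H(z_1,z_2)\}$ contains $c$. There are two cases, exhaustive because $x_1(\cdot)$ and $y_2(\cdot)$ are bijections: either there is a target for which both $z_1$ and $z_2$ are free, in which case $(z_1,z_2)$ ranges over all of $\bits^2$ so $H$ attains both its values (or $H$ is constant, in which case its value is hit no matter what, so take $c$ to be that value); or for each target exactly one of $z_1,z_2$ is free and the other is forced, and then a short computation shows that if reachability failed for both targets, $H(-b,-a)$ would have to equal both $+1$ and $-1$, which is absurd.

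The one delicate point --- the step I expect to need the most care --- is this last claim, that an \emph{arbitrary} third function $h$ cannot dodge the paradox once voters $1$ and $2$ are pivotal for $f$ and $g$; the subtlety is that the third coordinate of each pivotal voter ($z_1$ for voter $1$, $z_2$ for voter $2$) is only partially free, because of the legality constraint excluding $\pm(1,1,1)$, so one cannot simply set $h$'s relevant inputs at will. The case split above is precisely what overcomes this, and with it verified the proof is complete.
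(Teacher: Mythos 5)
Your proof is correct, and it follows the same overall route as the paper: apply Lemma~\ref{lem:two_inf} to get $\P[B]\ge\eps^3$, and then argue that, conditioned on $B$, at least one of the $36$ equally likely vote-pairs for voters $1,2$ produces $f(x)=g(y)=h(z)$. The paper finishes this second step by invoking Theorem~\ref{thm:arrow3} for the restricted two-voter constitution; you instead re-derive that instance by hand. Your case analysis is correct (the ``short computation'' in Case~2 does indeed land a contradiction at $H(-b,-a)$, and the constant-$H$ parenthetical in Case~1 is salvaged by the observation that $z_1=-b,\,z_2=-a$ are always legal), but it is more complicated than it needs to be: the argument the paper actually uses in the proof of Theorem~\ref{thm:arrow1} first fixes $z_1=-b$ and $z_2=-a$ (which are automatically legal against any choice of $x_1,y_2$), reads off the target $c:=H(-b,-a)$, and then simply picks $x_1$ and $y_2$ as the preimages of $c$; no dichotomy on free/forced coordinates is required. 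One genuinely sharp observation you made that the paper leaves implicit is that the hypothesis must be read with $i\neq j$ --- the dictator triple shows the inequality fails otherwise --- and the paper's ``w.l.o.g.\ $i=1,j=2$'' quietly assumes exactly this.
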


\begin{proof}
Without loss of generality assume that $i=1$ and $j=2$. 
Let $B = B_1 \cap B_2$, where $B_1$ and $B_2$  are the events from Lemma~\ref{lem:two_inf}. By the lemma we have $\P[B] \geq \eps^3$.
Note that conditioned on any $((x,y,z)_{i=3}^n) \in B$, the functions $f,g$ and $h$ on coordinates $1$ and $2$ satisfy the condition 
of Arrow Theorem~\ref{thm:arrow3}. Thus with probability at least $1/36$, the outcome is not transitive. 
Therefore:
\[
\P[f(x) = g(y) = h(z)] \geq 
\frac{1}{36} \P[B] \geq
\frac{1}{36} \eps^3.
\]
\end{proof}

\section{Proof of the quantitative Arrow Theorem}

We are finally ready to prove Theorem~\ref{thm:quant_arrow} which we restate below: 
\begin{theorem} \label{thm:arrow3quant}
For every $\eps >  0$, there exists $\delta(\eps) > 0$ such that the following holds for every $n$: 
If
\[
\IP[f(x) = g(y) = h(z)] < \delta,
\]
then either two of the functions $f,g,h$ are $\eps$-close to constant functions of the opposite sign, or there exists a variable $i$ such that 
$f,g$ and $h$ are all $\eps$-close to the same dictator on voter $i$. 
Moreover, one can take
\begin{equation} \label{eq:del33}
\delta = \exp \left(-\frac{C}{\eps^{21}} \right).
\end{equation}
\end{theorem}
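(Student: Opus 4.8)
The plan is to deduce Theorem~\ref{thm:arrow3quant} from its three precursors---Theorem~\ref{thm:arrow_low_cross_unif} (low cross-influence), Theorem~\ref{thm:arrow_one_inf} (at most one influential voter) and Theorem~\ref{thm:two_inf} (two influential voters)---by a case analysis on the influence profile of the triple $(f,g,h)$. Fix $\eps>0$. Apply Theorem~\ref{thm:arrow_one_inf} with parameter $\eps/9$ to get a bound $\delta_1$ and a threshold $\tau_1=\tau(\delta_1)$, and apply Theorem~\ref{thm:arrow_low_cross_unif} with parameter $\eps/4$ to get a bound $\delta_2$ and a threshold $\tau_2=\tau(\delta_2)$; here $\tau(\delta)=\delta^{C\log(1/\delta)/\delta}$ and $\delta_1,\delta_2$ are fixed polynomials in $\eps$. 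Put $\tau:=\min(\tfrac12\tau_1,\tau_2)$ and $\delta:=\min\bigl(\tfrac1{36}\tau^3,\ \delta_1/6,\ \delta_2\bigr)$. Tracking the composition gives $\tau=\exp\!\bigl(-\Theta(\log^2(1/\eps)\,\eps^{-20})\bigr)$, hence $\delta=\exp\!\bigl(-\Theta(\log^2(1/\eps)\,\eps^{-20})\bigr)\ge\exp(-C'/\eps^{21})$, which is the bound~\eqref{eq:del33}. Assume $\IP[f(x)=g(y)=h(z)]<\delta$; the goal is to produce one of the two structural conclusions.

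For each of the three functions let $S(f)=\{j:I_j(f)>\tau\}$ be its set of influential coordinates, and likewise $S(g),S(h)$. \emph{Case 1: two of the functions are influential on distinct coordinates}, i.e.\ (after relabelling, using the $S_3$-symmetry of the paradox event) there are voters $a\ne b$ with $a\in S(f)$ and $b\in S(g)$. Then Theorem~\ref{thm:two_inf}, with $\tau$ playing the role of its $\eps$, gives $\IP[f(x)=g(y)=h(z)]>\tfrac1{36}\tau^3\ge\delta$, contradicting the assumption; so Case 1 cannot happen.

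\emph{Case 2: Case 1 fails.} Then whenever two of $S(f),S(g),S(h)$ are nonempty they coincide and equal a single point $\{v\}$: if every $a\in S(f)$ and $b\in S(g)$ are forced to satisfy $a=b$, then $|S(f)|,|S(g)|\le1$, and any third nonempty set is pinned down the same way. Split once more. \emph{Case 2a: at most one of the sets is nonempty}, say $S(g)=S(h)=\emptyset$, so $I_j(g),I_j(h)\le\tau\le\tau_2$ for all $j$; then for every coordinate at most $f$ has influence above $\tau_2$, so the cross-influence hypothesis of Theorem~\ref{thm:arrow_low_cross_unif} holds. If in addition $\IP[f_i=u]+\IP[f_j=-u]\ge\eps$ for all ordered pairs $i\ne j$ and all $u\in\{-1,1\}$, Theorem~\ref{thm:arrow_low_cross_unif} gives $\IP[f(x)=g(y)=h(z)]\ge\delta_2\ge\delta$, a contradiction; otherwise two of $f,g,h$ are $\eps$-close to constants of opposite sign, which is an allowed conclusion. \emph{Case 2b: at least two of the sets are nonempty}, hence all nonempty ones equal one common point, which we relabel as voter $1$; then $I_j(f),I_j(g),I_j(h)\le\tau\le\tfrac12\tau_1$ for all $j>1$, which is exactly hypothesis~\eqref{eq:inf5} of Theorem~\ref{thm:arrow_one_inf}. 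That theorem gives either $\IP[f(x)=g(y)=h(z)]\ge\delta_1/6\ge\delta$ (contradiction), or that $(f,g,h)$ is $9\cdot(\eps/9)=\eps$-close to a common dictator on voter $1$, or $\eps$-close to a triple two of whose coordinate functions are constants of opposite sign. Every branch that is not a contradiction yields one of the two conclusions of the theorem, so the proof is finished.

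The analytic work is entirely inside the three precursor theorems, so the real content here is organizational: verifying that the split is exhaustive---the crucial observation being that the purely combinatorial failure of ``two influential coordinates'' is precisely the cross-influence hypothesis of Theorem~\ref{thm:arrow_low_cross_unif}---and choosing a single $\delta$ that simultaneously lies below $\tfrac1{36}\tau^3$, $\delta_1/6$ and $\delta_2$. Since $\tau$ is obtained by feeding a polynomial-in-$\eps$ quantity into the map $\delta\mapsto\delta^{C\log(1/\delta)/\delta}$ inherited from the low-influence Majority-is-Stablest bound~\eqref{eq:tau_bound}, the composition produces an extra $\log^2(1/\eps)$ factor in the exponent; absorbing it costs one power of $\eps$ and is what forces $1/\eps^{21}$ rather than $1/\eps^{20}$ in~\eqref{eq:del33}. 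Keeping the loss to this single power is the one quantitatively delicate step; everything else is a direct combination of the cited theorems.
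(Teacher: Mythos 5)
Your proposal is correct and takes essentially the same route as the paper: a three-way case split on the influence profile (two influential coordinates in two different functions; low cross-influence; at most one influential voter), each case resolved by citing Theorems~\ref{thm:two_inf},~\ref{thm:arrow_low_cross_unif} and~\ref{thm:arrow_one_inf} respectively, with $\delta$ chosen small enough to fit under all three bounds. Your bookkeeping of the $\eps/4$ and $\eps/9$ parameters and the $\tau=\min(\tfrac12\tau_1,\tau_2)$ threshold is a bit more explicit than the paper's (which simply says ``taking larger values $C'$ for $C$''), but the logic and the resulting $\delta=\exp(-C/\eps^{21})$ are the same.
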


\begin{remark}
We note that the dependency of $\delta$ on $\eps$ is very far from optimal. 
The optimal dependency of $\delta = O(\eps^3)$ was obtained by Keller~\cite{Keller:12}
who considered the cases where $f,g,h$ are close to functions that have $0$ probability of paradox and applied contractive and hyper-contractive estimates.
\end{remark}

\begin{proof}
 Let $\eta$ be a small constant to be determined later.  We will consider three cases:
\begin{itemize}
\item
There exist two voters $i \neq j \in [n]$ and two different functions say $f$ and $g$ such that
\begin{equation} \label{eq:case1}
I_i(f) > \eta, \quad I_j(g) > \eta.
\end{equation}
\item
For pair of functions $k_1 \neq k_2 \in \{f,g,h\}$ and every $i \in [n]$, it holds that
\begin{equation} \label{eq:case2}
\min(I_i(k_1),I_i(k_2)) < \eta.
\end{equation}
\item
There exists a voter $j'$ such that for all $j \neq j'$
\begin{equation} \label{eq:case3}
\max(I_j(f),I_j(g),I_j(h)) < \eta.
\end{equation}
\end{itemize}

First note that each $(f,g,h)$ satisfies at least one of the three conditions (\ref{eq:case1}), (\ref{eq:case2})
or (\ref{eq:case3}). Thus it suffices to prove the theorem for each of the three cases.

In~(\ref{eq:case1}), we have by Theorem~\ref{thm:two_inf} have that
\[
\IP[f(x) = g(y) = h(z)] > \frac{1}{36} \eta^3.
\]
We thus obtain that $\IP[f(x) = g(y) = h(z)]  > \delta$ where $\delta$ is given in~(\ref{eq:del33})
by taking larger values $C'$ for $C$.

In case~(\ref{eq:case2}), by Theorem~\ref{thm:arrow_low_cross_unif} it follows that either there exists a function $(g_1,g_2,g_3)$ which
always puts a candidate at top / bottom and $(f_1,f_2,f_3)$ is $\eps$-close to $(g_1,g_2,g_3)$ (if~(\ref{eq:non_top_bot_cross_unif}) holds), or
$\IP[f(x) = g(y) = h(z)] > C \eps^{20} >> \delta$.

Similarly in the remaining case~(\ref{eq:case3}), we have by Theorem~\ref{thm:arrow_one_inf} that either $(f_1,f_2,f_3)$ is $\eps$-close 
 to $(g_1,g_2,g_3)$ with $\IP[g_1(x) = g_2(y) = g_3(z)] = 0$ or 
$\IP[f(x) = g(y) = h(z)] > C \eps^{20} >> \delta$.
The proof follows.

\end{proof}


\section{More general statements} \label{sec:k}
In this section we discuss a more general statement of Arrow Theorem closer to his original formulation and its quantitative counterpart.
This requires to introduce a number of additional definition. The reduction from the more general statements of Arrow Theorem to the 3 candidate case discussed above will be carried out in this section. 

\subsection{General Setup}
Consider $A = \{a,b,\ldots,\}$, a set of $k \geq 3$ alternatives. A {\em transitive preference} over $A$ is a ranking of the alternatives from top to bottom where ties are not allowed. Such a ranking corresponds to a {\em permutation} $\sigma$ of the elements $1,\ldots,k$ where $\sigma_i$ is the rank of alternative $i$. The set of all rankings will be denoted by $S_k$.

A {\em constitution} is a function $F$ that associates to every $n$-tuple $\sigma = (\sigma(1),\ldots,\sigma(n))$ of transitive preferences (also called a {\em profile}), and every pair of alternatives $a,b$ a preference between $a$ and $b$. Some key properties of constitutions include:
\begin{itemize}
\item
{\em Transitivity}. The constitution $F$ is {\em transitive} if $F(\sigma)$ is transitive for all $\sigma$.
In other words, for all $\sigma$ and for all three alternatives $a,b$ and $c$, if $F(\sigma)$ prefers $a$ to $b$, and prefers $b$ to $c$, it also prefers $a$ to $c$. Thus $F$ is transitive if and only if its image is a subset of the permutations on $k$ elements.
\item
{\em Independence of Irrelevant Alternatives (IIA).} The constitution $F$ satisfies the IIA property if for every pair of alternatives $a$ and $b$, the social ranking of $a$ vs. $b$ (higher or lower) depends only on their relative rankings by all voters. The IIA condition implies that the pairwise preference between any pair of outcomes depends only on the individual pairwise preferences. Thus, if $F$ satisfies the IIA property then there exists functions $f^{a>b}$ for every pair of candidates $a$ and $b$ such that
\[
F(\sigma) = ((f^{a>b}(x^{a>b}) : \{a,b\} \in {k \choose 2})
\]
\item
{\em Unanimity.} The constitution $F$ satisfies {\em Unanimity} if the social outcome ranks $a$ above $b$ whenever all individuals rank $a$ above $b$.
\item
The constitution $F$ is a {\em dictator} on voter $i$, if $F(\sigma) = \sigma(i)$, for all $\sigma$, or $F(\sigma) = -\sigma$, for all $\sigma$, where $-\sigma(i)$ is the ranking $\sigma_k(i) > \sigma_{k-1}(i) \ldots \sigma_2(i) > \sigma_1(i)$ by reversing the ranking $\sigma(i)$.
\end{itemize}

Arrow's theorem states~\cite{Arrow:50,Arrow:63} that:
\begin{theorem} \label{thm:arrow}
Any constitution on three or more alternatives which satisfies Transitivity, IIA and Unanimity is a dictatorship.
\end{theorem}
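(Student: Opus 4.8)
The plan is to reduce the statement about a constitution $F$ on $k \geq 3$ alternatives to the three-alternative version already proved (Theorem~\ref{thm:arrow1}, or rather its $f,g,h$ refinement Theorem~\ref{thm:arrow3}). The first step is to extract, using IIA, the pairwise aggregation functions: for every ordered pair $(a,b)$ there is a Boolean function $f^{a>b}$ on $\{-1,1\}^n$ such that the social preference between $a$ and $b$ is $f^{a>b}$ applied to the vector of individual $a$-vs-$b$ preferences, and clearly $f^{b>a}(x) = -f^{a>b}(-x)$ after the sign convention is fixed, so it suffices to understand one function per unordered pair. The key point is that Unanimity forces each $f^{a>b}$ to be monotone and non-constant (if all voters rank $a$ above $b$ the outcome must too, and likewise with the roles reversed), which rules out the degenerate constant-function escape clauses in Theorem~\ref{thm:arrow3}.

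Next I would exploit Transitivity locally. Fix any three alternatives $a,b,c$. A voter's preference restricted to $\{a,b,c\}$ is a linear order, hence — in the $(x,y,z)$ encoding with $x$ the $a$-vs-$b$ bit, $y$ the $b$-vs-$c$ bit, $z$ the $c$-vs-$a$ bit — lies in $\{-1,1\}^3 \setminus \{\pm(1,1,1)\}$. Since $F$ is transitive, the triple $(f^{a>b}(x), f^{b>c}(y), f^{c>a}(z))$ must also avoid $\pm(1,1,1)$ for every such profile. Because the individual restricted preferences range over \emph{all} of $\{-1,1\}^3 \setminus \{\pm(1,1,1)\}$ independently across voters, the hypothesis of Theorem~\ref{thm:arrow3} is met for the triple $(f^{a>b}, f^{b>c}, f^{c>a})$. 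By that theorem, since none of these functions is constant, there is a voter $i = i(a,b,c)$ such that all three are the dictator on $i$: $f^{a>b}(x) = x_i$ for all $x$, and similarly for the other two pairs.

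The remaining step is to show the dictator $i(a,b,c)$ does not depend on the triple $\{a,b,c\}$. This is a standard connectivity/consistency argument: any two triples of alternatives can be joined by a chain of triples overlapping in two elements, and if $\{a,b,c\}$ and $\{a,b,d\}$ share the pair $\{a,b\}$, then $f^{a>b}$ is simultaneously the dictator on $i(a,b,c)$ and on $i(a,b,d)$, forcing $i(a,b,c) = i(a,b,d)$; propagating along the chain gives a single voter $i$ that is the dictator for every pairwise function, i.e.\ $F(\sigma) = \sigma(i)$ for all $\sigma$. (One must check the sign conventions are consistent so that the $f^{a>b}$ are all \emph{positive} dictators on the same $i$ rather than a mix of $x_i$ and $-x_i$; this follows because if $f^{a>b}(x) = x_i$ then transitivity on $\{a,b,c\}$ pins down $f^{b>c}$ and $f^{c>a}$ as the \emph{same} dictator $x_i$, not its negation, as is already verified inside the proof of Theorem~\ref{thm:arrow3}.) I expect the only mildly delicate part to be this last bookkeeping of signs and the verification that restricted individual preferences really do sweep out all of $\{-1,1\}^3 \setminus \{\pm(1,1,1)\}$ independently — everything else is a direct appeal to Theorem~\ref{thm:arrow3}.
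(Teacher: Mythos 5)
The paper does not actually prove Theorem~\ref{thm:arrow}; it cites Arrow's original papers \cite{Arrow:50,Arrow:63} and moves on. What the paper does carry out is precisely the reduction-to-triples argument you propose, but in its quantitative form: the proof of Theorem~\ref{thm:arrow_k} applies the three-alternative result to each triple $\{a,b,c\}$ and patches the conclusions together, and Remark~\ref{rem:3tok} explicitly observes this is a generic black-box reduction. So your route is the one the paper implicitly endorses, and it is correct.

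One small inaccuracy worth flagging: you assert that Unanimity forces each $f^{a>b}$ to be \emph{monotone}. It does not. Unanimity only pins down the values at the two constant inputs, giving $f^{a>b}(\mathbf{1}) = +1$ and $f^{a>b}(-\mathbf{1}) = -1$; a function satisfying these constraints need not be monotone. Fortunately your argument never uses monotonicity — what you actually need, and what Unanimity does deliver, is that $f^{a>b}$ is non-constant, which rules out the ``two constants of opposite sign'' escape clause in Theorem~\ref{thm:arrow3}. Everything else is sound: the restricted preferences on any triple $\{a,b,c\}$ do range over all of $\{-1,1\}^3 \setminus \{\pm(1,1,1)\}$ independently per voter (any linear order on three alternatives extends to a linear order on $[k]$), the restriction of $F(\sigma)$ to a triple remains transitive, the overlap-by-a-pair connectivity argument correctly identifies a single dictator index $i$, and Unanimity forces the positive sign $f^{a>b}(x) = x_i$ rather than $-x_i$, so that $F(\sigma) = \sigma(i)$.
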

It is possible to give a characterization of all constitutions satisfying IIA and Transitivity. Results of Wilson~\cite{Wilson:72} provide a partial characterization for the case where voters are allowed to rank some alternatives as equal. In order to obtain a quantitative version of Arrow theorem, we give an explicit and complete characterization of all constitutions satisfying IIA and Transitivity in the case where all voters vote using a strict preference order.
Write $\F_k(n)$ for the set of all constitutions on $k$ alternatives and $n$ voters satisfying IIA and Transitivity.
For the characterization it is useful write $A >_F B$ for the statement that for all $\sigma$ it holds that $F(\sigma)$ ranks all alternatives in $A$ above all alternatives in $B$. We will further write $F_{A}$ for the constitution $F$ restricted to the alternatives in $A$. The IIA condition implies that $F_A$ depends only on the individual rankings of the alternatives in the set $A$. The characterization of $\F_k(n)$ we prove is the following.

\begin{theorem} \label{thm:general_arrow}
The class $\F_k(n)$ consist exactly of all constitutions $F$ satisfying the following:
There exist a partition of the set of alternatives into disjoint sets $A_1,\ldots,A_r$ such that:
\begin{itemize}
\item
\[
A_1 >_F A_2 >_F \ldots >_F A_r,
\]
\item
For all $A_s$ s.t. $|A_s| \geq 3$, there exists a voter $j$ such that $F_{A_s}$ is a dictator on voter $j$.
\item
For all $A_s$ such that $|A_s| = 2$, the constitution $F_{A_s}$ is an arbitrary non-constant function of the preferences on the alternatives in $A_s$.
\end{itemize}
\end{theorem}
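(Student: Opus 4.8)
The overall plan is to reduce everything to the three-alternative case (Theorem~\ref{thm:arrow3}) applied to every restriction $F_{\{a,b,c\}}$, and then to glue the local pictures into the global partition. I would first dispose of the easy inclusion: if $F$ has the stated form then IIA is immediate since $F$ is described through pairwise functions, and transitivity holds because a constitution given by pairwise functions induces a linear order on every profile iff each of its three-element restrictions does (a tournament is transitive iff it has no $3$-cycle), and each such restriction either lies inside one block $A_s$ — necessarily of size $\geq 3$ to contain a triple, hence a dictator and transitive — or meets at least two blocks, in which case the chain $A_1 >_F \cdots >_F A_r$ pins down a consistent ranking.

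For the nontrivial inclusion, let $F \in \mathcal{F}_k(n)$ and, using IIA, write $F$ via pairwise functions $f^{a>b} : \{-1,1\}^n \to \{-1,1\}$. I would introduce two relations on the alternatives: $a$ is \emph{forced above} $b$, written $a \to b$, if $F(\sigma)$ ranks $a$ above $b$ for every profile $\sigma$; and $a,b$ are \emph{entangled}, written $a \approx b$, if $f^{a>b}$ is non-constant. For every pair exactly one of $a \to b$, $b \to a$, $a \approx b$ holds, and transitivity of $\to$ is immediate from transitivity of $F$ (look at $\{a,b,c\}$ in a single profile). The crucial step — and the one I expect to require the most care — is to show that $\approx$ together with equality is an equivalence relation: if $a \approx b$ and $b \approx c$ but, say, $a \to c$, then in the restriction $F_{\{a,b,c\}}$, which still satisfies IIA and transitivity, exactly one of the three pairwise functions is constant while the other two are not, contradicting Theorem~\ref{thm:arrow3}, whose only outcomes are a single-voter dictator (all three pairwise functions non-constant) or at least two constant pairwise functions. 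Let $A_1, \dots, A_r$ be the resulting $\approx$-classes.

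Next I would order the classes: for $A_s \neq A_t$ every cross pair is forced (an entangled cross pair would merge the classes), and if $a, a' \in A_s$ and $b \in A_t$ satisfied $a \to b$ and $b \to a'$, then $a$ would beat $a'$ in every profile, contradicting $a \approx a'$; hence all pairs between $A_s$ and $A_t$ point the same way, and transitivity of $\to$ makes this a total order, so after relabelling $A_1 >_F \cdots >_F A_r$. It remains to describe $F_{A_s}$: for $|A_s| = 1$ there is nothing to say; for $|A_s| = 2$ the unique pairwise function is non-constant by definition of $\approx$, exactly as claimed; and for $|A_s| \geq 3$ every triple inside $A_s$ has all three pairwise functions non-constant (the elements of a class are pairwise entangled), so by Theorem~\ref{thm:arrow3} it is a dictator on one voter. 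The remaining work — the second gluing argument — is to see that a single voter works for the whole block: two triples of $A_s$ sharing a pair induce the same pairwise function on that pair, necessarily of the form $\pm x_j$, and since these functions are pairwise distinct over choices of voter $j$ and sign, the two triples share both the dictator voter and the orientation; as the graph on triples of $A_s$ with ``share a pair'' adjacency is connected for $|A_s| \geq 3$, one voter $j$ and one sign govern every pair of $A_s$, so $F_{A_s}$ is $\sigma(j)$ or $-\sigma(j)$. Collecting the three cases yields precisely the asserted structure; beyond routine bookkeeping, the only real obstacles are making these two gluing steps airtight, and both reduce to the rigidity of the trichotomy in Theorem~\ref{thm:arrow3}.
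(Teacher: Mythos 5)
Your proof is correct, and it is worth noting that the paper does not actually supply one: for Theorem~\ref{thm:general_arrow} it simply defers to Theorem~3 of Wilson~\cite{Wilson:72} (with the formulation credited to~\cite{Mossel:09a}). So there is no ``paper's proof'' to compare against; what you have written is a complete, self-contained argument, and it is in the same spirit as the reduction-to-triples strategy the paper does use later for the quantitative Theorem~\ref{thm:arrow_k}.

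The structure is sound. The trichotomy (forced one way, forced the other, or entangled) is exactly the classification of $f^{a>b}$ as $+1$, $-1$, or non-constant. Transitivity of $\to$ is immediate from transitivity of each $F(\sigma)$. Transitivity of $\approx$ is the key use of Theorem~\ref{thm:arrow3}: a triple with exactly one constant pairwise function is neither ``two constants of opposite sign'' nor ``all three dictators,'' so it is excluded. The cross-block consistency argument (if $a,a'\in A_s$, $b\in A_t$, then $a\to b$ and $b\to a'$ would force $a\to a'$) is correct, and transitivity of $\to$ then gives the chain $A_1>_F\cdots>_F A_r$. For the gluing inside a block of size $\geq 3$: two triples sharing a pair induce the same pairwise function $f^{a>b}$, a dictator pairwise function $\pm x_j$ determines $(j,\pm)$ uniquely, and the ``share a pair'' graph on triples of a set of size $\geq 3$ is connected (the Johnson graph $J(m,3)$ for $m\geq 3$), so one $(j,\pm)$ governs every pair and hence $F_{A_s}=\pm\sigma(j)|_{A_s}$. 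The easy inclusion also checks out: a tournament is a linear order iff it has no $3$-cycle, and every triple either sits inside one block (dictator, hence transitive) or straddles blocks (cross pairs forced compatibly with the chain). No gaps.
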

We note that for all $k \geq 3$ all elements of $\F_k(n)$ are not desirable as constitutions. Indeed elements of $F_k(n)$ either have dictators whose vote is followed with respect to some of the alternatives, or they always rank some alternatives on top some other.
For a related discussion see~\cite{Wilson:72}.
The statement above follows easily from Theorem 3 in~\cite{Wilson:72}.
The exact formulation is taken from~\cite{Mossel:09a}.

The main goal of the current section is to provide a quantitative version of Theorem~\ref{thm:general_arrow} assuming voters vote
independently and uniformly at random. Note that Theorem~\ref{thm:general_arrow}
above implies that if $F \not\in \F_k(n)$ then $P(F) \geq (k!)^{-n}$.
However if $n$ is large and the probability of a non-transitive outcome is indeed as small as $(k!)^{-n}$, one may argue that a non-transitive outcome is so unlikely that in practice Arrow's theorem does not hold.

The main goal of this section is to prove the following statement: 
\begin{theorem} \label{thm:arrow_k}
For every number of alternatives $k \geq 1$ and $0.01 > \eps > 0$, there exists a $\delta = \delta(\eps)$, such that for every $n \geq 1$, if $F$ is a
constitution on $n$ voters and $k$ alternatives satisfying:
\begin{itemize}
\item
IIA and
\item
$P(F) < \delta$,
\end{itemize}
then there exists $G \in \F_k(n)$ satisfying $D(F,G) < k^2 \eps$.
Moreover, one may take:
\begin{equation} \label{eq:del_main}
\delta = \exp \left(-\frac{C}{\eps^{21}} \right),
\end{equation}
for some absolute constant $0 < C < \infty$.
\end{theorem}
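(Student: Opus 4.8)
The plan is to reduce the $k$-alternative statement to the three-alternative quantitative Arrow Theorem~\ref{thm:arrow3quant}, applied to every triple of alternatives, and then to assemble the resulting local information into the global structure of Theorem~\ref{thm:general_arrow}. The cases $k\le 2$ are trivial, since then every constitution satisfying IIA already lies in $\F_k(n)$ and we may take $G=F$; so assume $k\ge 3$. Let $\eps_0$ be a suitable constant fraction of $\eps$ (introduced to absorb the small losses below) and let $\delta=\delta(\eps_0)$ be the threshold produced by Theorem~\ref{thm:arrow3quant}; since $\delta(\eps_0)=\exp(-C/\eps_0^{21})$, this yields a bound of the form~(\ref{eq:del_main}) after adjusting the constant.

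\emph{Reduction to triples.} A profile $\sigma$ produces a non-transitive outcome for $F$ exactly when $F(\sigma)$ restricted to some triple $T=\{a,b,c\}$ is a $3$-cycle, so $P(F)\ge\max_T P(F_T)$; hence $P(F)<\delta$ forces $P(F_T)<\delta$ for every triple $T$. By IIA, $F_T$ is the three-alternative constitution determined by the pairwise functions $f^{a>b},f^{b>c},f^{c>a}$ in the cyclic encoding of Section~\ref{sec:preview}, so Theorem~\ref{thm:arrow3quant} applies: for each $T$, either (i) two of these three functions are $\eps_0$-close to constant functions of opposite sign, or (ii) there is a voter $i=i(T)$ with all three $\eps_0$-close to the same signed dictator on voter $i$.

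\emph{Gluing (the main step).} Since $\eps_0<1/4$, no Boolean function is $\eps_0$-close to two distinct signed dictators, nor simultaneously to a constant and to a dictator (all these are at distance $\ge 1/2$); so each pair $\{a,b\}$ is \emph{constant}, \emph{dictated} (by a well-defined voter), or \emph{free}, and these are mutually exclusive. Write $a\sim b$ when $\{a,b\}$ is not constant. Running the triple dichotomy through the triples containing a fixed pair shows: if $\{a,b\}$ and $\{b,c\}$ are both non-constant then $\{a,b,c\}$ cannot be in case (i), hence is in case (ii), which forces $\{a,c\}$ non-constant, forces all three pairs to be dictated, and --- matching the common voter across overlapping triples --- forces one voter with consistent signs on the whole class; thus $\sim$ is an equivalence relation, each class of size $\ge 3$ is governed by a single dictator, and each class of size $\le 2$ contributes a non-constant pair function, which $\F_k(n)$ allows. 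A short computation using near-transitivity of the ``$a$ always above $b$'' events (this is where the loss from $\eps$ to $\eps_0$ is spent) shows that the constant relation is a strict weak order refining the partition into $\sim$-classes $A_1,\dots,A_r$, yielding an ordering $A_1>_G\cdots>_G A_r$ with no pair straddling it inconsistently. One then defines $G$: a dictator on each block of size $\ge 3$, the rounded (or unchanged) function $f^{a>b}$ on each size-$2$ block, and the appropriate constant between blocks. By Theorem~\ref{thm:general_arrow}, $G\in\F_k(n)$, and $G$ differs from $F$ only by replacing at most $\binom{k}{2}$ pairwise functions, each by a function $\eps$-close to it, so $D(F,G)\le\binom{k}{2}\eps<k^2\eps$.

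The main obstacle is the gluing step: turning the per-triple dichotomy into the exact partition-with-dictators structure of Theorem~\ref{thm:general_arrow}. The delicate points are transitivity of ``lying in a common block'', the fact that a block of size $\ge 3$ is governed by one voter with globally consistent signs rather than a patchwork, and that the between-block ``always above'' relation is a genuine strict weak order on all $k$ alternatives rather than just on triples. Each reduces to a finite case analysis on three or four alternatives at a time, invoking only the exact statements of Theorems~\ref{thm:arrow3}, \ref{thm:general_arrow}, and~\ref{thm:arrow3quant}.
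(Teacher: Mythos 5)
Your proof is correct, and the overall architecture --- reduce to the 3-alternative Theorem~\ref{thm:arrow3quant} on every triple, then assemble --- matches the paper. The route through the gluing step, however, is genuinely different. You explicitly reconstruct the partition structure of Theorem~\ref{thm:general_arrow}: you show ``$\{a,b\}$ non-constant'' is an equivalence relation, that any class of size $\ge 3$ has a single consistent dictator, and that the constant relation on pairs is a strict weak order. The paper sidesteps all of this by first defining $G$ (round each $f^{a>b}$ to the unique dictator-or-constant within distance $\eps$, if one exists --- unique because distinct dictators and constants are all mutually at distance $\ge 1/2$, the same observation you make), and then invoking that $G\in\F_k(n)$ iff $G_A\in\F_3(n)$ for every triple $A$, since transitivity of a ranking is a local condition on triples. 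With that observation, one only has to verify $G_A\in\F_3(n)$ triple by triple, which is a two-case check directly from the conclusion of Theorem~\ref{thm:arrow3quant}; no global ordering or equivalence relation ever needs to be built. Your approach buys an explicit handle on the partition (which some readers may find more illuminating), at the cost of a longer combinatorial gluing argument; the paper's approach is more economical because it recognizes that transitivity need not be verified globally.
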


We therefore obtain the following:
\begin{corollary} \label{cor:main}
For any number of alternatives $k \geq 3$ and $\eps > 0$,
there exists a $\delta = \delta(\eps)$, such that for every $n$, if $F$ is a
constitution on $n$ voters and $k$ alternatives satisfying:
\begin{itemize}
\item
IIA and
\item
$F$ is $k^2 \eps$ far from any dictator, so $D(F,G) > k^2 \eps$ for any dictator $G$,
\item
For every pair of alternatives $a$ and $b$, the probability that $F$ ranks $a$ above $b$ is at least $k^2 \eps$,
\end{itemize}
then the probability of a non-transitive outcome, $P(F)$, is at least $\delta$, where $\delta(\eps)$
may be taken as in~(\ref{eq:del_main}).
\end{corollary}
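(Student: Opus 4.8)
The plan is to obtain Corollary \ref{cor:main} directly from Theorem \ref{thm:arrow_k} together with the structural characterization of $\F_k(n)$ given in Theorem \ref{thm:general_arrow}, arguing by contraposition. There is essentially no new analytic content to produce here: all of the work has already gone into Theorem \ref{thm:arrow_k}, and what remains is to translate its conclusion ($F$ is close to \emph{some} member of $\F_k(n)$) into the dichotomy stated in the corollary ($F$ is close to a dictator, or $F$ ranks some pair almost deterministically).

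Concretely, fix $k \geq 3$ and $0 < \eps < 0.01$, let $\delta = \delta(\eps)$ be as in \eqref{eq:del_main}, and suppose $F$ is a constitution on $n$ voters and $k$ alternatives satisfying IIA with $P(F) < \delta$. By Theorem \ref{thm:arrow_k} there is a constitution $G \in \F_k(n)$ with $D(F,G) < k^2\eps$. I would then apply Theorem \ref{thm:general_arrow} to $G$: there is a partition $A_1 >_G A_2 >_G \cdots >_G A_r$ of the $k$ alternatives with the stated properties. If $r = 1$, then $A_1$ is the whole alternative set, so $|A_1| = k \geq 3$ and by the second bullet of Theorem \ref{thm:general_arrow} the constitution $G = G_{A_1}$ is a dictator on some voter; but then $D(F,G) < k^2\eps$ contradicts the hypothesis that $F$ is $k^2\eps$-far from every dictator. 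Hence $r \geq 2$. Now pick any $a \in A_1$ and $b \in A_2$; since $A_1 >_G A_2$, the constitution $G$ ranks $a$ above $b$ on \emph{every} profile, so the probability that $G$ ranks $b$ above $a$ is $0$. A profile on which $F$ ranks $b$ above $a$ is in particular one on which $F$ and $G$ disagree, so $\IP[F \text{ ranks } b \text{ above } a] \le D(F,G) < k^2\eps$, contradicting the third hypothesis of the corollary applied to the ordered pair $(b,a)$. This establishes the contrapositive, proving the corollary with $\delta$ as in \eqref{eq:del_main}.

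The only point that requires a little care — and the closest thing to an "obstacle" — is the bookkeeping around the distance $D(F,G)$: one must check that disagreement of $F$ and $G$ on a single pair of alternatives is captured by $D$, so that $\IP[F \text{ ranks } b \text{ above } a] \le D(F,G)$. This holds for either of the natural choices (the maximum over pairs $\{a,b\}$ of $\IP[f_F^{a>b} \neq f_G^{a>b}]$, or the probability that the full rankings $F(\sigma)$ and $G(\sigma)$ differ), since a per-pair disagreement of probability at least $k^2\eps$ forces $D(F,G) \ge k^2\eps$ in both cases, which is all the argument uses. Everything else is a routine unwinding of the definitions of $\F_k(n)$ and of "dictator", so I would keep the write-up to the few lines above.
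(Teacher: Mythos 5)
Your proof is correct and follows essentially the same route as the paper's: both proceed by contradiction, invoke Theorem~\ref{thm:arrow_k} to produce $G \in \F_k(n)$ with $D(F,G) < k^2\eps$, and then use the structural characterization of $\F_k(n)$ from Theorem~\ref{thm:general_arrow} together with the second and third hypotheses to rule out every possible form of $G$. The only cosmetic difference is that you split explicitly on $r=1$ versus $r\geq 2$, whereas the paper observes directly that the third hypothesis forces every pair to be rankable in both orders under $G$, so $r=1$ and $G$ is a dictator; the content is identical.
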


\begin{proof}
Assume by contradiction that $P(F) < \delta$. Then  by Theorem~\ref{thm:arrow_k} there exists a function $G \in \F_{n,k}$
satisfying $D(F,G) < k^2 \eps$.
Note that for every pair of alternatives $a$ and $b$ it holds that:
\[
\P[G \mbox{ ranks } a \mbox{ above } b] \geq \P[F \mbox{ ranks } a \mbox{ above } b] - D(F,G) > 0.
\]
Therefore for every pair of alternatives there is a positive probability that $G$ ranks $a$ above $b$.
Thus by Theorem~\ref{thm:arrow_k} it follows that $G$ is a dictator which is a contradiction.
\end{proof}

\begin{remark}
Note that if $G \in \F_k(n)$ and
$F$ is any constitution satisfying $D(F,G) < k^2 \eps$ then $P(F) < k^2 \eps$.
\end{remark}

\begin{remark} \label{rem:dep}
The bounds stated in Theorem~\ref{thm:arrow_k} and Corollary~\ref{cor:main} in terms of $k$ and $\eps$ is clearly not an optimal one.
We expect that the true dependency has $\delta$ which is some fixed power of $\eps$. Moreover we expect that the bound $D(F,G) < k^2 \eps$ should be improved
to $D(F,G) < \eps$.
\end{remark}

\subsection{Nisan's argument}
N. Nisan argued in his blog~\cite{Nisan:FromArrowtoFourier} that the natural way to study quantitative versions of Arrow's theorem is to look at functions from $S_k^n$ to $S_k$ and check to what extent do they satisfy the IIA property. He defines a function to be $\eta$-IIA if for every two alternatives $a$ and $b$ it holds that
$\P[F(\sigma) \neq \F(\tau)] \leq \eta$, where $\sigma$ is uniformly chosen and $\tau$ is uniformly chosen conditioned
on the $a,b$ ranking at $\tau$ being identical to that of $\sigma$ for all voters. In his blog Nisan sketches how a  quantitative Arrow's theorem proven for the definition used here implies a quantitative Arrow's theorem for his definition. We briefly repeat the argument with some minor modifications and corrections.

Fixing alternatives $a,b$ and writing $p_{a,b} : \{0,1\}^n \to \{0,1\}$ for the probability that given a vector of $n$ binary preferences between $a$ and $b$, $F$ ranks $a$ above $b$. If $F$ satisfies the IIA property then $p_{a,b} \in \{0,1\}$ a.s. If $F$ is $\eta$-IIA then $\E[2 p_{a,b}(1-p_{a,b})] \leq \eta$, and therefore $\E[\min(p_{a,b},1-p_{a,b})] \leq \eta$.

Assume a quantitative Arrow theorem such as the one proven here with parameters $\eps,\delta$ and
suppose by contradiction that $F : S_k^n \to S_k$ is $\eta$-IIA and $\eps$ far from $\F_k(n)$ for some small $\eta$ to be determined later.
Define a function $G$ as follows. Let $G(\sigma)$ rank $a$ above $b$ if for the majority of $\tau$ which agree with $\sigma$ in the $a,b$ orderings it holds that $F(\tau)$ ranks $a$ above $b$. We note that
for every pair of alternatives $a,b$ it holds that
\[
\P[F(\sigma), G(\sigma) \mbox{ have different order on } a,b] = \E[\min(p_{a,b},1-p_{a,b})] \leq \eta.
\]
By taking a union bound on all pairs of alternatives, this implies that $D(F,G) \leq {k \choose 2} \eta \leq k^2 \eta/2$.
Note further that $G$ satisfies the IIA property by definition.
Since $F$ is transitive and from the quantitative Arrow theorem proven here we conclude that
\[
D(F,G) \geq \P[P(G)] \geq \delta.
\]
and a contradiction is implied unless $k^2 \eta / 2 \geq \delta$.  Thus the Arrow theorem for the $\eta$-IIA definition holds with $\eta(\eps) = 2 \delta/k^2$.(We briefly note that moving from $F$ to $G$ does not preserve the property of the function being balanced so in the setting of Kalai's theorem an additional argument is needed)

\subsection{Proof of Theorem~~\ref{thm:arrow_k}}

\begin{proof}
The proof follows by applying Theorem~\ref{thm:arrow3quant} to triplets of alternatives.
Assume $P(F) < \delta(\eps)$.

Note that if $g_1,g_2 : \{-1,1\}^n \to \{-1,1\}$ are two different function each of which is either
a dictator or a constant function than $D(g_1,g_2) \geq 1/2$. Therefore for all $a,b$ it holds that
$D(f^{a>b},g) < \eps/10$ for at most one function $g$ which is either a dictator or a constant function.
In case there exists such function we let $g^{a>b} = g$, otherwise, we let $g^{a>b} = f^{a>b}$.

Let $G$ be the social choice function defined by the functions $g^{a>b}$. Clearly:
\[
D(F,G) < {k \choose 2} \eps < k^2 \eps.
\]
The proof would follow if we could show $P(G) = 0$ and therefore $G \in \F_k(n)$.

To prove that $G \in F_k(n)$ is suffices to show that for every set $A$ of three alternatives, it holds that
$G_A \in \F_3(n)$. Since $P(F) < \delta$ implies $P(F_A) < \delta$, Theorem~\ref{thm:arrow3quant} implies that there exists a function $H_A \in F_3(n)$ s.t. $D(H_A,F_A) < \eps$.
There are two cases to consider:
\begin{itemize}
\item
$H_A$ is a dictator. This implies that $f^{a>b}$ is $\eps$ close to a dictator for each $a,b$ and therefore $f^{a>b} = g^{a>b}$ for all pairs $a,b$, so $G_A = H_A \in \F_3(n)$.
\item
There exists an alternative (say $a$) that $H_A$ always ranks at the top/bottom.
In this case we have that $f^{a>b}$ and $f^{c>a}$ are at most $\eps$ far from the constant functions $1$ and $-1$ (or $-1$ and $1$).
The functions $g^{a>b}$ and $g^{c>a}$ have to take the same constant values and therefore again we have that $G_A \in \F_3(n)$.
\end{itemize}

The proof follows.

\end{proof}

\begin{remark} \label{rem:3tok}
Note that this proof is generic in the sense that it takes the quantitative Arrow's result for $3$ alternatives as a black box and
produces a quantitative Arrow result for any $k \geq 3$ alternatives.
\end{remark}

\subsection{Other probability measures}
Given the right analytic tools, it is not hard to generalize the proof of 
Theorem~\ref{thm:arrow3quant} and Theorem~\ref{thm:arrow_k} to other product distribution. This is done in~\cite{MoOlSe:13} where some of the tools related to reverse-hyper-contraction are developed. \cite{MoOlSe:13} obtains the following extension. 

\begin{theorem}[Quantitative Arrow's theorem for general distribution] \label{thm:arrow_general_product}
Let $\varrho$ be general distribution on $S_k$ with $\varrho$ assigning positive probability to each element of $S_k$. Let $\PP$ denote the distribution $\varrho^{\otimes n}$ on $S_k^n$. Then
for any number of alternatives $k \geq 3$ and $\eps > 0$,
there exists $\delta = \delta(\eps, \rho)>0$, such that for every $n$, if $F : S_k^n \to \{-1,1\}^{k \choose 2}$
satisfies
\begin{itemize}
\item
IIA and
\item
$\PP \{F(\sigma) \mbox{ is transitive}\} \geq 1-\delta$.
\end{itemize}
Then there exists a function $G$ which is transitive and satisfies the IIA property and
$\PP\{F(\sigma) \neq G(\sigma)\} \leq \eps$
\end{theorem}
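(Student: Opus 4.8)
The plan is to repeat, essentially verbatim, the proof of Theorem~\ref{thm:arrow_k}, replacing each analytic ingredient used for the uniform measure on $\{-1,1\}^n$ by its counterpart for a general product measure with full support; these counterparts are exactly the tools developed in~\cite{MoOlSe:13}. First, recall that the passage from $k\ge 3$ alternatives to $k=3$ in Theorem~\ref{thm:arrow_k} (see Remark~\ref{rem:3tok}) is generic: it uses only the combinatorial characterization Theorem~\ref{thm:general_arrow}, the quantitative $3$-alternative statement as a black box, and the trivial fact that two distinct dictator-or-constant functions are far apart in $D$ — which under $\varrho^{\otimes n}$ holds with some separation $c(\varrho)>0$ in place of $1/2$, since the marginals of $\varrho$ restricted to any pair of alternatives are non-degenerate. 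Rescaling $\eps$ by $c(\varrho)$, the same reduction applies, so it suffices to prove the theorem for $k=3$.

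For $k=3$, fix three alternatives and let $\mu$ be the distribution that $\varrho$ induces on their orderings, i.e.\ on $\{-1,1\}^3\setminus\{\pm(1,1,1)\}$. Since $\varrho$ has full support, so does $\mu$; hence its single-coordinate marginals are non-degenerate, its $3\times 3$ covariance matrix $\Sigma$ is strictly positive definite (the six allowed vectors span $\R^3$), and the R\'enyi correlation between any two of the three coordinates is at most some $\rho_0=\rho_0(\varrho)<1$. Writing $f,g,h$ for the three pairwise aggregation functions and using $\psi(a,b,c)=\1(a=b=c)$, we have $P(F)=\E_{\mu^{\otimes n}}[\psi(f(x),g(y),h(z))]$, and we run the same three-case split on the influences (with respect to $\mu^{\otimes n}$) of $f,g,h$ as in the proof of Theorem~\ref{thm:arrow3quant}: either (i) two of the three functions each have a coordinate of influence $>\eta$; or (ii) every pair among $\{f,g,h\}$ contains a function all of whose influences are $<\eta$; or (iii) there is a single coordinate outside of which all three functions have influence $<\eta$.

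In case (i) I would use the reverse hypercontractive inequality for the product space $(\{-1,1\}^3\setminus\{\pm(1,1,1)\},\mu)^{\otimes n}$ — the generalization of Lemma~\ref{lem:inv_hyp} in~\cite{MoOlSe:13,MORSS:06}, whose constants are finite precisely because $\mu$ has full support — to conclude that the event ``the two influential coordinates are simultaneously pivotal for the two functions'' has probability at least $c(\eta,\varrho)>0$; conditioned on that event the restricted functions on those two coordinates satisfy the hypotheses of the qualitative Arrow Theorem~\ref{thm:arrow3}, so with probability bounded below by a positive constant depending only on $\mu$ the outcome is non-transitive, giving $P(F)\ge\delta$. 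Case (iii) reduces to case (ii): fix the influential voter's ordering to any of the six values (each of positive $\mu$-probability, costing only a $\varrho$-dependent constant factor), observe that each of the six restricted triples on the remaining voters has all influences $<\eta$, apply case (ii) to each, and assemble a rounded constitution $(g_1,g_2,g_3)$ as in Theorem~\ref{thm:arrow_one_inf}. In case (ii), the invariance principle / Majority-is-Stablest theorem for general product spaces (the general-space form of Theorem~\ref{thm:MISTsimple}, with the ``cross-influences'' reduction used to obtain Theorem~\ref{thm:MIST}) shows that $\E_{\mu^{\otimes n}}[\psi(f,g,h)]$ is within $o_\eta(1)$ of the Gaussian quantity $\P[\phi_1(N_1)=\phi_2(N_2)=\phi_3(N_3)]$, where $N$ is a $3$-dimensional Gaussian with covariance $\Sigma$ and $\phi_i=\sgn(\,\cdot\,-t_i)$ matches the marginals; and a Gaussian Arrow theorem for $\Sigma$ — proved exactly as Theorem~\ref{thm:arrow_gauss}, the only change being that the explicit linear combinations in that argument stay non-degenerate because $\Sigma$ is positive definite with off-diagonals in $[-\rho_0,\rho_0]$, so that Lemma~\ref{lem:inv_hyp_gauss} still applies with a correlation bounded away from $\pm 1$ — yields a lower bound $\delta(\eps,\varrho)>0$ unless $F$ puts some alternative on top or bottom with probability $\ge 1-O(\eps)$. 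Combining the three cases as in Theorem~\ref{thm:arrow3quant} produces a rounded $G\in\F_3(n)$, and the reduction of the first paragraph upgrades this to $G\in\F_k(n)$ with $D(F,G)\le k^2\eps$.

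The conceptual content lies entirely in the analytic prerequisites, and that is the main obstacle. Two points require care. First, reverse hypercontractivity and the ``two pivotal coordinates'' estimate must be established for a base probability space whose single-coordinate space carries $\mu$; the relevant reverse-hypercontractive / log-Sobolev constants are finite exactly because $\mu$ gives positive mass to all six orderings, which is why $\delta$ must depend on $\varrho$. Second, the invariance principle in this generality requires the influences of $f,g,h$ to be defined relative to a fixed orthonormal basis of $L^2(\mu)$ and the multilinear Efron--Stein expansion to be controlled accordingly — standard once the framework is set up, and done in~\cite{MoOlSe:13}. The remaining verification, that the argument of Theorem~\ref{thm:arrow_gauss} survives for an arbitrary admissible covariance $\Sigma$ rather than the special matrix with all off-diagonals $-1/3$, is a finite but tedious calculation affecting only the explicit (and far from optimal) value of $\delta(\eps,\varrho)$.
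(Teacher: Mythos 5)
The paper does not itself prove Theorem~\ref{thm:arrow_general_product}: it states the result and refers the reader to~\cite{MoOlSe:13}, saying only that ``given the right analytic tools, it is not hard to generalize the proof of Theorem~\ref{thm:arrow3quant} and Theorem~\ref{thm:arrow_k} to other product distributions.'' Your sketch is a faithful and reasonably careful expansion of exactly that remark. You correctly single out what must change from the uniform case: the $k\to 3$ reduction (Remark~\ref{rem:3tok}) is distribution-independent except for the separation constant between distinct dictator/constant functions, which is now $c(\varrho)>0$ rather than $1/2$; the three-case split by influence structure from the proof of Theorem~\ref{thm:arrow3quant} carries over verbatim once influences are taken with respect to $\mu^{\otimes n}$; the reverse-hypercontractive bound and the ``two pivotal coordinates'' argument of Theorem~\ref{thm:two_inf} require the general-space reverse-hypercontraction from~\cite{MoOlSe:13}, whose constants are finite precisely because $\mu$ has full support; the Gaussian limiting object has the full covariance matrix $\Sigma$ of $\mu$ (positive definite since the six allowed sign patterns span $\R^3$ and $\mu$ charges all of them), and the argument of Theorem~\ref{thm:arrow_gauss} goes through with correlations bounded away from $\pm 1$ by some $\rho_0(\varrho)<1$; and the invariance step needs the general-product-space invariance principle and MIST from~\cite{MoOlSe:13}. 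These are exactly the reasons the paper asserts $\delta$ depends on $\varrho$. Since you are writing a sketch, you defer the technical prerequisites to~\cite{MoOlSe:13}, which is also what the paper does, so I regard this as taking essentially the same route.
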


\section{Other Variants}
In the concluding section of this chapter, we will present Kalai's original approach for a quantitative Arrow Theorem and discuss the optimal low influence function for voting in the case of $k>3$ alternatives. 

\subsection{Kalai's proof for the balanced case}
The special case, where all the functions are balanced was the first where a quantitative Arrow Theorem was proved by 
Kalai~\cite{Kalai:02}. In this short section we provide the statement and the proof of this special case.

\begin{theorem} \label{thm:kalai3}
There exists a constant $C$ such that 
if $f,g,h : \{-1,1\}^n \to \{-1,1\}$ satisfy $\EE[f] = \EE[g] = \EE[h] = 0$ and 
\[
\PP[f(x) = g(y) = h(z)] \leq \eps,
\]
then there exists a dictator function $d$, such that 
\[
\IP[f(x) \neq d(x)] \leq C \eps, \quad \IP[g(y) \neq d(y)] \leq C \eps, \quad \IP[h(z) \neq d(z)] \leq C \eps.
\] 
\end{theorem}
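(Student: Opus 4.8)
The plan is to carry out Kalai's original argument~\cite{Kalai:02}, whose engine is the FKN theorem of Friedgut, Kalai and Naor~\cite{FrKaNa:02}. \emph{Step 1: reduce to a spectral statement.} Since for $a,b,c\in\{-1,1\}$ one has $\1(a=b=c)=\tfrac14(1+ab+ac+bc)$ (as in Section~\ref{sec:preview}),
\[
\P[f(x)=g(y)=h(z)]=\tfrac14\bigl(1+\langle f,g\rangle_{-1/3}+\langle g,h\rangle_{-1/3}+\langle h,f\rangle_{-1/3}\bigr),
\]
so the hypothesis gives $\langle f,g\rangle_{-1/3}+\langle g,h\rangle_{-1/3}+\langle h,f\rangle_{-1/3}\le 4\eps-1$. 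Expanding in the Fourier basis and using $\hat f(\emptyset)=\hat g(\emptyset)=\hat h(\emptyset)=0$, the left-hand side equals $\sum_{S\ne\emptyset}(-1/3)^{|S|}\bigl(\hat f(S)\hat g(S)+\hat g(S)\hat h(S)+\hat h(S)\hat f(S)\bigr)$. For each $S$, writing $(a,b,c)=(\hat f(S),\hat g(S),\hat h(S))$ and $P_S=a^2+b^2+c^2$, the elementary inequalities $-\tfrac12 P_S\le ab+bc+ca\le P_S$ give a termwise lower bound of $-\tfrac13P_S$ when $|S|=1$ and of $-\tfrac19P_S$ when $|S|\ge 2$ (since $3^{-|S|}\le\tfrac19$ there). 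Summing and using $\sum_{S\ne\emptyset}P_S=\|f\|_2^2+\|g\|_2^2+\|h\|_2^2=3$, one obtains $-\tfrac13W_1-\tfrac19(3-W_1)\le 4\eps-1$, where $W_1:=\sum_{|S|=1}P_S$, i.e.\ $W_1\ge 3-18\eps$. As each of the three level-$1$ weights is at most $1$, all three satisfy $\sum_{|S|=1}\hat f(S)^2\ge 1-18\eps$; equivalently each of $f,g,h$ has Fourier weight at most $18\eps$ above level $1$.

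\emph{Step 2: apply FKN.} By the FKN theorem there is a universal constant $K$ such that a $\{-1,1\}$-valued function with Fourier weight at most $\delta$ above level $1$ is $K\delta$-close (in Hamming distance) to some $\pm x_i$ or to a constant $\pm1$. A function that is $\eta$-close to a constant has level-$1$ weight at most $4\eta$, which for small $\eps$ is incompatible with level-$1$ weight $\ge 1-18\eps$; hence $f$ is $18K\eps$-close to a dictator $d_f=s_fx_{i_f}$, and likewise $g$ is $18K\eps$-close to $d_g=s_gx_{i_g}$ and $h$ to $d_h=s_hx_{i_h}$, with $s_f,s_g,s_h\in\{-1,1\}$.

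\emph{Step 3: the dictators must agree.} Since the events $f=d_f$, $g=d_g$, $h=d_h$ together with $d_f(x)=d_g(y)=d_h(z)$ force $f(x)=g(y)=h(z)$,
\[
\eps\ \ge\ \P[f(x)=g(y)=h(z)]\ \ge\ \P[d_f(x)=d_g(y)=d_h(z)]-54K\eps .
\]
A short case analysis of the law of the relevant coordinates --- which are uniform on $\{-1,1\}^3\setminus\{\pm(1,1,1)\}$ on matching indices and independent across distinct indices --- shows that $\P[d_f(x)=d_g(y)=d_h(z)]=0$ if $(i_f,s_f)=(i_g,s_g)=(i_h,s_h)$, while $\P[d_f(x)=d_g(y)=d_h(z)]\ge\tfrac16$ in every other configuration. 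So if the three dictators are not identical then $\eps\ge\tfrac16-54K\eps$, i.e.\ $\eps\ge\eps_0$ for the absolute constant $\eps_0=\tfrac1{6(1+54K)}$. Consequently, for $\eps<\eps_0$ all of $f,g,h$ are $18K\eps$-close to a common dictator $d$, which is the claim with $C=18K$; and for $\eps\ge\eps_0$ the statement holds trivially with any $C\ge 1/\eps_0$ and $d$ an arbitrary dictator, so $C=\max(18K,1/\eps_0)$ works.

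The FKN theorem is the one substantive external input, but here it is a black box. The real content is the termwise spectral estimate of Step 1, which concentrates almost all the mass of each function on the first Fourier level, together with the matching-up in Step 3; I expect the sign-and-index case analysis in Step 3 to be the most error-prone part, though it is entirely routine.
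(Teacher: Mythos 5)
Your proof is correct and proves the theorem, and it shares the broad outline of the paper's argument (Fourier decomposition pushing weight to level~$1$, then FKN), but it diverges at two genuine points.

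\emph{Where you differ.} The paper first reduces to a pair statement: it notes that, by Theorem~\ref{thm:dict_stab}, each of $\langle f,g\rangle_{-1/3},\langle g,h\rangle_{-1/3},\langle h,f\rangle_{-1/3}$ is $\geq -\tfrac13$, so the hypothesis forces each pair to satisfy $\langle \cdot,\cdot\rangle_{-1/3}\le -\tfrac13+O(\eps)$; it then bounds $\gamma=\sum_{|S|=1}|\hat f(S)\hat g(S)|$ from below via $\langle f,g\rangle_{-1/3}\ge -\tfrac13\gamma-\tfrac19(1-\gamma)$ and Cauchy--Schwarz. You instead treat all three functions at once, applying termwise the symmetric-polynomial inequality $-\tfrac12 P_S\le ab+bc+ca\le P_S$ with $P_S=\hat f(S)^2+\hat g(S)^2+\hat h(S)^2$, which yields $\sum_{|S|=1}P_S\ge 3-18\eps$ and hence each level-$1$ weight $\ge 1-18\eps$ simultaneously. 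This is slightly cleaner (it avoids a separate pair reduction and uses only Parseval rather than Cauchy--Schwarz). The second divergence is in showing the three dictators coincide: the paper deduces from the Fourier estimate that $\E[f(x)g(x)]$ is close to~$1$, so $f$ and $g$ nearly agree pointwise, forcing $d_1=d_2$; you instead compute $\P[d_f(x)=d_g(y)=d_h(z)]$ directly (it is $0$ exactly when the three dictators coincide, and $\ge\tfrac16$ otherwise) and conclude by the triangle inequality. Your route here is a bit more case-heavy but requires no further Fourier manipulation; I checked the case analysis and the constants ($1/3$ when the signs differ on matching indices, $1/6$ or $1/4$ when indices split) and they are right.

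\emph{Minor remarks.} Your version of FKN allows the near-constant alternative, which you then rule out from the large level-$1$ weight; the paper uses the version of FKN for balanced functions (Theorem~\ref{theorem:fkn}), where that case is absent, so your ruling-out is needed and is correct. Both proofs use FKN as a black box, and your constant-tracking is tighter than the paper's (which is loose at the line $\gamma\ge 1-\eps$, where the computation actually gives $\gamma\ge 1-\tfrac92\eps$).
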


The proof of the Theorem will use the FKN Theorem~\cite{FrKaNa:02} which will be proved at the end of the section. 
 \begin{theorem} \label{theorem:fkn}
 There exists a constant $C$ such that if 
$f : \{-1,1\}^n \to \{-1,1\}$ satisfies $\IE[f] = 0$ and 
 \begin{equation} \label{eq:fkn}
 \sum_{S : |S| = 1} \hat{f}^2(S) = 1-\eps,
 \end{equation} 
 Then there exists a dictator a $d$  such that $\IP[f(x) \neq d(x)] \leq C \eps$. 
 \end{theorem}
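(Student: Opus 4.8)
The plan is to show that if $f:\{-1,1\}^n\to\{-1,1\}$ has $\IE[f]=0$ and its Fourier weight is almost entirely concentrated on level one, i.e.\ $W^1 := \sum_{|S|=1}\hat f^2(S) = 1-\eps$, then the vector of linear coefficients $(\hat f(\{1\}),\ldots,\hat f(\{n\}))$ is close to a standard basis vector, which forces $f$ itself to be close to $\pm x_i$. Write $a_i = \hat f(\{i\})$ and $\ell(x) = \sum_i a_i x_i$ for the level-one part, so $\|f-\ell\|_2^2 = \eps$ (using $\|f\|_2^2=1$, $\IE[f]=0$). The key step is to prove that $\sum_i a_i^2(1-a_i^2) = O(\eps)$, i.e.\ that $\max_i a_i^2$ is within $O(\eps)$ of $1$; then one recovers the Boolean conclusion by a rounding argument.

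First I would bound the fourth moment of $\ell$. Since $f$ is Boolean, $\IE[f^4]=1$, and since $f = \ell + (\text{higher-order terms})$ with $\|f-\ell\|_2^2=\eps$, a short computation (expanding $f^4 = (\ell + (f-\ell))^4$ and using Cauchy--Schwarz together with hypercontractivity, Theorem~\ref{thm:BB}, to control the mixed terms — e.g.\ $\|\ell\|_4 \le \sqrt3\,\|\ell\|_2 \le \sqrt3$) shows that $\IE[\ell^4] = 1 + O(\sqrt\eps)$ or better. On the other hand, for a multilinear degree-one polynomial one has the exact identity
\[
\IE[\ell^4] = 3\Big(\sum_i a_i^2\Big)^2 - 2\sum_i a_i^4 = 3\|\ell\|_2^4 - 2\sum_i a_i^4.
\]
Combining this with $\|\ell\|_2^2 = 1-\eps$ gives $2\sum_i a_i^4 \ge 3(1-\eps)^2 - 1 - O(\sqrt\eps) = 2 - O(\sqrt\eps)$, hence $\sum_i a_i^4 \ge 1 - O(\sqrt\eps)$. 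Since $\sum_i a_i^2 = 1-\eps \le 1$ and each $a_i^2\le 1$, we get $\sum_i a_i^2(1-a_i^2) \le \sum_i a_i^2 - \sum_i a_i^4 = O(\sqrt\eps)$, and in particular there is a coordinate $i$ with $a_i^2 \ge 1 - O(\sqrt\eps)$ and $\sum_{j\ne i} a_j^2 = O(\sqrt\eps)$.

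Having singled out coordinate $i$, let $d(x) = \sgn(a_i)\,x_i$ be the candidate dictator. Then $\|f - d\|_2^2 = \|f-\ell\|_2^2 + \|\ell - d\|_2^2 = \eps + (1-|a_i|)^2 + \sum_{j\ne i}a_j^2 = O(\sqrt\eps)$. Since $f$ and $d$ are both $\{-1,1\}$-valued, $\IP[f\ne d] = \tfrac14\|f-d\|_2^2 = O(\sqrt\eps)$. This proves the theorem with an $O(\sqrt\eps)$ bound; to obtain the linear bound $O(\eps)$ claimed in the statement I would bootstrap: once we know $\IP[f\ne d]=:\eta$ is small, we can recompute the Fourier spectrum of $f$ directly — $f$ and $d$ differ on an $\eta$-fraction of inputs, so $f = d\cdot(1 - 2\,\1_{f\ne d})$, and expanding shows $\hat f(\{i\}) = 1-2\IP[f\ne d]$ while $\sum_{S\ne\{i\}}\hat f^2(S) = 1 - (1-2\eta)^2 = 4\eta - 4\eta^2$, which must equal $\eps + (\text{level-}\ge2\text{ weight})\ge \eps$ when... — more carefully, $W^1 = a_i^2 + \sum_{j\ne i}a_j^2$ and the level-$\ge 2$ weight is $1 - W^1 = \eps$; writing everything in terms of $\eta$ and the restricted function and using that the level-$\ge 2$ weight of a function $\eta$-close to a dictator is at most $O(\eta)$ gives $\eta = O(\eps)$, closing the loop.

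\smallskip

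The main obstacle is the fourth-moment computation: one must control $\IE[\ell^4]$ in terms of $\eps$ cleanly, and the cross terms $\IE[\ell^3(f-\ell)]$, $\IE[\ell^2(f-\ell)^2]$, etc., are only bounded via hypercontractivity (to upgrade the $L^2$ smallness of $f-\ell$ and the $L^2$ boundedness of $\ell$ into $L^4$ control). Getting the sharp linear dependence $\IP[f\ne d] = O(\eps)$ rather than $O(\sqrt\eps)$ requires the bootstrapping step above and is the genuinely delicate part; the original argument of~\cite{FrKaNa:02} handles this, and one could alternatively cite the cleaner treatment in~\cite{JeOlWo:15}.
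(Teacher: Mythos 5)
The paper's proof and your proposal take genuinely different routes, and yours has a gap at the crucial step of upgrading $O(\sqrt\eps)$ to $O(\eps)$.

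The paper's argument does not expand $\IE[\ell^4]$ at all. It instead uses the \emph{deterministic} identity $\ell^2 - 1 = -h(2f-h)$ (where $h = f - \ell$ is the part of $f$ above level $1$), which combined with $|f|\le 1$ gives the pointwise bound $|\ell^2 - 1| \le |h|(2+|h|)$. Markov's inequality applied to $|h|$ (using only $\IE[h^2]=\eps$) then yields a tail bound $\IP[|q| \ge C\sqrt\eps] \le 1/100$ for $q := \ell^2 - (1-\eps) = 2\sum_{i<j}a_i a_j x_i x_j$. Since $q$ is a fixed degree-$2$ polynomial, hypercontractivity gives $\IE[q^4]\le 81\,\IE[q^2]^2$, and the Paley--Zygmund inequality then forces $\IE[q^2]=O(\eps)$ — directly, with no square-root loss. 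From $\IE[q^2]=4\sum_{i<j}a_i^2a_j^2 = 2(1-\eps)^2 - 2\sum_i a_i^4$ one gets $\max_i a_i^2\ge \sum_i a_i^4 \ge 1-O(\eps)$, hence $\IP[f\ne d]=\tfrac12(1-|a_i|)=O(\eps)$. The Markov-plus-Paley--Zygmund step is precisely what buys the linear dependence.

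Your fourth-moment route can be pushed to give the $O(\sqrt\eps)$ bound (one has to notice that in $\IE[f^4]=\IE[(\ell+h)^4]$ the terms $\IE[\ell^2h^2]$ and $\IE[h^4]$ are nonnegative and may be dropped when upper-bounding $\IE[\ell^4]$, and the remaining cross terms $\IE[\ell^3 h]$, $\IE[\ell h^3]$ are $O(\sqrt\eps)$ by Cauchy--Schwarz against $\|h\|_2=\sqrt\eps$ plus hypercontractive control of $\|\ell\|_4,\|\ell\|_6$; your sketch doesn't say this, and a naive attempt to bound $\IE[h^4]$ or $\IE[\ell^2h^2]$ by something small fails because $\|h\|_4$ is only $O(1)$). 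But the cross term $\IE[\ell^3 h]$ is genuinely of order $\sqrt\eps$ in general — you only know $\|h\|_2^2=\eps$ and nothing finer about how the weight $\eps$ is distributed over levels $\ge 2$ — so this route alone cannot deliver better than $O(\sqrt\eps)$.

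The bootstrap step as you wrote it does not close the gap, and in fact runs in the wrong direction. You note that if $\IP[f\ne d]=\eta$ then $|a_i|=1-2\eta$ and $\sum_{S\ne\{i\}}\hat f^2(S)=4\eta-4\eta^2$, and that this equals $\sum_{j\ne i}a_j^2 + \eps$. Since $\sum_{j\ne i}a_j^2\ge 0$, this gives $4\eta-4\eta^2\ge\eps$, i.e.\ a \emph{lower} bound $\eta\gtrsim\eps/4$, not an upper bound. Similarly, the observation "the level-$\ge 2$ weight of a function $\eta$-close to a dictator is at most $O(\eta)$" gives $\eps\le O(\eta)$, which again only lower-bounds $\eta$. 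To actually obtain $\eta=O(\eps)$ you would need an \emph{upper} bound on $\sum_{j\ne i}a_j^2$ of order $\eps$ (not just $O(\sqrt\eps)$), and the identities you wrote down don't supply that. Some additional input is required here — e.g.\ the tail/Paley--Zygmund argument of the paper, or the variance-truncation argument of~\cite{FrKaNa:02}, or the cleaner approach of~\cite{JeOlWo:15}. As written, your plan stalls at $O(\sqrt\eps)$.
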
 

We now prove Theorem~\ref{thm:kalai3}.
\begin{proof} 
Recalling that 
\[
\IP[f(x) = g(y) = h(z)] = 
\frac{1}{4} \left(1+ \langle f,g \rangle_{-1/3} + \langle g,h \rangle_{-1/3} + \langle h,f \rangle_{-1/3} \right)
\]
and Theorem~\ref{thm:dict_stab}, it is clear that to prove Theorem~\ref{thm:kalai3} it suffices to prove that:
there exists a constant $C$ such that 
if $f,g : \{-1,1\}^n \to \{-1,1\}$ satisfy $\EE[f] = \EE[g] = 0$ and 
\begin{equation} \label{eq:arrow_eps9}
\langle f,g \rangle_{-1/3} \leq -\frac{1}{3}+\eps
\end{equation}
then there exists a dictator $d$, such that 
\begin{equation} \label{eq:almost_dictator}
\IP[f(x) \neq d(x)] \leq C \eps, \quad \IP[g(y) \neq d(y)] \leq C \eps.
\end{equation}
Note that 
\[
\langle f,g \rangle_{-1/3} \geq (-1/3)   \sum_{S : |S| = 1} \hat{f}(S) \hat{g}(S) +
 (-1/9) \sum_{S : |S| > 1} |\hat{f}(S) \hat{g}(S)| \geq -\frac{1}{3} \gamma  -\frac{1}{9}(1-\gamma), 
 \]
 where $\gamma = \sum_{S : |S| = 1}| \hat{f}(S) \hat{g}(S)|$.  Thus if~(\ref{eq:arrow_eps9}) holds then 
 \begin{equation} \label{eq:level1}
 \sum_{S : |S| = 1} |\hat{f}(S) \hat{g}(S)| \geq 1-\eps. 
 \end{equation}
 It therefore suffices to show that if~(\ref{eq:level1}) holds then~(\ref{eq:almost_dictator}) holds. 
 By CS,
 \[
 \sum_{S : |S| = 1} \hat{f}^2(S) \geq (1-\eps)^2 \geq 1-2\eps,
 \]
 Therefore by Theorem FKN, $f$ is $C_{FKN} \eps$ close to a dictator $d_1$. 
Similarly $g$ is is $C_{FKN} \eps$ close to a dictator $d_2$. Note that the statement of the theorem is trivial if $C \geq 8$ and $\eps \leq 1/8$, so assume $\eps \leq 1/8$. 
It remains to show that $d_1 = d_2$. Note that if $d_1 \neq d_2$ then 
 \[
 \IP[f(x) \neq g(x)] \geq \IP[d_1(x) \neq d_2(x)] - \IP[d_1(x) \neq f(x)] - \IP[d_2(x) \neq g(x)] \geq \frac{1}{2} - 2 \frac{1}{8} = 1/4.
 \] 
 However, by~(\ref{eq:level1}) and using the fact that $\sum_{S} |\hat{f}(S) \hat{g}(S)| \leq 1$
 \[
 \IE[f(x)g(x)] = \sum_{S} \hat{f}(S) \hat{g}(S) \geq 1-2\eps,
 \]
 and therefore $\IP[f(x) \neq g(x)] \leq \eps$. The proof follows.
 \end{proof}  
 
We will now prove the FKN Theorem. The proof is similar to the proof in~\cite{ODonnell:14}. 
For an alternative proof see~\cite{JeOlWo:15}.
We will use the following corollary of hyper-contraction.
\begin{lemma} \label{lem:deg2_hyp}
Let $q(x) = \sum_{i < j} q_{i,j} x_i x_j$. Then 
\[
\IE[q^4] \leq 81 \IE[q^2]^2.
\] 
\end{lemma}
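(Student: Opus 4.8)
The plan is to apply the Bonami--Beckner hypercontractive inequality (Theorem~\ref{thm:BB}) directly to $q$, exploiting the fact that $q$ is homogeneous of degree $2$. The key observation is that since every Fourier coefficient of $q$ is supported on sets $S$ with $|S| = 2$, the noise operator acts as a scalar: $T_\rho q = \rho^2 q$. Hence $\|T_\rho q\|_p = \rho^2 \|q\|_p$ for every $p$, which turns a contraction statement about $T_\rho$ into a comparison of two norms of $q$ itself.

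Concretely, I would invoke Theorem~\ref{thm:BB} with the exponents $q_{\mathrm{norm}} = 2$ and $p = 4$; the hypothesis $\rho^2 \le \frac{q_{\mathrm{norm}}-1}{p-1} = \frac{1}{3}$ is then satisfied by taking $\rho = 1/\sqrt{3}$, i.e., $\rho^2 = 1/3$. The theorem gives $\|T_\rho q\|_4 \le \|q\|_2$. Substituting $T_\rho q = \rho^2 q = \tfrac{1}{3} q$ yields $\tfrac{1}{3}\|q\|_4 \le \|q\|_2$, i.e., $\|q\|_4 \le 3\|q\|_2$. Raising both sides to the fourth power gives exactly $\IE[q^4] = \|q\|_4^4 \le 81\,\|q\|_2^4 = 81\,\IE[q^2]^2$, as claimed.

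There is essentially no obstacle here: the only thing to be careful about is verifying that $q$ being \emph{homogeneous} of degree $2$ (and not just of degree $\le 2$) is what makes $T_\rho q = \rho^2 q$ exact — if $q$ had a constant or linear part, one would instead only get $\|T_\rho q\|_4 \le \|q\|_2$ with $T_\rho q = \tfrac13 q^{=2} + \tfrac1{\sqrt3} q^{=1} + q^{=0}$, which is weaker. Since the statement as given has $q$ purely of degree $2$, this is immediate. (One could also note the same argument gives the more general bound $\IE[q^4] \le 9^d \IE[q^2]^2$ for degree-$d$ homogeneous $q$ by taking $\rho^2 = 1/3$ and using $T_\rho q = \rho^d q = 3^{-d/2} q$, but the degree-$2$ case is all that is needed.)
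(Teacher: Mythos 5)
Your argument is correct and is essentially identical to the paper's: both invoke Theorem~\ref{thm:BB} with exponents $2$ and $4$, set $\rho^2 = 1/3$, and use the fact that $T_\rho$ acts on the homogeneous degree-$2$ polynomial $q$ as multiplication by $\rho^2$, so that $\|T_\rho q\|_4 \le \|q\|_2$ becomes $\IE[q^4] \le \rho^{-8}\IE[q^2]^2 = 81\,\IE[q^2]^2$. Nothing to add.
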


\begin{proof} 
 By hyper-contraction if $\eta^2 \leq 1/3$ then, 
\[
\| T_{\eta} q \|_4 \leq \| q \|_2 \implies \IE[(T_{\eta} q)^4] \leq \IE[q^2]^2.
\]
On the other hand,
\[
\IE[(T_{\eta} q)^4] = \IE[(\sum_{i < j} \eta^2 q_{i,j} x_i x_j)^4] = \eta^8 \IE[q^4],
\]
So by choosing $\eta = 1/\sqrt{3}$, we get 
\[
\IE[q^4] \leq \eta^{-8} \IE[q^2]^2 \qed
\]
\end{proof} 

The proof will also use the  Paley-Zygmund inequality stating that for a positive random variable $Z$ and for $0 \leq \theta \leq 1$ it holds that:
\[
\IP[Z \geq \theta \IE[Z]] \geq (1-\theta)^2 \frac{\IE[Z]^2}{\IE[Z^2]}
\]
\begin{exercise} 
Prove this using CS.
\end{exercise}
Applying this inequality for $Z = q^2$ and using Lemma~\ref{lem:deg2_hyp} implies that 
\begin{corollary}
For $0 \leq \theta \leq 1$,
\begin{equation} \label{eq:pzq}
\IP[q^2 \geq \theta \IE[q^2]] \geq \frac{(1-\theta)^2}{81}.
\end{equation} 
\end{corollary}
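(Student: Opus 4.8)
The plan is to apply the Paley--Zygmund inequality to the nonnegative random variable $Z = q^2$ and then control its second moment $\IE[Z^2] = \IE[q^4]$ using the hypercontractive bound of Lemma~\ref{lem:deg2_hyp}. First I would write down Paley--Zygmund with parameter $\theta \in [0,1]$ applied to $Z$:
\[
\IP[q^2 \geq \theta \IE[q^2]] \geq (1-\theta)^2 \frac{\IE[q^2]^2}{\IE[q^4]}.
\]

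Next, invoking Lemma~\ref{lem:deg2_hyp}, which states $\IE[q^4] \leq 81 \IE[q^2]^2$, I would bound the fraction on the right-hand side from below:
\[
\frac{\IE[q^2]^2}{\IE[q^4]} \geq \frac{\IE[q^2]^2}{81\,\IE[q^2]^2} = \frac{1}{81},
\]
valid whenever $q$ is not identically zero. Chaining the two displays gives
\[
\IP[q^2 \geq \theta \IE[q^2]] \geq \frac{(1-\theta)^2}{81},
\]
which is the claimed bound.

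There is essentially no obstacle: the only point to note is the degenerate case $\IE[q^2] = 0$, i.e.\ $q \equiv 0$ almost surely, in which case the event $\{q^2 \geq \theta \IE[q^2]\} = \{0 \geq 0\}$ has probability $1 \geq (1-\theta)^2/81$ and the inequality holds trivially. All of the substantive content is already contained in Lemma~\ref{lem:deg2_hyp}, which in turn is a direct consequence of the Bonami--Beckner hypercontractive inequality (Theorem~\ref{thm:BB}) applied to the degree-$2$ homogeneous polynomial $q$.
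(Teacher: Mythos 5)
Your proof is correct and matches the paper's argument exactly: apply Paley--Zygmund to $Z = q^2$ and then bound $\IE[q^4]/\IE[q^2]^2$ by $81$ via Lemma~\ref{lem:deg2_hyp}. The remark about the degenerate case $q \equiv 0$ is a minor tidying that the paper leaves implicit.
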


 \begin{proof}
Let 
\[
\ell(x) = \sum_{|S| \leq 1} \hat{f}(S)x_S, \quad h(x) = \sum_{|S| > 1} \hat{f}(S) x_S, \quad  
q(x) = 2 \sum_{i < j} \hat{f}(\{i\}) \hat{f}(\{j\}) x_i x_j,
\]
 so that 
\[
\ell^2(x) = \sum_{i} \hat{f}^2(i) + q(x) = 1-\eps+q(x).
\]
Note that $f^2 = 1$ implies that  $\ell^2 + h(2f-h) = 1$. 
Moreover using the fact that $|f| = 1$, for all $c \geq 1$ and sufficiently small $\eps$,
\[
\IP[h(2h-f) \geq 2 c \sqrt{\eps}] \leq \IP[|h|(2|h|+1) \geq 2 c \sqrt{\eps}] \leq \IP[|h(x)| \geq c \sqrt{\eps}] \leq \frac{\IE[h^2]}{c^2 \eps} \leq \frac{1}{c^2}.
\]
Therefore,  
\[
\IP[|q(x)| \geq (2 c + 1) \sqrt{\eps}] = \IP[|\ell^2-1+\eps| \geq (2 c+1) \sqrt{\eps}] \leq \IP[|h(2f-h)| + \eps  \geq (2 c+1) \sqrt{\eps}] \leq \frac{1}{c^2}.
\]
In particular for $c=10$, we obtain:
\[
\IP[q^2(x) \geq 500 \eps] \leq \frac{1}{100}
\]
On the other hand, applying (\ref{eq:pzq}) with $\theta = 0.05$, implies that 
\[
\IP[q^2 \geq  \IE[q^2]/20] > \frac{1}{100},
\]
and therefore $\IE[q^2] \leq C \eps$, with $C = 10000$.
Now: 
\[
C \eps \geq \IE[q^2] = 4 \sum_{i < j} \hat{f}^2(i) \hat{f}^2(j) = 
2 \left( \sum_i \hat{f}^2(i) \right)^2 - 2 \sum_i \hat{f}^4(i) =
2(1-\eps)^2 - 2 \sum_i \hat{f}^4(i)
\]
So we obtain that 
\[
\max_i \hat{f}^2(i) \geq \sum_i \hat{f}^4(i) \geq (1-\eps)^2 - C \eps = 1 - O(\eps), 
\]
as needed.

 \end{proof}

\subsection{Low Influence Optimality for $k \geq 3$ alternatives} 

When we are considering $k \geq 3$ alternatives, we want to define more formally the possible outcome in Arrow voting.
Since for every two alternatives, a winner is decided,  the aggregation results in a {\em tournament} $G_k$ on the set $[k]$.
Recall that $G_k$ is a {\em tournament} on $[k]$ if it is a
directed graph on the vertex set $[k]$ such that for all $a,b \in [k]$
either $(a > b) \in G_k$ or $(b > a) \in G_k$.
Given individual
rankings $(\sigma_i)_{i=1}^n$ the tournament $G_k$ is defined as follows.

Let $x^{a > b}(i) = 1$, if $\sigma_i(a) > \sigma_i(b)$, and $x^{a > b}(i) = -1$ if
$\sigma_i(a) < \sigma_i(b)$. Note that $x^{b > a } = -x^{a > b}$.

The binary decision between each pair of candidates is performed via
a anti-symmetric function $f : \bits^n \to \bits$ so that
$f(-x) = -f(x)$ for all $x \in \bits$.
The tournament $G_k = G_k(\sigma; f)$ is then defined
by letting $(a > b) \in G_k$ if and only if $f(x^{a > b}) = 1$.

Note that there are $2^{k \choose 2}$tournaments while there are only
$k! = 2^{\Theta(k \log k)}$ linear rankings. For the purposes of
social choice, some tournaments make more sense than others.

\begin{definition}
We say that a tournament $G_k$ is {\em linear} if it is acyclic.
We will write $\Acyc(G_k)$ for the logical statement that $G_k$ is acyclic.
Non-linear tournaments are often referred to as non-rational in economics
as they represent an order where there are $3$ candidates $a,b$ and $c$ such
that $a$ is preferred to $b$, $b$ is preferred to $c$ and
$c$ is preferred to $a$.\\

We say that the tournament $G_k$ is a {\em unique max tournament} if there is a
candidate $a \in [k]$ such that for all $b \neq a$ it holds that
$(a > b) \in G_k$.
We write $\UniqueBest(G_k)$ for the logical statement that $G_k$ has a unique max.
Note that the unique max property is weaker than linearity.
It corresponds to the fact that there is a candidate that dominates all
other candidates.
\end{definition}

A generalization of Borell's result along with a general invariance principle~\cite{Mossel:08,Mossel:10} allows to prove the following~\cite{IsakssonMossel:12}. 
\begin{theorem}
  \label{thm:condorcet}
  For any $k \ge 1$ and $\epsilon>0$
  there exists a $\tau(\epsilon, k)>0$
  such that for any anti-symmetric $f:\{-1,1\}^n \rightarrow \{0,1\}$
  satisfying $\max_i \Inf_i f \le \tau$,
  \begin{equation}
    \IP[\UniqueBest_k(f)]
    \le
    \lim_{n \rightarrow \infty}
    \IP[\UniqueBest_k(\Maj_n)] + \epsilon
  \end{equation}
\end{theorem}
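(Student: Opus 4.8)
The plan is to reduce the statement to a one-candidate question, transfer it to Gaussian space via an invariance principle, and there invoke a generalization of Borell's inequality. First I would use that the events ``candidate $a$ is the unique maximum of the tournament'', $a=1,\dots,k$, are pairwise disjoint, so that
\[
\IP[\UniqueBest_k(f)] \;=\; \sum_{a=1}^{k}\IP\big[\,f(x^{a>b})=1\ \ \forall b\neq a\,\big] \;=\; k\cdot \IP\big[\,f(x^{1>b})=1\ \ \forall b\neq 1\,\big],
\]
the last equality by symmetry of the uniform profile (together with the anti-symmetry of $f$) under relabeling of alternatives. The analogous identity holds for $\Maj_n$, so it suffices to show that $\IP[f(x^{1>b})=1\ \forall b\neq1]$ exceeds its $\Maj_n$ counterpart by at most $\epsilon/k$. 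A short computation with a uniformly random ranking gives $\E[x^{1>b}(i)\,x^{1>c}(i)]=\tfrac13$ for distinct $b,c\neq1$, with the $n$ voters independent. In particular, for $k=3$ the quantity is simply $\langle f,f\rangle_{1/3}$ (treating $f$ as $\{0,1\}$-valued), and the claim is immediate from Theorem~\ref{thm:MIST} (Majority is Stablest) with $\rho=\tfrac13$, since $\langle\chi_{1/2},\chi_{1/2}\rangle_{1/3}$ is exactly the limiting $\Maj_n$ value $\lim_n\IP[\Maj_n(x^{1>2})=\Maj_n(x^{1>3})=1]$.

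For general $k$ I would pass to a Gaussian relaxation using the vector-valued invariance principle of~\cite{Mossel:08,Mossel:10}. After the standard regularization --- replacing $f$ by $T_{1-\eta}f$ and mollifying the indicator of the positive orthant of $\R^{k-1}$ into a test function with bounded derivatives, the incurred errors being controlled by anti-concentration estimates for low-influence functions --- the hypothesis $\max_i\Inf_i f\le\tau$ with $\tau=\tau(\epsilon,k)$ small enough yields
\[
\IP\big[\,f(x^{1>b})=1\ \ \forall b\neq1\,\big] \;\le\; \E\Big[\textstyle\prod_{b\neq1}\bar f(N^{1>b})\Big] \;+\; \tfrac{\epsilon}{k},
\]
where $\bar f:\R^n\to[0,1]$ has the same Hermite expansion as $T_{1-\eta}f$, hence $\E\bar f=\E f=\tfrac12$, and $(N^{1>b})_{b\neq1}$ is the Gaussian vector matching the covariances above: each $N^{1>b}(i)$ standard, $\E[N^{1>b}(i)N^{1>c}(i)]=\tfrac13$ for $b\neq c$, and independent across $i$.

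The heart of the proof is then the purely Gaussian claim that among all $\phi:\R^n\to[0,1]$ with $\E\phi=\tfrac12$, the half-space $\phi=\chi_{1/2}$ maximizes $\E\big[\prod_{b=1}^{k-1}\phi(N^b)\big]$, where the $N^b$ have common pairwise correlation $\rho=\tfrac13$. Since $\rho>0$ I would decompose coordinatewise $N^b(i)=\tfrac{1}{\sqrt3}Z_0(i)+\sqrt{\tfrac23}\,Z_b(i)$ with $Z_0,Z_1,\dots,Z_{k-1}$ independent standard Gaussian vectors in $\R^n$; conditioning on $Z_0$ makes the $N^b$ i.i.d., so
\[
\E\Big[\textstyle\prod_{b=1}^{k-1}\phi(N^b)\Big] \;=\; \E_{Z_0}\big[(P\phi)(Z_0)^{\,k-1}\big] \;=\; \|P\phi\|_{L^{k-1}(\gamma)}^{\,k-1},
\]
where $(P\psi)(z)=\E[\psi(\tfrac{1}{\sqrt3}z+\sqrt{\tfrac23}\,G)]$ and $P\phi\in[0,1]$ has mean $\tfrac12$. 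It then remains to show $\|P\phi\|_{k-1}^{k-1}\le\|P\chi_{1/2}\|_{k-1}^{k-1}$: for $k-1=2$ this is exactly Borell's inequality (Theorem~\ref{thm:half_space}) with $\rho=\tfrac13$, and for $k-1\ge3$ it is the extension of Borell's rearrangement principle~\cite{Borell:85} to the convex functional $t\mapsto t^{k-1}$ of the Ornstein--Uhlenbeck semigroup, as used in~\cite{IsakssonMossel:12}. Finally, $\|P\chi_{1/2}\|_{k-1}^{k-1}=\lim_n\IP[\Maj_n(x^{1>b})=1\ \forall b\neq1]$ by the multivariate central limit theorem (up to the symmetry $N\mapsto -N$), and summing the single-candidate bound over the $k$ candidates finishes the argument.

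The step I expect to be the main obstacle is the higher-power Gaussian inequality $\|P\phi\|_q^q\le\|P\chi_\mu\|_q^q$ for integer $q=k-1\ge3$, i.e.\ lifting the optimality of half-spaces from the bilinear ($q=2$) Borell inequality to convex powers of the heat semigroup. By comparison, the remaining pieces --- controlling the regularization errors in the invariance principle so they stay below $\epsilon/k$ uniformly in $n$, and the CLT identification of the $\Maj_n$ limit --- are routine.
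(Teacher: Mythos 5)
Your proposal correctly fleshes out exactly the strategy the paper cites for this theorem (from~\cite{IsakssonMossel:12}): reduce to a single-candidate event by disjointness and symmetry, transfer to Gaussian space by the vector-valued invariance principle of~\cite{Mossel:08,Mossel:10}, and then invoke Borell's~\cite{Borell:85} convex-functional generalization (here via the decomposition $N^b = \sqrt{\rho}\,Z_0 + \sqrt{1-\rho}\,Z_b$, which turns the product into $\|T_{\sqrt\rho}\phi\|_{k-1}^{k-1}$). The correlation computation $\E[x^{1>b}(i)x^{1>c}(i)]=\tfrac13$, the identification with $\langle f,f\rangle_{1/3}$ for $k=3$, and the CLT identification of the $\Maj_n$ limit are all correct, and you rightly flag the higher-power Borell inequality as the only non-routine input.
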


An alternative proof can be derived via multi-dimensional generalization of the inductive Majority is Stablest Theorem using a general notion of $\rho$-concavity which generalizes 
Claim~\ref{clm:negative-semidefinite}~\cite{Neeman:14,Ledoux:14}. 

It is not hard to show that  
\begin{equation} \label{eq:uniq_max_k}
\IP[\UniqueBest_k(\Maj_n)] = k^{-1+o(1)}.
\end{equation}
 
\begin{exercise}
Prove (\ref{eq:uniq_max_k}) by using a multi-dimensional CLT and representing the advantage of candidate $1$ over candidate $i$ as 
\[
\sqrt{\frac{1}{3}} X + \sqrt{\frac{2}{3}} Z_i,
\]
where $X,Z_2,\ldots,Z_k$ are i.i.d. $N(0,1)$ random variables. 
\end{exercise}

Other than the case $k=3$, where the notions of unique-max and linear tournaments coincide, very little is known about which function maximizes the probability of a linear order. Even computing this probability for Majority provides a surprising result~\cite{Mossel:10}: 
\begin{proposition} \label{prop:maj_linear}
We have
\begin{equation} \label{eq:maj_linear}
\lim_{n \to \infty} \P[\Acyc(G_k(\sigma; \Maj_n))] =
\exp(-\Theta(k^{5/3})).
\end{equation}
\end{proposition}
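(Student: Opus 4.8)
The plan is to pass to a Gaussian limit and then reduce the computation to a one‑parameter variational problem whose value is $\Theta(k^{5/3})$.

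\emph{Step 1 (Gaussian tournament).} A uniform voter is the order induced by i.i.d.\ continuous scores, so I would write $x^{a>b}(i)=\sgn(g^{(i)}_a-g^{(i)}_b)$ with $(g^{(i)}_a)$ i.i.d.\ standard Gaussians; a direct computation gives $\E[x^{a>b}x^{a>c}]=\tfrac13$, $\E[x^{a>b}x^{c>a}]=-\tfrac13$, and $\E[x^{a>b}x^{c>d}]=0$ for disjoint pairs. Applying the multidimensional CLT to $(n^{-1/2}\sum_{i}x^{a>b}(i))_{a<b}$, the tournament $G_k(\sigma;\Maj_n)$ converges to the sign tournament of a Gaussian vector $Z=(Z_{ab})_{a<b}$ (extended by $Z_{ba}=-Z_{ab}$) whose law agrees with that of $\tfrac1{\sqrt3}(W_a-W_b+Y_{ab})$ for i.i.d.\ standard Gaussians $W_1,\dots,W_k,(Y_{ab})_{a<b}$, with $Y_{ba}:=-Y_{ab}$ (one checks the covariances match). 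Since $Z$ is nondegenerate, ties disappear in the limit and $\P[\Acyc(G_k(\sigma;\Maj_n))]\to q_k:=\P[Z\text{'s sign tournament is acyclic}]$. A tournament has at most one topological order, and the law of $Z$ is invariant under relabeling the $k$ vertices, hence
\[ q_k=k!\,p_k,\qquad p_k:=\P[Z_{ab}>0\ \ \forall\,a<b]=\E_W\Big[\textstyle\prod_{a<b}\Phi(W_a-W_b)\Big]. \]
Because $k!=e^{o(k^{5/3})}$, it suffices to prove $p_k=e^{-\Theta(k^{5/3})}$.

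\emph{Step 2 (reduction to a variational problem).} By exchangeability $\prod_{a<b}\Phi(W_a-W_b)$ is maximized over relabelings of a fixed $W$ when the coordinates are sorted decreasingly, so $\tfrac1{k!}Q_k\le p_k\le Q_k$ where $Q_k:=\E\big[\prod_{a<b}\Phi(W_{(a)}-W_{(b)})\big]$ and $W_{(1)}\ge\cdots\ge W_{(k)}$ are order statistics. Inserting their joint density $k!\prod_a\phi(w_a)$ on $\{w_1\ge\cdots\ge w_k\}$, using the elementary bounds $\bar\Phi(t)\le-\log\Phi(t)\le2\bar\Phi(t)$ on $[0,\infty)$ (with $\bar\Phi:=1-\Phi$), and Laplace's method, this reduces to showing that
\[ \Psi^*:=\min_{w_1\ge\cdots\ge w_k}\Big[\tfrac12\|w-\bar w\mathbf{1}\|^2+\Sigma_\Phi(w)\Big],\qquad \Sigma_\Phi(w):=\sum_{a<b}\bar\Phi(w_a-w_b), \]
satisfies $\Psi^*=\Theta(k^{5/3})$ (centering only lowers $\sum_a w_a^2$ and leaves $\Sigma_\Phi$ unchanged, the leftover $k\bar w^2$ integrating to a harmless Gaussian factor; the overall error $e^{\pm O(k\log k)}=e^{\pm o(k^{5/3})}$ absorbs $k!$, the normalization and the $\mathrm{poly}(1/k)$ width of the neighborhood of the minimizer). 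For the bound $\Psi^*\le Ck^{5/3}$ I would use $w^*_a=\tfrac{R}{k}\big(\tfrac{k+1}{2}-a\big)$ with $R:=k^{1/3}$: then $\|w^*\|^2\asymp R^2k=k^{5/3}$, and since $w^*_a-w^*_b=\tfrac{R}{k}(b-a)$,
\[ \Sigma_\Phi(w^*)=\sum_{d=1}^{k-1}(k-d)\,\bar\Phi\!\big(\tfrac{Rd}{k}\big)\le k\!\int_0^\infty\!\bar\Phi\!\big(\tfrac{Rx}{k}\big)\,dx=\tfrac{k^2}{R}\!\int_0^\infty\!\bar\Phi(u)\,du\asymp k^{5/3}; \]
localizing the integral defining $Q_k$ to the box $\{|w_a-w^*_a|\le\tfrac{R}{4k}\}$ (on which all gaps stay positive and $w_a-w_b\ge\tfrac{R}{2k}(b-a)$) already yields $p_k\ge e^{-\Theta(k^{5/3})}$.

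\emph{Step 3 (the crux: $\Psi^*\ge ck^{5/3}$).} Fix a sorted, centered $w$ and set $n_0:=\#\{(a,b):a<b,\ w_a-w_b\le1\}$, so $\Sigma_\Phi(w)\ge\bar\Phi(1)\,n_0$. If $n_0\ge k^{5/3}$ we are done. Otherwise I would look at the middle $\lfloor k/2\rfloor$ indices, whose values span an interval of length $\mathcal R_{\mathrm{mid}}$; partitioning that interval into $\lceil\mathcal R_{\mathrm{mid}}\rceil$ unit subintervals and counting pairs of middle indices lying in a common subinterval (each has difference $<1$, hence is counted in $n_0$), Cauchy--Schwarz gives $n_0\gtrsim k^2/\mathcal R_{\mathrm{mid}}$, so $n_0<k^{5/3}$ forces $\mathcal R_{\mathrm{mid}}\gtrsim k^{1/3}$. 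Finally, testing the quadratic form against $c_a:=a-\tfrac{k+1}{2}$ (which is orthogonal to $\mathbf{1}$), summation by parts gives $|\langle w,c\rangle|=\tfrac12\sum_{a=1}^{k-1}(w_a-w_{a+1})\,a(k-a)\gtrsim k^2\,\mathcal R_{\mathrm{mid}}$ because $a(k-a)\asymp k^2$ on the middle block, and since $\|c\|^2\asymp k^3$,
\[ \|w-\bar w\mathbf{1}\|^2\ \ge\ \frac{\langle w,c\rangle^2}{\|c\|^2}\ \gtrsim\ k\,\mathcal R_{\mathrm{mid}}^2\ \gtrsim\ k^{5/3}. \]
Either branch gives $\tfrac12\|w-\bar w\mathbf{1}\|^2+\Sigma_\Phi(w)\gtrsim k^{5/3}$, hence $\Psi^*\gtrsim k^{5/3}$; feeding $\Psi^*=\Theta(k^{5/3})$ back through Step 2 gives $p_k=e^{-\Theta(k^{5/3})}$, and therefore $q_k=k!\,p_k=e^{-\Theta(k^{5/3})}$, which is the proposition.

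The main obstacle is Step 3: it is precisely the place where the ``anti‑concentration budget'' $\Sigma_\Phi$, which is large when the latent scores $W_a$ cluster, must be traded off against the Gaussian ``energy'' $\|w\|^2$, which is large when they spread out, the crossover occurring at spread $\asymp k^{1/3}$ and producing the exponent $5/3$. Making the Laplace--method estimates of Step 2 rigorous and uniform in $k$ is the other point that needs care.
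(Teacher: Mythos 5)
Your proof is correct, and at the conceptual core it is the same proof as the paper's: you both pass to the Gaussian tournament with the decomposition $Z_{ab}=\tfrac1{\sqrt3}(W_a-W_b+Y_{ab})$, use that acyclicity is $k!$ times the probability of the identity order, and then locate the $\Theta(k^{5/3})$ exponent as the crossover at spread $\asymp k^{1/3}$ between the Gaussian cost of spreading the latent scores and the ``close-pair'' cost of letting them cluster. Your $w_a^\ast\propto a\,k^{-2/3}$ for the lower bound on $p_k$ is exactly the paper's conditioning on $X_a\in(a,a{+}1)k^{-2/3}$.

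Where you genuinely diverge is in the mechanics. The paper works directly with probabilities: for the upper bound it conditions on how many $|X_a|$ exceed $k^\alpha$, uses a pigeonhole to harvest $\Omega(k^{2-\alpha})$ pairs with $|X_a-X_b|<1$ (each costing a constant factor from the independent $Z_{a>b}$), and optimizes $\alpha$; for the lower bound it just multiplies the explicit one-dimensional Gaussian probabilities. You instead recast everything as a deterministic variational problem $\Psi^\ast=\min_w[\tfrac12\|w-\bar w\mathbf{1}\|^2+\Sigma_\Phi(w)]$, prove $\Psi^\ast=\Theta(k^{5/3})$ by a pigeonhole--Cauchy--Schwarz argument against the test vector $c_a=a-\tfrac{k+1}{2}$, and recover $p_k$ by a Laplace-type argument. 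This is a clean repackaging that makes the energy tradeoff explicit, and your Step~3 (test vector + middle-block pigeonhole) is a nice alternative to the paper's probabilistic case split; but it pushes the difficulty into the Laplace step, which you rightly flag. That step is not automatic here because the dimension grows with $k$: to get $p_k\le e^{-c\Psi^\ast}$ you need to, e.g., split $\tfrac12\|w^\perp\|^2$ to retain a convergent Gaussian factor and absorb the resulting $e^{O(k)}$ and $e^{O(k\log k)}$ errors into the $e^{-\Theta(k^{5/3})}$ budget; and for the lower bound you must check that $\Sigma_\Phi$ stays $O(k^{5/3})$ uniformly over the localization box. Both are doable, but the paper's direct probabilistic argument simply sidesteps them. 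Net effect: same theorem, same scales, same constructions; your version is the ``energy-functional'' phrasing, the paper's is the ``conditioning'' phrasing, and neither contains an idea the other lacks.
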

We find this asymptotic behavior quite surprising.
Indeed, given the previous results that the probability that there is a unique max is $k^{-1+o(1)}$, one may expect that the probability that the order
is linear would be
\[
k^{-1+o(1)} (k-1)^{-1+o(1)} \ldots = (k!)^{-1+o(1)}.
\]
However, it turns out that there is a strong negative correlation between the
event that there is a unique maximum among the $k$ candidates and that among
the other candidates there is a unique max.

\begin{proof}
We use the multi-dimensional CLT. Let
\[
X_{a > b} = \frac{1}{\sqrt{n}} \left(
|\{\sigma : \sigma(a) > \sigma(b)\}| - |\{\sigma : \sigma(b) > \sigma(a) \}|
\right)
\]
By the CLT at the limit the collection of variables $(X_{a > b})_{a \neq b}$
converges to a joint Gaussian vector $(N_{a > b})_{a \neq b}$ satisfying
for all distinct $a,b,c,d$:
\[
N_{a > b} = - N_{b > a}, \quad
\Cov[N_{a > b}, N_{a > c}] = \frac{1}{3}, \quad
\Cov[N_{a > b}, N_{c > d}] = 0.
\]
and $N_{a > b} \sim N(0,1)$ for all $a$ and $b$.

We are interested in providing bounds on
\[
P[\forall a > b: N_{a > b} > 0]
\]
as the probability that the resulting tournament is an order is obtained
by multiplying by a $k! = \exp(\Theta(k \log k))$ factor.

We claim that there exist independent $N(0,1)$ random variables $X_a$ for $1 \leq a \leq k$ and
$Z_{a > b}$ for $1 \leq a \neq b \leq k$ such that
\[
N_{a > b} = \frac{1}{\sqrt{3}} (X_a - X_b + Z_{a > b})
\]
(where $Z_{a > b} = - Z_{b > a}$).
This follows from the fact that the joint distribution of Gaussian random
variables is determined by the covariance matrix (this is noted
in the literature in~\cite{NiemiWeisberg:68}).

We now prove the upper bound. Let $\alpha$ be a constant to be chosen later.
Note that for all $\alpha$ and large enough $k$ it holds that:
\[
P[|X_a| > k^{\alpha}] \leq \exp(-\Omega(k^{2 \alpha})).
\]
Therefore the probability that for at least half of the $a$'s in the interval
$[k/2,k]$ it holds that $|X_a| > k^{\alpha}$ is at most
\[
\exp(-\Theta(k^{1 + 2 \alpha})).
\]

Let's assume that at least half of the $a$'s in the interval $[k/2,k]$ satisfy
that $|X_a| < k^{\alpha}$. We claim that in this case the number 
of pairs $a > b$ such that
$X_a, X_b \in -[k^{\alpha},k^{\alpha}]$ and $X_a - X_b < 1$ is $\Omega(k^{2-\alpha})$.

For the last claim partition the interval
$[-k^{\alpha},k^{\alpha}]$ into sub-intervals of
length $1$ and note that at least $\Omega(k)$ of the points belong to
sub-intervals which contain at least $\Omega(k^{1-\alpha})$ points.
This implies that the number of pairs $a > b$ satisfying
$|X_a - X_b| < 1$ is $\Omega(k^{2-\alpha})$.

Note that for such pair $a > b$ in order that $N_{a > b} > 0$ we need
that $Z_{a>b} > -1$ which happens with constant probability.

We conclude that given that half of the $X$'s fall in $[-k^{\alpha},k^{\alpha}]$
the probability of a linear order is bounded by
\[
\exp(-\Omega(k^{2-\alpha})).
\]
Thus overall we have bounded the probability by
\[
\exp(-\Omega(k^{1 + 2 \alpha})) + \exp(-\Omega(k^{2-\alpha})).
\]
The optimal exponent is $\alpha=1/3$ giving the desired upper bound.

For the lower bound we condition on $X_a$ taking value in
$(a,a+1) k^{-2/3}$. Each probability is at least $\exp(-O(k^{2/3}))$ and therefore the probability that all $X_a$ take
such values is
\[
\exp(-O(k^{5/3})).
\]
Moreover, conditioned on $X_a$ taking such values the probability that
\[
Z_{a > b} > X_b - X_a,
\]
for all $a > b$ is at least
\[
\left( \prod_{i=0}^{k-1} \Phi(i)^{k^{2/3}} \right)^k \geq \left( \prod_{i=0}^{\infty} \Phi(i) \right)^{k^{5/3}} =
\exp(-O(k^{5/3})).
\]
This proves the required result.
\end{proof}

\chapter[Manipulation]{Manipulation and Isoperimetry} 

\section{Quantitive Manipulation} 
The GS Theorem, Theorem~\ref{thm:GS_intro}, states that under natural conditions, there exist profiles of voters such that at least one voter can manipulate.
The basic approach for the proof is to view the theorem as an isoperimetric theorem. 
In classical Isoperimetric theory, the goal is to find conditions that establish large boundary between sets. 
In the context of manipulation we can consider a voter who can manipulate as a special boundary point and our goal is to prove that there are many boundary points. 
   
It is natural to consider the following graph where the vertex set is $S_k^n$ --- the set of all voting profiles and there are edges between voting profiles that differ at a single voter. The statement of the GS theorem can be interpreted in terms of this graph: for certain natural partitions of $S_k^n$ into $k$ parts there is an edge of the graph between two different parts that corresponds to a manipulation. 
We will see that the existence of many edges between different parts of the graph follows from classical Isoperimetric theory. 

Thus one may consider quantitative statements of the GS theorem as isoperimetric statements: It is not only the case that there are many edges between different parts of the partition, but it is also the case that many of these edges correspond to manipulation by one of the voters.   

In the classical setup, isoperimetry and concentration of measure are closely related. In particular, standard concentration of measure results imply that for any set of fractional size at least 
$\eps$ in $S_k^n$, the set of profiles at graph distance at most $C(\eps) \sqrt{n}$ contains almost the whole graph. However, it is not known under what conditions typically a small coalition can manipulate. 

It may be useful to consider the following examples: 
\begin{itemize}
\item 
Consider the plurality function with $q \geq 3$ alternatives and $n$ voters. For a voter to be able to manipulate, it has to be that the difference between the top candidate and the second to top is at most $1$. This implies that the probability that there exists a voter who can manipulate is $O(1/\sqrt{n})$. 

A second question we can ask is what is the minimal size $s$ of a coalition $S \subset [n]$ that can manipulate with probability close to $1$. Since the difference between the top candidate and the second candidate is typically of order $\sqrt{n}$, it is clear that the size has to be at least order $\sqrt{n}$ and in fact it is easy to see that if 
$s/\sqrt{n} \to \infty$ that for every set $S$ of size $s$, with probability $1-o(1)$ there exists a subset $T \subset S$ that can change the outcome of the elections by manipulating. Indeed if $a$, $b$ is the second from the top and $c$ is a candidate different than $a$ and $b$, we may take $T$ to be all the voters in $S$ that 
rank $c$ above $a$ above $b$. 
\item 
Consider the case $q=2$ and the function $g(x) = -f(x)$ where $f$ is the tribes function.
In this case a voter can manipulate if and only if they are influential. Therefore, on general, we cannot expect that the probability that an individual voter can manipulate is higher than $O(\log n /n)$.
\end{itemize}

\subsection{A quantitive GS Theorem} 
Our goal in this section is to prove a quantitative version of the manipulation Theorem.  We will mostly follow~\cite{MosselRacz:12,MosselRacz:15} who proved a pretty general average manipulation theorem for a single voter. Some special cases of the Theorem were known before, in particular in the case of 
$3$ alternatives this was proved by  Friedgut, Kalai, Keller and Nisan~\cite{FrKaNi:08,FKKN:11}.

In this section we will prove that: if $k \geq 3$ and the SCF $f$ is $\eps$-far from the family of nonmanipulable functions, then the probability of a ranking profile being manipulable is bounded from below by a polynomial in $1/n$, $1/k$, and $\eps$. We continue by first presenting our results, then discussing their implications, and finally we conclude this section by commenting on the techniques used in the proof.

\subsection{Definitions and formal Statements} 

Recall that our basic setup consists of $n$ voters electing a winner among $k$ alternatives via
  An SCF $f : S_k^n \to \left[k\right]$. We now define manipulability in more detail:
\begin{definition}[Manipulation points]\label{def:manip}
Let $\sigma \in S_k^n$ be a ranking profile. Write $a \stackrel{\sigma_i}{>} b$ to denote that alternative $a$ is preferred over $b$ by voter $i$. A SCF $f \colon S_k^n \to [k]$ is \emph{manipulable} at the ranking profile $\sigma \in S_k^n$ if there exists a $\sigma' \in S_k^n$ and an $i\in \left[n\right]$ such that $\sigma$ and $\sigma'$ only differ in the $i^{\text{th}}$ coordinate and
\begin{equation*}
    f(\sigma') \stackrel{\sigma_i}{>} f(\sigma).
\end{equation*}
In this case we also say that $\sigma$ is a {\em manipulation point} of $f$, and that $(\sigma,\sigma')$ is {\em a manipulation pair} for $f$. We say that $f$ is {\em manipulable} if it is manipulable at some point $\sigma$. We also say that $\sigma$ is an $r$-{\em manipulation point} of $f$ if $f$ has a manipulation pair $(\sigma,\sigma')$ such that $\sigma'$ is obtained from $\sigma$ by permuting (at most) $r$ adjacent alternatives in one of the coordinates of $\sigma$. (We allow $r > k$---any manipulation point is an $r$-manipulation point for $r > k$.)

Let $M \left( f \right)$ denote the set of manipulation points of the SCF $f$, and for a given $r$, let $M_r \left( f \right)$ denote the set of $r$-manipulation points of $f$. When the SCF is obvious from the context, we write simply $M$ and $M_r$.
\end{definition}
We first recall Gibbard and Satterthwaite theorem (stated as Theorem~\ref{thm:GS_intro} in the introduction): 
\begin{theorem}[Gibbard-Satterthwaite~\cite{Gibbard:73,Satterthwaite:75}]\label{thm:GS}
Any SCF $f \colon S_k^n \to \left[k\right]$ which takes at least three values and is not a dictator (i.e., not a function of only one voter) is manipulable.
\end{theorem}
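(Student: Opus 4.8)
The plan is to derive Theorem~\ref{thm:GS} by reducing strategyproofness to the Arrovian conditions and invoking Arrow's Theorem (Theorem~\ref{thm:arrow}), which the book has already established. Call $f\colon S_k^n\to[k]$ \emph{strategyproof} if it has no manipulation point, let $R=f(S_k^n)$ be its range (so $|R|\ge 3$), and for a profile $\sigma$ write $\sigma^B$ for the profile obtained from $\sigma$ by lifting every alternative of $B\subseteq[k]$ to the top of each voter's ballot, keeping their mutual relative order. The first step is to reduce to the \emph{onto} case: walking from an arbitrary $\sigma$ to $\sigma^R$ one voter at a time, no single-coordinate step can change the winner or move it outside $R$ without exhibiting a manipulation pair, so $f(\sigma)=f(\sigma^R)$; hence $f$ is determined by its restriction to $S_R^n$, which is an onto strategyproof SCF on the alternative set $R$. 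If that restriction is a dictatorship on some voter $i$, then $f(\sigma)=f(\sigma^R)$ is voter $i$'s top $R$-alternative, a function of $\sigma_i$ alone, contradicting that $f$ is not a dictator. So from now on assume $f$ is onto.

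The reduction relies on two standard consequences of strategyproofness and ontoness, each proved by the same ``transform one voter at a time, and a failure produces a genuine single-coordinate manipulation'' method. \textbf{(Pareto / Unanimity):} if every voter ranks $a$ above $c$ at $\sigma$, then $f(\sigma)\ne c$; in particular, if every voter puts $a$ on top then $f(\sigma)=a$. \textbf{(Monotonicity):} if $f(\sigma)=a$ and $\sigma'$ differs from $\sigma$ in one coordinate in a way that weakly raises $a$ relative to every other alternative, then $f(\sigma')=a$; iterating, the same holds for changes supported on several coordinates, each of which is of this ``$a$-monotone'' type.

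Next I would build a constitution $F$ from $f$. For a pair $a\ne b$ declare $a\succ_{F(\sigma)}b$ iff $f(\sigma^{\{a,b\}})=a$. Since $\sigma^{\{a,b\}}$ Pareto-dominates every $c\notin\{a,b\}$ by $a$ and by $b$, the Pareto lemma gives $f(\sigma^{\{a,b\}})\in\{a,b\}$, so $F(\sigma)$ is a strict tournament; the monotonicity lemma shows $f(\sigma^{\{a,b\}})$ depends only on the vector of pairwise $a$-vs-$b$ comparisons, so $F$ satisfies IIA; and if all voters rank $a$ above $b$ then all have $a$ on top in $\sigma^{\{a,b\}}$, so $f(\sigma^{\{a,b\}})=a$ and $F$ satisfies Unanimity. \emph{Transitivity} is the crux. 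Given $a\succ_{F(\sigma)}b$ and $b\succ_{F(\sigma)}c$, the Pareto lemma gives $f(\sigma^{\{a,b,c\}})\in\{a,b,c\}$. If $f(\sigma^{\{a,b,c\}})=x$, then pushing the remaining alternative of $\{a,b,c\}$ down to its original position for every voter only raises $x$ relative to that alternative and fixes its position relative to all others, so by iterated monotonicity the winner stays $x$, i.e.\ $f(\sigma^{\{x,y\}})=x$ and $x\succ_{F(\sigma)}y$ for both $y\in\{a,b,c\}\setminus\{x\}$. Hence $f(\sigma^{\{a,b,c\}})$ cannot be $b$ (it would give $b\succ_{F(\sigma)}a$, contradicting $a\succ_{F(\sigma)}b$ in a strict tournament) nor $c$ (it would give $c\succ_{F(\sigma)}b$); so it equals $a$ and $a\succ_{F(\sigma)}c$. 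Now $F$ is a transitive, IIA, unanimous constitution on $|R|\ge 3$ alternatives, so Theorem~\ref{thm:arrow} forces $F$ to be a dictatorship on some voter $i$. Finally, reading off pairwise comparisons, $f(\sigma^{\{a,b\}})$ always equals voter $i$'s preferred member of $\{a,b\}$; lifting voter $i$'s true $R$-top alternative $a^\star$ to the top for all voters gives winner $a^\star$ by Pareto, and pulling the other voters' ballots back down one at a time keeps the winner $a^\star$ by monotonicity, so $f(\sigma)=a^\star$ for all $\sigma$; thus $f$ is the dictatorship of voter $i$, the desired contradiction.

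The step I expect to be the main obstacle is \emph{transitivity of $F$}, together with the bookkeeping behind the two auxiliary lemmas: each is a ``walk between two profiles one coordinate at a time'' argument, and care is needed so that a failure at an intermediate step genuinely exhibits two profiles $(\tau,\tau')$ differing in a single voter with $f(\tau')\stackrel{\tau_i}{>}f(\tau)$. One could instead give a self-contained pivotal-voter proof in the style of Barber\`a, but the reduction to Arrow is the most economical route here since Theorem~\ref{thm:arrow} (and its refinement Theorem~\ref{thm:general_arrow}) is already available.
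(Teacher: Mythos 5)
The paper states the Gibbard--Satterthwaite theorem twice but explicitly declines to prove it, remarking that all known proofs are more complex than the proof of Arrow's theorem given in the text; so there is no proof in the paper to compare against. Your choice to derive it as a corollary of Arrow's theorem (Theorem~\ref{thm:arrow}), which the text does prove, is a sensible and classical route. The reduction to the onto case, the Pareto and Maskin-monotonicity lemmas, the construction of the constitution $F$, and the verification of IIA, Unanimity, and Transitivity are all correct as written; in particular, the transitivity argument (pushing the third alternative of $\{a,b,c\}$ back down, an $x$-monotone move when $x=f(\sigma^{\{a,b,c\}})$) is exactly right and rules out $x\in\{b,c\}$ in a strict tournament.

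The gap is in the very last step. After Arrow yields that $F$ is a dictatorship of voter $i$, you lift $a^\star$ (voter $i$'s top alternative) to the top of every ballot, getting winner $a^\star$ by Pareto, and then ``pull the other voters' ballots back down one at a time,'' claiming the winner remains $a^\star$ ``by monotonicity.'' That is not a monotonicity step: pulling voter $j$'s ballot down \emph{lowers} $a^\star$, and your monotonicity lemma only protects the winner under moves that raise it. If the winner changed to some $c$ that voter $j$ ranks above $a^\star$ in $\sigma_j$, no single-coordinate manipulation falls out of monotonicity alone (voter $j$ is happier at the new profile and already top-ranked at the old one). The fix is short but must invoke the dictatorship of $F$ rather than bare monotonicity: if $f(\sigma)=b\neq a^\star$, then the transformation $\sigma\mapsto\sigma^{\{a^\star,b\}}$ is $b$-monotone (both $a^\star$ and $b$ are lifted preserving their mutual order, so $b$ is weakly raised against everything else and unchanged against $a^\star$), giving $f(\sigma^{\{a^\star,b\}})=b$; but voter $i$ ranks $a^\star$ above $b$, so the dictatorship of $F$ forces $f(\sigma^{\{a^\star,b\}})=a^\star$, a contradiction. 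With that one replacement the proof closes.
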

This theorem is tight in the sense that \emph{monotone} SCFs which are dictators or only have two possible outcomes are indeed nonmanipulable (a function is non-monotone, and clearly manipulable, if for some ranking profile a voter can change the outcome from, say, $a$ to $b$ by moving $a$ ahead of $b$ in her preference). It is useful to introduce a refined notion of a dictator before defining the set of nonmanipulable SCFs.
\begin{definition}[Dictator on a subset]\label{def:dict_subset}
For a subset of alternatives $H \subseteq \left[k\right]$, let $\tp_H$ be the SCF on one voter whose output is always the top ranked alternative among those in $H$.
\end{definition}
\begin{definition}[Nonmanipulable SCFs]
We denote by $\NONMANIP \equiv \NONMANIP \left( n, k \right)$ the set of nonmanipulable SCFs, which is the following:
\begin{align*}
\NONMANIP \left( n, k\right) &= \left\{ f : S_k^n \to \left[k\right] \mid f \left( \sigma \right) = \tp_H \left( \sigma_i \right) \text{ for some } i \in \left[n\right], H \subseteq \left[k\right], H \neq \emptyset \right\} \\
&\cup \left\{ f : S_k^n \to \left[k\right] \mid f \text{ is a monotone function taking on exactly two values} \right\}.
\end{align*}
When the parameters $n$ and $k$ are obvious from the context, we omit them.
\end{definition}
Another useful class of functions, which is larger than $\NONMANIP$, but which has a simpler description, is the following.
\begin{definition}
Define, for parameters $n$ and $k$ that remain implicit:
\[
  \overline{\NONMANIP} = \{f \colon S_k^n \to [k] \mid f \text{ only depends on one coordinate or takes at most two values}\}.
\]
\end{definition}
The notation should be thought of as ``closure'' rather than ``complement''. 

As discussed previously, our goal is to study manipulability from a quantitative viewpoint, and in order to do so we need to define the distance between SCFs.
\begin{definition}[Distance between SCFs]
The distance $\Dist(f,g)$ between two SCFs $f,g \colon S_k^n \to \left[ k \right]$ is defined as the fraction of inputs on which they differ: $\Dist\left(f,g\right) = \p\left(f\left(\sigma\right) \neq g\left(\sigma\right)\right)$, where $\sigma \in S_k^n$ is uniformly selected. For a class $G$ of SCFs, we write $\Dist\left(f,G\right) = \min_{g \in G} \Dist\left(f,g\right)$.
\end{definition}


The concepts of anonymity and neutrality of SCFs will be important to us, so we define them here.
\begin{definition}[Anonymity]\label{def:anonymity}
 A SCF is \emph{anonymous} if it is invariant under changes made to the names of the voters. More precisely, a SCF $f : S_k^n \to \left[k\right]$ is anonymous if for every $\sigma = \left( \sigma_1, \dots, \sigma_n \right) \in S_k^n$ and every $\pi \in S_n$,
\[
 f\left( \sigma_1, \dots, \sigma_n \right) = f\left( \sigma_{\pi\left( 1 \right)}, \dots, \sigma_{\pi \left( n \right)} \right).
\]
\end{definition}
\begin{definition}[Neutrality]\label{def:neutrality}
 A SCF is \emph{neutral} if it commutes with changes made to the names of the alternatives. More precisely, a SCF $f : S_k^n \to \left[k\right]$ is neutral if for every $\sigma = \left( \sigma_1, \dots, \sigma_n \right) \in S_k^n$ and every $\pi \in S_k$,
\[
 f \left( \pi \circ \sigma_1, \dots, \pi \circ \sigma_n \right) = \pi \left( f \left( \sigma \right) \right).
\]
\end{definition}

Our goal is to sketch the proof of the following Theorem:
\begin{theorem}\label{cor:k_refined_truenonmanip}
Suppose we have $n \geq 1$ voters, $k \geq 3$ alternatives, and a SCF $f : S_k^n \to \left[k\right]$ satisfying $\Dist \left( f, \NONMANIP \right) \geq \eps$. Then
\begin{equation}\label{eq:manip_refined_true}
\p\left( \sigma \in M \left( f \right) \right)\\
\geq \p \left( \sigma \in M_4 \left( f \right) \right)\geq p \left( \eps, \frac{1}{n}, \frac{1}{k} \right)
\end{equation}
for some polynomial $p$, where $\sigma \in S_k^n$ is selected uniformly.

An immediate consequence is that
\[
\p \left( \left( \sigma, \sigma' \right) \text{ is a manipulation pair for } f \right) \geq q \left( \eps, \frac{1}{n}, \frac{1}{k} \right)
\]
for some polynomial $q$, where $\sigma \in S_k^n$ is uniformly selected, and $\sigma'$ is obtained from $\sigma$ by uniformly selecting a coordinate $i \in \left\{1, \dots, n \right\}$, uniformly selecting $j \in \left\{1, \dots, n-3 \right\}$, and then uniformly randomly permuting the following four adjacent alternatives in $\sigma_i$: $\sigma_i \left( j \right), \sigma_i \left( j + 1 \right), \sigma_i \left( j + 2 \right)$, and $\sigma_i \left( j + 3 \right)$. 
\end{theorem}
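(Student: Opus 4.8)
The plan is to follow the isoperimetric approach of Mossel and R\'acz, which views Gibbard--Satterthwaite as a statement about the ``boundary'' of the partition of the profile space $S_k^n$ induced by $f$. A preliminary reduction: it suffices to lower-bound $\p(\sigma\in M_4(f))$, since $M_4(f)\subseteq M(f)$; the displayed consequence on manipulation \emph{pairs} then follows because each $\sigma\in M_4(f)$ has a witnessing $\sigma'$ among the $n(n-3)\cdot 24$ possible repermutations of four consecutive alternatives in one coordinate, so the described random choice of $(\sigma,\sigma')$ succeeds with probability at least $\p(\sigma\in M_4(f))/\bigl(24\,n(n-3)\bigr)$, still polynomial. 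One then distinguishes two regimes according to how structured $f$ is: $\Dist(f,\overline{\NONMANIP})\ge\eps/2$ (``far''), or $f$ is $(\eps/2)$-close to some $g\in\overline{\NONMANIP}$ (``close'').

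In the close case, since $\Dist(f,\NONMANIP)\ge\eps$, the approximating $g$ is $(\eps/2)$-far from $\NONMANIP$, hence \emph{robustly} manipulable: if $g$ depends on one coordinate it is $(\eps/2)$-far from every $\tp_H$, and because the non-manipulation points of a one-voter rule are exactly where it agrees with $\tp_{\mathrm{range}}$, a $\Theta(\eps)$ fraction of profiles are manipulation points of $g$ (and, after checking the manipulation can be realized within four consecutive positions, $4$-manipulation points); if instead $g$ is two-valued it is $(\eps/2)$-far from monotone, and a monotonicity-testing / edge-isoperimetry estimate gives a $\mathrm{poly}(\eps,1/n)$ fraction of manipulation points. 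One then transfers these to $f$ using that $f=g$ off a set of measure $\le\eps/2$; this robustness step, in the delicate regimes, again leans on reverse hypercontractivity to control the joint law of a profile and its local repermutation on the agreement set.

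In the far case, $f$ is far from every one-coordinate junta and from every two-valued function, so a junta-type isoperimetric inequality on the profile graph yields a $\mathrm{poly}(\eps,1/n,1/k)$ fraction of boundary edges $(\sigma,\sigma')$ that differ in a single voter and satisfy $f(\sigma)=a\ne b=f(\sigma')$. The core step is a local lemma turning a positive fraction of these into genuine $4$-manipulation points: by repermuting at most four consecutive alternatives in some coordinate --- possibly after a short canonical path of such moves --- one reaches, with probability bounded below, a profile where some voter strictly gains. Quantifying this requires combining ``pivotality''-type events, each measurable w.r.t.\ a bounded set of voters, via the reverse hypercontractive estimate (Lemma~\ref{lem:inv_hyp}) at $\Theta(1)$ correlation, exactly as in the proof of the quantitative Arrow theorem (Theorem~\ref{thm:two_inf}) --- this is also where the $1/n$ in the final bound appears. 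Finally, the passage from $k=O(1)$ to general $k$ is obtained by restricting attention to the four alternatives occupying four fixed consecutive positions of the social order (for the three-alternative sub-problem one may instead invoke Theorem~\ref{thm:arrow3quant}/\ref{thm:arrow_k}, in the spirit of Friedgut--Kalai--Keller--Nisan).

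The main obstacle is the far case: converting isoperimetric boundary edges of $f$ into $4$-manipulation points in a way that is at once local (so the output lies in $M_4$), robust (so a polynomial fraction of boundary edges survive), and uniform in all of $\eps,1/n,1/k$. Since neutrality is not assumed, one cannot symmetrize over the alternatives, so the bookkeeping of which alternative gets promoted at a manipulation, together with the canonical-path/Markov-chain argument that upgrades the edge-boundary bound to a manipulation-pair bound, is where the bulk of the work in~\cite{MosselRacz:15} lies.
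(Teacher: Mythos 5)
Your high-level decomposition into a ``far'' case ($\Dist(f,\overline{\NONMANIP})\ge\eps/2$) and a ``close'' case, with the far case handled by the fiber/boundary argument and the close case by controlling one-voter and two-valued approximants, matches the paper's structure: the far case is Theorem~\ref{thm:k_refined}, and the close case is Theorem~\ref{thm:TRUENONMANIP}. (The paper phrases the combination as a contrapositive --- few $4$-manipulation points implies $\Dist(f,\overline{\NONMANIP})$ small by Theorem~\ref{thm:k_refined}, which by Theorem~\ref{thm:TRUENONMANIP} implies $\Dist(f,\NONMANIP)$ small, contradiction --- but the content is the same.) The transfer from manipulation points to manipulation pairs by dividing by the at most $24\,n(n-3)$ local repermutations is also how the paper obtains the second displayed bound.

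The genuine gap is in your close case, one-coordinate subcase. You assert that since the one-voter approximant $h$ is $(\eps/2)$-far from every $\tp_H$, and ``the non-manipulation points of a one-voter rule are exactly where it agrees with $\tp_{\mathrm{range}}$,'' a $\Theta(\eps)$ fraction of profiles are manipulation points. That characterization is false: a profile $\sigma$ with $h(\sigma)\neq\tp_{\mathrm{range}(h)}(\sigma)$ need not lie in $M_4(h)$, or even in $M(h)$ --- manipulability at $\sigma$ depends on whether a nearby (indeed adjacent, in the refined graph) profile has an outcome the voter at $\sigma$ strictly prefers, which is a local property of $h$ near $\sigma$, not a pointwise comparison with $\tp_{\mathrm{range}}$. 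The quantitative relation between $\Dist(h,\NONMANIP(1,k))$ and $\p(\sigma\in M_3(h))$ is exactly the content of Theorem~\ref{thm:quant_GS_1voter}, which the paper proves in a full section (Section~\ref{sec:1voter}) via a second round of the fiber/local-dictator/canonical-path machinery, and which yields only $\eps^3/(10^5 k^{16})$, not $\Theta(\eps)$. Without some proof of this one-voter quantitative GS theorem, your close case does not close. A secondary inaccuracy: in the far case you suggest one could invoke Theorem~\ref{thm:arrow3quant}/\ref{thm:arrow_k} for the three-alternative sub-problem as in Friedgut--Kalai--Keller--Nisan; that reduction is only used in the literature for $k=3$, and the paper's uniform-in-$k$ argument does not go through Arrow at all --- it stays entirely within the refined rankings graph and its canonical-path isoperimetry.
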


\subsection{Proof Ideas}
We first present our techniques that achieve a lower bound for the probability of manipulation that involves factors of $\frac{1}{k!}$ 
and then describe how a refined approach leads to a lower bound which has inverse polynomial dependence on $k$. 

\medskip

\noindent\textbf{Rankings graph and applying the original Gibbard-Satterthwaite theorem.}  Consider the graph $G = \left( V, E \right)$ having vertex set $V = S_k^n$, the set of all ranking profiles, and let $\left( \sigma, \sigma' \right) \in E$ if and only if $\sigma$ and $\sigma'$ differ in exactly one coordinate. The SCF $f : S_k^n \to \left[k \right]$ naturally partitions $V$ into $k$ subsets. Since every manipulation point must be on the boundary between two such subsets, we are interested in the size of such boundaries.

For two alternatives $a$ and $b$, and voter $i$, denote by $B_i^{a,b}$ the boundary between $f^{-1} \left( a \right)$ and $f^{-1}\left( b \right)$ in voter $i$. 
A simple lemma 
tells us that at least two of the boundaries are large; in the following assume that these are $B_1^{a,b}$ and $B_2^{a,c}$. Now if a ranking profile $\sigma$ lies on \emph{both} of these boundaries, then applying the original Gibbard-Satterthwaite theorem to the restricted SCF on two voters where we fix all coordinates of $\sigma$ except the first two, we get that there must exist a manipulation point which agrees with $\sigma$ in all but the first two coordinates. Consequently, if we can show that the \emph{intersection} of the boundaries $B_1^{a,b}$ and $B_2^{a,c}$ is large, then we have many manipulation points.

\medskip

\noindent\textbf{Fibers and reverse hypercontractivity.} In order to have more ``control'' over what is happening at the boundaries, we partition the graph further---this idea is due to Friedgut et al.~\cite{FrKaNi:08,FKKN:11}. Given a ranking profile $\sigma$ and two alternatives $a$ and $b$, $\sigma$ induces a \emph{vector of preferences} $x^{a,b}\left( \sigma \right) \in \left\{-1,1\right\}^{n}$ between $a$ and $b$. For a vector $z^{a,b} \in \left\{-1,1\right\}^n$ we define the \emph{fiber with respect to preferences between $a$ and $b$}, denoted by $F\left( z^{a,b} \right)$, to be the set of ranking profiles for which the vector of preferences between $a$ and $b$ is $z^{a,b}$. We can then partition the vertex set $V$ into such fibers, and work inside each fiber separately. Working inside a specific fiber is advantageous, because it gives us the extra knowledge of the vector of preferences between $a$ and $b$.


We distinguish two types of fibers: large and small. We say that a fiber w.r.t.\ preferences between $a$ and $b$ is \emph{large} if almost all of the ranking profiles in this fiber lie on the boundary $B_1^{a,b}$, and \emph{small} otherwise. Now since the boundary $B_1^{a,b}$ is large, either there is big mass on the large fibers w.r.t.\ preferences between $a$ and $b$ or big mass on the small fibers. This holds analogously for the boundary $B_2^{a,c}$ and fibers w.r.t.\ preferences between $a$ and $c$.

Consider the case when there is big mass on the large fibers of both $B_1^{a,b}$ and $B_2^{a,c}$. Notice that for a ranking profile $\sigma$, being in a fiber w.r.t.\ preferences between $a$ and $b$ only depends on the vector of preferences between $a$ and $b$, $x^{a,b}\left( \sigma \right)$, which is a uniform bit vector. Similarly, being in a fiber w.r.t.\ preferences between $a$ and $c$ only depends on $x^{a,c}\left( \sigma \right)$. Moreover, we know the exact correlation between the coordinates of $x^{a,b} \left( \sigma \right)$ and $x^{a,c} \left( \sigma \right)$, and it is in exactly this setting where \emph{reverse hypercontractivity} applies 
and shows that the \emph{intersection} of the large fibers of $B_1^{a,b}$ and $B_2^{a,c}$ is also large. Finally, by the definition of a large fiber it follows that the intersection of the \emph{boundaries} $B_1^{a,b}$ and $B_2^{a,c}$ is large as well, and we can finish the argument using the Gibbard-Satterthwaite theorem as above.

To deal with the case when there is big mass on the \emph{small} fibers of $B_1^{a,b}$ we use various isoperimetric techniques, including the canonical path method. 
In particular, we use the fact that for a small fiber for $B_1^{a,b}$, the size of the boundary of $B_1^{a,b}$ in the small fiber is comparable to the size of $B_1^{a,b}$ in the small fiber itself, up to polynomial factors.

\medskip

\noindent\textbf{A refined geometry.} Using this approach with the rankings graph above, our bound includes $\frac{1}{k!}$ factors.
In order to obtain inverse polynomial dependence on $k$ 
we use a refined approach. Instead of the rankings graph outlined above, we use an underlying graph with a different edge structure: $\left( \sigma, \sigma' \right) \in E$ if and only if $\sigma$ and $\sigma'$ differ in exactly one coordinate, and in this coordinate they differ by a single adjacent transposition. In order to prove the refined result, we need to show that the geometric and combinatorial quantities such as boundaries and manipulation points are roughly the same in the refined graph as in the original rankings graph. In particular, this is where we need to analyze carefully functions of one voter, and ultimately 
prove a quantitative Gibbard-Satterthwaite theorem for one voter.

\subsection{Isoperimetric Results}\label{subsec:gs:iso} 


\subsubsection{Boundaries and influences} 

For a general graph $G = \left( V, E \right)$, and a subset of the vertices $A \subseteq G$, we define the \emph{edge boundary} of $A$ as
\[
\partial_{e} \left( A \right) = \left\{ \left( u, v \right) \in E : u \in A, v \notin A \right\}.
\]
We also define the \emph{boundary} (or vertex boundary) of  a subset of the vertices $A \subseteq G$ to be the set of vertices in $A$ which have a neighbor that is not in $A$:
\[
\partial \left( A \right) = \left\{ u \in A : \text{ there exists } v \notin A \text{ such that } \left( u, v \right) \in E \right\}.
\]
If $u \in \partial \left( A \right)$, we also say that $u$ is \emph{on} the
edge boundary of $A$.


\begin{definition}[Boundaries]
For a given SCF $f$ and a given alternative $a \in \left[k \right]$, we define
\[
H^{a} \left( f \right) = \left\{ \sigma \in S_k^n : f\left( \sigma \right) = a \right\},
\]
the set of ranking profiles where the outcome of the vote is $a$. The edge boundary of this set is denoted by $B^a \left( f \right)$ : $B^a \left( f \right) = \partial_e \left( H^a \left( f \right) \right)$. This boundary can be partitioned: we say that the edge boundary of $H^a \left( f \right)$ in the direction of the $i^{\text{th}}$ coordinate is
\[
B_i^{a} \left( f \right) = \left\{ \left( \sigma, \sigma' \right) \in B^a \left( f \right) :  \sigma_i \neq \sigma'_i \right\}.
\]
The boundary $B^a \left( f \right)$ can be therefore written as $B^a \left( f \right) = \cup_{i=1}^{n} B_i^a \left( f \right)$. We can also define the boundary between two alternatives $a$ and $b$ in the direction of the $i^{\text{th}}$ coordinate:
\[
B_i^{a,b} \left( f \right) = \left\{ \left( \sigma, \sigma' \right) \in B_i^a \left( f \right) : f\left( \sigma' \right) = b \right\}.
\]
We also say that $\sigma \in B_i^a \left( f \right)$ is \emph{on} the boundary $B_i^{a,b} \left( f \right)$ if there exists $\sigma'$ such that $\left( \sigma, \sigma' \right) \in B_i^{a,b} \left( f \right)$.
\end{definition}

We will need to generalize the definition of influences as follows: 
\begin{definition}[Influences]
We define the \emph{influence} of the $i^{\text{th}}$ coordinate on $f$ as
\[
\Inf_i \left( f \right) = \p \left( f\left( \sigma \right) \neq f \left( \sigma^{\left( i \right)} \right) \right) = \p \left( \left( \sigma, \sigma^{\left( i \right)} \right) \in \cup_{a = 1}^{k} B_i^a \left( f \right) \right),
\]
where $\sigma$ is uniform on $S_{k}^{n}$ and $\sigma^{\left( i \right)}$ is obtained from $\sigma$ by rerandomizing the $i^{\text{th}}$ coordinate. Similarly, we define the influence of the $i^{\text{th}}$ coordinate with respect to a single alternative $a \in \left[k \right]$ or a pair of alternatives $a,b \in \left[k\right]$ as
\[
\Inf_i^{a} \left( f \right) = \p \left( f\left( \sigma \right) = a, f \left( \sigma^{\left( i \right)} \right) \neq a \right) = \p \left( \left( \sigma, \sigma^{\left( i \right)} \right) \in B_i^a \left( f \right) \right) ,
\]
and
\[
\Inf_i^{a,b} \left( f \right) = \p \left( f\left( \sigma \right) = a, f \left( \sigma^{\left( i \right)} \right) = b \right) = \p \left( \left( \sigma, \sigma^{\left( i \right)} \right) \in B_i^{a,b} \left( f \right) \right),
\]
respectively.
\end{definition}
Clearly
\[
\Inf_i \left( f \right) = \sum_{a=1}^{k} \Inf_i^a \left( f \right) = \sum_{a,b \in \left[k \right]: a\neq b} \Inf_{i}^{a,b} \left( f \right).
\]

Most of the time the specific SCF $f$ will be clear from the context, in which case we omit the dependence on $f$, and write simply $B^a \equiv B^a \left( f \right)$, $B_i^a \equiv B_i^a \left( f \right)$, etc.

\subsection{Large boundaries} 

The following standard proposition bounds the total influence with
respect to a given candidate from below by the variance with respect to that
candidate.
\begin{proposition}
  \label{prop:sumInfVarBound}
  For any $f \colon S_k^n \rightarrow [k]$ and $a \in [k]$,
  \begin{equation}
    \sum_{i=1}^n \Inf_i^a (f)
    \ge
    \Var [ 1_{\{f(X) = a\}} ]
  \end{equation}
  where $X \in S_k^n$ is uniformly selected.
\end{proposition}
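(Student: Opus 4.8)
The inequality to prove is a standard fact: for a Boolean-type function, the sum of single-coordinate influences dominates the variance. Here the ``function'' is $1_{\{f(X)=a\}}$, which maps $S_k^n$ to $\{0,1\}$, and the underlying product space is $S_k^n$ with the uniform measure (each coordinate is uniform on $S_k$). First I would reduce to working with the indicator $g := 1_{\{f(\cdot)=a\}} : S_k^n \to \{0,1\}$ and observe that $\Inf_i^a(f) = \p(g(\sigma)=1, g(\sigma^{(i)})=0)$, where $\sigma^{(i)}$ rerandomizes coordinate $i$. The key identity is the martingale (Efron–Stein / orthogonal decomposition) expansion of the variance along the coordinates: writing $g_j = \E[g \mid X_{j+1},\dots,X_n]$ as in Definition~\ref{def:DMS_error}, orthogonality of martingale increments gives $\Var[g] = \sum_{i=1}^n \E[(g_i - g_{i-1})^2]$, and one checks $\E[(g_i-g_{i-1})^2] \le \E[\Var[g \mid X_{\sim i}]] =: I_i(g)$, the ``averaged conditional variance'' influence (this is exactly the Exercise after Definition~\ref{def:DMS_error}). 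So it suffices to show $I_i(g) \le \Inf_i^a(f)$ for each $i$.

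**The per-coordinate bound.** Fix $i$ and condition on all coordinates $X_{\sim i} = x_{\sim i}$; let $p = p(x_{\sim i}) = \p(g=1 \mid X_{\sim i} = x_{\sim i})$ be the conditional probability (over the uniform choice of $X_i \in S_k$) that $f = a$. Then the conditional variance is $p(1-p)$, so $I_i(g) = \E[p(1-p)]$. On the other hand, $\Inf_i^a(f)$, restricted to this conditioning, is the probability that an \emph{independent} pair $X_i, X_i'$ of uniform samples from $S_k$ yields $g=1$ on the first and $g=0$ on the second, namely $p(1-p)$ as well — identical, in fact. Hence $I_i(g) = \Inf_i^a(f)$ coordinatewise, and summing over $i$ with the variance decomposition gives
\[
\Var[1_{\{f(X)=a\}}] = \sum_{i=1}^n \E[(g_i - g_{i-1})^2] \le \sum_{i=1}^n I_i(g) = \sum_{i=1}^n \Inf_i^a(f),
\]
which is the claim.

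**What is actually delicate.** The only place needing care is the orthogonality/telescoping step and the inequality $\E[(g_i-g_{i-1})^2] \le I_i(g)$: the martingale increment $g_i - g_{i-1}$ is the projection of $g$ onto the part of $L^2$ that depends on $X_i$ but not on $X_{i+1},\dots,X_n$, whereas $I_i(g)$ captures \emph{all} of the dependence on $X_i$ (including interactions with earlier coordinates $X_1,\dots,X_{i-1}$); hence the inequality rather than equality. This is precisely the content of the exercise following Definition~\ref{def:DMS_error}, which one invokes; alternatively one can give the one-line Jensen argument: $g_i - g_{i-1} = \E[\,g_i - \E[g_i \mid X_{\sim i}, X_{i+1},\dots] \mid X_i,\dots,X_n]$ is a conditional expectation of a ``localized'' increment, so its second moment is at most that of the increment it averages, whose total over a fixed value of $X_{i+1},\dots,X_n$ is exactly the conditional variance $\Var[g \mid X_{\sim i}]$. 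I expect no genuine obstacle here; the proposition is essentially a restatement of the Efron–Stein inequality for a $\{0,1\}$-valued function on a product space, specialized to the SCF indicator, and the whole argument is three or four lines once the martingale decomposition from Definition~\ref{def:DMS_error} is in hand.
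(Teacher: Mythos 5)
Your proof is correct, but it takes a genuinely different route from the paper's. The paper proves this via a random-walk argument: for a $\{0,1\}$-valued $g$, $2\Var[g(X)] = \P(g(X)\neq g(Y))$ with $Y$ independent of $X$, and the paper interpolates from $X$ to $Y$ by rerandomizing one coordinate at a time, so that $\{g(X)\neq g(Y)\}$ forces a change at some edge of the walk, and a union bound over the $n$ edges gives $\sum_i 2\Inf_i^a(f)$. You instead use the orthogonal martingale decomposition $\Var[g]=\sum_i \E[(g_i-g_{i-1})^2]$ together with the per-coordinate bound $\E[(g_i-g_{i-1})^2]\le \E[\Var[g\mid X_{\sim i}]]$, and then observe the exact identity $\E[\Var[g\mid X_{\sim i}]]=\Inf_i^a(f)$ (conditionally, both equal $p(1-p)$). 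The paper's argument is self-contained and shorter, needing no Fourier/orthogonality machinery and no Jensen step; yours reuses the martingale apparatus already set up in Definition~\ref{def:DMS_error} and its exercise, and it yields a slightly sharper picture by exhibiting the per-coordinate equality $I_i(g)=\Inf_i^a(f)$ rather than just the summed inequality. Both are standard one-sided Efron--Stein arguments, and both lose a factor when the martingale increment fails to capture interactions with the earlier coordinates (you) or when multiple edges flip (the paper) --- these are the same slack in two guises.
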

\begin{proof}
  Create a random walk
  $X=X^{(0)}, \ldots, X^{(n)}=Y$
  from $X$ by re-randomizing the $i$:th coordinate in the $i$:th step,
  i.e. for $i \in [n]$,
  $X^{(i)} \in S_k^n$ is obtained by re-randomizing the $i$:th coordinate
  of $X^{(i-1)}$.
  Letting $g(x)=1_{\{f(x) = a\}}$ and using that $X,Y$ are independent
  and that if $g(X) \neq g(Y)$
  then the value of $g$ has to change at some edge on the path we have
  \begin{eqnarray*}
    2 \Var [ 1_{\{f(X) = a\}} ]
    &=&
    2 \Var g(X)
    =
    \P(g(X) \neq g(Y))
    \le
    \\
    &\le&
    \P(\cup_{i \in [n]} \{g(X^{(i-1)}) \neq g(X^{(i)})\})
    \le
    \sum_{i=1}^n 2 \Inf_i^a(f)
  \end{eqnarray*}
\end{proof}

Further, if a function is far from all constants not all such variances can be small:
\begin{lemma}
  \label{lem:constDist}
  For any $f \colon S_k^n \rightarrow [k]$,
  \begin{equation}
    \Dist(f, \CONST)
    \le
    \frac{k}{2} \sum_{a=1}^k
    \Var [ 1_{\{f(X) = a\}} ]
  \end{equation}
\end{lemma}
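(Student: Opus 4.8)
The plan is to reduce the statement to an elementary inequality about the vector of fiber probabilities of $f$. First I would set $p_a = \P(f(X) = a)$ for each $a \in [k]$, where $X \in S_k^n$ is uniform, so that $\sum_{a=1}^k p_a = 1$ and $\Var[1_{\{f(X)=a\}}] = p_a(1-p_a)$. Since $\CONST$ is the class of constant SCFs, the distance $\Dist(f,\CONST)$ is realized by the constant equal to the most likely output of $f$, i.e.
\[
\Dist(f,\CONST) = \min_{a \in [k]} \P(f(X) \neq a) = 1 - \max_{a \in [k]} p_a.
\]
Writing $p^\ast = \max_a p_a$, the claimed inequality becomes $1 - p^\ast \le \tfrac{k}{2}\bigl(1 - \sum_{a} p_a^2\bigr)$, using $\sum_a \Var[1_{\{f(X)=a\}}] = \sum_a p_a(1-p_a) = 1 - \sum_a p_a^2$.

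The key step is the bound $\sum_a p_a^2 \le p^\ast \sum_a p_a = p^\ast$, which holds because $p_a \le p^\ast$ for every $a$. This gives $\sum_a \Var[1_{\{f(X)=a\}}] = 1 - \sum_a p_a^2 \ge 1 - p^\ast = \Dist(f,\CONST)$, and since $k \ge 2$ (the case $k=1$ being trivial, as then $\Dist(f,\CONST)=0$) we may multiply by $\tfrac{k}{2} \ge 1$ to conclude. Alternatively one observes directly that $\sum_a p_a(1-p_a) \ge \sum_a p_a(1-p^\ast) = 1-p^\ast$.

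I do not expect any real obstacle here; the only point requiring care is the identification of $\Dist(f,\CONST)$ with $1 - \max_a p_a$, after which the estimate is a one-line computation. It is worth remarking that the factor $\tfrac{k}{2}$ in the statement is far from tight — the constant $1$ would already suffice — so no sharpness considerations enter the argument.
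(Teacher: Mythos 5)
Your proof is correct, and it reaches the same conclusion as the paper but by a different (and in fact tighter) route. The paper also begins from $\Dist(f,\CONST) = 1-\mu_1$ with $\mu_1 = \max_a p_a$, but then applies the inequality $1-\mu_1 \le k\,\mu_1(1-\mu_1)$, valid because $\mu_1 \ge 1/k$, and follows it with the algebraic identity $k\mu_1(1-\mu_1) = \tfrac{k}{2}\bigl(1-\mu_1^2-(1-\mu_1)^2\bigr)$ and the bound $(1-\mu_1)^2 = \bigl(\sum_{a\ge 2}\mu_a\bigr)^2 \ge \sum_{a\ge 2}\mu_a^2$. So the paper ``spends'' the factor of $k$ at the very first step, and then partially wastes it by relaxing $(1-\mu_1)^2$ down to $\sum_{a\ge2}\mu_a^2$. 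Your argument instead lower-bounds $\sum_a p_a(1-p_a) \ge \sum_a p_a(1-p^\ast) = 1-p^\ast$ directly, proving the inequality with constant $1$ in place of $k/2$; the $k/2$ factor is then recovered trivially from $k\ge2$. Both proofs are a few lines; yours yields the sharper statement $\Dist(f,\CONST) \le \sum_a \Var[\1_{\{f(X)=a\}}]$, which shows the constant $k/2$ in the lemma is not doing any real work.
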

\begin{proof}
  For $a \in [k]$, let $\mu_a = \P(f(X)=a)$
  and assume w.l.o.g. that $\mu_1 \ge \mu_2 \ge \ldots \ge \mu_k$.
  Then,
  \begin{align*}
    \Dist(f, \CONST)
    & =
    (1-\mu_1)
    \le
    k \mu_1 (1-\mu_1)
    =
    \frac{k}{2} \left(1 - \mu_1^2 - (1-\mu_1)^2\right)
    \le
    \\
    & \le
    \frac{k}{2} \left(1- \sum_{a=1}^k \mu_a^2\right)
    =
    \frac{k}{2} \sum_{a=1}^k \mu_a - \mu_a^2
    =
    \frac{k}{2} \sum_{a=1}^k \Var [ 1_{\{f(X) = a\}} ]
  \end{align*}
\end{proof}

The following lemma shows that there are some boundaries which are large 
\begin{lemma} \label{lem:boundaries1}
  Fix $k \ge 3$ and
  $f \colon S_k^n \to \left[k\right]$
  satisfying $\Dist(f, \overline{\NONMANIP}) \ge \eps$.
  Then there exist distinct $i,j \in [n]$
  and $\{a,b\},\{c,d\} \subseteq [k]$ such that $c \notin \{a,b\}$ and
  \begin{equation}
    \Inf_i^{a,b} (f) \ge \frac{2\eps}{n k^2 (k-1)}
    \quad \text{ and } \quad
    \Inf_j^{c,d} (f) \ge \frac{2\eps}{n k^2 (k-1)}.
  \end{equation}
\end{lemma}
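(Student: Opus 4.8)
The plan is to fix the threshold $\tau := \frac{2\eps}{nk^2(k-1)}$, first exhibit one boundary of influence at least $\tau$, and then argue by contradiction that a second boundary, in a different coordinate and about a different pair, must also exist.

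First I would produce one large boundary. Since $\CONST \subseteq \overline{\NONMANIP}$, the hypothesis gives $\Dist(f,\CONST) \ge \eps$, so Lemma~\ref{lem:constDist} yields $\sum_{a=1}^k \Var[1_{\{f(X)=a\}}] \ge 2\eps/k$, and Proposition~\ref{prop:sumInfVarBound} summed over $a$ gives $\sum_{i=1}^n \Inf_i(f) \ge 2\eps/k$. Because the pair $(\sigma,\sigma^{(i)})$ is exchangeable one has $\Inf_i^{a,b}(f) = \Inf_i^{b,a}(f)$, hence $\Inf_i(f) = 2\sum_{\{a,b\}} \Inf_i^{a,b}(f)$; thus the total influence is twice a sum of the $n\binom{k}{2}$ quantities $\Inf_i^{a,b}(f)$, and pigeonhole produces a pair $(i_0,\{a_0,b_0\})$ with $\Inf_{i_0}^{a_0,b_0}(f) \ge \tau$.

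Next I would suppose the conclusion fails and set $L := \{(i,\{a,b\}) : \Inf_i^{a,b}(f) \ge \tau\}$, nonempty by the previous step. Failure of the conclusion says exactly that any two members of $L$ share their coordinate or their pair, and a short combinatorial argument (fix $(i_0,S_0)\in L$; if some member has coordinate $\ne i_0$ its pair must be $S_0$, and then every member has pair $S_0$) forces $L$ to be of one of two types: (a) all members have a common coordinate $i_0$, or (b) all members have a common pair $\{a_0,b_0\}$; each contradicts $\Dist(f,\overline{\NONMANIP}) \ge \eps$. In type (b): for every $c \notin \{a_0,b_0\}$ and every $i,d$ the pair $\{c,d\}$ differs from $\{a_0,b_0\}$, so $\Inf_i^{c,d}(f) < \tau$ and Proposition~\ref{prop:sumInfVarBound} gives $\Var[1_{\{f=c\}}] \le \sum_i \Inf_i^c(f) < n(k-1)\tau = 2\eps/k^2$; hence $\P(f=c) < 4\eps/k^2$ for each such $c$ (if instead $\P(f=c) > 1-4\eps/k^2$ then $f$ is $(4\eps/k^2)$-close to the constant $c$, and $4/k^2 < 1$ for $k \ge 3$, a contradiction), so $\P(f \notin \{a_0,b_0\}) < (k-2)\cdot 4\eps/k^2 \le \eps/2$ and the two-valued function agreeing with $f$ on $f^{-1}(\{a_0,b_0\})$ and equal to $a_0$ elsewhere is within $\eps$ of $f$ — a contradiction. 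In type (a): for each $i \ne i_0$ all boundaries in coordinate $i$ lie below $\tau$, so $\Inf_i(f) < k(k-1)\tau$ and $\sum_{i \ne i_0} \Inf_i(f) < (n-1)k(k-1)\tau < 2\eps/k \le \eps$; combining this with the elementary bound $\Dist(f, \{\text{functions of coordinate } i_0\}) \le \sum_{i \ne i_0} \Inf_i(f)$ — proved by letting $g(\sigma_{i_0})$ be the plurality value of $f(\sigma_{i_0},\cdot)$, using $1 - \max_a p_a \le \sum_a p_a(1-p_a) = \sum_a \Var[1_{\{f(\sigma_{i_0},\cdot)=a\}}]$, and applying Proposition~\ref{prop:sumInfVarBound} to the restriction $f(\sigma_{i_0},\cdot)$ on $n-1$ coordinates — shows $f$ is within $\eps$ of a function of a single coordinate, again a contradiction.

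The combinatorial dichotomy in the third step is routine; the substance is in the two closeness estimates, and in particular in the fiberwise application of Proposition~\ref{prop:sumInfVarBound} that gives $\Dist(f,\{\text{functions of }i_0\}) \le \sum_{i\ne i_0}\Inf_i(f)$. The one point demanding genuine care is keeping the numerical constants sharp enough that every estimate strictly beats $\eps$ for \emph{all} $k \ge 3$ simultaneously — the tightest cases being $k=3$ and $k=4$ — which is precisely why the denominator in the statement carries the factor $k^2(k-1)$ rather than anything smaller.
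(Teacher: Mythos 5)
Your proof is correct, and it uses the same core ingredients as the paper's argument: Proposition~\ref{prop:sumInfVarBound}, the relation between $\Dist(f,\CONST)$ and $\sum_a\Var[1_{\{f=a\}}]$, and the fiberwise application of Proposition~\ref{prop:sumInfVarBound} to bound distance to dictators by the influences in the other coordinates. The difference is in packaging. The paper defines $A^{a,b}=\{i:\Inf_i^{a,b}(f)\ge\tau\}$ and proves two positive claims directly — that $\lvert\bigcup_{a,b}A^{a,b}\rvert\ge 2$, and that for every $\{a,b\}$ some $\{c,d\}$ with $c\notin\{a,b\}$ has $A^{c,d}\ne\emptyset$ — and then assembles the lemma from the two. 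You instead first exhibit one large boundary by pigeonhole over the $n\binom{k}{2}$ quantities and then run a contradiction, showing that if the conclusion fails the set $L$ of large boundaries must either share a coordinate (your type~(a), which is the contrapositive of the paper's first claim and treated with the same restriction-then-apply-Proposition-\ref{prop:sumInfVarBound} device) or share a pair (your type~(b), the contrapositive of the second). This reorganization hides the small combinatorial step of extracting the lemma from the two claims; and your opening pigeonhole step, which produces $L\ne\emptyset$, is not actually needed in the paper's version since it is subsumed by proving the second claim. One genuine, if minor, improvement on your side: in type~(a) you bound $\Dist(f,\{\text{functions of }i_0\})$ by $\sum_{i\ne i_0}\Inf_i(f)$ using the constant-free inequality $1-\max_a p_a\le\sum_a p_a(1-p_a)$, whereas the paper invokes Lemma~\ref{lem:constDist}, which carries an extra factor $k/2$ and therefore needs the additional multiplicative $k$ in front of $\sum_{j\ne i}\Inf_j(f)$ before the estimate closes. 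Both arguments finish with room to spare, so this only affects the constants, not the validity.
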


\begin{proof}
  For $a \neq b$ let
  $
  A^{a,b} =
  \left\{
    i\in [n] \mid \Inf_i^{a,b} \ge \frac{2\eps}{n k^2 (k-1)}
  \right\}
  $.

The proof of the lemma would follow if we could show that 1. $|\cup_{a,b} A^{a,b}| \ge 2$, and 
   2. for all $\{a,b\}$ there exists $\{c,d\}$ such that $\{c,d\}
  \neq \{a,b\}$ and $A^{c,d} \neq \emptyset$.
  
  For the second claim, note that $f$ being $\eps$-far from taking two values implies that we can find
  a $c \notin \{a,b\}$
  such that $1-\frac{\eps}{q} \ge \P(f(X) = c) \ge \frac{\eps}{k-2} \ge \frac{\eps}{k}$.
  But then by Proposition~\ref{prop:sumInfVarBound} it holds that 
  \begin{equation*}
    \sum_{d \neq c} \sum_{i=1}^n \Inf_i^{c,d} (f)
    =
    \sum_{i=1}^n \Inf_i^c (f)
    \ge
    \Var [ 1_{\{f(X) = c\}} ]
    \ge
    \frac{\eps(1-\eps/k)}{k}
    \ge
    \frac{\eps (k-1)}{k^2}
  \end{equation*}
  hence there must exist some $d \neq c$ and $i \in [n]$ such that
  $\Inf_i^{c,d} \ge \frac{\eps}{n k^2} \ge \frac{2\eps}{n k^2(k-1)}$,
  and thus $A^{c,d} \neq \emptyset$.

It remains to show that 
  \begin{equation}
    \label{eq:unionContains}
    |\cup_{a,b} A^{a,b}| \ge 2
  \end{equation}
  To see this, assume the contrary, i.e.
  $\cup_{a,b} A^{a,b} \subseteq \{i\}$ for some $i \in [n]$.
  Then for all $j \neq i$ it holds that
  \begin{equation}
    \label{eq:nonInflInflBound}
    \Inf_j (f) = \sum_{c,d} \Inf_j^{c,d}(f) < \frac{k(k-1)}{2}
    \frac{2\eps}{n k^2 (k-1)} = \frac{\eps}{n k}
  \end{equation}
  For $\sigma \in S_n$,
  let $f_\sigma(x) = f(x_1, \ldots, x_{i-1}, \sigma, x_{i+1}, \ldots, x_n)$
  and note that for $j \neq i$,
  \begin{equation}
    \Inf_j(f) = \frac{1}{k!} \sum_{\sigma \in S_n} \Inf_j(f_\sigma)
  \end{equation}
  while $\Inf_i(f_\sigma) = 0$.
  Hence, by \eqref{eq:nonInflInflBound}, we have
  \begin{equation*}
    \eps
    >
    k \sum_{j \neq i} \Inf_j (f)
    =
    \frac{k}{k!} \sum_{j=1}^n \sum_{\sigma } \Inf_j(f_\sigma)
    \ge
    \frac{2}{k!} \sum_{\sigma} \Dist(f_\sigma, \CONST)
    =
    2 \Dist(f,\DICT_i)
  \end{equation*}
  where the second inequality follows from Lemma~\ref{lem:constDist}
  and Proposition~\ref{prop:sumInfVarBound}.
  But this means that $f$ is $\eps/2$-close to a dictator, contradicting
  the assumption that $\Dist(f,\NONMANIP) \ge \eps$.

  Hence \eqref{eq:unionContains} holds. The proof follows. 
\end{proof}

\subsection{General isoperimetric results} 

Our rankings graph is the Cartesian product of $n$ complete graphs on $k!$ vertices. 
The edge-isoperimetric problem on the product of complete graphs was originally solved by Lindsey~\cite{Lindsey:64}, implying the following result.
\begin{corollary}\label{cor:isoLindsey}
If $A \subseteq K_{\ell} \times \dots \times K_{\ell}$ ($n$ copies of the complete graph $K_{\ell}$) and $\left| A \right| \leq \left( 1 - \frac{1}{\ell} \right) \ell^n$, then $\left| \partial_{e} \left( A \right) \right| \geq \left| A \right|$.
\end{corollary}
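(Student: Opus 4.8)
The statement to prove is Corollary~\ref{cor:isoLindsey}, the edge-isoperimetric inequality on the Cartesian product $K_\ell^n$ of $n$ copies of the complete graph $K_\ell$: any $A$ with $|A| \le (1-1/\ell)\ell^n$ satisfies $|\partial_e(A)| \ge |A|$. The plan is to derive this from Lindsey's theorem, which identifies the exact edge-isoperimetric optimizers in $K_\ell^n$, and then to check that for the relevant size range those optimizers have edge boundary at least their own size. First I would recall the form of Lindsey's result: for every $m$, among all subsets of $V(K_\ell^n)$ of size $m$, the edge boundary $|\partial_e(\cdot)|$ is minimized by an initial segment of a suitable nested (``simplicial'' / lexicographic-type) ordering of the vertices --- concretely, when $m = j\ell^{n-1}$ for an integer $0\le j\le \ell$, an optimal set is a union of $j$ parallel copies of $K_\ell^{n-1}$ (a ``sub-box'' in one coordinate), and for general $m$ one fills in one more slab lexicographically. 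So it suffices to prove the bound $|\partial_e(A^\star)| \ge |A^\star|$ for these canonical optimal sets $A^\star$ of each size $m \le (1-1/\ell)\ell^n$.

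**Key steps.** (1) Reduce to canonical sets via Lindsey: fix $m = |A| \le (1-1/\ell)\ell^n$ and let $A^\star$ be the Lindsey-optimal set of size $m$; then $|\partial_e(A)| \ge |\partial_e(A^\star)|$, so it is enough to bound $|\partial_e(A^\star)|$ from below by $m$. (2) Compute the boundary of a ``full slab'' set: if $m = j \ell^{n-1}$ with $1 \le j \le \ell-1$, then $A^\star$ is $j$ disjoint copies of $K_\ell^{n-1}$ sitting over $j$ of the $\ell$ values of the first coordinate. Each vertex of $A^\star$ has $\ell - j$ neighbors in the first-coordinate direction lying outside $A^\star$, so $|\partial_e(A^\star)| = (\ell-j)\,j\,\ell^{n-1} = (\ell-j)\,m$. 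Since $j \le \ell - 1$ gives $\ell - j \ge 1$, we get $|\partial_e(A^\star)| \ge m$. (3) Handle the general size: write $m = j\ell^{n-1} + m'$ with $0 \le j \le \ell-1$ and $0 \le m' < \ell^{n-1}$; here $A^\star$ consists of $j$ full slabs plus a Lindsey-optimal subset $A'$ of size $m'$ inside one further slab $K_\ell^{n-1}$. The edge boundary of $A^\star$ splits into: edges in the first-coordinate direction leaving the full slabs ($\ge (\ell-1-j)\cdot j\ell^{n-1}$ into empty slabs, plus the boundary contributed against the partial slab), edges from the partial part $A'$ in the first direction, and edges strictly inside the partial slab, which by induction on $n$ is at least $|A'| = m'$. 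Bookkeeping these contributions and using $j \le \ell - 1$ yields $|\partial_e(A^\star)| \ge j\ell^{n-1} + m' = m$; the base case $n = 1$ is immediate since in $K_\ell$ any set $A$ with $|A| \le \ell - 1$ has every vertex adjacent to all $\ell - |A| \ge 1$ outside vertices, giving $|\partial_e(A)| = |A|(\ell - |A|) \ge |A|$.

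**Main obstacle.** The only real subtlety is step~(3): making the inductive decomposition of $\partial_e(A^\star)$ clean, i.e.\ verifying that the cross-direction edges from the partial slab and the ``internal'' edges really do add up to at least $m$ without double-counting, and that the canonical Lindsey set genuinely has the nested ``slabs plus one lexicographic remainder'' structure in every dimension (this is exactly what Lindsey's/Harper-type theorems guarantee, so I would cite it rather than reprove it). A slicker alternative that avoids the induction entirely: observe that for the canonical set, since $|A^\star| \le (1-1/\ell)\ell^n$ the complement has size $\ge \ell^{n-1}$, so in the first coordinate there is always at least one completely empty slab's worth of ``room''; a short averaging/compression argument then shows each of the $|A^\star|$ vertices can be charged a distinct outgoing edge (in the first coordinate direction) to the outside, giving $|\partial_e(A^\star)| \ge |A^\star|$ directly. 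I would present the slab computation (step 2) as the heart of the argument and remark that the general case follows by the same charging idea, citing Lindsey~\cite{Lindsey:64} for the reduction to canonical sets.
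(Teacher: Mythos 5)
The paper offers no proof of this corollary; it is stated as a direct consequence of Lindsey's theorem~\cite{Lindsey:64}, which is the route you take, so you are filling in a step the text leaves to the reader. Steps (1) and (2) are correct, and your ``slicker alternative'' is actually the cleanest way to finish the general case: once Lindsey reduces to a lexicographic initial segment $A^\star$, the hypothesis $|A^\star| \le (1-1/\ell)\ell^n = (\ell-1)\ell^{n-1}$ forces the $\ell$-th slab in the first coordinate to be entirely empty (if $j \le \ell-2$ then slabs $j+2,\dots,\ell$ are empty; if $j = \ell-1$ the size bound forces $m'=0$, so slab $\ell$ is again empty), and the map $(i,x) \mapsto \{(i,x),(\ell,x)\}$ is an injection from $A^\star$ into $\partial_e(A^\star)$. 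No averaging or compression is needed once the set is in canonical form.

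There is, however, a small gap in the inductive version of step (3): you invoke the inductive hypothesis to claim the within-slab boundary of $A' \subseteq K_\ell^{n-1}$ is at least $m' = |A'|$, but the hypothesis only applies when $m' \le (1-1/\ell)\ell^{n-1} = (\ell-1)\ell^{n-2}$, and $m'$ can be as large as $\ell^{n-1}-1 > (\ell-1)\ell^{n-2}$. This gap is harmless because the within-slab boundary is never needed: if $j=0$ the direction-one edges already contribute $m'(\ell-1) \ge m'$; if $1 \le j \le \ell-2$ the full slabs contribute at least $j\ell^{n-1}(\ell-j-1) + j(\ell^{n-1}-m') \ge j\ell^{n-1}$ in direction one and the partial slab contributes $m'(\ell-j-1) \ge m'$ into the empty slabs, so the total is already $\ge m$; and if $j = \ell-1$ then $m'=0$ and the $j\ell^{n-1}$ direction-one edges into the empty slab $\ell$ suffice. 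So drop the appeal to induction and use either this bookkeeping or the charging bijection.
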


\subsection{Fibers}\label{sec:fibers} 

In our proof we need to partition the graph even further---this idea is due to Friedgut, Kalai, Keller, and Nisan~\cite{FrKaNi:08,FKKN:11}.
\begin{definition}\label{def:xab}
For a ranking profile $\sigma \in S_k^n$ define the vector
\[
x^{a,b} \equiv x^{a,b} \left( \sigma \right) = \left( x_1^{a,b} \left( \sigma \right), \dots, x_n^{a,b} \left( \sigma \right) \right)
\]
of preferences between $a$ and $b$, where $x_i^{a,b} \left( \sigma \right) = 1$ if $a \stackrel{\sigma_i}{>} b$ and $x_i^{a,b} \left( \sigma \right) = -1$ otherwise.
\end{definition}
\begin{definition}[Fibers]\label{def:fibers}
For a pair of alternatives $a,b \in \left[k \right]$ and a vector $z^{a,b} \in \left\{-1,1\right\}^n$, write
\[
F \left( z^{a,b} \right) := \left\{ \sigma : x^{a,b} \left( \sigma \right) = z^{a,b} \right\}.
\]
We call the $F \left( z^{a,b} \right)$ \emph{fibers} with respect to preferences between $a$ and $b$.
\end{definition}
So for any pair of alternatives $a,b$, we can partition the ranking profiles according to its fibers:
\[
S_k^n = \bigcup_{z^{a,b} \in \left\{ - 1, 1 \right\}^{n}} F \left( z^{a,b} \right).
\]

Given a SCF $f$, for any pair of alternatives $a,b \in \left[ k \right]$ and $i \in \left[ n \right]$, we can also partition the boundary $B_i^{a,b} \left( f \right)$ according to its fibers. There are multiple, slightly different ways of doing this, but for our purposes the following definition is most useful. Define
\[
B_i \left( z^{a,b} \right) := \left\{ \sigma \in F \left( z^{a,b} \right) : f \left( \sigma \right) = a, \text{ and there exists } \sigma' \text{ s.t.\ } \left( \sigma, \sigma' \right) \in B_i^{a,b} \right\},
\]
where we omit the dependence of $B_i \left( z^{a,b} \right)$ on $f$. So $B_i \left( z^{a,b} \right) \subseteq F \left( z^{a,b} \right)$ is the set of vertices on the given fiber for which the outcome is $a$ and which lies on the boundary between $a$ and $b$ in direction $i$. We call the sets of the form $B_i \left( z^{a,b} \right)$ \emph{fibers for the boundary $B_i^{a,b}$} (again omitting the dependence on $f$ of both sets).

We now distinguish between small and large fibers for the boundary $B_i^{a,b}$.
\begin{definition}[Small and large fibers]\label{def:lg_fibers}
We say that the fiber $B_i \left( z^{a,b} \right)$ is \emph{large} if
\begin{equation}\label{eq:lg_fbr_def}
\p \left( \sigma \in B_i \left( z^{a,b} \right) \, \middle| \, \sigma \in F \left( z^{a,b} \right) \right) \geq 1 - \frac{\eps^{3}}{4 n^3 k^9},
\end{equation}
and \emph{small} otherwise.

We denote by $\Lg \left( B_i^{a,b} \right)$ the union of large fibers for the boundary $B_i^{a,b}$, i.e.,
\[
\Lg \left( B_i^{a,b} \right) := \left\{ \sigma : B_i \left( x^{a,b} \left( \sigma \right) \right) \text{ is a large fiber, and } \sigma \in B_i \left( x^{a,b} \left( \sigma \right) \right) \right\}
\]
and similarly, we denote by $\Sm \left( B_i^{a,b} \right)$ the union of small fibers.
\end{definition}
We remark that what is important is that the fraction appearing on the right hand side of \eqref{eq:lg_fbr_def} is a polynomial of $\frac{1}{n}$, $\frac{1}{k}$ and $\eps$---the specific polynomial in this definition is the end result of the computation in the proof.

Finally, for a voter $i$ and a pair of alternatives $a,b \in \left[k\right]$, we define
\[
F_i^{a,b} := \left\{ \sigma : B_i \left( x^{a,b} \left( \sigma \right) \right) \text{ is a large fiber} \right\}.
\]
So this means that
\begin{equation}\label{eq:Aiab}
\p \left( \sigma \in \cup_{z^{a,b}} B_i \left( z^{a,b} \right) \, \middle| \, \sigma \in F_i^{a,b} \right) \geq 1 - \frac{\eps^{3}}{4 n^3 k^9}.
\end{equation}

\subsection{Boundaries of boundaries} 

Finally, we also look at boundaries of boundaries. In particular, for a given vector $z^{a,b}$ of preferences between $a$ and $b$, we can think of the fiber $F \left( z^{a,b} \right)$ as a subgraph of the original rankings graph. When we write $\partial \left( B_i \left( z^{a,b} \right) \right)$, we mean the boundary of $B_i \left( z^{a,b} \right)$ in the subgraph of the rankings graph induced by the fiber $F \left( z^{a,b} \right)$. That is,
\begin{align*}
  \partial \left( B_i \left( z^{a,b} \right) \right)
  &= \{ \sigma \in B_i \left( z^{a,b} \right) : \exists\ \pi \in F\left( z^{a,b} \right) \setminus B_i \left( z^{a,b} \right) \text{ s.t. }\\
  &\qquad\qquad\sigma \text{ and } \pi \text{ differ in exactly one coordinate} \}.
\end{align*}

\subsection{Dictators and miscellaneous definitions}\label{sec:dict_misc_def} 

For a ranking profile $\sigma = \left( \sigma_1, \dots, \sigma_n \right)$ we sometimes write $\sigma_{-i}$ for the collection of all coordinates except the $i^{\text{th}}$ coordinate, i.e., $\sigma = \left( \sigma_i, \sigma_{-i} \right)$. Furthermore, we sometimes distinguish two coordinates, e.g., we write $\sigma = \left( \sigma_1, \sigma_i, \sigma_{-\left\{1, i\right\}} \right)$.

\begin{definition}[Induced SCF on one coordinate]\label{def:induced_SCF}
Let $f_{\sigma_{-i}}$ denote the SCF on one voter induced by $f$ by fixing all voter preferences except the $i^{\text{th}}$ one according to $\sigma_{-i}$. I.e.,
\[
f_{\sigma_{-i}} \left( \cdot \right) := f\left( \cdot, \sigma_{-i} \right).
\]
\end{definition}
Recall Definition~\ref{def:dict_subset} of a dictator on a subset.
\begin{definition}[Ranking profiles giving dictators on a subset]
For a coordinate $i$ and a subset of alternatives $H \subseteq \left[k \right]$, define
\[
D_i^H := \left\{ \sigma_{-i} : f_{\sigma_{-i}} \left( \cdot \right) \equiv \tp_H \left( \cdot \right) \right\}.
\]

Also, for a pair of alternatives $a$ and $b$, define
\[
D_i \left( a,b \right) := \bigcup_{H : \left\{a,b \right\} \subseteq H, \left|H \right| \geq 3} D_i^H.
\]
\end{definition}

\section{Proof for fixed number of alternatives}\label{sec:k_bdd} 

We prove here the following theorem (Theorem~\ref{thm:k_bdd} below), which is weaker than our main theorem, Theorem~\ref{cor:k_refined_truenonmanip}, in two aspects: first, the condition $\Dist \left( f, \NONMANIP \right) \geq \eps$ is replaced with the stronger condition $\Dist \left( f, \overline{\NONMANIP} \right) \geq \eps$, and second, we allow factors of $\frac{1}{k!}$ in our lower bounds for $\p \left( \sigma \in M \left( f \right) \right)$. The advantage is that the proof of this statement is relatively simpler. We move on to getting a lower bound with polynomial dependence on $k$ in the following sections, and finally we replace the condition $\Dist \left( f, \overline{\NONMANIP} \right) \geq \eps$ with $\Dist \left( f, \NONMANIP \right) \geq \eps$. 

\begin{theorem}\label{thm:k_bdd}
Suppose we have $n \geq 2$ voters, $k \geq 3$ alternatives, and a SCF $f : S_k^n \to \left[k\right]$ satisfying $\Dist \left( f, \overline{\NONMANIP} \right) \geq \eps$. Then
\begin{equation}\label{eq:manip_general}
\p \left( \sigma \in M \left( f \right) \right)\geq p \left( \eps, \frac{1}{n}, \frac{1}{k!} \right),
\end{equation}
for some polynomial $p$, where $\sigma \in S_k^n$ is selected uniformly. 

An immediate consequence is that
\begin{equation*}
\p \left( \left( \sigma, \sigma' \right) \text{ is a manipulation pair for } f \right)\geq q \left( \eps, \frac{1}{n}, \frac{1}{k!} \right),
\end{equation*}
for some polynomial $q$, where $\sigma \in S_k^n$ is selected uniformly, and $\sigma'$ is obtained from $\sigma$ by uniformly selecting a coordinate $i \in \left\{1, \dots, n \right\}$ and resetting the $i^{\text{th}}$ coordinate to a random preference. In particular, the specific lower bound for $\p \left( \sigma \in M \left( f \right) \right)$ implies that we can take $q \left( \eps, \frac{1}{n}, \frac{1}{k} \right) = \frac{\eps^5}{4 n^8 k^{12} \left(k!\right)^5}$.
\end{theorem}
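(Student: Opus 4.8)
\textbf{Proof plan for Theorem~\ref{thm:k_bdd}.}
The plan is to execute the strategy sketched in the ``Proof Ideas'' subsection, combining the combinatorial reduction to the two-voter Gibbard--Satterthwaite theorem (Theorem~\ref{thm:GS}) with the isoperimetric input (Corollary~\ref{cor:isoLindsey}) and reverse hypercontractivity (Lemma~\ref{lem:inv_hyp}). First I would invoke Lemma~\ref{lem:boundaries1}: since $\Dist(f,\overline{\NONMANIP}) \ge \eps$, there exist distinct coordinates $i,j$ and pairs $\{a,b\}$, $\{c,d\}$ with $c \notin \{a,b\}$ such that $\Inf_i^{a,b}(f)$ and $\Inf_j^{c,d}(f)$ are both at least $\frac{2\eps}{nk^2(k-1)}$. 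Relabeling, take $i=1$, $j=2$, and note $\{a,b\}$ and $\{a,c\}$ share the alternative $a$ after possibly swapping roles of $c,d$ (if $a$ is not common one still has a common alternative among the two pairs up to renaming, or one uses $c \notin\{a,b\}$ to force the generic configuration; this bookkeeping is routine). The goal becomes: show the intersection of the boundaries $B_1^{a,b}$ and $B_2^{a,c}$ (suitably interpreted on fibers) has measure polynomial in $\eps, 1/n, 1/k!$, and then apply the two-voter GS theorem on each $\sigma$ in that intersection to produce a manipulation point agreeing with $\sigma$ outside coordinates $1,2$.

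The core of the argument is the fiber decomposition. For the boundary $B_1^{a,b}$, partition $S_k^n$ into fibers $F(z^{a,b})$; by Definition~\ref{def:lg_fibers} each fiber is large or small. Since $\Inf_1^{a,b}(f)$ is not too small, either the large fibers $\Lg(B_1^{a,b})$ carry a polynomially-large mass, or the small fibers $\Sm(B_1^{a,b})$ do. Analogously for $B_2^{a,c}$ and fibers $F(z^{a,c})$. This yields four cases. In the ``large--large'' case, membership in $F_1^{a,b}$ depends only on the uniform bit-vector $x^{a,b}(\sigma)$ and membership in $F_2^{a,c}$ only on $x^{a,c}(\sigma)$; these two bit-vectors have correlated coordinates with correlation $\E[x_i^{a,b}x_i^{a,c}] = \pm 1/3$ (the sign depending on the configuration), so Lemma~\ref{lem:inv_hyp} gives $\P[\sigma \in F_1^{a,b} \cap F_2^{a,c}] \ge (\text{poly})^{3/2}$ is polynomially large; combining with~\eqref{eq:Aiab} shows $\sigma$ lies on both boundaries $B_1^{a,b}$ and $B_2^{a,c}$ with polynomial probability, and the two-voter GS argument finishes. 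In the cases involving a small fiber, one works inside a single small fiber $F(z^{a,b})$ viewed as a subgraph (a product of complete graphs on the coordinates other than coordinate $1$, essentially), uses Corollary~\ref{cor:isoLindsey} to conclude that the boundary $\partial(B_1(z^{a,b}))$ inside the fiber is comparable in size to $B_1(z^{a,b})$ itself (this is precisely where ``small'' is used: the complementary set within the fiber is not too small), and then each vertex on that internal boundary is, by the original GS theorem applied to the restricted one- or two-voter SCF, within bounded graph-distance of a manipulation point. A canonical-path / counting argument then converts ``many boundary-of-boundary vertices'' into ``many manipulation points'', losing only polynomial factors.

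I would assemble these estimates by tracking the worst case among the four cases, multiplying through the polynomial losses (the $\frac{1}{nk^2(k-1)}$ from Lemma~\ref{lem:boundaries1}, the $\frac{\eps^3}{4n^3k^9}$ threshold from Definition~\ref{def:lg_fibers}, the reverse-hypercontractive exponent $\tfrac{3}{2}$ applied to a $\ge \eps/(nk^3)$-ish mass, and the isoperimetric/canonical-path factors), arriving at a bound of the shape $\frac{\eps^5}{4n^8 k^{12}(k!)^5}$ claimed in the statement; the factor $k!$ enters because within a fiber we still have $n-1$ free coordinates each ranging over $S_k$, and the canonical-path argument for the small-fiber case pays $1/k!$ per free coordinate touched. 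The ``immediate consequence'' about manipulation \emph{pairs} then follows because each manipulation point $\sigma$ has at least one neighbor $\sigma'$ (differing in one coordinate by a full re-randomization) forming a manipulation pair, and $\sigma'$ is hit with probability $\ge 1/(k!-1) \ge 1/k!$ among re-randomizations of a uniformly chosen coordinate, costing one more factor of $\frac{1}{nk!}$.

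\textbf{Main obstacle.} The hard part is the small-fiber case: quantifying, via the canonical path method, that a large internal boundary $\partial(B_1(z^{a,b}))$ inside a small fiber forces many genuine manipulation points while keeping all losses polynomial, and in particular verifying that the one-voter restrictions to which one applies the classical GS theorem genuinely fall outside $\NONMANIP$ (or $\overline{\NONMANIP}$) often enough. The large--large case, by contrast, is comparatively clean once the correlation $\pm 1/3$ is identified and Lemma~\ref{lem:inv_hyp} is invoked. Careful attention must also be paid to the configuration of the shared alternative among $\{a,b\}$ and $\{c,d\}$ so that the correlation sign and the applicability of the two-voter GS theorem are both correct.
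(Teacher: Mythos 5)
Your outline follows the same roadmap as the paper's proof: Lemma~\ref{lem:boundaries1} for two large boundaries, fiber decomposition into large/small cases, reverse hypercontractivity in the large--large case, and isoperimetry plus a GS reduction in the small case. That said, there are a few places where the outline glosses over or mischaracterizes essential steps.

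First, the case when $\{a,b\}$ and $\{c,d\}$ are disjoint is not "routine bookkeeping" that can be reduced to the shared-alternative configuration. Lemma~\ref{lem:boundaries1} only guarantees $c \notin \{a,b\}$, so the two pairs may well be disjoint. The paper's handling (Lemma~\ref{lem:lg_fbr_2}) is that in the disjoint case the large--large configuration simply \emph{cannot occur}: independence of the two fiber events forces $\sigma$ to lie on both $B_1^{a,b}$ and $B_2^{c,d}$ with positive probability, which is impossible since $f(\sigma)$ cannot be in $\{a,b\}\cap\{c,d\}=\emptyset$. So the reverse-hypercontractive argument is only needed (and only works) when the pairs share an alternative; in the disjoint case, at least one boundary must have big mass on small fibers. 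You should make this dichotomy explicit rather than waving at renaming.

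Second, the reverse-hypercontractive exponent with $\rho=1/3$ is $\frac{2}{1-\rho}=3$, not $3/2$. So the intersection bound looks like $\left(\frac{\eps}{nk^3}\right)^3$, matching the paper's $\frac{\eps^3}{n^3 k^9}$, and this exponent is what propagates into the final polynomial.

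Third, for Theorem~\ref{thm:k_bdd} specifically, the small-fiber conversion to manipulation points is not a canonical-path argument; that method enters only in the refined Theorem~\ref{thm:k_refined}. Here one uses Lindsey's isoperimetric inequality (Corollary~\ref{cor:isoLindsey}) directly inside the fiber (which is a Cartesian product of $K_{k!/2}$'s) to get $|\partial_e(B)|\gtrsim |B|$, and then the combinatorial argument of Lemma~\ref{lem:manip_on_bdry} produces a manipulation point near each boundary-of-boundary vertex \emph{unless} $\sigma_{-1}$ puts the restricted one-voter function into $D_1(a,b)$ (a dictator on a subset). That residual dictator case is a genuine extra branch, handled by Lemma~\ref{lem:dictators}: either $\P(\sigma_{-1}\in D_1(a,b))$ is tiny (so it doesn't eat the boundary mass), or $D_1(a,b)$ itself has a large boundary (again via Lindsey, now in $S_k^{n-1}$) and every boundary point there yields a manipulation point by a two-voter GS argument. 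Your closing remark correctly identifies this as the hard part, but the mechanism is Lindsey-plus-GS-casework, not canonical paths, and the dictator escape hatch deserves to be named explicitly as a separate lemma rather than folded into "often enough."
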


First we provide an overview of the proof of Theorem~\ref{thm:k_bdd} in Section~\ref{sec:proof_outline_1}. In this overview we use adjectives such as ``big'', and ``not too small'' to describe probabilities---here these are all synonymous with ``has probability at least an inverse polynomial of $n$, $k!$, and $\eps^{-1}$''.

\subsection{Overview of proof}\label{sec:proof_outline_1} 

The tactic in proving Theorem~\ref{thm:k_bdd} is roughly the following:
\begin{itemize}
\item By Lemma~\ref{lem:boundaries1}, we know that there are at least two boundaries which are big. W.l.o.g.\ we can assume that these are either $B_1^{a,b}$ and $B_2^{a,c}$, or $B_1^{a,b}$ and $B_2^{c,d}$ with $\left\{a,b\right\} \cap \left\{ c,d \right\} = \emptyset$. Our proof works in both cases, but we continue the outline of the proof assuming the former case which is more difficult. 
\item We partition $B_1^{a,b}$ according to its fibers based on the preferences between $a$ and $b$ of the $n$ voters.
 Similarly for $B_2^{a,c}$ and preferences between $a$ and $c$.
\item 
We can distinguish small and large fibers for these two boundaries. Now since $B_1^{a,b}$ is big, either the mass of small fibers, or the mass of large fibers is big. Similarly for $B_2^{a,c}$.
\item Suppose first that there is big mass on large fibers in both $B_1^{a,b}$ and $B_2^{a,c}$. In this case the probability of our random ranking $\sigma$ being in $F_1^{a,b}$ is big, and similarly for $F_2^{a,c}$, where $F_1^{a,b}$ is the union of large fibers which depends on the vector $x^{a,b} \left( \sigma \right)$ of preferences between $a$ and $b$, and similarly being in $F_2^{a,c}$ only depends on the vector $x^{a,c}\left( \sigma \right)$ of preferences between $a$ and $c$. We know the correlation between $x^{a,b}\left( \sigma \right)$ and $x^{a,c} \left( \sigma \right)$ and hence we can apply reverse hypercontractivity (Lemma~\ref{lem:inv_hyp_org}), which tells us that the probability that $\sigma$ lies in both $F_1^{a,b}$ and $F_2^{a,c}$ is big as well. If $\sigma \in F_1^{a,b}$, then voter 1 is pivotal between alternatives $a$ and $b$ with big probability, and similarly if $\sigma \in F_2^{a,c}$, then voter 2 is pivotal between alternatives $a$ and $c$ with big probability. So now we have that the probability that both 
voter 1 is pivotal between $a$ and $b$ and voter 2 is pivotal between $a$ and $c$ is big, and in this case the Gibbard-Satterthwaite theorem tells us that there is a manipulation point which agrees with this ranking profile in all except for perhaps the first two coordinates. So there are many manipulation points.
\item Now suppose that the mass of small fibers in $B_1^{a,b}$ is big. By isoperimetric theory, the size of the boundary of every small fiber is comparable (same order up to $\text{poly}^{-1} \left( \eps^{-1}, n, k! \right)$ factors) to the size of the small fiber. Consequently, the total size of the boundaries of small fibers is comparable to the total size of small fibers, which in this case has to be big.

We then distinguish two cases: either we are on the boundary of a small fiber in the first coordinate, or some other coordinate. If $\sigma$ is on the boundary of a small fiber in some coordinate $j \neq 1$, then the Gibbard-Satterthwaite theorem tells us that there is a manipulation point which agrees with $\sigma$ in all coordinates except perhaps in coordinates 1 and $j$. If our ranking profile $\sigma$ is on the boundary of a small fiber in the first coordinate, then either there exists a manipulation point which agrees with $\sigma$ in all coordinates except perhaps the first, or the SCF on one voter that we obtain from $f$ by fixing the votes of voters 2 through $n$ to be $\sigma_{-1}$ must be a dictator on some subset of the alternatives. So either we get sufficiently many manipulation points this way, or for many votes of voters 2 through $n$, the restricted SCF obtained from $f$ by fixing these votes is a dictator on coordinate 1 on some subset of the alternatives.

Finally, to deal with dictators on the first coordinate, we look at the boundary of the dictators. Since $\Dist \left( f, \overline{\NONMANIP} \right) \geq \eps$, the boundary is big, and we can also show that there is a manipulation point near every boundary point.
\item If the mass of small fibers in $B_2^{a,c}$ is big, then we can do the same thing for this boundary.
\end{itemize}

\subsection{Division into cases} 

For the remainder of Section~\ref{sec:k_bdd}, let us fix the number of voters $n \geq 2$, the number of alternatives $k \geq 3$, and the SCF $f$, which satisfies $\Dist \left( f, \overline{\NONMANIP} \right) \geq \eps$. Accordingly, we typically omit the dependence of various sets (e.g., boundaries between two alternatives) on $f$.

Our starting point is Lemma~\ref{lem:boundaries1}. W.l.o.g.\ we may assume that the two boundaries that the lemma gives us have $i=1$ and $j=2$, so the lemma tells us that
\begin{equation*}
\p \left( \left( \sigma, \sigma^{\left( 1 \right)} \right) \in B_1^{a,b} \right) \geq \frac{2 \eps}{n k^3},
\end{equation*}
where $\sigma$ is uniform on the ranking profiles, and $\sigma^{\left(1\right)}$ is obtained by rerandomizing the first coordinate. This also means that
\begin{equation*}
\p \left( \sigma \in \cup_{z^{a,b}} B_1 \left( z^{a,b} \right) \right) \geq \frac{2 \eps}{n k^3},
\end{equation*}
and similar inequalities hold for the boundary $B_2^{c,d}$. The following lemma is an immediate corollary.
\begin{lemma}\label{lem:cases_k}
Either
\begin{equation}\label{eq:sm_fbr}
\p \left( \sigma \in \Sm \left( B_1^{a,b} \right) \right) \geq \frac{\eps}{n k^3}
\end{equation}
or
\begin{equation}\label{eq:lg_fbr}
\p \left( \sigma \in \Lg \left( B_1^{a,b} \right) \right) \geq \frac{\eps}{n k^3},
\end{equation}
and the same can be said for the boundary $B_2^{c,d}$.
\end{lemma}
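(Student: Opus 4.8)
The plan is to obtain Lemma~\ref{lem:cases_k} from Lemma~\ref{lem:boundaries1} by a one-line pigeonhole argument; it is genuinely an immediate corollary, so there is no real obstacle — essentially all the content sits in Lemma~\ref{lem:boundaries1}, which we are free to invoke, and the only thing to be careful about is the bookkeeping of constants and the fact that the fibers partition cleanly.

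First I would record what Lemma~\ref{lem:boundaries1} gives after the normalization $i=1$, $j=2$: namely
\[
\Inf_1^{a,b}(f) \ \ge\ \frac{2\eps}{n k^2(k-1)} \ \ge\ \frac{2\eps}{n k^3}
\qquad\text{and}\qquad
\Inf_2^{c,d}(f) \ \ge\ \frac{2\eps}{n k^3},
\]
the last inequality in each case being just $k^2(k-1) \le k^3$. Next I would translate the influence bound into a statement about boundary fibers: since $\Inf_1^{a,b}(f) = \p\big(f(\sigma)=a,\ f(\sigma^{(1)})=b\big)$, and any such $\sigma$ witnesses (taking $\sigma' = \sigma^{(1)}$) that $\sigma \in B_1\big(x^{a,b}(\sigma)\big)$, we get
\[
\p\Big(\sigma \in \bigcup_{z^{a,b}} B_1(z^{a,b})\Big) \ \ge\ \Inf_1^{a,b}(f) \ \ge\ \frac{2\eps}{n k^3}.
\]

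Then I would invoke Definition~\ref{def:lg_fibers}: every fiber $B_1(z^{a,b})$ is classified as either large or small (the threshold in \eqref{eq:lg_fbr_def} being strict/non-strict so there is no overlap or omission), and the sets $F(z^{a,b})$ partition $S_k^n$; hence $\bigcup_{z^{a,b}} B_1(z^{a,b})$ is the disjoint union $\Sm(B_1^{a,b}) \sqcup \Lg(B_1^{a,b})$. Therefore
\[
\p\big(\sigma \in \Sm(B_1^{a,b})\big) + \p\big(\sigma \in \Lg(B_1^{a,b})\big) \ \ge\ \frac{2\eps}{n k^3},
\]
so at least one summand is $\ge \frac{\eps}{n k^3}$, which is exactly \eqref{eq:sm_fbr} or \eqref{eq:lg_fbr}. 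The identical argument applied to $B_2^{c,d}$ — using the second half of Lemma~\ref{lem:boundaries1} together with the obvious analogues of Definitions~\ref{def:fibers}--\ref{def:lg_fibers} with the pair $a,b$ replaced by $c,d$ and voter $1$ by voter $2$ — gives the same dichotomy for that boundary, completing the proof.
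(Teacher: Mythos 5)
Your argument is correct and is exactly the route the paper takes: it invokes Lemma~\ref{lem:boundaries1} (with $i=1$, $j=2$ after relabeling) to get $\p\big(\sigma \in \bigcup_{z^{a,b}} B_1(z^{a,b})\big) \geq \frac{2\eps}{nk^3}$, then splits that union disjointly into $\Sm(B_1^{a,b})$ and $\Lg(B_1^{a,b})$ and pigeonholes, and likewise for $B_2^{c,d}$. The only thing you spell out more explicitly than the paper is the translation from $\Inf_1^{a,b}(f)$ to the mass of $\bigcup_{z^{a,b}} B_1(z^{a,b})$, but the content and constants are identical.
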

We distinguish cases based upon this: either \eqref{eq:sm_fbr} holds, or \eqref{eq:sm_fbr} holds for the boundary $B_2^{c,d}$, or \eqref{eq:lg_fbr} holds for both boundaries. We only need one boundary for the small fiber case, and we need both boundaries only in the large fiber case. So in the large fiber case we must differentiate between two cases: whether $d\in \left\{a,b\right\}$ or $d \notin \left\{ a, b \right\}$. 
We will see that if $d \notin \left\{a,b\right\}$ then the large fiber case cannot occur---so this method of proof works as well.

In the rest of the section we first deal with the large fiber case, and then with the small fiber case.

\subsection{Big mass on large fibers}\label{sec:k_lg_fbr} 

We now deal with the case when
\begin{equation}\label{eq:k_lg_fbr_ab}
\p \left( \sigma \in \Lg \left( B_1^{a,b} \right) \right) \geq \frac{\eps}{n k^3}
\end{equation}
and also
\begin{equation}\label{eq:k_lg_fbr_cd}
\p \left( \sigma \in \Lg \left( B_2^{c,d} \right) \right) \geq \frac{\eps}{n k^3}.
\end{equation}
As mentioned before, we must differentiate between two cases: whether $d\in \left\{a,b\right\}$ or $d \notin \left\{ a, b \right\}$.

\subsubsection{Case 1} 

Suppose $d\in \left\{a,b\right\}$, in which case we may assume w.l.o.g.\ that $d = a$.
\begin{lemma}\label{lem:lg_fbr_1}
If
\begin{equation}\label{eq:k_lg_fbr_ab2}
\p \left( \sigma \in \Lg \left( B_1^{a,b} \right) \right) \geq \frac{\eps}{n k^3} \qquad \text{ and } \qquad \p \left( \sigma \in \Lg \left( B_2^{a,c} \right) \right) \geq \frac{\eps}{n k^3},
\end{equation}
then
\begin{equation}\label{eq:k_lg_fbr_manip}
\p \left( \sigma \in M \right) \geq \frac{\eps^{3}}{2 n^{3} k^{9} \left(k!\right)^2}.
\end{equation}
\end{lemma}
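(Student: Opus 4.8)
\textbf{Proof plan for Lemma~\ref{lem:lg_fbr_1}.}
The plan is to combine reverse hypercontractivity with the two‑voter Gibbard–Satterthwaite theorem, in the spirit of Friedgut–Kalai–Keller–Nisan, and then to convert the resulting lower bound on an event of ranking profiles into a lower bound on the measure of $M(f)$.

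First, since $\Lg(B_1^{a,b}) \subseteq F_1^{a,b}$ and $\Lg(B_2^{a,c}) \subseteq F_2^{a,c}$ by Definition~\ref{def:lg_fibers}, the hypothesis~\eqref{eq:k_lg_fbr_ab2} gives $\p(\sigma \in F_1^{a,b}) \ge \frac{\eps}{nk^3}$ and $\p(\sigma \in F_2^{a,c}) \ge \frac{\eps}{nk^3}$. The crucial structural point is that the event $\{\sigma \in F_1^{a,b}\}$ is a function of the preference vector $x^{a,b}(\sigma) \in \{-1,1\}^n$ alone, while $\{\sigma \in F_2^{a,c}\}$ is a function of $x^{a,c}(\sigma)$ alone; both vectors are uniform on $\{-1,1\}^n$, the pairs $(x_i^{a,b}(\sigma), x_i^{a,c}(\sigma))$ are i.i.d.\ over $i$, and in a uniformly random ranking the correlation between the $a$-vs-$b$ and $a$-vs-$c$ preferences of a single voter equals $+\tfrac13$ (the same elementary count as in the Condorcet computation, up to the sign flip $x^{c>a}=-x^{a>c}$). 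Hence I would apply the reverse hypercontractive estimate Lemma~\ref{lem:inv_hyp_org} with $\rho=\tfrac13$ to the sets $\{z: B_1(z)\text{ is a large fiber}\}$ and $\{z: B_2(z)\text{ is a large fiber}\}$; since $\tfrac{2}{1-\rho}=3$, this yields
\[
\p\bigl(\sigma \in F_1^{a,b} \cap F_2^{a,c}\bigr) \ \ge\ \Bigl(\tfrac{\eps}{nk^3}\Bigr)^{3} \ =\ \frac{\eps^3}{n^3 k^9}.
\]

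Next I pass from the $F$-events to the $\Lg$-events. By Definition~\ref{def:lg_fibers}, conditioned on $\sigma \in F_1^{a,b}$ one has $\sigma \notin \Lg(B_1^{a,b})$ with probability at most $\tfrac{\eps^3}{4n^3k^9}$, so $\p(\sigma \in F_1^{a,b}\setminus \Lg(B_1^{a,b})) \le \tfrac{\eps^3}{4n^3k^9}$, and likewise for the other boundary; subtracting these two error terms gives
\[
\p\bigl(\sigma \in \Lg(B_1^{a,b}) \cap \Lg(B_2^{a,c})\bigr)\ \ge\ \frac{\eps^3}{n^3k^9} - 2\cdot\frac{\eps^3}{4n^3k^9}\ =\ \frac{\eps^3}{2n^3k^9}.
\]
On this event $f(\sigma)=a$, voter $1$ is pivotal between $a$ and $b$, and voter $2$ is pivotal between $a$ and $c$, where $a,b,c$ are distinct by the choice made in Lemma~\ref{lem:boundaries1} (Case~1, $d=a$, $c\notin\{a,b\}$). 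Fixing all coordinates other than the first two, the induced two-voter SCF $g=f_{\sigma_{-\{1,2\}}}$ therefore takes the three distinct values $a,b,c$ and depends genuinely on both of its coordinates, hence is not a dictator; by the Gibbard–Satterthwaite theorem (Theorem~\ref{thm:GS}) it is manipulable, and any manipulation point $(\tau_1,\tau_2)$ of $g$ yields a manipulation point $(\tau_1,\tau_2,\sigma_{-\{1,2\}})$ of $f$.

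Finally, the counting step. Let $T\subseteq S_k^{n-2}$ be the set of tuples $\sigma_{-\{1,2\}}$ occurring as the last $n-2$ coordinates of some $\sigma\in\Lg(B_1^{a,b})\cap\Lg(B_2^{a,c})$; then the fraction of $S_k^{n-2}$ in $T$ is at least $\p(\sigma\in\Lg(B_1^{a,b})\cap\Lg(B_2^{a,c}))\ge\tfrac{\eps^3}{2n^3k^9}$. Each element of $T$ produces a manipulation point of $f$, and these points are distinct for distinct elements of $T$ (they already differ in the last $n-2$ coordinates), so $|M(f)|\ge |T|\,(k!)^{n-2}$, i.e.\ $\p(\sigma\in M(f))\ge \tfrac{|T|}{(k!)^{2}}\cdot\tfrac{1}{(k!)^{n-2}}(k!)^{n-2}$... more plainly $\p(\sigma\in M(f))\ge \tfrac{\eps^3}{2n^3k^9(k!)^2}$, which is~\eqref{eq:k_lg_fbr_manip}. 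I expect the last conversion to be the main obstacle: the Gibbard–Satterthwaite theorem only locates a manipulation point somewhere inside the two-voter slice rather than at $\sigma$ itself, so one must project onto the remaining $n-2$ coordinates and absorb the $(k!)^2$ loss, and one has to verify carefully that the induced $g$ genuinely fails to be a dictator (which is exactly what the two pivotalities supply) so that Theorem~\ref{thm:GS} applies; the sign of the correlation ($+\tfrac13$, not $-\tfrac13$) is also worth double-checking since Lemma~\ref{lem:inv_hyp_org} needs $\rho\ge0$.
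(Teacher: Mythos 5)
Your proposal reproduces the paper's proof step for step: reverse hypercontractivity on the $F$-events (with $\rho=1/3$, and you are right to flag the sign — the paper covers this either by writing $|\E(x_i^{a,b}x_i^{a,c})|=1/3$ or by appealing to the generalization in Lemma~\ref{lem:inv_hyp}), subtraction of the small-fiber error terms via the definition of large fibers, an application of Gibbard–Satterthwaite to the two-voter slice obtained by fixing $\sigma_{-\{1,2\}}$, and a $(k!)^2$ multiplicity loss to pass to manipulation points. The only slip is in your final count: the inequality $|M(f)|\ge |T|\,(k!)^{n-2}$ is not what you want — the correct statement is $|M(f)|\ge |T|$, from which $\p(\sigma\in M(f))=|M(f)|/(k!)^n\ge |T|/(k!)^n\ge \tfrac{\eps^3}{2n^3k^9(k!)^2}$ follows, which is the bound you in fact state at the end.
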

\begin{proof}
By \eqref{eq:k_lg_fbr_ab2} we have that 
\[
 \p \left( \sigma \in F_1^{a,b} \right) \geq \frac{\eps}{n k^3} \qquad \text{ and } \qquad \p \left( \sigma \in F_2^{a,c} \right) \geq \frac{\eps}{n k^3}.
\]
We know that $\left|\E\left( x_i^{a,b} \left( \sigma \right) x_i^{a,c} \left( \sigma \right) \right)\right| = 1/3$, and so by reverse hypercontractivity (Lemma~\ref{lem:inv_hyp_org}) we have that
\begin{equation}\label{eq:inv_hyp_contr_k}
\p \left( \sigma \in F_1^{a,b} \cap F_2^{a,c} \right) \geq \frac{\eps^{3}}{n^3 k^9}.
\end{equation}
Recall that we say that $\sigma$ is \emph{on} the boundary $B_1^{a,b}$ if there exists $\sigma'$ such that $\left( \sigma, \sigma' \right) \in B_1^{a,b}$. If $\sigma \in F_1^{a,b}$, then with big probability $\sigma$ is on the boundary $B_1^{a,b}$, and if $\sigma \in F_2^{a,c}$, then with big probability $\sigma$ is on the boundary $B_2^{a,c}$. Using this and \eqref{eq:inv_hyp_contr_k} we can show that the probability of $\sigma$ lying on both the boundary $B_1^{a,b}$ and the boundary $B_2^{a,c}$ is big. Then we are done, because if $\sigma$ lies on both $B_1^{a,b}$ and $B_2^{a,c}$, then by the Gibbard-Satterthwaite theorem there is a $\hat{\sigma}$ which agrees with $\sigma$ on the last $n-2$ coordinates, and which is a manipulation point, and there can be at most $\left( k! \right)^{2}$ ranking profiles that give the same manipulation point. Let us do the computation:
\begin{multline*}
\p \left( \sigma \text{ on } B_1^{a,b}, \sigma \text{ on } B_2^{a,c} \right) \geq \p \left( \sigma \text{ on } B_1^{a,b}, \sigma \text{ on } B_2^{a,c}, \sigma \in F_1^{a,b} \cap F_2^{a,c} \right)\\
\geq \p \left( \sigma \in F_1^{a,b} \cap F_2^{a,c} \right) - \p \left( \sigma \in F_1^{a,b} \cap F_2^{a,c}, \sigma \text{ not on } B_1^{a,b} \right)- \p \left( \sigma \in F_1^{a,b} \cap F_2^{a,c}, \sigma \text{ not on } B_2^{a,c} \right).
\end{multline*}
The first term is bounded below via \eqref{eq:inv_hyp_contr_k}, while the other two terms can be bounded using \eqref{eq:Aiab}:
\[
\p \left( \sigma \in F_1^{a,b} \cap F_2^{a,c}, \sigma \text{ not on } B_1^{a,b} \right) \leq \p \left( \sigma \in F_1^{a,b}, \sigma \text{ not on } B_1^{a,b} \right) \leq \p \left( \sigma \text{ not on } B_1^{a,b} \, \middle| \, \sigma \in F_1^{a,b} \right)\leq \frac{\eps^{3}}{4 n^3 k^9},
\]
and similarly for the other term. Putting everything together gives us
\begin{equation*}
\p \left( \sigma \text{ on } B_1^{a,b}, \sigma \text{ on } B_2^{a,c} \right) \geq \frac{\eps^{3}}{2 n^3 k^9},
\end{equation*}
which by the discussion above implies \eqref{eq:k_lg_fbr_manip}.
\end{proof}

\subsubsection{Case 2}\label{sec:lg_fbr_gen_case2} 

\begin{lemma}\label{lem:lg_fbr_2}
If $d\notin \left\{a,b\right\}$, then \eqref{eq:k_lg_fbr_ab} and \eqref{eq:k_lg_fbr_cd} cannot hold simultaneously.
\end{lemma}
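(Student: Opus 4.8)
The plan is to derive a contradiction from assuming that both \eqref{eq:k_lg_fbr_ab} and \eqref{eq:k_lg_fbr_cd} hold when $\{a,b\} \cap \{c,d\} = \emptyset$. The key structural observation is that membership in the union of large fibers for $B_1^{a,b}$ is a function of the preference vector $x^{a,b}(\sigma)$ alone, membership in the union of large fibers for $B_2^{c,d}$ is a function of $x^{c,d}(\sigma)$ alone, and — crucially — since the four alternatives $a,b,c,d$ are distinct, for a uniformly random profile $\sigma$ the vectors $x^{a,b}(\sigma)$ and $x^{c,d}(\sigma)$ are \emph{independent}. This independence is exactly the feature that is destroyed when $\{a,b\}$ and $\{c,d\}$ overlap, which is why Case 1 needed reverse hypercontractivity and Case 2 can instead be ruled out outright.

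First I would recall, as in the proof of Lemma~\ref{lem:lg_fbr_1}, that $\Lg(B_1^{a,b}) \subseteq F_1^{a,b}$, so \eqref{eq:k_lg_fbr_ab} gives $\p(\sigma \in F_1^{a,b}) \geq \frac{\eps}{nk^3}$, and likewise $\p(\sigma \in F_2^{c,d}) \geq \frac{\eps}{nk^3}$. Next I would establish the independence claim: for each coordinate $i$, since $a,b,c,d$ are distinct, swapping the positions of $c$ and $d$ inside a uniformly random $\sigma_i \in S_k$ is a measure-preserving involution that fixes the relative order of $a$ and $b$, so the sign of $x_i^{c,d}(\sigma)$ is uniform conditional on $x_i^{a,b}(\sigma)$; combining this with the independence of the coordinates shows that $x^{a,b}(\sigma)$ and $x^{c,d}(\sigma)$ are independent random vectors. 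Since $\mathbb{1}[\sigma \in F_1^{a,b}]$ depends only on $x^{a,b}(\sigma)$ and $\mathbb{1}[\sigma \in F_2^{c,d}]$ only on $x^{c,d}(\sigma)$, we conclude $\p(\sigma \in F_1^{a,b} \cap F_2^{c,d}) = \p(\sigma \in F_1^{a,b})\,\p(\sigma \in F_2^{c,d}) \geq \frac{\eps^2}{n^2k^6}$.

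Then I would invoke the defining property \eqref{eq:Aiab} of large fibers: conditional on $\sigma \in F_1^{a,b}$, with probability at least $1-\frac{\eps^3}{4n^3k^9}$ the profile lies in $\cup_{z^{a,b}} B_1(z^{a,b})$, and in particular $f(\sigma)=a$; symmetrically, conditional on $\sigma \in F_2^{c,d}$ we have $f(\sigma)=c$ except with probability at most $\frac{\eps^3}{4n^3k^9}$. A union bound over these two bad events inside $F_1^{a,b}\cap F_2^{c,d}$ then yields
\[
\p\bigl( f(\sigma) = a \text{ and } f(\sigma) = c \bigr) \;\geq\; \p\bigl(\sigma \in F_1^{a,b} \cap F_2^{c,d}\bigr) - \frac{\eps^3}{4n^3k^9} - \frac{\eps^3}{4n^3k^9} \;\geq\; \frac{\eps^2}{n^2k^6} - \frac{\eps^3}{2n^3k^9} \;>\; 0,
\]
where positivity uses $\eps \leq 1 < 2nk^3$. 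But $a \neq c$, so the event $\{f(\sigma)=a\}\cap\{f(\sigma)=c\}$ is empty, a contradiction; hence \eqref{eq:k_lg_fbr_ab} and \eqref{eq:k_lg_fbr_cd} cannot hold simultaneously.

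The only point requiring genuine care is the independence of $x^{a,b}(\sigma)$ and $x^{c,d}(\sigma)$ — this is precisely where the hypothesis $d \notin \{a,b\}$ is used, and it is what collapses the large-fiber case rather than forcing a reverse-hypercontractivity argument. Everything else is routine bookkeeping with the large-fiber threshold $\frac{\eps^3}{4n^3k^9}$ and the trivial fact that a single profile cannot be mapped to two distinct alternatives.
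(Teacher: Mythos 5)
Your proof is correct and follows essentially the same route as the paper: use that $\Lg(B_1^{a,b}) \subseteq F_1^{a,b}$ and $\Lg(B_2^{c,d}) \subseteq F_2^{c,d}$ to lower-bound the fiber probabilities, exploit independence of $x^{a,b}(\sigma)$ and $x^{c,d}(\sigma)$ (which the paper asserts and you helpfully spell out via a measure-preserving involution) to get the product bound, then invoke the large-fiber threshold to conclude that $\sigma$ lies on both boundaries with positive probability, which is impossible since $\{a,b\}\cap\{c,d\}=\emptyset$ forces $f(\sigma)=a$ and $f(\sigma)=c$ simultaneously. The only cosmetic difference is that you phrase the contradiction directly in terms of $f(\sigma)$ taking two distinct values, while the paper phrases it as the event of being on both boundaries having probability zero; these are the same observation.
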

\begin{proof}
Suppose on the contrary that \eqref{eq:k_lg_fbr_ab} and \eqref{eq:k_lg_fbr_cd} do both hold. Then
\[
 \p \left( \sigma \in F_1^{a,b} \right) \geq \frac{\eps}{n k^3} \qquad \text{ and } \qquad \p \left( \sigma \in F_2^{c,d} \right) \geq \frac{\eps}{n k^3}
\]
as before. Since $\left\{ a, b \right\} \cap \left\{ c, d \right\} = \emptyset$, $\left\{ \sigma \in F_1^{a,b} \right\}$ and $\left\{ \sigma \in F_2^{c,d} \right\}$ are independent events, and so
\[
\p \left( \sigma \in F_1^{a,b} \cap F_2^{c,d} \right) = \p \left( \sigma \in F_1^{a,b} \right) \p \left( \sigma \in F_2^{c,d} \right) \geq  \frac{\eps^{2}}{n^2 k^6}.
\]
In the same way as before, by the definition of large fibers this implies that
\begin{equation*}
\p \left( \sigma \text{ on } B_1^{a,b}, \sigma \text{ on } B_2^{c,d} \right) \geq \frac{\eps^{2}}{2 n^2 k^6} > 0,
\end{equation*}
but it is clear that
\begin{equation*}
\p \left( \sigma \text{ on } B_1^{a,b}, \sigma \text{ on } B_2^{c,d} \right) = 0,
\end{equation*}
since $\sigma$ on $B_1^{a,b}$ and on $B_2^{c,d}$ requires $f\left( \sigma \right) \in \left\{a,b\right\} \cap \left\{c,d\right\} = \emptyset$. So we have reached a contradiction.
\end{proof}

\subsection{Big mass on small fibers}\label{sec:k_sm_fbr} 

We now deal with the case when \eqref{eq:sm_fbr} holds, i.e., when we have a big mass on the small fibers for the boundary $B_1^{a,b}$.
We formalize the ideas of the outline described in Section~\ref{sec:proof_outline_1} in a series of statements.

First, we want to formalize that the boundaries of the boundaries are big, when we are on a small fiber.
\begin{lemma}\label{lem:comparable_boundaries_k}
Fix coordinate 1 and the pair of alternatives $a,b$. Let $z^{a,b}$ be such that $B_1 \left( z^{a,b} \right)$ is a small fiber for $B_1^{a,b}$. Then, writing $B \equiv B_1 \left( z^{a,b} \right)$, we have
\[
\left| \partial_e \left( B \right) \right| \geq \frac{\eps^3}{4 n^3 k^9} \left| B \right|
\]
and
\begin{equation}\label{eq:iso_small_fib}
\p \left( \sigma \in \partial \left( B \right) \right) \geq \frac{\eps^3}{2 n^4 k^9 k!} \p \left( \sigma \in B \right),
\end{equation}
where both the edge boundary $\partial_e \left( B \right)$ and the boundary $\partial \left( B \right)$ are with respect to the induced subgraph $F\left( z^{a,b} \right)$, which is isomorphic to $K_{k!/2}^{n}$, the Cartesian product of $n$ complete graphs of size $k!/2$.
\end{lemma}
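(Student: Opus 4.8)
The plan is to use the fact that, as a graph, the fiber $F(z^{a,b})$ is a Cartesian product of complete graphs, so that Lindsey's edge-isoperimetric inequality (Corollary~\ref{cor:isoLindsey}) applies directly. Set $\ell = k!/2$. Since fixing the preference between $a$ and $b$ leaves each voter exactly $\ell$ admissible rankings, the fiber $F := F(z^{a,b})$ is isomorphic to $K_{\ell}^{n}$ and $|F| = \ell^{n}$. Because $B = B_1(z^{a,b})$ is a \emph{small} fiber, Definition~\ref{def:lg_fibers} gives $\p\left(\sigma \in B \mid \sigma \in F\right) < 1 - \frac{\eps^3}{4n^3k^9}$, equivalently the complement $F \setminus B$ satisfies $|F \setminus B| \ge \frac{\eps^3}{4n^3k^9}|F| \ge \frac{\eps^3}{4n^3k^9}|B|$.

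For the edge-boundary inequality I would split on the size of $B$. If $B$ is empty the claim is trivial. If $|B| \le (1-\frac{1}{\ell})\ell^{n}$, Corollary~\ref{cor:isoLindsey} applied to $B$ gives $|\partial_e(B)| \ge |B| \ge \frac{\eps^3}{4n^3k^9}|B|$. Otherwise $|F \setminus B| = \ell^{n} - |B| < \ell^{n-1} \le (1-\frac{1}{\ell})\ell^{n}$, using $\ell \ge 2$ (which holds since $k \ge 3$), so Corollary~\ref{cor:isoLindsey} applied to $F \setminus B$ gives $|\partial_e(F \setminus B)| \ge |F \setminus B| \ge \frac{\eps^3}{4n^3k^9}|B|$; since the edge boundary of a set inside $F$ coincides with that of its complement, $\partial_e(B) = \partial_e(F \setminus B)$ and the bound transfers to $B$. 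Either way the first claimed inequality holds.

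The vertex-boundary inequality then follows from a max-degree count: in $K_{\ell}^{n}$ every vertex has degree $n(\ell-1)$, so each vertex of $\partial(B)$ is incident to at most $n(\ell-1)$ edges of $\partial_e(B)$, whence $|\partial(B)| \ge |\partial_e(B)|/(n(\ell-1)) \ge |\partial_e(B)| \cdot \frac{2}{nk!}$. Combining with the previous step, $|\partial(B)| \ge \frac{2}{nk!}\cdot\frac{\eps^3}{4n^3k^9}|B| = \frac{\eps^3}{2n^4k^9k!}|B|$. Dividing numerator and denominator by $|S_k^n|$ turns this counting estimate into the stated probability bound, since both $\p(\sigma \in \partial(B))$ and $\p(\sigma \in B)$ are computed under the uniform measure on the common ambient set $S_k^n$, so their ratio equals $|\partial(B)|/|B|$.

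There is no serious obstacle here: the only points requiring care are (i) verifying $\ell = k!/2 \ge 2$ so that Corollary~\ref{cor:isoLindsey} is applicable to whichever of $B$ and $F \setminus B$ lies below the $(1-\frac{1}{\ell})$-fraction threshold, and (ii) recalling that $\partial_e(B) = \partial_e(F \setminus B)$ so the bound obtained on the small side applies to $B$ itself. Everything else is bookkeeping around Corollary~\ref{cor:isoLindsey} together with the trivial degree estimate relating edge and vertex boundaries.
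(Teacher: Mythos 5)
Your proof is correct and follows essentially the same route as the paper's: split on whether $|B|$ exceeds the $(1-1/\ell)$-fraction threshold, apply Corollary~\ref{cor:isoLindsey} to $B$ or to its complement within the fiber, and then convert the edge-boundary bound to a vertex-boundary bound via the maximum degree $n(k!/2-1)\leq nk!/2$ in $K_{k!/2}^n$. The final division by $|S_k^n|$ to pass to probabilities is the same bookkeeping the paper performs implicitly.
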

\begin{proof}
We use Corollary~\ref{cor:isoLindsey} with $\ell = k! / 2$ and the set $A$ being either $B$ or $B^c := F \left( z^{a,b} \right) \setminus B$. Suppose first that $\left| B \right| \leq \left( 1 - \frac{2}{k!} \right)\left( k! / 2 \right)^n$. Then $\left| \partial_e \left( B \right) \right| \geq \left| B \right|$. Suppose now that $\left| B \right| > \left( 1 - \frac{2}{k!} \right)\left( k! / 2 \right)^n$. Since we are in the case of a small fiber, we also know that $\left| B \right| \leq \left( 1 - \frac{\eps^3}{4 n^3 k^9} \right) \left(k! / 2 \right)^n$. Consequently, we get
\[
\left| \partial_e \left( B \right) \right| = \left| \partial_e \left( B^c \right) \right| \geq \left| B^c \right| \geq \frac{\eps^3}{4 n^3 k^9} \left| B \right|,
\]
which proves the first claim.

A ranking profile in $F \left( z^{a,b} \right)$ has $\left( k! / 2 - 1 \right) n \leq n k! / 2$ neighbors in $F \left( z^{a,b} \right)$, which then implies \eqref{eq:iso_small_fib}.
\end{proof}

\begin{corollary}\label{cor:comparable_boundaries_k}
If \eqref{eq:sm_fbr} holds, then
\[
\p\left( \sigma \in \bigcup_{z^{a,b}} \partial \left( B_1 \left( z^{a,b} \right) \right) \right) \geq \frac{\eps^4}{2 n^5 k^{12} k!}.
\]
\end{corollary}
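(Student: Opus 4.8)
The plan is to break $\Sm(B_1^{a,b})$ into its constituent small fibers, apply Lemma~\ref{lem:comparable_boundaries_k} to each one, and sum the resulting estimates, exploiting the fact that distinct fibers are disjoint.

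First I would record the bookkeeping. The fibers $F(z^{a,b})$, as $z^{a,b}$ ranges over $\{-1,1\}^n$, partition $S_k^n$; hence the sets $B_1(z^{a,b}) \subseteq F(z^{a,b})$ are pairwise disjoint, and so are the boundary sets $\partial(B_1(z^{a,b}))$ (each of which lies inside the corresponding fiber). By Definition~\ref{def:lg_fibers}, $\Sm(B_1^{a,b})$ is exactly the disjoint union of those $B_1(z^{a,b})$ which are small fibers, so that hypothesis~\eqref{eq:sm_fbr} reads
\[
\sum_{z^{a,b}:\, B_1(z^{a,b}) \text{ small}} \p\bigl(\sigma \in B_1(z^{a,b})\bigr) \;=\; \p\bigl(\sigma \in \Sm(B_1^{a,b})\bigr) \;\ge\; \frac{\eps}{n k^3}.
\]

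Next, for each small fiber $B \equiv B_1(z^{a,b})$, inequality~\eqref{eq:iso_small_fib} of Lemma~\ref{lem:comparable_boundaries_k} gives $\p(\sigma \in \partial(B)) \ge \tfrac{\eps^3}{2 n^4 k^9 k!}\,\p(\sigma \in B)$. Summing this over all small $z^{a,b}$ and using the disjointness of the $\partial(B_1(z^{a,b}))$,
\[
\p\Bigl(\sigma \in \bigcup_{z^{a,b}} \partial\bigl(B_1(z^{a,b})\bigr)\Bigr) \;\ge\; \sum_{z^{a,b}:\, B_1(z^{a,b}) \text{ small}} \p\bigl(\sigma \in \partial(B_1(z^{a,b}))\bigr) \;\ge\; \frac{\eps^3}{2 n^4 k^9 k!}\cdot\frac{\eps}{n k^3} \;=\; \frac{\eps^4}{2 n^5 k^{12} k!},
\]
where the first inequality is valid because the union on the left-hand side (over \emph{all} $z^{a,b}$) contains the union over small fibers only. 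This is the claimed bound.

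There is essentially no hard step; the only points requiring care are (i) the disjointness bookkeeping that turns probabilities of unions into sums, and (ii) checking that Lemma~\ref{lem:comparable_boundaries_k} is being applied with the same (global, uniform on $S_k^n$) measure $\p$ used in~\eqref{eq:sm_fbr} — which is fine, since the estimate \eqref{eq:iso_small_fib} is really a statement about the ratio $|\partial(B)|/|B|$ and both $B$ and $\partial(B)$ live inside a single fiber.
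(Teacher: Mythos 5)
Your proof is correct and follows exactly the same route as the paper: decompose $\Sm(B_1^{a,b})$ into its small fibers, apply inequality~\eqref{eq:iso_small_fib} of Lemma~\ref{lem:comparable_boundaries_k} fiber by fiber, and sum over the disjoint fibers using~\eqref{eq:sm_fbr}. No gaps.
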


\begin{proof}
Using the previous lemma and \eqref{eq:sm_fbr} we have
\begin{align*}
\p \left( \sigma \in \bigcup_{z^{a,b}} \partial \left( B_1 \left( z^{a,b} \right) \right) \right) &= \sum_{z^{a,b}} \p \left( \sigma \in \partial \left( B_1 \left( z^{a,b} \right) \right) \right)\geq \sum_{z^{a,b} : B_1 \left( z^{a,b} \right) \subseteq \Sm \left( B_1^{a,b} \right)} \p \left( \sigma \in \partial \left( B_1 \left( z^{a,b} \right) \right) \right)\\
&\geq \sum_{z^{a,b} : B_1 \left( z^{a,b} \right) \subseteq \Sm \left( B_1^{a,b} \right)} \frac{\eps^3}{2 n^4 k^9 k!} \p \left( \sigma \in  B_1 \left( z^{a,b} \right) \right) = \frac{\eps^3}{2 n^4 k^9 k!} \p \left( \sigma  \in \Sm \left( B_1^{a,b} \right) \right)\\
&\geq \frac{\eps^4}{2 n^5 k^{12} k!}. \qedhere
\end{align*}
\end{proof}

Next, we want to find manipulation points on the boundaries of boundaries.

\begin{lemma}\label{lem:manip_on_bdry}
Suppose the ranking profile $\sigma$ is on the boundary of a fiber for $B_1^{a,b}$, i.e.,
\[
\sigma \in \bigcup_{z^{a,b}} \partial \left( B_1 \left( z^{a,b} \right) \right).
\]
Then either $\sigma_{-1} \in D_1 \left( a, b \right)$, or there exists a manipulation point $\hat{\sigma}$ which differs from $\sigma$ in at most two coordinates, one of them being the first coordinate.
\end{lemma}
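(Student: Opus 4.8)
The plan is to unpack what it means for $\sigma$ to lie on the boundary $\partial\left(B_1\left(z^{a,b}\right)\right)$ inside the fiber $F\left(z^{a,b}\right)$, and then apply the original Gibbard-Satterthwaite theorem (Theorem~\ref{thm:GS}) to a suitable restriction of $f$. Since $\sigma \in \partial\left(B_1\left(z^{a,b}\right)\right)$ with $z^{a,b} = x^{a,b}\left(\sigma\right)$, by definition $\sigma \in B_1\left(z^{a,b}\right)$ — so $f\left(\sigma\right) = a$ and there is $\sigma'$ with $\left(\sigma,\sigma'\right) \in B_1^{a,b}$, i.e.\ voter $1$ is pivotal between $a$ and $b$ at $\sigma$ — and moreover there is a neighbor $\pi \in F\left(z^{a,b}\right) \setminus B_1\left(z^{a,b}\right)$ differing from $\sigma$ in exactly one coordinate, say coordinate $j$. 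The two cases to distinguish are $j = 1$ and $j \neq 1$.

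First I would handle $j \neq 1$. Here $\pi$ and $\sigma$ agree on coordinate $1$ and on all coordinates except $j$, and $\pi$ is in the same fiber (so the $a$ vs.\ $b$ preference vector is unchanged), yet $\pi \notin B_1\left(z^{a,b}\right)$. Since $f\left(\sigma\right) = a$ and $\sigma$ is on $B_1^{a,b}$, while $\pi$ fails to be on this boundary, something must change: either $f\left(\pi\right) \neq a$, or $f\left(\pi\right) = a$ but voter $1$ is no longer pivotal between $a$ and $b$. In either subcase, restricting $f$ to the two coordinates $1$ and $j$ (fixing $\sigma_{-\left\{1,j\right\}}$) yields a two-voter SCF that cannot be of the nonmanipulable form, because it genuinely depends on coordinate $1$ (voter $1$ is pivotal at $\sigma$) and its behavior at the pair $\left(\sigma_1,\sigma_j\right)$ versus $\left(\sigma_1,\pi_j\right)$ shows dependence on coordinate $j$ as well, so it is a function of more than one voter; if it also takes at least three values, Theorem~\ref{thm:GS} gives a manipulation point of this restricted SCF. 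The manipulation point $\hat\sigma$ then agrees with $\sigma$ outside coordinates $1$ and $j$ (note it differs in at most two coordinates, both cases being allowed by the statement). The subcase where the restricted SCF takes only two values needs a short separate argument: then the change from $f\left(\sigma\right)=a$ forces a non-monotonicity or a two-valued restriction in which voter $1$'s pivotality between $a$ and $b$ combined with the $j$-direction move produces a manipulation directly — I would argue that a pivotal voter on a boundary whose outcome can be changed by another voter's move gives an explicit manipulation pair using monotonicity, or else $\sigma_{-1} \in D_1\left(a,b\right)$ is forced.

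For $j = 1$: now $\pi$ differs from $\sigma$ only in coordinate $1$, stays in the fiber $F\left(z^{a,b}\right)$ (so voter $1$'s $a$ vs.\ $b$ preference is the same in $\pi_1$ and $\sigma_1$), but $\pi \notin B_1\left(z^{a,b}\right)$. Consider the induced one-voter SCF $f_{\sigma_{-1}}$. We know $f_{\sigma_{-1}}\left(\sigma_1\right) = a$ and $\sigma_1$ is a pivot between $a$ and $b$ for $f_{\sigma_{-1}}$, while $f_{\sigma_{-1}}\left(\pi_1\right)$ either is not $a$ or is $a$ without the pivot. If $f_{\sigma_{-1}}$ is \emph{not} of the form $\tp_H$ for some $H \ni a,b$ with $\left|H\right| \geq 3$ — i.e.\ $\sigma_{-1} \notin D_1\left(a,b\right)$ — then I claim one can find a manipulation point $\hat\sigma$ differing from $\sigma$ only in coordinate $1$: a one-voter SCF that is pivotal between $a$ and $b$ somewhere and is not a top-of-subset rule must be manipulable by that single voter (this is essentially the one-voter content of GS, and is exactly where the paper will later need the quantitative one-voter GS theorem). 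So either $\sigma_{-1} \in D_1\left(a,b\right)$, giving the first alternative of the lemma, or we produce the required $\hat\sigma$.

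The main obstacle I anticipate is the careful case analysis of the \emph{two-valued} and \emph{one-voter} restricted SCFs — precisely pinning down that a pivotal voter sitting on the $B_1^{a,b}$ boundary, when the outcome or pivot status can be flipped by a one-coordinate move, yields either an honest manipulation pair or lands $\sigma_{-1}$ in $D_1\left(a,b\right)$. The monotonicity hypothesis (or its failure, which itself is a manipulation) has to be invoked cleanly here, and the bookkeeping of which alternative moves past which in a single adjacent transposition is the fiddly part; the bulk of the argument otherwise is a direct appeal to Theorem~\ref{thm:GS} on a two-coordinate restriction.
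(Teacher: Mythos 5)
Your high-level plan matches the paper's: split on whether the boundary neighbor $\pi$ of $\sigma$ differs in coordinate~$1$ or some $j\neq 1$, then apply Gibbard--Satterthwaite to a two-coordinate (or one-coordinate) restriction. But there is a genuine gap in the $j=1$ case, plus a vagueness in $j\neq 1$ that the paper resolves differently.

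In the $j=1$ case you assert: ``a one-voter SCF that is pivotal between $a$ and $b$ somewhere and is not a top-of-subset rule must be manipulable by that single voter.'' This is false as stated. The classes $D_1^H$ (and hence $D_1(a,b)$) only cover $\tp_H$ with $|H|\geq 3$, but a one-voter SCF can also be a nonmanipulable monotone \emph{two-valued} function which is pivotal between $a$ and $b$, takes only values in $\{a,b\}$, and is not of the form $\tp_H$ for any $H$ with $|H|\geq 3$; such an $f_{\sigma_{-1}}$ would lie in neither of your two conclusions. The paper avoids this by first observing that when $j=1$ one has $\pi_{-1}=\sigma_{-1}=\sigma'_{-1}$, so the pivot witness $\sigma'_1$ still works at $\pi$; this forces $f(\pi)\neq a$. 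Then $f(\pi)=b$ gives a direct $2$-manipulation at $\pi$ (voter $1$ ranks $a$ above $b$ yet the outcome is $b$ and can be made $a$), and $f(\pi)=c\notin\{a,b\}$ forces $f_{\sigma_{-1}}$ to take at least three values $\{a,b,c\}$, at which point the $\tp_H$-or-manipulable dichotomy is legitimate. The ``takes at least three values'' step is what licenses the dichotomy, and your argument omits it.

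In the $j\neq 1$ case you acknowledge the two-valued restriction as a loose end and propose falling back on ``$\sigma_{-1}\in D_1(a,b)$''; that alternative plays no role when $j\neq 1$, and the paper never invokes it there. What the paper does instead, in the subcase $f(\pi)=a$ (voter $1$ no longer pivotal at $\pi$), is to pass to the auxiliary profile $\tilde\pi := (\sigma'_1,\pi_{-1})$ and observe $f(\tilde\pi)\neq b$; then $f(\tilde\pi)=a$ yields an explicit manipulation pair with $\sigma'$ (they differ only in coordinate $j$ and agree on the $a$--$b$ order there), while $f(\tilde\pi)=c\notin\{a,b\}$ reinstates a three-valued two-coordinate restriction so GS applies. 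Both the $j=1$ and $j\neq 1$ analyses thus hinge on carefully establishing three values (or producing a direct two-step manipulation), not on a blanket one-voter GS principle or a fallback to $D_1(a,b)$.
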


\begin{proof}
First of all, by our assumption that $\sigma$ is on the boundary of a fiber for $B_1^{a,b}$, we know that $\sigma \in B_1 \left( z^{a,b} \right)$ for some $z^{a,b}$, which means that there exists a ranking profile $\sigma' = \left( \sigma'_1, \sigma_{-1} \right)$ such that $\left( \sigma, \sigma' \right) \in B_1^{a,b}$. We may assume $a \stackrel{\sigma_1}{>} b$ and $b \stackrel{\sigma'_1}{>} a$, or else either $\sigma$ or $\sigma'$ is a manipulation point.

Now since $\sigma \in \partial \left( B_1 \left( z^{a,b} \right) \right)$ we also know that there exists a ranking profile $\pi = \left( \pi_j, \sigma_{-j} \right) \in F \left( z^{a,b} \right) \setminus B_1 \left( z^{a,b} \right)$ for some $j \in \left[k \right]$. We distinguish two cases: $j \neq 1$ and $j=1$.

\textbf{Case 1:} $\mathbf{j \neq 1}$\textbf{.} What does it mean for $\pi = \left( \pi_j, \sigma_{-j} \right)$ to be on the same fiber as $\sigma$, but for $\pi$ to not be in $B_1 \left( z^{a,b} \right)$? First of all, being on the same fiber means that $\sigma_j$ and $\pi_j$ both rank $a$ and $b$ in the same order. Now $\pi \notin B_1 \left( z^{a,b} \right)$ means that
\begin{itemize}
\item either $f\left( \pi \right) \neq a$;
\item or $f \left( \pi \right) = a$ and $f\left( \pi'_1, \pi_{-1} \right) \neq b$ for every $\pi'_1 \in S_k$.
\end{itemize}
If $f\left( \pi \right) = b$, then either $\sigma$ or $\pi$ is a manipulation point, since the order of $a$ and $b$ is the same in both $\sigma_j$ and $\pi_j$ (since $\sigma$ and $\pi$ are on the same fiber).

Suppose $f\left( \pi \right) = c \notin \left\{ a, b \right\}$. Then we can define a SCF function on two coordinates by fixing all coordinates except coordinates 1 and $j$ to agree with the respective coordinates of $\sigma$---letting coordinates 1 and $j$ vary we get a SCF function on two coordinates which takes on at least three values ($a$, $b$, and $c$), and does not only depend on one coordinate. Now applying the Gibbard-Satterthwaite theorem we get that this SCF on two coordinates has a manipulation point, which means that our original SCF $f$ has a manipulation point which agrees with $\sigma$ in all coordinates except perhaps in coordinates 1 and $j$.

So the final case is that $f \left( \pi \right) = a$ and $f\left( \pi'_1, \pi_{-1} \right) \neq b$ for every $\pi'_1 \in S_k$. In particular for $\tilde{\pi} := \left( \sigma'_1, \pi_{-1} \right) = \left( \pi_j, \sigma'_{-j} \right)$ we have $f\left( \tilde{\pi} \right) \neq b$. Now if $f\left( \tilde{\pi} \right) = a$ then either $\sigma'$ or $\tilde{\pi}$ is a manipulation point, since the order of $a$ and $b$ is the same in both $\sigma'_j = \sigma_j$ and $\pi_j$. Finally, if $f\left( \tilde{\pi} \right) = c \notin \left\{ a, b \right\}$, then we can apply the Gibbard-Satterthwaite theorem just like in the previous paragraph.

\textbf{Case 2:} $\mathbf{j = 1}$\textbf{.} We can again ask: what does it mean for $\pi = \left( \pi_1, \sigma_{-1} \right)$ to be on the same fiber as $\sigma$, but for $\pi$ to not be in $B_1 \left( z^{a,b} \right)$? First of all, being on the same fiber means that $\sigma_1$ and $\pi_1$ both rank $a$ and $b$ in the same order (namely, as discussed at the beginning, ranking $a$ above $b$, or else we have a manipulation point). Now $\pi \notin B_1 \left( z^{a,b} \right)$ means that
\begin{itemize}
\item either $f\left( \pi \right) \neq a$;
\item or $f \left( \pi \right) = a$ and $f\left( \pi'_1, \pi_{-1} \right) \neq b$ for every $\pi'_1 \in S_k$.
\end{itemize}
However, we know that $f \left( \sigma' \right) = b$ and that $\sigma'$ is of the form $\sigma' = \left( \sigma'_1, \sigma_{-1} \right) = \left( \sigma'_1, \pi_{-1} \right)$, and so the only way we can have $\pi \notin B_1 \left( z^{a,b} \right)$ is if $f\left( \pi \right) \neq a$.

If $f\left( \pi \right) = b$, then $\pi$ is a manipulation point, since $a \stackrel{\pi_1}{>} b$ and $f\left( \sigma \right) = a$.

So the remaining case is if $f\left( \pi \right) = c \notin \left\{a,b\right\}$. This means that $f_{\sigma_{-1}}$ (see Definition~\ref{def:induced_SCF}) takes on at least three values. Denote by $H \subseteq \left[ k \right]$ the range of $f_{\sigma_{-1}}$. Now either $\sigma_{-1} \in D_1^{H} \subseteq D_1 \left( a,b \right)$, or there exists a manipulation point $\hat{\sigma}$ which agrees with $\sigma$ in every coordinate except perhaps the first. 
\end{proof}

Finally, we need to deal with dictators on the first coordinate.

\begin{lemma}\label{lem:dictators}
Assume that $\Dist \left( f, \overline{\NONMANIP} \right) \geq \eps$. We have that either
\[
\p \left( \sigma_{-1}  \in D_1 \left( a, b \right) \right) \leq \frac{\eps^4}{4 n^5 k^{12} k!},
\]
or
\begin{equation}\label{eq:manip_with_dictators}
\p \left( \sigma \in M \right) \geq \frac{\eps^{5}}{4 n^7 k^{12} \left(k!\right)^4}.
\end{equation}
\end{lemma}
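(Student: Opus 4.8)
The statement to prove is Lemma~\ref{lem:dictators}: either the probability that fixing voters $2,\dots,n$ according to $\sigma_{-1}$ yields a dictator-on-a-subset in coordinate $1$ is small (at most $\tfrac{\eps^4}{4n^5k^{12}k!}$), or the manipulation probability $\p(\sigma \in M)$ is at least $\tfrac{\eps^5}{4n^7k^{12}(k!)^4}$. The plan is a dichotomy argument: assume the first conclusion fails, i.e.
\[
\p\left(\sigma_{-1} \in D_1(a,b)\right) > \frac{\eps^4}{4 n^5 k^{12} k!},
\]
and derive the lower bound on $\p(\sigma \in M)$. The key idea is that when $\sigma_{-1} \in D_1(a,b)$ the induced one-voter SCF $f_{\sigma_{-1}}$ is a genuine dictator-on-a-subset $\tp_H$ with $\{a,b\}\subseteq H$, $|H|\ge 3$. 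Such a function is itself \emph{nonmanipulable} as a one-voter function, so the manipulation of $f$ must come from the interaction with the other coordinates: I would show that near many such $\sigma_{-1}$ there is a choice of $\sigma_{-1}'$ differing in one coordinate $j\ne 1$ for which $f_{\sigma_{-1}'}$ is \emph{not} of the form $\tp_{H'}$, and then invoke the two-voter Gibbard--Satterthwaite theorem (Theorem~\ref{thm:GS}) on the restricted SCF in coordinates $1$ and $j$ to produce a manipulation point agreeing with $\sigma$ off coordinates $1$ and $j$.

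**Carrying it out.**

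First I would argue that $\Dist(f,\overline{\NONMANIP})\ge\eps$ forces $f$ itself to \emph{not} be (close to) the one-coordinate function $\sigma\mapsto\tp_H(\sigma_1)$; hence the set $D_1^H$ cannot have probability close to $1$ for any fixed $H$, and more precisely, since $\sum_H \p(\sigma_{-1}\in D_1^H) \le 1$ while $\p(\sigma_{-1}\in D_1(a,b))$ is bounded below, there is at least one $H\supseteq\{a,b\}$ with $|H|\ge 3$ and $\p(\sigma_{-1}\in D_1^H)$ still at least a $\mathrm{poly}^{-1}$ fraction (dividing by at most $2^k$ choices of $H$, or rather noting $|H|\le k$ so there are at most $2^k$ of them — this costs a factor that must be absorbed; in fact one can avoid the $2^k$ by observing there are at most $k$ relevant $H$'s containing $\{a,b\}$ and being closed under the top element, or by a pigeonhole on $|H|$). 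Next, consider the set of $\sigma_{-1}$ with $f_{\sigma_{-1}}=\tp_H$; since $\Dist(f,\overline{\NONMANIP})\ge\eps$, $f$ is not globally equal to $\sigma\mapsto\tp_H(\sigma_1)$ even after modifying $\eps$-fraction, so there is a non-negligible vertex-boundary (in the graph on $S_k^{n-1}$ with single-coordinate-change edges) between $\{\sigma_{-1}: f_{\sigma_{-1}}=\tp_H\}$ and its complement. Here I would apply the Lindsey edge-isoperimetric inequality (Corollary~\ref{cor:isoLindsey}) to this set inside $K_{k!}^{n-1}$ exactly as in Lemma~\ref{lem:comparable_boundaries_k}, obtaining that the boundary has probability at least $\mathrm{poly}^{-1}(\eps,1/n,1/k!)$ times the probability of the set itself, hence still at least $\tfrac{\eps^5}{\mathrm{poly}(n,k)(k!)^{O(1)}}$.

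**The manipulation step.**

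For each boundary pair $(\sigma_{-1},\sigma_{-1}')$ differing in some coordinate $j\ne 1$, with $f_{\sigma_{-1}}=\tp_H$ and $f_{\sigma_{-1}'}\ne\tp_{H'}$ for any $H'$, I would look at the two-voter SCF $g(\tau_1,\tau_j):=f(\tau_1,\dots,\tau_j,\dots)$ with all other coordinates fixed to the common values of $\sigma_{-1}$ and $\sigma_{-1}'$. This $g$ takes at least three values (it restricts to $\tp_H$ for one value of $\tau_j$ and to something not of that form for the neighbouring value, and one checks it is not a function of a single coordinate nor two-valued — the standard case analysis as in the proof of Lemma~\ref{lem:manip_on_bdry}). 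By Theorem~\ref{thm:GS}, $g$ has a manipulation point, i.e.\ $f$ has a manipulation point $\hat\sigma$ agreeing with $\sigma$ off coordinates $1,j$. Then I would count: each manipulation point $\hat\sigma$ is produced by at most $(k!)^2$ ranking profiles $\sigma$ (the freedom in coordinates $1$ and $j$), so $\p(\sigma\in M)$ is at least $\tfrac{1}{(k!)^2}$ times the boundary probability, giving the claimed $\tfrac{\eps^5}{4n^7k^{12}(k!)^4}$ after tracking constants.

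**Main obstacle.**

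The delicate part is the case analysis ensuring that the restricted two-voter function $g$ genuinely falls under the hypotheses of Gibbard--Satterthwaite (three values, not single-coordinate, monotonicity issues), paralleling the intricate argument in Lemma~\ref{lem:manip_on_bdry}: one must rule out the possibilities that $f_{\sigma_{-1}'}$ is still some $\tp_{H'}$ absorbed into a larger dictator structure, or that changing $\tau_j$ only permutes alternatives outside $H$ without creating a third value visible from coordinate $1$. Handling the subcase where the ``defect'' is itself in coordinate $1$ (rather than some $j\ne1$) requires separately noting that then $f_{\sigma_{-1}}$ would have to take a value outside $H$ for some $\tau_1$, contradicting $f_{\sigma_{-1}}=\tp_H$, so this subcase is vacuous — which is precisely why the boundary must lie in a coordinate $j\ne 1$. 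A secondary bookkeeping issue is keeping all the $\mathrm{poly}^{-1}$ factors (the isoperimetric loss, the $(k!)^2$ overcounting, the pigeonhole over $H$) consistent with the exact exponents $\tfrac{\eps^5}{4n^7k^{12}(k!)^4}$ stated; I would not expect to match these exactly on a first pass and would simply verify the final exponents are no larger than claimed.
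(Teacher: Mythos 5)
Your proposal follows the same outline as the paper's proof: apply Lindsey's edge-isoperimetric inequality to each $D_1^H$ inside $S_k^{n-1}$ to show its vertex boundary is comparable in size, then for each boundary point apply the Gibbard--Satterthwaite theorem to the two-voter SCF obtained by fixing all coordinates except $1$ and the direction $j$ of the boundary edge, and finally account for the $\leq n(k!)^2$-to-one overcounting. The one step where you diverge is the pigeonhole to a single $H$, which is both unnecessary and incorrectly justified: there are $2^{k-2}-1$ (not $O(k)$) subsets $H$ with $\{a,b\}\subseteq H$ and $|H|\geq 3$, so the pigeonhole would cost a factor exponential in $k$ and would not reproduce the stated constant $\frac{\eps^5}{4n^7k^{12}(k!)^4}$ (it would still be $\mathrm{poly}(k!)$, but strictly weaker). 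The fix is to observe that the sets $D_1^H$ are pairwise disjoint across $H$ (a given one-voter function equals $\tp_H$ for at most one $H$), hence so are their vertex boundaries, so one can sum
\[
\p\Bigl(\sigma_{-1}\in \bigcup_H \partial\bigl(D_1^H\bigr)\Bigr)=\sum_H \p\bigl(\sigma_{-1}\in\partial(D_1^H)\bigr)\geq \frac{\eps}{nk!}\sum_H\p\bigl(\sigma_{-1}\in D_1^H\bigr)
\]
with no loss at all; that is precisely how the paper proceeds. Aside from this, your boundary-to-manipulation-point case analysis (including the observation that the boundary direction cannot be coordinate $1$, and the need to rule out $f_{\sigma'_{-1}}$ being another $\tp_{H'}$) matches the paper's Cases 1 and 2.
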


\begin{proof}
Suppose $\p \left( \sigma_{-1} \in D_1 \left( a,b \right) \right) \geq \frac{\eps^{4}}{4 n^5 k^{12} k!}$, which is the same as
\begin{equation}\label{eq:lot_of_dictators}
\sum_{H : \left\{a,b\right\} \subseteq H, \left| H \right| \geq 3} \p \left( \sigma_{-1} \in D_1^{H} \right) \geq  \frac{\eps^{4}}{4 n^5 k^{12} k!}.
\end{equation}
Note that for every $H \subseteq \left[k \right]$ we have
\[
\eps \leq \Dist \left( f, \overline{\NONMANIP} \right) \leq \p \left( f \left( \sigma \right) \neq \tp_H \left( \sigma_1 \right) \right) \leq 1 - \p \left( D_1^H \right), 
\]
and so 
\begin{equation}\label{eq:D1H_not_too_big}
\p \left( D_1^H \right) \leq 1 - \eps.
\end{equation}

The main idea is that \eqref{eq:D1H_not_too_big} implies that the size of the boundary of $D_1^H$ is comparable to the size of $D_1^H$, and if we are on the boundary of $D_1^H$, then there is a manipulation point nearby.

So first let us establish that the size of the boundary of $D_1^H$ is comparable to the size of $D_1^H$. This is done along the same lines as the proof of Lemma~\ref{lem:comparable_boundaries_k}.

Notice that $D_1^{H} \subseteq S_k^{n-1}$, where $S_k^{n-1}$ should be thought of as the Cartesian product of $n-1$ copies of the complete graph on $S_k$. We apply Corollary~\ref{cor:isoLindsey} with $\ell = k!$ and with $n-1$ copies, and we see that if $\eps \geq \frac{1}{k!}$, then $\left| \partial_e \left( D_1^H \right) \right| \geq \left| D_1^H \right|$. If $\eps < \frac{1}{k!}$ and $1 - \frac{1}{k!} \leq \p \left( D_1^H \right) \leq 1-\eps$ then
\[
\left| \partial_e \left( D_1^H \right) \right| = \left| \partial_e \left( \left( D_1^H \right)^c \right) \right| \geq \left| \left(D_1^H\right)^c \right| \geq \eps \left| D_1^H \right|.
\]
So in any case we have $\left| \partial_e \left( D_1^H \right) \right| \geq \eps \left| D_1^H \right|$. Since $\sigma_{-1}$ has $\left( n - 1 \right) \left(k! - 1\right) \leq n k!$ neighbors in $S_{k}^{n-1}$, we have that
\[
\p \left( \sigma_{-1} \in \partial \left( D_1^H \right) \right) \geq \frac{\eps}{n k!} \p \left( \sigma_{-1} \in D_1^H \right).
\]
Consequently, by \eqref{eq:lot_of_dictators}, we have
\begin{align*}
\p \left( \sigma_{-1} \in \bigcup_{H : \left\{a,b\right\} \subseteq H, \left|H \right| \geq 3} \partial \left( D_1^H \right) \right) &= \sum_{H : \left\{a,b\right\} \subseteq H, \left|H \right| \geq 3} \p \left( \sigma_{-1} \in \partial \left( D_1^H \right) \right)\\
&\geq \sum_{H : \left\{a,b\right\} \subseteq H, \left|H \right| \geq 3} \frac{\eps}{n k!} \p \left( \sigma_{-1} \in D_1^H \right) \geq \frac{\eps^{5}}{4 n^6 k^{12} \left(k!\right)^2}.
\end{align*}

Next, suppose $\sigma_{-1} \in \partial \left( D_1^H \right)$ for some $H$ such that $\left\{ a, b \right\} \subseteq H, \left| H \right| \geq 3$. We want to show that then there is a manipulation point ``close'' to $\sigma_{-1}$ in some sense. To be more precise: for the manipulation point $\hat{\sigma}$, $\hat{\sigma}_{-1}$ will agree with $\sigma_{-1}$ in all except maybe one coordinate.

If $\sigma_{-1} \in \partial \left( D_1^H \right)$, then there exist  $j \in \left\{2, \dots, n \right\}$ and $\sigma'_j$ such that $\sigma'_{-1} := \left( \sigma'_{j}, \sigma_{-\left\{1, j\right\}} \right) \notin D_1^H$. That is, $f_{\sigma'_{-1}} \left( \cdot \right) \not\equiv \tp_{H} \left( \cdot \right)$. There can be two ways that this can happen---the two cases are outlined below. Denote by $H' \subseteq \left[k\right]$ the range of $f_{\sigma'_{-1}}$.

\textbf{Case 1:} $\mathbf{H' = H}$\textbf{.} In this case we automatically know that there exists a manipulation point $\hat{\sigma}$ such that $\hat{\sigma}_{-1} = \sigma'_{-1}$, and so $\hat{\sigma}_{-1}$ agrees with $\sigma_{-1}$ in all coordinates except coordinate $j$.

\textbf{Case 2:} $\mathbf{H' \neq H}$\textbf{.} W.l.o.g.\ suppose $H' \setminus H \neq \emptyset$, and let $c \in H' \setminus H$. (The other case when $H \setminus H' \neq \emptyset$ works in exactly the same way.) First of all, we may assume that $f_{\sigma'_{-1}} \left( \cdot \right) \equiv \tp_{H'} \left( \cdot \right)$, because otherwise we have a manipulation point just like in Case 1.
 
We can define a SCF on two coordinates by fixing all coordinates except coordinate 1 and $j$ to agree with $\sigma_{-1}$, and varying coordinates 1 and $j$. We know that the outcome takes on at least three different values, since $\sigma_{-1} \in D_1^H$, and $\left| H \right| \geq 3$. 

Now let us show that this SCF is not a function of the first coordinate. Let $\sigma_1$ be a ranking which puts $c$ first, and then $a$. Then $f \left( \sigma_1, \sigma_{-1} \right) = a$, but $f\left( \sigma_1, \sigma'_{-1} \right) = c$, which shows that this SCF is not a function of the first coordinate (since a change in coordinate $j$ can change the outcome).

Consequently, the Gibbard-Satterthwaite theorem tells us that this SCF on two coordinates has a manipulation point, and therefore there exists a manipulation point $\hat{\sigma}$ for $f$ such that $\hat{\sigma}_{-1}$ agrees with $\sigma_{-1}$ in all coordinates except coordinate $j$.

Putting everything together yields \eqref{eq:manip_with_dictators}.
\end{proof}

\subsection{Proof of Theorem~\ref{thm:k_bdd} concluded} 

\begin{proof}[Proof of Theorem~\ref{thm:k_bdd}]
If \eqref{eq:k_lg_fbr_ab} and \eqref{eq:k_lg_fbr_cd} hold, then we are done by Lemmas~\ref{lem:lg_fbr_1} and~\ref{lem:lg_fbr_2}.

If not, then either \eqref{eq:sm_fbr} holds, or \eqref{eq:sm_fbr} holds for the boundary $B_2^{c,d}$; w.l.o.g.\ assume that \eqref{eq:sm_fbr} holds.

By Corollary~\ref{cor:comparable_boundaries_k}, we have
\[
\p\left( \sigma \in \bigcup_{z^{a,b}} \partial \left( B_1 \left( z^{a,b} \right) \right) \right) \geq \frac{\eps^4}{2 n^5 k^{12} k!}.
\]

We may assume that $\p \left( \sigma_{-1} \in D_1 \left( a, b \right) \right) \leq \frac{\eps^4}{4 n^5 k^{12} k!}$, since otherwise we are done by Lemma~\ref{lem:dictators}. Consequently, we then have
\[
\p\left( \sigma \in \bigcup_{z^{a,b}} \partial \left( B_1 \left( z^{a,b} \right) \right), \sigma_{-1} \notin D_1 \left( a, b \right) \right) \geq \frac{\eps^4}{4 n^5 k^{12} k!}.
\]
We can then finish our argument using Lemma~\ref{lem:manip_on_bdry}:
\[
\p \left( \sigma \in M \right) \geq \frac{1}{n \left( k! \right)^{2}} \p\left( \sigma \in \bigcup_{z^{a,b}} \partial \left( B_1 \left( z^{a,b} \right) \right), \sigma_{-1} \notin D_1 \left( a, b \right) \right) \geq \frac{\eps^4}{4 n^6 k^{12} \left(k!\right)^{3}}. \qedhere
\]
\end{proof}

\section{An overview of the refined proof}\label{sec:k_refined_overview} 

In order to improve on the result of Theorem~\ref{thm:k_bdd}---in particular to get rid of the factor of $\frac{1}{\left(k!\right)^4}$---we need to refine the methods used in the previous section. 


The key to the refined method is to consider the so-called \emph{refined rankings graph} instead of the general rankings graph studied in Section~\ref{sec:k_bdd}. The vertices of this graph are again ranking profiles (elements of $S_k^n$), and two vertices are connected by an edge if they differ in exactly one coordinate, and by an adjacent transposition in that coordinate. Again, the SCF $f$ naturally partitions the vertices of this graph into $k$ subsets, depending on the value of $f$ at a given vertex. Clearly a 2-manipulation point can only be on the edge boundary of such a subset in the refined rankings graph, and so it is important to study these boundaries.

One of the important steps of the proof in Section~\ref{sec:k_bdd} is creating a configuration where we fix all but two coordinates, and the SCF $f$ takes on at least three values when we vary these two coordinates---then we can define another SCF on two voters and $k$ alternatives which must have a manipulation point by the Gibbard-Satterthwaite theorem. The advantage of the refined rankings graph is that we can create a configuration where we fix all but two coordinates, and in these two coordinates we also fix all but constantly many adjacent alternatives, and the SCF takes on at least three values when we vary these constantly many adjacent alternatives in the two coordinates. Then we can define another SCF on two voters and $r$ alternatives, where $r$ is a small constant, which must have a manipulation point by the Gibbard-Satterthwaite theorem. Since $r$ is a constant, we only lose a constant factor in our estimates, not factors of $\frac{1}{k!}$.

We state the refined result in Theorem~\ref{thm:k_refined}, which we also prove in Section~\ref{sec:manip_ref}. The proof of Theorem~\ref{thm:k_refined} follows the outline of the proof of Theorem~\ref{thm:k_bdd}: we know that there are at least two refined boundaries which are big

The difficulty is dealing with the case when we are on the boundary of a small fiber in the first coordinate. Suppose $\sigma = \left( \sigma_1, \sigma_{-1} \right)$ is on such a boundary. We know that there are $k!$ ranking profiles which agree with $\sigma$ in coordinates 2 through $n$. The difficulty comes from the fact that---in order to obtain a polynomial bound in $k$---we are only allowed to look at a polynomial number (in $k$) of these ranking profiles when searching for a manipulation point. If there is an $r$-manipulation point among them for some small constant $r$, then we are done. If this is not the case then $\sigma$ is what we call a \emph{local dictator} on some subset of the alternatives in coordinate 1. We say that $\sigma$ is a local dictator on some subset $H \subseteq \left[k\right]$ of the alternatives in coordinate 1 if the alternatives in $H$ are adjacent in $\sigma_1$, and permuting the alternatives in $H$ in every possible way in the first coordinate, the outcome of the SCF $f$ is 
always the top-ranked alternative in $H$.

So instead of dealing with dictators on some subset in coordinate 1, as in Section~\ref{sec:k_bdd}, we have to deal with \emph{local dictators} on some subset in coordinate 1. This analysis involves essentially only the first coordinate, in essence proving a quantitative Gibbard-Satterthwaite theorem for one voter. This has not been studied in the literature before, and, moreover, we were not able to utilize previous quantitative Gibbard-Satterthwaite theorems to solve this problem easily. Hence we separate this argument from the rest of the proof of Theorem~\ref{thm:k_refined} and formulate a quantitative Gibbard-Satterthwaite theorem for one voter, Theorem~\ref{thm:quant_GS_1voter}, which is proven in Section~\ref{sec:1voter}. This proof forms the backbone for the proof of Theorem~\ref{thm:k_refined}, which is then proven in Section~\ref{sec:manip_ref}. 

\begin{theorem}\label{thm:quant_GS_1voter}
Suppose $f : S_k \to \left[ k \right]$ is a SCF on $n=1$ voter and $k \geq 3$ alternatives which satisfies $\Dist \left( f, \NONMANIP \right) \geq \eps$. Then
\begin{equation}\label{eq:main}
\p \left( \sigma \in M \left( f \right) \right) \geq \p \left( \sigma \in M_3 \left( f \right) \right) \geq p \left( \eps, \frac{1}{k} \right),
\end{equation}
for some polynomial $p$, where $\sigma \in S_k$ is selected uniformly. In particular, we show a lower bound of $\frac{\eps^3}{10^5 k^{16}}$.
\end{theorem}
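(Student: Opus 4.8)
The plan is to run the strategy of the refined proof of Theorem~\ref{thm:k_bdd} in the degenerate case $n=1$, where there are no fibers coming from other voters and the entire argument must take place inside $S_k$ itself. All of the combinatorics is carried out on the \emph{refined rankings graph}, whose vertex set is $S_k$ and in which $\sigma\sim\sigma'$ iff $\sigma'$ is obtained from $\sigma$ by a single adjacent transposition. The basic dichotomy is the same as in the general case: if $(\sigma,\sigma')$ is an edge of this graph with $f(\sigma)=a$ and $f(\sigma')=b\neq a$, then either the transposition is between exactly $a$ and $b$ (a ``monotone-type'' boundary edge, which need not by itself produce a manipulation), or it involves some third alternative, in which case one of $\sigma,\sigma'$ is already a $3$-manipulation point. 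So the whole difficulty is concentrated on the monotone-type boundaries, and ultimately on ruling out the structure that makes them harmless.

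First I would establish that the boundaries are large. Since $\Dist(f,\NONMANIP)\ge\eps$, in particular $f$ is $\eps$-far from every constant, so by the single-voter analogues of Proposition~\ref{prop:sumInfVarBound} and Lemma~\ref{lem:constDist} — proved by walking along adjacent transpositions, using that the refined graph has diameter $O(k^2)$ and degree $k-1$ — the total refined edge boundary $\sum_a |B^a(f)|$ has measure $\gtrsim \eps/k^{O(1)}$, so some value $a$ has a large boundary. Moreover $f$ is $\eps$-far from every two-valued monotone SCF, and a union bound over the $O(k^2)$ pairs then forces a pair $a,b$ together with a third alternative $c\notin\{a,b\}$ such that the monotone-type boundary $B^{a,b}(f)$ is large and, along that boundary, the value $c$ is ``accessible'' by permuting a bounded number of adjacent alternatives. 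This is the single-voter version of Lemma~\ref{lem:boundaries1} and contributes the initial factors of $\eps$ and $\mathrm{poly}(1/k)$.

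Next I would partition $S_k$ into \emph{fibers} according to the positions occupied by the relevant block of alternatives, calling a fiber large or small exactly as in Definition~\ref{def:lg_fibers}. On a large fiber almost every vertex lies on $B^{a,b}(f)$; fixing all alternatives except a bounded-size adjacent block containing witnesses for $a$, $b$ and $c$, the restricted SCF takes at least three values and is nonconstant, so the Gibbard--Satterthwaite theorem (Theorem~\ref{thm:GS}) for a \emph{constant} number $r$ of alternatives yields an $r$-manipulation point within bounded graph distance — costing only absolute constants, not factors of $k!$, which is the whole point of the refined graph. On a small fiber one applies an edge-isoperimetric inequality on the refined graph (its fibers are products of small transposition graphs, for which a bound with polynomial-in-$k$ slack holds, in the spirit of Corollary~\ref{cor:isoLindsey}) to conclude that the boundary of the fiber has measure comparable to the fiber, up to $\mathrm{poly}(1/k)$. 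At each boundary point of a small fiber a case analysis of how the output can change under adjacent transpositions shows that either there is an $r$-manipulation point nearby, or $\sigma$ is a \emph{local dictator}: the elements of some block $H$ are adjacent in $\sigma$ and permuting them in every way always outputs the top element of $H$; by restricting to sub-blocks one may take $|H|$ bounded. To finish, fix such an $H$: the set of rankings that are local dictators on $H$ cannot be all of $S_k$, because $f$ is $\eps$-far from $\tp_H\in\NONMANIP$, so its edge boundary in the refined graph is $\gtrsim\eps$ times its size, and at every such boundary point there is an $r$-manipulation point nearby — the single-voter analogue of Lemma~\ref{lem:dictators}. Summing over the bounded-size $H$'s and multiplying the $\mathrm{poly}(1/k)$ losses from the two isoperimetric comparisons, the union bounds over pairs and blocks, and the bounded branching of the refined graph, together with the $\eps^3$ coming from the initial boundary estimate and the two $\eps$-slacks in the small-fiber/local-dictator step, yields $\p(\sigma\in M_3(f))\gtrsim \eps^3/k^{16}$.

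The main obstacle is exactly the local-dictator analysis in the last step: with only one voter there is no coordinate left to re-randomize, so one cannot simply invoke Gibbard--Satterthwaite on a restricted two-voter function as in the $n\ge2$ proof; instead one must argue entirely inside $S_k$ and control how $f$ behaves as one transposes alternatives into and out of a local-dictator block, including the reduction to blocks of bounded size so that the union over $H$ does not become exponential in $k$. This is why the paper isolates the statement as Theorem~\ref{thm:quant_GS_1voter} and treats its proof (in Section~\ref{sec:1voter}) as the backbone for the refined manipulation theorem.
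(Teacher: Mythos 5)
Your high-level architecture matches the paper's: find a large refined boundary $B^{a,b;[a:b]}$, split into a small-fiber and a large-fiber case, use canonical-path isoperimetry on small fibers to find boundary points, observe that each boundary point either gives a $3$-manipulation point or a local dictator on a block of three alternatives, and then kill the local dictators. You also correctly identify the local-dictator termination as the crux of the whole proof. But the termination you sketch does not work, and the gap is exactly where the real content lives. You propose to fix $H$ and apply isoperimetry to $\LD^H$ in the ambient refined graph $S_k$, arguing that since $f$ is $\eps$-far from $\tp_H$ the set $\LD^H$ cannot be too large and at every boundary point there is a manipulation. This fails for two reasons. First, $\LD^H$ is \emph{always} a tiny fraction of $S_k$ (it requires $H$ to be an adjacent block in $\sigma$), so ``cannot be almost all of $S_k$'' is vacuous and gives no useful size control from the $\tp_H$-farness hypothesis. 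Second, and more fatally, in the ambient graph most boundary edges $(\sigma, z\sigma)$ of $\LD^H$ simply move an element of $H$ out of the adjacent block: that destroys membership in $\LD^H$ for a purely combinatorial reason and forces nothing about $f$, so there is no manipulation to extract there. The paper gets around both problems by fixing the block at the top — it works inside the slice $V_{(a,b,c)}$ whose internal edges never touch the block, sets $T(a,b,c)=V_{(a,b,c)}\cap\LD^{\{a,b,c\}}$, and runs isoperimetry there (Lemma~\ref{lem:lg_bdry_for_loc_dict_1vot} and Lemma~\ref{lem:manip_pts_on_bdry_of_loc_dict_1vot}). Even then, when $T(a,b,c)$ fills almost all of $V_{(a,b,c)}$ the isoperimetric step is useless; the paper then needs a genuinely different argument: collect such $c$ into a set $K$, show by bubbling-to-top that conditioned on $c\in K$ first, $f$ outputs $c$ with high probability (Lemma~\ref{lem:cond_on_top_1vot}), show $d\notin K$ is rarely the outcome (Lemma~\ref{lem:d_notin_K_1vot}), and only then derive a contradiction with $\Dist(f,\NONMANIP)\ge\eps$ via closeness to $\tp_K$ (Lemma~\ref{lem:final_loc_dict_1vot}). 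Nothing in your proposal replaces this machinery; the ``single-voter analogue of Lemma~\ref{lem:dictators}'' you invoke cannot exist in the naive sense, because Lemma~\ref{lem:dictators} crucially applies GS to a two-voter restriction that does not exist here.

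A secondary issue: in your large-fiber step you claim that restricting to a bounded adjacent block and applying Gibbard--Satterthwaite on $r$ alternatives ``yields an $r$-manipulation point.'' For $n=1$ GS only says the restricted function is manipulable \emph{unless it is a dictatorship}, and for a single voter the dictatorships on a subset are exactly the $\tp_H$'s — i.e.\ exactly the local-dictator configurations you are trying to rule out. So GS alone is inconclusive in precisely the hard case. The paper's large-fiber argument (Lemma~\ref{lem:cond_ab_top_still_lg_fbr_1vot} and Lemma~\ref{lem:lg_fbr_final_1vot}) does not invoke GS; it bubbles $a,b$ to the top, splits $S_k$ into three explicit events, and funnels everything back into the same local-dictator analysis. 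You are right that the whole difficulty is the local-dictator termination, but having identified the obstacle, your proposal does not actually solve it.
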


\section{Refined rankings graph}\label{sec:k_refined_prelims} 


\subsection{Transpositions, boundaries, and influences} 

\begin{definition}[Adjacent transpositions]
Given two elements $a,b \in \left[k \right]$, the adjacent transposition $\left[a:b\right]$ between them is defined as follows. If $\sigma \in S_k$ has $a$ and $b$ adjacent, then $\left[a:b\right] \sigma$ is obtained from $\sigma$ by exchanging $a$ and $b$. Otherwise $\left[a:b\right] \sigma = \sigma$.

We let $T$ denote the set of all $k \left( k - 1 \right) / 2$ adjacent transpositions.

For $\sigma \in S_k^n$, we let $\left[a:b\right]_i \sigma$ denote the ranking profile obtained by applying $\left[a:b\right]$ on the $i^{\text{th}}$ coordinate of $\sigma$ while leaving all other coordinates unchanged.
\end{definition}

\begin{definition}[Boundaries]
For a given SCF $f$ and a given alternative $a \in \left[k \right]$, we define
\[
H^{a} \left( f \right) = \left\{ \sigma \in S_k^n : f\left( \sigma \right) = a \right\},
\]
the set of ranking profiles where the outcome of the vote is $a$. The edge boundary of this set (with respect to the underlying refined rankings graph) is denoted by $B^{a;T} \left( f \right)$ : $B^{a;T} \left( f \right) = \partial_e \left( H^a \left( f \right) \right)$. This boundary can be partitioned: we say that the edge boundary of $H^a \left( f \right)$ in the direction of the $i^{\text{th}}$ coordinate is
\[
B_i^{a;T} \left( f \right) = \left\{ \left( \sigma, \sigma' \right) \in B^{a;T} \left( f \right):  \sigma_i \neq \sigma'_i \right\}.
\]
The boundary $B^a \left( f \right)$ can be therefore written as $B^{a;T} \left( f \right)= \cup_{i=1}^{n} B_i^{a;T} \left( f \right)$. We can also define the boundary between two alternatives $a$ and $b$ in the direction of the $i^{\text{th}}$ coordinate:
\[
B_i^{a,b;T} \left( f \right) = \left\{ \left( \sigma, \sigma' \right) \in B_i^{a;T} \left( f \right) : f\left( \sigma' \right) = b \right\}.
\]
Moreover, we can define the boundary between two alternatives $a$ and $b$ in the direction of the $i^{\text{th}}$ coordinate with respect to the adjacent transposition $z \in T$:
\[
B_i^{a,b;z} \left( f \right) = \left\{ \left( \sigma, \sigma' \right) \in B_i^{a;T} \left( f \right) : \sigma' = z_i \sigma, f\left( \sigma' \right) = b \right\}.
\]
We also say that $\sigma$ is \emph{on} the boundary $B_i^{a,b;z} \left( f \right)$ if $\left( \sigma, z_i \sigma \right) \in B_i^{a,b;z} \left( f \right)$.
Clearly we have
\[
B_i^{a,b;T} \left( f \right) = \bigcup_{z \in T} B_i^{a,b;z} \left( f \right).
\]
\end{definition}

\begin{definition}[Influences]
Given $z \in T$, we define
\begin{align*}
\Inf_i^{a,b;z} \left( f \right) &= \p \left( f \left( \sigma \right) = a, f \left( \sigma^{\left( i \right)} \right) = b \right)\\
\Inf_i^{a;z} \left( f \right) &= \p \left( f \left( \sigma \right) = a, f \left( \sigma^{\left( i \right)} \right) \neq a \right)\\
\Inf_i^{a,b;T} \left( f \right) &= \sum_{z \in T} \Inf_i^{a,b;z} \left( f \right),
\end{align*}
where $\sigma$ is uniformly distributed in $S_k^n$ and $\sigma^{\left(i\right)}$ is obtained from $\sigma$ by rerandomizing the $i^{\text{th}}$ coordinate $\sigma_i$ in the following way: with probability 1/2 we keep it as $\sigma_i$, and otherwise we replace it by $z \sigma_i$.
\end{definition}
Note that for $a \neq b$,
\[
\Inf_i^{a,b;z} \left( f \right) = \frac{1}{2} \p \left( f\left( \sigma \right) = a, f\left( z_i \sigma \right) = b \right) = \frac{1}{2} \frac{\left| B_i^{a,b;z} \left( f \right) \right|}{\left( k! \right)^{n}}.
\]

Again, most of the time the specific SCF $f$ will be clear from the context, in which case we omit the dependence on $f$.

\subsection{Canonical Paths and Group
  Actions}\label{sec:canon-paths-group}

In order to derive the more refined result, we will need to consider in more
detail the properties of the permutation group $L_q$ with respect to adjacent
transpositions. Again we use canonical paths arguments. We state the
arguments in a more general setup.

\begin{definition}
Let $L$ be a graph. 
\begin{itemize}
\item Let $P_L(\ell)$ denote the set of paths of length at most $\ell$ in $L$
  and $P_L=\cup_{\ell \in \mathbb{N}} P_L(l)$ the set of paths of finite length.
\item
  Let $L_1,L_2 \subseteq L$.
  A {\em canonical path map} on $L$  from $L_1$ to $L_2$ of {\em length} $\ell$ is a map $\Gamma \colon L_1 \times L_2 \to
  P_L(\ell)$ which satisfies that $\Gamma(x,y)$ begins at $x$ and ends at $y$
  for all $(x,y) \in L_1 \times L_2$.
\item
  Given a canonical path map $\Gamma \colon L_1 \times L_2 \to
  P_L(\ell)$
  and $0 \leq i \le \ell$ we define the inverse image mapping of the
  $i$'th vertex, $\Gamma_i^{-1}:L \rightarrow 2^{L_1 \times L_2}$ as
  \[ \Gamma_i^{-1}(z) = \{ (x,y)
  \mid \length(\Gamma(x,y)) \ge i, \Gamma(x,y)_i = z\}. \]
  Further, we let
  \begin{equation*}
    \Gamma^{-1}(z) = \cup_{i=0}^{\ell} \Gamma_i^{-1}(z)
  \end{equation*}
\item Given a
  group $H$ acting on $L$ we say that a canonical path map
  $\Gamma \colon L_1 \times L_2 \to P_L(\ell)$ is $H$-{\em invariant} if
  $H L_1 = L_1$ and $H L_2 = L_2$ and \[ \Gamma(h x, h y) = h \Gamma(x, y), \]
  for all $h \in H$ and all $(x,y) \in L_1 \times L_2$.
\end{itemize}
\end{definition}

We will use the following proposition. Recall that a group $H$ acting on $L$ is
called {\em fixed-point-free} if for all $x \in L$ and all $h \in H$ different than the
identity it holds that $h x \neq x$.

\begin{proposition} \label{prop:canon_sym}
Let $H$ be a fixed-point-free group acting on $L$ and let $\Gamma \colon L_1 \times L_2 \to P_L(\ell)$ be a canonical path map that is $H$-invariant.
Then for all $z\in L$ and $0\le i \le l$ it holds that
\begin{equation} \label{eq:canon_sym_i}
  |\Gamma^{-1}_i(z)| \le \frac{|L_1| |L_2|} {|H|}
\end{equation}
and
\begin{equation} \label{eq:canon_sym}
  |\Gamma^{-1}(z)| \leq \frac{(\ell+1) |L_1| |L_2|} {|H|}
\end{equation}
\end{proposition}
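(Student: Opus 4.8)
The plan is to prove Proposition~\ref{prop:canon_sym} by a direct counting argument exploiting that $H$ acts freely. First I would observe that the $H$-invariance of $\Gamma$ propagates to the inverse-image maps: for every $h \in H$, every $z \in L$, and every $0 \le i \le \ell$, we have
\[
\Gamma_i^{-1}(hz) = \{(hx, hy) : (x,y) \in \Gamma_i^{-1}(z)\} = h \cdot \Gamma_i^{-1}(z),
\]
because $\length(\Gamma(hx,hy)) = \length(h\Gamma(x,y)) = \length(\Gamma(x,y))$ and $\Gamma(hx,hy)_i = (h\Gamma(x,y))_i = h(\Gamma(x,y)_i)$. In particular $|\Gamma_i^{-1}(hz)| = |\Gamma_i^{-1}(z)|$, so the function $z \mapsto |\Gamma_i^{-1}(z)|$ is constant on each $H$-orbit.

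Next I would set up the double count. Fix $i$ and consider the set
\[
S_i = \{(x, y, h) \in L_1 \times L_2 \times H : \length(\Gamma(x,y)) \ge i,\ \Gamma(x,y)_i = hz_0\}
\]
for an arbitrary basepoint $z_0$; actually it is cleaner to avoid the basepoint and instead sum $|\Gamma_i^{-1}(z)|$ over one full orbit. Since $H$ acts freely on $L$, the orbit $Hz_0$ has exactly $|H|$ distinct elements, and by the previous paragraph each contributes the same value $|\Gamma_i^{-1}(z_0)|$. On the other hand, each pair $(x,y) \in L_1 \times L_2$ with $\length(\Gamma(x,y)) \ge i$ has its $i$-th vertex $\Gamma(x,y)_i$ lying in \emph{at most one} orbit, hence contributes to $\sum_{z \in Hz_0} |\Gamma_i^{-1}(z)|$ at most once. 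Therefore
\[
|H| \cdot |\Gamma_i^{-1}(z_0)| = \sum_{z \in Hz_0} |\Gamma_i^{-1}(z)| \le |L_1| \, |L_2|,
\]
which is exactly \eqref{eq:canon_sym_i}. Then \eqref{eq:canon_sym} follows immediately: $\Gamma^{-1}(z) = \bigcup_{i=0}^{\ell} \Gamma_i^{-1}(z)$ is a union of $\ell+1$ sets, so $|\Gamma^{-1}(z)| \le \sum_{i=0}^{\ell} |\Gamma_i^{-1}(z)| \le (\ell+1) |L_1| |L_2| / |H|$.

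The argument is essentially routine once the $H$-equivariance of $\Gamma_i^{-1}$ is recorded, so I do not anticipate a genuine obstacle; the one point that needs care is making sure the orbit really has size $|H|$, which is precisely where the fixed-point-free hypothesis enters (otherwise the orbit-stabilizer count would introduce a stabilizer factor and weaken the bound). A secondary subtlety is that different pairs $(x,y)$ may have $\Gamma(x,y)$ of different lengths, so one must restrict attention throughout to those pairs with $\length(\Gamma(x,y)) \ge i$ when bounding $|\Gamma_i^{-1}(z)|$; this is harmless since we only ever need an upper bound and dropping pairs only decreases the count.
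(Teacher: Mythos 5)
Your proof is correct and takes essentially the same approach as the paper: both use the $H$-invariance to show $|\Gamma_i^{-1}(hz)| = |\Gamma_i^{-1}(z)|$, the fixed-point-free hypothesis to guarantee the orbit has $|H|$ distinct elements, and the fact that the $i$-th vertex partitions the paths of length $\ge i$ to bound the orbit sum by $|L_1||L_2|$. The paper phrases this as a chain $|L_1 \times L_2| \ge \sum_w |\Gamma_i^{-1}(w)| \ge \sum_{h\in H}|\Gamma_i^{-1}(hz)| = |H||\Gamma_i^{-1}(z)|$, which is the same counting argument you carried out.
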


\begin{proof}
  Note that for all $i$, \[
  |L_1 \times L_2| \ge \sum_{w} |\Gamma^{-1}_i(w)| \ge \sum_{h \in
    H} |\Gamma^{-1}_i(h z)| = |H| |\Gamma^{-1}_i(z)|, \] where the
  first inequality follows since the value of the $i$'th vertex partitions the set of paths of length at least $i$,
  the second inequality since $H$ is fixed-point-free, and the final equality from the path being $H$-invariant.
  We thus obtain: \[
  |\Gamma^{-1}(z)| \leq \sum_{i=0}^{\ell} |\Gamma_i^{-1}(z)| \leq
  \frac{(\ell+1) |L_1| |L_2|} {|H|}, \] as needed.
\end{proof}

Two applications of the result above will be given for adjacent transpositions.
\begin{definition}
 Given two elements $a,b \in [k]$ the {\em adjacent transposition} $\adj a b$
  between them is defined as follows. If $\sigma \in S_k$ has $a$ and $b$ adjacent,
  then $\adj{a}{b} \sigma$ is obtained from $\sigma$ be exchanging $a$ and $b$. Otherwise,
  $\adj{a}{b} \sigma = \sigma$.

  We let $T$ denote the set of all $q(q-1)/2$ adjacent transpositions. Given $\mu
  \in T$, we define \begin{eqnarray}
    \Inf_i^{a,b ; \mu}(f) &=& \P(f(X) = a, f(X^{(i)})=b)
    \\
    \Inf_i^{a ; \mu}(f) &=& \P(f(X) = a, f(X^{(i)}) \neq a)
    \\
  \Inf_i^{a,b ; T}(f) &=& \sum_{\mu \in T} \Inf_i^{a,b;\mu}(f)
\end{eqnarray}
where $X^{(i)}$ is obtained from $X$ by re-randomizing the $i$:th coordinate $X_i$ in the
following way: with probability $1/2$ we keep it as $X_i$ and otherwise we replace
it by $ \mu X_i$.

Finally for $\sigma \in S_k^n$ we will let $\adji{\sigma}{b}{i} \sigma$ denote the element
obtained by applying $\adj{a}{b}$ on the $i$:th coordinate of $\sigma$ while leaving
all other coordinates unchanged. \end{definition}

\begin{proposition} \label{prop:canon1}
  There exists a canonical path map
  $\Gamma \colon S_k \times S_k \to P_{S_k}(\ell)$
  of length at most $\ell = k(k-1)/2 < k^2/2$, all of whose edges are
  adjacent transpositions such that for all $\mu$ it holds that:
  \begin{equation}
    \label{eq:canon1}
    |\Gamma^{-1}(\mu)| \leq \frac{k^2 k!}{2}
  \end{equation}
\end{proposition}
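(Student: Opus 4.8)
The plan is to build an explicit canonical path map on $S_k$ using adjacent transpositions and then apply Proposition~\ref{prop:canon_sym} with a suitable fixed-point-free group action. First I would fix, for each pair $(\pi,\tau) \in S_k \times S_k$, the following path: think of $\pi$ as a sequence of alternatives read from top to bottom, and perform a ``selection sort'' that transforms $\pi$ into $\tau$. Concretely, for $j = 1, 2, \dots, k-1$ in order, locate the alternative that $\tau$ ranks in position $j$ and bubble it up (via successive adjacent transpositions) to position $j$ in the current ranking, leaving positions $1,\dots,j-1$ untouched. Each such ``bubble up'' costs at most $k - j$ adjacent transpositions, so the total length is at most $\sum_{j=1}^{k-1}(k-j) = k(k-1)/2 < k^2/2 =: \ell$; pad the path with repeated final vertices if it is shorter so that all paths have length exactly $\ell$ (repeating a vertex is harmless and keeps $\Gamma : S_k \times S_k \to P_{S_k}(\ell)$ well defined). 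By construction every edge of $\Gamma(\pi,\tau)$ is an adjacent transposition, and $\Gamma(\pi,\tau)$ begins at $\pi$ and ends at $\tau$.

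Next I would exhibit a fixed-point-free group $H$ acting on $S_k$ with $|H| = k!$ for which this map is $H$-invariant. The natural choice is $H = S_k$ acting on itself by \emph{left} multiplication: $h \cdot \sigma = h \sigma$. This action is fixed-point-free (if $h\sigma = \sigma$ then $h$ is the identity) and has order $k!$. The key point is $H$-invariance: I must check that relabelling ranks by $h$ (i.e.\ left multiplication) commutes with the selection-sort procedure, that is, $\Gamma(h\pi, h\tau) = h\,\Gamma(\pi,\tau)$. This holds because the selection-sort is defined purely in terms of \emph{positions}: ``the alternative occupying position $p$'' and ``swap the alternatives in positions $p$ and $p+1$'' are operations that are unaffected by composing on the left with $h$ — left multiplication by $h$ permutes which rank-label is attached to each alternative but does not move alternatives between positions, so the sequence of positional swaps chosen for $(\pi,\tau)$ is literally the same as for $(h\pi, h\tau)$, and applying it produces $h$ times the corresponding intermediate ranking. (Here one must be careful about the convention: with $\sigma_i$ denoting the rank of alternative $i$, an adjacent transposition of two alternatives in consecutive positions is a right multiplication by a transposition of consecutive \emph{values}, which indeed commutes with left multiplication; if the paper's convention makes this a left action one instead uses right multiplication as $H$ — either way an appropriate fixed-point-free $S_k$-action exists.)

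Having established that $\Gamma$ is an $H$-invariant canonical path map of length $\le \ell = k(k-1)/2 < k^2/2$ with all edges adjacent transpositions, and that $|H| = k!$, with $L_1 = L_2 = S_k$ so $|L_1||L_2| = (k!)^2$, Proposition~\ref{prop:canon_sym} gives directly
\[
|\Gamma^{-1}(\mu)| \le \frac{|L_1||L_2|}{|H|} = \frac{(k!)^2}{k!} = k!
\]
from \eqref{eq:canon_sym_i} at each level, hence the per-level bound is even $k!$; but summing over levels we get $|\Gamma^{-1}(\mu)| \le (\ell+1)\,(k!)^2/|H|$. To land on the stated bound $\tfrac{k^2 k!}{2}$ one uses the level-by-level statement \eqref{eq:canon_sym_i}: there are at most $\ell + 1 \le k^2/2$ levels (rounding, $k(k-1)/2 + 1 \le k^2/2$ for $k \ge 2$), and each contributes at most $k!$, giving $|\Gamma^{-1}(\mu)| \le (k^2/2)\cdot k! = k^2 k!/2$, which is \eqref{eq:canon1}. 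The main obstacle is the bookkeeping in the previous paragraph: pinning down the exact multiplication convention so that the selection-sort procedure is genuinely equivariant under a fixed-point-free $S_k$-action, and making sure the padding to uniform length $\ell$ does not break the inverse-image count (it does not, since padded segments sit at a single vertex and only inflate $|\Gamma_i^{-1}(z)|$ by vertices already counted, still bounded by $k!$ per level via the fixed-point-free argument). Everything else is a routine length computation.
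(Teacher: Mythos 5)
Your proposal is correct and takes essentially the same approach as the paper's own proof: a selection-sort/bubble-sort canonical path using adjacent transpositions, invariance under the fixed-point-free left action of $S_k$ (relabeling candidates), and then Proposition~\ref{prop:canon_sym}. The only cosmetic blemishes are a small inconsistency in your definition of $\ell$ (you set $\ell := k^2/2$ but then use $\ell = k(k-1)/2$ in the final count) and the unnecessary padding of paths to a uniform length, neither of which affects the validity of the argument.
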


\begin{proof}
  Given $\sigma,\pi \in S_k$ consider the following canonical path starting at $\sigma$ and
  ending at $\pi$. Take the element $\pi(1)$ ranked at the top for $\pi$ and bubble it
  to the top by performing adjacent transpositions. Then take the element $\pi(2)$
  ranked second for $\pi$ and bubble it to the second position etc. Clearly the
 length of the path is at most $k(k-1)/2$.
  Let $H=\{\sigma \mapsto \tau \sigma \mid \tau \in S_k\}$ be the group of compositions with
  all possible permutations of the candidates.
  Since $H$ is a fixed-point-free group acting on $S_k$ and the described canonical path map is $H$-invariant
  the result follows from Proposition~\ref{prop:canon_sym}. \end{proof}

\begin{corollary}~\label{cor:sumInfVarBound2}
 For any $f \colon S_k^n \rightarrow [k]$, $a \in [k]$ and $i \in [n]$ it holds that
  \begin{equation}
   \sum_{\mu \in T} \Inf_i^{a; \mu}(f) \geq \frac{1}{k^2} \Inf_i^a(f),
  \end{equation}
  where $T$ is the set of all adjacent transpositions.
\end{corollary}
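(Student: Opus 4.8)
The plan is to condition on all coordinates except the $i$th, turn both sides into a statement about a single fixed one‑voter SCF $g=f_{\sigma_{-i}}$ (Definition~\ref{def:induced_SCF}), and then recognize the resulting inequality as an edge‑isoperimetric (Poincar\'e) inequality on $S_k$ with the adjacent‑transposition generators, which is exactly what Proposition~\ref{prop:canon1} is built to supply.

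\textbf{Step 1: conditioning and bookkeeping.} First I would fix $\sigma_{-i}$, set $g=f_{\sigma_{-i}}\colon S_k\to[k]$ and $A=g^{-1}(a)\subseteq S_k$, and let $G_T$ be the $(k-1)$-regular graph on $S_k$ whose edges join $\tau$ to $\mu\tau$ for $\mu\in T$ with $\mu\tau\neq\tau$ (i.e.\ one adjacent transposition apart); write $\partial_T(A)$ for the set of edges of $G_T$ with exactly one endpoint in $A$. Unwinding the definition of $\Inf_i^a$ (rerandomizing coordinate $i$ means drawing an independent uniform permutation there) gives
\[
\Inf_i^a(f)=\E_{\sigma_{-i}}\!\left[\frac{|A|\,(k!-|A|)}{(k!)^2}\right],
\]
and unwinding $\Inf_i^{a;\mu}$ — where the rerandomization keeps $\sigma_i$ with probability $\tfrac12$ and applies $\mu$ otherwise, so the "no change'' branch contributes $0$ — gives
\[
\sum_{\mu\in T}\Inf_i^{a;\mu}(f)=\E_{\sigma_{-i}}\!\left[\frac{1}{2\,k!}\sum_{\mu\in T}\bigl|\{\tau\in A:\mu\tau\notin A\}\bigr|\right]=\E_{\sigma_{-i}}\!\left[\frac{|\partial_T(A)|}{2\,k!}\right],
\]
the last equality because $\sum_{\mu}|\{\tau\in A:\mu\tau\notin A\}|=\sum_{\tau\in A}\#\{G_T\text{-neighbours of }\tau\text{ outside }A\}$ counts each boundary edge once (from its unique endpoint in $A$). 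Hence it suffices to prove the deterministic bound $|\partial_T(A)|\ge \frac{2}{k^2\,k!}\,|A|\,(k!-|A|)$ for every $A\subseteq S_k$.

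\textbf{Step 2: edge isoperimetry via canonical paths.} For this I would use the canonical path map $\Gamma\colon S_k\times S_k\to P_{S_k}(\ell)$ of Proposition~\ref{prop:canon1}: $\ell\le k^2/2$, all of its steps are adjacent transpositions, and $|\Gamma^{-1}(\mu)|\le k^2k!/2$ for every $\mu\in T$. Since the bubble‑sort paths there have no trivial steps, for any $(x,y)$ with $x\in A$, $y\notin A$ the path $\Gamma(x,y)$ goes from inside $A$ to outside $A$ along edges of $G_T$, hence traverses at least one edge of $\partial_T(A)$. Counting such pairs against the boundary edges they cross,
\[
|A|\,(k!-|A|)\;\le\;\sum_{e\in\partial_T(A)}\#\{(x,y):x\in A,\ y\notin A,\ \Gamma(x,y)\text{ uses }e\}.
\]
Because $\mu$ is an involution, a path using the edge $e=\{z,\mu z\}$ (in either orientation) necessarily takes a step along the transposition $\mu$, so the inner count is at most $|\Gamma^{-1}(\mu)|\le k^2k!/2$; summing over $e$ gives $|A|(k!-|A|)\le |\partial_T(A)|\cdot k^2k!/2$, i.e.\ exactly the target of Step~1.

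\textbf{Step 3: combine.} Plugging the Step~2 bound into the two identities of Step~1,
\[
\sum_{\mu\in T}\Inf_i^{a;\mu}(f)=\E_{\sigma_{-i}}\!\left[\frac{|\partial_T(A)|}{2k!}\right]\ge\E_{\sigma_{-i}}\!\left[\frac{|A|(k!-|A|)}{k^2(k!)^2}\right]=\frac{1}{k^2}\,\Inf_i^a(f),
\]
which is the claim. The only ingredient that is not pure bookkeeping is the congestion estimate $|\Gamma^{-1}(\mu)|\le k^2k!/2$, and that is precisely Proposition~\ref{prop:canon1} (itself an instance of Proposition~\ref{prop:canon_sym} applied to the fixed‑point‑free left action of $S_k$ on itself); so I expect no real obstacle here, only the need to be careful that the influence quantities carry the right combinatorial weights — in particular the factor $\tfrac12$ in $\Inf_i^{a;\mu}$ and the fact that $\partial_T(A)$ counts each boundary edge exactly once.
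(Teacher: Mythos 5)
Your proof is correct and follows essentially the same route as the paper's: reduce to a single coordinate by conditioning on $\sigma_{-i}$, express $\sum_{\mu}\Inf_i^{a;\mu}$ and $\Inf_i^a$ as $|\partial_T(A)|/(2k!)$ and $|A|\,|A^c|/(k!)^2$, and bound the edge congestion of the bubble-sort canonical paths from Proposition~\ref{prop:canon1} by the congestion at an endpoint vertex. (One cosmetic quibble: $\Gamma^{-1}$ in that proposition is indexed by a vertex of $S_k$, not by an adjacent transposition, but since every path using the edge $\{z,\mu z\}$ passes through the vertex $z$, the bound $|\Gamma^{-1}(z)|\le k^2 k!/2$ applies exactly as you need.)
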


\begin{proof}
  This is a standard canonical path argument. Since both sides of the desired
  inequality involve averaging over all coordinates but the $i$'th coordinate,
  it follows that it suffices
  to prove the claim in the case where $i=n=1$.
  Let $B = \{(u,v) \in S_k \times S_k \mid f(u) = a \neq f(v), \exists \mu \in T: v = \mu u \}$ and
  note that
  \begin{equation}
    \label{eq:sumInfVarBound2a}
    \sum_{\mu \in T} \Inf_1^{a; \mu}(f) = \frac{|B|}{2k!},
  \end{equation}
  Consider the canonical path map $\Gamma$ constructed in Proposition~\ref{prop:canon1}.
  Note that each canonical path
  between an element in $A:=\{\sigma \in L_q \mid f(\sigma)=a\}$ and an element in $A^c$ must pass via one of the edges in $B$.
  Define $h:A\times A^C \to B$ by letting $h(\sigma,\pi)$ be the first edge in $B$ which $\Gamma(\sigma,\pi)$ passes through.
  Then by \eqref{eq:canon1}, for any $(u,v) \in B$,
  \begin{equation}
    |h^{-1}((u,v))| \le |\Gamma^{-1}(u)| \le \frac{k^2 k!}{2}
  \end{equation}
  Thus
  \begin{equation}
    \label{eq:sumInfVarBound2c}
    |B| \geq \frac{|A| |A^c|}{k^2 k! / 2}
  \end{equation}
  Combining \eqref{eq:sumInfVarBound2a} and \eqref{eq:sumInfVarBound2c} we obtain:
  \begin{equation*}
    \sum_{\mu \in T} \Inf_1^{a; \mu}(f) \geq \frac{1}{2k!}
    \frac{|A| |A^c|}{k^2 k! / 2} = \frac{1}{k^2} \frac{|A|}{k!}
    \frac{|A^c|}{k!} =
    \frac{1}{k^2} \Inf_1^a(f)
  \end{equation*}
\end{proof}

A second application of Proposition~\ref{prop:canon1} is the following.

\begin{proposition} \label{prop:canon2}
  Fix two elements $a, b \in [k]$ and let
  $B \subseteq S_k$ denote the set of all permutations where $a$ is ranked above
  $b$. Then there exists a canonical path map $\Gamma:B\times B \to P_{B}(k^2)$
  consisting of adjacent transpositions such that
  all permutations along the path satisfy that $a$ is ranked above $b$. Moreover
  for all $\mu$ it holds that:
  \[
  |\Gamma^{-1}(\mu)|
  \leq
  k^4 k!
  \]
\end{proposition}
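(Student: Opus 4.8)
The plan is to reuse the bubble-sort canonical path from the proof of Proposition~\ref{prop:canon1}, but with the symmetry group cut down to the relabellings of the candidates that fix both $a$ and $b$. Concretely, recall the canonical path map $\Gamma$ on $S_k$: given $\sigma,\pi$, bubble the element ranked first by $\pi$ to the top of $\sigma$ by adjacent transpositions, then the element ranked second by $\pi$ to second place, and so on; this uses at most $\binom{k}{2}<k^2$ adjacent transpositions. I would take exactly this map, but restricted to $B\times B$, and argue it is a canonical path map on the induced subgraph of the refined rankings graph on the vertex set $B$ (whose edges are precisely the adjacent transpositions other than $[a:b]$).

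The key step — and the \emph{main obstacle} — is to check that if $\sigma,\pi\in B$, i.e.\ both rank $a$ above $b$, then every permutation along $\Gamma(\sigma,\pi)$ also ranks $a$ above $b$. Here the point is that the relative order of $a$ and $b$ changes along the path only when the transposition $[a:b]$ is applied, and $[a:b]$ occurs only if the element currently being bubbled is $a$ or $b$ while the other one sits directly above it. I would rule out both cases: while bubbling elements different from $a$ and $b$, the relative order of $a$ and $b$ is untouched, so it remains ``$a$ above $b$'' as in $\sigma$; hence, when we reach the stage of bubbling $a$ to its target position (which is above $b$'s target position, since $\pi\in B$), the element $b$ is still below $a$ and so is never directly above it; symmetrically, once $a$ has been placed at its (higher) target position, $b$ is only bubbled up to its own (lower) target position, so $a$ is never directly above $b$ during that stage either. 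Therefore $[a:b]$ is never used, the whole path lies in $B$, and its length is still at most $\binom{k}{2}<k^2$, so $\Gamma\colon B\times B\to P_B(k^2)$.

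For the counting bound I would apply Proposition~\ref{prop:canon_sym} with $L=L_1=L_2=B$ and $H$ the group of candidate-relabellings fixing $a$ and $b$, acting on rankings exactly as in Proposition~\ref{prop:canon1}. This $H$ preserves $B$ (a relabelling fixing $a$ and $b$ does not change whether $a$ is ranked above $b$), it is fixed-point-free (being a left-translation action of a group on itself), and the bubble-sort map is $H$-invariant because ``the element ranked $j$-th by $\pi$'' commutes with relabelling — the same equivariance verified in the proof of Proposition~\ref{prop:canon1}, now with the smaller group. Since $|B|=k!/2$, $|H|=(k-2)!$, and $\ell+1\le\binom{k}{2}+1\le k^2/2$ for $k\ge 2$, Proposition~\ref{prop:canon_sym} yields
\[
|\Gamma^{-1}(\mu)|\ \le\ \frac{(\ell+1)\,|B|^2}{|H|}\ \le\ \frac{k^2}{2}\cdot\frac{(k!/2)^2}{(k-2)!}\ =\ \frac{k^2}{2}\cdot\frac{k(k-1)\,k!}{4}\ \le\ k^4 k!,
\]
using $(k!)^2/(k-2)!=k(k-1)\,k!$. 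In a full write-up the only things needing care are the verification in the previous paragraph that $[a:b]$ is never applied (everything else then follows), and the bookkeeping that the induced graph on $B$ is the correct ambient object so that the restricted $H$-action is genuinely fixed-point-free and equivariant on it.
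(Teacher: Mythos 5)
Your proof is correct, and it takes a genuinely different route to the path construction than the paper does. You reuse the bubble-sort canonical path map of Proposition~\ref{prop:canon1} verbatim, restrict it to $B\times B$, and argue that the transposition $[a:b]$ is never applied along such a path: since intermediate swaps never exchange $a$ and $b$ directly, their relative order is preserved; and because $\pi\in B$ means $a$'s target position lies above $b$'s, $a$ is bubbled before $b$, $b$ sits strictly below $a$ throughout, and $b$ stops at its lower target before ever reaching $a$. The paper instead designs a bespoke two-phase path: first bubble every alternative other than $a$ and $b$ to the top of the ranking in the order prescribed by $\pi$ (this pushes $a,b$ to the bottom two positions with $a$ above $b$), then bubble $a$ to its position in $\pi$, then $b$. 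In the paper's construction the avoidance of $[a:b]$ is essentially designed-in and needs no invariant argument; the price is a new construction and a separate (slightly larger) length estimate $\frac{(k+4)(k-1)}{2}<k^2$. Both proofs then apply Proposition~\ref{prop:canon_sym} with the same group $H$ of relabellings fixing $a$ and $b$, giving $|\Gamma^{-1}(\mu)|\le k^2|B|^2/|H|=k^2(k!/2)^2/(k-2)!\le k^4 k!$. Your approach is more economical in that it introduces nothing new; in a full write-up you should also note explicitly that during the stages strictly between bubbling $a$ and bubbling $b$, the element $a$ is already in the sorted prefix and hence immobile, while $b$ can only drift downward, so the invariant ``$a$ strictly above $b$'' persists — this is easy but is the one intermediate step your sketch glosses over.
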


\begin{proof}
$\Gamma(\sigma,\pi)$ is defined as follows.
We look at all elements different than $a,b$, starting with the top one of $\pi$, and bubble each of them upwards to its position in $\pi$ ignoring $a,b$.
After we have done so, we have all elements but $a,b$ ordered as in $\pi$, followed by $a$, followed by $b$. We now bubble $a$ to its location in $y\pi$ and then bubble $b$.
Note that the length of the path so defined is at most
\[
\frac{k(k-1)}{2} + 2 (k-1) = \frac{(k+4)(k-1)}{2} < k^2
\]
The proof now follows from Proposition~\ref{prop:canon_sym} by considering
the group $H$ which acts by permuting arbitrary all elements but those labeled by $a$ and $b$:
\begin{equation*}
  |\Gamma^{-1}(\mu)|
  \le \frac{k^2 |B|^2}{|H|}
  = \frac{k^2 (k!/2)^2}{(k-2)!}
  \le k^4 k!
\end{equation*}
\end{proof}

\section{Refined Boundaries}\label{sec:refined-boundaries}

Similarly to the previous construction we now define
the $i$:th $a$-$b$ boundary with respect to an adjacent swap $z \in T$ as
\[
B_i^{a,b ; z} (f) = \{(x,y) \mid f(x)=a, f(y)=b, x_i = z y_i, \forall j \neq i:x_j=y_j\},
\]
and the boundary with respect to arbitrary adjacent swaps on the $i$:th coordinate as
\[
B_i^{a,b ; T} (f) = \bigcup_{z \in T} B_i^{a,b;z} (f)
\]
Note that for $a \neq b$,
\begin{equation}\label{eq:inf2Boundary}
  \Inf_i^{a,b;z} (f)
  = \frac{1}{2} \P(f(X)=a,f(zX)=b)
  = \frac{1}{2} \frac{|B_i^{a,b;z}(f)|}{(k!)^n}
\end{equation}

\subsection{Manipulation points on refined boundaries} 

The following three lemmas from Isaksson, Kindler and Mossel~\cite{IsKiMo:10,IsKiMo:12} identify manipulation points on (or close to) these refined boundaries.

\begin{lemma}
  \label{lem:nonManipBoundary} 
  Fix $f : S_k^n \to [k]$,
  distinct $a,b \in [k]$
  and $(\sigma, \pi) \in B_i^{a,b ; T}$.
  Then either $\sigma_i=\adj{a}{b} \pi_i$,
  or
  one of $\sigma$ and $\pi$ is a $2$-manipulation point for $f$.
\end{lemma}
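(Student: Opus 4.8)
The plan is to prove the contrapositive: assuming neither $\sigma$ nor $\pi$ is a $2$-manipulation point for $f$, I would deduce $\sigma_i = \adj{a}{b}\pi_i$. By definition of $B_i^{a,b;T}$, there is an adjacent transposition $z = \adj{c}{d} \in T$ with $f(\sigma)=a$, $f(\pi)=b$, $\sigma_i = z\pi_i$, and $\sigma_j=\pi_j$ for all $j\neq i$. First I would observe that $z$ must act nontrivially on $\pi_i$: otherwise $\sigma_i=\pi_i$, so $\sigma=\pi$, contradicting $f(\sigma)=a\neq b=f(\pi)$. Hence $c,d$ are adjacent in $\pi_i$ and $\sigma_i$ is obtained from $\pi_i$ by exchanging exactly $c$ and $d$; in particular $\sigma$ is obtained from $\pi$ by permuting (at most) two adjacent alternatives in coordinate $i$, so any manipulation witnessed between $\sigma$ and $\pi$ through coordinate $i$ is automatically a $2$-manipulation.

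The key structural fact I would invoke next is elementary: exchanging two adjacent elements $c,d$ in a linear order changes the relative ranking of the pair $\{c,d\}$ only, and leaves the relative order of every other pair unchanged. Applying this to the pair $\{a,b\}$: if $\{a,b\}\neq\{c,d\}$, then $a$ and $b$ appear in the same relative order in $\pi_i$ as in $\sigma_i$. I would then split into the two cases for this common order. If $a \stackrel{\pi_i}{>} b$ (equivalently $a\stackrel{\sigma_i}{>}b$), then at the profile $\pi$, where $f(\pi)=b$, voter $i$ with true preference $\pi_i$ strictly prefers $a=f(\sigma)$ to $b$, and $\sigma$ differs from $\pi$ only in coordinate $i$; so $\pi$ is a manipulation point, and by the previous paragraph a $2$-manipulation point. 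Symmetrically, if $b\stackrel{\sigma_i}{>}a$, then at $\sigma$, where $f(\sigma)=a$, voter $i$ with true preference $\sigma_i$ strictly prefers $b=f(\pi)$ to $a$, making $\sigma$ a $2$-manipulation point. Either alternative contradicts the assumption, so $\{a,b\}=\{c,d\}$, i.e.\ $\sigma_i=\adj{a}{b}\pi_i$, as desired.

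There is no analytic obstacle here; the proof is a short case analysis. The only thing requiring care — the ``hard part'' such as it is — is the bookkeeping: pairing ``which of $\sigma,\pi$ manipulates'' correctly with ``which direction the $\{a,b\}$-preference of voter $i$ points,'' and confirming that the witnessing move is a single adjacent transposition in one coordinate so that we genuinely get a $2$-manipulation point rather than a generic manipulation point. I would also double-check that $\adj{c}{d}$ behaves as an involution on orders in which $c,d$ are adjacent, so that the relation $\sigma_i=z\pi_i$ is symmetric in $\sigma$ and $\pi$; this symmetry is used implicitly when $\sigma$ and $\pi$ are treated on equal footing in the two cases above.
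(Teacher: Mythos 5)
Your proof is correct and takes essentially the same route as the paper: you observe that an adjacent transposition not equal to $\adj{a}{b}$ leaves the relative order of $a$ and $b$ unchanged between $\sigma_i$ and $\pi_i$, and then split on which way that common order points to exhibit either $\sigma$ or $\pi$ as a $2$-manipulation point. Writing it as a contrapositive and explicitly noting that $z$ must act nontrivially (so the move is genuinely a single adjacent transposition) are minor presentational variations on the paper's direct two-case argument.
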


\begin{proof}
  Suppose $\sigma_i=\adj{c}{d} \pi_i$ where $\{c,d\} \neq \{a,b\}$.
  Then an adjacent transposition of $c$ and $d$ will not change the order of $a$
  and $b$. Hence
  $
  b \stackrel{\sigma_i}{>} a
  \text{ iff }
  b \stackrel{\pi_i}{>} a
  $.
  But then either
  \begin{enumerate}
  \item $f(\pi) =b \stackrel{x_i}{>} a=f(\sigma)$ and $\sigma$ is a 2-manipulation point
  or
  \item $f(\sigma)=a \stackrel{y_i}{>} b=f(\pi)$ and $\pi$ is a 2-manipulation point.
  \end{enumerate} 
\end{proof}

\begin{lemma}
  \label{lem:nonManipTriple} 
  Fix $f : S_k^n \to [k]$
  and points
  $\sigma, \pi, \mu \in S_k^n$ such that
  $(\sigma, \pi) \in B_i^{a,b; T}$, 
  $(\mu, \pi) \in B_j^{c,b; T}$
  where $a,b,c$ are distinct and $i \neq j$.
  Then there exists a $3$-manipulation point $\nu \in S_k^n$ for $f$
  such that $\nu_{\ell}=\pi_{\ell}$ for $\ell \notin \{i,j\}$
  and $\nu_i$ is equal to $\sigma_i$ or $\pi_i$ except that the position of $c$ may be
  shifted arbitrarily
  and $\nu_j$ is equal to $\mu_j$ or $\pi_j$ except that the position of $a$ may be
  shifted arbitrarily.
\end{lemma}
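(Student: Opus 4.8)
The plan is to strip off the easy cases with Lemma~\ref{lem:nonManipBoundary}, then use the freedom allowed in the statement to move $c$ and $a$ into convenient positions, and finally reduce to a two‑voter, three‑alternative social choice function to which the Gibbard--Satterthwaite theorem (Theorem~\ref{thm:GS}) applies. First I would apply Lemma~\ref{lem:nonManipBoundary} to $(\sigma,\pi)\in B_i^{a,b;T}$ and to $(\mu,\pi)\in B_j^{c,b;T}$. If in either case one of the profiles is a $2$‑manipulation point, that profile is a $3$‑manipulation point $\nu$ agreeing with $\pi$ off coordinates $i,j$, with $\nu_i\in\{\sigma_i,\pi_i\}$ and $\nu_j\in\{\mu_j,\pi_j\}$, and we are done. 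Otherwise $\sigma_i=[a{:}b]\pi_i$ and $\mu_j=[c{:}b]\pi_j$; comparing, in the usual way, the order of $a$ and $b$ in $\pi_i$ with the outcomes $f(\pi)=b$, $f(\sigma)=a$ (and similarly in coordinate $j$) shows that unless $\pi$ is itself a $2$‑manipulation point we may assume $b\stackrel{\pi_i}{>}a$ and $b\stackrel{\pi_j}{>}c$, hence $a\stackrel{\sigma_i}{>}b$ and $c\stackrel{\mu_j}{>}b$.

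Next I would assemble, in coordinate $i$ and in coordinate $j$, a block of three consecutive positions occupied by $\{a,b,c\}$. Bubble $c$ in coordinate $i$ one adjacent transposition at a time towards the pair $\{a,b\}$, never transposing $c$ with $b$; do the same with $a$ in coordinate $j$, never transposing $a$ with $b$. At each step the reasoning behind Lemma~\ref{lem:nonManipBoundary} shows that either $f$ is unchanged, or the step is a manipulation: the only way a value‑changing adjacent transposition fails to produce a $2$‑manipulation point (among the two profiles involved) is the ``monotone'' configuration in which the transposed pair equals the pair of old and new outcomes and the outcome moves together with the element that moves up; since the outcome stays equal to $b$ (respectively to $a$ when bubbling in $\sigma$, to $c$ when bubbling in $\mu$) as long as we do not transpose $c$ (respectively $a$) past $b$, that bad configuration would force $b$ to lie in the transposed pair and is excluded by the rule. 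A value‑changing step therefore yields a $2$‑manipulation point $\nu$ of exactly the required shape ($\nu$ agrees with $\pi$ off $\{i,j\}$, $\nu_i$ equals $\sigma_i$ or $\pi_i$ up to the position of $c$, $\nu_j$ equals $\mu_j$ or $\pi_j$ up to the position of $a$), and we are done. Otherwise we reach profiles $\pi^{\star}$, $\sigma^{\star}$, $\mu^{\star}$ that agree with $\pi$ off coordinates $i,j$ and, within coordinates $i$ and $j$, agree off a common three‑element block (the slots occupied by $\{a,b,c\}$), on which $f$ equals $b$, $a$, $c$ respectively, with $\sigma^{\star}$ obtained from $\pi^{\star}$ by permuting only the coordinate‑$i$ block and $\mu^{\star}$ by permuting only the coordinate‑$j$ block.

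Finally, let $g$ be the two‑voter social choice function on the alternatives $\{a,b,c\}$ obtained from $f$ by permuting these two blocks and leaving everything else as in $\pi^{\star}$. Then $g$ takes the three distinct values $a,b,c$ and is not a function of a single voter (toggling the coordinate‑$i$ block sends $b$ to $a$, toggling the coordinate‑$j$ block sends $b$ to $c$), so by Theorem~\ref{thm:GS} it has a manipulation point. Lifting it back to $f$, the corresponding profile $\nu$ differs from its manipulation partner by permuting the at most three adjacent alternatives of one of the two blocks, so $\nu$ is a $3$‑manipulation point of $f$; and by construction $\nu_{\ell}=\pi_{\ell}$ for $\ell\notin\{i,j\}$, while $\nu_i$ agrees with $\sigma_i$ or $\pi_i$ up to the position of $c$ and $\nu_j$ agrees with $\mu_j$ or $\pi_j$ up to the position of $a$, which is precisely the claimed form.

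I expect the main obstacle to be the bookkeeping in the middle step: checking that the bubbling can always be carried out, that every value‑changing transposition along the way is genuinely a manipulation point of exactly the prescribed shape (the borderline ``monotone'' cases are what force the rule never to transpose $c$ or $a$ past $b$), and tracking that every arrangement of a three‑block is of the form ``$\sigma_i$ or $\pi_i$ with the position of $c$ displaced'' and likewise in coordinate $j$. A secondary technicality is that a priori the range of $g$ could contain an alternative outside $\{a,b,c\}$; one must either verify that this cannot occur in the present configuration or argue without first passing to a three‑alternative sub‑function.
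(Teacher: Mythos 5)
Your high-level strategy is the same as the paper's: reduce with Lemma~\ref{lem:nonManipBoundary} to the case $\sigma_i=[a{:}b]\pi_i$, $\mu_j=[c{:}b]\pi_j$; bubble $c$ in coordinate $i$ and then $a$ in coordinate $j$ until $\{a,b,c\}$ forms an adjacent block in both; then read off a two-voter three-alternative SCF and invoke Gibbard--Satterthwaite. The worry you raise about the range of the restricted SCF is legitimate and is closed in the paper by one further application of Lemma~\ref{lem:nonManipBoundary}: any adjacent swap inside the block either keeps $f\in\{a,b,c\}$ or already produces a $2$-manipulation point of the required shape.

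However, there is a genuine gap in the bubbling step that your rule ``never transpose $c$ (resp.\ $a$) past $b$'' does not close. That rule does force $f(\wt\pi)=b$ throughout: for the outcome to leave $b$ under a swap $[c{:}d]_i$ with $d\notin\{a,b\}$, the non-manipulating (``monotone'') configuration would require $\{c,d\}$ to be the pair of old/new outcomes, impossible since $b\notin\{c,d\}$; and the same reasoning forces $f(\wt\sigma)=a$ during the $i$-bubbling. But it says nothing about $\wt\mu$: there the old outcome is $c$, the swap \emph{is} $[c{:}d]$, and the monotone case $\{c,d\}=\{\text{old},\text{new}\}$ is perfectly realizable, so $f(\wt\mu)$ can drift from $c$ to some $d\notin\{a,b,c\}$ with neither endpoint of the swap a manipulation point. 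The symmetric problem hits $\wt\sigma$ in the second stage (the $j$-bubbling of $a$, where $\sigma$'s outcome is $a$). The paper closes this with a cross-coordinate comparison you do not make: throughout the $i$-bubbling one still has $\wt\mu=[c{:}b]_j\wt\pi$, so if $f(\wt\pi)=b$ and $f(\wt\mu)=d\notin\{b,c\}$, then $(\wt\pi,\wt\mu)\in B_j^{b,d;T}$ via the transposition $[c{:}b]\neq[b{:}d]$, and Lemma~\ref{lem:nonManipBoundary} hands back a $2$-manipulation point---one of the required shape, since $\wt\pi,\wt\mu$ remain within the permitted deformations of $\pi,\sigma,\mu$. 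You need this comparison for $\mu$ in stage one and for $\sigma$ in stage two before the block assembly, and hence the reduction to Gibbard--Satterthwaite, actually goes through.
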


  \begin{proof}
  By Lemma~\ref{lem:nonManipBoundary} we must have
  $\sigma_i = \adj{a}{b} \pi_i$
  and
  $\mu_j = \adj{c}{b} \pi_j$,
  or $\sigma$, $\pi$ or $\mu$ is a 2-manipulation point in which case we are done.

  Now create a new triple $(\sigma',\pi',\mu')$ by starting from $(\sigma,\pi,\mu)$ and
  simultaneously in the $i$:th coordinate of $\sigma$, $\pi$ and $\mu$, 
  bubbling $c$ towards the pair $ab$ until it becomes adjacent to the pair.
  Since $c$ is never swapped with $a$ or $b$ during this process
  Lemma~\ref{lem:nonManipBoundary} implies that
  for any intermediate triple $(\wt{\sigma}, \wt{\pi}, \wt{\mu})$ we have
  $f(\wt{\sigma})=a$, $f(\wt{\pi})=b$ and $f(\wt{\mu}) \notin \{a,b\}$,
  or one of $\wt \sigma$, $\wt \pi$ and $\wt \mu$ is a 2-manipulation point.
  But since we also have $\wt{\mu} = \adj{c}{b}_j \wt{\pi}$, we must actually have
  $f(\wt{\mu}) = c$, or either $\wt \pi$ or $\wt \mu$ is a 2-manipulation point.

  Similarly bubbling $a$ towards the pair $bc$ in coordinate $j$
  starting from $(\sigma',\pi',\mu')$
  gives us $\sigma'',\pi'',\mu''$ all having $a,b,c$ adjacent in
  coordinates $i$ and $j$ such that
  $(\sigma'',\pi'') \in B_i^{a,b; [a:b]}$ and
  $(\mu',\pi'') \in B_j^{c,b; [c:b]}$.
  Note that $\sigma'', \pi'', \mu''$ are equal except for a reordering of the blocks containing $a,b,c$ in coordinates $i$ and $j$.

  Now arbitrary adjacent swapping of $a,b,c$ in these coordinates of
  $\sigma'',\pi''$ and $\mu''$ will keep the value of $f$ in $\{a,b,c\}$,
  or give rise to a 2-manipulation point by Lemma~\ref{lem:nonManipBoundary}.
  Thus we can define a social choice function with 2 voters and 3 candidates
  $f':S_{\{a,b,c\}}^2 \rightarrow \{a,b,c\}$ by letting $f'(\nu)=f(g(\nu))$, where
  $g(\nu) \in S_q^n$ is obtained from $\sigma''$ by simply reordering the two blocks of elements $a,b,c$ in coordinates $i$ and $j$ to match $\nu_1$ and $\nu_2$, respectively.
  Since $f'$ takes three values and is not a dictator,
  Gibbard-Satterthwaite (Theorem~\ref{thm:GS}) implies that
  $f'$ has a manipulation point and hence
  $f$ has a 3-manipulation point satisfying our requirements.
\end{proof}

\subsection{Large refined boundaries} 

 The following lemma, again from Isaksson, Kindler and Mossel~\cite{IsKiMo:10,IsKiMo:12}, shows that there are large refined boundaries (or else we have a lot of 2-manipulation points automatically).
\begin{lemma} \label{lem:boundaries2} 
  Fix $k \ge 3$ and
  $f : S_k^n \to \left[k\right]$
  satisfying $\Dist(f, \overline{\NONMANIP}) \ge \eps$.
  Let $\sigma$ be uniformly selected from $S_k^n$.
  Then either
  \begin{equation}
    \label{eq:neutralPairs3ManipProb}
    \p\left( \sigma \in M_2 \left( f \right) \right) \ge \frac{4\eps}{n k^7},
  \end{equation}
  or there exist distinct $i,j \in [n]$
  and $\{a,b\},\{c,d\} \subseteq [k]$ such that $c \notin \{a,b\}$ and
  \begin{equation}\label{eq:k_inf_ref}
   \Inf_i^{a,b;[a:b]} (f) \ge \frac{2\eps}{n k^7} \quad
    \text{ and } \quad
   \Inf_j^{c,d;[c:d]} (f) \ge \frac{2\eps}{n k^7}.
  \end{equation}
\end{lemma}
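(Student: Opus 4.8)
The plan is to mimic the proof of Lemma~\ref{lem:boundaries1}, but working in the refined rankings graph, where each edge is an adjacent transposition in a single coordinate, and using the canonical-path machinery of Section~\ref{sec:canon-paths-group} to convert the coarse influence bounds of Lemma~\ref{lem:boundaries1} into refined influence bounds at the cost of only polynomial-in-$k$ factors. First I would invoke Lemma~\ref{lem:boundaries1}: since $\Dist(f,\overline{\NONMANIP}) \ge \eps$ and $\overline{\NONMANIP} \supseteq \NONMANIP$ in the relevant sense (or rather, we apply Lemma~\ref{lem:boundaries1} with the hypothesis $\Dist(f,\overline{\NONMANIP})\ge\eps$, which is exactly what its proof uses), we obtain distinct $i,j \in [n]$ and pairs $\{a,b\},\{c,d\}$ with $c\notin\{a,b\}$ such that $\Inf_i^{a,b}(f) \ge \frac{2\eps}{nk^2(k-1)}$ and $\Inf_j^{c,d}(f) \ge \frac{2\eps}{nk^2(k-1)}$.

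Next, I would use Corollary~\ref{cor:sumInfVarBound2} to pass from $\Inf_i^{a}(f)$ to a sum over adjacent transpositions. The issue is that Corollary~\ref{cor:sumInfVarBound2} bounds $\sum_{\mu\in T}\Inf_i^{a;\mu}(f) \ge \frac{1}{k^2}\Inf_i^a(f)$ in terms of $\Inf_i^a$, not the pair influence $\Inf_i^{a,b}$. To fix the pair $\{a,b\}$ I would instead run the canonical-path argument of Proposition~\ref{prop:canon1} restricted to the set $B$ of permutations ranking (say) $a$ above $b$, using Proposition~\ref{prop:canon2}: a canonical path between two permutations with $f$-values $a$ and $b$ that both keep $a$ above $b$ must pass through an edge where $f$ changes from $a$ to $b$; since $a$ and $b$ stay adjacently orderable only when swapped directly, such a crossing edge, if it is a $[c':d']$ transposition with $\{c',d'\}\ne\{a,b\}$, would by Lemma~\ref{lem:nonManipBoundary} force a $2$-manipulation point. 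So either we already have $\p(\sigma \in M_2(f)) \ge \frac{4\eps}{nk^7}$ (this is where the first alternative in the statement comes from, absorbing the polynomial loss), or essentially all the crossing edges are $[a:b]$ swaps, giving $\Inf_i^{a,b;[a:b]}(f) \ge c\cdot k^{-4}\Inf_i^{a,b}(f) \ge \frac{2\eps}{nk^7}$ after plugging in the bound from Lemma~\ref{lem:boundaries1} and tracking constants. The same argument applied to coordinate $j$ and the pair $\{c,d\}$ gives $\Inf_j^{c,d;[c:d]}(f) \ge \frac{2\eps}{nk^7}$.

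The main obstacle is the bookkeeping of the $2$-manipulation alternative: I must argue carefully that the ``bad'' crossing edges — those crossing edges of a canonical path on which $f$ jumps between $a$ and $b$ but the edge is a transposition of some other pair — each correspond to an actual $2$-manipulation point of $f$, and that these bad points are not overcounted by more than a $\poly(k)$ factor (this is exactly what Proposition~\ref{prop:canon_sym}, via the fixed-point-free action of the relevant symmetric group in Proposition~\ref{prop:canon2}, delivers: $|\Gamma^{-1}(\mu)| \le k^4 k!$). Quantitatively, if the total mass of such bad edges is at least a $\poly(k)^{-1}$ fraction of $\Inf_i^{a,b}(f)$, I deduce $\p(\sigma\in M_2(f))$ is at least the claimed $\frac{4\eps}{nk^7}$; otherwise the surviving $[a:b]$-edges carry the bulk of $\Inf_i^{a,b}(f)$, and dividing by the path length $\ell < k^2/2$ and the overcount $k^4 k!/(k!)^n$-normalization yields the stated bound $\frac{2\eps}{nk^7}$. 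Choosing the exponent $7$ is just a matter of summing the losses: one factor $k^2(k-1)$ from Lemma~\ref{lem:boundaries1}, a factor $k^2$ from the path length, and a factor $k^2$ (or $k^4$, after which one notes the $k!$'s cancel against the uniform measure) from the canonical-path overcount, all comfortably inside $k^7$. I would present the coordinate-$i$ analysis in full and remark that coordinate $j$ is identical.
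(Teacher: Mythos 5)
Your plan has a genuine gap in the coarse-to-refined conversion. You propose to take the pair $\{a,b\}$ handed to you by Lemma~\ref{lem:boundaries1}, with $\Inf_i^{a,b}(f)$ large, and refine it to $\Inf_i^{a,b;[a:b]}(f)$ by running a canonical path inside the set $B$ of permutations ranking $a$ above $b$, arguing that ``a canonical path between two permutations with $f$-values $a$ and $b$ \ldots\ must pass through an edge where $f$ changes from $a$ to $b$.'' This is not true when $k\ge 3$: the path starts at a vertex with $f=a$ and ends at one with $f=b$, so \emph{some} value change occurs, but the path may pass through intermediate values, e.g.\ $a\to e\to b$ for some $e\notin\{a,b\}$, and never cross an $a$-$b$ edge at all. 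The intermediate transitions $a\to e$ and $e\to b$ are controlled by Lemma~\ref{lem:nonManipBoundary} only when the swap is not $[a:e]$ (resp.\ $[e:b]$), so they need not produce 2-manipulation points either. There is also a secondary issue: even if such a crossing did give you a refined influence for \emph{some} pair and coordinate, you would lose control of the combinatorial structure you need in the conclusion --- distinct $i,j$ and $c\notin\{a,b\}$ --- which in your plan is inherited from Lemma~\ref{lem:boundaries1} but is not stable under switching to a different pair discovered along the path.

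The paper's proof does not invoke Lemma~\ref{lem:boundaries1} as a black box at all; rather, it reproduces its structure directly at the refined level, and that is the key difference. It opens with a dichotomy: either some $\Inf_i^{a,b;\nu}$ with $\nu\neq[a:b]$ already exceeds $\tfrac{2\eps}{nk^7}$, in which case Lemma~\ref{lem:nonManipBoundary} applied to each $(\sigma,\nu\sigma)\in B_i^{a,b;\nu}$ immediately yields (\ref{eq:neutralPairs3ManipProb}); or else $\Inf_i^{a,b;\nu}<\tfrac{2\eps}{nk^7}$ for \emph{every} $i$, $\{a,b\}$ and $\nu\neq[a:b]$. Under this standing assumption it then mirrors the two claims in the proof of Lemma~\ref{lem:boundaries1}, but replaces each coarse influence $\Inf_i^{c}(f)$ by $\sum_{\mu\in T}\Inf_i^{c;\mu}(f)$ using Corollary~\ref{cor:sumInfVarBound2}, which is the legitimate canonical-path step (it converts the \emph{total} pair-unspecified influence, losing only a factor $k^2$). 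A pigeonhole argument then produces some $\Inf_i^{c,d;\mu}\ge\tfrac{\eps}{nk^6}$, and it is the standing assumption --- not any path-crossing argument --- that forces $\mu=[c:d]$. This is why the pair $\{c,d\}$ and the coordinate $i$ are found a posteriori rather than carried over from a coarse statement, and why the $c\notin\{a,b\}$ and $|{\cup A^{a,b}}|\ge 2$ structure comes out correctly. If you want to salvage your approach, you would need to replace the per-pair path argument by exactly this kind of dichotomy plus a total-influence comparison; as written, the step ``the path must hit an $a$-$b$ edge'' does not hold.
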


\begin{proof}
  First, suppose that $\Inf_i^{a,b;\nu} \ge \frac{2\eps}{nk^7}$
  for some $i$, $a \neq b$ and $\nu \neq [a:b]$.
  Then by Lemma~\ref{lem:nonManipBoundary}
  for any point $(\sigma,\sigma') \in B_i^{a,b;\nu}(f)$
  at least one of $\sigma$ or $\sigma'=\nu \sigma$ is a $2$-manipulation point.
  Let $\wt{M}$ be the set of all such 2-manipulation points.
  Then
  \begin{equation}
   |\wt{M}| \ge |B_i^{a,b;\nu}(f)|
    =
    2 (k!)^n \Inf_i^{a,b;\nu} (f)
    \ge
    \frac{4\eps}{n k^7} (k!)^n
  \end{equation}
  Dividing with $(k!)^n$ gives \eqref{eq:neutralPairs3ManipProb}.
  Thus, for the remainder of the proof we may assume that
  \begin{equation}
    \label{eq:boundariesAssump}
    \Inf_i^{a,b;\nu} < \frac{2\eps}{nk^7} \quad,\quad
    \forall i\in [n], \{a,b\} \subseteq [k], \nu \neq [a:b]
  \end{equation}
  Now, for $a \neq b$ let
  $
  A^{a,b} =
  \left\{
    i\in [n] \mid \Inf_i^{a,b;[a:b]} \ge \frac{2\eps}{n k^7}
  \right\}
  $.

  We first claim that for all $\{a,b\}$ there exists $\{c,d\}$ such that $\{c,d\} \neq \{a,b\}$ and $A^{c,d} \neq \emptyset$.
  Note that $f$ being $\eps$-far from taking two values asserts that we can find
  a $c \notin \{a,b\}$
  such that $1-\frac{\eps}{k} \ge \P(f(X) = c) \ge \frac{\eps}{q-2} \ge \frac{\eps}{k}$.
  But then, by Corollary~\ref{cor:sumInfVarBound2} and Proposition~\ref{prop:sumInfVarBound},
  \begin{equation*}
    \sum_{\mu \in T} \sum_{d \neq c} \sum_{i=1}^n \Inf_i^{c,d;\mu} (f)
    =
    \sum_{ \mu \in T} \sum_{i=1}^n \Inf_i^{c;\mu} (f)
    \ge
    \frac{1}{k^2} \Var [ 1_{\{f(X) = c\}} ]
    \ge
    \frac{\eps (k-1)}{k^4}
  \end{equation*}
  hence there must exist some $\mu \in T$, $d \neq c$ and $i \in [n]$ such that
  $\Inf_i^{c,d;\mu} \ge \frac{\eps}{n k^6}$.
  But by \eqref{eq:boundariesAssump} we must have $w=[c:d]$, hence
  $A^{c,d} \neq \emptyset$.

  We next claim that
  \begin{equation}
    \label{eq:unionContains2}
    |\cup_{a,b} A^{a,b}| \ge 2
  \end{equation}
  To see this, assume the contrary, i.e.
  $\cup_{a,b} A^{a,b} \subseteq \{i\}$ for some $i \in [n]$.
  Then, by Corollary~\ref{cor:sumInfVarBound2}, for all $j \neq i$ it holds that
  \begin{equation}
    \label{eq:nonInflInflBound2}
    \Inf_j (f)
    \le k^2 \sum_{\nu \in T} \sum_a \Inf_j^{a;\nu}(f)
    = k^2 \sum_{\nu \in T, a, b>a} \Inf_j^{a,b;\nu}(f)
    \le \frac{k^6}{2} \frac{2\eps}{nk^7}
    = \frac{\eps}{nk}
  \end{equation}
  For $\bar{\sigma} \in L_q$,
  let $f_{\bar{\sigma(x)}} = f(\sigma_1, \ldots, \sigma_{i-1}, \bar{\sigma}, \sigma_{i+1}, \ldots, \sigma_n)$
  and note that for $j \neq i$,
  \begin{equation}
    \Inf_j(f) = \frac{1}{k!} \sum_{\bar{\sigma} \in S_k} \Inf_j(f_{\bar{\sigma}})
  \end{equation}
  while $\Inf_i(f_{\bar{\sigma}}) = 0$.
  Hence, by \eqref{eq:nonInflInflBound2}, we have
  \begin{equation*}
    \eps
    \ge
    k \sum_{j \neq i} \Inf_j (f)
    =
    \frac{k}{k!} \sum_{j=1}^n \sum_{\bar{\sigma} } \Inf_j(f_{\bar{\sigma}})
    \ge
    \frac{2}{k!} \sum_{\bar{\sigma}} \Dist(f_{\bar{\sigma}}, \CONST)
    =
    2 \Dist(f,\DICT_i)
  \end{equation*}
  where the second inequality follows from Lemma~\ref{lem:constDist}
  and Proposition~\ref{prop:sumInfVarBound}.
  But this means that $f$ is $\eps/2$-close to a dictator, contradicting
  the assumption that $\Dist(f,\NONMANIP) \ge \eps$.

  Hence \eqref{eq:unionContains2} holds.
  Therefore we can either find $i \neq j$ and
  $\{a,b\} \neq \{c,d\}$ such that $i \in A^{a,b}$ and $j \in A^{c,d}$
  which proves the theorem,
  or we must have $|A^{a,b}| \ge 2$ for some $\{a,b\}$ while
  $A^{c,d} = \emptyset$ for any $\{c,d\} \neq \{a,b\}$. However, this
  contradicts the first claim in the proof. The result follows.
\end{proof}

\subsection{Fibers} 

We again use fibers $F\left( z^{a,b} \right)$ as defined in Definition~\ref{def:fibers}. However, we need more than this. We note that the following definitions only apply in Section~\ref{sec:manip_ref}, i.e., when we have at least two voters; in Section~\ref{sec:1voter}, when we have only one voter, things are simpler.

Given the result of Lemma~\ref{lem:boundaries2}, our primary interest is in the boundary $B_i^{a,b;\left[a:b\right]}$. For ranking profiles on this boundary, we know that the alternatives $a$ and $b$ are adjacent in coordinate $i$---so we know more than just the preference between $a$ and $b$ in coordinate $i$. Consequently we would like to divide the set of ranking profiles with $a$ and $b$ adjacent in coordinate $i$ according to the preferences between $a$ and $b$ in all coordinates except coordinate $i$. The following definitions make this precise.

As done in Section~\ref{sec:dict_misc_def} for ranking profiles, we can write $x_{-i}^{a,b} \equiv x_{-i}^{a,b} \left( \sigma \right)$ for the vector of preferences between $a$ and $b$ for all coordinates except coordinate $i$, i.e., the whole vector of preferences between $a$ and $b$ is $x^{a,b} \left( \sigma \right) = \left( x_{i}^{a,b} \left( \sigma \right), x_{-i}^{a,b} \left( \sigma \right) \right)$.

We can define $F\left( z_{-i}^{a,b} \right)$ analogously to $F\left( z^{a,b} \right)$:
\[
F\left( z_{-i}^{a,b} \right) := \left\{ \sigma : x_{-i}^{a,b} \left( \sigma \right) = z_{-i}^{a,b} \right\}.
\]
We also define the subset of $F\left( z_{-i}^{a,b} \right)$ where $a$ and $b$ are adjacent in coordinate $i$, with $a$ above $b$:
\[
\bar{F}\left( z_{-i}^{a,b} \right) := \left\{ \sigma \in F\left( z_{-i}^{a,b} \right) : a \text{ and } b \text{ are adjacent in coordinate } i, \text{ with } a \text{ above } b \right\}.
\]

Given a SCF $f$, for any pair of alternatives $a, b \in \left[k \right]$ and coordinate $i \in \left[n\right]$, we can also partition the boundary $B_i^{a,b} \left( f \right)$ according to its fibers. There are multiple, slightly different ways of doing this, but for our purposes the following definition is most useful.

Define
\[
B_i \left( z_{-i}^{a,b} \right) := \left\{ \sigma \in \bar{F}\left( z_{-i}^{a,b} \right) : f\left( \sigma \right) = a, f\left( \left[a:b\right]_{i} \sigma \right) = b \right\},
\]
where we omit the dependence of $B_i \left( z_{-i}^{a,b} \right)$ on $f$. We call sets of the form $B_i \left( z_{-i}^{a,b} \right) \subseteq \bar{F} \left( z_{-i}^{a,b} \right)$ \emph{fibers for the boundary $B_i^{a,b;\left[a:b\right]}$}.

We now distinguish between small and large fibers for the boundary $B_i^{a,b;\left[a:b\right]}$.
\begin{definition}[Small and large fibers]
We say that the fiber $B_i \left( z_{-i}^{a,b} \right) \subseteq \bar{F} \left( z_{-i}^{a,b} \right)$ is \emph{large} if
\[
\p \left( \sigma \in B_i \left( z_{-i}^{a,b} \right) \, \middle| \, \sigma \in \bar{F} \left( z_{-i}^{a,b} \right) \right) \geq 1 - \gamma,
\]
where $\gamma = \frac{\eps^3}{10^3 n^3 k^{24}}$, and \emph{small} otherwise.

As before, we denote by $\Lg \left( B_i^{a,b;\left[a:b\right]} \right)$ the union of large fibers for the boundary $B_i^{a,b;\left[a:b\right]}$, i.e.,
\[
\Lg \left( B_i^{a,b;\left[a:b\right]} \right) := \bigcup_{B_i \left( z_{-i}^{a,b} \right) \text{ is a large fiber}} B_i \left( z_{-i}^{a,b} \right),
\]
and similarly, we denote by $\Sm \left( B_i^{a,b;\left[a:b\right]} \right)$ the union of small fibers.
\end{definition}
As in Definition~\ref{def:lg_fibers}, we remark that what is important is that $\gamma$ is a polynomial of $\frac{1}{n}$, $\frac{1}{k}$ and $\eps$---the specific polynomial in this definition is the end result of the computation in the proof.

The following definition is used in Section~\ref{sec:lg_fbr_ref} in dealing with the large fiber case in the refined setting.
\begin{definition}
For a coordinate $i$ and a pair of alternatives $a$ and $b$, define $F_i^{a,b}$ to be the set of ranking profiles $\sigma$ such that $x^{a,b} \left( \sigma \right)$ satisfies
\[
\p \left( f \left( \tilde{\sigma} \right) = \tp_{\left\{a,b\right\}} \left( \tilde{\sigma}_i \right) \, \middle| \, \tilde{\sigma} \in F \left( x_{-i}^{a,b} \left( \sigma \right) \right) \right) \geq 1 - 2k \gamma.
\]
\end{definition}
Clearly $F_i^{a,b}$ is the union of fibers of the form $F \left( z^{a,b} \right)$, and also $F \left( \left(1, x_{-i}^{a,b} \right) \right) \subseteq F_i^{a,b}$ if and only if $F \left( \left(-1, x_{-i}^{a,b} \right) \right) \subseteq F_i^{a,b}$.

\subsection{Boundaries of boundaries}\label{sec:bdry_of_bdry} 

In the refined graph setting, just like in the general rankings graph setting, we also look at boundaries of boundaries.

For a given vector $z_{-i}^{a,b}$ of preferences between $a$ and $b$, we can think of $\bar{F} \left( z_{-i}^{a,b} \right)$ as a subgraph of the original refined rankings graph $S_k^n$, i.e., two ranking profiles in $\bar{F} \left( z_{-i}^{a,b} \right)$ are adjacent if they differ by one adjacent transposition in exactly one coordinate. Since both of the ranking profiles are in $\bar{F} \left( z_{-i}^{a,b} \right)$, this adjacent transposition keeps the order of $a$ and $b$ in all coordinates, and moreover it keeps $a$ and $b$ adjacent in coordinate $i$.

We choose to slightly modify this graph: the vertex set is still $\bar{F} \left( z_{-i}^{a,b} \right)$, but we modify the edge set by adding new edges. Suppose $\sigma \in \bar{F} \left( z_{-i}^{a,b} \right)$ and
\[
\sigma_i = 
\begin{pmatrix}
\vdots\\
c\\
a\\
b\\
d\\
\vdots
\end{pmatrix};
\qquad\quad
\sigma'_i =
\begin{pmatrix}
\vdots\\
a\\
b\\
c\\
d\\
\vdots
\end{pmatrix};
\qquad\quad
\sigma''_i =
\begin{pmatrix}
\vdots\\
c\\
d\\
a\\
b\\
\vdots
\end{pmatrix}.
\]
Define in this way $\sigma' = \left( \sigma'_i, \sigma_{-i} \right)$ and $\sigma'' = \left( \sigma''_i, \sigma_{-i} \right)$, and add $\left( \sigma, \sigma' \right)$ and $\left( \sigma, \sigma'' \right)$ to the edge set. So basically, we consider the block of $a$ and $b$ in coordinate $i$ as a single element, and connect two ranking profiles in $\bar{F} \left( z_{-i}^{a,b} \right)$ if they differ in an adjacent transposition in a single coordinate, allowing this transposition to move the block of $a$ and $b$ in coordinate $i$. We call this graph $G\left( z_{-i}^{a,b} \right) = \left( \bar{F} \left( z_{-i}^{a,b} \right), E \left( z_{-i}^{a,b} \right) \right)$, where $E \left( z_{-i}^{a,b} \right)$ is the edge set.

When we write $\partial_e \left( B_i \left( z_{-i}^{a,b} \right) \right)$, we mean the edge boundary of $B_i \left( z_{-i}^{a,b} \right)$ in the graph $G \left( z_{-i}^{a,b} \right)$, and similarly when we write $\partial \left( B_i \left( z_{-i}^{a,b} \right) \right)$, we mean the vertex boundary of $B_i \left( z_{-i}^{a,b} \right)$ in the graph $G \left( z_{-i}^{a,b} \right)$.

%

\subsection{Local dictators, conditioning and miscellaneous definitions} 

In the general rankings graph setting we defined a dictator on a subset of the alternatives, but in the refined rankings graph setting we need to define so-called \emph{local dictators}.

\begin{definition}[Local dictators]
For a coordinate $i$ and a subset of alternatives $H \subseteq \left[k\right]$, define $\LD_i^H$ to be the set of ranking profiles $\sigma$ such that the alternatives in $H$ form an adjacent block in $\sigma_i$, and permuting them among themselves in any order, the outcome of the SCF $f$ is always the top ranked alternative among those in $H$.
If $\sigma \in \LD_i^{H}$, then we call $\sigma$ a local dictator on $H$ in coordinate $i$.

Also, for a pair of alternatives $a$ and $b$, define
\[
\LD_i \left( a, b\right) := \bigcup_{c \notin \left\{ a, b \right\}} \LD_i^{\left\{a,b,c\right\}},
\]
the set of local dictators on three alternatives, two of which are $a$ and $b$, in coordinate $i$.
\end{definition}

In dealing with local dictators, we will condition on the top of a particular coordinate being fixed. We therefore introduce the following notation.
\begin{definition}[Conditioning]\label{def:cond}
For any coordinate $i \in \left[n\right]$ and any vector $\mathbf{v}$ of alternatives we define
\[
\p_i^{\mathbf{v}} \left( \cdot \right) := \p \left( \cdot \, \middle| \, \left( \sigma_i \left( 1 \right), \dots, \sigma_i \left(\left|\mathbf{v} \right| \right) \right) = \mathbf{v} \right),
\]
where $\left| \mathbf{v} \right|$ denotes the length of the vector $\mathbf{v}$. E.g., $\p_1^{\left(a \right)} \left( \cdot \right) = \p \left( \cdot \, \middle| \, \sigma_1 \left( 1 \right) = a \right)$ and
\[
 \p_1^{\left( a, b, c \right)} = \p \left( \cdot \, \middle| \, \left( \sigma_1 \left( 1 \right), \sigma_1 \left( 2 \right), \sigma_1 \left( 3 \right) \right) = \left( a, b, c \right) \right).
\]
\end{definition}

We use the following notation in the proof of Theorem~\ref{thm:TRUENONMANIP}:
\begin{theorem}\label{thm:TRUENONMANIP}
Suppose $f$ is a SCF on $n$ voters and $k \geq 3$ alternatives for which $\Dist \left( f, \overline{\NONMANIP} \right) \leq \alpha$. Then
either
\begin{equation}\label{eq:true_NONMANIP}
\Dist \left( f, \NONMANIP \right) < 100 n^4 k^8 \alpha^{1/3}
\end{equation}
or
\begin{equation}\label{eq:many_manip}
\p \left( \sigma \in M \left( f \right) \right) \geq \p \left( \sigma \in M_3 \left( f \right) \right) \geq \alpha.
\end{equation}
\end{theorem}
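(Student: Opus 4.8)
The plan is to pass to one-voter restrictions and feed them into the quantitative one-voter Gibbard--Satterthwaite theorem, Theorem~\ref{thm:quant_GS_1voter}, since its cubic dependence on the distance is exactly what produces the exponent $1/3$ in \eqref{eq:true_NONMANIP}. Fix $g\in\overline{\NONMANIP}$ with $\Dist(f,g)\le\alpha$. We may assume $\alpha$ is so small that $100 n^4 k^8\alpha^{1/3}<1$, since otherwise \eqref{eq:true_NONMANIP} holds trivially because $\Dist(f,\NONMANIP)\le 1$. By definition of $\overline{\NONMANIP}$ there are two (possibly overlapping) cases: $g$ depends on a single coordinate, or $g$ takes at most two values. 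In both cases, for a coordinate $i$ and a partial profile $\sigma_{-i}$ write $f_{\sigma_{-i}}$ for the induced one-voter SCF and $\delta_i(\sigma_{-i})=\Dist\!\left(f_{\sigma_{-i}},\NONMANIP(1,k)\right)$; the key elementary fact is that a $3$-manipulation point of $f_{\sigma_{-i}}$ in its single coordinate, combined with $\sigma_{-i}$, is a $3$-manipulation point of $f$. Combined with Theorem~\ref{thm:quant_GS_1voter} and Jensen's inequality $\E[\delta^3]\ge(\E\delta)^3$ this gives, for each fixed $i$,
\[
\p\!\left(\sigma\in M_3(f)\right)\;\ge\;\E_{\sigma_{-i}}\!\left[\p_{\sigma_i}\!\left(\sigma_i\in M_3(f_{\sigma_{-i}})\right)\right]\;\ge\;\frac{\E_{\sigma_{-i}}\!\left[\delta_i(\sigma_{-i})^3\right]}{10^5 k^{16}}\;\ge\;\frac{\left(\E_{\sigma_{-i}}\delta_i(\sigma_{-i})\right)^3}{10^5 k^{16}}.
\]
So everything reduces to a lower bound on $\E_{\sigma_{-i}}\delta_i(\sigma_{-i})$ for a suitable $i$.

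\textbf{Case A: $g(\sigma)=h(\sigma_j)$.} Relabel so $j=1$. Since $\E_{\sigma_{-1}}\Dist(f_{\sigma_{-1}},h)=\Dist(f,g)\le\alpha$, and since $\tilde h(\sigma_1)\in\NONMANIP(n,k)$ for every $\tilde h\in\NONMANIP(1,k)$ (if $\tilde h=\tp_H$ it is a dictator on coordinate $1$; if $\tilde h$ is monotone two-valued then $\tilde h(\sigma_1)$ is a monotone two-valued function of $n$ voters), the triangle inequality gives
$\E_{\sigma_{-1}}\delta_1(\sigma_{-1})\ge \Dist\!\left(h,\NONMANIP(1,k)\right)-\alpha\ge \Dist(f,\NONMANIP)-2\alpha$. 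If \eqref{eq:true_NONMANIP} fails, then $\Dist(f,\NONMANIP)\ge 100 n^4 k^8\alpha^{1/3}$, hence $\E_{\sigma_{-1}}\delta_1(\sigma_{-1})\ge 99 n^4 k^8\alpha^{1/3}$, and the display above yields $\p(\sigma\in M_3(f))\ge 99^3\cdot 10^{-5}\,n^{12}k^8\,\alpha\ge\alpha$, which is \eqref{eq:many_manip}.

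\textbf{Case B: $g$ takes at most two values $\{a,b\}$.} If $f$ is within $100 n^4 k^8\alpha^{1/3}$ of a monotone two-valued SCF, then \eqref{eq:true_NONMANIP} holds since such SCFs lie in $\NONMANIP(n,k)$. Otherwise $g$ (hence $f$) is far from the monotone two-valued functions, and we again restrict coordinates, now averaging over a uniform $i$ as well. The point is that a one-voter two-valued SCF lies in $\NONMANIP(1,k)$ only if it is $\tp_{\{a,b\}}$ or constant, so any restriction that is close to a "reversed" one-voter rule contributes $\Omega(1)$ to $\delta_i$. One shows that a two-valued $g$ that is $\beta$-far from the monotone two-valued functions satisfies $\E_{i,\sigma_{-i}}\!\left[\delta_i(\sigma_{-i})\right]\ge \beta/\mathrm{poly}(n,k)$ — this is where the factor $n^4 k^8$ in the statement is spent, and is proved with the combinatorial tools of the chapter (Proposition~\ref{prop:sumInfVarBound}, the Lindsey edge-isoperimetric inequality Corollary~\ref{cor:isoLindsey}, and the canonical-path estimates of Section~\ref{sec:canon-paths-group}, applied to the non-monotone boundary). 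Plugging $\beta\ge 100 n^4 k^8\alpha^{1/3}$ into the Jensen computation above (with the extra average over $i$) again gives $\p(\sigma\in M_3(f))\ge\alpha$.

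\textbf{Expected main obstacle.} The subtle step is the lower bound on $\E\,\delta$ in Case~B: one must rule out that a patchwork of per-restriction nonmanipulable one-voter rules assembles into an SCF far from $\NONMANIP(n,k)$, equivalently quantify "$g$ two-valued and far from monotone $\Rightarrow$ a non-negligible mass of coordinate-restrictions are manipulable," losing only $\mathrm{poly}(n,k)$. In Case~A this is free because $f$ is genuinely close to $h(\sigma_1)$; in Case~B it is the heart of the argument and forces the $n^4 k^8$ blow-up in the theorem's bound. The remaining work — verifying the two structural facts about $\NONMANIP(1,k)$ and bookkeeping the constants so that $99^3\cdot10^{-5}n^{12}k^8\ge 1$ — is routine.
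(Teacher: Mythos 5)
Your Case A is a legitimate and arguably cleaner route than the paper's: the paper applies Theorem~\ref{thm:quant_GS_1voter} to $h$ directly and then transfers the $M_3$ bound from $g$ back to $f$ via the observation that a single profile change alters $|M_3|$ by at most $6nk$; you instead apply the one-voter theorem to the actual restrictions $f_{\sigma_{-1}}$ and average with Jensen, avoiding the stability argument. The triangle-inequality chain and the constant bookkeeping there check out.

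Case B, however, is where the argument is left unfinished, and the hand-waved lemma is exactly where the real difficulty lives. You assert that a two-valued SCF $\beta$-far from the monotone two-valued functions has $\E_{i,\sigma_{-i}}[\delta_i(\sigma_{-i})]\ge\beta/\mathrm{poly}(n,k)$, and gesture at Lindsey's inequality and canonical paths, but never prove it. This is not a routine gap: (i) it is essentially a monotonicity-testing statement — "far from monotone implies many violating pairs" — and the paper actually invokes the Goldreich--Goldwasser--Lehman--Ron--Samorodnitsky theorem for exactly this step rather than deriving it from the chapter's isoperimetric toolkit; (ii) your restriction-based formulation has a quantitative weakness that you never control: for an anti-majority-like two-valued $g$, each restriction $g_{\sigma_{-i}}$ is non-monotone only on a pivotal event of probability $O(n^{-1/2})$, and the one-voter distance of the "reversed" restriction to $\NONMANIP(1,k)$ is itself $O(1)$ at best, so one has to verify that the resulting $\mathrm{poly}(n,k)$ loss, cubed through Jensen, still lands within $100n^4k^8$ — this is not checked; and (iii) the paper flags, and handles, a subtlety you skip entirely: a two-valued $g$ need not be a Boolean function of the preference vector $x^{a,b}(\sigma)$, so one must first replace $g$ by the Boolean $h(\sigma)=\Maj\big(g|_{F(x^{a,b}(\sigma))}\big)$ and pay for that replacement with a separate canonical-path argument producing 2-manipulation points. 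Without this reduction, the object you hope to run a monotonicity argument on is not even on the hypercube $\{a,b\}^n$. So Case B as written is an outline of a strategy, not a proof, and the strategy diverges from the paper's at the step that is least obviously repairable.
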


\begin{definition}[Majority function]\label{def:maj}
For a function $f$ whose domain $X$ is finite and whose range is the set $\left\{a,b\right\}$, define $\Maj \left( f \right)$ by
\begin{equation*}
\Maj \left( f \right) =
\begin{cases}
a & \text{ if } \quad \# \left\{ x \in X : f\left( x \right) = a \right\} \geq \# \left\{ x \in X : f\left( x \right) = b \right\},\\
b & \text{ if } \quad \# \left\{ x \in X : f\left( x \right) = a \right\} < \# \left\{ x \in X : f\left( x \right) = b \right\}.
\end{cases}
\end{equation*}
\end{definition}

\section{Proof for one voter}\label{sec:1voter} 

In this section we prove our quantitative Gibbard-Satterthwaite theorem for one voter, Theorem~\ref{thm:quant_GS_1voter}. As mentioned before, we present this proof before proving Theorem~\ref{thm:k_refined}, because the proof of Theorem~\ref{thm:k_refined} follows the lines of this proof, with slight modifications needed to deal with having $n > 1$ coordinates.

For the remainder of this section, let us fix the number of voters to be 1, the number of alternatives $k\geq 3$, and the SCF $f$, which satisfies $\Dist \left( f, \NONMANIP \right) \geq \eps$. Accordingly, we typically omit the dependence of various sets (e.g., boundaries between two alternatives) on $f$. 

An additional notational remark: since our SCF is on one voter only, we omit the subscripts that denote the coordinate we are on. E.g., we write simply $\Inf^{a,b}$ instead of $\Inf_1^{a,b}$, etc.\

We present the proof in several steps.

\subsection{Large boundary between two alternatives} 

The first thing we have to establish is a large boundary between two alternatives. This can be done just like in Lemma~\ref{lem:boundaries2}, except there are two small differences. On the one hand, the assumption of the lemma, namely that $\Dist \left( f, \NONMANIP \right) \geq \eps$, is weaker than that of the original lemma. On the other hand, here we only need one big boundary, unlike in Lemma~\ref{lem:boundaries2}, where 
there are two big boundaries in two different coordinates. The following lemma formulates what we need.

\begin{lemma}\label{lem:big_ab_bdry}
Recall that $f$ is a SCF on 1 voter and $k \geq 3$ alternatives which satisfies $\Dist \left( f, \NONMANIP \right) \geq \eps$. Let $\sigma \in S_k$ be selected uniformly. Then either
\begin{equation}\label{eq:2manip_pt}
\p \left( \sigma \in M_2 \right) \geq \frac{4\eps}{k^6}
\end{equation}
or there exist alternatives $a,b \in \left[ k \right]$, $a\neq b$ such that
\begin{equation}\label{eq:big_ab_bdry}
\Inf^{a,b;\left[a:b\right]} \geq \frac{2\eps}{k^6}.
\end{equation}
\end{lemma}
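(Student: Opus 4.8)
\textbf{Proof plan for Lemma~\ref{lem:big_ab_bdry}.} The plan is to mimic the proof of Lemma~\ref{lem:boundaries2}, simplifying it since we now only need \emph{one} large boundary rather than two in distinct coordinates, but paying attention to the weaker hypothesis $\Dist(f,\NONMANIP)\ge\eps$ (which is weaker than $\Dist(f,\overline{\NONMANIP})\ge\eps$, the hypothesis used there). The single-voter setting is genuinely easier because all influences are ``in coordinate 1'' and there is no need to argue about which coordinate the large boundary sits in, nor to handle the dictator-via-many-coordinates case. First I would note the preliminary reduction: if $\Inf^{a,b;\nu}\ge\frac{2\eps}{k^6}$ for some $a\neq b$ and some adjacent transposition $\nu\neq[a:b]$, then by Lemma~\ref{lem:nonManipBoundary} every pair $(\sigma,\nu\sigma)\in B^{a,b;\nu}(f)$ contributes a $2$-manipulation point; counting as in the proof of Lemma~\ref{lem:boundaries2} (using $|B^{a,b;\nu}(f)| = 2(k!)\Inf^{a,b;\nu}(f)$ from \eqref{eq:inf2Boundary}) gives $\p(\sigma\in M_2)\ge\frac{4\eps}{k^6}$, which is \eqref{eq:2manip_pt}. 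So I may assume $\Inf^{a,b;\nu}<\frac{2\eps}{k^6}$ for all $a\neq b$ and all $\nu\neq[a:b]$.

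Under that assumption, the goal is to show $\Inf^{a,b;[a:b]}\ge\frac{2\eps}{k^6}$ for \emph{some} pair $a\neq b$, else derive a contradiction with $\Dist(f,\NONMANIP)\ge\eps$. Suppose toward contradiction that $\Inf^{a,b;[a:b]}<\frac{2\eps}{k^6}$ for every pair. Combined with the preliminary assumption, this forces $\Inf^{a,b;\nu}<\frac{2\eps}{k^6}$ for \emph{all} $\nu\in T$ and all $a\neq b$, hence by Corollary~\ref{cor:sumInfVarBound2} and Proposition~\ref{prop:sumInfVarBound}, for each alternative $a$,
\[
\Var[1_{\{f(\sigma)=a\}}] \le \sum_{i=1}^n\Inf_i^a(f) = \Inf^a(f) \le k^2\sum_{\nu\in T}\Inf^{a;\nu}(f) = k^2\sum_{\nu\in T,\,b\neq a}\Inf^{a,b;\nu}(f) \le k^2\cdot\frac{k(k-1)}{2}\cdot\frac{2\eps}{k^6} = \frac{\eps(k-1)}{k^3},
\]
so $\sum_{a=1}^k\Var[1_{\{f(\sigma)=a\}}]\le\frac{\eps(k-1)}{k^2}<\eps$, but then Lemma~\ref{lem:constDist} gives $\Dist(f,\CONST)\le\frac{k}{2}\cdot\frac{\eps(k-1)}{k^2} = \frac{\eps(k-1)}{2k}<\frac{\eps}{2}$. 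Since for one voter $\CONST\subseteq\NONMANIP$ (a constant function is $\tp_H$ with $|H|=1$, or a monotone two-valued function), this contradicts $\Dist(f,\NONMANIP)\ge\eps$. (One should double-check the exact constants: the proof of Lemma~\ref{lem:boundaries2} uses $k^7$ in the denominator because it splits $\eps$ across two boundaries and across coordinates; with one voter and one boundary the factor $k^6$ should suffice, but I would carefully recompute the chain $k^2\cdot\binom{k}{2}\cdot\frac{2\eps}{k^6}$ to confirm it lands below $\eps$ after applying Lemma~\ref{lem:constDist}.)

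\textbf{Main obstacle.} The substantive difference from Lemma~\ref{lem:boundaries2} is that its proof, in the final contradiction step, uses the full strength of $\Dist(f,\overline{\NONMANIP})\ge\eps$ to rule out $f$ being close to a dictator on \emph{some} coordinate via an averaging argument over $S_k$ acting on the other coordinates. Here, with $n=1$, ``dictator on a coordinate'' just means ``$f$ is a function of the single voter'' — which is automatic and harmless — so that branch collapses entirely, and only the ``close to constant'' branch remains, for which $\Dist(f,\CONST)<\eps/2$ contradicting $\Dist(f,\NONMANIP)\ge\eps$ since $\CONST\subseteq\NONMANIP$. So the real care needed is just bookkeeping: verifying that the constants work out with $k^6$ rather than $k^7$, and confirming that no separate argument about the $\tp_H$-type nonmanipulable functions is needed at this stage (it is deferred to later lemmas in Section~\ref{sec:1voter}). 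The rest is a direct transcription of the counting arguments already carried out for Lemma~\ref{lem:boundaries2}.
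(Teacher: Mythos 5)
Your overall plan is right — the preliminary reduction to the case $\Inf^{a,b;z}<\frac{2\eps}{k^6}$ for $z\neq[a:b]$ is exactly what the paper does, and the use of Corollary~\ref{cor:sumInfVarBound2} plus Proposition~\ref{prop:sumInfVarBound} is the right pair of tools. But the final contradiction step has a genuine gap, partly masked by a counting error.

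First the counting error: the double sum $\sum_{\nu\in T,\,b\neq a}\Inf^{a,b;\nu}$ runs over $|T|\cdot(k-1)=\frac{k(k-1)^2}{2}$ terms, not $\frac{k(k-1)}{2}$; you forgot the factor $(k-1)$ from the sum over $b$. Second, and more importantly, even with the correct count the detour through Lemma~\ref{lem:constDist} does not deliver a contradiction. With the corrected count one gets $\Var[1_{\{f=a\}}]\le k^2\cdot\frac{k(k-1)^2}{2}\cdot\frac{2\eps}{k^6}=\frac{(k-1)^2\eps}{k^3}$ for every $a$, hence $\sum_a\Var\le\frac{(k-1)^2\eps}{k^2}$, and Lemma~\ref{lem:constDist} then gives only $\Dist(f,\CONST)\le\frac{k}{2}\cdot\frac{(k-1)^2\eps}{k^2}=\frac{(k-1)^2\eps}{2k}$. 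For $k\ge 4$ this is $\ge\eps$, so there is no contradiction with $\Dist(f,\NONMANIP)\ge\eps$. The extra factor of $k/2$ coming from Lemma~\ref{lem:constDist} kills the argument as soon as $k\ge 4$.

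The paper avoids this by never invoking Lemma~\ref{lem:constDist} here. It establishes the lower bound on the variance \emph{directly} from the hypothesis: since $\Dist(f,\tp_{\{a\}})\ge\eps$ for every $a$, each $\p(f(\sigma)=a)\le 1-\eps$, while by pigeonhole some $a$ has $\p(f(\sigma)=a)\ge 1/k$. For that $a$, $\Var[1_{\{f=a\}}]=\p(f=a)(1-\p(f=a))\ge\eps/k$. Feeding this one large variance into the Poincar\'e-type bound and pigeonholing over the $\frac{k(k-1)^2}{2}<k^3/2$ pairs $(b,w)$ gives some $\Inf^{a,b;w}\ge\frac{2\eps}{k^6}$, which must have $w=[a:b]$ by the preliminary reduction. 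The upshot is that your positive-direction argument should be run forward: find the one good alternative $a$ first, rather than assuming all influences small and trying to deduce closeness to $\CONST$ in aggregate.
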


\begin{proof}
The proof is just like the proof of Lemma~\ref{lem:boundaries2}. First, suppose that $\Inf^{a,b;z} \geq \frac{2\eps}{k^6}$ for some pair of alternatives $a \neq b$, and transposition $z \neq \left[a:b \right]$. Then by Lemma~\ref{lem:nonManipBoundary}, for any point $\left( \sigma, \sigma' \right) \in B^{a,b;z}$, at least one of $\sigma$ or $\sigma' = z \sigma$ is a 2-manipulation point. Then
\[
\left| M_2 \right| \geq \left| B^{a,b;z} \right| = 2 \cdot k! \cdot \Inf^{a,b;z} \geq \frac{4\eps}{k^6} k!,
\]
and dividing with $k!$ gives \eqref{eq:2manip_pt}. So for the remainder of the proof we may assume that $\Inf^{a,b;z} < \frac{2\eps}{k^6}$ for every $a \neq b$ and $z \neq \left[a:b \right]$.

For every $a \in \left[k\right]$, $\Dist \left( f, \tp_{\left\{a\right\}} \right) \geq \eps$, so $\p \left( f \left( \sigma \right) = a \right) \leq 1 - \eps$. On the other hand, there exists an alternative, say $a \in \left[ k \right]$, such that $\p \left( f\left( \sigma \right) = a \right) \geq \frac{1}{k}$. So for this alternative we have
\[
\Var \left( \mathbf{1} \left[ f\left( \sigma \right) = a \right] \right) \geq \frac{\eps}{k},
\]
and consequently using~Corollary ~\ref{cor:sumInfVarBound2} and Proposition~\ref{prop:sumInfVarBound} we have
\[
\sum_{w \in T} \sum_{b \neq a} \Inf^{a,b;w} = \sum_{w \in T} \Inf^{a;w} \geq \frac{1}{k^2} \Var \left( \mathbf{1} \left[ f\left( \sigma \right) = a \right] \right) \geq \frac{\eps}{k^3}.
\]
Hence there must exist some $w \in T$ and $b \neq a$ such that $\Inf^{a,b;w} \geq \frac{2\eps}{k^6}$, but by our assumption we must have $w = \left[a:b\right]$.
\end{proof}

If \eqref{eq:2manip_pt} holds, then we are done, so in the following we assume that \eqref{eq:big_ab_bdry} holds.

We know that $\sigma$ is on $B^{a,b;\left[a:b\right]}$ if $f\left( \sigma \right) = a$ and $f \left( \left[a:b\right] \sigma \right) = b$. We know that if $b \stackrel{\sigma}{>} a$, then $\sigma$ is a 2-manipulation point, so if this happens in more than half of the cases when $\sigma$ is on $B^{a,b;\left[a:b\right]}$, then we have 
\[
\p \left( \sigma \in M_2 \right) \geq \frac{2\eps}{k^6},
\]
in which case we are again done. So we may assume in the following that
\begin{equation}\label{eq:B_1vot}
\p \left( \sigma \in B \right) \geq \frac{2\eps}{k^6},
\end{equation}
where
\[
B := \left\{ \sigma : f\left( \sigma \right) = a, f\left( \left[a:b \right] \sigma \right) = b, a \stackrel{\sigma}{>} b \right\}.
\]

\subsection{Division into cases}\label{sec:cases_1vot} 

We again divide into two cases. 

We introduce the set $\bar{F}$ of permutations where $a$ is directly above $b$:
\[
\bar{F} := \left\{ \sigma \in S_k : a \stackrel{\sigma}{>} b \text{ and } b \stackrel{\sigma'}{>} a, \text{ where } \sigma' = \left[a:b\right] \sigma \right\}.
\]
One of the following two cases must hold.

\textbf{Case 1: Small fiber case.} We have
\begin{equation}\label{eq:sm_fbr_case}
\p \left( \sigma \in B \, \middle| \, \sigma \in \bar{F} \right) \leq 1 - \frac{\eps}{4k}.
\end{equation}

\textbf{Case 2: Large fiber case.} We have
\begin{equation}\label{eq:lg_fbr_case}
\p \left( \sigma \in B \, \middle| \, \sigma \in \bar{F} \right) > 1 - \frac{\eps}{4k}.
\end{equation}

\subsection{Small fiber case}\label{sec:sm_fbr_1vot} 

In this section we assume that \eqref{eq:sm_fbr_case} holds.

We first formalize that the boundary $\partial \left( B \right)$ of $B$ is big (recall the definition of $\partial\left( B \right)$ from Section~\ref{sec:bdry_of_bdry}). The proof uses the canonical path method again.

\begin{lemma}\label{lem:comparable_bdries_1vot}
If \eqref{eq:sm_fbr_case} holds, then
\begin{equation}\label{eq:comp_bdry_of_B}
\p \left( \sigma \in \partial \left( B \right) \right) \geq \frac{\eps}{2 k^4} \p \left( \sigma \in B \right).
\end{equation}
\end{lemma}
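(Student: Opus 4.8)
The statement \eqref{eq:comp_bdry_of_B} is an edge-isoperimetric inequality for the set $B$ inside the graph $G\left( z^{a,b}_{-1} \right)$ — but since we have only one voter, the relevant ambient graph is simpler. The plan is to run the same canonical-path argument already used for Corollary~\ref{cor:sumInfVarBound2} and Lemma~\ref{lem:big_ab_bdry}, but now relative to the set $\bar{F}$ of permutations in which $a$ sits directly above $b$. First I would observe that $\bar{F}$ is in bijection with $S_{k-1}$ (collapse the adjacent block $ab$ to a single symbol), so $\left| \bar{F} \right| = (k-1)!$ and $\p \left( \sigma \in \bar{F} \right) = 1/k$. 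Working inside this graph, an edge-isoperimetric bound of the form $\left| \partial_e \left( B \right) \right| \ge c \left| B \right|$ for some $c = \Omega(\eps/k)$ will follow from the small-fiber hypothesis \eqref{eq:sm_fbr_case} exactly as in Lemma~\ref{lem:comparable_boundaries_k}: either $B$ occupies at most a $\left( 1 - \tfrac{1}{k-1} \right)$-fraction of $\bar{F}$ and Lindsey's theorem (Corollary~\ref{cor:isoLindsey}, applied with a single copy) gives $\left| \partial_e (B) \right| \ge \left| B \right|$ directly, or $B$ is nearly all of $\bar F$, in which case the complement $\bar F \setminus B$ has measure at least $\tfrac{\eps}{4k}$ by \eqref{eq:sm_fbr_case} and we bound $\left| \partial_e(B) \right| = \left| \partial_e(\bar F \setminus B) \right| \ge \left| \bar F \setminus B\right| \ge \tfrac{\eps}{4(k-1)} \left| B \right|$.

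The one technical point is that the graph on $\bar{F}$ is not literally a single complete graph $K_{(k-1)!}$: its edges are given by adjacent transpositions that keep $a$ directly above $b$ (treating the $ab$-block as one element), together with the block-moving edges introduced in Section~\ref{sec:bdry_of_bdry}. So Corollary~\ref{cor:isoLindsey} does not apply verbatim. This is where I would invoke the canonical-path machinery of Section~\ref{sec:canon-paths-group} instead: by Proposition~\ref{prop:canon2} there is a canonical path map $\Gamma : \bar F \times \bar F \to P_{\bar F}(k^2)$ consisting of adjacent transpositions preserving the relative order of $a$ and $b$, with $\left| \Gamma^{-1}(\mu) \right| \le k^4 k!$ for every transposition $\mu$. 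Feeding the pair $(A, A^c)$ with $A = B$, $A^c = \bar F \setminus B$ into this map and routing through the boundary edges (the argument verbatim from the proof of Corollary~\ref{cor:sumInfVarBound2}), every canonical path from $B$ to $\bar F \setminus B$ must cross an edge of $\partial_e(B)$, so $\left| \partial_e(B) \right| \ge \dfrac{\left| B \right| \left| \bar F \setminus B \right|}{k^4 k!}$. Combining with the small-fiber lower bound $\left| \bar F \setminus B \right| \ge \tfrac{\eps}{4k} \left| \bar F \right|$ and $\left| \bar F \right| = (k-1)! = k!/k$ yields $\left| \partial_e(B) \right| \ge \dfrac{\eps}{4 k^6} \left| B \right|$.

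Finally I would pass from the edge boundary to the vertex boundary $\partial(B)$ by noting that each vertex of the graph on $\bar F$ has at most $k^2$ incident edges (at most $k$ ordinary adjacent transpositions plus two block-moving edges, crudely bounded by $k^2$), so $\left| \partial(B) \right| \ge \left| \partial_e(B) \right| / k^2 \ge \dfrac{\eps}{4 k^8} \left| B \right|$; after converting to probabilities (dividing numerator and denominator by $k!$) this gives $\p \left( \sigma \in \partial(B) \right) \ge \dfrac{\eps}{4 k^8}\, \p \left( \sigma \in B \right)$, which is stronger than the claimed $\tfrac{\eps}{2 k^4}$ bound once one is slightly more careful with the degree count. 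The main obstacle is bookkeeping: making sure the correct graph (with the extra block-moving edges) is used so that $\partial(B)$ in \eqref{eq:comp_bdry_of_B} matches the one later needed to locate actual manipulation points, and tracking the polynomial powers of $k$ through Lindsey vs.\ the canonical-path bound so the final constant is no worse than stated. None of this is conceptually hard; it is the standard isoperimetry-via-canonical-paths template specialized to one voter.
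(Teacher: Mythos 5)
Your overall template — replace Lindsey with canonical paths, cross from $B$ to $\bar F \setminus B$ and count paths through a boundary edge — is the right one, and you correctly diagnose that Corollary~\ref{cor:isoLindsey} doesn't apply because the graph on $\bar F$ is not a product of complete graphs. But the specific canonical-path construction you invoke is the wrong one, and this is not a bookkeeping slip: it breaks the argument and loses a polynomial factor in $k$ that you cannot recover by ``being slightly more careful.''

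Concretely: Proposition~\ref{prop:canon2} constructs paths inside the set of permutations where $a$ is ranked \emph{above} $b$, but it makes no promise that $a$ and $b$ stay \emph{adjacent} along the way. So the image of $\Gamma$ is not contained in $\bar F$, and your chain ``every path from $B$ to $\bar F \setminus B$ must cross $\partial_e(B)$'' does not go through, since $\partial_e(B)$ is a boundary relative to the subgraph induced on $\bar F$ and your paths exit that subgraph. The paper instead applies Proposition~\ref{prop:canon1} to $S_{k-1}$, treating the $ab$-block as a single new symbol. This is the key device you are missing: after collapsing, $\bar F \cong S_{k-1}$, the paths manifestly stay inside $\bar F$, and the inverse-image bound becomes $(k-1)^2 (k-1)!/2 < k^2(k-1)!/2$, which is smaller than your $k^4 k!$ by roughly a factor of $2k^3$. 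Together with the assumption $|B^c| \ge \tfrac{\eps}{4k}(k-1)!$, this gives $|\partial_e(B)| \ge \tfrac{\eps}{2k^3}|B|$. Finally, the degree of a vertex in this collapsed graph is $k-2$ (adjacent transpositions of $k-1$ symbols), not ``crudely $k^2$'' — using $k-2 < k$ gives exactly the stated $\tfrac{\eps}{2k^4}$. Your calculation yields $\tfrac{\eps}{4k^8}$, and you misstate that this is ``stronger than'' the target bound; it is strictly weaker. The gap between $k^8$ and $k^4$ comes precisely from the wrong path construction and the loose degree bound, not from rounding.
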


\begin{proof}
 Let $B^c = \bar{F} \setminus B$. For every $\left( \sigma, \sigma' \right) \in B \times B^c$, we define a canonical path from $\sigma$ to $\sigma'$, which has to pass through at least one edge in $\partial_e \left( B \right)$. Then if we show that every edge in $\partial_e \left( B \right)$ lies on at most $r$ canonical paths, then it follows that $\left| \partial_e \left( B \right) \right| \geq \left|B \right| \left|B^c \right| / r$.

So let $\left( \sigma, \sigma' \right) \in B \times B^c$. We apply the path construction of Proposition~\ref{prop:canon1}, but considering the block formed by $a$ and $b$ as a single element. Since this path goes from $\sigma$ (which is in $B$) to $\sigma'$ (which is in $B^c$), it must pass through at least one edge in $\partial_e \left( B \right)$.

For a given edge $\left( \pi, \pi' \right) \in \partial_e \left( B \right)$, at most how many possible $\left( \sigma, \sigma' \right) \in B \times B^c$ pairs are there such that the canonical path between $\sigma$ and $\sigma'$ defined above passes through $\left( \pi, \pi' \right)$? We learn from Proposition~\ref{prop:canon1} that there are at most $\left( k - 1\right)^2 \left( k - 1 \right)! / 2 < k^2 \left( k - 1 \right)! / 2$ possibilities for the pair $\left( \sigma, \sigma' \right)$.

Recall that $\left| \bar{F} \right| = \left( k - 1 \right)!$. By our assumption we have $\left| B \right| \leq \left( 1 - \frac{\eps}{4k} \right) \left( k - 1 \right)!$, and so $\left| B^c \right| \geq \frac{\eps}{4k} \left( k - 1 \right)!$. Therefore
\[
\left| \partial_e \left( B \right) \right| \geq \frac{\left|B \right| \left| B^c \right|}{\frac{k^2}{2} \left(k-1\right)!} \geq \frac{\eps}{2 k^3} \left| B \right|.
\]
Now in $G$ every ranking profile has $k - 2 < k$ neighbors, which implies \eqref{eq:comp_bdry_of_B}.
\end{proof}

\begin{corollary}\label{cor:big_bdry_of_B}
If \eqref{eq:sm_fbr_case} holds, then
\begin{equation}\label{eq:big_bdry_of_B}
\p \left( \sigma \in \partial \left( B \right) \right) \geq \frac{\eps^2}{k^{10}}.
\end{equation}
\end{corollary}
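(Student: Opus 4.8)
This corollary is an immediate consequence of the two facts already established just above it, so the plan is simply to chain them together. The first ingredient is Lemma~\ref{lem:comparable_bdries_1vot}, which under the small-fiber hypothesis \eqref{eq:sm_fbr_case} gives
\[
\p\left(\sigma \in \partial(B)\right) \geq \frac{\eps}{2k^4}\,\p\left(\sigma \in B\right).
\]
The second ingredient is the lower bound \eqref{eq:B_1vot} on the mass of $B$ itself, namely $\p(\sigma \in B) \geq \frac{2\eps}{k^6}$; crucially, this bound was derived \emph{before} the division into the small-fiber and large-fiber cases in Section~\ref{sec:cases_1vot}, so it is available to us here without any extra assumption.

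Combining the two displays, I would write
\[
\p\left(\sigma \in \partial(B)\right) \geq \frac{\eps}{2k^4}\cdot\frac{2\eps}{k^6} = \frac{\eps^2}{k^{10}},
\]
which is exactly \eqref{eq:big_bdry_of_B}. There is essentially no obstacle: the only thing to be careful about is making sure that the quantity $B$ appearing in Lemma~\ref{lem:comparable_bdries_1vot} is the same set $B = \{\sigma : f(\sigma) = a,\ f([a:b]\sigma) = b,\ a \stackrel{\sigma}{>} b\}$ referenced in \eqref{eq:B_1vot}, which it is by construction, and that the hypothesis \eqref{eq:sm_fbr_case} of Lemma~\ref{lem:comparable_bdries_1vot} is precisely the case assumption we are working under. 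So the proof is a one-line multiplication of the two inequalities.
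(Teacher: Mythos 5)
Your proof is correct and is exactly the argument the paper uses (the paper's proof is simply the one-liner ``Combine Lemma~\ref{lem:comparable_bdries_1vot} and \eqref{eq:B_1vot}''). You applied the bound $\p(\sigma \in \partial(B)) \geq \frac{\eps}{2k^4}\p(\sigma \in B)$ from Lemma~\ref{lem:comparable_bdries_1vot} together with $\p(\sigma \in B) \geq \frac{2\eps}{k^6}$ from \eqref{eq:B_1vot}, and the multiplication yields $\frac{\eps^2}{k^{10}}$ as required.
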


\begin{proof}
Combine Lemma~\ref{lem:comparable_bdries_1vot} and \eqref{eq:B_1vot}.
\end{proof}

Next we want to find manipulation points on the boundary $\partial \left( B \right)$. The next lemma tells us that if we are on the boundary $\partial \left( B \right)$, then either we can find manipulation points easily, or we are at a local dictator on three alternatives.

\begin{lemma}\label{lem:bdry_of_B}
Suppose that $\sigma \in \partial \left( B \right)$. Then
\begin{itemize}
\item either $\sigma \in \LD \left( a,b \right)$,
\item or there exists $\hat{\sigma} \in M_3$ such that $\hat{\sigma}$ is equal to $\sigma$ or $\left[a:b\right] \sigma$ except that the position of a third alternative $c$ might be shifted arbitrarily.
\end{itemize}
\end{lemma}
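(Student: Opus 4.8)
\textbf{Proof plan for Lemma~\ref{lem:bdry_of_B}.}

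The plan is to unpack what it means for $\sigma$ to lie on $\partial(B)$ in the graph $G$ and then chase the possible values of $f$ at nearby ranking profiles, producing either a $3$-manipulation point or a local dictator. First I would record the setup: $\sigma \in B$ means $f(\sigma) = a$, $f([a:b]\sigma) = b$, and $a \stackrel{\sigma}{>} b$ with $a$ directly above $b$. Being on the boundary $\partial(B)$ means there is a neighbor $\pi$ of $\sigma$ in $G(z_{-1}^{a,b})$ (equivalently, in $G$, since there is one voter) with $\pi \notin B$; by the modified edge set of $G$, $\pi$ is obtained from $\sigma$ either by an adjacent transposition not involving the $ab$-block, or by swapping the $ab$-block with the element immediately above or below it. In all cases the order of $a$ and $b$ is preserved and they remain adjacent, so $\pi \in \bar F$. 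Write $\pi' = [a:b]\pi$; then $\pi \notin B$ means either $f(\pi) \neq a$, or $f(\pi) = a$ but $f(\pi') \neq b$.

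Next I would do the case analysis on $f(\pi)$ and $f(\pi')$. If $f(\pi) = b$: since $a \stackrel{\pi}{>} b$, the profile $\pi$ together with a deviation to $\sigma$ (or $[a:b]\pi$) exhibits a manipulation — more precisely, $\pi$ is a manipulation point because the voter who prefers $a$ over $b$ at $\pi$ gets outcome $b$, and can move to a ranking producing $a$; one checks this is in fact a $1$-manipulation point, hence a $3$-manipulation point of the required form (it differs from $\pi$, which differs from $\sigma$ or $[a:b]\sigma$ only by the block move, which can be described as shifting the position of the block, i.e.\ of some third alternative $c$). If $f(\pi) = c \notin \{a,b\}$: then fixing nothing (there is one voter) we have three distinct outcomes $a, b, c$ appearing among $\sigma$, $[a:b]\sigma$, $\pi$, which are all obtained from one another by moving the $ab$-block and transposing $a,b$; I would then invoke the Gibbard--Satterthwaite theorem (Theorem~\ref{thm:GS}) applied to the restriction of $f$ to the block $\{a,b,c\}$ (treating the other alternatives as frozen in their positions, exactly as in the proof of Lemma~\ref{lem:nonManipTriple}), which yields a manipulation point obtained by permuting $a,b,c$ among themselves, equivalently shifting $c$ relative to the $ab$-block — this is precisely a $3$-manipulation point $\hat\sigma$ of the stated form. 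The remaining subcase is $f(\pi) = a$ and $f(\pi') \neq b$: if $f(\pi') = a$ then, comparing $\sigma$ (where $f = a$, $f([a:b]\sigma) = b$) to $\pi$ (where $f = a$, $f([a:b]\pi) = a$), I would bubble the block back and forth to find, along the way, a place where the value jumps from $a$ to $b$ versus $a$ to $a$; combined with $f(\sigma)=a, f([a:b]\sigma)=b$ this again forces a third value to appear somewhere, or directly a manipulation via Lemma~\ref{lem:nonManipBoundary}. If $f(\pi') = c \notin \{a,b\}$ then again three values appear and Gibbard--Satterthwaite applies as above.

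Finally, if none of these branches produces a manipulation point, I would argue that $\sigma \in \LD(a,b)$: the only way to avoid the above is that for \emph{every} neighbor $\pi$ of $\sigma$ obtained by moving the $ab$-block, $f$ remains in $\{a,b\}$ and respects the order (top of the block is the outcome), i.e.\ the position of the block is irrelevant and only the internal order of $a,b$ matters. But $\sigma \in \partial(B)$ guarantees \emph{some} bad neighbor, so in fact the escape must come from an adjacent transposition \emph{not} involving the block that changes the outcome, and tracing that transposition's effect — it moves some third alternative $c$ past the block's neighbors — shows that permuting $\{a,b,c\}$ among themselves always yields the top of $\{a,b,c\}$, which is exactly the definition of $\sigma \in \LD^{\{a,b,c\}} \subseteq \LD(a,b)$. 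The main obstacle I anticipate is the bookkeeping in the subcase $f(\pi) = a$, $f(\pi') \neq b$: one must carefully combine the information at $\sigma, [a:b]\sigma, \pi, \pi'$ and possibly intermediate block positions to guarantee a genuine third outcome (and hence applicability of Gibbard--Satterthwaite on a $3$-element restriction) rather than merely an inconsistency, and to make sure the resulting $\hat\sigma$ really differs from $\sigma$ or $[a:b]\sigma$ only by shifting one alternative. This is the step where I would lean most heavily on the block-bubbling argument already used in the proof of Lemma~\ref{lem:nonManipTriple}.
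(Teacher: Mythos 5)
Your general plan (unpack what it means for $\sigma$ to lie on $\partial(B)$, classify the neighbor $\pi$, and chase the value of $f$ at $\sigma,[a:b]\sigma,\pi,[a:b]\pi$) is the right one, and you correctly identify the two types of edges in $G$. But there is a genuine gap at the heart of the argument: you want to handle the case $f(\pi)=c\notin\{a,b\}$ by invoking Gibbard--Satterthwaite (Theorem~\ref{thm:GS}) on the SCF obtained by freezing everything outside the $\{a,b,c\}$ block, ``exactly as in the proof of Lemma~\ref{lem:nonManipTriple}.'' That cannot work here. Lemma~\ref{lem:nonManipTriple} applies GS to a two-voter SCF (two coordinates are varied), where ``not a dictator'' is a nontrivial hypothesis. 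In the present lemma there is exactly one voter, and every one-voter SCF is vacuously ``a function of only one voter,'' so Theorem~\ref{thm:GS} gives you nothing. This is not a cosmetic issue: for one voter, nonmanipulable functions taking $\ge 3$ values do exist (the $\tp_H$ functions), and ruling out exactly those is what produces the $\LD(a,b)$ alternative in the conclusion. The paper never invokes GS in this proof; instead it first reduces to $f(\pi)=f(\pi')$ via Lemma~\ref{lem:nonManipBoundary}, and then \emph{explicitly} checks the two remaining permutations of the block ($[b:c]\sigma$ and $[a:c]\sigma'$): if either fails to output the top of the block one gets a $3$-manipulation point, and if both succeed then all six block-permutations output the top, which is literally the definition of $\sigma\in\LD^{\{a,b,c\}}$.

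A second, related, gap is that you do not distinguish the block moving \emph{up} from moving \emph{down}, but this dichotomy is structurally important. When the block is moved down (so in $\sigma$ the order is $a,b,c$), the conclusion $f(\sigma)=a=\tp_{\{a,b,c\}}(\sigma)$ is compatible with $\sigma$ being a local dictator, and that is the only subcase in which $\LD(a,b)$ can arise. When the block is moved up (so in $\sigma$ the order is $c,a,b$), $\sigma$ \emph{cannot} be a local dictator, since a local dictator would give $f(\sigma)=c$ but we know $f(\sigma)=a$; here a manipulation point must be found directly, and the paper does so by forcing $f(\pi)=f(\pi')=a$ and exhibiting $\pi'$ as a $3$-manipulation point (at $\pi'$ the voter ranks $b$ above $a$, receives $a$, and can shift $c$ past the block to reach $\sigma'$ and receive $b$). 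Your ``bubble back and forth'' paragraph for $f(\pi)=f(\pi')=a$ does not engage with this; $\sigma$ and $\pi$ are already adjacent in $G$, so there is nothing to bubble, and the manipulation comes from the explicit four-profile comparison rather than from an intermediate jump. Finally, a minor point: when $f(\pi)=b$ you call the resulting manipulation a ``$1$-manipulation point''; it is in fact a $2$- or $3$-manipulation point depending on whether $\pi$ differs from $\sigma$ by an ordinary transposition or a block move, and $1$-manipulation is not a meaningful notion here (permuting one alternative does nothing).
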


\begin{proof}
Since $\sigma \in \partial \left( B \right) \subseteq B$, we know that $f \left( \sigma \right) = a$, and if $\sigma' = \left[a:b\right] \sigma$, then $f\left( \sigma' \right) = b$. Let $\pi \in B^c$ denote the ranking profile such that $\left( \sigma, \pi \right) \in \partial_e \left( B \right)$, and let $\pi' = \left[a:b \right] \pi$. Since $\pi \notin B$, $\left( f\left( \pi \right), f \left( \pi' \right) \right) \neq \left( a, b \right)$. Then, by Lemma~\ref{lem:nonManipBoundary}, if $f\left( \pi \right) \neq f\left( \pi' \right)$, then one of $\pi$ and $\pi'$ is a 2-manipulation point. So assume $f\left( \pi \right) = f \left( \pi' \right)$.

There are two cases to consider: either $\sigma$ and $\pi$ differ by an adjacent transposition not involving the block of $a$ and $b$, or they differ by an adjacent transposition that moves the block of $a$ and $b$.

In the former case, it is not hard to see that one of $\sigma$, $\sigma'$, $\pi$, $\pi'$ is a 2-manipulation point, by Lemma~\ref{lem:nonManipBoundary}.

If $\sigma$ and $\pi$ differ by an adjacent transposition that involves the block of $a$ and $b$, then there are again two cases to consider: either this transposition moves the block of $a$ and $b$ up in the ranking, or it moves it down.

If the block of $a$ and $b$ is moved up to get from $\sigma$ to $\pi$, then we must have $f\left( \pi \right) = a$, or else $\sigma$ or $\pi$ is a 3-manipulation point. Then we must have $f \left( \pi' \right) = f\left( \pi \right) = a$, in which case $\pi'$ is a 3-manipulation point, since $f\left( \sigma' \right) = b$.

The final case is when the block of $a$ and $b$ is moved down to get from $\sigma$ to $\pi$, and a third alternative, call it $c$, is moved up, directly above the block of $a$ and $b$. Now if $f\left( \pi \right) = d \notin \left\{a, b, c \right\}$, then $\sigma$ or $\pi$ is a 3-manipulation point. If $f\left( \pi \right) = f \left( \pi' \right) = a$, then $\pi'$ is a 3-manipulation point, whereas if $f\left( \pi \right) = f \left( \pi' \right) = b$, then $\pi$ is a 3-manipulation point. The remaining case is when $f\left( \pi \right) = f \left( \pi' \right) = c$. Now if $f\left( \left[b:c\right] \sigma \right) \neq a$ or $f\left( \left[a:c\right] \sigma' \right) \neq b$, then we again have a 3-manipulation point close to $\sigma$. Otherwise $\sigma \in \LD \left( a, b \right)$.
\end{proof}

The following corollary then tells us that either we have found many 3-manipulation points, or we have many local dictators on three alternatives.

\begin{corollary}\label{cor:manip_or_loc_dict}
If \eqref{eq:sm_fbr_case} holds, then either
\begin{equation}\label{eq:lot_of_loc_dict}
\sum_{c \notin \left\{ a,b \right\}} \p \left( \sigma \in \LD^{\left\{a,b,c\right\}} \right) = \p \left( \sigma \in \LD \left( a, b \right) \right) \geq \frac{\eps^2}{2 k^{10}}
\end{equation}
or
\[
\p \left( \sigma \in M_3 \right) \geq \frac{\eps^2}{4 k^{12}}.
\]
\end{corollary}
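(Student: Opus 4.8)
The plan is to feed the lower bound on $\p(\sigma\in\partial(B))$ supplied by Corollary~\ref{cor:big_bdry_of_B} into the structural dichotomy of Lemma~\ref{lem:bdry_of_B}, and then split into two cases by an averaging argument. Assuming \eqref{eq:sm_fbr_case}, Corollary~\ref{cor:big_bdry_of_B} gives $\p(\sigma\in\partial(B))\ge \eps^2/k^{10}$. I would then write $\partial(B)$ as a disjoint union $\partial(B)=S_{\mathrm{LD}}\cup S_{\mathrm{man}}$, where $S_{\mathrm{LD}}:=\partial(B)\cap\LD(a,b)$ and $S_{\mathrm{man}}:=\partial(B)\setminus\LD(a,b)$. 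By Lemma~\ref{lem:bdry_of_B}, every $\sigma\in S_{\mathrm{man}}$ (which by construction is not a local dictator on $\{a,b\}$) yields a point $\widehat\sigma=\widehat\sigma(\sigma)\in M_3$ that equals $\sigma$ or $[a:b]\sigma$, except that the position of one further alternative $c$ may have been moved arbitrarily. Since $\p(S_{\mathrm{LD}})+\p(S_{\mathrm{man}})\ge \eps^2/k^{10}$, at least one of the two sets has probability at least $\eps^2/(2k^{10})$; the first possibility will give \eqref{eq:lot_of_loc_dict}, the second the bound $\p(\sigma\in M_3)\ge \eps^2/(4k^{12})$.

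In the local-dictator case, if $\p(S_{\mathrm{LD}})\ge\eps^2/(2k^{10})$ then a fortiori $\p(\sigma\in\LD(a,b))\ge\eps^2/(2k^{10})$, and since $\LD(a,b)=\bigcup_{c\notin\{a,b\}}\LD^{\{a,b,c\}}$ a union bound gives
\[
\sum_{c\notin\{a,b\}}\p\bigl(\sigma\in\LD^{\{a,b,c\}}\bigr)\ \ge\ \p\bigl(\sigma\in\LD(a,b)\bigr)\ \ge\ \frac{\eps^2}{2k^{10}},
\]
which is \eqref{eq:lot_of_loc_dict}.

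In the manipulation case, suppose $\p(S_{\mathrm{man}})\ge\eps^2/(2k^{10})$. Here the key point is that the map $\sigma\mapsto\widehat\sigma(\sigma)$ from $S_{\mathrm{man}}$ into $M_3$ is at most $2k^2$-to-one: to reconstruct $\sigma$ from a given value $\widehat\sigma$ one chooses whether an $[a:b]$ transposition must be undone ($2$ possibilities), which alternative $c$ was displaced ($\le k$ possibilities), and where $c$ originally sat ($\le k$ possibilities). Hence, passing to the uniform measure on $S_k$,
\[
\p(\sigma\in M_3)\ \ge\ \frac{1}{2k^2}\,\p(S_{\mathrm{man}})\ \ge\ \frac{\eps^2}{4k^{12}},
\]
as required, and the two cases together establish the corollary.

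The only step that needs genuine care is the $2k^2$-to-one bound in the manipulation case: one must return to the proof of Lemma~\ref{lem:bdry_of_B} and check exactly how $\widehat\sigma$ is produced from $\sigma$ — it is one of $\sigma$, $\sigma'=[a:b]\sigma$, $[b:c]\sigma$, or $[a:c]\sigma'$ with the third alternative $c$ possibly moved to an arbitrary position — so that the multiplicity really is bounded by a constant times a low power of $k$. Everything else is a one-line averaging or union-bound argument, so I do not anticipate any serious obstacle.
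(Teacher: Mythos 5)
Your proposal is correct and matches the route the paper implicitly intends: apply Corollary~\ref{cor:big_bdry_of_B} to get the lower bound on $\p(\sigma\in\partial(B))$, split $\partial(B)$ via the dichotomy of Lemma~\ref{lem:bdry_of_B}, and in the manipulation branch bound the multiplicity of the map $\sigma\mapsto\hat\sigma$ by $2k^2$ (choice of whether to undo $[a:b]$, which $c$ was displaced, and where it sat), which indeed yields $\p(\sigma\in M_3)\ge\frac{1}{2k^2}\cdot\frac{\eps^2}{2k^{10}}=\frac{\eps^2}{4k^{12}}$. The paper omits the proof of this corollary as immediate, and your write-up supplies exactly the expected argument; the only cosmetic remark is that the equality sign in \eqref{eq:lot_of_loc_dict} should really be $\ge$ (the sets $\LD^{\{a,b,c\}}$ can overlap), but this does not affect the lower bound and you correctly use only the inequality.
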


\subsection{Dealing with local dictators}\label{sec:loc_dict_1vot} 

So the remaining case we have to deal with in this small fiber case is when \eqref{eq:lot_of_loc_dict} holds, i.e., we have many local dictators on three alternatives.

\begin{lemma}\label{lem:loc_dict_abc_to_top_1vot}
Suppose $\sigma \in \LD^{\left\{a,b,c\right\}}$ for some alternative $c\notin \left\{a,b\right\}$. Let $\sigma'$ be equal to $\sigma$ except that the block of $a$, $b$ and $c$ is moved to the top of the coordinate. Then
\begin{itemize}
\item either $\sigma' \in \LD^{\left\{a,b,c\right\}}$,
\item or there exists a 3-manipulation point $\hat{\sigma}$ which agrees with $\sigma$ except that the positions of $a$, $b$ and $c$ might be shifted arbitrarily.
\end{itemize}
\end{lemma}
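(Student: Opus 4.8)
The statement to prove is Lemma~\ref{lem:loc_dict_abc_to_top_1vot}: if $\sigma$ is a local dictator on the block $\{a,b,c\}$ in the (single) coordinate, then moving that whole block to the top either preserves the local-dictator property or produces a nearby $3$-manipulation point. The natural strategy is to move the block up one adjacent position at a time and track what happens at each step, exactly in the spirit of Lemma~\ref{lem:bdry_of_B}. So first I would set up an induction (or a finite descent) on the number of alternatives sitting above the block $\{a,b,c\}$ in $\sigma$. The base case is that $\{a,b,c\}$ is already at the top, where there is nothing to prove. For the inductive step, let $d$ be the alternative immediately above the block, and let $\tau$ be the ranking obtained from $\sigma$ by swapping $d$ down past the block (so $\{a,b,c\}$ moves up by one, keeping its internal order). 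It suffices to show: either $\tau \in \LD^{\{a,b,c\}}$, or there is a $3$-manipulation point $\hat\sigma$ agreeing with $\sigma$ up to shifting the positions of $a$, $b$, $c$ (and then apply the inductive hypothesis to $\tau$, composing the allowed ``shift $a,b,c$ arbitrarily'' moves, since shifting is transitive in the obvious sense).

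The heart of the matter is the one-step analysis. Write the block in $\sigma$ in its internal order, say $\alpha \stackrel{\sigma}{>} \beta \stackrel{\sigma}{>} \gamma$ with $\{\alpha,\beta,\gamma\}=\{a,b,c\}$; by the local-dictator hypothesis $f(\sigma)=\alpha$, and permuting $a,b,c$ among themselves always yields the top one of the three. In $\tau$ the block is in the same internal order but one slot higher, with $d$ now directly below $\gamma$. I would first ask what $f(\tau)$ is. If $f(\tau)\notin\{a,b,c\}$, then comparing $\sigma$ and $\tau$ (which differ by a single adjacent transposition $[d:\gamma]$ that does not touch any of $\alpha,\beta$), Lemma~\ref{lem:nonManipBoundary}-style reasoning gives that one of $\sigma,\tau$ is a $2$-manipulation point of the appropriate kind — the only transposition involved is $[d:\gamma]$, $\{d,\gamma\}\neq$ any pair inside $\{a,b,c\}$ that changed the winner, so the voter prefers the better of $f(\sigma),f(\tau)$ in one of the two profiles. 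If instead $f(\tau)\in\{a,b,c\}$, then I claim $\tau$ is again a local dictator on $\{a,b,c\}$: to see this, run through the $6$ internal orderings of the block. For each ordering $\pi$ of $a,b,c$, $\sigma_\pi$ (the block permuted to $\pi$, everything else as in $\sigma$) and $\tau_\pi$ (same for $\tau$) again differ only in $[d:\gamma']$ for the appropriate bottom element $\gamma'$ of $\pi$, so if $f(\tau_\pi)$ were not the top of $\{a,b,c\}$ in $\pi$ while $f(\sigma_\pi)$ is, we would get a $2$-manipulation point among $\sigma_\pi,\tau_\pi$; these are obtained from $\sigma$ by only shifting $a,b,c$, so they are nearby in the required sense, and we are in the second alternative of the conclusion. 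Hence either we exit with a manipulation point, or $f(\tau_\pi)=\tp_{\{a,b,c\}}(\pi)$ for all $\pi$, i.e.\ $\tau\in\LD^{\{a,b,c\}}$, completing the step.

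The main obstacle is bookkeeping about which profiles count as ``agreeing with $\sigma$ up to shifting $a,b,c$''. When I apply the inductive hypothesis to $\tau$ I get a manipulation point agreeing with $\tau$ up to shifting $a,b,c$; but $\tau$ itself differs from $\sigma$ only by having pushed $d$ down past the block, i.e.\ by a relative repositioning of $\{a,b,c\}$ against the fixed background of the other alternatives — this is precisely ``shifting the positions of $a,b,c$ arbitrarily,'' so the two notions compose. I should state this closure property cleanly once (the set of rankings obtainable from $\sigma$ by arbitrary repositioning of the block $\{a,b,c\}$, with arbitrary internal order, is stable under the operations used) and then the induction goes through mechanically. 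One should also double-check the edge case where $d$ equals none of $a,b,c$ but the block has fewer than $3$ alternatives above it versus exactly the top — handled by the base case — and the case where the single adjacent transposition that moves the block up happens to be the ``merge'' edge of the modified graph $G$ from Section~\ref{sec:bdry_of_bdry}; but for $n=1$ that subtlety is absent and ordinary adjacent transpositions of the block-as-a-unit suffice. With that, the lemma follows by at most $k$ applications of the one-step claim.
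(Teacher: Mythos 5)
Your overall strategy — move the block $\{a,b,c\}$ up one slot at a time and verify the local-dictator property at each intermediate height — is a legitimate alternative to what the paper does, but your one-step claim is false, and this is not a minor slip. You assert that $\sigma_\pi$ and $\tau_\pi$ ``differ only in $[d:\gamma']$,'' a single adjacent transposition. In fact $[d:\gamma']\sigma_\pi = \sigma_\pi$, since $d$ and $\gamma'$ are not adjacent in $\sigma_\pi$: they are separated by $\alpha'$ and $\beta'$. The rankings $\sigma_\pi$ (with local pattern $d,\alpha',\beta',\gamma'$ from top to bottom) and $\tau_\pi$ (with pattern $\alpha',\beta',\gamma',d$) are at distance three in the adjacent-transposition graph, and every path between them passes through rankings in which $d$ sits strictly inside the block, so $\{a,b,c\}$ is no longer contiguous. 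Hence Lemma~\ref{lem:nonManipBoundary} cannot be applied to the pair $(\sigma_\pi,\tau_\pi)$, and the heart of your inductive step collapses. Your aside about the modified graph of Section~\ref{sec:bdry_of_bdry} does not rescue this: that graph's added edges slide a \emph{two-element} block past a neighbor, those are not ordinary adjacent transpositions either, and no analogous graph for a three-element block has been set up anywhere in the paper.

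The paper sidesteps the block-as-a-unit move entirely. It fixes one internal order (WLOG $a>b>c$), bubbles $a$ alone to the top by genuine single transpositions while tracking only the weaker invariant $f=a$, then bubbles $b$ to just below $a$, then $c$ to just below $b$. Each sub-step is a single $[a:e]$, $[b:e]$, or $[c:e]$ swap, so the one-swap manipulation argument applies directly, and the six internal orderings of the block only need to be checked once, at the very end. If you insist on the block-as-a-unit induction you would need to decompose each block move into its three constituent transpositions and run the manipulation check at every sub-step, including the delicate case where the intermediate value of $f$ becomes $d$ (there $\{d,x\}$ coincides with the pair of $f$-values, so the stated conclusion of Lemma~\ref{lem:nonManipBoundary} gives nothing and one must argue by hand that the profile where $f=d$ but the voter ranks the block element above $d$ is a $2$-manipulation point); this is strictly more bookkeeping than the paper's three bubbling passes, and repeating the six-orderings check at every height is unnecessary overhead.
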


\begin{proof}
W.l.o.g.\ we may assume that in $\sigma$ alternative $a$ is ranked above $b$, which is ranked above $c$. Now move $a$ to the top using a sequence of adjacent transpositions, all involving $a$; we call this procedure ``bubbling'' $a$ to the top. If at any point during this the outcome of $f$ is not $a$, then we have found a 2-manipulation point. Now bubble up $b$ to right below $a$, and then bubble up $c$ to be right below $b$. Again, if at any point during this the outcome of $f$ is not $a$, then there is a 2-manipulation point. Otherwise we now have $a$, $b$ and $c$ at the top (in this order), with the outcome of $f$ being $a$. Now permuting alternatives $a$, $b$ and $c$ at the top, we either have a 3-manipulation point, or $\sigma' \in \LD^{\left\{a,b,c\right\}}$.
\end{proof}

\begin{corollary}\label{cor:many_dict_at_top_1vot}
If \eqref{eq:lot_of_loc_dict} holds, then either
\begin{equation}\label{eq:loc_dict_abc_on_top_1vot}
\sum_{c \notin \left\{ a, b \right\}} \p \left( \sigma \in \LD^{\left\{a,b,c\right\}},  \left\{ \sigma \left( 1 \right), \sigma \left( 2 \right), \sigma \left( 3 \right) \right\} = \left\{a,b,c\right\} \right) \geq \frac{\eps^2}{4 k^{11}}
\end{equation}
or
\[
\p \left( \sigma \in M_3 \right) \geq \frac{\eps^2}{4 k^{13}}.
\]
\end{corollary}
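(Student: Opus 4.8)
The plan is to deduce Corollary~\ref{cor:many_dict_at_top_1vot} from Lemma~\ref{lem:loc_dict_abc_to_top_1vot} by a fiber‑counting argument together with a pigeonhole split. Starting from \eqref{eq:lot_of_loc_dict}, for each $c \notin \left\{a,b\right\}$ partition $\LD^{\left\{a,b,c\right\}}$ as $A_c \cup A_c'$, where $A_c$ is the set of $\sigma$ for which the first conclusion of Lemma~\ref{lem:loc_dict_abc_to_top_1vot} holds (the profile $\sigma'$ obtained by bubbling the block $\left\{a,b,c\right\}$ to the top of the coordinate is again in $\LD^{\left\{a,b,c\right\}}$), and $A_c'$ is the set for which the second conclusion holds (there is a nearby $3$‑manipulation point). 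Since \eqref{eq:lot_of_loc_dict} gives $\sum_c \p(\sigma \in \LD^{\left\{a,b,c\right\}}) \ge \eps^2/(2 k^{10})$, at least one of $\sum_c \p(\sigma \in A_c) \ge \eps^2/(4 k^{10})$ or $\sum_c \p(\sigma \in A_c') \ge \eps^2/(4 k^{10})$ must hold, and I would treat the two cases separately.

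\textbf{First case.} Consider the map $\Psi_c \colon A_c \to S_k$ sending $\sigma$ to $\sigma'$, the profile with the block $\left\{a,b,c\right\}$ moved to the top while preserving its internal order; this is well defined because $\sigma \in \LD^{\left\{a,b,c\right\}}$ forces $a,b,c$ to form an adjacent block in $\sigma$. By the definition of $A_c$, the image of $\Psi_c$ lies inside $\{\sigma' \in \LD^{\left\{a,b,c\right\}} : \{\sigma'(1),\sigma'(2),\sigma'(3)\} = \left\{a,b,c\right\}\}$. Given a target $\sigma'$ one recovers $\sigma$ by sliding the top $3$‑block down to one of at most $k-2$ positions (the internal order being fixed), so $\Psi_c$ is at most $(k-2)$‑to‑one; since $\sigma$ is uniform this yields $\p(\sigma \in \LD^{\left\{a,b,c\right\}},\ \{\sigma(1),\sigma(2),\sigma(3)\}=\left\{a,b,c\right\}) \ge \p(\sigma \in A_c)/(k-2)$. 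Summing over $c$ gives the left alternative \eqref{eq:loc_dict_abc_on_top_1vot} with room to spare in the constant.

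\textbf{Second case.} For each $\sigma \in A_c'$ let $\hat\sigma = \hat\sigma(\sigma) \in M_3$ be a $3$‑manipulation point as supplied by Lemma~\ref{lem:loc_dict_abc_to_top_1vot}, agreeing with $\sigma$ outside the positions of $a,b,c$. Because $\sigma$ is a local dictator on $\left\{a,b,c\right\}$, those alternatives sit in a consecutive block, so from $\hat\sigma$ one recovers $\sigma$ by choosing the block's position ($\le k-2$ choices) and its internal order ($3! = 6$ choices); hence $\sigma \mapsto \hat\sigma$ is at most $6(k-2)$‑to‑one on $A_c'$. Choosing the index $c$ maximizing $\p(\sigma \in A_c')$, so that $\p(\sigma \in A_c') \ge \eps^2/(4(k-2)k^{10})$, and pushing forward gives $\p(\sigma \in M_3) \ge \p(\sigma \in A_c')/(6(k-2)) \ge \eps^2/(24(k-2)^2 k^{10})$; a short check shows $k^3 \ge 6(k-2)^2$ for all $k \ge 3$, so this is at least $\eps^2/(4 k^{13})$, the right alternative of the corollary.

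\textbf{Main obstacle.} There is no genuine difficulty here: the corollary is a combinatorial repackaging of the preceding lemma. The only slightly delicate point is the two fiber‑count estimates — confirming that the adjacent‑block structure forced by the local‑dictator condition really caps the number of preimages at $O(k)$ in each case — and then tracking the constants so they match the bounds in the statement; everything else is bookkeeping.
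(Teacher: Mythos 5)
Your proof is correct and follows the same approach the paper takes: invoke Lemma~\ref{lem:loc_dict_abc_to_top_1vot}, split the mass of $\LD(a,b)$ according to which alternative of that lemma applies, and count preimages of the "bubble-to-top" map in one case and of the map to the $3$-manipulation point in the other. The only difference is that you use the tighter preimage counts $k-2$ and $6(k-2)^2$ (with a short verification that $6(k-2)^2 \le k^3$) where the paper simply uses the looser $k$ and $k^3$, which give the stated bounds directly.
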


\begin{proof}
Lemma~\ref{lem:loc_dict_abc_to_top_1vot} tells us that when we move the block of $a$, $b$, and $c$ up to the top, we either encounter a 3-manipulation point, or we get a local dictator on $\left\{ a, b, c \right\}$ at the top.

If we get a 3-manipulation point, by the describtion of this manipulation point in the lemma, there can be at most $k^3$ ranking profiles that give the same manipulation point.

If we arrive at a local dictator at the top, then there could have been at most $k$ different places where the block of $a$, $b$ and $c$ could have come from.
\end{proof}

Now \eqref{eq:loc_dict_abc_on_top_1vot} is equivalent to
\begin{equation}\label{eq:loc_dict_abc_on_top2_1vot}
\sum_{c \notin \left\{ a, b \right\}} \p \left( \sigma \in \LD^{\left\{a,b,c\right\}}, \left( \sigma \left( 1 \right), \sigma \left( 2 \right), \sigma \left( 3 \right) \right) = \left(a,b,c\right) \right) \geq \frac{\eps^2}{24 k^{11}}.
\end{equation}
We know that
\[
\p \left( \left( \sigma \left( 1 \right), \sigma \left( 2 \right), \sigma \left( 3 \right) \right) = \left(a,b,c\right) \right) = \frac{1}{k \left( k-1 \right) \left( k-2 \right)} \leq \frac{6}{k^3},
\]
and so \eqref{eq:loc_dict_abc_on_top2_1vot} implies (recall Definition~\ref{def:cond})
\begin{equation}\label{eq:loc_dict_abc_on_top3_1vot}
\sum_{c \notin \left\{ a, b \right\}} \p^{\left(a,b,c\right)} \left( \sigma \in \LD^{\left\{a,b,c\right\}}  \right) \geq \frac{\eps^2}{144 k^{8}}.
\end{equation}

Now fix an alternative $c \notin \left\{a,b\right\}$ and define the graph $G_{\left( a, b, c \right)} = \left( V_{\left( a, b, c \right)}, E_{\left( a, b, c \right)} \right)$ to have vertex set
\[
V_{\left( a, b, c \right)} := \left\{ \sigma \in S_k : \left( \sigma \left( 1 \right), \sigma \left( 2 \right), \sigma \left( 3 \right) \right) = \left(a,b,c\right) \right\}
\]
and for $\sigma, \pi \in V_{\left( a, b, c \right)}$ let $\left( \sigma, \pi \right) \in E_{\left( a, b, c \right)}$ if and only if $\sigma$ and $\pi$ differ by an adjacent transposition. So $G_{\left( a, b, c \right)}$ is the subgraph of the refined rankings graph induced by the vertex set $V_{\left( a, b, c \right)}$. (If $k = 3$ or $k=4$, then this graph consists of only one vertex, and no edges.)

Let
\[
T \left( a, b, c \right) := V_{\left( a, b, c \right)} \cap \LD^{\left\{a,b,c\right\}},
\]
and let $\partial_e \left( T \left( a, b, c \right) \right)$ and $\partial \left( T \left( a, b, c \right) \right)$ denote the edge- and vertex-boundary of $T \left( a, b, c \right)$ in $G_{\left( a, b, c \right)}$, respectively.

The next lemma shows that unless $T \left( a, b, c \right)$ is almost all of $V_{\left( a, b, c \right)}$, the size of the boundary $\partial \left( T \left( a, b, c \right) \right)$ is comparable to the size of $T \left( a, b, c \right)$. The proof uses a canonical path argument, just like in Lemma~\ref{lem:comparable_bdries_1vot}.

\begin{lemma}\label{lem:lg_bdry_for_loc_dict_1vot}
Let $c \notin \left\{a,b \right\}$ be arbitrary. Write $T \equiv T \left( a, b, c \right)$ for simplicity. If $\p^{\left(a,b,c\right)} \left( \sigma \in T \right) \leq 1 - \delta$, then
\begin{equation}\label{eq:lg_bdry_for_loc_dict_1vot}
\p^{\left( a, b, c \right)} \left( \sigma \in \partial \left( T \right) \right) \geq \frac{\delta}{k^3} \p^{\left( a, b, c \right)} \left( \sigma \in T \right).
\end{equation}
\end{lemma}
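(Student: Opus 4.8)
The plan is to reprise the canonical-path argument used in Lemma~\ref{lem:comparable_bdries_1vot}, now carried out inside the graph $G_{(a,b,c)}$. The first step is to identify this graph concretely: since $a$, $b$, $c$ occupy the top three positions of every $\sigma \in V_{(a,b,c)}$, an adjacent transposition keeps $\sigma$ inside $V_{(a,b,c)}$ if and only if it acts on two of the bottom $k-3$ positions. Hence $G_{(a,b,c)}$ is isomorphic to the Cayley graph of $S_m$, with $m := k-3$, generated by the adjacent transpositions; it has $m!$ vertices, each of degree $m-1$, and its edges are exactly the transpositions described above. When $k \in \{3,4\}$ this graph is a single vertex and the lemma is trivial (the hypothesis $\p^{(a,b,c)}(\sigma \in T) \le 1-\delta$ with $\delta>0$ forces $T=\emptyset$, so both sides vanish), so I would assume $k \ge 5$ from here on.

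Next I would apply Proposition~\ref{prop:canon1} with $k$ replaced by $m$ to get a canonical path map $\Gamma : S_m \times S_m \to P_{S_m}(\ell)$, with $\ell < m^2/2$, all of whose edges are adjacent transpositions and for which $|\Gamma^{-1}(u)| \le m^2 m!/2$ for every vertex $u$. Writing $T^c := V_{(a,b,c)} \setminus T$, for each pair $(\sigma,\sigma') \in T \times T^c$ the path $\Gamma(\sigma,\sigma')$ runs from a vertex of $T$ to a vertex of $T^c$ through edges of $G_{(a,b,c)}$, so it must cross $\partial_e(T)$; let $h(\sigma,\sigma')$ be the first edge of $\partial_e(T)$ that it uses. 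Every pair counted by $h^{-1}((u,v))$ has its path passing through $u$, so $|h^{-1}((u,v))| \le |\Gamma^{-1}(u)| \le m^2 m!/2$; summing over the edges of $\partial_e(T)$ gives $|T|\,|T^c| \le |\partial_e(T)| \cdot m^2 m!/2$. Since $\p^{(a,b,c)}(\sigma \in T) \le 1-\delta$ is the same as $|T^c| \ge \delta m!$, this yields $|\partial_e(T)| \ge 2\delta|T|/m^2$.

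Finally, passing from the edge boundary to the vertex boundary costs at most a factor equal to the maximum degree $m-1$, so $|\partial(T)| \ge 2\delta|T|/(m^2(m-1)) \ge \delta|T|/k^3$, using $m^2(m-1) < k^3$; dividing through by $|V_{(a,b,c)}| = m!$ (the conditional measure $\p^{(a,b,c)}$ being uniform on $V_{(a,b,c)}$) gives \eqref{eq:lg_bdry_for_loc_dict_1vot}. I do not expect a genuine obstacle here beyond bookkeeping: the points needing care are applying Proposition~\ref{prop:canon1} to the symmetric group on the $k-3$ free positions rather than to all of $[k]$, and tracking the constants so that the exponent of $k$ that emerges is exactly $3$; the structural content is already supplied by Propositions~\ref{prop:canon_sym} and~\ref{prop:canon1}.
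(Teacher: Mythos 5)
Your proof is correct and follows essentially the same route as the paper's: a canonical-path argument via Proposition~\ref{prop:canon1} applied to the $k-3$ alternatives outside $\{a,b,c\}$, followed by converting the edge-boundary bound to a vertex-boundary bound by dividing by the maximum degree $k-4$. You are slightly more explicit about the graph structure, the corner cases $k\in\{3,4\}$, and the constants (getting $(k-3)^2(k-3)!/2$ rather than the rounded $k^2(k-3)!$), but the argument is the same one the paper gives.
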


\begin{proof}
Let $T^c = V_{\left( a, b, c \right)} \setminus T \left( a, b, c \right)$. For every $\left( \sigma, \sigma' \right) \in T \times T^c$, we define a canonical path from $\sigma$ to $\sigma'$, which has to pass through at least one edge in $\partial_e \left( T \right)$. Then if we show that every edge in $\partial_e \left( T \right)$ lies on at most $r$ canonical paths, then it follows that $\left| \partial_e \left( T \right) \right| \geq \left| T \right| \left| T^c \right| / r$.

So let $\left( \sigma, \sigma' \right) \in T \times T^c$. We apply the path construction of Proposition~\ref{prop:canon1}, but only to alternatives $\left[k \right] \setminus \left\{ a, b, c \right\}$.

The analysis of this construction is done in exactly the same way as in Lemma~\ref{lem:comparable_bdries_1vot}; in the end we get that there are at most $k^2 \left( k - 3 \right)!$ paths that pass through a given edge in $\partial_e \left( T \right)$.

Recall that $\left| V_{\left(a,b,c\right)} \right| = \left( k - 3 \right)!$ and that by our assumption $\left| T \right| \leq \left( 1 - \delta \right) \left( k - 3 \right)!$, so $\left| T^c \right| \geq \delta \left( k - 3 \right)!$. Therefore
\[
\left| \partial_e \left( T \right) \right| \geq \frac{\left| T \right| \left| T^c \right|}{k^2  \left( k - 3 \right)!} \geq \frac{\delta}{k^2} \left| T \right|.
\]
Now every vertex in $V_{\left(a,b,c\right)}$ has $k - 4 < k$ neighbors, which implies \eqref{eq:lg_bdry_for_loc_dict_1vot}.
\end{proof}

The next lemma tells us that if $\sigma$ is on the boundary of a set of local dictators on $\left\{a,b,c \right\}$ for some alternative $c \notin \left\{a,b\right\}$, then there is a 2-manipulation point $\hat{\sigma}$ which is close to $\sigma$.

\begin{lemma}\label{lem:manip_pts_on_bdry_of_loc_dict_1vot}
Suppose $\sigma \in \partial \left( T \left( a,b, c \right) \right)$ for some $c\notin \left\{a,b\right\}$. Then there exists $\hat{\sigma} \in M_2$ which equals $z \sigma$ for some adjacent transposition $z$ that does not involve $a$, $b$ or $c$, except that the order of the block of $a$, $b$ and $c$ might be rearranged.
\end{lemma}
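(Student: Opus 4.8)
The plan is to unwind the definitions and reduce everything to a single application of Lemma~\ref{lem:nonManipBoundary}. By hypothesis $\sigma \in \partial(T(a,b,c))$, so on the one hand $\sigma \in T(a,b,c) = V_{(a,b,c)} \cap \LD^{\{a,b,c\}}$, and on the other hand there is a neighbour $\pi$ of $\sigma$ in $G_{(a,b,c)}$ with $\pi \in V_{(a,b,c)} \setminus T(a,b,c)$. Since the edges of $G_{(a,b,c)}$ are adjacent transpositions that keep $a,b,c$ fixed in the top three positions, we have $\pi = z\sigma$ for an adjacent transposition $z = [d:e]$ with $\{d,e\} \cap \{a,b,c\} = \emptyset$; in particular $\{a,b,c\}$ is still an adjacent block (sitting at the very top) in $\pi$, so $\pi \notin \LD^{\{a,b,c\}}$ must be caused by failure of the ``outcome $=$ top of the block'' property for some rearrangement of $a,b,c$ within $\pi$.

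First I would introduce, for each permutation $w \in S_{\{a,b,c\}}$, the profile $\sigma^{(w)}$ obtained from $\sigma$ by reordering the top block $\{a,b,c\}$ according to $w$, and set $\pi^{(w)} = z\sigma^{(w)}$ (so $\pi^{(w)}$ is $\pi$ with its top block reordered by $w$). Because $\sigma \in \LD^{\{a,b,c\}}$ we have $f(\sigma^{(w)}) = \tp_{\{a,b,c\}}(w)$ for every $w$, whereas because $\pi \notin \LD^{\{a,b,c\}}$ there is some $w_0$ with $f(\pi^{(w_0)}) \neq \tp_{\{a,b,c\}}(w_0)$. Write $g = \tp_{\{a,b,c\}}(w_0)$ and $h = f(\pi^{(w_0)}) \neq g$.

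Next I would feed the pair $(\sigma^{(w_0)}, \pi^{(w_0)})$ into Lemma~\ref{lem:nonManipBoundary}. Since $f(\sigma^{(w_0)}) = g$, $f(\pi^{(w_0)}) = h$, and the two profiles differ by the single adjacent transposition $z$, this pair lies in $B^{g,h;z} \subseteq B^{g,h;T}$. The lemma then says that either $\sigma^{(w_0)}_1 = [g:h]\,\pi^{(w_0)}_1$, or one of $\sigma^{(w_0)}, \pi^{(w_0)}$ is a $2$-manipulation point. The first alternative is impossible: the pair is distinct (different $f$-values), so $z = [d:e]$ acts non-trivially on $\pi^{(w_0)}$, while $[g:h]$, being a transposition involving $g \in \{a,b,c\}$ and hence not in $\{d,e\}$, either acts trivially on $\pi^{(w_0)}$ or moves $g$; in neither case can $[g:h]\,\pi^{(w_0)}$ coincide with $\sigma^{(w_0)} = z\,\pi^{(w_0)}$, which moves $d,e$ but not $g$. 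Hence one of $\sigma^{(w_0)}, \pi^{(w_0)}$ is a $2$-manipulation point; take it to be $\hat\sigma$. Since $\sigma^{(w_0)}$ is $\sigma$ with its top $\{a,b,c\}$-block rearranged and $\pi^{(w_0)}$ is $z\sigma$ with its top $\{a,b,c\}$-block rearranged, $\hat\sigma$ has exactly the claimed form (with $z$ the given transposition, and with $z$ trivial in the case $\hat\sigma = \sigma^{(w_0)}$).

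The only delicate point is verifying that the ``$\sigma^{(w_0)}_1 = [g:h]\,\pi^{(w_0)}_1$'' branch of Lemma~\ref{lem:nonManipBoundary} does not occur, i.e.\ that the lemma really hands us a manipulation point rather than the trivial conclusion; this is precisely where the disjointness $\{d,e\} \cap \{a,b,c\} = \emptyset$ coming from the structure of $G_{(a,b,c)}$ is used. Everything else is bookkeeping: which adjacent transpositions fix the top block, and translating ``$\sigma$ lies on the vertex boundary of $T(a,b,c)$ in $G_{(a,b,c)}$'' into the existence of a bad rearrangement $w_0$.
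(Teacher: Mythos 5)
Your proof is correct and lands in essentially the same place as the paper's, but you package the final step differently. The paper argues bare-handed: after constructing $\pi'$ (your $\pi^{(w_0)}$) with $a$ on top and $f(\pi') \neq a$, it observes directly that $\sigma' := z\pi'$ (your $\sigma^{(w_0)}$) satisfies $f(\sigma') = a$, so $\pi'$ is a $2$-manipulation point because $a$ is the voter's top choice at $\pi'$ and she can reach $\sigma'$ by one adjacent transposition. You instead feed $(\sigma^{(w_0)}, \pi^{(w_0)})$ into Lemma~\ref{lem:nonManipBoundary} and rule out the ``$z = [g:h]$'' branch using $\{d,e\} \cap \{a,b,c\} = \emptyset$, getting ``one of the two is a $2$-manipulation point.'' Both are valid; yours is slightly more uniform (it reuses the same lemma that drives the other boundary arguments in this section), while the paper's is shorter and pins down that the manipulation point is always the $\pi$-side profile. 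In fact, if you trace Lemma~\ref{lem:nonManipBoundary}'s proof in your application you'd see the same: since $g = \tp_{\{a,b,c\}}(w_0)$ is on top of both profiles, it is always $\pi^{(w_0)}$ that is the manipulation point — recovering the paper's sharper conclusion for free. Your handling of the ``$z$ might be trivial'' corner case in the final sentence is consistent with how the paper later counts pre-images (at most $k$ choices of $z$ including the identity, and $6$ rearrangements of the top block) in Corollary~\ref{cor:manip_by_bdry_loc_dict_1vot}.
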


\begin{proof}
Let $\pi$ be the ranking profile such that $\left( \sigma, \pi \right) \in \partial_e \left( T \left( a, b , c \right) \right)$, and let $z$ be the adjacent transposition in which they differ, i.e., $\pi = z \sigma$. Since $\pi \notin T \left( a, b, c \right)$, there exists a reordering of the block of $a$, $b$, and $c$ at the top of $\pi$ such that the outcome of $f$ is not the top ranked alternative. Call the resulting vector $\pi'$. W.l.o.g.\ let us assume that $\pi' \left( 1 \right) = a$. Let us also define $\sigma' := z \pi'$. Now $\pi'$ is a 2-manipulation point, since $f\left( \sigma' \right) = a$.
\end{proof}

The next corollary puts together Corollary~\ref{cor:many_dict_at_top_1vot} and Lemmas~\ref{lem:lg_bdry_for_loc_dict_1vot} and~\ref{lem:manip_pts_on_bdry_of_loc_dict_1vot}.

\begin{corollary}\label{cor:manip_by_bdry_loc_dict_1vot}
Suppose \eqref{eq:loc_dict_abc_on_top_1vot} holds. Then if for every $c \notin \left\{a,b\right\}$ we have $\p^{\left(a,b,c\right)} \left( \sigma \in T \left( a,b, c \right) \right) \leq 1 - \frac{\eps}{100 k}$, then
\[
\p \left( \sigma \in M_2 \right) \geq \frac{\eps^3}{10^5 k^{16}}.
\]
\end{corollary}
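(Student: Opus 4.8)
The plan is to chain the reduction that the surrounding text has already set up with the two isoperimetric lemmas about local dictators, and finish with a multiplicity count. First I would record the input: from the hypothesis \eqref{eq:loc_dict_abc_on_top_1vot} the text passes to \eqref{eq:loc_dict_abc_on_top2_1vot} and then to \eqref{eq:loc_dict_abc_on_top3_1vot}, and recalling that $T(a,b,c) = V_{(a,b,c)} \cap \LD^{\{a,b,c\}}$ and that $\p^{(a,b,c)}(\cdot)$ is exactly conditioning on $\sigma \in V_{(a,b,c)}$, inequality \eqref{eq:loc_dict_abc_on_top3_1vot} is the same as
\[
\sum_{c \notin \{a,b\}} \p^{(a,b,c)}\bigl(\sigma \in T(a,b,c)\bigr) \ \geq\ \frac{\eps^2}{144\, k^8}.
\]

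Next, for each $c \notin \{a,b\}$ the assumption of the corollary puts us in the regime $\p^{(a,b,c)}(\sigma \in T(a,b,c)) \le 1 - \eps/(100k)$, so I would apply Lemma~\ref{lem:lg_bdry_for_loc_dict_1vot} with $\delta = \eps/(100k)$ to obtain $\p^{(a,b,c)}(\sigma \in \partial(T(a,b,c))) \ge \frac{\eps}{100 k^4}\,\p^{(a,b,c)}(\sigma \in T(a,b,c))$, then sum over $c$ and use the displayed bound to get $\sum_c \p^{(a,b,c)}(\sigma \in \partial(T(a,b,c))) \ge \eps^3/(14400\, k^{12})$. Converting to the unconditional measure via $\p(\sigma \in \partial(T(a,b,c))) = \p^{(a,b,c)}(\sigma \in \partial(T(a,b,c)))\cdot \frac{1}{k(k-1)(k-2)}$, using $\frac{1}{k(k-1)(k-2)} \ge k^{-3}$, and noting that the sets $\partial(T(a,b,c)) \subseteq V_{(a,b,c)}$ are pairwise disjoint as $c$ ranges over $[k]\setminus\{a,b\}$, this yields
\[
\p\Bigl(\sigma \in \bigcup_{c\notin\{a,b\}} \partial(T(a,b,c))\Bigr) \ \ge\ \frac{1}{k^3}\cdot\frac{\eps^3}{14400\, k^{12}} \ =\ \frac{\eps^3}{14400\, k^{15}}.
\]

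Finally I would invoke Lemma~\ref{lem:manip_pts_on_bdry_of_loc_dict_1vot}, which turns every $\sigma$ in this union into a $2$-manipulation point $\hat\sigma$ obtained from $\sigma$ by a single adjacent transposition $z$ avoiding $a,b,c$ (hence swapping two of the bottom $k-3$ positions, at most $k$ choices) followed by a rearrangement of the top block $\{a,b,c\}$ (at most $3!$ choices), and then conclude with the counting bound: since the set $\{a,b,c\}$ is recoverable from $\hat\sigma$ as its top triple and $a,b$ are fixed, at most $6k$ profiles $\sigma$ in $\bigcup_c \partial(T(a,b,c))$ map to a given $\hat\sigma$, so
\[
\p(\sigma \in M_2) \ \ge\ \frac{1}{6k}\cdot\frac{\eps^3}{14400\, k^{15}} \ =\ \frac{\eps^3}{86400\, k^{16}} \ \ge\ \frac{\eps^3}{10^5\, k^{16}}.
\]

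The main obstacle is nothing conceptual but rather the bookkeeping: one has to be careful in the last step that the adjacent transposition $z$ and the reordering of the top block are genuinely the only degrees of freedom (in particular that $z$ does not eject $a,b,c$ from the top three positions and that $c$ is therefore determined by $\hat\sigma$), and to propagate the polynomial factors correctly through the two passages between conditional and unconditional measures. Since the final numerical margin is comfortable ($86400 < 10^5$), a slightly looser multiplicity count would still suffice, so the argument is robust.
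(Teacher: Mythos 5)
Your proof is correct and follows essentially the same chain as the paper's: reduce to \eqref{eq:loc_dict_abc_on_top3_1vot}, apply Lemma~\ref{lem:lg_bdry_for_loc_dict_1vot} with $\delta = \eps/(100k)$, invoke Lemma~\ref{lem:manip_pts_on_bdry_of_loc_dict_1vot}, and finish with a multiplicity count of at most $6k$; you even land on the identical intermediate constant $\eps^3/(86400\,k^{16})$. The only difference is presentational — you make the passage from conditional probability $\p^{(a,b,c)}$ to the unconditional measure and the $6k$ multiplicity bound explicit as separate steps, whereas the paper compresses them into a single displayed chain of inequalities.
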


\begin{proof}
We know that \eqref{eq:loc_dict_abc_on_top_1vot} implies
\[
\sum_{c \notin \left\{a,b \right\}} \p^{a,b,c} \left( \sigma \in T \left( a,b,c\right) \right) \geq \frac{\eps^2}{144 k^8}.
\]
Now using the assumptions, Lemma~\ref{lem:lg_bdry_for_loc_dict_1vot} with $\delta = \frac{\eps}{100k}$, and Lemma~\ref{lem:manip_pts_on_bdry_of_loc_dict_1vot}, we have
\begin{align*}
\p \left( \sigma \in M_2 \right) &\geq \sum_{c \neq \left\{a,b \right\}} \frac{1}{k^3} \p^{\left(a,b,c \right)} \left( \sigma \in M_2 \right) \geq \sum_{c\notin \left\{a,b\right\}} \frac{1}{6 k^4} \p^{\left( a, b, c \right)} \left( \sigma \in \partial \left( T \left( a,b,c \right) \right) \right)\\
&\geq \sum_{c \notin \left\{a,b\right\}} \frac{\eps}{600 k^{8}} \p^{\left( a, b , c \right)} \left( \sigma \in T \left( a, b, c \right) \right)\geq \frac{\eps^3}{86400 k^{16}} \geq \frac{\eps^3}{10^5 k^{16}}. \qedhere
\end{align*}
\end{proof}

So again we are left with one case to deal with: if there exists an alternative $c \notin \left\{ a, b \right\}$ such that $\p^{\left(a,b,c\right)} \left( \sigma \in T \left( a,b, c \right) \right) > 1 - \frac{\eps}{100 k}$. Define a subset of alternatives $K \subseteq \left[k \right]$ in the following way:
\[
K := \left\{a,b \right\} \cup \left\{ c \in \left[k \right] \setminus \left\{a,b \right\} : \p^{\left(a,b,c\right)} \left( \sigma \in T \left( a,b, c \right) \right) > 1 - \frac{\eps}{100 k} \right\}.
\]
In addition to $a$ and $b$, $K$ contains those alternatives that whenever they are at the top with $a$ and $b$, they form a local dictator with high probability.

So our assumption now is that $\left| K \right| \geq 3$.

Our next step is to show that unless we have many manipulation points, for any alternative $c \in K$, conditioned on $c$ being at the top, the outcome of $f$ is $c$ with probability close to 1.

\begin{lemma}\label{lem:cond_on_top_1vot}
Let $c \in K$. Then either
\begin{equation}\label{eq:cond_on_top1_1vot}
\p^{\left( c \right)} \left( f \left( \sigma \right) = c \right) \geq 1 - \frac{\eps}{50 k},
\end{equation}
or
\begin{equation}\label{eq:else_2manip_1vot}
\p \left( \sigma \in M_2 \right) \geq \frac{\eps}{100 k^4}.
\end{equation}
\end{lemma}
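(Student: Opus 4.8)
The plan is to prove the dichotomy by a \emph{bubbling} argument in the single coordinate. Fix $c\in K$. The idea is: for a typical $\sigma$ with $c$ at the top, transport $\sigma$ via adjacent transpositions to a ranking $\sigma^{*}$ in which a block of three alternatives — one of which is $c$, sitting on top of the block — occupies the top three positions; use the local–dictator information built into the definition of $K$ to conclude $f(\sigma^{*})=c$; and then show that along the transport path the value of $f$ cannot have changed without creating a $2$-manipulation point, so $f(\sigma)=c$ as well.

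First I would set up the transport and match measures. Assume $c\notin\{a,b\}$ (the cases $c=a$ and $c=b$ are identical after replacing the missing block element by some $c'\in K\setminus\{a,b\}$, which exists since $|K|\ge 3$). Given $\sigma$ with $\sigma(1)=c$, let $\sigma^{*}$ be obtained by bubbling $a$ up to position $2$ and then bubbling $b$ up to position $3$; none of these transpositions touches $c$, so $\sigma^{*}$ still has $c$ at the top with $a,b$ just below it. Let $\sigma^{**}$ be $\sigma^{*}$ with its top block reordered to $(a,b,c)$. Under $\p^{(c)}$ the relative order of the $k-3$ alternatives outside $\{a,b,c\}$ is uniform and is preserved by $\sigma\mapsto\sigma^{**}$, so the pushforward of $\p^{(c)}$ along $\sigma\mapsto\sigma^{**}$ is exactly $\p^{(a,b,c)}$; hence $\p^{(c)}(\sigma^{**}\notin T(a,b,c))=\p^{(a,b,c)}(\sigma\notin T(a,b,c))\le \eps/(100k)$ by the definition of $K$. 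Whenever $\sigma^{**}\in T(a,b,c)\subseteq\LD^{\{a,b,c\}}$, the local–dictator property (which depends only on the block, not on its internal order) yields $f(\sigma^{*})=c$, since $c$ is on top of the block in $\sigma^{*}$.

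The core step is the analysis of the bubbling path $\sigma=\sigma_{0},\sigma_{1},\dots,\sigma_{m}=\sigma^{*}$. Consider the first edge $(\sigma_{t},\sigma_{t+1})$ at which $f$ changes; the transposition swaps the rising element $e$ (the one currently being bubbled up) past the element $e'$ directly above it. If $\{f(\sigma_{t}),f(\sigma_{t+1})\}\neq\{e,e'\}$, then by Lemma~\ref{lem:nonManipBoundary} one of $\sigma_{t},\sigma_{t+1}$ is a $2$-manipulation point. If $\{f(\sigma_{t}),f(\sigma_{t+1})\}=\{e,e'\}$ and $f(\sigma_{t})=e$, then the voter at $\sigma_{t}$ ranks $e'$ above $e$ yet receives $e$, so reporting $\sigma_{t+1}$ improves her outcome to $e'$ and $\sigma_{t}$ is a $2$-manipulation point; the only remaining possibility is $f(\sigma_{t})=e'$, $f(\sigma_{t+1})=e$, i.e.\ $f$ moves \emph{onto} the rising element $e$. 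After that, the same case analysis (now with $f$ already equal to the rising element, so a further "monotone" change is impossible) shows that no later transposition within that bubble can change $f$ without a $2$-manipulation point. Iterating over the two bubbles, I conclude: either a $2$-manipulation point $\hat\sigma$ occurs on the path, or $f(\sigma^{*})\in\{f(\sigma),a,b\}$. Combined with $f(\sigma^{*})=c\notin\{a,b\}$ (valid when $\sigma^{**}\in T(a,b,c)$), the no-manipulation case forces $f(\sigma)=c$.

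For the bookkeeping: if $\sigma^{**}\in T(a,b,c)$ and $f(\sigma)\neq c$, there is a $2$-manipulation point $\hat\sigma$ on the path, which agrees with $\sigma$ up to shifting the positions of $a$ and $b$ and still has $c$ on top; each such $\hat\sigma$ has at most $k^{2}$ preimages $\sigma$ (choose the original positions of $a$ and $b$), giving $\p^{(c)}(\sigma^{**}\in T(a,b,c),\ f(\sigma)\neq c)\le k^{3}\,\p(\sigma\in M_{2})$. Hence
\[
\p^{(c)}(f(\sigma)\neq c)\ \le\ \frac{\eps}{100k}+k^{3}\,\p(\sigma\in M_{2}).
\]
If $\p(\sigma\in M_{2})\ge \eps/(100k^{4})$ we are in case~\eqref{eq:else_2manip_1vot}; otherwise $\p^{(c)}(f(\sigma)\neq c)<\eps/(50k)$, which is case~\eqref{eq:cond_on_top1_1vot}. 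The main obstacle is the bubbling analysis — isolating the "monotone" change, and verifying that once $f$ equals the bubbled alternative it stays there, so that $f(\sigma^{*})$ is pinned to $\{f(\sigma)\}\cup\{a,b\}$; the measure matching and the preimage count are routine.
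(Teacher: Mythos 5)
Your proof is correct and takes essentially the same route as the paper's: bubble $a$ and then $b$ up to the top of $\sigma$ (without moving $c$), observe that the resulting profile is a local dictator on $\{a,b,c\}$ with $\p^{(c)}$-probability at least $1-\eps/(100k)$ so that $f(\sigma^*)=c$ there, and then extract a $2$-manipulation point on the bubbling path whenever $f(\sigma)\neq c$, with the same $k^3$ preimage-counting and the same $\eps/(100k^4)$ threshold. The only cosmetic difference is that you spell out the ``monotone vs.\ not'' case analysis along the bubble path in detail, whereas the paper simply asserts the existence of the manipulation point (one can also see it directly: $c$ is never swapped, so the edge where $f$ first becomes $c$ involves a transposition $[e:e']$ with $c\notin\{e,e'\}$, and Lemma~\ref{lem:nonManipBoundary} applies); both versions close the gap.
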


\begin{proof}
First assume that $c \notin \left\{ a, b \right\}$.

Let $\sigma$ be uniform according to $\p^{\left( c \right)}$, i.e., uniform on $S_k$ conditioned on $\sigma \left( 1 \right) = c$. Define $\sigma'$, where $\sigma'$ is constructed from $\sigma$ by first bubbling up alternative $a$ to just below $c$, using adjacent transpositions, and then bubbling up $b$ to just below $a$. Clearly $\sigma'$ is distributed according to $\p^{\left(c,a,b\right)}$, i.e., it is uniform on $S_k$ conditioned on $\left( \sigma \left( 1 \right), \sigma \left( 2 \right), \sigma \left( 3 \right) \right) = \left( c, a, b \right)$.

Since $c \in K$, we know that $\p^{\left(c,a,b \right)} \left( \sigma \in  \LD^{\left\{a,b,c\right\}} \right) > 1 - \frac{\eps}{100k}$. This also means that
\[
\p^{\left(c \right)} \left( \sigma' \in \LD^{\left\{a,b,c\right\}} \right) > 1 - \frac{\eps}{100k}.
\]
Now we can partition the ranking profiles into three parts, based on the outcome of the SCF $f$ at $\sigma$ and $\sigma'$:
\begin{align*}
I_1 &= \left\{ \sigma : f\left( \sigma \right) = c, f \left( \sigma' \right) = c \right\}\\
I_2 &= \left\{ \sigma : f\left( \sigma \right) \neq c, f \left( \sigma' \right) = c \right\}\\
I_3 &= \left\{ \sigma : f \left( \sigma' \right) \neq c \right\}.
\end{align*}
If $\p^{\left( c \right)} \left( I_1 \right) \geq 1 - \frac{\eps}{50k}$, then \eqref{eq:cond_on_top1_1vot} holds. Otherwise we have $\p^{\left(c\right)} \left( I_2 \cup I_3 \right) \geq \frac{\eps}{50k}$, and since $\p^{\left( c \right)} \left( I_3 \right) \leq \frac{\eps}{100 k}$, we have $\p^{\left( c \right)} \left( I_2 \right) \geq \frac{\eps}{100k}$.

Now if $\sigma \in I_2$, then we know that there is a 2-manipulation point along the way as we go from $\sigma$ to $\sigma'$. I.e., to every $\sigma \in I_2$ there exists $\hat{\sigma} \in M_2$ such that $\hat{\sigma}$ is equal to $\sigma$ except perhaps $a$ and $b$ are shifted arbitrarily. So there can be at most $k^2$ ranking profiles $\sigma$ giving the same 2-manipulation point $\hat{\sigma}$, and so we have
\[
\p\left( \sigma \in M_2 \right) \geq \frac{1}{k} \p^{\left(c\right)} \left( \sigma \in M_2 \right) \geq \frac{1}{k^3} \p^{\left( c \right)} \left( I_2 \right) \geq \frac{\eps}{100 k^4},
\]
showing \eqref{eq:else_2manip_1vot}.

Now suppose $c \in \left\{a,b\right\}$, w.l.o.g.\ assume $c = a$. We know that $\left|K \right| \geq 3$ and so there exists an alternative $d\in K \setminus \left\{a,b \right\}$. We can then do the same thing as above, but we now bubble up $b$ and $d$.
\end{proof}

We now deal with alternatives that are not in $K$: either we have many manipulation points, or for any alternative $d \notin K$, the outcome of $f$ is \emph{not} $d$ with probability close to 1.

\begin{lemma}\label{lem:d_notin_K_1vot}
Let $d \notin K$. If $\p \left( f \left( \sigma \right) = d \right) \geq \frac{\eps}{4k}$, then
\[
\p \left( \sigma \in M_2 \right) \geq \frac{\eps^2}{10^6 k^{9}}.
\]
\end{lemma}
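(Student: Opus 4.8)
The plan is to follow the bubbling-and-canonical-path template used throughout Section~\ref{sec:1voter}, reducing step by step until we may invoke the hypothesis $d\notin K$; assume $\p\left(\sigma\in M_2\right) < \eps^2/(10^6 k^9)$, as otherwise there is nothing to prove. First I would bring $d$ to the top: for $\sigma$ with $f(\sigma)=d$, bubble $d$ upward by adjacent transpositions; when a transposition exchanging $d$ with the alternative $e$ directly above it changes the outcome from $d$ to some $g$, one of the two configurations is a $2$-manipulation point — the pre-swap one if $g$ sits weakly above $\{d,e\}$ there (in particular if $g=e$), since then $g$ is ranked above $d=f$; the post-swap one otherwise, since then $d$ is ranked above $g=f$. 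Each $2$-manipulation point so obtained, and each configuration with $d$ on top produced when the outcome never changes, is reached from at most $k$ choices of $\sigma$, so $\p(f(\sigma)=d)\le k\,\p(\sigma\in M_2)+k\,\p(\sigma(1)=d,\,f(\sigma)=d)$, whence $\p^{(d)}(f(\sigma)=d)\ge\eps/(8k)$. Now bubble $a$ into the second and $b$ into the third position while $d$ stays on top; any outcome change again gives a $2$-manipulation point at the post-swap configuration ($d$ is still ranked first), and the same preimage count yields $\p^{(d,a,b)}(f(\sigma)=d)\ge\eps/(32k)$.

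Next I would cycle the top three positions through all orderings of $\{a,b,d\}$ by adjacent transpositions, as in the proof of Lemma~\ref{lem:loc_dict_abc_to_top_1vot}: at each step the outcome stays the top alternative of the current ordering, or we find a manipulation point — a $2$-manipulation point if the outcome leaves $\{a,b,d\}$ (the block top is ranked above $f$ and is the outcome of a neighbor), and a point of $M_2$ or $M_3$ otherwise. Since all these points differ from the starting configuration only in the ordering of the top block, they are reached $O(1)$-to-one; discarding the small $M_2$-mass and, \emph{crucially}, the $M_3$-mass, we are left with configurations $\rho$ whose top-block orbit realizes $\tp_H$ for some $H\ni d$ with $H\subseteq\{a,b,d\}$. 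The case $H=\{a,b,d\}$ is a local dictator on $\{a,b,d\}$ with the block at the top, so reordering to $(a,b,d)$ produces a configuration in $T(a,b,d)$ and hence $\p^{(a,b,d)}(\sigma\in T(a,b,d))\gtrsim\eps/k$; the proper-subset cases $H=\{a,d\}$, $H=\{b,d\}$, $H=\{d\}$ must be reduced separately — such a $\rho$ lies on a refined boundary of the form $B^{d,x;[d:x]}$ for a single alternative $x$, which is again attacked by the fiber and boundary-of-boundary machinery of this section.

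Finally, in the principal case, $d\notin K$ gives $\p^{(a,b,d)}(\sigma\in T(a,b,d))\le 1-\eps/(100k)$, so $T(a,b,d)$ has conditional measure bounded away from $0$ and $1$ inside $V_{(a,b,d)}$; a canonical-path estimate exactly like Lemma~\ref{lem:lg_bdry_for_loc_dict_1vot} with $\delta=\eps/(100k)$ makes its boundary (in the graph that permutes the remaining $k-3$ alternatives) have conditional measure $\gtrsim\eps^2/k^5$, and Lemma~\ref{lem:manip_pts_on_bdry_of_loc_dict_1vot} harvests $2$-manipulation points from that boundary, reached $O(k)$-to-one; converting to the unconditional measure costs a factor $1/(k(k-1)(k-2))$, and tracking constants yields the claimed bound $\eps^2/(10^6 k^9)$. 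The main obstacle is the tension between the $2$-manipulation points the statement demands and the $3$-manipulation points that naturally fall out of permuting a size-$3$ block: the resolution is to use the block manipulations only to certify proximity to $T(a,b,d)$ and to extract the actual $2$-manipulation points from the boundary of $T(a,b,d)$ via transpositions among the untouched alternatives — which is exactly the role of Lemmas~\ref{lem:lg_bdry_for_loc_dict_1vot} and~\ref{lem:manip_pts_on_bdry_of_loc_dict_1vot} — together with the need to dispose of the degenerate $\tp_H$ cases and to keep every $O(k^{O(1)})$-to-one map under control.
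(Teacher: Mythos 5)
Your opening bubbling steps track the paper's proof: bubble $d$ to the top, then $a$ and $b$ into the next two slots, discarding a small $M_2$ contribution at each stage to conclude $\p^{(d,a,b)}\left(f(\sigma)=d\right)\gtrsim\eps/k$. The gap appears at the orbit step. When you cycle the top-three block through all orderings of $\{a,b,d\}$ and classify the outcomes, the case ``$f$ stays in $\{a,b,d\}$ but isn't the top of the current ordering'' can genuinely require permuting all three adjacent alternatives to witness the manipulation (for example when $f(\rho^{(i-1)})=f(\rho^{(i)})=x$ across a swap of positions~1 and~2: the swap itself is not a manipulation, and certifying one may use the rest of the orbit). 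Those are $M_3$ points, and you explicitly propose to discard the $M_3$-mass. That step is unjustified: the lemma asserts a lower bound on $\p(\sigma\in M_2)$, and you have not shown that $\p(\sigma\in M_3)$ is small, so discarding it leaves you proving a strictly weaker statement. Your closing paragraph names exactly this tension but offers no mechanism to resolve it -- ``using the block manipulations only to certify proximity to $T(a,b,d)$'' is the right slogan, but the orbit-cycling argument as written does not deliver it, since it produces $M_3$ points rather than certifying membership in $T(a,b,d)$.

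The missing idea is that you never invoke $a\in K$ and $b\in K$, which is exactly what the paper leans on. Starting from $\sigma^{(d,a,b)}$ with $f=d$, the paper swaps to $\sigma^{(a,d,b)}$ and $\sigma^{(a,b,d)}$, then bubbles $d$ and $b$ back down to produce $\bar\sigma$ with $a$ at the top. Since $a\in K$, Lemma~\ref{lem:cond_on_top_1vot} gives $\p^{(a)}\left(f(\bar\sigma)=a\right)\geq 1-\eps/(50k)$; the three-way split $I_1,I_2,I_3$ then forces (outside a small $I_2$-mass and a 2-manipulation event $I_3$) that $f(\sigma^{(a,d,b)})=f(\sigma^{(a,b,d)})=a$. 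Repeating with $b$ on top pins down the remaining two orderings, and now the block behaves as $\tp_{\{a,b,d\}}$ on all six orderings -- membership in $T(a,b,d)$ -- \emph{without ever producing an $M_3$ point and without the degenerate $\tp_H$ subcases}. The proper-subset cases $H=\{a,d\},\{b,d\},\{d\}$ you list cannot in fact survive: each would contradict $a\in K$ or $b\in K$ via the same $\bar\sigma$ device, and your vague appeal to ``the fiber and boundary-of-boundary machinery'' is not a substitute for this. Once you know you are in $T(a,b,d)$ with the correct mass, the rest of your plan (Lemma~\ref{lem:lg_bdry_for_loc_dict_1vot} with $\delta=\eps/(100k)$, then Lemma~\ref{lem:manip_pts_on_bdry_of_loc_dict_1vot}) is indeed how the paper finishes, and the $O(k^{O(1)})$-to-one bookkeeping you mention is exactly what gives the constants $\eps^2/(10^6k^9)$.
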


\begin{proof}
Let $\sigma$ be such that $f \left( \sigma \right) = d$. Bubble up $d$ to the top, and call this ranking profile $\sigma'$. Now if $f\left( \sigma' \right) \neq d$, then we know that there exists a 2-manipulation point $\hat{\sigma}$ along the way, i.e., a 2-manipulation $\hat{\sigma}$ which agrees with $\sigma$ except perhaps $d$ is shifted arbitrarily. Consequently, either
\[
\p \left( \sigma \in M_2 \right) \geq \frac{\eps}{8k^2},
\]
in which case we are done, or 
\[
\p \left( \sigma: f\left( \sigma \right) = f\left( \sigma' \right) = d \right) \geq \frac{\eps}{8k}.
\]

Next, let us bubble up $a$ to just below $d$, and then bubble up $b$ to just below $d$. Denote this ranking profile by $\sigma^{\left(d,b,a\right)}$, and analogously define $\sigma^{\left(d,a,b\right)}, \sigma^{\left(a,b,d\right)}, \sigma^{\left(a,d,b\right)}, \sigma^{\left(b,a,d\right)}$, and $\sigma^{\left(b,d,a\right)}$. Either we encounter a 2-manipulation point $\hat{\sigma}$ along the way of bubbling up to $\sigma^{\left(d,b,a\right)}$ ($\hat{\sigma}$ agrees with $\sigma$ except $d$ is at the top, and $a$ and $b$ might be arbitrarily shifted), or the outcome of the SCF $f$ is $d$ all along. So we have that either
\[
\p \left( \sigma \in M_2 \right) \geq \frac{\eps}{16 k^3},
\]
in which case we are done, or
\[
\p \left( \sigma : f\left( \sigma \right) = f\left( \sigma' \right) = f\left( \sigma^{\left(d,b,a\right)} \right) = f\left( \sigma^{\left(d,a,b\right)} \right) = d \right) \geq \frac{\eps}{16 k}.
\]

Now start from $\sigma^{\left(d,a,b\right)}$. First swap $a$ and $d$ to get $\sigma^{\left(a,d,b\right)}$, then swap $d$ and $b$ to get $\sigma^{\left(a,b,d\right)}$, and finally bubble $d$ and $b$ down to their original positions in $\sigma$, except for the fact that $a$ is now at the top of the coordinate. Call this profile $\bar{\sigma}$. Since $\sigma$ is uniformly distributed, $\bar{\sigma}$ is distributed according to $\p_1^{\left( a \right)}$, i.e., uniformly conditional on $\bar{\sigma} \left( 1 \right) = a$. Now note that one of the following three events has to happen. (These events are not mutually exclusive.)
\begin{align*}
I_1 &= \left\{ f\left( \sigma^{\left(a,d,b\right)} \right) = f\left( \sigma^{\left(a,b,d\right)} \right) = a \right\}\\
I_2 &= \left\{ f \left( \bar{\sigma} \right) \neq a \right\}\\
I_3 &= \{ \sigma: \exists\ \hat{\sigma} \in M_2 \text{ which is equal to } \sigma \text{ except } a \text{ is shifted}\\
&\qquad \ \text{ to the top, and }b \text{ and } d \text{ may be shifted arbitrarily}\}.
\end{align*}
Since $a\in K$, we know by Lemma~\ref{lem:cond_on_top_1vot} that (unless we already have enough manipulation points by the lemma) we must have
\[
\p \left( f\left( \bar{\sigma} \right) \neq a \right) = \p^{\left( a \right)} \left( f\left( \bar{\sigma} \right) \neq a \right) \leq \frac{\eps}{50k}.
\]
Consequently 
\[
\p \left( I_1 \cup I_3, f\left( \sigma \right) = f\left( \sigma' \right) = f\left( \sigma^{\left(d,b,a\right)} \right) = f\left( \sigma^{\left(d,a,b\right)} \right) = d  \right) \geq \frac{\eps}{16 k} - \frac{\eps}{50 k} = \frac{17 \eps}{400 k},
\]
and so either
\[
\p \left( \sigma \in M_2 \right) \geq \frac{17 \eps}{800 k^3},
\]
in which case we are done, or
\[
\p \left( \sigma: f\left( \sigma^{\left(d,b,a\right)} \right) = f\left( \sigma^{\left(d,a,b\right)} \right) = d, f\left( \sigma^{\left(a,b,d\right)} \right) = f\left( \sigma^{\left(a,d,b\right)} \right) = a \right) \geq \frac{17 \eps}{800 k}.
\]

Next, we can do the same thing with $b$ on top, and we ultimately get that either
\[
\p \left( \sigma \in M_2 \right) \geq \frac{\eps}{1600 k^3},
\]
in which case we are done, or
\begin{equation}\label{eq:loc_dict_abd_1vot}
\p^{\left( a, b, d \right)} \left( \sigma^{\left(a,b,d\right)} \in \LD^{\left\{a,b,d\right\}} \right) = \p \left( \sigma: \sigma^{\left( a, b, d \right)} \in \LD^{\left\{a,b,d\right\}} \right) \geq \frac{\eps}{1600 k}.
\end{equation}
Define $G_{\left(a,b,d\right)}$ and $T_{\left( a, b, d \right)}$ analogously to $G_{\left( a, b, c \right)}$ and $T_{\left( a, b, c \right)}$, respectively.

Suppose that \eqref{eq:loc_dict_abd_1vot} holds. We also know that $d\notin K$, so Lemma~\ref{lem:lg_bdry_for_loc_dict_1vot} applies, and then Lemma~\ref{lem:manip_pts_on_bdry_of_loc_dict_1vot} shows us how to find manipulation points. We can put these arguments together, just like in the proof of Corollary~\ref{cor:manip_by_bdry_loc_dict_1vot}, to show what we need:
\begin{align*}
\p \left( \sigma \in M_2 \right) &\geq \frac{1}{k^3} \p^{\left(a,b,d \right)} \left( \sigma \in M_2 \right) \geq \frac{1}{6 k^4} \p^{\left( a, b, d \right)} \left( \sigma \in \partial \left( T \left( a,b,d \right) \right) \right)\\
&\geq \frac{\eps}{600 k^{8}} \p^{\left( a, b , d \right)} \left( \sigma \in T \left( a, b, d \right) \right) \geq \frac{\eps^2}{10^6 k^{9}}. \qedhere
\end{align*}
\end{proof}

Putting together the results of the previous lemmas, there is only one case to be covered, which is covered by the following final lemma. Basically, this lemma says that unless there are enough manipulation points, our function is close to a dictator on the subset of alternatives $K$.
\begin{lemma}\label{lem:final_loc_dict_1vot}
Recall that we assume that $\Dist \left( f, \NONMANIP \right) \geq \eps$. Furthermore assume that $\left| K \right| \geq 3$, for every $c \in K$ we have
\begin{equation}\label{eq:dict_cond_on_top_1vot}
\p^{\left( c \right)} \left( f \left( \sigma \right) = c \right) \geq 1 - \frac{\eps}{50k},
\end{equation}
and for every $d \notin K$ we have
\[
\p \left( f\left( \sigma \right) = d \right) \leq \frac{\eps}{4k}.
\]
Then
\begin{equation}\label{eq:final_manip_1vot}
\p \left( \sigma \in M_2 \right) \geq \frac{\eps}{4k^2}.
\end{equation}
\end{lemma}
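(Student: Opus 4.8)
The plan is to show that $f$ must be $\eps$-close to the nonmanipulable function $\tp_K$ unless it already has many $2$-manipulation points. Since $|K|\ge 3$ (in particular $K\neq\emptyset$), we have $\tp_K\in\NONMANIP$, so $\Dist(f,\tp_K)\ge\Dist(f,\NONMANIP)\ge\eps$. Thus it suffices to establish the unconditional estimate
\[
\Dist(f,\tp_K)\;<\;\tfrac{\eps}{4}+k\,\p(\sigma\in M_2)+\tfrac{\eps}{50},
\]
which together with $\Dist(f,\tp_K)\ge\eps$ yields $\p(\sigma\in M_2)>\tfrac{73\eps}{100k}\ge\tfrac{\eps}{4k^2}$ for $k\ge 3$.

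To bound $\Dist(f,\tp_K)=\p(f(\sigma)\neq\tp_K(\sigma))$ I split into two cases. If $f(\sigma)\notin K$, then summing the hypothesis $\p(f(\sigma)=d)\le\eps/(4k)$ over the $k-|K|<k$ alternatives outside $K$ shows this event has probability $<\eps/4$. The remaining case is $f(\sigma)=c'\in K$ with $\tp_K(\sigma)=c\neq c'$ (so $c$ is ranked above $c'$ in $\sigma$). For such a $\sigma$, I would bubble $c$ to the top of the coordinate by adjacent transpositions, producing a path $\sigma=\rho_0,\dots,\rho_m=\sigma^c$ with $\rho_j=\left[c:y_j\right]_1\rho_{j-1}$, where $y_j$ is the alternative directly above $c$ in $\rho_{j-1}$. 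Since $c$ remains the top-ranked $K$-alternative along the entire path, every $y_j\notin K$ and $c=\tp_K(\rho_j)$ for all $j$.

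A single-step analysis — invoking Lemma~\ref{lem:nonManipBoundary} whenever the swapped pair $\{c,y_j\}$ differs from $\{f(\rho_{j-1}),f(\rho_j)\}$, and checking the two remaining subcases by hand — shows that each step $\rho_{j-1}\to\rho_j$ either leaves $f$ unchanged, or produces a $2$-manipulation point among $\{\rho_{j-1},\rho_j\}$, or jumps $f$ from $y_j$ to $c$, the last possibility requiring $f(\rho_{j-1})=y_j\notin K$. As $f(\rho_0)=c'\in K$ and no step of the last type can create a value outside $K$, it follows by induction that if the path contains no $2$-manipulation point then $f\equiv c'$ along it; in particular $\sigma^c(1)=c$ and $f(\sigma^c)=c'\neq c$. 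Hence every profile in this case lies in $\alpha\cup\beta$, where $\alpha$ consists of those $\sigma$ whose bubbling path meets $M_2(f)$, and $\beta$ consists of those $\sigma$ with $\sigma^c\in\mathrm{Bad}_c:=\{\rho:\rho(1)=c,\ f(\rho)\neq c\}$.

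Finally I would bound $\p(\alpha)$ and $\p(\beta)$ by preimage counting: the relevant map sends $\sigma$ either to the $2$-manipulation point on its path or to $\sigma^c$, and in both cases the image agrees with $\sigma$ except for the position of $c$, which is recoverable from the image as its top $K$-alternative; hence each map has multiplicity $<k$. This gives $\p(\alpha)\le k\,\p(\sigma\in M_2)$ and $\p(\beta)\le\sum_{c\in K}k\,\p(\mathrm{Bad}_c)$; since $\p(\mathrm{Bad}_c)=\p^{(c)}(f(\sigma)\neq c)\,\p(\sigma(1)=c)\le\frac{\eps}{50k}\cdot\frac1k$ by hypothesis and $|K|\le k$, we obtain $\p(\beta)\le\eps/50$. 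Adding the three contributions proves the displayed estimate. The main delicate point is the per-step dichotomy in the second case: one must distinguish genuine manipulation boundaries from monotone ones and verify that the only way the bubbling path can fail to keep $f$ constant without a manipulation point is a monotone swap, which is excluded because only non-$K$ alternatives are ever passed. The multiplicity bookkeeping is then routine, the key observation being that identifying $c$ with $\tp_K$ of the target profile keeps all multiplicities linear in $k$.
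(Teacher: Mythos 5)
Your proof is correct and follows essentially the same route as the paper's: lower-bound $\Dist(f,\tp_K)$ by $\eps$, control the part of it coming from $f(\sigma)\notin K$ by the hypothesis on alternatives outside $K$, and then bubble $\tp_K(\sigma)$ to the top to show that the remaining discrepancy forces either a 2-manipulation point along the path or a profile with $K$-element on top whose outcome is not that element, controlled by \eqref{eq:dict_cond_on_top_1vot}. Your per-step dichotomy (case (b2), the monotone swap, excluded via the $K$-membership invariant) makes explicit a detail the paper's statement ``if $f(\sigma)\neq f(\bar\sigma)$ then there is a 2-manipulation point along the way'' leaves implicit, and your preimage-counting with multiplicity $k$ gives a slightly sharper constant than the paper's implicit $k^2$, but both arrive at the same conclusion by the same method.
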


\begin{proof}
First note that
\[
\p \left( f \left( \sigma \right) \neq \tp_K \left( \sigma \right) \right) = \p \left( f \left( \sigma \right) \notin K \right) + \p \left( f\left( \sigma \right) \neq \tp_K \left( \sigma \right), f\left( \sigma \right) \in K \right).
\]
We know that
\[
\eps \leq \Dist \left( f, \NONMANIP \right) \leq \p \left( f \left( \sigma \right) \neq \tp_K \left( \sigma \right) \right)
\]
and also that
\[
\p\left( f\left( \sigma \right) \notin K \right) \leq \left( k - \left| K \right| \right) \frac{\eps}{4k} \leq \frac{\eps}{2},
\]
which together imply that
\[
\p \left( f\left( \sigma \right) \neq \tp_K \left( \sigma \right), f\left( \sigma \right) \in K \right) \geq \frac{\eps}{2}.
\]
Let $\sigma$ be such that $ f\left( \sigma \right) \neq \tp_K \left( \sigma \right)$ and $f\left( \sigma \right) \in K$. Now bubble $\tp_K \left( \sigma \right)$ up to the top in $\sigma$, call this ranking profile $\bar{\sigma}$. Clearly then $\tp_K \left( \bar{\sigma} \right) = \tp_K \left( \sigma \right)$.

There are two cases to consider. If $f\left( \sigma \right) \neq f\left( \bar{\sigma} \right)$, then there is a 2-manipulation point along the way from $\sigma$ to $\bar{\sigma}$, i.e., a 2-manipulation point $\hat{\sigma}$ such that $\hat{\sigma}$ agrees with $\sigma$ except perhaps some alternative $c$ is arbitrarily shifted. Otherwise $f \left( \sigma \right) = f\left( \bar{\sigma} \right)$, and so $f \left( \bar{ \sigma} \right) \neq \tp_K \left( \bar{\sigma} \right)$.

Consequently we have that either \eqref{eq:final_manip_1vot} holds, or that
\begin{equation}\label{eq:last_eq_1vot}
\p \left( \sigma : f \left( \bar{ \sigma} \right) \neq \tp_K \left( \bar{\sigma} \right) \right) \geq \frac{\eps}{4}.
\end{equation}
By the construction of $\bar{\sigma}$, we know that $\bar{\sigma}$ is uniformly distributed conditional on $\bar{\sigma} \left( 1 \right) \in K$. Consequently, by \eqref{eq:dict_cond_on_top_1vot}, we have that
\[
\p \left( \sigma : f \left( \bar{ \sigma} \right) \neq \tp_K \left( \bar{\sigma} \right) \right) \leq \frac{\eps}{50 k},
\]
which contradicts with \eqref{eq:last_eq_1vot} since $\frac{\eps}{50k} < \frac{\eps}{4}$.
\end{proof}
This concludes the proof of the small fiber case.

\subsection{Large fiber case}\label{sec:lg_fbr_case_1vot} 

In this section we assume that \eqref{eq:lg_fbr_case} holds. We show that we either have a lot 2-manipulation points or we have a lot of local dictators on three alternatives.

Our first step towards this is the following lemma.

\begin{lemma}\label{lem:cond_ab_top_still_lg_fbr_1vot}
Suppose \eqref{eq:lg_fbr_case} holds. Then
\begin{equation}\label{eq:cond_ab_top_still_lg_fbr_1vot}
\p^{\left(a,b\right)} \left( \sigma \in B \right) \geq 1 - \frac{\eps}{4}.
\end{equation}
\end{lemma}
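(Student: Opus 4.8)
The plan is to exploit the symmetry of the uniform measure on $S_k$ and the fact that conditioning on $\{(\sigma(1),\sigma(2))=(a,b)\}$ is just conditioning on a fixed fraction of $\bar F$. First I would note that if $(\sigma(1),\sigma(2))=(a,b)$ then in particular $a$ is directly above $b$ with $b=[a:b]\sigma$ putting $b$ on top, so $\{\sigma:(\sigma(1),\sigma(2))=(a,b)\}\subseteq \bar F$, and hence $\p^{(a,b)}$ is the uniform measure on a subset of $\bar F$. Recall $\bar F$ is in bijection with $S_{k-1}$ by treating the adjacent pair $ab$ as a single symbol, so $|\bar F|=(k-1)!$, while $\{\sigma:(\sigma(1),\sigma(2))=(a,b)\}$ corresponds to those permutations of $S_{k-1}$ with that symbol placed first, of which there are $(k-2)!$. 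By this symmetry (the $k-1$ possible positions of the block $ab$ within a member of $\bar F$ are equiprobable), the event $\{(\sigma(1),\sigma(2))=(a,b)\}$ has conditional probability exactly $\tfrac1{k-1}$ given $\sigma\in\bar F$.

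Next, write $B^c:=\bar F\setminus B$. The large fiber assumption \eqref{eq:lg_fbr_case} says precisely that $\p(\sigma\in B^c\mid \sigma\in\bar F)<\tfrac{\eps}{4k}$. Since $\{(\sigma(1),\sigma(2))=(a,b)\}\subseteq\bar F$, we get the chain
\[
\p^{(a,b)}(\sigma\notin B)=\frac{\p\bigl(\sigma\in B^c,\ (\sigma(1),\sigma(2))=(a,b)\bigr)}{\p\bigl((\sigma(1),\sigma(2))=(a,b)\bigr)}\le\frac{\p(\sigma\in B^c,\ \sigma\in\bar F)}{\tfrac1{k-1}\,\p(\sigma\in\bar F)}=(k-1)\,\p(\sigma\in B^c\mid\sigma\in\bar F)<\frac{(k-1)\eps}{4k}<\frac{\eps}{4},
\]
so $\p^{(a,b)}(\sigma\in B)>1-\tfrac{\eps}{4}$, which is \eqref{eq:cond_ab_top_still_lg_fbr_1vot}.

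This argument is entirely elementary — the only thing to verify carefully is the bijective/counting claim that $\{(\sigma(1),\sigma(2))=(a,b)\}$ is a $\tfrac1{k-1}$-fraction of $\bar F$ under the uniform measure, which follows from the block-symbol bijection above — so I do not anticipate any real obstacle here.
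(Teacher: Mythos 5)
Your proof is correct and is essentially the paper's argument, restated in probability language rather than via cardinalities: both use that $|\bar F|=(k-1)!$, that $\{(\sigma(1),\sigma(2))=(a,b)\}$ has size $(k-2)!$ (so is a $\tfrac1{k-1}$-fraction of $\bar F$), and that the large-fiber hypothesis bounds $\p(\sigma\in B^c\mid\sigma\in\bar F)<\tfrac{\eps}{4k}$, yielding $\p^{(a,b)}(\sigma\notin B)<\tfrac{(k-1)\eps}{4k}<\tfrac{\eps}{4}$. No gap; nothing to add.
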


\begin{proof}
Let $B^c = \bar{F} \setminus B$. Our assumption \eqref{eq:lg_fbr_case} implies that $\p \left( \sigma \in B^c \, \middle| \, \sigma \in \bar{F} \right) \leq \frac{\eps}{4k}$, which means that $\left| B^c \right| \leq \frac{\eps \left( k - 1 \right)!}{4k}$, and so
\[
\p^{\left( a, b \right)} \left( \sigma \notin B \right) \leq \frac{\eps \left( k - 1 \right)!}{ 4k \left( k - 2 \right)!} < \frac{\eps}{4},
\]
which is equivalent to \eqref{eq:cond_ab_top_still_lg_fbr_1vot}.
\end{proof}

The next lemma (together with Section~\ref{sec:loc_dict_1vot}) concludes the proof in the large fiber case.

\begin{lemma}\label{lem:lg_fbr_final_1vot}
Suppose \eqref{eq:lg_fbr_case} holds and recall that our SCF $f$ satisfies $\Dist \left( f, \NONMANIP \right) \geq \eps$. Then either
\begin{equation}\label{eq:2manip_again_1vot}
\p \left( \sigma \in M_2 \right) \geq \frac{\eps}{4k^2}
\end{equation}
or
\begin{equation}\label{eq:loc_dict_again_1vot}
\p \left( \sigma \in \LD \left(a,b\right) \right) \geq \frac{\eps}{4k^2}.
\end{equation}
\end{lemma}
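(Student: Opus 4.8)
The plan is to prove the first alternative, \eqref{eq:2manip_again_1vot}, directly; this implies the disjunction, and in fact the local‑dictator clause \eqref{eq:loc_dict_again_1vot} never actually occurs in the large fiber case (if $a,b$ are adjacent with $a$ above $b$, then $f$ outputs $a$ with high probability even when a third alternative $c$ sits directly above the $ab$‑block, so no block $\{a,b,c\}$ can be a local dictator). The starting point is that $\tp_{\{a,b\}}$, the one‑voter SCF returning the better of $a$ and $b$, lies in $\NONMANIP$, so $\Dist\!\left(f,\tp_{\{a,b\}}\right)\ge \Dist\!\left(f,\NONMANIP\right)\ge\eps$, whereas \eqref{eq:lg_fbr_case} says precisely that $f$ agrees with $\tp_{\{a,b\}}$ with conditional probability $>1-\tfrac{\eps}{4k}$ once we condition on $a$ and $b$ being adjacent (in either order, since $B$ controls both $f(\sigma)$ and $f\!\left(\left[a:b\right]\sigma\right)$). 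The tension between ``$f$ far from $\tp_{\{a,b\}}$ globally'' and ``$f$ close to $\tp_{\{a,b\}}$ on the adjacency event'' must be witnessed by manipulation points, which we harvest by bubbling $a$ and $b$ together.

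Concretely, since $\p\!\left(f(\sigma)\neq\tp_{\{a,b\}}(\sigma)\right)\ge\eps$, one of the events $\left\{a\stackrel{\sigma}{>}b,\ f(\sigma)\neq a\right\}$ and $\left\{b\stackrel{\sigma}{>}a,\ f(\sigma)\neq b\right\}$ has probability $\ge\eps/2$; assume w.l.o.g.\ the former, so under $\sigma$ uniform on $\left\{a\stackrel{\sigma}{>}b\right\}$ we have $\p(f(\sigma)\neq a)\ge\eps$. For such $\sigma$ let $\beta(\sigma)$ be obtained by bubbling $b$ upward via adjacent transpositions (none involving $a$) until $b$ sits directly below $a$; then $\beta(\sigma)\in\bar F$ and the $a$‑versus‑$b$ order is unchanged. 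Each $\rho\in\bar F$ has at most $k$ preimages under $\beta$ (the only lost information is the original position of $b$), so pushing the conditional‑uniform measure forward and using $\p(\bar F)=1/k$ together with \eqref{eq:lg_fbr_case} gives $\p\!\left(\beta(\sigma)\notin B\right)<\eps/(2k)$; hence the event $\left\{a\stackrel{\sigma}{>}b,\ f(\sigma)\neq a,\ \beta(\sigma)\in B\right\}$ still has probability $\ge \eps/2-\eps/(2k)\ge\eps/4$. For any $\sigma$ in this event, $f(\sigma)\neq a$ while $f(\beta(\sigma))=a$, so along the bubbling path the value of $f$ changes across some adjacent transposition $\left[b:x\right]$ with $x\neq a$, from a value $e\neq a$ to $a$; since $\left[b:x\right]\neq\left[a:e\right]$, Lemma~\ref{lem:nonManipBoundary} produces a $2$‑manipulation point $\hat\sigma$ on that edge. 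Each such $\hat\sigma$ agrees with $\sigma$ except for the position of $b$, hence arises from at most $k$ values of $\sigma$, so $\p(\sigma\in M_2)\ge\tfrac1k\cdot\tfrac\eps4\ge\tfrac{\eps}{4k^2}$, establishing \eqref{eq:2manip_again_1vot}. The remaining w.l.o.g.\ case is the mirror image, bubbling $a$ up to sit directly below $b$.

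The three points requiring care are all routine. First, the multiplicity of the bubbling map $\beta$: one must check that the uniform measure on $\left\{a\stackrel{\sigma}{>}b\right\}$ pushes forward to a measure on $\bar F$ that inflates the mass of $B^c\cap\bar F$ by a factor at most $k$, which follows because $\left|\beta^{-1}(\rho)\right|$ is just the number of positions below $a$ from which $b$ could have been bubbled. Second, the application of Lemma~\ref{lem:nonManipBoundary} at the crossing edge: the decisive observation is that a bubbling transposition $\left[b:x\right]$ never swaps $a$ with anything, so it can never coincide with the transposition $\left[a:e\right]$ that the lemma would otherwise permit, forcing one endpoint to be a $2$‑manipulation point. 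Third, the manipulation points extracted from distinct bubbling paths are counted with multiplicity $O(k)$, again because only $b$'s original position is forgotten. I do not expect any genuine obstacle here; the only conceptual input is recognizing that $\tp_{\{a,b\}}$ is the correct comparison function and that \eqref{eq:lg_fbr_case} is exactly the assertion that $f$ imitates it on the adjacency event, and the clause \eqref{eq:loc_dict_again_1vot} is retained only for parallelism with the analogous lemmas in the other cases.
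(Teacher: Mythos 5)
Your proof is correct, but it takes a genuinely different route from the paper's. The paper defines $\sigma'$ by moving \emph{both} $a$ and $b$ to the very top of the ranking; on the bubbling path from $\sigma$ to $\sigma'$, the alternative $a$ itself may have to cross a third alternative $c$, and when the outcome on the original $\sigma$ is $c$ (the paper's set $I_{2,c}$ with $c$ ranked above both $a$ and $b$) the crossing transpositions $[a:c]$ and $[b:c]$ can fall into the exempt case of Lemma~\ref{lem:nonManipBoundary}; this is precisely why the paper's proof cannot always extract a manipulation point and instead falls back on identifying $\tilde\sigma$ as a local dictator on $\{a,b,c\}$. Your proof avoids this entirely by keeping $a$ fixed and bubbling only $b$ upward until it sits directly below $a$: every transposition along the path is of the form $[b:x]$ with $x\neq a$, and since $a\notin\{b,x\}$ no such transposition can equal the exempt transposition $[e:a]$ at the edge where the outcome first becomes $a$, so Lemma~\ref{lem:nonManipBoundary} always delivers a $2$-manipulation point. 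You also use a cleaner entry point (distance to $\tp_{\{a,b\}}$, giving a bias of $\eps$ on the conditioned event $\{a\stackrel{\sigma}{>}b\}$) rather than the paper's $I_1/I_2/I_3$ partition, and in the end you obtain the sharper bound $\eps/(4k)$, of which $\eps/(4k^2)$ is a weakening. One caveat: your parenthetical assertion that the local-dictator clause \eqref{eq:loc_dict_again_1vot} ``never actually occurs'' is not established by your argument — what you show is that it is not \emph{needed}; the set $\LD(a,b)$ could still have nontrivial measure even under \eqref{eq:lg_fbr_case} (only one of the six arrangements of a local-dictator block lies in $\bar F\setminus B$, so the large-fiber bound gives $\p(\LD(a,b))=O(\eps/k)$ after summing over $c$, which is not below the $\eps/(4k^2)$ threshold). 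Since your main argument proves \eqref{eq:2manip_again_1vot} directly, this aside can simply be dropped.
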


\begin{proof}
By Lemma~\ref{lem:cond_ab_top_still_lg_fbr_1vot} we know that \eqref{eq:cond_ab_top_still_lg_fbr_1vot} holds.

Let $\sigma \in S_k$ be uniform. Define $\sigma'$ by being the same as $\sigma$ except alternatives $a$ and $b$ are moved to the top of the coordinate: $\sigma' \left( 1 \right) = a$ and $\sigma' \left( 2 \right) = b$. Clearly $\sigma'$ is distributed according to $\p^{\left(a,b\right)} \left( \cdot \right)$. Also define $\sigma'' = \left[a:b \right] \sigma'$.

We partition the set of ranking profiles $S_k$ into three parts:
\begin{align*}
I_1 &:= \left\{ \sigma \in S_k : f\left( \sigma \right) = \tp_{\left\{a,b \right\}} \left( \sigma \right), \left( f \left( \sigma' \right), f \left( \sigma'' \right) \right) = \left( a, b \right) \right\}\\
I_2 &:= \left\{ \sigma \in S_k : f\left( \sigma \right) \neq \tp_{\left\{a,b \right\}} \left( \sigma \right), \left( f \left( \sigma' \right), f \left( \sigma'' \right) \right) = \left( a, b \right) \right\}\\
I_3 &:= \left\{ \sigma \in S_k : \left( f \left( \sigma' \right), f \left( \sigma'' \right) \right) \neq \left( a, b \right) \right\}.
\end{align*}

By \eqref{eq:cond_ab_top_still_lg_fbr_1vot} we know that $\p \left( \sigma \in I_3 \right) \leq \frac{\eps}{4}$. We also know that $\p \left( \sigma \in I_1 \right) \leq 1 - \eps$, since $\Dist \left( f, \NONMANIP \right) \geq \eps$. Therefore we must have
\[
\p \left( \sigma \in I_2 \right) \geq \frac{3\eps}{4} > \frac{\eps}{2}.
\]

Let us partition $I_2$ further, and write it as $I_2 = I_2' \cup \left( \cup_{c \notin \left\{a,b \right\}} I_{2,c} \right)$, where 
\[
I_2' := \left\{ \sigma \in I_2 : f \left( \sigma \right) \neq \tp_{\left\{a,b\right\}} \left( \sigma \right), f\left( \sigma \right) \in \left\{a,b\right\} \right\}
\]
and for any $c \notin \left\{a,b \right\}$,
\[
I_{2,c} := \left\{ \sigma \in I_2 : f \left( \sigma \right) = c \right\}.
\]

Suppose $\sigma \in I_2'$. W.l.o.g.\ let us assume that $a$ is ranked higher than $b$ by $\sigma$, and therefore $f\left( \sigma \right) = b$, since $\sigma \in I_2'$. Then we can get from $\sigma$ to $\sigma'$ by first bubbling up $a$ to the top, and then bubbling up $b$ to just below $a$. Since $f\left( \sigma \right) = b$ and $f\left( \sigma' \right) = a$, there must be a 2-manipulation point $\hat{\sigma}$ along the way, which is equal to $\sigma$ except perhaps the positions of $a$ and $b$ are arbitrarily shifted.

Now suppose that $\sigma \in I_{2,c}$ for some $c \notin \left\{a,b \right\}$. We distinguish two cases: either $c$ is ranked above both $a$ and $b$ in $\sigma$, or it is not.

If not, then say $a$ is ranked above $c$ in $\sigma$. Bubble $a$ all the way to the top, and then bubble $b$ as well, all the way to the top, just below $a$. Since $f\left( \sigma \right) = c$ and $f\left( \sigma' \right) = a$, there must be a 2-manipulation point $\hat{\sigma}$ along the way, which is equal to $\sigma$ except perhaps the positions of $a$ and $b$ are arbitrarily shifted.

If $c$ is ranked above both $a$ and $b$ in $\sigma$, then the argument is similar. First bubble up $a$ and $b$ to just below $c$, and denote this ranking profile by $\tilde{\sigma}$, then permute these three alternatives arbitrarily, and then bubble $a$ and $b$ to the top. It is not hard to think through that either there is a 2-manipulation $\hat{\sigma}$ along the way, which is then equal to $\sigma$ except perhaps the positions of $a$ and $b$ are arbitrarily shifted, or else $\tilde{\sigma} \in \LD^{\left\{a,b,c\right\}}$.

Combining these cases we see that either \eqref{eq:2manip_again_1vot} or \eqref{eq:loc_dict_again_1vot} must hold.
\end{proof}

So if \eqref{eq:2manip_again_1vot} holds then we are done, and if \eqref{eq:loc_dict_again_1vot} holds, then we refer back to Section~\ref{sec:loc_dict_1vot}, where we deal with the case of local dictators on three alternatives.

\subsection{Proof of Theorem~\ref{thm:quant_GS_1voter} concluded} 

\begin{proof}[Proof of Theorem~\ref{thm:quant_GS_1voter}]
Our starting point is Lemma~\ref{lem:big_ab_bdry}, which implies that \eqref{eq:B_1vot} holds (unless we already have many 2-manipulation points). We then consider two cases, as indicated in Section~\ref{sec:cases_1vot}.

We deal with the small fiber case---when \eqref{eq:sm_fbr_case} holds---in Section~\ref{sec:sm_fbr_1vot}. First, Lemma~\ref{lem:comparable_bdries_1vot}, Corollary~\ref{cor:big_bdry_of_B}, Lemma~\ref{lem:bdry_of_B} and Corollary~\ref{cor:manip_or_loc_dict} show that either there are many 3-manipulation points, or there are many local dictators on three alternatives. We then deal with the case of many local dictators in Section~\ref{sec:loc_dict_1vot}. Lemma~\ref{lem:loc_dict_abc_to_top_1vot}, Corollary~\ref{cor:many_dict_at_top_1vot}, Lemmas~\ref{lem:lg_bdry_for_loc_dict_1vot} and~\ref{lem:manip_pts_on_bdry_of_loc_dict_1vot}, Corollary~\ref{cor:manip_by_bdry_loc_dict_1vot}, and Lemmas~\ref{lem:cond_on_top_1vot},~\ref{lem:d_notin_K_1vot} and~\ref{lem:final_loc_dict_1vot} together show that there are many 3-manipulation points if there are many local dictators on three alternatives, and the SCF is $\eps$-far from the family of nonmanipulable functions.

We deal with the large fiber case---when \eqref{eq:lg_fbr_case} holds---in Section~\ref{sec:lg_fbr_case_1vot}. Here Lemma~\ref{lem:lg_fbr_final_1vot} shows that either we have many 2-manipulation points, or we have many local dictators on three alternatives. In this latter case we refer back to Section~\ref{sec:loc_dict_1vot} to conclude the proof.
\end{proof}

\section{Proof for unbounded number of Alternatives}\label{sec:manip_ref} 

In this section we prove the theorem below, which is the same as our main theorem, Theorem~\ref{cor:k_refined_truenonmanip}, except that the condition of $\Dist \left( f, \NONMANIP \right) \geq \eps$ from  Theorem~\ref{cor:k_refined_truenonmanip} is replaced with the stronger condition $\Dist \left( f, \overline{\NONMANIP} \right) \geq \eps$.

\begin{theorem}\label{thm:k_refined}
Suppose we have $n \geq 2$ voters, $k \geq 3$ alternatives, and a SCF $f : S_k^n \to \left[k\right]$ satisfying $\Dist \left( f, \overline{\NONMANIP} \right) \geq \eps$. Then
\begin{equation}\label{eq:manip_refined2}
\p\left( \sigma \in M \left( f \right) \right)\geq \p \left( \sigma \in M_4 \left( f \right) \right)\geq p \left( \eps, \frac{1}{n}, \frac{1}{k} \right),
\end{equation}
for some polynomial $p$, where $\sigma \in S_k^n$ is selected uniformly. In particular, we show a lower bound of $\frac{\eps^5}{10^9 n^7 k^{46}}$.

An immediate consequence is that
\[
\p \left( \left( \sigma, \sigma' \right) \text{ is a manipulation pair for } f \right) \geq q \left( \eps, \frac{1}{n}, \frac{1}{k} \right),
\]
for some polynomial $q$, where $\sigma \in S_k^n$ is uniformly selected, and $\sigma'$ is obtained from $\sigma$ by uniformly selecting a coordinate $i \in \left\{1, \dots, n \right\}$, uniformly selecting $j \in \left\{1, \dots, n-3 \right\}$, and then uniformly randomly permuting the following four adjacent alternatives in $\sigma_i$: $\sigma_i \left( j \right), \sigma_i \left( j + 1 \right), \sigma_i \left( j + 2 \right)$, and $\sigma_i \left( j + 3 \right)$. In particular, the specific lower bound for $\p \left( \sigma \in M_4 \left( f \right) \right)$ implies that we can take $q \left( \eps, \frac{1}{n}, \frac{1}{k} \right) = \frac{\eps^5}{10^{11} n^8 k^{47}}$.
\end{theorem}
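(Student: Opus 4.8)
The plan is to follow the architecture of the proof of Theorem~\ref{thm:k_bdd}, but carried out in the \emph{refined} rankings graph (two profiles adjacent iff they differ by a single adjacent transposition in one coordinate), so that every invocation of Gibbard-Satterthwaite is made on a constant number of alternatives and costs a constant --- rather than a $1/k!$ --- factor; since $M_3(f)\subseteq M_4(f)$ it suffices to produce enough $3$-manipulation points, and the passage from the hypothesis $\Dist(f,\overline{\NONMANIP})\ge\eps$ to the full conclusion of Theorem~\ref{cor:k_refined_truenonmanip} is deferred to Theorem~\ref{thm:TRUENONMANIP}. First I would apply Lemma~\ref{lem:boundaries2}: either $\P(\sigma\in M_2(f))$ is already polynomially large and we are done, or there are distinct coordinates $i\ne j$ and pairs $\{a,b\},\{c,d\}$ with $c\notin\{a,b\}$ and $\Inf_i^{a,b;[a:b]}(f),\Inf_j^{c,d;[c:d]}(f)\ge 2\eps/(nk^7)$. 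After discarding the profiles on these boundaries for which the order of $a,b$ in coordinate $i$ (resp.\ $c,d$ in coordinate $j$) is already reversed --- each being an automatic $2$-manipulation point --- one gets $\P(\sigma\in B_i(z_{-i}^{a,b}))$ and the analogue for $j$ polynomially large. Classify the fibers $\bar F(z_{-i}^{a,b})$, $\bar F(z_{-j}^{c,d})$ as small or large with threshold $\gamma=\eps^3/(10^3 n^3 k^{24})$, and split into Case (I), big mass on large fibers of both boundaries, and Case (II), big mass on the small fibers of one of them.

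In Case (I), if $d\notin\{a,b\}$ then $\sigma$ cannot lie on both boundaries simultaneously (that would need $f(\sigma)\in\{a,b\}\cap\{c,d\}=\emptyset$), so by the definition of large fiber this case is impossible unless $d\in\{a,b\}$, say $d=a$, exactly as in Lemma~\ref{lem:lg_fbr_2}. Then membership in $F_i^{a,b}$ depends only on $x_{-i}^{a,b}(\sigma)$ and membership in $F_j^{a,c}$ only on $x_{-j}^{a,c}(\sigma)$, two bit-vectors with coordinatewise correlation $\pm 1/3$; reverse hypercontractivity (Lemma~\ref{lem:inv_hyp_org}, with Lemma~\ref{lem:inv_hyp} handling the sign) makes $\P(\sigma\in F_i^{a,b}\cap F_j^{a,c})$ polynomially large, hence so is the probability that $\sigma$ lies on $B_i^{a,b;[a:b]}\cap B_j^{a,c;[a:c]}$, where Lemma~\ref{lem:nonManipTriple} produces a $3$-manipulation point agreeing with $\sigma$ off $\{i,j\}$ --- with at most $\mathrm{poly}(k)$ preimages per manipulation point --- yielding \eqref{eq:manip_refined2}.

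In Case (II), say the small fibers of $B_i^{a,b;[a:b]}$ carry big mass. Using the canonical-path machinery --- Proposition~\ref{prop:canon1} with the block $\{a,b\}$ treated as a single symbol, in the modified graph $G(z_{-i}^{a,b})$ --- I would show that each small fiber $B\equiv B_i(z_{-i}^{a,b})$ has vertex boundary $\partial(B)$ comparable to $B$ up to a $\mathrm{poly}^{-1}(1/n,1/k,\eps)$ factor, so $\P\bigl(\sigma\in\bigcup_{z_{-i}^{a,b}}\partial(B_i(z_{-i}^{a,b}))\bigr)$ is polynomially large. Then comes the refined analogue of Lemma~\ref{lem:manip_on_bdry}: on such a boundary of a boundary, Lemmas~\ref{lem:nonManipBoundary} and~\ref{lem:nonManipTriple} --- case-splitting on whether the differing coordinate is $i$ or some $j\ne i$, and whether the block $\{a,b\}$ moves up or down in coordinate $i$ --- give either a $3$-manipulation point differing from $\sigma$ in at most two coordinates, or the conclusion $\sigma\in\LD_i(a,b)$, i.e.\ $\sigma$ is a local dictator on three alternatives in coordinate $i$.

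The remaining, and hardest, task is to dispose of $\LD_i(a,b)$ with only polynomial-in-$k$ losses, which is exactly where the one-voter quantitative Gibbard-Satterthwaite theorem, Theorem~\ref{thm:quant_GS_1voter}, enters: conditioning on $\sigma_{-i}$, the induced $f_{\sigma_{-i}}$ is a one-voter SCF, and one reproves the argument of Section~\ref{sec:1voter} coordinatewise --- a large $a$-$b$ boundary (one-voter analogue of Lemma~\ref{lem:boundaries2}), the small/large fiber dichotomy, bubbling a constant-size block $\{a,b,c\}$ to the top, the canonical-path comparisons of Lemmas~\ref{lem:lg_bdry_for_loc_dict_1vot} and~\ref{lem:manip_pts_on_bdry_of_loc_dict_1vot}, and the analysis of the ``dictated'' set $K$ in Lemmas~\ref{lem:cond_on_top_1vot}--\ref{lem:final_loc_dict_1vot} --- absorbing an extra $1/(nk!)$ for the conditioning on the other coordinates and using $\Dist(f,\overline{\NONMANIP})\ge\eps$ in place of $\Dist(f,\NONMANIP)\ge\eps$. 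The delicate part is the bookkeeping through the nested bubbling steps: after each step one must track which of the $O(1)$ adjacent block-configurations yields a genuine manipulation versus a local dictator, and verify via the fixed-point-free group actions of Propositions~\ref{prop:canon_sym} and~\ref{prop:canon1} that every manipulation point has at most $\mathrm{poly}(k)$ preimages and every ``top-block'' conditioning costs only $\mathrm{poly}(k)$; getting the exponents mutually consistent across all cases is where most of the labor lies. Collecting the polynomial lower bounds from Cases (I) and (II) and taking the worst one gives the claimed $\eps^5/(10^9 n^7 k^{46})$; the statement about manipulation pairs then follows since each $4$-manipulation point is hit by the described randomized perturbation with probability at least an inverse polynomial in $n$ and $k$.
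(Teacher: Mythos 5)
Your high-level architecture matches the paper's: start from Lemma~\ref{lem:boundaries2}, split into large- and small-fiber cases, use reverse hypercontractivity in the large-fiber case (with Lemma~\ref{lem:nonManipTriple} to extract 3-manipulation points), use canonical paths (via Propositions~\ref{prop:canon1} and~\ref{prop:canon2}) to make boundaries-of-boundaries big in the small-fiber case, and reduce to local dictators, finally deferring the passage from $\Dist(f,\overline{\NONMANIP})\ge\eps$ to $\Dist(f,\NONMANIP)\ge\eps$ to Theorem~\ref{thm:TRUENONMANIP}.

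However, your handling of the local-dictator case has a genuine gap. You propose to ``condition on $\sigma_{-i}$, the induced $f_{\sigma_{-i}}$ is a one-voter SCF, ... absorbing an extra $1/(nk!)$ for the conditioning on the other coordinates.'' This would reintroduce precisely the $1/k!$ loss that the refined rankings graph was built to avoid, and would collapse the bound back to that of Theorem~\ref{thm:k_bdd}. The paper never conditions on $\sigma_{-1}$ in this part of the argument: the local-dictator set $\LD_1^H$ is a subset of $S_k^n$ (it is a property of the whole profile $\sigma$, not of $\sigma_1$ alone, since the SCF evaluated at the permuted-block profiles depends on all coordinates), and Lemmas~\ref{lem:loc_dict_abc_to_top}, \ref{lem:lg_bdry_for_loc_dict}, \ref{lem:manip_pts_on_bdry_of_loc_dict}, \ref{lem:cond_on_top}, \ref{lem:d_notin_K_ref} and~\ref{lem:final_loc_dict} are the $n$-coordinate versions worked out directly in the refined graph. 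The one-voter proof of Section~\ref{sec:1voter} serves only as a template; it is not applied to induced one-voter SCFs $f_{\sigma_{-1}}$, about whose distance from $\NONMANIP(1,k)$ one has no control. Moreover, there is no $1/k!$ factor hiding anywhere: the $k!$'s appearing in the canonical-path preimage bounds of Propositions~\ref{prop:canon1} and~\ref{prop:canon2} are exactly cancelled against the $(k!)^n$- or $(k-1)!\cdot(k!/2)^{n-1}$-sized ambient sets (see the arithmetic in Lemma~\ref{lem:comparable_bdries_ref} and Lemma~\ref{lem:lg_bdry_for_loc_dict}).

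A second, smaller, inaccuracy: you claim that since $M_3\subseteq M_4$ it suffices to produce 3-manipulation points. In fact the local-dictator analysis forces you up to 4-manipulation points when $k\ge 4$: in Lemma~\ref{lem:manip_pts_on_bdry_of_loc_dict} the boundary of $T_1(a,b,c)$ can be crossed by an adjacent transposition involving a fourth alternative $d$, and the resulting Gibbard--Satterthwaite application is to a two-voter SCF on the four alternatives $\{a,b,c,d\}$, giving a 4-manipulation point. That is precisely why the theorem statement is about $M_4$ rather than $M_3$.
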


For the remainder of the section, let us fix the number of voters $n \geq 2$, the number of alternatives $k \geq 3$, and the SCF $f$, which satisfies $\Dist \left( f, \overline{\NONMANIP} \right) \geq \eps$. Accordingly, we typically omit the dependence of various sets (e.g., boundaries between two alternatives) on $f$.

\subsection{Division into cases}\label{sec:cases_refined} 

Our starting point in proving Theorem~\ref{thm:k_refined} is Lemma~\ref{lem:boundaries2}. Clearly if \eqref{eq:neutralPairs3ManipProb} holds then we are done, so in the rest of Section~\ref{sec:manip_ref} we assume that this is not the case. Then Lemma~\ref{lem:boundaries2} tells us that \eqref{eq:k_inf_ref} holds, and w.l.o.g.\ we may assume that the two boundaries that the lemma gives us have $i=1$ and $j=2$. I.e., we have
\[
\p \left( \sigma \text{ on } B_1^{a,b;\left[a:b\right]} \right) \geq \frac{4\eps}{n k^7} \quad \text{and} \quad \p \left( \sigma \text{ on } B_2^{c,d;\left[c:d\right]} \right) \geq \frac{4\eps}{n k^7},
\]
where recall that $\sigma$ is on $B_1^{a,b;\left[a:b\right]}$ if $f\left( \sigma \right) = a$ and $f\left( \left[a:b\right]_1 \sigma \right) = b$. If $\sigma$ is on $B_1^{a,b;\left[a:b\right]}$ and $b \stackrel{\sigma_1}{>} a$, then $\sigma$ is a 2-manipulation point, so if this happens in more than half of the cases when $\sigma$ is on $B_1^{a,b;\left[a:b\right]}$, then we have
\[
\p\left( \sigma \in M_2 \right) \ge \frac{2\eps}{n k^7},
\]
and we are done. Similarly in the case of the boundary between $c$ and $d$ in coordinate 2. So we may assume from now on that
\[
\p \left( \sigma \in \cup_{z_{-1}^{a,b}} B_1 \left( z_{-1}^{a,b} \right) \right) \geq \frac{2\eps}{n k^7} \quad \text{and} \quad \p \left( \sigma \in \cup_{z_{-2}^{c,d}} B_2 \left( z_{-2}^{c,d} \right) \right) \geq \frac{2\eps}{n k^7}.
\]
The following lemma is an immediate corollary.
\begin{lemma}\label{lem:cases_ref}
Either
\begin{equation}\label{eq:sm_fbr_ref}
\p \left( \sigma \in \Sm \left( B_1^{a,b;\left[a:b\right]} \right) \right) \geq \frac{\eps}{n k^7}
\end{equation}
or
\begin{equation}\label{eq:lg_fbr_ref}
\p \left( \sigma \in \Lg \left( B_1^{a,b;\left[a:b\right]} \right) \right) \geq \frac{\eps}{n k^7},
\end{equation}
and the same can be said for the boundary $B_2^{c,d;\left[c:d\right]}$.
\end{lemma}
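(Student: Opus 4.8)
The plan is to obtain Lemma~\ref{lem:cases_ref} directly from the two probability lower bounds established immediately above it, combined with the definition of large and small fibers in the refined setting. First I would recall that for the boundary $B_1^{a,b;[a:b]}$, every vector $z_{-1}^{a,b}$ of preferences between $a$ and $b$ on the coordinates other than $1$ determines a fiber $B_1(z_{-1}^{a,b}) \subseteq \bar{F}(z_{-1}^{a,b})$, and that by definition this fiber is classified as \emph{large} when $\p\left(\sigma \in B_1(z_{-1}^{a,b}) \mid \sigma \in \bar{F}(z_{-1}^{a,b})\right) \geq 1 - \gamma$ and \emph{small} otherwise. Hence $\Lg\left(B_1^{a,b;[a:b]}\right)$ and $\Sm\left(B_1^{a,b;[a:b]}\right)$ are disjoint, and their union is exactly $\cup_{z_{-1}^{a,b}} B_1(z_{-1}^{a,b})$.

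Next I would invoke the hypothesis $\p\left(\sigma \in \cup_{z_{-1}^{a,b}} B_1(z_{-1}^{a,b})\right) \geq 2\eps/(nk^7)$, which was derived from Lemma~\ref{lem:boundaries2} — in the case where we are not already done via~\eqref{eq:neutralPairs3ManipProb} — after discarding the contribution of those ranking profiles on which the preference between $a$ and $b$ in coordinate $1$ is reversed (these already yield $2$-manipulation points, and the earlier discussion handled that subcase). Since the event $\{\sigma \in \cup_{z_{-1}^{a,b}} B_1(z_{-1}^{a,b})\}$ is the disjoint union of $\{\sigma \in \Lg(B_1^{a,b;[a:b]})\}$ and $\{\sigma \in \Sm(B_1^{a,b;[a:b]})\}$, a trivial averaging (pigeonhole) argument shows that at least one of these two events has probability at least half of $2\eps/(nk^7)$, i.e.\ at least $\eps/(nk^7)$. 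This is precisely either~\eqref{eq:sm_fbr_ref} or~\eqref{eq:lg_fbr_ref}.

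Finally, I would apply the identical argument to the boundary $B_2^{c,d;[c:d]}$, using $\p\left(\sigma \in \cup_{z_{-2}^{c,d}} B_2(z_{-2}^{c,d})\right) \geq 2\eps/(nk^7)$ together with the large/small partition of $\cup_{z_{-2}^{c,d}} B_2(z_{-2}^{c,d})$, to obtain the analogous dichotomy there. There is essentially no obstacle: the statement is a pure pigeonhole consequence of the two bounds already in hand. The only point requiring (minor) care is to confirm that $\Lg$ and $\Sm$ genuinely partition the relevant boundary set, which is immediate from the definition of large versus small fibers, and that the reduction from "$\sigma$ on $B_1^{a,b;[a:b]}$" to "$\sigma \in \cup_{z_{-1}^{a,b}} B_1(z_{-1}^{a,b})$" has already absorbed the manipulation-point subcase — so that in the remaining regime the full $2\eps/(nk^7)$ mass really does sit inside $\Lg \cup \Sm$.
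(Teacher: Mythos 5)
Your proposal is correct and matches the paper's (unstated, "immediate corollary") reasoning exactly: you reduce to $\p\bigl(\sigma \in \cup_{z_{-1}^{a,b}} B_1(z_{-1}^{a,b})\bigr) \geq 2\eps/(nk^7)$ via Lemma~\ref{lem:boundaries2} and the 2-manipulation-point subcase, observe that $\Lg\bigl(B_1^{a,b;[a:b]}\bigr)$ and $\Sm\bigl(B_1^{a,b;[a:b]}\bigr)$ disjointly partition that union since every fiber is classified as one or the other, and apply pigeonhole; the same argument handles $B_2^{c,d;[c:d]}$. No gaps.
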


We distinguish cases based upon this: either \eqref{eq:sm_fbr_ref} holds, or \eqref{eq:sm_fbr_ref} holds for the boundary $B_2^{c,d;\left[c:d\right]}$, or \eqref{eq:lg_fbr_ref} holds for both boundaries. We only need one boundary for the small fiber case, and we need both boundaries only in the large fiber case. So in the large fiber case we must differentiate between two cases: whether $d\in \left\{ a, b \right\}$ or $d \notin \left\{a, b \right\}$. 
We will see that our method of proof works in both cases.

In the rest of the section we first deal with the small fiber case, and then with the large fiber case.

\subsection{Small fiber case}\label{sec:sm_fbr_ref} 

We now deal with the case when \eqref{eq:sm_fbr_ref} holds. We formalize the ideas of the outline in a series of statements.

First, we want to formalize that the boundaries of the boundaries are big in this refined graph setting as well, when we are on a small fiber. The proof uses the canonical path method, as successfully adapted to this setting by Isaksson, Kindler and Mossel~\cite{IsKiMo:12}, and is very similar to the proof of Lemma~\ref{lem:comparable_bdries_1vot}, with some necessary modifications due to the fact that we now have $n$ coordinates.
\begin{lemma}\label{lem:comparable_bdries_ref}
Fix a coordinate and a pair of alternatives---for simplicity we choose coordinate 1 and alternatives $a$ and $b$, but we note that this lemma holds in general, we do not assume anything special about these choices. Let $z_{-1}^{a,b}$ be such that $B_1 \left( z_{-1}^{a,b} \right)$ is a small fiber for $B_1^{a,b;\left[a:b\right]}$. Then, writing $B \equiv B_1 \left( z_{-1}^{a,b} \right)$ for simplicity, we have
\begin{equation}\label{eq:comp_bdries_ref}
\p \left( \sigma \in \partial \left( B \right) \right) \geq \frac{\gamma}{2nk^5} \p \left( \sigma \in B \right).
\end{equation}
\end{lemma}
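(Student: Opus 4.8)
The plan is to mimic the one-voter argument of Lemma~\ref{lem:comparable_bdries_1vot}, now carried out inside the graph $G\left(z_{-1}^{a,b}\right) = \left(\bar{F}\left(z_{-1}^{a,b}\right), E\left(z_{-1}^{a,b}\right)\right)$. Write $B \equiv B_1\left(z_{-1}^{a,b}\right)$ and $B^c = \bar{F}\left(z_{-1}^{a,b}\right) \setminus B$. For every pair $\left(\sigma, \sigma'\right) \in B \times B^c$ I would construct a canonical path from $\sigma$ to $\sigma'$ inside $G\left(z_{-1}^{a,b}\right)$; since $\sigma \in B$ and $\sigma' \notin B$, every such path must traverse at least one edge of $\partial_e\left(B\right)$. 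If no edge of $\partial_e\left(B\right)$ lies on more than $r$ of these paths, then $\left|\partial_e\left(B\right)\right| \ge \left|B\right|\left|B^c\right|/r$, and the small-fiber hypothesis $\p\left(\sigma \in B \mid \sigma \in \bar{F}\left(z_{-1}^{a,b}\right)\right) < 1-\gamma$ — equivalently $\left|B^c\right| > \gamma \left|\bar{F}\left(z_{-1}^{a,b}\right)\right|$ — turns this into $\left|\partial_e\left(B\right)\right| \gtrsim \gamma \left|B\right|/\mathrm{poly}(k)$. Dividing by the maximum degree of $G\left(z_{-1}^{a,b}\right)$ then gives \eqref{eq:comp_bdries_ref}.

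The canonical path is built one coordinate at a time, fixing coordinates $1,2,\dots,n$ in order. In coordinate $1$ the block of $a$ and $b$ must stay adjacent, so I treat that block as a single super-element among the $k-1$ objects of coordinate $1$ and apply the bubble-sort construction of Proposition~\ref{prop:canon1} (with $k$ replaced by $k-1$); the extra ``block-moving'' edges of $G\left(z_{-1}^{a,b}\right)$ are exactly the adjacent transpositions of this super-element, so the path lives in $G\left(z_{-1}^{a,b}\right)$. In each coordinate $\ell \ge 2$ the relative order of $a$ and $b$ is pinned by $z_{-1}^{a,b}$, so I use the canonical path map of Proposition~\ref{prop:canon2}, all of whose intermediate permutations keep $a$ above $b$. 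Concatenating, the total length is at most $\tfrac{(k-1)^2}{2} + (n-1)k^2 \le nk^2$, and the path stays inside $\bar{F}\left(z_{-1}^{a,b}\right)$ using only edges of $G\left(z_{-1}^{a,b}\right)$.

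To bound the number of canonical paths through a fixed edge $\left(\pi, \pi'\right)$ that changes coordinate $\ell$: along any path routed through it, coordinates $1,\dots,\ell-1$ already equal $\sigma'$ and coordinates $\ell+1,\dots,n$ still equal $\sigma$, so $\pi$ determines $\sigma'_m$ for $m<\ell$ and $\sigma_m$ for $m>\ell$. The remaining freedom is $\sigma_1,\dots,\sigma_{\ell-1}$, then $\sigma'_{\ell+1},\dots,\sigma'_n$, and the pair $\left(\sigma_\ell,\sigma'_\ell\right)$ subject to the coordinate-$\ell$ path crossing the transition $\pi_\ell \to \pi'_\ell$; by Proposition~\ref{prop:canon1} (for $\ell=1$) or Proposition~\ref{prop:canon2} (for $\ell\ge 2$) the last factor is at most a fixed power of $k$ times the size of the coordinate-$\ell$ space. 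Multiplying the free factors, exactly one coordinate space is replaced by this $\mathrm{poly}(k)$ factor, so the number of routed pairs is at most $r \le C k^{c}\left|\bar{F}\left(z_{-1}^{a,b}\right)\right|$ for absolute $C,c$ (with $c$ essentially the exponent in Proposition~\ref{prop:canon2}). Hence $\left|\partial_e\left(B\right)\right| \ge \left|B\right|\left|B^c\right|/r \ge \gamma\left|B\right|/(Ck^{c})$, and since every vertex of $G\left(z_{-1}^{a,b}\right)$ has at most $\left(k-2\right)+\left(n-1\right)\left(k-1\right) \le nk$ neighbours, $\p\left(\sigma \in \partial\left(B\right)\right) \ge \left|\partial_e\left(B\right)\right|/\bigl(nk\,\left|\bar{F}\left(z_{-1}^{a,b}\right)\right|\bigr)$, which after collecting constants yields \eqref{eq:comp_bdries_ref} (the factor $\tfrac{\gamma}{2nk^5}$ being a convenient weakening). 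The only real obstacle is bookkeeping: verifying that the coordinate-$1$ block structure and the ``$a$ above $b$'' constraint mesh correctly with Propositions~\ref{prop:canon1} and~\ref{prop:canon2} so the path never leaves $\bar{F}\left(z_{-1}^{a,b}\right)$, and tracking which power of $k$ emerges; no new idea beyond the canonical-path machinery already developed is required.
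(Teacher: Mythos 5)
Your proposal follows the same route as the paper's own proof: a coordinate-by-coordinate canonical path inside $G\left(z_{-1}^{a,b}\right)$, using the block-as-super-element version of Proposition~\ref{prop:canon1} in coordinate $1$ and Proposition~\ref{prop:canon2} (equivalently, the bound cited from Isaksson--Kindler--Mossel) in the remaining coordinates, then a counting argument on routed pairs that collapses exactly one coordinate factor to a $\mathrm{poly}(k)$ multiple, followed by division by the maximum degree $nk$ of $G\left(z_{-1}^{a,b}\right)$. If you carry the constants through explicitly (at most $2k^4\left|\bar{F}\left(z_{-1}^{a,b}\right)\right|$ routed pairs per edge, $\left|B^c\right|\ge\gamma\left|\bar{F}\left(z_{-1}^{a,b}\right)\right|$ from the small-fiber hypothesis) you recover exactly $\left|\partial_e(B)\right|\ge\frac{\gamma}{2k^4}\left|B\right|$ and hence \eqref{eq:comp_bdries_ref}, so nothing is missing.
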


\begin{proof}
Let $B^c = \bar{F} \left( z_{-1}^{a,b} \right) \setminus B$. For every $\left( \sigma, \sigma' \right) \in B \times B^c$, we define a canonical path from $\sigma$ to $\sigma'$, which has to pass through at least one edge in $\partial_e \left( B \right)$. Then if we show that every edge in $\partial_e \left( B \right)$ lies on at most $r$ canonical paths, then it follows that $\left| \partial_e \left( B \right) \right| \geq \left|B \right| \left|B^c \right| / r$.

So let $\left( \sigma, \sigma' \right) \in B \times B^c$. We define a path from $\sigma$ to $\sigma'$ by applying a path construction in each coordinate one by one, and then concatenating these paths: first in the first coordinate we get from $\sigma_1$ to $\sigma'_1$, while leaving all other coordinates unchanged, then in the second coordinate we get from $\sigma_2$ to $\sigma'_2$, while leaving all other coordinates unchanged, and so on, finally in the last coordinate we get from $\sigma_n$ to $\sigma'_n$. In the first coordinate we apply the path construction of Proposition~\ref{prop:canon1}, but considering the block formed by $a$ and $b$ as a single element; in all other coordinates we apply the path construction of Proposition~\ref{prop:canon2}. Since this path goes from $\sigma$ (which is in $B$) to $\sigma'$ (which is in $B^c$), it must pass through at least one edge in $\partial_e \left( B \right)$.

For a given edge $\left( \pi, \pi' \right) \in \partial_e \left( B \right)$, at most how many possible $\left( \sigma, \sigma' \right) \in B \times B^c$ pairs are there such that the canonical path between $\sigma$ and $\sigma'$ defined above passes through $\left( \pi, \pi' \right)$? Let us differentiate two cases.

Suppose $\pi$ and $\pi'$ differ in the first coordinate. Then coordinates 2 through $n$ of $\sigma$ must agree with the respective coordinates of $\pi$, while coordinates 2 through $n$ of $\sigma'$ can be anything (up to the restriction given by $\sigma' \in B^c \subseteq \bar{F} \left( z_{-1}^{a,b} \right)$), giving $\left( \frac{k!}{2} \right)^{n-1}$ possibilities. Now fixing all coordinates except the first,Proposition~\ref{prop:canon1} tells us that there are at most $\left( k - 1 \right)^2 \left( k -1 \right)! / 2 < k^2 \left( k - 1 \right)!$ possibilities for the pair $\left( \sigma_1, \sigma'_1 \right)$. So altogether there are at most $k^2 \left( k- 1 \right)! \left( \frac{k!}{2} \right)^{n-1}$ paths that pass through a given edge in $\partial_e \left( B \right)$ in this case.

Suppose now that $\pi$ and $\pi'$ differ in the $i^{\text{th}}$ coordinate, $i \neq 1$. Then the first $i-1$ coordinates of $\sigma'$ must agree with the first $i-1$ coordinates of $\pi$, while coordinates $i+1, \dots, n$ of $\sigma$ must agree with the respective coordinates of $\pi$. The first $i-1$ coordinates of $\sigma$, and coordinates $i+1, \dots, n$ of $\sigma'$ can be anything (up to the restriction given by $\sigma, \sigma' \in \bar{F} \left( z_{-1}^{a,b} \right)$), giving $\left( k - 1 \right)! \left( \frac{k!}{2} \right)^{n-2}$ possibilities. Now fixing all coordinates except the $i^{\text{th}}$ coordinate,~\cite[Proposition 6.6.]{IsKiMo:12} tells us that there are at most $k^4 k!$ possibilities for the pair $\left( \sigma_i, \sigma'_i \right)$. So altogether there are at most $2k^4 \left( k - 1 \right)! \left( \frac{k!}{2} \right)^{n-1}$ paths that pass through a given edge in $\partial_e \left( B \right)$ in this case.

So in any case, there are at most $2k^4 \left( k - 1 \right)! \left( \frac{k!}{2} \right)^{n-1}$ paths that pass through a given edge in $\partial_e \left( B \right)$.

Recall that $\left| \bar{F} \left( z_{-1}^{a,b} \right) \right| = \left( k - 1 \right)! \left( \frac{k!}{2} \right)^{n-1}$, and also $\left| B^c \right| \geq \gamma \left( k - 1 \right)! \left( \frac{k!}{2} \right)^{n-1}$ since $B$ is a small fiber. Therefore
\[
\left| \partial_e \left( B \right) \right| \geq \frac{\left| B \right| \left| B^c \right|}{ 2 k^4 \left( k - 1 \right)! \left( \frac{k!}{2} \right)^{n-1}} \geq \frac{\gamma}{2 k^4} \left| B \right|.
\]
Now in $G\left( z_{-1}^{a,b} \right)$ every ranking profile has no more than $n k$ neighbors, which implies \eqref{eq:comp_bdries_ref}.
\end{proof}

\begin{corollary}\label{cor:comparable_bdries_ref}
If \eqref{eq:sm_fbr_ref} holds, then
\[
\p \left( \sigma \in \bigcup_{z_{-1}^{a,b}} \partial \left( B_1 \left( z_{-1}^{a,b} \right) \right) \right) \geq \frac{\gamma \eps}{2 n^2 k^{12}}.
\]
\end{corollary}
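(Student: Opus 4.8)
The plan is to mirror exactly the proof of Corollary~\ref{cor:comparable_boundaries_k} from the non-refined setting, now feeding in the refined comparison estimate Lemma~\ref{lem:comparable_bdries_ref} in place of Lemma~\ref{lem:comparable_boundaries_k}. First I would observe that the sets $\partial\left(B_1\left(z_{-1}^{a,b}\right)\right)$, ranging over all vectors $z_{-1}^{a,b}\in\left\{-1,1\right\}^{n-1}$, are pairwise disjoint, since each is contained in the corresponding $\bar{F}\left(z_{-1}^{a,b}\right)$, and these fibers partition the set of ranking profiles with $a,b$ adjacent in coordinate $1$. Hence
\[
\p\left(\sigma\in\bigcup_{z_{-1}^{a,b}}\partial\left(B_1\left(z_{-1}^{a,b}\right)\right)\right)=\sum_{z_{-1}^{a,b}}\p\left(\sigma\in\partial\left(B_1\left(z_{-1}^{a,b}\right)\right)\right).
\]

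Next I would restrict the sum to those $z_{-1}^{a,b}$ for which $B_1\left(z_{-1}^{a,b}\right)$ is a small fiber for $B_1^{a,b;\left[a:b\right]}$; dropping the remaining nonnegative terms only decreases the sum. For each such small fiber, Lemma~\ref{lem:comparable_bdries_ref} gives $\p\left(\sigma\in\partial\left(B_1\left(z_{-1}^{a,b}\right)\right)\right)\geq\frac{\gamma}{2nk^5}\,\p\left(\sigma\in B_1\left(z_{-1}^{a,b}\right)\right)$. Summing over the small fibers, and using that $\Sm\left(B_1^{a,b;\left[a:b\right]}\right)$ is by definition the (disjoint) union of exactly these $B_1\left(z_{-1}^{a,b}\right)$, I get
\[
\sum_{z_{-1}^{a,b}}\p\left(\sigma\in\partial\left(B_1\left(z_{-1}^{a,b}\right)\right)\right)\geq\frac{\gamma}{2nk^5}\,\p\left(\sigma\in\Sm\left(B_1^{a,b;\left[a:b\right]}\right)\right).
\]
Finally, plugging in the hypothesis \eqref{eq:sm_fbr_ref}, namely $\p\left(\sigma\in\Sm\left(B_1^{a,b;\left[a:b\right]}\right)\right)\geq\frac{\eps}{nk^7}$, yields the claimed bound $\frac{\gamma\eps}{2n^2k^{12}}$.

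There is no serious obstacle here: the statement is a pure aggregation of the per-fiber estimate, and the only points needing a line of care are the disjointness of the $\partial\left(B_1\left(z_{-1}^{a,b}\right)\right)$ (so that the probabilities add) and the bookkeeping that the union of small fibers is precisely $\Sm\left(B_1^{a,b;\left[a:b\right]}\right)$. All of the analytic content — the canonical-path and edge-isoperimetric argument bounding $\left|\partial_e(B)\right|$ from below, and the passage from that edge-boundary bound to a vertex-boundary probability via the degree bound in $G\left(z_{-1}^{a,b}\right)$ — has already been carried out in Lemma~\ref{lem:comparable_bdries_ref}, so this corollary is merely its global consequence. I would also note in passing, as the paper does elsewhere, that none of the choices ($i=1$, alternatives $a,b$) are special, so the same inequality holds for any coordinate and pair, which is what is actually used downstream for $B_2^{c,d;\left[c:d\right]}$.
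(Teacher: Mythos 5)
Your proof is correct and follows the paper's argument step for step: express the probability of the union as a sum over fibers, drop to the small fibers, apply the per-fiber bound of Lemma~\ref{lem:comparable_bdries_ref}, reassemble the sum as $\p\left(\sigma\in\Sm\left(B_1^{a,b;\left[a:b\right]}\right)\right)$, and invoke \eqref{eq:sm_fbr_ref}. The only addition is that you spell out the disjointness of the $\partial\left(B_1\left(z_{-1}^{a,b}\right)\right)$ that justifies the first equality, which the paper leaves implicit; this is a harmless and accurate observation.
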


\begin{proof}
Using the previous lemma and \eqref{eq:sm_fbr_ref} we have
\begin{align*}
\p \left( \sigma \in \bigcup_{z_{-1}^{a,b}} \partial \left( B_1 \left( z_{-1}^{a,b} \right) \right) \right) &= \sum_{z_{-1}^{a,b}} \p \left( \sigma \in \partial \left( B_1 \left( z_{-1}^{a,b} \right) \right) \right)\\
&\geq \sum_{z_{-1}^{a,b} : B_1 \left( z_{-1}^{a,b} \right)\subseteq \Sm \left( B_1^{a,b;\left[a:b\right]} \right) } \p \left( \sigma \in \partial \left( B_1 \left( z_{-1}^{a,b} \right) \right) \right)\\
&\geq \sum_{z_{-1}^{a,b} : B_1 \left( z_{-1}^{a,b} \right) \subseteq \Sm \left(  B_1^{a,b;\left[a:b\right]} \right) } \frac{\gamma}{2 n k^5} \p \left( \sigma \in  B_1 \left( z^{a,b} \right) \right)\\
&= \frac{\gamma}{2 n k^5} \p \left( \sigma \in \Sm \left( B_1^{a,b} \right) \right) \geq \frac{\gamma \eps}{2 n^2 k^{12}}. \qedhere
\end{align*}
\end{proof}

Next, we want to find manipulation points on the boundaries of boundaries.

Before we do this, let us divide the boundaries of the boundaries according to which direction they are in. If $\sigma \in \partial \left( B_1 \left( z_{-1}^{a,b} \right) \right)$ for some $z_{-1}^{a,b}$, then we know that there exists a ranking profile $\pi$ such that $\left( \sigma, \pi \right) \in \partial_{e} \left( B_1 \left( z_{-1}^{a,b} \right) \right)$. We know that $\sigma$ and $\pi$ differ in exactly one coordinate, say coordinate $j$; in this case we say that $\sigma$ is on the boundary of $B_1 \left( z_{-1}^{a,b} \right)$ in direction $j$, and we write $\sigma \in \partial_j \left( B_1 \left( z_{-1}^{a,b} \right) \right)$. (This notation should not be confused with that of the edge boundary.)

We can write the boundary of $B_1 \left( z_{-1}^{a,b} \right)$ as a union of boundaries in the different directions:
\[
\partial \left( B_1 \left( z_{-1}^{a,b} \right) \right) = \cup_{j=1}^{n} \partial_j \left( B_1 \left( z_{-1}^{a,b} \right) \right),
\]
but note that this is not (necessarily) a disjoint union, as a ranking profile $\sigma$ for which $\sigma \in \partial \left( B_1 \left( z_{-1}^{a,b} \right) \right)$ might lie on the boundary in multiple directions.

In particular, we differentiate between the boundary in direction 1 and the boundary in all other directions. To this end we introduce the notation
\[
\partial_{-1} \left( B_1 \left( x_{-1}^{a,b} \right) \right) := \cup_{j=2}^{n} \partial_j \left( B_1 \left( x_{-1}^{a,b} \right) \right).
\]
With this notation we have the following corollary of Corollary~\ref{cor:comparable_bdries_ref}.

\begin{corollary}\label{cor:cases_bdry_dir_ref}
If \eqref{eq:sm_fbr_ref} holds, then either
\begin{equation}\label{eq:lg_bdry_in_dir!=1}
\p \left( \sigma \in \cup_{z_{-1}^{a,b}} \partial_{-1} \left( B_1 \left( z_{-1}^{a,b} \right) \right) \right) \geq \frac{\gamma \eps}{4n^2 k^{12}}
\end{equation}
or
\begin{equation}\label{eq:lg_bdry_in_dir=1}
\p \left( \sigma \in \cup_{z_{-1}^{a,b}} \partial_{1} \left( B_1 \left( z_{-1}^{a,b} \right) \right) \right) \geq \frac{\gamma \eps}{4n^2 k^{12}}.
\end{equation}
\end{corollary}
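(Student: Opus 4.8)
The plan is to deduce this statement immediately from Corollary~\ref{cor:comparable_bdries_ref} by a union bound, so essentially no new work is required. Recall that, by the definitions given just before the statement, for every fixed $z_{-1}^{a,b}$ the vertex boundary in the graph $G\left( z_{-1}^{a,b} \right)$ splits according to the coordinate of the edge realizing it, and $\partial_{-1}$ collects the coordinates $j \geq 2$, so
\[
\partial \left( B_1 \left( z_{-1}^{a,b} \right) \right) = \partial_1 \left( B_1 \left( z_{-1}^{a,b} \right) \right) \cup \partial_{-1} \left( B_1 \left( z_{-1}^{a,b} \right) \right).
\]
Taking the union over all $z_{-1}^{a,b}$ and using that union distributes over union, we obtain
\[
\bigcup_{z_{-1}^{a,b}} \partial \left( B_1 \left( z_{-1}^{a,b} \right) \right) = \left( \bigcup_{z_{-1}^{a,b}} \partial_1 \left( B_1 \left( z_{-1}^{a,b} \right) \right) \right) \cup \left( \bigcup_{z_{-1}^{a,b}} \partial_{-1} \left( B_1 \left( z_{-1}^{a,b} \right) \right) \right).
\]

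First I would invoke Corollary~\ref{cor:comparable_bdries_ref}, which under the assumption \eqref{eq:sm_fbr_ref} gives that the left-hand side above has probability at least $\frac{\gamma \eps}{2 n^2 k^{12}}$ under a uniformly random $\sigma \in S_k^n$. Then by subadditivity of probability at least one of the two sets on the right-hand side has probability at least half of this, namely $\frac{\gamma \eps}{4 n^2 k^{12}}$; in the first case this is exactly \eqref{eq:lg_bdry_in_dir=1}, and in the second case \eqref{eq:lg_bdry_in_dir!=1}. The only thing to verify is that $\partial_1$ and $\partial_{-1}$ together exhaust $\partial$, which is immediate from the definitions, and that the union over $z_{-1}^{a,b}$ is being taken over a partition of $S_k^n$ into fibers so the probabilities of the $\partial \left( B_1 \left( z_{-1}^{a,b} \right) \right)$ add up as in the proof of Corollary~\ref{cor:comparable_bdries_ref}.

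There is essentially no obstacle in this corollary itself; its role is purely organizational, setting up the case split for the small-fiber analysis. The genuine work lies downstream: the direction-$\neq 1$ alternative \eqref{eq:lg_bdry_in_dir!=1} will be handled by applying the Gibbard--Satterthwaite theorem across two coordinates (via Lemma~\ref{lem:nonManipTriple}) to produce $4$-manipulation points, while the direction-$1$ alternative \eqref{eq:lg_bdry_in_dir=1} will lead either to manipulation points on the first coordinate or to many local dictators, which is then resolved by the one-voter argument of Section~\ref{sec:1voter}. Those steps are deferred to the subsequent subsections.
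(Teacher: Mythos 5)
Your proposal is correct and matches the paper's intent exactly: the paper presents this as an immediate corollary of Corollary~\ref{cor:comparable_bdries_ref} using the decomposition $\partial = \partial_1 \cup \partial_{-1}$ (introduced just before the statement) together with subadditivity, which is precisely your argument. One minor note: the clause about probabilities ``adding up'' across fibers is not needed for this corollary---the union bound on the two direction-classes suffices regardless of whether the fibers partition the space---but it does no harm.
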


\begin{lemma}\label{lem:manip_on_bdry_of_bdry_in_dir!=1_ref}
Suppose the ranking profile $\sigma$ is on the boundary of a fiber for $B_1^{a,b;\left[a:b\right]}$ in direction $j \neq 1$, i.e.,
\[
\sigma \in \cup_{z_{-1}^{a,b}} \partial_{-1} \left( B_1 \left( z_{-1}^{a,b} \right) \right).
\]
Then there exists a 3-manipulation point $\hat{\sigma}$ which agrees with $\sigma$ in all coordinates except perhaps coordinate 1 and some coordinate $j \neq 1$; furthermore $\hat{\sigma}_1$ is equal to $\sigma_1$ or $\left[a:b\right]\sigma_1$, except that the position of a third alternative $c$ might be shifted arbitrarily, and $\hat{\sigma}_j$ is equal to $\sigma_j$ or $z \sigma_j$ for some adjacent transposition $z \in T$, except the position of $b$ might be shifted arbitrarily.
\end{lemma}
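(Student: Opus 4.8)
The plan is to follow the template of Case~1 of Lemma~\ref{lem:manip_on_bdry} --- and of its one-voter analogue Lemma~\ref{lem:bdry_of_B} --- but in the refined graph, feeding the resulting configurations into the two identification lemmas for refined boundaries, Lemma~\ref{lem:nonManipBoundary} and Lemma~\ref{lem:nonManipTriple}, rather than invoking Gibbard--Satterthwaite directly. First I would unpack the hypothesis. Since $\sigma \in \partial_{-1}(B_1(z_{-1}^{a,b}))$ for some $z_{-1}^{a,b}$, in particular $\sigma \in B_1(z_{-1}^{a,b}) \subseteq \bar{F}(z_{-1}^{a,b})$, so $a$ and $b$ are adjacent in $\sigma_1$ with $a$ above $b$, $f(\sigma) = a$, and writing $\sigma' := [a:b]_1\sigma$ we get $f(\sigma') = b$, i.e.\ $(\sigma,\sigma') \in B_1^{a,b;[a:b]}$. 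Moreover there is an edge of $G(z_{-1}^{a,b})$ from $\sigma$ to some $\pi \notin B_1(z_{-1}^{a,b})$ in a direction $j \neq 1$; because $j \neq 1$ this edge is an ordinary adjacent transposition $\pi = z_j\sigma$ with $z \in T$, and since $\pi$ must still lie in $\bar{F}(z_{-1}^{a,b})$ we have $z \neq [a:b]$, hence the relative order of $a$ and $b$ in coordinate $j$ is the same in $\sigma$ and in $\pi$; note also that $[a:b]_1\pi = z_j\sigma'$.

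Next I would split on $f(\pi)$, using that $\pi \notin B_1(z_{-1}^{a,b})$ means either $f(\pi) \neq a$, or $f(\pi) = a$ and $f([a:b]_1\pi) \neq b$. If $f(\pi) = b$, then since the $a$/$b$ order in coordinate $j$ agrees in $\sigma$ and $\pi$ while $f(\sigma) = a$ and $f(\pi) = b$, one of $\sigma$, $\pi$ is a $2$-manipulation point, differing from $\sigma$ only in coordinate $j$ by the single transposition $z$. If $f(\pi) = c \notin \{a,b\}$, then $(\sigma',\sigma) \in B_1^{b,a;[a:b]}$ and $(\pi,\sigma) \in B_j^{c,a;z}$ are refined boundary pairs meeting at the common vertex $\sigma$, so Lemma~\ref{lem:nonManipTriple} produces a $3$-manipulation point that changes $\sigma$ only in coordinates $1$ and $j$, shifting $c$ in coordinate $1$ and $b$ in coordinate $j$. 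Finally, if $f(\pi) = a$, then $f([a:b]_1\pi) = f(z_j\sigma') \neq b$: when $f(z_j\sigma') = a$ we have $(\sigma',z_j\sigma') \in B_j^{b,a;z}$ with $z \neq [a:b]$, so Lemma~\ref{lem:nonManipBoundary} yields a $2$-manipulation point among $\sigma',z_j\sigma'$; when $f(z_j\sigma') = c \notin \{a,b\}$ the pairs $(\sigma,\sigma') \in B_1^{a,b;[a:b]}$ and $(z_j\sigma',\sigma') \in B_j^{c,b;z}$ meet at $\sigma'$, and Lemma~\ref{lem:nonManipTriple} again gives a $3$-manipulation point. In every case the resulting point differs from $\sigma$ only in coordinates $1$ and $j$, with coordinate-$1$ entry in $\{\sigma_1,[a:b]\sigma_1\}$ up to shifting one further alternative and coordinate-$j$ entry in $\{\sigma_j,z\sigma_j\}$ up to shifting one further alternative.

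The hard part will be the bookkeeping of the precise \emph{shape} of the manipulation point, not the case split itself: Lemma~\ref{lem:nonManipTriple} is stated with a fixed convention for which alternative is bubbled in which coordinate, and in the applications above the roles of $a$, $b$ and the auxiliary alternative $c$ are permuted relative to that convention, so its conclusion must be carefully translated through the relabeling in each case to confirm that the alternative shifted in coordinate $1$ is indeed the third alternative $c$ and that the one shifted in coordinate $j$ is $b$ as asserted (in the degenerate sub-case $f(\pi)=a$, $f(z_j\sigma')=c$ one has to check that a suitable choice of the common vertex makes the coordinate-$j$ shift be of $b$ rather than of $a$, or else re-derive a mirror-image variant of Lemma~\ref{lem:nonManipTriple} with the bubbling performed in the other order). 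The remaining verifications --- that $\pi$ stays in $\bar{F}(z_{-1}^{a,b})$, that $z \neq [a:b]$, and that each constructed point differs from $\sigma$ in at most the two named coordinates --- are routine.
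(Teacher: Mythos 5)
Your identification of the two key lemmas (\ref{lem:nonManipBoundary} and \ref{lem:nonManipTriple}) and the overall plan is right, and all but one of your cases go through. The gap you flag at the end is real: in the sub-case $f(\pi)=a$, $f(\pi')=c\notin\{a,b\}$, Lemma~\ref{lem:nonManipTriple} applied at the common vertex $\sigma'$ (with $(\sigma,\sigma')\in B_1^{a,b;T}$ and $(\pi',\sigma')\in B_j^{c,b;T}$, so $a_L=a$, $b_L=b$, $c_L=c$) produces a $\hat\sigma$ in which the alternative shifted in coordinate $j$ is $a_L=a$, not $b$, which does not match the statement you are trying to prove. There is no ``suitable choice of common vertex'' that makes it $b$, because the other choice of common vertex ($\sigma$) would require $(\pi,\sigma)\in B_j^{c,a;T}$, i.e.\ $f(\pi)=c$, which is false here.

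The fix is not a mirror variant of Lemma~\ref{lem:nonManipTriple} but a different case split, which is what the paper does. Split on whether $f(\pi)=f(\pi')$, not on $f(\pi)$ alone. If $f(\pi)\neq f(\pi')$, apply Lemma~\ref{lem:nonManipBoundary} directly to the pair $(\pi,\pi')\in B_1^{f(\pi),f(\pi');T}$: they differ in coordinate $1$ by $[a:b]$, and since $(f(\pi),f(\pi'))\neq(a,b)$ (by $\pi\notin B_1(z_{-1}^{a,b})$), either $\{f(\pi),f(\pi')\}\neq\{a,b\}$ --- so $[a:b]\neq[f(\pi):f(\pi')]$ and one of $\pi,\pi'$ is a $2$-manipulation point --- or $(f(\pi),f(\pi'))=(b,a)$, in which case $\pi$ itself is a $2$-manipulation point since $a\stackrel{\pi_1}{>}b$ and $f(\pi)=b$. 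Both $\pi$ and $\pi'=z_j\sigma'$ are within the claimed shape (differ from $\sigma$ only in coordinates $1$ and $j$, with no extra shifts). Then you only need Lemma~\ref{lem:nonManipTriple} when $f(\pi)=f(\pi')$, and the only non-trivial such sub-case is $f(\pi)=f(\pi')=c\notin\{a,b\}$; there $(\pi,\sigma)\in B_j^{c,a;T}$ does hold, the common vertex $\sigma$ works, and Lemma~\ref{lem:nonManipTriple} (with $a_L=b$, $b_L=a$, $c_L=c$) shifts $c$ in coordinate $1$ and $a_L=b$ in coordinate $j$, exactly as the statement requires. This reorganization absorbs the problematic $(a,c)$ sub-case into a $2$-manipulation argument and eliminates the shape mismatch.
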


\begin{proof}
Suppose $x_{-1}^{a,b} \left( \sigma \right) = z_{-1}^{a,b}$.
Since $\sigma \in \partial \left( B_1 \left( z_{-1}^{a,b} \right) \right) \subseteq B_1 \left( z_{-1}^{a,b} \right)$, we know that $f \left( \sigma \right) = a$, and if $\sigma' = \left[a:b\right]_1 \sigma$, then $f\left( \sigma' \right) = b$.

Let $\pi = \left( \pi_j, \sigma_{-j} \right)$ denote the ranking profile such that $\left( \sigma, \pi \right) \in \partial_{e} \left( B_1 \left( z_{-1}^{a,b} \right) \right)$. Let $\pi' := \left[a:b\right]_1 \pi$. Since $\pi \notin B_1 \left( z_{-1}^{a,b} \right)$, $\left( f\left( \pi \right), f \left( \pi' \right) \right) \neq \left( a, b \right)$. Then, by Lemma~\ref{lem:nonManipBoundary}, if $f\left( \pi \right) \neq f\left( \pi' \right)$, then one of $\pi$ and $\pi'$ is a 2-manipulation point.

So let us suppose that $f\left( \pi \right) = f \left( \pi' \right)$. If $f\left( \pi' \right) = a$, then one of $\sigma'$ and $\pi'$ is a 2-manipulation point by Lemma~\ref{lem:nonManipBoundary}, since $\pi' = z_j \sigma'$ for some adjacent transposition $z \neq \left[a:b\right]$. If $f\left( \pi \right) = b$, then similarly one of $\sigma$ and $\pi$ is a 2-manipulation point.

Finally let us suppose that $f\left(\pi \right) = c$ for some $c\notin \left\{ a, b \right\}$. In this case Lemma~\ref{lem:nonManipTriple} tells us that there exists an appropriate 3-manipulation point $\hat{\sigma}$.
\end{proof}

\begin{corollary}\label{cor:lg_bdry_in_dir!=1_gives_manip}
If \eqref{eq:lg_bdry_in_dir!=1} holds, then
\begin{equation}\label{eq:lg_bdry_in_dir!=1_gives_manip}
\p \left( \sigma \in M_3 \right) \geq \frac{\gamma \eps}{8 n^3 k^{16}}.
\end{equation}
\end{corollary}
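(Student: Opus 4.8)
The plan is a standard multiplicity (double-counting) argument built directly on top of Lemma~\ref{lem:manip_on_bdry_of_bdry_in_dir!=1_ref}. That lemma assigns to every $\sigma$ lying in $\bigcup_{z_{-1}^{a,b}} \partial_{-1}(B_1(z_{-1}^{a,b}))$ a $3$-manipulation point $\Psi(\sigma)\in M_3(f)$, and hence $\Psi$ maps the set appearing in \eqref{eq:lg_bdry_in_dir!=1} into $M_3$. If every point of $M_3$ has at most $N$ preimages under $\Psi$, then $\p(\sigma\in M_3)\ge N^{-1}\,\p\bigl(\sigma\in\bigcup_{z_{-1}^{a,b}}\partial_{-1}(B_1(z_{-1}^{a,b}))\bigr)$; combined with \eqref{eq:lg_bdry_in_dir!=1} this yields \eqref{eq:lg_bdry_in_dir!=1_gives_manip} as soon as $N\le 2nk^4$.

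So the remaining task is to verify $N\le 2nk^4$. Fix $\tau\in M_3$ and enumerate the $\sigma$ with $\Psi(\sigma)=\tau$. By Lemma~\ref{lem:manip_on_bdry_of_bdry_in_dir!=1_ref} such a $\sigma$ agrees with $\tau$ in every coordinate except coordinate $1$ and one further coordinate $j$, so there are at most $n$ choices for $j$, and all coordinates of $\sigma$ other than $1$ and $j$ are determined by $\tau$. To recover $\sigma_1$ from $\tau_1$ one must decide whether or not to apply the adjacent transposition $[a:b]$ (which is fixed, since $a,b$ are fixed throughout), and undo the arbitrary shift of the third alternative $c$; the latter costs at most $k$ choices for the name of $c$ and at most $k$ for the rank to which $c$ is returned, giving at most $2k^2$ possibilities for $\sigma_1$. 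To recover $\sigma_j$ from $\tau_j$ one must undo the arbitrary shift of $b$ (at most $k$ choices for its rank) and possibly undo an adjacent transposition $z$; the point is that a nontrivially acting $z$ swaps two alternatives occupying consecutive ranks in $\tau_j$, so there are at most $k$ choices for $z$ (including the identity), giving at most $k^2$ possibilities for $\sigma_j$. Multiplying, $N\le n\cdot 2k^2\cdot k^2 = 2nk^4$, and therefore
\[
\p(\sigma\in M_3)\ \ge\ \frac{1}{2nk^4}\cdot\frac{\gamma\eps}{4n^2k^{12}}\ =\ \frac{\gamma\eps}{8n^3k^{16}},
\]
which is \eqref{eq:lg_bdry_in_dir!=1_gives_manip}.

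The only step that needs a little care is precisely the observation pinning $z$ down to $O(k)$ rather than $O(k^2)$ possibilities, since that slack is exactly what makes the constant come out as $2nk^4$ rather than something weaker; keeping track of whether each ``arbitrary shift'' is performed before or after the relevant transposition does not affect any of the counts. Everything else is bookkeeping, and no idea beyond Lemma~\ref{lem:manip_on_bdry_of_bdry_in_dir!=1_ref} is required.
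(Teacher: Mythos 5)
Your proof is correct and follows essentially the same double-counting argument as the paper: apply Lemma~\ref{lem:manip_on_bdry_of_bdry_in_dir!=1_ref} to map boundary points into $M_3$, bound the multiplicity of the map by $2nk^4$ ($n$ for the coordinate $j$, $2k^2$ for coordinate $1$, $k^2$ for coordinate $j$), and divide. The only place you go beyond the paper is in spelling out why the adjacent transposition $z$ contributes only $O(k)$ rather than $O(k^2)$ choices --- namely that $z$ acts nontrivially only when it swaps consecutively ranked alternatives --- a detail the paper states without justification; your version is slightly more self-contained but not a different argument.
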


\begin{proof}
Lemma~\ref{lem:manip_on_bdry_of_bdry_in_dir!=1_ref} tells us that for every ranking profile $\sigma$ which is on the boundary of a fiber for $B_1^{a,b;\left[a:b\right]}$ in some direction $j\neq 1$, there is a 3-manipulation point $\hat{\sigma}$ ``nearby''; the lemma specifies what ``nearby'' means.

How many ranking profiles $\sigma$ may give the same $\hat{\sigma}$? At most $2 n k^4$, which comes from the following: $\sigma$ and $\hat{\sigma}$ agree in all coordinates except maybe two, one of which is the first coordinate; there are $n-1 < n$ possibilities for the other coordinate; in the first coordinate, $\hat{\sigma}_1$ is either $\sigma_1$ or $\left[a:b\right] \sigma_1$ (giving 2 possibilities), while some alternative $c$ ($k-2 < k$ possibilities) might be shifted arbitrarily (at most $k$ possibilities); in the other coordinate $j \neq 1$, $\hat{\sigma}_j$ is equal to $\sigma_j$ or $z \sigma_j$ for some adjacent transposition $z\in T$ (at most $k$ possibilities), except $b$ might be shifted arbitrarily ($k$ possibilities).

So putting this result from Lemma~\ref{lem:manip_on_bdry_of_bdry_in_dir!=1_ref} together with \eqref{eq:lg_bdry_in_dir!=1} yields \eqref{eq:lg_bdry_in_dir!=1_gives_manip}.
\end{proof}

The remaining case we have to deal with is when \eqref{eq:lg_bdry_in_dir=1} holds.

\begin{lemma}\label{lem:bdry_of_bdry_in_coord1_ref}
Suppose the ranking profile $\sigma$ is on the boundary of a fiber for $B_1^{a,b;\left[a:b\right]}$ in direction 1, i.e.,
\[
\sigma \in \cup_{z_{-1}^{a,b}} \partial_{1} \left( B_1 \left( z_{-1}^{a,b} \right) \right).
\]
Then either $\sigma \in \LD_1 \left( a, b \right)$, or there exists a 3-manipulation point $\hat{\sigma}$ which agrees with $\sigma$ in all coordinates except perhaps in coordinate 1; furthermore $\hat{\sigma}_1$ is equal to $\sigma_1$, or $\left[a:b\right] \sigma_1$ except that the position of a third alternative $c$ might be shifted arbitrarily.
\end{lemma}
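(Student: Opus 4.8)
The plan is to transcribe the one-voter argument of Lemma~\ref{lem:bdry_of_B}, with ``coordinate $1$'' playing the role of ``the single voter'' and all other coordinates kept frozen, working throughout inside the modified fiber graph $G\left( z_{-1}^{a,b} \right)$ for $z_{-1}^{a,b} = x_{-1}^{a,b}\left( \sigma \right)$. First I would record the data we start from. Since $\sigma \in \partial_1\left( B_1\left( z_{-1}^{a,b} \right) \right) \subseteq B_1\left( z_{-1}^{a,b} \right)$, we have $f\left( \sigma \right) = a$, and, writing $\sigma' = \left[a:b\right]_1 \sigma$, also $f\left( \sigma' \right) = b$, with $a$ directly above $b$ in $\sigma_1$. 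Let $\pi$ witness $\sigma \in \partial_1\left( B_1\left( z_{-1}^{a,b} \right) \right)$, i.e.\ $\left( \sigma, \pi \right) \in \partial_e\left( B_1\left( z_{-1}^{a,b} \right) \right)$ with $\pi$ differing from $\sigma$ only in coordinate $1$, and set $\pi' = \left[a:b\right]_1 \pi$. Because $\pi \notin B_1\left( z_{-1}^{a,b} \right)$ we have $\left( f\left( \pi \right), f\left( \pi' \right) \right) \neq \left( a, b \right)$; and if $f\left( \pi \right) \neq f\left( \pi' \right)$, then (using Lemma~\ref{lem:nonManipBoundary} applied to the pair $\left( \pi, \pi' \right)$, together with the fact that $a$ stays directly above $b$ in $\pi_1$) one of $\pi, \pi'$ is a $2$-manipulation point that agrees with $\sigma$ off coordinate $1$, and we are done. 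So assume $f\left( \pi \right) = f\left( \pi' \right)$.

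Next I would use the edge structure of $G\left( z_{-1}^{a,b} \right)$: since $\pi$ and $\sigma$ differ by a single edge of this graph and only in coordinate $1$, $\pi_1$ is obtained from $\sigma_1$ in one of three ways --- (i) an ordinary adjacent transposition of two alternatives both outside $\left\{ a, b \right\}$; (ii) sliding the $ab$-block up one position past a third alternative $c$; or (iii) sliding the $ab$-block down one position past a third alternative $c$. In case (i) the relative order of $a$ and $b$, and of $a$ versus any $d \notin \left\{a,b\right\}$, is the same in $\sigma_1$ and $\pi_1$, so applying Lemma~\ref{lem:nonManipBoundary} to whichever of the pairs $\left( \sigma, \pi \right), \left( \sigma', \pi' \right), \left( \sigma, \pi' \right), \left( \sigma', \pi \right)$ is a boundary pair yields a $2$-manipulation point agreeing with $\sigma$ off coordinate $1$, with at most the transposed alternative shifted. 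In case (ii): unless $\sigma$ or $\pi$ is already a $3$-manipulation point (which happens whenever $f\left( \pi \right) \neq a$, since then one of $\sigma, \pi$ can be manipulated by permuting the adjacent block $\left\{ a, b, c \right\}$ in coordinate $1$), we must have $f\left( \pi \right) = a$, hence $f\left( \pi' \right) = a$; then $\pi'$ is a $3$-manipulation point because $f\left( \sigma' \right) = b$ while $\sigma'$ and $\pi'$ differ only by such a block permutation in coordinate $1$. In both cases the manipulation point has the claimed form.

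The substantive case is (iii), handled exactly as in Lemma~\ref{lem:bdry_of_B}. Write $c$ for the third alternative that ends up directly above the $ab$-block in $\pi_1$. If $f\left( \pi \right) \notin \left\{ a, b, c \right\}$, or if $f\left( \pi \right) = f\left( \pi' \right) \in \left\{ a, b \right\}$, one immediately extracts a $3$-manipulation point from the orders of $a, b$ (and of the remaining outcome) in the relevant coordinates, using $f\left( \sigma \right) = a$ and $f\left( \sigma' \right) = b$. The only remaining possibility is $f\left( \pi \right) = f\left( \pi' \right) = c$; in that case I would inspect $f\left( \left[b:c\right]_1 \sigma \right)$ and $f\left( \left[a:c\right]_1 \sigma' \right)$. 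If the first is $\neq a$ or the second is $\neq b$, a $3$-manipulation point agreeing with $\sigma$ off coordinate $1$ --- with coordinate $1$ equal to $\sigma_1$ or $\left[a:b\right]\sigma_1$ up to a shift of $c$ --- is produced. Otherwise every reordering of $\left\{a,b,c\right\}$ in that adjacent block of $\sigma_1$ makes the outcome the top-ranked alternative among $\left\{a,b,c\right\}$, which is precisely $\sigma \in \LD_1^{\left\{a,b,c\right\}} \subseteq \LD_1\left( a, b \right)$.

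I expect no new idea to be needed beyond Lemma~\ref{lem:nonManipBoundary}: the whole content is bookkeeping. The two points that require care are (a) confirming that the modified adjacency of $G\left( z_{-1}^{a,b} \right)$ really does give exactly the trichotomy (i)--(iii), so that no move touching a second coordinate can occur --- this follows because $\sigma$ and $\pi$ differ in exactly one coordinate by construction of $\partial_1$, and within coordinate $1$ the fiber constraint $\sigma \in \bar{F}\left( z_{-1}^{a,b} \right)$ forbids separating $a$ from $b$; and (b) verifying in each subcase that the exhibited profile genuinely lies in $M_3$ and that the coordinate-$1$ change is of the stated restricted form (equal to $\sigma_1$ or $\left[a:b\right]\sigma_1$, possibly with one further alternative shifted). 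Both (a) and (b) are direct transcriptions of the corresponding steps in the proof of Lemma~\ref{lem:bdry_of_B}.
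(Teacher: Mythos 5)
Your proof is correct and follows exactly the intended route: the paper proves this lemma by saying only ``just like the proof of Lemma~\ref{lem:bdry_of_B},'' and your argument is a faithful transcription of that one-voter proof to coordinate $1$ inside the fiber graph $G\left( z_{-1}^{a,b} \right)$, with the same three-way case split (adjacent transposition away from the $ab$-block, block moved up, block moved down) and the same use of Lemma~\ref{lem:nonManipBoundary} in each case.
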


\begin{proof}
Just like the proof of Lemma~\ref{lem:bdry_of_B}.
\end{proof}

The following corollary then tells us that either we have found many 3-manipulation points, or we have many local dictators on three alternatives in coordinate 1.

\begin{corollary}\label{cor:sm_fbr_ref_last_cor}
Suppose \eqref{eq:lg_bdry_in_dir=1} holds. Then either
\begin{equation}\label{eq:many_loc_dict}
\sum_{c\notin\left\{a,b\right\}} \p \left( \sigma \in \LD_1^{\left\{a,b,c\right\}} \right) = \p\left( \sigma \in \LD_1 \left( a, b \right) \right) \geq \frac{\gamma \eps}{8 n^2 k^{12}}
\end{equation}
or
\[
\p \left( \sigma \in M_3 \right) \geq \frac{\gamma \eps}{16 n^2 k^{14}}.
\]
\end{corollary}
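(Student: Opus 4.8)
The plan is to derive this corollary directly from Lemma~\ref{lem:bdry_of_bdry_in_coord1_ref} by an averaging argument followed by a multiplicity count, exactly parallel to how Corollary~\ref{cor:manip_or_loc_dict} was obtained from Lemma~\ref{lem:bdry_of_B} in the one-voter setting. First I would invoke \eqref{eq:lg_bdry_in_dir=1}, which says that
\[
\p \left( \sigma \in \bigcup_{z_{-1}^{a,b}} \partial_1 \left( B_1 \left( z_{-1}^{a,b} \right) \right) \right) \geq \frac{\gamma \eps}{4 n^2 k^{12}},
\]
and then apply Lemma~\ref{lem:bdry_of_bdry_in_coord1_ref} pointwise on this event: every such $\sigma$ either lies in $\LD_1 \left( a, b \right)$, or admits a $3$-manipulation point $\hat\sigma$ of the form described in the lemma. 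Splitting the mass, at least one of these two sub-events carries at least half of $\frac{\gamma \eps}{4 n^2 k^{12}}$.

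If it is the local-dictator sub-event that carries the mass, we immediately get $\p \left( \sigma \in \LD_1 \left( a, b \right) \right) \geq \frac{\gamma \eps}{8 n^2 k^{12}}$, which is the first alternative in the statement. The displayed equality $\sum_{c \notin \left\{a,b\right\}} \p \left( \sigma \in \LD_1^{\left\{a,b,c\right\}} \right) = \p \left( \sigma \in \LD_1 \left( a, b \right) \right)$ is then just the definition of $\LD_1 \left( a, b \right)$ together with the observation that the union is disjoint: at a local dictator on $\left\{a,b,c\right\}$ in coordinate $1$, the three alternatives $a,b,c$ form an adjacent block in $\sigma_1$, so the third alternative $c$ is determined by $\sigma$.

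If instead it is the manipulation sub-event that carries the mass, I would bound the number of ranking profiles $\sigma$ that can produce the same witness $\hat\sigma$. By the description in Lemma~\ref{lem:bdry_of_bdry_in_coord1_ref}, $\hat\sigma$ and $\sigma$ agree in all coordinates outside coordinate $1$; in coordinate $1$, $\hat\sigma_1$ is either $\sigma_1$ or $\left[a:b\right]\sigma_1$ (two choices), with the position of some third alternative $c$ (at most $k$ choices for $c$, and at most $k$ possible positions for it) shifted arbitrarily. Hence at most $2 k^2$ profiles $\sigma$ map to a given $\hat\sigma \in M_3$, so
\[
\p \left( \sigma \in M_3 \right) \geq \frac{1}{2 k^2} \cdot \frac{\gamma \eps}{8 n^2 k^{12}} = \frac{\gamma \eps}{16 n^2 k^{14}},
\]
as claimed.

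Since all the genuine combinatorial work --- the case analysis showing that a boundary point of a fiber in direction $1$ is either a local dictator or close to a $3$-manipulation point --- is packaged inside Lemma~\ref{lem:bdry_of_bdry_in_coord1_ref} (whose proof mirrors that of Lemma~\ref{lem:bdry_of_B}), the corollary itself presents no real obstacle. The only points requiring minor care are getting the multiplicity constant $2k^2$ right so that the final exponent matches the stated $k^{14}$, and confirming the disjointness of the union defining $\LD_1 \left( a, b \right)$.
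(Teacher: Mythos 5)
Your proof is correct and follows exactly the route the paper leaves implicit (the paper gives no explicit proof of this corollary): invoke \eqref{eq:lg_bdry_in_dir=1}, apply Lemma~\ref{lem:bdry_of_bdry_in_coord1_ref} pointwise to split the mass between $\LD_1(a,b)$ and a $3$-manipulation witness, and then count multiplicity ($2$ choices for $\sigma_1$ versus $[a:b]\sigma_1$, at most $k$ choices for $c$, at most $k$ original positions for $c$, giving $2k^2$). This parallels the one-voter case Corollary~\ref{cor:manip_or_loc_dict} exactly, and the constants $\frac{\gamma\eps}{8n^2k^{12}}$ and $\frac{\gamma\eps}{16n^2k^{14}}$ come out right.

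One small gap in your aside: the justification you give for disjointness of the union $\bigcup_{c}\LD_1^{\{a,b,c\}}$ — that ``the third alternative $c$ is determined by $\sigma$'' because $a,b,c$ are adjacent — is not quite right, since $\sigma_1$ could be of the form $(\ldots,c,a,b,c',\ldots)$ with both $\{a,b,c\}$ and $\{a,b,c'\}$ adjacent blocks. The correct argument is that being a local dictator on both $\{a,b,c\}$ and $\{a,b,c'\}$ would force $f(\sigma)$ to equal simultaneously $c$ (the top of $\{a,b,c\}$) and some element of $\{a,b\}$ (the top of $\{a,b,c'\}$), which is impossible. This does not affect your main estimate, since all you actually need downstream is $\sum_c \p(\sigma\in\LD_1^{\{a,b,c\}}) \geq \p(\sigma\in\LD_1(a,b))$, which holds unconditionally.
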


\subsection{Dealing with local dictators}\label{sec:loc_dict} 

So the remaining case we have to deal with in this small fiber case is when \eqref{eq:many_loc_dict} holds, i.e., we have many local dictators in coordinate 1.

\begin{lemma}\label{lem:loc_dict_abc_to_top}
Suppose $\sigma \in \LD_1^{\left\{a,b,c\right\}}$ for some alternative $c\notin \left\{a,b\right\}$. Define $\sigma' := \left( \sigma'_1, \sigma_{-1} \right)$ by letting $\sigma'_1$ be equal to $\sigma_1$ except that the block of $a$, $b$ and $c$ is moved to the top of the coordinate. Then
\begin{itemize}
\item either $\sigma' \in \LD_1^{\left\{a,b,c\right\}}$,
\item or there exists a 3-manipulation point $\hat{\sigma}$ which agrees with $\sigma$ in all coordinates except perhaps in coordinate 1; furthermore $\hat{\sigma}_1$ is equal to $\sigma_1$ except that the position of $a$, $b$ and $c$ might be shifted arbitrarily.
\end{itemize}
\end{lemma}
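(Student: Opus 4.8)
The plan is to replay the one-voter argument of Lemma~\ref{lem:loc_dict_abc_to_top_1vot}, working entirely inside coordinate $1$ with the remaining coordinates frozen at $\sigma_{-1}$. Without loss of generality we may assume the block of $\left\{a,b,c\right\}$ sits in $\sigma_1$ in the order $a$ above $b$ above $c$; since $\sigma \in \LD_1^{\left\{a,b,c\right\}}$ this forces $f(\sigma) = a$.

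The first step is to push the block to the top of coordinate $1$, one slot at a time. To move the block up past an element $d \notin \left\{a,b,c\right\}$ lying directly above it, apply in coordinate $1$ the three adjacent transpositions carrying $d$ past the top, middle and bottom of the block in turn, so that afterwards $d$ sits directly below the block. Let $\tau$ be a profile reached along the way, let $\tau' = \left[e:d\right]_1 \tau$ be the next one with $e \in \left\{a,b,c\right\}$, and assume inductively that $f(\tau)$ equals the top element of the block (which lies in $\left\{a,b,c\right\}$). If $f(\tau') \neq f(\tau)$, set $g = f(\tau')$, so $(\tau,\tau') \in B_1^{f(\tau),g;\left[e:d\right]}$. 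When $g \neq d$, the transposition $\left[e:d\right]$ cannot equal $\left[f(\tau):g\right]$ because $d \notin \left\{a,b,c\right\} \ni f(\tau)$, so Lemma~\ref{lem:nonManipBoundary} produces a $2$-manipulation point among $\left\{\tau,\tau'\right\}$; when $g = d$, the element $f(\tau)$ lies above $d$ in $\tau'$, so at $\tau'$ the voter can report $\tau_1$ (putting $d$ back above $e$) and obtain $f(\tau) \stackrel{\tau'_1}{>} d = f(\tau')$, again a $2$-manipulation point. In both cases this point agrees with $\sigma$ off coordinate $1$ and, in coordinate $1$, is $\sigma_1$ with the positions of $a$, $b$, $c$ (and of $d$) moved, hence is a $3$-manipulation point of the required form. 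So we may assume the outcome tracks the top element of the block throughout, and we arrive at $\sigma'$ with $f(\sigma') = a$.

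It remains to decide membership in $\LD_1^{\left\{a,b,c\right\}}$, and this is where the hypothesis $\sigma \in \LD_1^{\left\{a,b,c\right\}}$ really enters. The displayed bubbling estimate applies verbatim to each of the $3! = 6$ orderings of the block: starting from the block (at its current location) in a fixed ordering and running the same three transpositions, either the outcome stays equal to that ordering's top element at every step, or a $2$-manipulation point of the required form appears. Hence the property ``with the block at its current location, $f$ returns the top-ranked block alternative for every ordering of the block'' is preserved by each one-slot move of the block unless a manipulation point is produced; this property holds at the original location of the block exactly because $\sigma \in \LD_1^{\left\{a,b,c\right\}}$, so either some step exhibits a $2$-manipulation point or the property also holds with the block at the top, i.e.\ $\sigma' \in \LD_1^{\left\{a,b,c\right\}}$. (Alternatively one can first bring only the $(a,b,c)$-ordering to the top and inspect the six reorderings of the top block directly, using Lemma~\ref{lem:nonManipBoundary} together with the characterization of nonmanipulable one-voter social choice functions as the maps $\tp_H$.)

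The main obstacle is precisely this last step. Without the hypothesis on $\sigma$, the block social choice function induced at the top could be $\tp_H$ for a strict subset $H \subsetneq \left\{a,b,c\right\}$ — nonmanipulable, yet not equal to $\tp_{\left\{a,b,c\right\}}$ — so that neither conclusion of the lemma would hold; propagating all six block orderings along the bubble-up, as above, is the clean way to use $\sigma \in \LD_1^{\left\{a,b,c\right\}}$ to exclude this. The only other point needing attention is the routine bookkeeping that every manipulation point we produce differs from $\sigma$ only in coordinate $1$ and there only by a displacement of $a$, $b$, $c$, which is immediate since each intermediate profile in the construction already has this shape.
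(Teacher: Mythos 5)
Your proof is correct, and it takes a genuinely more careful route than the paper's own (very terse) argument for the one-voter version. The paper's proof bubbles $a$, $b$, $c$ to the top one element at a time, tracking only the single ordering $(a,b,c)$, and then asserts in one line that permuting the top block either yields a $3$-manipulation point or forces $\sigma'\in\LD_1^{\{a,b,c\}}$. You are right that this last step is not automatic as written: the induced one-voter SCF on the top block can be $\tp_H$ for a proper subset $H\subsetneq\{a,b,c\}$ (e.g.\ constantly $a$), which is nonmanipulable on the block yet not a local dictator on $\{a,b,c\}$, so the paper's dichotomy needs the hypothesis $\sigma\in\LD_1^{\{a,b,c\}}$ to be fed in somewhere. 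Your fix — bubble the $\{a,b,c\}$-block as a unit, one slot at a time, and propagate all six internal orderings simultaneously, showing that the local-dictator property at the current block location is preserved by each one-slot move unless a $2$-manipulation point of the required shape appears — makes explicit exactly where the hypothesis is used and closes this gap cleanly. The case analysis for a single slot move (Lemma~\ref{lem:nonManipBoundary} when $g\neq d$, and the direct manipulation $\tau'\mapsto\tau$ when $g=d$, noting $f(\tau)\stackrel{\tau'_1}{>}d$) is correct, and the bookkeeping that every manipulation point produced differs from $\sigma$ only in coordinate~1 by a displacement of $a,b,c$ is also right. In short: same bubbling spirit as the paper, but a more explicit induction on the block position over all six orderings, which is the clean way to invoke $\sigma\in\LD_1^{\{a,b,c\}}$.
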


\begin{proof}
Just like the proof of Lemma~\ref{lem:loc_dict_abc_to_top_1vot}.
\end{proof}

\begin{corollary}\label{cor:many_dict_at_top}
If \eqref{eq:many_loc_dict} holds, then either
\begin{equation}\label{eq:loc_dict_abc_on_top}
\sum_{c \notin \left\{ a, b \right\}} \p \left( \sigma \in LD_1^{\left\{a,b,c\right\}}, \left\{ \sigma_1 \left( 1 \right), \sigma_1 \left( 2 \right), \sigma_1 \left( 3 \right) \right\} = \left\{a,b,c\right\} \right) \geq \frac{\gamma \eps}{16 n^2 k^{13}}
\end{equation}
or
\[
\p \left( \sigma \in M_3 \right) \geq \frac{\gamma \eps}{16 n^2 k^{15}}.
\]
\end{corollary}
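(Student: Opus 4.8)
\emph{Proof plan.} The plan is to mirror the one-voter argument of Corollary~\ref{cor:many_dict_at_top_1vot}, adjusting the bookkeeping to the fact that we now have $n$ coordinates but act only in coordinate $1$. Starting from \eqref{eq:many_loc_dict}, for each $\sigma\in\LD_1(a,b)$ fix once and for all a witness $c=c(\sigma)\notin\{a,b\}$ with $\sigma\in\LD_1^{\{a,b,c\}}$ (such a $c$ exists since $\LD_1(a,b)=\bigcup_{c\notin\{a,b\}}\LD_1^{\{a,b,c\}}$), and let $\sigma'$ be the ranking profile obtained by moving the adjacent block $\{a,b,c\}$ of $\sigma_1$ to the top of coordinate $1$, keeping its internal order and all other coordinates unchanged. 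By Lemma~\ref{lem:loc_dict_abc_to_top}, either $\sigma'\in\LD_1^{\{a,b,c\}}$ — in which case $\{a,b,c\}$ occupies the top three slots of $\sigma'_1$ — or there is a $3$-manipulation point $\hat\sigma$ agreeing with $\sigma$ off coordinate $1$, with $\hat\sigma_1$ equal to $\sigma_1$ up to an arbitrary relocation of $a$, $b$, $c$.

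Partition $\LD_1(a,b)$ into the set $S$ of profiles for which Lemma~\ref{lem:loc_dict_abc_to_top} yields a $3$-manipulation point and the set $S'$ for which it yields a local dictator with the block at the top; every $\sigma\in\LD_1(a,b)$ lies in $S\cup S'$, so by \eqref{eq:many_loc_dict} at least one of $\p(S),\p(S')$ is $\ge\frac{\gamma\eps}{16 n^2 k^{12}}$. If $\p(S)$ is the larger, then a given $\hat\sigma$ determines $\sigma$ once one specifies the positions of $a$, $b$, $c$ within coordinate $1$ (at most $k^3$ possibilities, the remaining data being shared), so $\p(\sigma\in M_3)\ge k^{-3}\p(S)\ge\frac{\gamma\eps}{16 n^2 k^{15}}$, the second alternative. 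If instead $\p(S')$ is the larger, consider the map $\sigma\mapsto\sigma'$ on $S'$: its image is contained in $\bigcup_{c\notin\{a,b\}}\{\sigma:\sigma\in\LD_1^{\{a,b,c\}},\ \{\sigma_1(1),\sigma_1(2),\sigma_1(3)\}=\{a,b,c\}\}$, and each profile in the image has at most $k-2<k$ preimages in $S'$, since the only freedom in inverting the map is the starting position of the length-$3$ block in coordinate $1$ (its internal order and all other coordinates being forced). Hence the probability of that union is $\ge k^{-1}\p(S')\ge\frac{\gamma\eps}{16 n^2 k^{13}}$; as the events indexed by distinct $c$ are disjoint (the triple $\{a,b,c\}$ is read off as the top three entries of $\sigma_1$), the sum in \eqref{eq:loc_dict_abc_on_top} equals this union probability, giving the first alternative.

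There is no genuine obstacle here: the only points requiring care are the two multiplicity counts — that the description of $\hat\sigma$ in Lemma~\ref{lem:loc_dict_abc_to_top} pins $\sigma$ down to one of at most $k^3$ profiles, and that ``move the block to the top'' is at most $(k-2)$-to-one — together with the observation that fixing one witness $c(\sigma)$ per profile keeps the image events cleanly indexed and disjoint. The structure is identical to Corollary~\ref{cor:many_dict_at_top_1vot}, with the factors $k^3$ and $k$ in the two branches accounting for the loss from $k^{12}$ to $k^{15}$ and to $k^{13}$, respectively, in the stated bounds.
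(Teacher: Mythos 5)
Your proof is correct and is precisely the argument the paper intends: the paper's own proof of this corollary is the one-sentence remark ``Just like the proof of Corollary~\ref{cor:many_dict_at_top_1vot},'' and your reconstruction mirrors that one-voter argument with the correct bookkeeping — the $k^3$ multiplicity for the manipulation-point branch, the $k-2<k$ multiplicity for the move-block-to-top map, and the disjointness of the events indexed by $c$ once $\{a,b,c\}$ is pinned to the top of $\sigma_1$ — yielding exactly the stated bounds $\frac{\gamma\eps}{16n^2k^{15}}$ and $\frac{\gamma\eps}{16n^2k^{13}}$.
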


\begin{proof}
Just like the proof of Corollary~\ref{cor:many_dict_at_top_1vot}.
\end{proof}

Now \eqref{eq:loc_dict_abc_on_top} is equivalent to
\begin{equation}\label{eq:loc_dict_abc_on_top2}
\sum_{c \notin \left\{ a, b \right\}} \p \left( \sigma \in LD_1^{\left\{a,b,c\right\}}, \left( \sigma_1 \left( 1 \right), \sigma_1 \left( 2 \right), \sigma_1 \left( 3 \right) \right) = \left(a,b,c\right) \right) \geq \frac{\gamma \eps}{96 n^2 k^{13}}.
\end{equation}
We know that
\[
\p \left( \left( \sigma_1 \left( 1 \right), \sigma_1 \left( 2 \right), \sigma_1 \left( 3 \right) \right) = \left(a,b,c\right) \right) = \frac{1}{k \left( k-1 \right) \left( k-2 \right)} \leq \frac{6}{k^3},
\]
and so \eqref{eq:loc_dict_abc_on_top2} implies (recall Definition~\ref{def:cond})
\begin{equation}\label{eq:loc_dict_abc_on_top3}
\sum_{c \notin \left\{ a, b \right\}} \p_1^{\left(a,b,c\right)} \left( \sigma \in LD_1^{\left\{a,b,c\right\}}  \right) \geq \frac{\gamma \eps}{576 n^2 k^{10}}.
\end{equation}

Now fix an alternative $c \notin \left\{a,b\right\}$ and define the graph $G_{\left( a, b, c \right)} = \left( V_{\left( a, b, c \right)}, E_{\left( a, b, c \right)} \right)$ to have vertex set
\[
V_{\left( a, b, c \right)} := \left\{ \sigma \in S_k^n : \left( \sigma_1 \left( 1 \right), \sigma_1 \left( 2 \right), \sigma_1 \left( 3 \right) \right) = \left(a,b,c\right) \right\}
\]
and for $\sigma, \pi \in V_{\left( a, b, c \right)}$ let $\left( \sigma, \pi \right) \in E_{\left( a, b, c \right)}$ if and only if $\sigma$ and $\pi$ differ in exactly one coordinate, and by an adjacent transposition in this coordinate. So $G_{\left( a, b, c \right)}$ is the subgraph of the refined rankings graph induced by the vertex set $V_{\left( a, b, c \right)}$.

Let
\[
T_1 \left( a, b, c \right) := V_{\left( a, b, c \right)} \cap LD_1^{\left\{a,b,c\right\}},
\]
and let $\partial_e \left( T_1 \left( a, b, c \right) \right)$ and $\partial \left( T_1 \left( a, b, c \right) \right)$ denote the edge and vertex boundary of $T_1 \left( a, b, c \right)$ in $G_{\left( a, b, c \right)}$, respectively.

The next lemma shows that unless $T_1 \left( a, b, c \right)$ is almost all of $V_{\left( a, b, c \right)}$, the size of the boundary $\partial \left( T_1 \left( a, b, c \right) \right)$ is comparable to the size of $T_1 \left( a, b, c \right)$. 

\begin{lemma}\label{lem:lg_bdry_for_loc_dict}
Let $c \notin \left\{a,b \right\}$ be arbitrary. Write $T \equiv T_1 \left( a, b, c \right)$ for simplicity. If $\p_1^{\left(a,b,c\right)} \left( \sigma \in T \right) \leq 1 - \delta$, then
\begin{equation}\label{eq:lg_bdry_for_loc_dict}
\p_1^{\left( a, b, c \right)} \left( \sigma \in \partial \left( T \right) \right) \geq \frac{\delta}{n k^3} \p_1^{\left( a, b, c \right)} \left( \sigma \in T \right).
\end{equation}
\end{lemma}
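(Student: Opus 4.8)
The plan is to mimic the canonical-path arguments of Lemma~\ref{lem:comparable_bdries_ref} and its one-voter analogue Lemma~\ref{lem:lg_bdry_for_loc_dict_1vot}. Recall that $G_{(a,b,c)}$ is the subgraph of the refined rankings graph induced on $V_{(a,b,c)}$, so an edge changes exactly one coordinate by a single adjacent transposition, subject to keeping $(\sigma_1(1),\sigma_1(2),\sigma_1(3)) = (a,b,c)$; in particular every vertex of $G_{(a,b,c)}$ has at most $(k-4) + (n-1)(k-1) < nk$ neighbours, and $|V_{(a,b,c)}| = (k-3)!\,(k!)^{n-1}$. First I would reduce the desired vertex-boundary bound to an edge-boundary bound in the usual way: every edge of $\partial_e(T)$ has an endpoint in $\partial(T)$ and a vertex lies on at most $nk$ edges, so $|\partial(T)| \ge |\partial_e(T)|/(nk)$; hence it suffices to prove $|\partial_e(T)| \ge \frac{\delta}{k^2}|T|$, where $T \equiv T_1(a,b,c)$.

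For the edge-boundary bound I would build, for each pair $(\sigma,\sigma') \in T \times (V_{(a,b,c)}\setminus T)$, a canonical path inside $G_{(a,b,c)}$ from $\sigma$ to $\sigma'$ by fixing the coordinates one at a time: in coordinate $1$ apply the path construction of Proposition~\ref{prop:canon1} to the $k-3$ alternatives of $[k]\setminus\{a,b,c\}$ (leaving the block $a,b,c$ frozen at the top, so the path stays in $V_{(a,b,c)}$), and in each coordinate $i\ne 1$ apply Proposition~\ref{prop:canon1} to all $k$ alternatives; concatenating these, the path must cross $\partial_e(T)$ at least once since $\sigma\in T$ and $\sigma'\notin T$. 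Then I would bound, for a fixed edge $(\pi,\pi')\in\partial_e(T)$, the number of pairs whose canonical path uses it: if $\pi,\pi'$ differ in coordinate $1$, then $\sigma$ agrees with $\pi$ in coordinates $2,\dots,n$ while $\sigma'$ is free there ($(k!)^{n-1}$ possibilities) and Proposition~\ref{prop:canon1} applied to $k-3$ elements caps the pair $(\sigma_1,\sigma'_1)$ at $k^2(k-3)!$; if $\pi,\pi'$ differ in some coordinate $i\ne 1$, then the free coordinates (coordinates $1,\dots,i-1$ of $\sigma$, with coordinate $1$ constrained to $V_{(a,b,c)}$, and coordinates $i+1,\dots,n$ of $\sigma'$) contribute at most $(k-3)!\,(k!)^{n-2}$ and Proposition~\ref{prop:canon1} caps $(\sigma_i,\sigma'_i)$ at $k^2 k!$. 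In either case the count through a fixed edge is at most $k^2(k-3)!\,(k!)^{n-1}$.

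Combining these, $|\partial_e(T)| \ge \frac{|T|\cdot|V_{(a,b,c)}\setminus T|}{k^2(k-3)!\,(k!)^{n-1}}$, and since $B_1(z_{-1}^{a,b})$ being a small fiber forces $|V_{(a,b,c)}\setminus T| \ge \delta\,(k-3)!\,(k!)^{n-1}$, this yields $|\partial_e(T)|\ge \frac{\delta}{k^2}|T|$; dividing by the maximum degree $nk$ gives $\p_1^{(a,b,c)}(\sigma\in\partial(T)) \ge \frac{\delta}{nk^3}\,\p_1^{(a,b,c)}(\sigma\in T)$, which is~\eqref{eq:lg_bdry_for_loc_dict}. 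The main obstacle is the bookkeeping in the path-count through a fixed edge — tracking which coordinates of $\sigma$ and $\sigma'$ are forced, which are free, and which alternatives may be permuted in coordinate $1$ versus the other coordinates — but this is entirely parallel to the computations already carried out for Lemmas~\ref{lem:comparable_bdries_ref} and~\ref{lem:lg_bdry_for_loc_dict_1vot}, so any loss of polynomial-in-$k$ factors along the way is harmless.
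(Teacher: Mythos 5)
Your proposal is correct and follows the same canonical-path argument the paper uses (it explicitly defers to the constructions of Lemma~\ref{lem:comparable_bdries_ref} and Lemma~\ref{lem:lg_bdry_for_loc_dict_1vot}, which you have fleshed out faithfully). One small verbal slip: the lower bound $|V_{(a,b,c)}\setminus T| \ge \delta\,(k-3)!\,(k!)^{n-1}$ comes directly from the hypothesis $\p_1^{(a,b,c)}(\sigma\in T)\le 1-\delta$, not from any ``small fiber'' condition on $B_1(z_{-1}^{a,b})$ — that language belongs to Lemma~\ref{lem:comparable_bdries_ref} and has no role here.
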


\begin{proof}
The proof is essentially the same as the proof of Lemma~\ref{lem:lg_bdry_for_loc_dict_1vot}, with a slight modification to deal with $n$ coordinates. Let $T^c = V_{\left( a, b, c \right)} \setminus T \left( a, b, c \right)$. For every $\left( \sigma, \sigma' \right) \in T \times T^c$ we define a canonical path from $\sigma$ to $\sigma'$ by applying a path construction in each coordinate one by one, and then concatenating these paths. In all coordinates we apply the path construction of~\cite[Proposition 6.4.]{IsKiMo:12}, but in the first coordinate we only apply it to alternatives $\left[k \right] \setminus \left\{ a, b, c \right\}$.

The analysis of this construction is done in exactly the same way as in Lemma~\ref{lem:comparable_bdries_ref}; in the end we get that $\left| \partial_e \left( T \right) \right| \geq \frac{\delta}{k^2} \left| T \right|$. Now every vertex in $V_{\left(a,b,c\right)}$ has no more than $nk$ neighbors, which implies \eqref{eq:lg_bdry_for_loc_dict}.
\end{proof}

The next lemma tells us that if $\sigma$ is on the boundary of a set of local dictators on $\left\{a,b,c \right\}$ for some alternative $c \notin \left\{a,b\right\}$ in coordinate 1, then there is a 4-manipulation point $\hat{\sigma}$ which is close to $\sigma$. The proof is similar to that of Lemma~\ref{lem:manip_pts_on_bdry_of_loc_dict_1vot}, but we have to take care of all $n$ coordinates.

\begin{lemma}\label{lem:manip_pts_on_bdry_of_loc_dict}
Suppose $\sigma \in \partial \left( T_1 \left( a,b, c \right) \right)$ for some $c\notin \left\{a,b\right\}$. We distinguish two cases, based on the number of alternatives.

If $k=3$, then there exists a (3-)manipulation point $\hat{\sigma}$ which differs from $\sigma$ in at most two coordinates, one of them being the first coordinate.

If $k \geq 4$, then there exists a 4-manipulation point $\hat{\sigma}$ which differs from $\sigma$ in at most two coordinates, one of them being the first coordinate; furthermore, $\hat{\sigma}_1$ is equal to $\sigma_1$ except that the order of the block of $a$, $b$ and $c$ might be rearranged and an additional alternative $d$ might be shifted arbitrarily; and in the other coordinate, call it $j$, $\hat{\sigma}_j$ is equal to $\sigma_j$ except perhaps $a$, $b$ and $c$ are shifted arbitrarily.
\end{lemma}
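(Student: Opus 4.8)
The plan is to imitate the one-voter argument of Lemma~\ref{lem:manip_pts_on_bdry_of_loc_dict_1vot}, but to split on the direction of the boundary edge and, in the awkward direction, to run the kind of bubbling-plus-Gibbard--Satterthwaite argument already used in Lemma~\ref{lem:manip_on_bdry_of_bdry_in_dir!=1_ref}. So I would start by unpacking $\sigma \in \partial\left( T_1\left( a,b,c \right) \right)$: there is $\pi \in V_{\left( a,b,c \right)} \setminus T_1\left( a,b,c \right)$ with $\left( \sigma, \pi \right) \in \partial_e\left( T_1\left( a,b,c \right) \right)$, so $\sigma$ and $\pi$ agree except in one coordinate $j$ where $\pi_j = z\sigma_j$ for an adjacent transposition $z$. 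The one structural fact I would extract up front is that since $\pi \in V_{\left( a,b,c \right)}$ but $\pi \notin \LD_1^{\left\{a,b,c\right\}}$, some reordering of the block $\left\{a,b,c\right\}$ at the top of coordinate $1$ produces an outcome that is not the top element of the block. Writing $\pi^{\rho}$ (resp.\ $\sigma^{\rho}$) for $\pi$ (resp.\ $\sigma$) with coordinate $1$ rewritten so that $a,b,c$ occupy the top three positions in the order $\rho$, I would, after renaming alternatives, fix a $\rho_0$ whose top element is $a$ with $f\left( \pi^{\rho_0} \right) = e \neq a$; since $\sigma_1 = \pi_1$, the profiles $\sigma^{\rho_0}$ and $\pi^{\rho_0}$ still differ only in coordinate $j$ by $z$, and $\sigma \in \LD_1^{\left\{a,b,c\right\}}$ gives $f\left( \sigma^{\rho_0} \right) = a$.

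First I would handle $j = 1$. This can only happen when $k \geq 4$, and then $z$ must swap two alternatives strictly below the block (otherwise $\pi$ leaves $V_{\left( a,b,c \right)}$), so $z$ commutes with block reorderings. In $\pi^{\rho_0}$ the alternative $a$ is ranked first in coordinate $1$, the outcome is $e \neq a$, and one step of $z$ in coordinate $1$ takes $\pi^{\rho_0}$ to $\sigma^{\rho_0}$, whose outcome is $a$; since voter $1$ ranks $a$ above everything at $\pi^{\rho_0}$, this is a profitable manipulation. So $\hat\sigma := \pi^{\rho_0}$ is a $2$-manipulation point that differs from $\sigma$ only in coordinate $1$, where the block is reordered and $z$ shifts one further alternative $d$ --- the required form.

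Next, for $j \neq 1$, I would look at the pair $\left( \sigma^{\rho_0},\pi^{\rho_0} \right) \in B_j^{a,e;T}$ and apply Lemma~\ref{lem:nonManipBoundary}: either $z = \adj{a}{e}$, or one of $\sigma^{\rho_0},\pi^{\rho_0}$ is already a $2$-manipulation point whose witnessing move is $z$ in coordinate $j$. In the latter case, because $z \neq \adj{a}{e}$ the relative order of $a$ and $e$ is unchanged between $\sigma_j$ and $\pi_j$, which pins down which endpoint manipulates; either way the manipulation point agrees with $\sigma$ outside coordinates $1$ and $j$, equals $\sigma$ on coordinate $1$ up to the block reordering, and differs from $\sigma$ on coordinate $j$ by at most $z$ (and when it is $\pi^{\rho_0}$, $z$ is then forced to involve $a$, so it is a shift of a block element). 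The remaining case $z = \adj{a}{e}$ is where I expect the real work: the single swap in coordinate $j$ straddles the current outcome, so there is no one-step manipulation, and I would instead bubble --- if $e \notin \left\{a,b,c\right\}$, move $e$ in coordinate $j$ of $\pi^{\rho_0}$ towards the top, invoking Lemma~\ref{lem:nonManipBoundary} at each step to either catch a $2$- or $3$-manipulation point or reach a configuration on which Lemma~\ref{lem:nonManipTriple} can be applied with the two active coordinates being $1$ (block permutable) and $j$. This produces a $3$-manipulation point when $k = 3$ (forcing $e \in \left\{b,c\right\}$, so nothing outside the block moves) and a $4$-manipulation point when $k \geq 4$ (the extra moved alternative being $d := e$), with $\hat\sigma_1$ equal to $\sigma_1$ up to the block reordering and a shift of $d$, and $\hat\sigma_j$ equal to $\sigma_j$ up to shifts of $a,b,c$.

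The main obstacle, then, is exactly the $z = \adj{a}{e}$ subcase: it is where the ``one transposition away from a manipulation'' idea breaks and one must build a two-coordinate sub-SCF on the constantly many alternatives $\left\{a,b,c,e\right\}$ and route through Gibbard--Satterthwaite via Lemma~\ref{lem:nonManipTriple}, while keeping the bubbling coherent on the $n-1$ untouched coordinates. The closing bookkeeping is to check that at most a polynomial-in-$n$ and $k$ number of profiles $\sigma$ map to each produced $\hat\sigma$ (coordinate $j$ among $n-1$ choices, the block in $6$ orders, a choice of $d$ and its shift, and shifts of $a,b,c$ in coordinate $j$), which is what the downstream estimate combining this lemma with Lemma~\ref{lem:lg_bdry_for_loc_dict} requires.
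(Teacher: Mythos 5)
Your overall decomposition matches the paper's: unpack the boundary edge $\left(\sigma,\pi\right) \in \partial_e\left(T_1\left(a,b,c\right)\right)$ with $\pi = z_j\sigma$, choose a block reorder $\rho_0$ witnessing $\pi \notin \LD_1^{\{a,b,c\}}$ so that $f\left(\sigma^{\rho_0}\right) = a$ and $f\left(\pi^{\rho_0}\right) = e \neq a$, split on $j=1$ versus $j \neq 1$, and in the latter case use Lemma~\ref{lem:nonManipBoundary} to reduce to the single hard configuration $z = \left[a:e\right]$. The $j=1$ step and the easy half of $j \neq 1$ are fine. But the hard case is where your proposal goes wrong, and that is the substance of the lemma.

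You propose to bubble $e$ in coordinate $j$ of $\pi^{\rho_0}$ towards the top. This does not work, for three reasons. First, it violates the structure the lemma promises: $\hat\sigma_j$ is supposed to differ from $\sigma_j$ only by shifts of $a,b,c$, and moving $e \notin \{a,b,c\}$ an arbitrary distance in coordinate $j$ introduces a factor of order $k$ extra degrees of freedom that the counting in Corollary~\ref{cor:manip_by_bdry_loc_dict} does not account for. Second, you are bubbling the alternative that is the current outcome; Lemma~\ref{lem:nonManipBoundary} is then useless at the step where it is supposed to save you, because if the outcome flips from $e$ to $x$ precisely when you swap $e$ past $x$, the boundary edge lies in $B_j^{e,x;\left[e:x\right]}$ and the hypothesis $\sigma_i = \left[a:b\right]\pi_i$ of the lemma is satisfied, so no manipulation point is extracted. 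Third, bubbling $e$ up one coordinate does not manufacture the second boundary edge in a different coordinate that Lemma~\ref{lem:nonManipTriple} needs, nor does it set up the two-voter, constant-alternative restricted SCF on which the paper invokes Gibbard--Satterthwaite; you never say where this extra structure comes from. The paper's bubbling is carried out on different objects and in the other coordinate: bring $d = e$ up to the $\{a,b,c\}$ block in coordinate $1$ (this is the ``additional alternative $d$ shifted'' in the statement), bring $b$ and $c$ to the adjacent pair $\{a,d\}$ in coordinate $j$ (this is the ``$a,b,c$ shifted'' in coordinate $j$), do both simultaneously in $\sigma'$ and $\pi'$ while checking via Lemma~\ref{lem:nonManipBoundary} that the outcomes stay $a$ and $d$ respectively, and only then define the restricted two-voter SCF on $\leq 4$ adjacent alternatives and apply Gibbard--Satterthwaite. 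Finally, the subcase $e \in \{b,c\}$, which is the only one relevant when $k=3$, is not actually worked out in your proposal; it needs a slightly different (shorter) version of the same bubbling, with just $c$ moved in coordinate $j$.
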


\begin{proof}
Let $\pi$ be the ranking profile such that $\left( \sigma, \pi \right) \in \partial_e \left( T_1 \left( a, b , c \right) \right)$, let $j$ be the coordinate in which they differ, and let $z$ be the adjacent transposition in which they differ, i.e., $\pi = z_j \sigma$. Since $\pi \notin T_1 \left( a, b, c \right)$, there exists a reordering of the block of $a$, $b$, and $c$ at the top of $\pi_1$ such that the outcome of $f$ is not the top ranked alternative in coordinate 1. Call the resulting vector $\pi'_1$, and let $\pi':= \left( \pi'_1, \pi_{-1} \right)$. W.l.o.g.\ let us assume that $\pi'_1 \left( 1 \right) = a$. Let us also define $\sigma' := z_j \pi'$. We distinguish two cases: $j=1$ and $j\neq 1$.

If $j=1$ (in which case we must have $k \geq 5$), $\pi'$ is a 2-manipulation point, since $f\left( \sigma' \right) = a$.

If $j\neq 1$, then there are various cases to consider. If the adjacent transposition $z$ does not move $a$, then either $\pi'$ or $\sigma'$ is a 2-manipulation point. So let us suppose that $z = \left[a:d\right]$ for some $d \neq a$.

Clearly we must have $f\left( \pi' \right) = d$, or else $\pi'$ or $\sigma'$ is a 2-manipulation point. Suppose first that $d\in \left\{ b, c \right\}$. W.l.o.g.\ suppose that $d=b$.

Then take alternative $c$ in coordinate $j$ of both $\sigma'$ and $\pi'$, and bubble it to the block of $a$ and $b$ simultaneously in the two ranking profiles. If along the way the value of the outcome of the SCF $f$ changes from $a$ or $b$, respectively, then we have a 2-manipulation point by Lemma~\ref{lem:nonManipBoundary}. Otherwise, we now have $a$, $b$, and $c$ adjacent in both coordinates 1 and $j$. Now rearranging the order of the blocks of $a$, $b$, and $c$ in these two coordinates (which can be done using adjacent transpositions), we either get a 2-manipulation point by Lemma~\ref{lem:nonManipBoundary}, or we can define a new SCF on two voters and three alternatives, $a$, $b$, and $c$. This SCF takes on three values and it is also not hard to see that the outcome is not only a function of the first coordinate, so by the Gibbard-Satterthwaite theorem we know that this SCF has a manipulation point, which is a 3-manipulation point of the original SCF $f$.

Now let us look at the case when $d \notin \left\{b,c\right\}$. In this case we do something similar to what we just did in the previous paragraph. In both $\sigma'$ and $\pi'$, first bubble up alternative $d$ in coordinate 1 up to the block of $a$, $b$, and $c$, and then bubble $b$ and $c$ in coordinate $j$ to the block of $a$ and $d$. All of this using adjacent transpositions. If the value of the outcome of the SCF $f$ changes from $a$ or $d$, respectively, at any time along the way, then we have a 2-manipulation point by Lemma~\ref{lem:nonManipBoundary}. Otherwise, we now have $a$, $b$, $c$ and $d$ adjacent in both coordinates 1 and $j$, and we can apply the same trick to find a 4-manipulation point, using the Gibbard-Satterthwaite theorem.
\end{proof}

The next corollary puts together Corollary~\ref{cor:many_dict_at_top} and Lemmas~\ref{lem:lg_bdry_for_loc_dict} and~\ref{lem:manip_pts_on_bdry_of_loc_dict}.

\begin{corollary}\label{cor:manip_by_bdry_loc_dict}
Suppose \eqref{eq:loc_dict_abc_on_top} holds. Then if for every $c \notin \left\{a,b\right\}$ we have $\p_1^{\left(a,b,c\right)} \left( \sigma \in T_1 \left( a,b, c \right) \right) \leq 1 - \frac{\eps}{100 k}$, then
\[
\p \left( \sigma \in M_4 \right) \geq \frac{\gamma \eps^2}{345600 n^4 k^{22}}.
\]
\end{corollary}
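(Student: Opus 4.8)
The plan is to mimic the one-voter Corollary~\ref{cor:manip_by_bdry_loc_dict_1vot} step for step, inserting the extra factors of $n$ that come from having several coordinates and using the refined-graph versions of the relevant lemmas. First I would record that the standing hypothesis \eqref{eq:loc_dict_abc_on_top} is equivalent to \eqref{eq:loc_dict_abc_on_top2} (fix the order of the top three alternatives in coordinate $1$, dividing by $6$) and hence, after conditioning on $(\sigma_1(1),\sigma_1(2),\sigma_1(3)) = (a,b,c)$ and using $\p((\sigma_1(1),\sigma_1(2),\sigma_1(3))=(a,b,c)) = 1/(k(k-1)(k-2)) \le 6/k^3$, implies \eqref{eq:loc_dict_abc_on_top3}: $\sum_{c\notin\{a,b\}} \p_1^{(a,b,c)}(\sigma \in \LD_1^{\{a,b,c\}}) \ge \gamma\eps/(576\, n^2 k^{10})$. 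This is the routine reduction already asserted in the surrounding text.

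Next, for each fixed $c \notin \{a,b\}$ I would invoke Lemma~\ref{lem:lg_bdry_for_loc_dict} with $\delta = \eps/(100k)$. The corollary's standing assumption is exactly $\p_1^{(a,b,c)}(\sigma \in T_1(a,b,c)) \le 1 - \eps/(100k)$, and since $\p_1^{(a,b,c)}$ is supported on $V_{(a,b,c)}$ and $T_1(a,b,c)=V_{(a,b,c)}\cap \LD_1^{\{a,b,c\}}$ we have $\p_1^{(a,b,c)}(\sigma \in T_1(a,b,c)) = \p_1^{(a,b,c)}(\sigma \in \LD_1^{\{a,b,c\}})$, so the lemma yields $\p_1^{(a,b,c)}(\sigma \in \partial(T_1(a,b,c))) \ge \frac{\eps}{100\, n k^4}\,\p_1^{(a,b,c)}(\sigma \in \LD_1^{\{a,b,c\}})$.

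The main, and only mildly delicate, step is the passage from boundary points to manipulation points via Lemma~\ref{lem:manip_pts_on_bdry_of_loc_dict}. For $k \ge 4$ that lemma attaches to each $\sigma \in \partial(T_1(a,b,c))$ a $4$-manipulation point $\hat\sigma$ that differs from $\sigma$ in at most two coordinates, one being coordinate $1$, with $\hat\sigma_1$ equal to $\sigma_1$ up to a reordering of the block $\{a,b,c\}$ and an arbitrary shift of one further alternative $d$, and $\hat\sigma_j$ equal to $\sigma_j$ up to an arbitrary shift of $a,b,c$. I would then bound the multiplicity of this assignment: at most $n$ choices for the second modified coordinate $j$, at most $k^3$ for recovering $\sigma_j$ from $\hat\sigma_j$, at most $6$ for the block reordering and at most $k^2$ for recovering the identity and position of $d$ in $\sigma_1$; since the third alternative of $\sigma_1$ is then exactly $c$, and the sets $\partial(T_1(a,b,c))$ for distinct $c$ are pairwise disjoint (they sit inside the disjoint $V_{(a,b,c)}$), the total multiplicity is at most $6n k^5$. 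For $k=3$ the same bound holds trivially because $\sigma_1$ is already determined by $\sigma \in V_{(a,b,c)}$ and the lemma gives a $3$-manipulation point, which is in particular a $4$-manipulation point. Converting the conditional probabilities back to the uniform measure via $|V_{(a,b,c)}| = (k-3)!\,(k!)^{n-1}$ together with $(k-3)!/k! = 1/(k(k-1)(k-2)) \ge 1/k^3$ then gives $\p(\sigma \in M_4) \ge \frac{1}{6nk^8}\sum_{c}\p_1^{(a,b,c)}(\sigma \in \partial(T_1(a,b,c)))$.

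Finally I would chain the three inequalities: $\p(\sigma\in M_4) \ge \frac{1}{6nk^8}\cdot\frac{\eps}{100\,nk^4}\sum_c\p_1^{(a,b,c)}(\sigma\in\LD_1^{\{a,b,c\}}) \ge \frac{\eps}{600\, n^2 k^{12}}\cdot\frac{\gamma\eps}{576\, n^2 k^{10}} = \frac{\gamma\eps^2}{345600\, n^4 k^{22}}$, which is exactly the claimed bound. The only point requiring genuine care is the multiplicity count in the third step — in particular correctly accounting for the extra alternative $d$ that the $n$-coordinate Lemma~\ref{lem:manip_pts_on_bdry_of_loc_dict} allows to wander in coordinate $1$, a feature absent from the one-voter lemma — but this is finite combinatorics rather than a real obstacle; everything substantive has already been established in Lemmas~\ref{lem:lg_bdry_for_loc_dict} and~\ref{lem:manip_pts_on_bdry_of_loc_dict}.
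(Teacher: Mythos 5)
Your proof is correct and follows the same route as the paper's: reduce \eqref{eq:loc_dict_abc_on_top} to \eqref{eq:loc_dict_abc_on_top3}, apply Lemma~\ref{lem:lg_bdry_for_loc_dict} with $\delta=\eps/(100k)$, convert boundary points to $4$-manipulation points via Lemma~\ref{lem:manip_pts_on_bdry_of_loc_dict} with a multiplicity factor of $6nk^5$, and convert conditional to unconditional probabilities via $\p(V_{(a,b,c)})\ge 1/k^3$. The paper compresses the last two steps into the single inequality $\sum_c\frac{1}{k^3}\p_1^{(a,b,c)}(M_4)\ge\sum_c\frac{1}{6nk^8}\p_1^{(a,b,c)}(\partial(T_1))$, while you unfold the multiplicity count explicitly (including the disjointness of the $\partial(T_1(a,b,c))$'s and the special case $k=3$), which is a slightly more transparent presentation of the same estimate; the final chain and the constant $\frac{\gamma\eps^2}{345600\,n^4k^{22}}$ match exactly.
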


\begin{proof}
We know that \eqref{eq:loc_dict_abc_on_top} implies
\[
\sum_{c \notin \left\{a,b \right\}} \p_1^{a,b,c} \left( \sigma \in T_1 \left( a,b,c\right) \right) \geq \frac{\gamma \eps}{576 n^2 k^{10}}.
\]
Now then using the assumptions, Lemma~\ref{lem:lg_bdry_for_loc_dict} with $\delta = \frac{\eps}{100k}$ and Lemma~\ref{lem:manip_pts_on_bdry_of_loc_dict}, we have
\begin{align*}
\p \left( \sigma \in M_4 \right) &\geq \sum_{c \neq \left\{a,b \right\}} \frac{1}{k^3} \p_1^{\left(a,b,c \right)} \left( \sigma \in M_4 \right) \geq \sum_{c\notin \left\{a,b\right\}} \frac{1}{6nk^8} \p_1^{\left( a, b, c \right)} \left( \sigma \in \partial \left( T_1 \left( a,b,c \right) \right) \right)\\
&\geq \sum_{c \notin \left\{a,b\right\}} \frac{\eps}{600 n^2 k^{12}} \p_1^{\left( a, b , c \right)} \left( \sigma \in T_1 \left( a, b, c \right) \right) \geq \frac{\gamma \eps^2}{345600 n^4 k^{22}}. \qedhere
\end{align*}
\end{proof}

So again we are left with one case to deal with: if there exists an alternative $c \notin \left\{ a, b \right\}$ such that $\p_1^{\left(a,b,c\right)} \left( \sigma \in T_1 \left( a,b, c \right) \right) > 1 - \frac{\eps}{100 k}$. Define a subset of alternatives $K \subseteq \left[k \right]$ in the following way:
\[
K := \left\{a,b \right\} \cup \left\{ c \in \left[k \right] \setminus \left\{a,b \right\} : \p_1^{\left(a,b,c\right)} \left( \sigma \in T_1 \left( a,b, c \right) \right) > 1 - \frac{\eps}{100 k} \right\}.
\]
In addition to $a$ and $b$, $K$ contains those alternatives that whenever they are at the top of coordinate 1 with $a$ and $b$, they form a local dictator with high probability.

So our assumption now is that $\left| K \right| \geq 3$.

Our next step is to show that unless we have many manipulation points, for any alternative $c \in K$, conditioned on $c$ being at the top of the first coordinate, the outcome of $f$ is $c$ with probability close to 1.

\begin{lemma}\label{lem:cond_on_top}
Let $c \in K$. Then either
\begin{equation}\label{eq:cond_on_top1}
\p_1^{\left( c \right)} \left( f \left( \sigma \right) = c \right) \geq 1 - \frac{\eps}{50 k},
\end{equation}
or
\begin{equation}\label{eq:else_2manip}
\p \left( \sigma \in M_2 \right) \geq \frac{\eps}{100 k^4}.
\end{equation}
\end{lemma}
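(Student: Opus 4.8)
The plan is to adapt the one‑voter argument of Lemma~\ref{lem:cond_on_top_1vot} essentially verbatim, performing every ``bubbling'' operation inside coordinate $1$ and leaving coordinates $2,\dots,n$ untouched; the only change in the bookkeeping is an extra factor of $1/k$ coming from the conditioning event $\{\sigma_1(1) = c\}$.

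First I would handle the case $c \notin \{a,b\}$. Let $\sigma$ be distributed according to $\p_1^{\left(c\right)}$, and build $\sigma'$ from $\sigma$ by bubbling $a$ up in coordinate $1$ to the second position and then bubbling $b$ up to the third position, via adjacent transpositions and changing no other coordinate. Then $\sigma'$ is distributed according to $\p_1^{\left(c,a,b\right)}$. The defining property of $\LD_1^{\{a,b,c\}}$ quantifies over all internal orderings of the block $\{a,b,c\}$, and the conditional law of everything outside that block agrees under $\p_1^{\left(c,a,b\right)}$ and $\p_1^{\left(a,b,c\right)}$; hence $c \in K$ gives $\p_1^{\left(c\right)}\!\left(\sigma' \in \LD_1^{\{a,b,c\}}\right) = \p_1^{\left(a,b,c\right)}\!\left(\sigma \in T_1\left(a,b,c\right)\right) > 1 - \frac{\eps}{100k}$, and on this event $f(\sigma') = c$ since $c$ is at the top of the block, hence at the top of coordinate $1$. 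Partition the profiles into $I_1 = \{f(\sigma) = c,\, f(\sigma') = c\}$, $I_2 = \{f(\sigma) \neq c,\, f(\sigma') = c\}$, and $I_3 = \{f(\sigma') \neq c\}$, so that $\p_1^{\left(c\right)}(I_3) \le \frac{\eps}{100k}$. If $\p_1^{\left(c\right)}(I_1) \ge 1 - \frac{\eps}{50k}$ then \eqref{eq:cond_on_top1} holds; otherwise $\p_1^{\left(c\right)}(I_2) \ge \frac{\eps}{100k}$, and it remains to produce manipulation points.

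The key step is that for every $\sigma \in I_2$ there is a $2$-manipulation point on the bubbling path from $\sigma$ to $\sigma'$: the value of $f$ must flip to $c$ at some adjacent transposition along the path, every such transposition is of the form $[a:e]$ or $[b:e]$, and neither $a$ nor $b$ is ever swapped with $c$, so the transposed pair does not coincide with the pair of $f$-values at the flip; Lemma~\ref{lem:nonManipBoundary} then yields a $2$-manipulation point $\hat\sigma$. This $\hat\sigma$ differs from $\sigma$ only in coordinate $1$, where it equals $\sigma_1$ with the positions of $a$ and $b$ shifted, so at most $k^2$ profiles $\sigma$ give rise to the same $\hat\sigma$, and $\hat\sigma_1(1) = c$ is preserved. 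Consequently
\[
\p(\sigma \in M_2) \ge \tfrac1k\, \p_1^{\left(c\right)}(\sigma \in M_2) \ge \tfrac{1}{k^3}\, \p_1^{\left(c\right)}(I_2) \ge \tfrac{\eps}{100 k^4},
\]
which is \eqref{eq:else_2manip}. For $c \in \{a,b\}$, say $c = a$, I would use $|K| \ge 3$ to choose $d \in K \setminus \{a,b\}$ and rerun the identical argument, this time bubbling $b$ and then $d$ to the top of coordinate $1$ (so $\sigma' \sim \p_1^{\left(a,b,d\right)}$ and $f(\sigma') = a$ on $\LD_1^{\{a,b,d\}}$); all bounds are unchanged.

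The only genuinely non-mechanical point --- and thus the main obstacle --- is the transfer of the hypothesis ``$c \in K$'', which is phrased in terms of $T_1\left(a,b,c\right)$ with the block in the specific order $(a,b,c)$ at the top of coordinate $1$, to the profile $\sigma'$, which carries the block in a different order; this uses that $\LD_1^{\{a,b,c\}}$ is an order-free property of the block together with the fact that the two conditional measures agree off the block. Everything else is the path bookkeeping already carried out in the one-voter case.
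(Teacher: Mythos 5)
Your proposal is correct and is exactly the argument the paper intends: the paper's proof is literally ``Just like the proof of Lemma~\ref{lem:cond_on_top_1vot},'' and you have faithfully expanded it by running the one-voter bubbling argument inside coordinate $1$ while leaving coordinates $2,\dots,n$ fixed. You also correctly pin down the one place where something mildly non-mechanical happens --- transferring the hypothesis $c\in K$, stated via $\p_1^{\left(a,b,c\right)}$, to a bound on $\p_1^{\left(c,a,b\right)}\!\left(\sigma'\in\LD_1^{\{a,b,c\}}\right)$ using the order-invariance of $\LD_1^{\{a,b,c\}}$ and the matching conditional laws off the block --- a point the one-voter proof also uses silently.
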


\begin{proof}
Just like the proof of Lemma~\ref{lem:cond_on_top_1vot}.
\end{proof}

We now deal with alternatives that are not in $K$: either we have many manipulation points, or for any alternative $d \notin K$, the outcome of $f$ is \emph{not} $d$ with probability close to 1.

\begin{lemma}\label{lem:d_notin_K_ref}
Let $d \notin K$. If $\p \left( f \left( \sigma \right) = d \right) \geq \frac{\eps}{4k}$, then
\[
\p \left( \sigma \in M_4 \right) \geq \frac{\eps^2}{10^6 n^2 k^{13}}.
\]
\end{lemma}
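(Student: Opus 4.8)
The plan is to transcribe the one-voter argument of Lemma~\ref{lem:d_notin_K_1vot} to the $n$-voter setting, performing every ``bubbling'' operation inside coordinate~$1$ while freezing the other $n-1$ coordinates, and replacing the one-voter auxiliary statements by their $n$-voter analogues (Lemma~\ref{lem:cond_on_top}, Lemma~\ref{lem:lg_bdry_for_loc_dict}, Lemma~\ref{lem:manip_pts_on_bdry_of_loc_dict}). Concretely, I would start from $\sigma$ uniform with $f(\sigma)=d$, bubble $d$ to the top of $\sigma_1$ to get $\sigma'$; either a manipulation point appears en route---and since only the coordinate-$1$ position of $d$ varies, at most $k$ profiles $\sigma$ map to the same $\hat\sigma$, so $\p(\sigma\in M_2)\geq \eps/(8k^2)$---or $f(\sigma)=f(\sigma')=d$ with probability at least $\eps/(8k)$. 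Then bubble $a$ and $b$ up to just below $d$ (defining $\sigma^{(d,b,a)}$, $\sigma^{(d,a,b)}$), swap to obtain $\sigma^{(a,d,b)}$, $\sigma^{(a,b,d)}$, and bubble $d,b$ back down leaving $a$ at the top of coordinate~$1$; the resulting profile $\bar\sigma$ is uniform conditional on $\bar\sigma_1(1)=a$, so since $a\in K$ Lemma~\ref{lem:cond_on_top} forces $f(\bar\sigma)\neq a$ with probability $\leq \eps/(50k)$ unless we already have $\p(\sigma\in M_2)\geq \eps/(100k^4)$. Splitting on the (non-disjoint) events ``outcome stays $a$'', ``$f(\bar\sigma)\neq a$'', and ``a $2$-manipulation point occurred along the way'', exactly as in Lemma~\ref{lem:d_notin_K_1vot}, yields either enough manipulation points or, after repeating the maneuver with $b$ on top, the inequality
\[
\p_1^{\left(a,b,d\right)}\!\left(\sigma^{(a,b,d)}\in\LD_1^{\left\{a,b,d\right\}}\right)\geq\frac{\eps}{1600k}.
\]

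Given this, I would conclude as in Corollary~\ref{cor:manip_by_bdry_loc_dict}: since $d\notin K$ we have $\p_1^{(a,b,d)}(\sigma\in T_1(a,b,d))\leq 1-\eps/(100k)$, so Lemma~\ref{lem:lg_bdry_for_loc_dict} (with $\delta=\eps/(100k)$) shows the boundary $\partial(T_1(a,b,d))$ carries at least a $\frac{\eps}{100nk^4}$-fraction of the mass of $T_1(a,b,d)$, and Lemma~\ref{lem:manip_pts_on_bdry_of_loc_dict} produces, near each such boundary point, a $4$-manipulation point (a $3$-manipulation point if $k=3$, which still lies in $M_4$). Bounding by $O(nk^{O(1)})$ the number of profiles $\sigma$ giving a fixed $\hat\sigma$ and chaining the inequalities gives $\p(\sigma\in M_4)\geq \eps^2/(10^6 n^2 k^{13})$; the two extra factors of $n$ over the one-voter bound are exactly the $1/n$ in Lemma~\ref{lem:lg_bdry_for_loc_dict} and the second, a priori unknown, coordinate in which the manipulation point of Lemma~\ref{lem:manip_pts_on_bdry_of_loc_dict} may differ from $\sigma$.

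I expect the main difficulty to be bookkeeping rather than a new idea: one must verify at each branch that the intended adjacent transpositions are actually available (the relevant alternatives do become adjacent in coordinate~$1$), that any manipulation point encountered en route is an $r$-manipulation point with $r\leq 4$ of bounded preimage multiplicity, and---most delicately---that the case analysis according to whether the third alternative sits above or below the $\{a,b\}$-block is exhaustive, as in Lemma~\ref{lem:lg_fbr_final_1vot}. Threading the $k=3$ versus $k\geq 4$ distinction of Lemma~\ref{lem:manip_pts_on_bdry_of_loc_dict} through the argument is routine, since both outputs are subsumed by $M_4$.
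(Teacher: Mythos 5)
Your proposal reproduces the paper's proof essentially verbatim: both follow the one-voter argument of Lemma~\ref{lem:d_notin_K_1vot} step-by-step inside coordinate~$1$ (invoking Lemma~\ref{lem:cond_on_top} in place of Lemma~\ref{lem:cond_on_top_1vot}) to reach the local-dictator bound $\p_1^{(a,b,d)}(\sigma^{(a,b,d)}\in\LD_1^{\{a,b,d\}})\geq\eps/(1600k)$, then apply Lemmas~\ref{lem:lg_bdry_for_loc_dict} and~\ref{lem:manip_pts_on_bdry_of_loc_dict} and chain the inequalities exactly as in Corollary~\ref{cor:manip_by_bdry_loc_dict}. Your accounting for the two extra factors of $n$ matches the paper's, and your remarks on the $k=3$ versus $k\geq 4$ distinction and preimage multiplicity correctly identify the only bookkeeping that needs to be added to the one-voter argument.
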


\begin{proof}
The proof is very similar to that of Lemma~\ref{lem:d_notin_K_1vot}: we do the same steps in the first coordinate as done in the proof of Lemma~\ref{lem:d_notin_K_1vot}, and the fact that we have $n$ coordinates only matters at the very end.

Let $\sigma$ be such that $f\left( \sigma \right) = d$. We will keep coordinates 2 through $n$ to be fixed as $\sigma_{-1}$ throughout the proof. By bubbling alternatives $d$, $a$, and $b$ in the first coordinate, we can define $\sigma'$, $\sigma^{\left(d, b, a \right)}$, $\sigma^{\left(d, a, b \right)}$, $\sigma^{\left( a, b, d \right)}$, $\sigma^{\left( a, d, b \right)}$, $\sigma^{\left( b, a, d \right)}$, and $\sigma^{\left( b, d, a \right)}$ just as in  the proof of Lemma~\ref{lem:d_notin_K_1vot}.  Again, we can show that either
\[
\p \left( \sigma \in M_2 \right) \geq \frac{\eps}{1600 k^3},
\]
in which case we are done, or
\begin{equation}\label{eq:loc_dict_abd}
\p_1^{\left( a, b, d \right)} \left( \sigma^{\left(a,b,d\right)} \in LD_1^{\left\{a,b,d\right\}} \right) = \p \left( \sigma: \sigma^{\left( a, b, d \right)} \in LD_1^{\left\{a,b,d\right\}} \right) \geq \frac{\eps}{1600 k}.
\end{equation}
Define $G_{\left(a,b,d\right)}$ and $T_{\left( a, b, d \right)}$ analogously to $G_{\left( a, b, c \right)}$ and $T_{\left( a, b, c \right)}$, respectively.

Suppose that \eqref{eq:loc_dict_abd} holds. We also know that $d\notin K$, so Lemma~\ref{lem:lg_bdry_for_loc_dict} applies, and then Lemma~\ref{lem:manip_pts_on_bdry_of_loc_dict} shows us how to find manipulation points. We can put these arguments together, just like in the proof of Corollary~\ref{cor:manip_by_bdry_loc_dict}, to show what we need:
\begin{align*}
\p \left( \sigma \in M_4 \right) &\geq \frac{1}{k^3} \p_1^{\left(a,b,d \right)} \left( \sigma \in M_4 \right) \geq \frac{1}{6nk^8} \p_1^{\left( a, b, d \right)} \left( \sigma \in \partial \left( T_1 \left( a,b,d \right) \right) \right)\\
&\geq \frac{\eps}{600 n^2 k^{12}} \p_1^{\left( a, b , d \right)} \left( \sigma \in T_1 \left( a, b, d \right) \right) \geq \frac{\eps^2}{10^6 n^2 k^{13}}. \qedhere
\end{align*}
\end{proof}

Putting together the results of the previous lemmas, there is only one case to be covered, which is covered by the following final lemma. Basically, this lemma says that unless there are enough manipulation points, our function is close to a dictator in the first coordinate, on the subset of alternatives $K$.
\begin{lemma}\label{lem:final_loc_dict}
Recall that we assume that $\Dist \left( f, \overline{\NONMANIP} \right) \geq \eps$. Furthermore assume that $\left| K \right| \geq 3$, for every $c \in K$ we have
\begin{equation}\label{eq:dict_cond_on_top}
\p_1^{\left( c \right)} \left( f \left( \sigma \right) = c \right) \geq 1 - \frac{\eps}{50k},
\end{equation}
and for every $d \notin K$ we have
\[
\p \left( f\left( \sigma \right) = d \right) \leq \frac{\eps}{4k}.
\]
Then
\begin{equation}\label{eq:final_manip}
\p \left( \sigma \in M_2 \right) \geq \frac{\eps}{4k^2}.
\end{equation}
\end{lemma}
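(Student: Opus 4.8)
The plan is to follow the proof of Lemma~\ref{lem:final_loc_dict_1vot}, carried out now in coordinate $1$. Consider the SCF $g(\sigma):=\tp_K(\sigma_1)$, which depends only on the first coordinate and hence lies in $\overline{\NONMANIP}$. Since $\Dist(f,\overline{\NONMANIP})\ge\eps$, we get $\p(f(\sigma)\ne\tp_K(\sigma_1))\ge\eps$. Splitting according to whether $f(\sigma)\in K$ and using $\p(f(\sigma)\notin K)=\sum_{d\notin K}\p(f(\sigma)=d)\le(k-|K|)\tfrac{\eps}{4k}\le\tfrac{\eps}{2}$, one obtains $\p(\sigma\in E)\ge\tfrac{\eps}{2}$, where $E:=\{\sigma:f(\sigma)\in K,\ f(\sigma)\ne\tp_K(\sigma_1)\}$. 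This is the analogue of the first step of the one-voter proof, and the only new ingredient is that $\tp_K$ applied to a fixed coordinate still belongs to $\overline{\NONMANIP}$.

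Next, fix $\sigma\in E$, write $e:=\tp_K(\sigma_1)$ and $w:=f(\sigma)\in K\setminus\{e\}$, and let $\bar\sigma$ be obtained from $\sigma$ by bubbling $e$ up to the top of coordinate $1$ by adjacent transpositions, leaving all other coordinates fixed; note $\bar\sigma_1(1)=e$ and $\tp_K(\bar\sigma_1)=e$. Walking along this path, I claim the following dichotomy: either some $2$-manipulation point $\hat\sigma$ appears along the way (agreeing with $\sigma$ outside coordinate $1$, with $\hat\sigma_1$ equal to $\sigma_1$ with $e$ shifted upward, possibly followed by one further adjacent transposition involving $e$), or else $f(\bar\sigma)=w$ and $\bar\sigma$ violates the dictator condition, i.e.\ $\bar\sigma_1(1)=e\in K$ while $f(\bar\sigma)=w\ne e=\tp_K(\bar\sigma_1)$. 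The dichotomy is obtained from Lemma~\ref{lem:nonManipBoundary}: at any step where the outcome changes from $x$ to $y$ by swapping $e$ past its neighbour $g'$, either $\{x,y\}\ne\{e,g'\}$ and one of the two profiles is a $2$-manipulation point, or $\{x,y\}=\{e,g'\}$; in the latter case a transition $e\to g'$ is itself a $2$-manipulation point (the pre-swap profile has $g'$ above $e$ but outcome $e$, and reporting the post-swap ranking gives $g'$). The point that makes the dichotomy clean is that $e=\tp_K(\sigma_1)$ lies above every other element of $K$ in $\sigma_1$, so bubbling $e$ up only ever swaps it past alternatives \emph{outside} $K$; hence, as long as no manipulation point has appeared, the outcome cannot leave $K$ (leaving $K$ via an admissible transition $\{x,y\}=\{e,g'\}$ would force $y=g'\notin K$ and $x=e$, which is a manipulation point), and then no transition $\{x,y\}=\{e,g'\}$ with $x,y\in K$ is possible since $g'\notin K$. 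So the outcome never changes at all and $f(\bar\sigma)=w$.

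Finally, I combine the two cases by counting. The map $\sigma\mapsto\bar\sigma$ from $E$ to dictator-violating profiles is at most $O(k)$-to-one: given $\bar\sigma$, recover $e=\tp_K(\bar\sigma_1)$ and then $\sigma$ by lowering $e$ to one of the at most $k$ positions still above the top element of $K\setminus\{e\}$ in coordinate $1$. Averaging \eqref{eq:dict_cond_on_top} over $c\in K$ (each with $\p(\sigma_1(1)=c)=1/k$) gives
\[
\p\bigl(\tau:\tau_1(1)\in K,\ f(\tau)\ne\tp_K(\tau_1)\bigr)=\sum_{c\in K}\tfrac{1}{k}\,\p_1^{(c)}\bigl(f(\sigma)\ne c\bigr)\le\frac{\eps}{50k},
\]
so the measure of $\sigma\in E$ falling in the second case is $O(\tfrac{\eps}{50})$. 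Hence at least $\tfrac{\eps}{2}-O(\tfrac{\eps}{50})\ge\tfrac{\eps}{4}$ of the mass of $E$ produces a $2$-manipulation point via the first case; since the map from these $\sigma$ to the witnessing $2$-manipulation point is $O(k^2)$-to-one (at most $k$ choices for the alternative shifted and $k$ for its position), dividing out the multiplicity yields $\p(\sigma\in M_2)\ge\tfrac{\eps}{4k^2}$. The main obstacle is establishing the clean dichotomy of the second paragraph—in particular ruling out a ``monotone escape'' in which bubbling $e$ up changes the outcome (to $e$, or through intermediate values) without ever exposing a manipulation point—and this is exactly the place where the hypothesis $f(\sigma)\in K$ is essential.
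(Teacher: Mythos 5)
Your proof is correct and follows the paper's own route: the paper's proof of Lemma~\ref{lem:final_loc_dict} is the single line ``Just like the proof of Lemma~\ref{lem:final_loc_dict}'' (evidently a typo for Lemma~\ref{lem:final_loc_dict_1vot}), so the intended argument is exactly the one-voter proof transplanted to coordinate~$1$, which is what you do. One place where you are more careful than the source, in a way that is genuinely needed: the paper's one-voter proof asserts flatly that if $f(\sigma)\neq f(\bar\sigma)$ then a $2$-manipulation point appears along the bubbling path, but Lemma~\ref{lem:nonManipBoundary} alone does not give this---a step where the outcome changes from the neighbour $g'$ to $e$ under the swap $[e:g']$ produces no manipulation---and your dichotomy explicitly rules this ``monotone escape'' out using $f(\sigma)\in K$ (the outcome can only leave $K$ by producing a manipulation, and inside $K$ no swap matches $\{e,g'\}$). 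Likewise, your explicit $k$-to-one and $k^2$-to-one multiplicity bounds replace the paper's slightly imprecise claim that $\bar\sigma$ is uniform on $\{\tau:\tau_1(1)\in K\}$ (it is not, but the factor of $k$ you account for cancels with the $1/k$ from averaging~\eqref{eq:dict_cond_on_top}, so both versions reach $\eps/50<\eps/4$ and hence~\eqref{eq:final_manip}).
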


\begin{proof}
Just like the proof of Lemma~\ref{lem:final_loc_dict}.
\end{proof}

To conclude the proof in the small fiber case, inspect all the lower bounds for $\p \left( \sigma \in M_4 \right)$ obtained in Section~\ref{sec:sm_fbr_ref}, and recall that $\gamma = \frac{\eps^3}{10^3 n^3 k^{24}}$.

\subsection{Large fiber case}\label{sec:lg_fbr_ref} 

We now deal with the large fiber case, when \eqref{eq:lg_fbr_ref} holds for both boundaries, i.e., when
\[
\p \left( \sigma \in \Lg \left( B_1^{a,b;\left[a:b\right]} \right) \right) \geq \frac{\eps}{nk^7}
\]
and
\[
\p \left( \sigma \in  \Lg \left( B_2^{c,d;\left[c:d\right]} \right)  \right) \geq \frac{\eps}{nk^7}.
\]
We differentiate between two cases: whether $d \in \left\{a,b\right\}$ or $d \notin \left\{a,b\right\}$. 

\subsubsection{Case 1}
Suppose $d \in \left\{a,b \right\}$, in which case w.l.o.g.\ we may assume that $d = a$. That is, in the rest of this case we may assume that
\begin{equation}\label{eq:lg_fbr_d=a_1}
\p \left( \sigma \in  \Lg \left( B_1^{a,b;\left[a:b\right]} \right) \right) \geq \frac{\eps}{nk^7}
\end{equation}
and
\begin{equation}\label{eq:lg_fbr_d=a_2}
\p \left( \sigma \in \Lg \left( B_2^{a,c;\left[a:c\right]} \right) \right) \geq \frac{\eps}{nk^7}.
\end{equation}

First, let us look at only the boundary between $a$ and $b$ in direction 1. Let us fix a vector $z_{-1}^{a,b}$ which gives a large fiber $B_1 \left( z_{-1}^{a,b} \right)$ for the boundary $B_1^{a,b;\left[a:b\right]}$, i.e., we know that
\begin{equation}\label{eq:lg_fbr_def2}
\p \left( \sigma \in B_1 \left( z_{-1}^{a,b} \right) \, \middle| \, \sigma \in \bar{F} \left( z_{-1}^{a,b} \right) \right) \geq 1 - \gamma.
\end{equation}

Our basic goal in the following will be to show that conditional on the ranking profile $\sigma$ being in the fiber $F \left( z_{-1}^{a,b} \right)$ (but not necessarily in $\bar{F} \left( z_{-1}^{a,b} \right)$), with high probability the outcome of the vote is $\tp_{\left\{a,b\right\}} \left( \sigma_1 \right)$, or else we have a lot of 2-manipulation points or local dictators on three alternatives in coordinate 1.

Our first step towards this is the following.
\begin{lemma}\label{lem:cond_ab_top_still_lg_fbr}
Suppose $z_{-1}^{a,b}$ gives a large fiber $B_1 \left( z_{-1}^{a,b} \right)$ for the boundary $B_1^{a,b;\left[a:b\right]}$. Then
\begin{equation}\label{eq:cond_ab_top_still_lg_fbr}
\p_1^{\left(a,b\right)} \left( \sigma \in B_1 \left( z_{-1}^{a,b} \right) \, \middle| \, \sigma \in F\left( z_{-1}^{a,b} \right) \right) \geq 1 - k \gamma.
\end{equation}
\end{lemma}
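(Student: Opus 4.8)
The plan is to reduce the statement to a one-line cardinality comparison between two conditional distributions. Observe first that, since $\sigma$ is uniform on $S_k^n$, the measure $\p_1^{\left(a,b\right)}\left(\,\cdot \mid \sigma \in F\left(z_{-1}^{a,b}\right)\right)$ is simply the uniform measure on the event $E := \left\{\sigma : \left(\sigma_1(1),\sigma_1(2)\right) = (a,b)\right\} \cap F\left(z_{-1}^{a,b}\right)$; this is legitimate because the first event depends only on coordinate $1$ and the second only on coordinates $2,\dots,n$, and these coordinate blocks are independent. The crucial structural point is the inclusion $E \subseteq \bar{F}\left(z_{-1}^{a,b}\right)$: if the top two alternatives of coordinate $1$ are $a$ above $b$, then $a$ and $b$ are in particular adjacent in coordinate $1$ with $a$ above $b$, and the constraint on coordinates $2,\dots,n$ defining $F\left(z_{-1}^{a,b}\right)$ is exactly the one carried over into $\bar{F}\left(z_{-1}^{a,b}\right)$. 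Since $B_1\left(z_{-1}^{a,b}\right) \subseteq \bar{F}\left(z_{-1}^{a,b}\right)$ by definition, we get $E \setminus B_1\left(z_{-1}^{a,b}\right) \subseteq \bar{F}\left(z_{-1}^{a,b}\right) \setminus B_1\left(z_{-1}^{a,b}\right)$, and the hypothesis that $B_1\left(z_{-1}^{a,b}\right)$ is a large fiber bounds the size of the right-hand set by $\gamma\,\bigl|\bar{F}\left(z_{-1}^{a,b}\right)\bigr|$.

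Next I would do the elementary count. Under the uniform measure the coordinates factor, so both $\bar{F}\left(z_{-1}^{a,b}\right)$ and $E$ split as (a set of admissible coordinate-$1$ permutations) $\times$ (the fixed $(k!/2)^{n-1}$ admissible tuples in coordinates $2,\dots,n$). In coordinate $1$, $\bar{F}$ allows the adjacent block $ab$ to occupy any of the $k-1$ positions with the remaining $k-2$ alternatives in any order, i.e.\ $(k-1)!$ permutations, whereas $E$ forces the block to the top, i.e.\ $(k-2)!$ permutations; hence $\bigl|E\bigr| = \bigl|\bar{F}\left(z_{-1}^{a,b}\right)\bigr|/(k-1)$. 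Therefore
\[
\p_1^{\left(a,b\right)}\!\left(\sigma \notin B_1\left(z_{-1}^{a,b}\right) \,\middle|\, \sigma \in F\left(z_{-1}^{a,b}\right)\right) = \frac{\bigl|E \setminus B_1\left(z_{-1}^{a,b}\right)\bigr|}{\bigl|E\bigr|} \le \frac{\gamma\,\bigl|\bar{F}\left(z_{-1}^{a,b}\right)\bigr|}{\bigl|\bar{F}\left(z_{-1}^{a,b}\right)\bigr|/(k-1)} = (k-1)\gamma \le k\gamma,
\]
and taking complements gives $\p_1^{\left(a,b\right)}\left(\sigma \in B_1\left(z_{-1}^{a,b}\right) \mid \sigma \in F\left(z_{-1}^{a,b}\right)\right) \ge 1 - k\gamma$, as claimed.

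I do not expect a genuine obstacle here — this is the $n$-voter transcription of the computation in the proof of Lemma~\ref{lem:cond_ab_top_still_lg_fbr_1vot}, and the argument is purely combinatorial. The only point deserving a moment's care is the first one above, namely that conditioning simultaneously on $\left(\sigma_1(1),\sigma_1(2)\right)=(a,b)$ and on $\sigma \in F\left(z_{-1}^{a,b}\right)$ produces the uniform distribution on $E$; once that is noted, the whole lemma is the inclusion $E \subseteq \bar{F}\left(z_{-1}^{a,b}\right)$ combined with transporting the large-fiber bound across the constant-factor renormalization $\bigl|\bar{F}\left(z_{-1}^{a,b}\right)\bigr|/\bigl|E\bigr| = k-1$.
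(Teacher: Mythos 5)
Your proof is correct and takes essentially the same approach as the paper: identify that conditioning on the top two of coordinate $1$ being $(a,b)$ together with $\sigma\in F(z_{-1}^{a,b})$ is the same as conditioning on a subset of $\bar{F}(z_{-1}^{a,b})$ of relative measure $1/(k-1)$, and then transport the large-fiber bound across that factor. The paper phrases it via a Bayes-type manipulation of conditional probabilities rather than your explicit cardinality count, but the content is identical.
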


\begin{proof}
We know that
\[
\p \left( \left( \sigma_1 \left( 1 \right), \sigma_1 \left( 2 \right) \right) = \left( a, b \right) \, \middle| \, \sigma \in \bar{F} \left( z_{-1}^{a,b} \right) \right) = \frac{1}{k-1},
\]
and so
\begin{align*}
\p_1^{\left(a,b\right)} &\left( \sigma \notin B_1 \left( z_{-1}^{a,b} \right)\, \middle| \, \sigma \in F \left( z_{-1}^{a,b} \right) \right) = \p_1^{\left(a,b\right)} \left( \sigma \notin B_1 \left( z_{-1}^{a,b} \right)\, \middle| \, \sigma \in \bar{F}\left( z_{-1}^{a,b} \right) \right)\\
&= \left( k - 1 \right) \p \left( \sigma \notin B_1 \left( z_{-1}^{a,b} \right), \left( \sigma_1 \left( 1 \right), \sigma_1 \left( 2 \right) \right) = \left( a, b \right) \, \middle| \, \sigma \in \bar{F} \left( z_{-1}^{a,b} \right) \right) \leq \left( k - 1 \right) \gamma < k \gamma. \qedhere
\end{align*}
\end{proof}

The next lemma formalizes our goal mentioned above.
\begin{lemma}\label{lem:towards_inv_hyp_contr}
Suppose $z_{-1}^{a,b}$ gives a large fiber $B_1 \left( z_{-1}^{a,b} \right)$ for the boundary $B_1^{a,b;\left[a:b\right]}$. Then either
\begin{equation}\label{eq:f=top_ab_1}
\p \left( f\left( \sigma \right) = \tp_{\left\{a,b \right\}} \left( \sigma_1 \right) \, \middle| \, \sigma \in F \left( z_{-1}^{a,b} \right) \right) \geq 1 - 2 k \gamma
\end{equation}
or
\begin{equation}\label{eq:2manip_again}
\p \left( \sigma \in M_2 \, \middle| \, \sigma \in F \left( z_{-1}^{a,b} \right) \right) \geq \frac{\gamma}{2k}
\end{equation}
or
\begin{equation}\label{eq:loc_dict_again}
\p \left( \sigma \in LD_1 \left(a,b\right) \, \middle| \, \sigma \in F \left( z_{-1}^{a,b} \right) \right) \geq \frac{\gamma}{2k}.
\end{equation}
\end{lemma}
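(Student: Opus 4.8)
The plan is to show that, conditioned on the fiber $F(z_{-1}^{a,b})$, the function $f$ acts on coordinate $1$ essentially like $\tp_{\{a,b\}}$ unless this failure is witnessed either by many $2$-manipulation points or by many local dictators on three alternatives in coordinate $1$. The starting point is Lemma~\ref{lem:cond_ab_top_still_lg_fbr}: it says that, conditioned on $\sigma \in F(z_{-1}^{a,b})$ and on $(\sigma_1(1),\sigma_1(2)) = (a,b)$, we have $\sigma \in B_1(z_{-1}^{a,b})$ (hence $f(\sigma)=a=\tp_{\{a,b\}}(\sigma_1)$) with probability at least $1-k\gamma$; applying $[a:b]_1$ to a profile with $(b,a)$ at the top of coordinate $1$ (which lies in $F(z_{-1}^{a,b})$ iff the original does), the same lemma gives the symmetric statement for the case $(\sigma_1(1),\sigma_1(2))=(b,a)$. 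So $f(\sigma)=\tp_{\{a,b\}}(\sigma_1)$ with conditional probability $\ge 1-k\gamma$ whenever $a,b$ occupy the top two positions of $\sigma_1$ in either order.

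Next I would pass from ``$a,b$ at the top'' to ``$a,b$ anywhere.'' Let $\sigma$ be uniform in $F(z_{-1}^{a,b})$; assume w.l.o.g.\ $a \stackrel{\sigma_1}{>} b$, and let $\sigma^\star$ be obtained from $\sigma$ by bubbling $a$ to the top of coordinate $1$ and then $b$ to the position directly below $a$. This map is exactly $\binom{k}{2}$-to-one from $\{\sigma \in F(z_{-1}^{a,b}) : a \stackrel{\sigma_1}{>} b\}$ onto $\{\tau \in F(z_{-1}^{a,b}) : (\tau_1(1),\tau_1(2))=(a,b)\}$, hence measure preserving, so by the previous paragraph $f(\sigma^\star)=\tp_{\{a,b\}}(\sigma_1)$ with conditional probability $\ge 1-k\gamma$. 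On that event, if $f(\sigma)\in\{a,b\}$ but $f(\sigma)\ne\tp_{\{a,b\}}(\sigma_1)$, then the winner takes different values at $\sigma$ and $\sigma^\star$, so it changes at some adjacent transposition along the bubbling path; since every such transposition involves $a$ or $b$ and the swapped pair cannot equal $\{\text{old winner},\text{new winner}\}$ in the relevant configurations, Lemma~\ref{lem:nonManipBoundary} forces one of the two endpoints to be a $2$-manipulation point. That manipulation point differs from $\sigma$ only in coordinate $1$ and only in the positions of $a$ and $b$, so it is reached from at most $k^2$ profiles $\sigma$; hence the conditional probability of this case is at most $k^2\,\p(\sigma\in M_2\mid \sigma\in F(z_{-1}^{a,b})) + k\gamma$.

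The remaining and essential case is $f(\sigma)=c$ with $c\notin\{a,b\}$. Here I would first bubble $c$ to the top of coordinate $1$, arguing exactly as above via Lemma~\ref{lem:nonManipBoundary} that either a $2$-manipulation point appears or $f$ stays equal to $c$; then bubble $a$, then $b$, up to just below $c$, again extracting $2$-manipulation points or preserving $f=c$, reaching a profile $\tilde\sigma$ in which $c,a,b$ form an adjacent block with $a$ directly above $b$ (so $\tilde\sigma\in\bar F(z_{-1}^{a,b})\setminus B_1(z_{-1}^{a,b})$) and $f(\tilde\sigma)=c$. Finally I would permute $\{a,b,c\}$ inside this block and then bubble the block to the very top of coordinate $1$: using that $f$ equals $a$ with conditional probability $\ge 1-k\gamma$ once $(a,b)$ sit at the top, either one of the rearrangement/bubbling steps produces a $2$-manipulation point (via Lemma~\ref{lem:nonManipBoundary}, precisely when the winner fails to ``track'' the element being bubbled upward), or $\tilde\sigma\in\LD_1^{\{a,b,c\}}\subseteq\LD_1(a,b)$. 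Since $\tilde\sigma$ determines $\sigma$ up to the positions of $a,b,c$, each manipulation point and each local dictator is reached from at most $k^3$ profiles, and $\p(\bar F(z_{-1}^{a,b})\setminus B_1(z_{-1}^{a,b})\mid \bar F(z_{-1}^{a,b}))\le\gamma$ together with $\p(\bar F(z_{-1}^{a,b})\mid F(z_{-1}^{a,b}))=1/k$ bounds the ``no manipulation found'' part of this case by $k^2\gamma$. Combining all cases, $\p(f(\sigma)\ne\tp_{\{a,b\}}(\sigma_1)\mid \sigma\in F(z_{-1}^{a,b}))$ is at most a bounded power of $k$ times $\p(\sigma\in M_2\mid F(z_{-1}^{a,b}))+\p(\sigma\in\LD_1(a,b)\mid F(z_{-1}^{a,b}))+\gamma$, and a dichotomy on which term dominates yields exactly one of \eqref{eq:f=top_ab_1}, \eqref{eq:2manip_again}, \eqref{eq:loc_dict_again}.

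The main obstacle is the last case: checking that when $f(\sigma)=c\notin\{a,b\}$ one can always route from ``block in place'' to ``block at the top'' so that either a $2$-manipulation point is produced or $\tilde\sigma$ is genuinely a local dictator on $\{a,b,c\}$ — i.e.\ verifying that the ``winner tracks the bubbled element'' obstruction to finding a manipulation coincides with the local-dictator condition — and then keeping all the $\mathrm{poly}(k)$-to-one overcounting and the $k\gamma$-type error terms within the budgets $2k\gamma$ and $\gamma/(2k)$ stated in the lemma. (This bookkeeping is exactly why the definition of $\gamma$ carries a large power of $k$ and of $n$ in the denominator.)
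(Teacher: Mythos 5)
Your strategy is the paper's: condition on the fiber, use Lemma~\ref{lem:cond_ab_top_still_lg_fbr} to get that $f$ equals $\tp_{\{a,b\}}$ once $(a,b)$ occupy the top two slots of coordinate~$1$, and on the residual ``bad'' event extract either $2$-manipulation points or local dictators on three alternatives in coordinate~$1$, modifying coordinate~$1$ only. The paper's own proof is a one-line reduction to Lemma~\ref{lem:lg_fbr_final_1vot}, and your handling of the ``$f(\sigma)\in\{a,b\}$ but wrong'' case and of the measure-preservation of the $(a,b)$-bubbling map are both sound.

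The gap is in the case $f(\sigma)=c\notin\{a,b\}$, and specifically in your decision to bubble $c$ to the top of coordinate~$1$. The paper keeps $c$ fixed: when $c$ is not above both $a$ and $b$, bubbling $a,b$ alone already forces a manipulation because they must pass $c$; when $c$ is above both, $a,b$ are bubbled to \emph{just below} $c$, and the profile $\tilde\sigma$ (together with any manipulation point extracted from it) differs from $\sigma$ only in the positions of $a$ and $b$. This makes the map from bad profiles to elements of $M_2\cup\LD_1(a,b)$ at most $k^2$-to-one, which is exactly what the arithmetic needs: if alternative~\eqref{eq:f=top_ab_1} fails then (using $\p(I_3\mid F)\le k\gamma$) one has $\p(I_2\mid F)>k\gamma$, and $k\gamma/k^2=\gamma/k$ then splits into the two alternatives $\gamma/(2k)$. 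Because you also move $c$, your map is $k^3$-to-one, as you yourself note, which yields only $\gamma/(2k^2)$ — strictly weaker than what the lemma asserts. The lemma's conclusion is stated \emph{in terms of $\gamma$}, so the large powers of $n,k$ sitting inside the definition of $\gamma$ cannot absorb this extra factor of $k$. There is a second, related leak: after bubbling the block to positions $1,2,3$ and permuting to $(a,b,c)$, you invoke the ``$f=a$ with probability $\ge 1-k\gamma$ once $(a,b)$ sit at the top'' statement, but your comparison profile is further constrained to have $c$ at position~$3$, and conditioning on that sub-event can concentrate the exceptional mass, costing roughly another factor of $k$. Both issues disappear if you leave $c$ in place and run the two-subcase argument of Lemma~\ref{lem:lg_fbr_final_1vot} verbatim inside the fiber.
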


\begin{proof}
The proof of this lemma is essentially the same as that of Lemma~\ref{lem:lg_fbr_final_1vot}, there are only two slight differences. First, we use Lemma~\ref{lem:cond_ab_top_still_lg_fbr} to know that \eqref{eq:cond_ab_top_still_lg_fbr} holds. Second, we take $\sigma \in F \left( z_{-1}^{a,b} \right)$ to be uniform, and we stay on the fiber $F \left( z_{-1}^{a,b} \right)$ throughout the proof: we modify only the first coordinate throughout the proof, in the same way as we did for Lemma~\ref{lem:lg_fbr_final_1vot}. We omit the details.
\end{proof}

Now this lemma holds for all vectors $z_{-1}^{a,b}$ which give a large fiber $B_1 \left( z_{-1}^{a,b} \right)$ for the boundary $B_1^{a,b;\left[a:b\right]}$. By \eqref{eq:lg_fbr_d=a_1} we know that
\[
\p \left( \sigma : B_1 \left( x_{-1}^{a,b} \left( \sigma \right) \right) \text{ is a large fiber} \right)\geq \frac{\eps}{nk^7}.
\]
Now if \eqref{eq:2manip_again} holds for at least a third of the vectors $z_{-1}^{a,b}$ that give a large fiber $B_1 \left( z_{-1}^{a,b} \right)$, then it follows that
\[
\p \left( \sigma \in M_2 \right) \geq \frac{\gamma \eps}{6n k^8}
\]
and we are done. If \eqref{eq:loc_dict_again} holds for at least a third of the vectors $z_{-1}^{a,b}$ that give a large fiber $B_1 \left( z_{-1}^{a,b} \right)$, then similarly we have
\[
\p \left( \sigma \in LD_1 \left( a, b \right) \right) \geq \frac{\gamma \eps}{6nk^8},
\]
which means that \eqref{eq:many_loc_dict} also holds, and so we are done by the argument in Section~\ref{sec:loc_dict}.

So the remaining case to consider is when \eqref{eq:f=top_ab_1} holds for at least a third of the vectors $z_{-1}^{a,b}$ that give a large fiber $B_1 \left( z_{-1}^{a,b} \right)$.

We can go through this same argument for the boundary between $a$ and $c$ in direction 2 as well, and either we are done because
\[
\p \left( \sigma \in M_2 \right) \geq \frac{\gamma \eps}{6n k^8}
\]
or
\[
\p \left( \sigma \in LD_2 \left( a, c \right) \right) \geq \frac{\gamma \eps}{6nk^8},
\]
or for at least a third of the vectors $z_{-2}^{a,c}$ that give a large fiber $B_2 \left( z_{-2}^{a,c} \right)$ we have
\[
\p \left( f \left( \sigma \right) = \tp_{\left\{a,c \right\}} \left( \sigma_2 \right) \, \middle| \, \sigma \in F \left( z_{-2}^{a,c} \right) \right) \geq 1 - 2 k \gamma.
\]

So basically our final case is if
\begin{equation}\label{eq:lg_fbr_final_case_1}
\p \left( \sigma \in F_1^{a,b} \right) \geq \frac{\eps}{3 n k^7}
\end{equation}
and also
\begin{equation}\label{eq:lg_fbr_final_case_2}
\p \left( \sigma \in F_2^{a,c} \right) \geq \frac{\eps}{3 n k^7}.
\end{equation}
Notice that being in the set $F_1^{a,b}$ only depends on the vector $x^{a,b} \left( \sigma \right)$ of preferences between $a$ and $b$, and similarly being in the set $F_2^{a,c}$ only depends on the vector $x^{a,c} \left( \sigma \right)$ of preferences between $a$ and $c$. We know that $\left\{\left( x_i^{a,b} \left( \sigma \right), x_i^{a,c} \left( \sigma \right) \right) \right\}_{i=1}^{n}$ are independent, and for any given $i$ we know that $\left| \E \left( x_i^{a,b} \left( \sigma \right) x_i^{a,c} \left( \sigma \right) \right) \right| = \frac{1}{3}$. Hence we can apply reverse hypercontractivity (Lemma~\ref{lem:invhypcontr}), to get the following result.

\begin{lemma}\label{lem:applying_inv_hyp_contr}
If \eqref{eq:lg_fbr_final_case_1} and \eqref{eq:lg_fbr_final_case_2} hold, then also
\begin{equation}\label{eq:result_of_inv_hyp_contr}
\p \left( \sigma \in F_1^{a,b} \cap F_2^{a,c} \right) \geq \frac{\eps^3}{27 n^3 k^{21}}.
\end{equation}
\end{lemma}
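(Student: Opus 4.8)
The plan is to reduce the statement directly to the reverse hypercontractive estimate of Lemma~\ref{lem:inv_hyp}. First I would record the structural fact that both $F_1^{a,b}$ and $F_2^{a,c}$ are unions of fibers: by definition $F_1^{a,b}$ is a union of fibers of the form $F(z^{a,b})$, so the event $\{\sigma \in F_1^{a,b}\}$ is a function of the preference vector $x^{a,b}(\sigma) \in \{-1,1\}^n$ alone, and likewise $\{\sigma \in F_2^{a,c}\}$ is a function of $x^{a,c}(\sigma) \in \{-1,1\}^n$ alone. Setting $B_1 := \{z \in \{-1,1\}^n : F(z) \subseteq F_1^{a,b}\}$ and $B_2 := \{z \in \{-1,1\}^n : F(z) \subseteq F_2^{a,c}\}$, we then have $\{\sigma \in F_1^{a,b}\} = \{x^{a,b}(\sigma) \in B_1\}$ and $\{\sigma \in F_2^{a,c}\} = \{x^{a,c}(\sigma) \in B_2\}$, and hypotheses \eqref{eq:lg_fbr_final_case_1} and \eqref{eq:lg_fbr_final_case_2} read exactly $\p(x^{a,b}(\sigma) \in B_1) \ge \frac{\eps}{3nk^7}$ and $\p(x^{a,c}(\sigma) \in B_2) \ge \frac{\eps}{3nk^7}$.

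Second, I would verify that the pair $(x^{a,b}(\sigma), x^{a,c}(\sigma))$, for $\sigma$ uniform on $S_k^n$, meets the hypotheses of Lemma~\ref{lem:inv_hyp}. Since $\sigma_1, \dots, \sigma_n$ are i.i.d.\ uniform on $S_k$, the pairs $(x_i^{a,b}(\sigma), x_i^{a,c}(\sigma))$ are independent over $i$; each of $x_i^{a,b}(\sigma)$ and $x_i^{a,c}(\sigma)$ is uniform on $\{-1,1\}$, hence has mean zero; and a short count over the six relative orderings of $\{a,b,c\}$ (noting that $x_i^{a,b}(\sigma)\, x_i^{a,c}(\sigma) = 1$ precisely when $a$ is ranked first or last among $a,b,c$, which happens in four of the six cases) gives $\E[x_i^{a,b}(\sigma)\, x_i^{a,c}(\sigma)] = \tfrac{2}{3} - \tfrac{1}{3} = \tfrac{1}{3}$, so $|\E[x_i^{a,b}(\sigma)\, x_i^{a,c}(\sigma)]| \le \rho$ with $\rho = \tfrac13$, as already noted in the discussion preceding the lemma.

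Third, I would apply the conclusion \eqref{eq:two_small} of Lemma~\ref{lem:inv_hyp} with $x = x^{a,b}(\sigma)$, $y = x^{a,c}(\sigma)$, $\rho = \tfrac13$, and the common lower bound $\eps' := \frac{\eps}{3nk^7}$ for $\p(B_1)$ and $\p(B_2)$. Since $\frac{2}{1-\rho} = 3$, this yields
\[
\p\left( x^{a,b}(\sigma) \in B_1,\ x^{a,c}(\sigma) \in B_2 \right) \ \ge\ (\eps')^{3} \ =\ \frac{\eps^3}{27\, n^3 k^{21}},
\]
and the left-hand side equals $\p(\sigma \in F_1^{a,b} \cap F_2^{a,c})$, giving exactly \eqref{eq:result_of_inv_hyp_contr}.

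There is no real obstacle here: the lemma is essentially a packaging of reverse hypercontractivity, and the only points requiring (routine) care are making the identification of $F_i^{a,b}$ with a subset of preference vectors precise — immediate from the definition — and checking that the exponent $\frac{2}{1-1/3} = 3$ produces the stated constants. If one prefers not to invoke \eqref{eq:two_small} in its ``$\ge \eps^{2/(1-\rho)}$'' form, the sharper bound of Lemma~\ref{lem:inv_hyp} with $e^{-\alpha^2} = e^{-\beta^2} = \eps'$ and $\rho = \tfrac13$ gives the same exponent, since $\frac{\alpha^2 + \beta^2 + 2\rho\alpha\beta}{1-\rho^2} = \frac{2 + \tfrac{2}{3}}{\,8/9\,} = 3$.
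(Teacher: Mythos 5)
Your proposal is correct and matches the paper's approach, which in fact only writes ``See above'' as its proof, deferring to the discussion preceding the lemma (that $F_1^{a,b}$ depends only on $x^{a,b}(\sigma)$, $F_2^{a,c}$ depends only on $x^{a,c}(\sigma)$, the coordinatewise correlation is $1/3$ in absolute value, and reverse hypercontractivity applies). You have supplied precisely those implicit details, including the correct exponent $\tfrac{2}{1-1/3}=3$ and constants.
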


\begin{proof}
See above.
\end{proof}

The next and final lemma then concludes that we have lots of manipulation points.
\begin{lemma}\label{lem:inv_hyp_contr_end}
Suppose \eqref{eq:result_of_inv_hyp_contr} holds. Then
\begin{equation}\label{eq:final_manip2}
\p \left( \sigma \in M_3 \right) \geq \frac{\eps^3}{54n^3 k^{27}} - \frac{9 \gamma}{k^3}.
\end{equation}
\end{lemma}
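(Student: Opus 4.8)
The plan is to run the large-fiber argument of Lemma~\ref{lem:lg_fbr_1} inside the refined rankings graph, so that the manipulation points produced are $3$-manipulation points; the role of ``$\sigma$ lies on two large boundaries'' is played by ``$\sigma\in F_1^{a,b}\cap F_2^{a,c}$'', and Lemma~\ref{lem:nonManipTriple} replaces the bare appeal to Gibbard--Satterthwaite. \emph{Step 1 (from average typicality to a pointwise good event).} First I would record that if $\sigma\in F_1^{a,b}$ then, conditionally on this, $f(\sigma)=\tp_{\{a,b\}}(\sigma_1)$ with probability at least $1-2k\gamma$: $F_1^{a,b}$ is a union of fibers $F(z^{a,b})$ on each of which the defining inequality of $F_1^{a,b}$ holds, and $\sigma$ conditioned on $\sigma\in F_1^{a,b}$ is uniform on that union. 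Using this and its analogue for $F_2^{a,c}$, together with the hypothesis $\p(\sigma\in F_1^{a,b}\cap F_2^{a,c})\ge \eps^3/(27n^3k^{21})$, a union bound shows that off an event of probability at most $4k\gamma$ the profile $\sigma$ lies in $F_1^{a,b}\cap F_2^{a,c}$ and satisfies $f(\sigma)=\tp_{\{a,b\}}(\sigma_1)=\tp_{\{a,c\}}(\sigma_2)$, hence $f(\sigma)=a$, $a\stackrel{\sigma_1}{>}b$ and $a\stackrel{\sigma_2}{>}c$. Call this event $E$, so $\p(E)\ge \eps^3/(27n^3k^{21})-4k\gamma$.

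\emph{Step 2 (forcing a profile onto two refined boundaries).} Fix $\sigma\in E$. Bubble $b$ upward in coordinate $1$ until it sits directly below $a$, using adjacent transpositions none of which swaps $a$ with $b$; by Lemma~\ref{lem:nonManipBoundary} at each step either the outcome stays equal to $a$ or one of the two profiles is a $2$-manipulation point, so either we have already found a cheap manipulation point --- at most $O(k)$ profiles $\sigma$ produce a given one, since it differs from $\sigma$ only through the position of $b$ in coordinate $1$ --- or we reach $\sigma'$ with $a$ directly above $b$ in coordinate $1$, $f(\sigma')=a$, and still $\sigma'\in F(x_{-1}^{a,b}(\sigma))$. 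Doing the same for $c$ in coordinate $2$ starting from $\sigma'$ yields $\sigma''$ with $a$ directly above $b$ in coordinate $1$ and directly above $c$ in coordinate $2$, $f(\sigma'')=a$, and $\sigma''\in F(x_{-1}^{a,b}(\sigma))\cap F(x_{-2}^{a,c}(\sigma))$, since none of the transpositions altered any relevant pairwise preference. Now $[a:b]_1\sigma''$ and $[a:c]_2\sigma''$ lie in those same two fibers with the pair $\{a,b\}$, resp.\ $\{a,c\}$, adjacent; a profile of a fiber counted in $F_i^{\cdot,\cdot}$ that has the pair adjacent has $f$ equal to the top of the pair unless it belongs to the atypical subset of the fiber, whose mass within the adjacent part is $O(k^{2}\gamma)$, and the deterministic bubbling maps reweight the uniform measure only by a bounded density. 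Hence, off an exceptional event of probability $O(k^{3}\gamma)$, $f([a:b]_1\sigma'')=b$ and $f([a:c]_2\sigma'')=c$, i.e.\ $\sigma''$ lies on $B_1^{a,b;[a:b]}$ and on $B_2^{a,c;[a:c]}$, with $f(\sigma'')=a$ and $a,b,c$ distinct.

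\emph{Step 3 (Lemma~\ref{lem:nonManipTriple} and counting).} Apply Lemma~\ref{lem:nonManipTriple} with common profile $\pi=\sigma''$ (value $a$), with $([a:b]_1\sigma'',\sigma'')\in B_1^{b,a;T}$ and $([a:c]_2\sigma'',\sigma'')\in B_2^{c,a;T}$: the three values $b,a,c$ are distinct and $1\neq 2$, so there is a $3$-manipulation point $\hat\sigma$ agreeing with $\sigma''$ --- hence with $\sigma$ --- on all coordinates but the first two, with $\hat\sigma_1$ equal to $\sigma''_1$ or $([a:b]_1\sigma'')_1$ up to shifting $c$, and $\hat\sigma_2$ equal to $\sigma''_2$ or $([a:c]_2\sigma'')_2$ up to shifting $b$. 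Undoing the two shifts and the two bubblings, at most $O(k^{2})$ profiles $\sigma$ give a fixed $\hat\sigma$ in coordinate $1$ and likewise in coordinate $2$, so $O(k^{4})$ in all; the cheap-manipulation branches of Step~2 are at least as good. Therefore $\p(\sigma\in M_3)\ge (\p(E)-O(k^{3}\gamma))/O(k^{4})$, and substituting $\p(E)\ge \eps^3/(27n^3k^{21})-4k\gamma$ and $\gamma=\eps^3/(10^3n^3k^{24})$ yields, after tracking constants, the asserted $\eps^3/(54n^3k^{27})-9\gamma/k^3$. (The unknown relative orders of $a$ vs.\ $c$ in coordinate $1$ and of $a$ vs.\ $b$ in coordinate $2$, pinned down by $F_2^{a,c}$ and $F_1^{a,b}$ respectively, split the argument into a bounded number of symmetric cases costing only a constant factor.)

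\emph{Main obstacle.} The crux is Step~2: the sets $F_i^{a,b}$ only control the outcome on a \emph{random} profile of a fiber, whereas Lemmas~\ref{lem:nonManipBoundary} and~\ref{lem:nonManipTriple} need the outcome at the \emph{specific} profiles $[a:b]_1\sigma''$ and $[a:c]_2\sigma''$ obtained from $\sigma$ by deterministic bubbling followed by one adjacent transposition. Bridging this requires verifying that conditioning the fiber on ``the pair is adjacent in that coordinate'' inflates the exceptional mass by only a factor $\asymp k$, and that the bubbling maps have bounded-density pushforward onto the relevant slice of the fiber, so that the total exceptional mass stays of order $\mathrm{poly}(k)\cdot\gamma$ --- small enough, for the chosen $\gamma$, not to swamp the main term $\eps^3/(n^3k^{21})$ coming from \eqref{eq:result_of_inv_hyp_contr}. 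Everything else is bookkeeping of constants.
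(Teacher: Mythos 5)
Your proposal is correct and follows essentially the same route as the paper: start from $\p(F_1^{a,b}\cap F_2^{a,c})$, subtract the $O(k\gamma)$ mass on which $f(\sigma)\neq\tp_{\{a,b\}}(\sigma_1)$ or $f(\sigma)\neq\tp_{\{a,c\}}(\sigma_2)$, bubble $b$ and $c$ into the ``adjacent'' position in coordinates $1$ and $2$ respectively, rule out the bad case $a\in\{f(\tilde\sigma'),f(\tilde\sigma'')\}$ using the definition of the large-fiber sets, and then invoke Lemma~\ref{lem:nonManipTriple} together with a preimage count. The only real difference is bookkeeping: the paper handles the ``pointwise vs.\ average typicality'' step by observing that the map $\sigma\mapsto\tilde\sigma'$ is at most $k^2$-to-one into the (symmetrically chosen) fiber $F((-1,z_{-1}^{a,b}))\subseteq F_1^{a,b}$, on which the atypical mass is $\le 4k\gamma$, and it performs a $50$--$50$ split between the ``cheap $2$-manipulation'' and ``continue'' branches, while you fold both branches into a single $O(k^4)$-to-one count. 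Your phrase ``reweight the uniform measure only by a bounded density'' is misleading as written (the pushforward density onto the adjacent slice is $O(k)$, not $O(1)$); since you nonetheless report the exceptional mass as $O(k^3\gamma)$, the arithmetic comes out right, and the resulting bound is at least as strong as the paper's $\frac{\eps^3}{54n^3k^{27}}-\frac{9\gamma}{k^3}$ (the paper generously uses a $k^6$ divisor for the $3$-manipulation preimage count, where your tighter $O(k^4)$ suffices).
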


\begin{proof}
First let us define two events:
\begin{align*}
I_1 &:= \left\{ \sigma : f \left( \sigma \right) = \tp_{\left\{a,b \right\}} \left( \sigma_1 \right) \right\}\\
I_2 &:= \left\{ \sigma : f \left( \sigma \right) = \tp_{\left\{a,c \right\}} \left( \sigma_2 \right) \right\}.
\end{align*}
Using similar estimates as previously in Lemma~\ref{lem:lg_fbr_1}, we have
\begin{multline*}
\p \left( \sigma \in I_1 \cap I_2 \cap F_1^{a,b} \cap F_2^{a,c} \right) \geq \p \left( \sigma \in F_1^{a,b} \cap F_2^{a,c} \right)\\
 - \p \left( \sigma \notin I_1, \sigma \in F_1^{a,b} \cap F_2^{a,c} \right) - \p \left( \sigma \notin I_2, \sigma \in F_1^{a,b} \cap F_2^{a,c} \right).
\end{multline*}
The first term is bounded below via \eqref{eq:result_of_inv_hyp_contr}, while the other two terms can be bounded using the definition of $F_1^{a,b}$ and $F_2^{a,c}$, respectively:
\[
\p \left( \sigma \notin I_1, \sigma \in F_1^{a,b} \cap F_2^{a,c} \right) \leq \p \left( \sigma \notin I_1, \sigma \in F_1^{a,b} \right) \leq \p \left( \sigma \notin I_1 \, \middle| \, \sigma \in F_1^{a,b} \right) \leq 2 k \gamma,
\]
and similarly for the other term. Putting everything together gives us
\[
\p \left( \sigma \in I_1 \cap I_2 \cap F_1^{a,b} \cap F_2^{a,c} \right) \geq \frac{\eps^3}{27 n^3 k^{21}} - 4k \gamma.
\]
If $\sigma \in I_1 \cap I_2 \cap F_1^{a,b} \cap F_2^{a,c}$, then clearly we must have $f\left( \sigma \right) = a$, and therefore $x_1^{a,b} \left( \sigma \right) = 1$ and $x_2^{a,c} \left( \sigma \right) = 1$. Now define $\sigma'$ from $\sigma$ by bubbling up $b$ in coordinate 1 to just below $a$, and bubbling up $c$ in coordinate 2 to just below $a$. Either we encounter a 2-manipulation point along the way, or the outcome is still $a$: $f\left( \sigma' \right) = a$. If we encounter a 2-manipulation point along the way for at least half of such ranking profiles, then we are done:
\[
\p \left( \sigma \in M_2 \right) \geq \frac{1}{k^2} \left( \frac{\eps^3}{54 n^3 k^{21}} - 2k \gamma \right) = \frac{\eps^3}{54 n^3 k^{23}} - \frac{2 \gamma}{k}.
\]
Otherwise, we may assume that
\[
\p \left( \sigma \in I_1 \cap I_2 \cap F_1^{a,b} \cap F_2^{a,c}, f\left( \sigma' \right) = a  \right) \geq \frac{\eps^3}{54 n^3 k^{21}} - 2k \gamma.
\]
In this case define $\tilde{\sigma}' := \left[a:b\right]_1 \sigma'$ and $\tilde{\sigma}'' := \left[a:c\right]_2 \sigma'$.
If $f \left( \tilde{\sigma}' \right) \notin \left\{a,b \right\}$ or $f \left( \tilde{\sigma}'' \right) \notin \left\{a,c \right\}$, then we automatically have that one of $\sigma', \tilde{\sigma}', \tilde{\sigma}''$ is a 2-manipulation point. If $f \left( \tilde{\sigma}' \right) = b$ and $f\left( \tilde{\sigma}'' \right) = c$, then by Lemma~\ref{lem:nonManipTriple} we know that there exists a 3-manipulation point $\hat{\sigma}$ which agrees with $\sigma$ except perhaps $a$, $b$, and $c$ could be arbitrarily shifted in the first two coordinates. The final case is when $a \in \left\{ f\left( \tilde{\sigma}' \right), f \left( \tilde{\sigma}'' \right) \right\}$. But we now show that this has small probability, and therefore \eqref{eq:final_manip2} follows.

First let us look at the case of $f \left( \tilde{\sigma}' \right) = a$. We have
\begin{multline*}
\p \left( \sigma \in I_1 \cap I_2 \cap F_1^{a,b} \cap F_2^{a,c}, f\left( \sigma' \right) = a, f\left( \tilde{\sigma}' \right) = a  \right)\\
\begin{aligned}
&= \sum_{z_{-1}^{a,b} : F \left( z_{-1}^{a,b} \right) \subseteq F_1^{a,b}} \p \left( \sigma \in I_1 \cap I_2 \cap F \left( \left( 1, z_{-1}^{a,b} \right) \right) \cap F_2^{a,c}, f\left( \sigma' \right) = a, f\left( \tilde{\sigma}' \right) = a  \right)\\
&= \sum_{z_{-1}^{a,b} : F \left( z_{-1}^{a,b} \right) \subseteq F_1^{a,b}} \p \left( \sigma \in I_1 \cap I_2 \cap F_2^{a,c}, f\left( \sigma' \right) = a, f\left( \tilde{\sigma}' \right) = a  \, \middle| \,  \sigma \in F \left( \left( 1, z_{-1}^{a,b} \right) \right) \right) \p \left( \sigma \in  F \left( \left( 1, z_{-1}^{a,b} \right) \right) \right)\\
&\leq \sum_{z_{-1}^{a,b} : F \left( z_{-1}^{a,b} \right) \subseteq F_1^{a,b}} \p \left( \sigma :  f\left( \tilde{\sigma}' \right) = a  \, \middle| \,  \sigma \in F \left( \left( 1, z_{-1}^{a,b} \right) \right) \right) \p \left( \sigma \in  F \left( \left( 1, z_{-1}^{a,b} \right) \right) \right).
\end{aligned}
\end{multline*}
Now we know that $\tilde{\sigma}' \in F \left( \left( -1, z_{-1}^{a,b} \right) \right) \subseteq F_1^{a,b}$, and we also know that
\[
\p \left( f \left( \sigma \right) \neq b \, \middle| \, \sigma \in F \left( \left( -1, z_{-1}^{a,b} \right) \right) \right) \leq 4k \gamma.
\]
The number of $\sigma$'s that give the same $\tilde{\sigma}'$ is at most $k^2$, and so we can conclude that
\[
\p \left( \sigma \in I_1 \cap I_2 \cap F_1^{a,b} \cap F_2^{a,c}, f\left( \sigma' \right) = a, f\left( \tilde{\sigma}' \right) = a  \right) \leq 4 k^3 \gamma,
\]
and similarly
\[
\p \left( \sigma \in I_1 \cap I_2 \cap F_1^{a,b} \cap F_2^{a,c}, f\left( \sigma' \right) = a, f\left( \tilde{\sigma}'' \right) = a  \right) \leq 4 k^3 \gamma,
\]
which shows that
\[
\p \left( \sigma \in M_3 \right) \geq \frac{1}{k^6} \left( \frac{\eps^3}{54 n^3 k^{21}} - 2k \gamma - 8 k^3 \gamma \right) \geq \frac{\eps^3}{54 n^3 k^{27}} - \frac{9 \gamma}{k^3}. \qedhere
\]
\end{proof}
To conclude the proof in this case, recall that we have chosen $\gamma = \frac{\eps^3}{10^3 n^3 k^{24}}$.

\subsubsection{Case 2}

First, as in the previous case, we can look at simply the boundary between $a$ and $b$ in direction 1, and conclude that either there are many manipulation points, or there are many local dictators, or \eqref{eq:lg_fbr_final_case_1} holds. This holds similarly for the boundary between $c$ and $d$ in direction 2. Finally, just as in Section~\ref{sec:lg_fbr_gen_case2}, we can show that \eqref{eq:lg_fbr_final_case_1} and \eqref{eq:lg_fbr_final_case_2} cannot hold at the same time. We omit the details.

\subsection{Proof of Theorem~\ref{thm:k_refined} concluded} 

\begin{proof}[Proof of Theorem~\ref{thm:k_refined}]

Our starting point is Lemma~\ref{lem:boundaries2}, which directly implies Lemma~\ref{lem:cases_ref} (unless there are many 2-manipulation points, in which case we are done). We then consider two cases, as indicated in Section~\ref{sec:cases_refined}.

We deal with the small fiber case in Section~\ref{sec:sm_fbr_ref}. First, Lemmas~\ref{lem:comparable_bdries_ref},~\ref{lem:manip_on_bdry_of_bdry_in_dir!=1_ref}, and~\ref{lem:bdry_of_bdry_in_coord1_ref}, and Corollaries~\ref{cor:comparable_bdries_ref},~\ref{cor:cases_bdry_dir_ref},~\ref{cor:lg_bdry_in_dir!=1_gives_manip},  and~\ref{cor:sm_fbr_ref_last_cor} imply that either there are many 3-manipulation points, or there are many local dictators on three alternatives in coordinate 1. We then deal with the case of many local dictators in Section~\ref{sec:loc_dict}. Lemma~\ref{lem:loc_dict_abc_to_top}, Corollary~\ref{cor:many_dict_at_top}, Lemmas~\ref{lem:lg_bdry_for_loc_dict},~\ref{lem:manip_pts_on_bdry_of_loc_dict}, Corollary~\ref{cor:manip_by_bdry_loc_dict}, and Lemmas~\ref{lem:cond_on_top},~\ref{lem:d_notin_K_ref}, and~\ref{lem:final_loc_dict} together show that there are many 4-manipulation points if there are many local dictators on three alternatives, and the SCF is $\eps$-far from the family of 
nonmanipulable functions.

We deal with the large fiber case in Section~\ref{sec:lg_fbr_ref}. Here Lemmas~\ref{lem:cond_ab_top_still_lg_fbr},~\ref{lem:towards_inv_hyp_contr},~\ref{lem:applying_inv_hyp_contr}, and~\ref{lem:inv_hyp_contr_end} show that if there are not many local dictators on three alternatives, then there are many 3-manipulation points. In the case when there are many local dictators, we refer back to Section~\ref{sec:loc_dict} to conclude the proof.
\end{proof}

\section{To truly nonmanipulable functions}\label{sec:TRUENONMANIP} 


\begin{proof}[Proof of Theorem~\ref{thm:TRUENONMANIP}]
Our assumption means that there exists a SCF $g \in \overline{\NONMANIP}$ such that $\Dist \left( f, g \right) \leq \alpha$. We distinguish two cases: either $g$ is a function of one coordinate, or $g$ takes on at most two values.

\textbf{Case 1. $g$ is a function of one coordinate.} In this case we can assume w.l.o.g.\ that $g$ is a function of the first coordinate, i.e., there exists a SCF $h : S_k \to \left[k\right]$ on one coordinate such that for every ranking profile $\sigma$, we have $g \left( \sigma \right) = h \left( \sigma_1 \right)$.

We know from the quantitative Gibbard-Satterthwaite theorem for one voter that for any $\beta$ either $\Dist \left( h, \NONMANIP \left( 1, k \right) \right) \leq \beta$, or $\p \left( \sigma \in M_3 \left( h \right) \right) \geq \frac{\beta^3}{10^5 k^{16}}$.

In the former case, we have that 
\[
\Dist \left( g, \NONMANIP \left( n, k \right) \right) \leq \Dist \left( h, \NONMANIP \left(1, k \right) \right) \leq \beta,
\]
and so consequently
\[
\Dist \left( f, \NONMANIP \left(n, k \right) \right) \leq \alpha + \beta.
\]

In the latter case, we have that
\[
\p \left( \sigma \in M_3 \left( g \right) \right) = \p \left( \sigma \in M_3 \left( h \right) \right) \geq \frac{\beta^3}{10^5 k^{16}},
\]
and so consequently
\[
\p \left( \sigma \in M_3 \left( f \right) \right) \geq \frac{\beta^3}{10^5 k^{16}} - 6 n k \alpha,
\]
since changing the outcome of a SCF at one ranking profile can change the number of 3-manipulation points by at most $6nk$. Now choosing $\beta = 100 n k^6 \alpha^{1/3}$ shows that either \eqref{eq:true_NONMANIP} or \eqref{eq:many_manip} holds.

\textbf{Case 2. $g$ is a function which takes on at most two values.} W.l.o.g.\ we may assume that the range of $g$ is $\left\{a,b\right\} \subset \left[k\right]$, i.e., for every ranking profile $\sigma \in S_k^n$ we have $g\left( \sigma \right) \in \left\{a,b\right\}$.

There is one thing we have to be careful about: even though $g$ takes on at most two values, it is not necessarily a Boolean function, since the value of $g \left( \sigma \right)$ does not necessarily depend only on the Boolean vector $x^{a,b} \left( \sigma \right)$.

We now define a function $h : S_k^n \to \left\{a,b \right\}$ that is close in some sense to $g$ and which can be viewed as a Boolean function $h : \left\{a,b \right\}^n \to \left\{a,b \right\}$ because $h\left( \sigma \right)$ depends on $\sigma$ only through $x^{a,b} \left( \sigma \right)$. (The vector $x^{a,b} \left( \sigma \right) \in \left\{ -1, 1 \right\}^{n}$ encodes which of $a$ and $b$ is preferred in each coordinate, and a vector in $\left\{a,b\right\}^{n}$ can encode the same information.) For a given ranking profile $\sigma$, let us consider the fiber on which it is on, $F \left( x^{a,b} \left( \sigma \right) \right)$, and let us define $g|_{F \left( x^{a,b} \left( \sigma \right) \right)}$ to be the restriction of $g$ to ranking profiles in the fiber $F \left( x^{a,b} \left( \sigma \right) \right)$. Then define (see Definition~\ref{def:maj})
\[
h \left( \sigma \right) := \Maj \left( g|_{F\left( x^{a,b} \left( \sigma \right) \right)} \right).
\]
By definition, $h\left( \sigma \right)$ depends on $\sigma$ only through $x^{a,b} \left( \sigma \right)$, so we may also view $h$ as a Boolean function $h : \left\{a,b \right\}^n \to \left\{a,b \right\}$.

For any given $0 < \delta < 1$, we either have $\Dist \left( g, h \right) \leq \delta$, in which case $\Dist \left( f, h \right) \leq \alpha + \delta$, or if $\Dist \left( g, h \right) > \delta$, then we show presently that
\begin{equation}\label{eq:many_manip_h}
\p \left( \sigma \in M_2 \left( f \right) \right) \geq \frac{\delta}{4 n k^5} - n k \alpha.
\end{equation}
Choosing $\delta = 8 n^2 k^6 \alpha$ then shows that either \eqref{eq:many_manip} holds, or $\Dist \left( f, h \right) \leq 9 n^2 k^6 \alpha$.

Let us now show \eqref{eq:many_manip_h}. We use a canonical path argument again, but first we divide the ranking profiles according to the fibers with respect to preference between $a$ and $b$.

Let us consider an arbitrary fiber $F\left( z^{a,b} \right)$, and divide it into two disjoint sets: into those ranking profiles for which the outcome of $g$ and $h$ agree, and those for which these outcomes are different. I.e.,
\[
F\left( z^{a,b} \right) = F^{\maj} \left( z^{a,b} \right) \cup F^{\min} \left( z^{a,b} \right),
\]
where
\begin{align*}
F^{\maj} \left( z^{a,b} \right) &= \left\{ \sigma \in F \left( z^{a,b} \right) : g \left( \sigma \right) = h \left( \sigma \right) \right\},\\
F^{\min} \left( z^{a,b} \right) &= \left\{ \sigma \in F \left( z^{a,b} \right) : g \left( \sigma \right) \neq h \left( \sigma \right) \right\}.
\end{align*}
By construction, we know that
\[
\left| F^{\min} \left( z^{a,b} \right) \right| \leq \frac{1}{2} \left| F \left( z^{a,b} \right) \right| = \frac{1}{2} \left( \frac{k!}{2} \right)^n.
\]
Now for every pair of profiles $\left( \sigma, \sigma' \right) \in F^{\min} \left( z^{a,b} \right) \times F^{\maj} \left( z^{a,b} \right)$ define a canonical path from $\sigma$ to $\sigma'$ by applying a path construction in each coordinate one by one, and then concatenating these paths. In each coordinate we apply the path construction of~\cite[Proposition 6.6.]{IsKiMo:12}: we bubble up everything except $a$ and $b$, and then finally bubble up the last two alternatives as well.

For a given edge $\left( \pi, \pi' \right) \in F^{\min} \left( z^{a,b} \right) \times F^{\maj} \left( z^{a,b} \right)$ there are at most $2 k^4 \left( \frac{k!}{2} \right)^n$ possible pairs $\left( \sigma, \sigma' \right) \in F^{\min} \left( z^{a,b} \right) \times F^{\maj} \left( z^{a,b} \right)$ such that the canonical path between $\sigma$ and $\sigma'$ defined above passes through $\left( \pi, \pi' \right)$. (This can be shown just like in the previous lemmas, e.g., Lemma~\ref{lem:comparable_bdries_ref}.) Consequently we have
\[
\left| \partial_e \left( F^{\min} \left( z^{a,b} \right) \right) \right| \geq \frac{\left| F^{\min} \left( z^{a,b} \right) \right| \left| F^{\maj} \left( z^{a,b} \right) \right|}{2 k^4 \left( \frac{k!}{2} \right)^n} \geq \frac{\left| F^{\min} \left( z^{a,b} \right) \right|}{4 k^4},
\]
where the edge boundary $\partial_e \left( F^{\min} \left( z^{a,b} \right) \right)$ is defined via the refined rankings graph restricted to the fiber $F \left( z^{a,b} \right)$. Summing this over all fibers we have that
\begin{equation}\label{eq:sum_fbrs}
\sum_{z^{a,b}} \left| \partial_e \left( F^{\min} \left( z^{a,b} \right) \right) \right| \geq \sum_{z^{a,b}} \frac{\left| F^{\min} \left( z^{a,b} \right) \right|}{4 k^4} \geq \frac{\delta}{4 k^4} \left( k! \right)^n,
\end{equation}
using the fact that $\Dist \left( g, h \right) > \delta$.

Now it is easy to see that if $\left( \sigma, \sigma' \right) \in \partial_e \left( F^{\min} \left( z^{a,b} \right) \right)$ for some $z^{a,b}$, then either $\sigma$ or $\sigma'$ is a 2-manipulation point for $g$. In the refined rankings graph every vertex (ranking profile) has $n \left( k - 1 \right) < nk$ neighbors, so each 2-manipulation point can be counted at most $nk$ times in the sum on the left hand side of \eqref{eq:sum_fbrs}, showing that
\[
\p \left( \sigma \in M_2 \left( g \right) \right) \geq \frac{\delta}{4nk^5},
\]
from which \eqref{eq:many_manip_h} follows immediately, since changing the outcome of a SCF at one ranking profile can change the number of 2-manipulation points by at most $nk$.

So either we are done because \eqref{eq:many_manip} holds, or $\Dist \left( f, h \right) \leq 9 n^2 k^6 \alpha$; suppose the latter case. Our final step is to look at $h$ as a Boolean function, and use a result on testing monotonicity~\cite{GGLRS:00}.

Denote by $\tilde{\Dist}$ the distance of $h$ when viewed as a Boolean function from the set of monotone Boolean functions. Let $0 < \eps < 1$ be arbitrary. Then either $\tilde{\Dist} \leq \eps$, in which case $\Dist \left(h, \NONMANIP \right) \leq \tilde{\Dist} \leq \eps$ and therefore $\Dist \left( f, \NONMANIP \right) \leq 9 n^2 k^6 \alpha + \eps$, or $\tilde{\Dist} > \eps$. In the latter case we show that then
\begin{equation}\label{eq:manip_eps}
\p \left( \sigma \in M_2 \left( f\right) \right) \geq \frac{2\eps}{nk} - 9 n^3 k^7 \alpha.
\end{equation}
Choosing $\eps = 5 n^4 k^8 \alpha$ then shows that either \eqref{eq:true_NONMANIP} or \eqref{eq:many_manip} holds.

Let us now show \eqref{eq:manip_eps}. Let us view $h$ as a Boolean function, and denote by $p\left( h \right)$ the fraction of pairs of strings, differing on one coordinate, that violate the monotonicity condition. Goldreich, Goldwasser, Lehman, Ron, and Samorodnitsky showed in~\cite[Theorem 2]{GGLRS:00} that $p \left( h \right) \geq \frac{\tilde{\Dist}}{n}$.

Now going back to viewing $h$ as a SCF on $k$ alternatives, this tells us that there are at least $\frac{\eps}{2} 2^n$ pairs of fibers, which differ on one coordinate, that violate monotonicity. For each such pair of fibers, whenever $a$ and $b$ are adjacent in the coordinate where the two fibers differ, we get a 2-manipulation point. Such a 2-manipulation point can be counted at most $n$ times in this way (since there are $n$ coordinates where $a$ and $b$ can be adjacent). Consequently, we have
\[
\left| M_2 \left( h \right) \right| \geq \frac{\eps}{2} \cdot 2^n \cdot 2 \left( k - 1 \right)! \left( \frac{k!}{2} \right)^{n-1} \cdot \frac{1}{n} = \frac{2\eps}{nk}\left( k! \right)^n,
\]
i.e.,
\[
\p \left( \sigma \in M_2 \left( h \right) \right) \geq \frac{2 \eps}{nk},
\]
from which \eqref{eq:manip_eps} follows immediately, since changing the outcome of a SCF at one ranking profile can change the number of 2-manipulation points by at most $nk$.
\end{proof}

\begin{proof}[Proof of Theorem~\ref{cor:k_refined_truenonmanip}]
First we argue without specific bounds. Suppose on the contrary that our SCF $f$ does not have many 4-manipulation points. Then $f$ is close to $\overline{\NONMANIP}$ by Theorem~\ref{thm:k_refined}. Consequently, by Theorem~\ref{thm:TRUENONMANIP}, $f$ is close to $\NONMANIP$, which is a contradiction.

Now we argue with specific bounds. Assume on the contrary that
\[
\p \left( \sigma \in M_4 \left( f \right) \right) < \frac{\eps^{15}}{10^{39} n^{67} k^{166}}.
\]
Then by Theorem~\ref{thm:k_refined} we have that $\Dist \left( f, \overline{\NONMANIP} \right) < \frac{\eps^3}{10^6 n^{12} k^{24}}$, and consequently by Theorem~\ref{thm:TRUENONMANIP} we have $\Dist \left( f, \NONMANIP \right) < \eps$, which is a contradiction.
\end{proof}


\chapter[Aggregation]{Aggregation power of Boolean Functions}

In this chapter we will review some of the theory of Boolean functions from the perspective of Condorcet Jury Theorem. 

\section{Influences and Aggregation} 

We now take a modern view of Condorcet Jury Theorem. First, recall the setting. There are two alternatives denoted $+$ and $-$, which are apiori 
equally likely to be the preferable alternative, and that each voter independently receives the correct information with probability $p > 1/2$ and incorrect information with probability $1-p$.
We now denote the $n$ signals by $x_1,\ldots,x_n$ and aggregate them via a boolean function 
$f : \{-1,1\}^n \to \{-1,1\}$. The following fact is a generalization of the second part of the Jury Theorem. 
\begin{theorem} \label{thm:neyman_pearson}
Let $0.5 < p \leq 1$. Then $f : \{-1,1\}^n \to \{-1,1\}$ maximizes $P[f(x) = s]$ if and only if 
$f(x) = +$ for all $x$ such that $\sum_{i=1}^n x_i > 0$ and $f(x) = -$ for all $x$ such that $\sum x_i < 0$. 

In particular when $n$ is odd Majority is the only function that maximizes $P[f(x) = s]$. 
\end{theorem}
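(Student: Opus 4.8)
The plan is to recognize Theorem~\ref{thm:neyman_pearson} as the assertion that the maximum-a-posteriori (Bayes-optimal) decision rule is exactly the majority-type rule, and to prove it by a pointwise likelihood-ratio argument. First I would set up the model explicitly: $s$ is uniform on $\{-1,1\}$ and, conditionally on $s$, the signals $x_1,\dots,x_n$ are i.i.d.\ with $\P[x_i = s \mid s] = p$. For an arbitrary $f:\{-1,1\}^n\to\{-1,1\}$,
\[
\P[f(x) = s] = \sum_{x} \P[x]\,\P[s = f(x)\mid x] \le \sum_{x} \P[x] \max_{u\in\{-1,1\}} \P[s=u\mid x],
\]
with equality if and only if $f(x)$ attains $\max_u \P[s=u\mid x]$ for every $x$ with $\P[x]>0$. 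Thus the whole theorem reduces to identifying, for each $x$, which values $u$ maximize the posterior.

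The key computation, which I would carry out via Bayes' rule and independence, is the posterior odds: writing $k=k(x)=\#\{i:x_i=+1\}$, one has $\P[x\mid s=+1]=p^{k}(1-p)^{n-k}$ and $\P[x\mid s=-1]=p^{n-k}(1-p)^{k}$, hence
\[
\frac{\P[s=+1\mid x]}{\P[s=-1\mid x]} = \Big(\frac{p}{1-p}\Big)^{2k-n} = \Big(\frac{p}{1-p}\Big)^{\sum_i x_i}.
\]
Because $p>1/2$ we have $p/(1-p)>1$, so this ratio exceeds $1$ exactly when $\sum_i x_i>0$, is less than $1$ exactly when $\sum_i x_i<0$, and equals $1$ when $\sum_i x_i=0$. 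Hence the posterior maximizer is the unique value $+1$ on $\{\sum_i x_i>0\}$, the unique value $-1$ on $\{\sum_i x_i<0\}$, and either value on $\{\sum_i x_i=0\}$. Combining this with the previous paragraph, and using that for $p<1$ every $x$ has $\P[x]>0$ so the pointwise condition is forced on all of $\{-1,1\}^n$, gives exactly the claimed characterization of the maximizers. For the ``in particular'' clause: when $n$ is odd, $\sum_i x_i$ is an odd integer and therefore never $0$, so the characterization forces $f(x)=\sgn(\sum_i x_i)=\Maj(x)$ for every $x$, and majority is the unique maximizer.

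I do not expect a serious obstacle: the argument is a standard piece of Bayesian decision theory and every step is routine. The one point that needs a word of care is the degenerate endpoint $p=1$, where the only inputs of positive probability are $\pm(1,\dots,1)$; there the ``only if'' direction and the uniqueness of majority genuinely fail (e.g.\ the dictator $f(x)=x_1$ is also optimal), so I would carry out the proof for $0.5<p<1$ and remark separately that the ``if'' direction---optimality of any rule that agrees with $\sgn(\sum_i x_i)$ off the zero-sum set---still holds at $p=1$.
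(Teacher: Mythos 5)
Your argument is correct and follows the same route the paper indicates: compute the posterior odds $\P[s=+\mid x]/\P[s=-\mid x] = (p/(1-p))^{\sum_i x_i}$ and observe that, by the decomposition $\P[f(x)=s]=\sum_x \P[x]\,\P[s=f(x)\mid x]$, the Bayes-optimal rule is pointwise forced wherever $\P[x]>0$. Your caveat about $p=1$ is correct and worth keeping: the theorem as stated allows $p\le 1$, but at $p=1$ only $\pm(1,\dots,1)$ carry positive probability, so (for instance) the dictator $f(x)=x_1$ is also optimal, and both the ``only if'' direction and the uniqueness of Majority for odd $n$ fail; the paper's one-line proof tacitly assumes $p<1$, which is also the hypothesis used in the Condorcet Jury Theorem earlier in the text.
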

This is a special case of the Neyman-Pearson lemma for hypothesis testing in statistics~\cite{NeymanPearson:33}. 
 In our case, the proof immediately follows from the fact that
\[
\frac{P[s = +| x_1,\ldots,x_n]}{P[s = -| x_1,\ldots,x_n]} = 
\frac{P[ x_1,\ldots,x_n | s = +]}{P[ x_1,\ldots,x_n | s = -]} = \left( \frac{p}{1-p} \right)^{n_+(x) - n_-(x)}
\]
where $n_+(x)$ is the number of $+$s in $x$ and similarly $n_-$.
 
Assume for a moment that $s = +$. Therefore, our interest is in the quantity $\IP[f = +]$. 
If $f$ is a monotone function, we have the following Russo's formula~\cite{Margulis:74,Russo:82}, where $\IP_p$ denotes the product measure with marginals $\IP_p[x_i = +] = p$: 
\begin{proposition} \label{prop:russo}
  Let $f :  \{-1,1\}^n \to \{-1,1\}$ be monotone then 
\[
\frac{d}{dp}\bigg\rvert_{p=1/2} \IP_p[f = 1] = \sum_{i=1}^n I_i(f).
\]
\end{proposition}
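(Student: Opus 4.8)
The plan is to compute $\frac{d}{dp}\IP_p[f=1]$ directly from the multilinear expansion of $\IP_p[f=1]$ as a polynomial in $p$, and then identify the derivative at $p=1/2$ with a sum of one-coordinate influences using monotonicity. First I would write $\IP_p[f=1]$ as an expectation. Letting $x_1,\dots,x_n$ be independent with $\IP_p[x_i=1]=p$, we have $\IP_p[f=1] = \EE_p[\1(f(x)=1)] = \EE_p[\tfrac{1+f(x)}{2}]$, so it suffices to differentiate $\EE_p[f(x)]$. Expanding over the values of the single coordinate $x_i$ and using independence, for each fixed $i$
\[
\EE_p[f(x)] = p\,\EE_p[f(x) \mid x_i = 1] + (1-p)\,\EE_p[f(x)\mid x_i=-1],
\]
where the two conditional expectations do not depend on $p$ through coordinate $i$ itself but only through the other coordinates. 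This is a bit awkward to differentiate coordinate-by-coordinate, so a cleaner route is to use the full multilinear (Fourier-type, but in the $p$-biased sense) representation: write $\IP_p[f=1]$ as a multilinear polynomial $Q(p,\dots,p)$ where $Q(p_1,\dots,p_n)$ is multilinear in each $p_i$ and $Q$ evaluated at $p_i = \IP[x_i=1]$ gives the probability; then $\frac{d}{dp}\big\rvert_{p=1/2} Q(p,\dots,p) = \sum_i \partial_{p_i} Q\big\rvert_{(1/2,\dots,1/2)}$ by the chain rule.

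The key step is then to show that $\partial_{p_i} Q\big\rvert_{(1/2,\dots,1/2)} = I_i(f)$ for monotone $f$ (up to the factor $\tfrac12$ from $\tfrac{1+f}{2}$ which I will track). By multilinearity, $\partial_{p_i} Q$ equals $Q$ with $p_i$ replaced by $1$ minus $Q$ with $p_i$ replaced by $0$, i.e.\ $\partial_{p_i} Q = \IP_{p_{-i}}[f=1 \mid x_i = 1] - \IP_{p_{-i}}[f=1\mid x_i=-1]$, evaluated with all other coordinates at bias $1/2$. Because $f$ is monotone, the event $\{f(\cdot,x_i=1)=1\}$ contains the event $\{f(\cdot,x_i=-1)=1\}$ as subsets of $\{-1,1\}^{[n]\setminus\{i\}}$, so this difference is exactly the probability (under the uniform measure on the other coordinates) that flipping $x_i$ from $-1$ to $1$ changes $f$ from $-1$ to $1$ — which for a monotone function is precisely the probability that coordinate $i$ is pivotal, i.e.\ $I_i(f)$ (recall that by definition $I_i(f) = \EE[|\partial_i f|^2] = \IP[f \text{ depends on } x_i \text{ given } x_{-i}]$, and for monotone $f$ this pivotality probability equals that single difference without any absolute value). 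Carrying the factor $\tfrac12$ through: $\frac{d}{dp}\big\rvert_{1/2}\IP_p[f=1] = \tfrac12\frac{d}{dp}\big\rvert_{1/2}\EE_p[f] = \tfrac12\sum_i\big(\EE_{1/2}[f\mid x_i=1]-\EE_{1/2}[f\mid x_i=-1]\big) = \tfrac12\sum_i 2 I_i(f) = \sum_i I_i(f)$, where the factor $2$ arises because $\EE[f\mid x_i=1]-\EE[f\mid x_i=-1] = 2\cdot(\text{pivotality prob.})$ for monotone Boolean $f$ taking values $\pm1$.

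The main obstacle — really the only subtle point — is bookkeeping the normalization constants correctly and being careful that the "influence" $I_i(f)$ as defined in the excerpt (via $\partial_i f$ and Fourier weight $\sum_{S\ni i}\hat f(S)^2$) coincides with the pivotality probability for monotone $f$. This is standard: for Boolean $f:\{-1,1\}^n\to\{-1,1\}$ one has $\partial_i f \in \{-1,0,1\}$, so $I_i(f)=\EE[(\partial_i f)^2] = \IP[\partial_i f \neq 0] = \IP[i \text{ pivotal}]$, and monotonicity ensures $\partial_i f \in\{0,1\}$ so that $\EE[\partial_i f] = \IP[i\text{ pivotal}] = I_i(f)$; since $\EE_{1/2}[f\mid x_i=1]-\EE_{1/2}[f\mid x_i=-1] = 2\EE_{1/2}[\partial_i f]$ by definition of $\partial_i$, the claim follows. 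I would present the argument by first establishing the chain-rule identity, then the multilinearity identity $\partial_{p_i}Q = Q|_{p_i=1} - Q|_{p_i=0}$, then interpreting each term, finishing with the normalization computation; alternatively one can shortcut the multilinear polynomial language entirely and just differentiate $\EE_p[f]$ by conditioning simultaneously on all coordinates and using the product structure, but the multilinear viewpoint makes the chain rule transparent.
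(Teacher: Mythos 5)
Your proof is correct and takes the same approach the paper sketches: pass to the multilinear function $Q(p_1,\ldots,p_n)=\IP_{(p_1,\ldots,p_n)}[f=1]$, apply the chain rule along the diagonal, and identify each partial derivative with the pivotality probability using monotonicity. The paper gives exactly this argument in a single sentence; you have simply filled in the bookkeeping, and the normalization is tracked correctly.
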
 
One short proof of Proposition~\ref{prop:russo} is to consider an $n$ variable function $P_{(p_1,\ldots,p_n)}[f=1]$ and applying the chain rule. 
In fact, if we define for Boolean functions
\[
J_{p,i}(f) = \IE_p[\IP_p[f(x_{-i},+) \neq f(x_{-i},-)]],
\]
then the same proof yields more generally that for monotone $f :  \{-1,1\}^n \to \{-1,1\}$ it holds that 
\begin{equation} 
\frac{d}{dp}\bigg\rvert_{p} \IP_p[f = 1] = \sum_{i=1}^n J_{p,i}(f)= 
\frac{1}{4 p(1-p)}  \sum_{i=1}^n I_{p,i}(f),
\end{equation}
where 
\[
I_{p,i}(f) = \IE[Var[f(x) | x_{-i}]],
\]

\begin{lemma} In the setting of Condorcet voting with uniform prior on $s$, 
  among all monotone functions majority maximizes
\[
\frac{d}{dp}\bigg\rvert_{p=1/2} \IP_p[f=s] = \sum_{i=1}^n I_i(f)
\]
\end{lemma}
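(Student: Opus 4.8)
The plan is to reduce this to Theorem~\ref{thm:neyman_pearson} (the Neyman-Pearson characterization) via Russo's formula, Proposition~\ref{prop:russo}. First I would observe that the derivative $\frac{d}{dp}\big\rvert_{p=1/2}\IP_p[f=1] = \sum_i I_i(f)$ is exactly the quantity we wish to maximize when $s=+$, by Proposition~\ref{prop:russo}. The idea is then to compare, for $p$ slightly above $1/2$, the success probability $\IP_p[f=s]$ of an arbitrary monotone $f$ against that of majority, using the fact that both functions agree at $p=1/2$ (where $\IP_{1/2}[f=s]=1/2$ for every $f$ with $\IE[f]=0$, but in fact $\IP_{1/2}[f=s]=1/2$ for \emph{any} $f$ by the symmetry $s\mapsto -s$, $x\mapsto -x$ together with $f$ not being assumed odd --- one should be slightly careful here and may instead just work with the derivative directly).

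The cleanest route: by Theorem~\ref{thm:neyman_pearson}, for every $p\in(1/2,1]$ the majority function $m$ satisfies $\IP_p[m=s]\ge \IP_p[f=s]$ for every $f:\{-1,1\}^n\to\{-1,1\}$, with $n$ odd. At $p=1/2$ both sides equal $1/2$ (a short symmetry argument: conditioning on $s$, $\IP_{1/2}[f=s\mid s=+]$ and $\IP_{1/2}[f=s\mid s=-]$ average to $\tfrac12(\IP[f=+]+\IP[f=-])=\tfrac12$ under the uniform signal distribution). Hence the function $p\mapsto \IP_p[m=s]-\IP_p[f=s]$ is nonnegative on $[1/2,1]$ and vanishes at $p=1/2$, so its right derivative at $1/2$ is $\ge 0$. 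Since both $m$ and $f$ are monotone, Proposition~\ref{prop:russo} identifies these derivatives as $\sum_i I_i(m)$ and $\sum_i I_i(f)$ respectively, giving $\sum_i I_i(m)\ge \sum_i I_i(f)$, which is the claim. For the ``only majority'' part one invokes the uniqueness clause of Theorem~\ref{thm:neyman_pearson}: if $f$ is not majority then $\IP_p[m=s]>\IP_p[f=s]$ strictly for $p$ close to $1$, but this alone does not immediately give a strict derivative gap at $1/2$; to get strictness one should argue directly that for non-majority monotone $f$ there is some input $x$ with $\sum x_i>0$ and $f(x)=-$ (or the reverse), and track how this lowers $\sum_i I_i(f)$ relative to $\sum_i I_i(m)$ by an explicit comparison of the linear-in-$(p-1/2)$ expansions.

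Concretely, I would expand $\IP_p[f=s]$ to first order in $\delta = p-1/2$: writing the signal distribution as a perturbation of uniform, one gets $\IP_p[f=s] = \tfrac12 + \delta\sum_i I_i(f) + O(\delta^2)$ directly from Russo's formula, uniformly over the finitely many Boolean $f$. Theorem~\ref{thm:neyman_pearson} says the order-$\delta$ coefficient is maximized (uniquely, for $n$ odd) by majority, which is precisely $\sum_i I_i(f)\le \sum_i I_i(m)$ with equality iff $f=m$. This packages both the inequality and the uniqueness in one stroke, provided one is willing to lean on the full (uniqueness) statement of the Neyman-Pearson lemma.

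The main obstacle I anticipate is the strictness/uniqueness claim. The inequality $\sum_i I_i(f)\le\sum_i I_i(m)$ follows painlessly from monotonicity of $p\mapsto(\text{gap})$ plus Russo, but deducing that equality forces $f=m$ requires either (a) a genuine second-order or global argument showing the derivative gap at $1/2$ is strictly positive whenever $f\ne m$, or (b) a self-contained combinatorial proof that among monotone functions $\sum_i I_i(f)=\sum_{x}\#\{i: f(x^{i\to+})\ne f(x^{i\to -})\}/2^n$ is strictly maximized by majority --- essentially re-deriving the odd-case uniqueness of Neyman-Pearson at the level of edge boundaries. I would handle it by the combinatorial route: note $\sum_i I_i(f) = 2^{-n}\,|\partial_e H^+(f)|$ where $H^+(f)=f^{-1}(+)$ and $\partial_e$ counts monotone edges flipping the value; monotone $f$ maximizing this edge boundary subject to no cardinality constraint is exactly the ``middle layer'' threshold, i.e. majority, and any deviation strictly decreases the count. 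This is where the real work sits, but it is a finite, elementary argument and does not require any of the heavy analytic machinery of the earlier chapters.
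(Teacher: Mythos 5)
Your argument is the paper's argument: combine Theorem~\ref{thm:neyman_pearson} (majority maximizes $\IP_p[f=s]$ for every $p>1/2$) with the fact that $\IP_{1/2}[f=s]=1/2$ for all $f$, so majority maximizes the one-sided derivative at $p=1/2$, which Russo's formula identifies with $\sum_i I_i(f)$; the paper just makes the $p\mapsto 1-p$ reflection explicit by writing $\IP_p[f=s]=\tfrac12+\tfrac12(\IP_p[f=+]-\IP_{1-p}[f=+])$. The only place you diverge is in worrying about uniqueness of the maximizer, but the lemma as stated asserts only maximality, not uniqueness --- the paper handles the ``only majorities'' refinement in a one-line remark after the proof, no more rigorously than you do.
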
 

\begin{proof}
  Consider jurors who receive the correct signal with probability $p$. Then for any
  monotone function $f$ we have that 
\[
\IP_p[f = s] = 0.5(\IP_p[f = +] + \IP_{1-p}[f = -]) = 0.5 + 0.5(\IP_p[f = +] - \IP_{1-p}[f = +]). 
\]
We know that for every $p$, among all monotone functions, the pan-ultimae expression above is maximized for the Majority function. 
Taking the derivative of the ultimate expression with respect to $p$ concludes the proof.
\end{proof} 
By examining the proof of Theorem~\ref{thm:neyman_pearson} more carefully, we see that the Majorities are the only maximizers of the influence sum. 

Interestingly, dictators are the minimizers in their aggregation power. In particular we have:

\begin{proposition}
For any Boolean function with  $f : \{-1,1\}^n \to \{-1,1\}$ it holds that $\sum I_i(f) \geq Var[f]$. 
Equality holds if and only if $f$ is a dictator or a constant. 
\end{proposition}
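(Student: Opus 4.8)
The plan is to expand everything in the Fourier (Walsh) basis and compare the variance to the influence sum term by term. Recall from the discussion in the text that for $f:\{-1,1\}^n\to\{-1,1\}$ we have $\Var[f]=\sum_{S\neq\emptyset}\hat f(S)^2$ by Parseval, and $I_i(f)=\sum_{S\ni i}\hat f(S)^2$. Summing the latter over $i$ gives
\[
\sum_{i=1}^n I_i(f)=\sum_{i=1}^n\sum_{S\ni i}\hat f(S)^2=\sum_{S\neq\emptyset}|S|\,\hat f(S)^2.
\]
Since $|S|\ge 1$ for every nonempty $S$, we get $\sum_i I_i(f)\ge\sum_{S\neq\emptyset}\hat f(S)^2=\Var[f]$, which is the inequality. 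This is essentially the same computation already carried out in the proof of Lemma~\ref{lem:inf_sum_rho}.

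For the equality characterization, first I would record that equality forces $|S|\,\hat f(S)^2=\hat f(S)^2$ for every $S$, i.e. $\hat f(S)=0$ whenever $|S|\ge 2$. Hence $f$ is an affine function, $f=\hat f(\emptyset)+\sum_i \hat f(\{i\})x_i$. Then I would invoke the Boolean constraint $f^2\equiv 1$: expanding, $f^2=\hat f(\emptyset)^2+\sum_i\hat f(\{i\})^2+2\hat f(\emptyset)\sum_i\hat f(\{i\})x_i+\sum_{i\neq j}\hat f(\{i\})\hat f(\{j\})x_ix_j$, and comparing Fourier coefficients with the constant function $1$ shows $\hat f(\{i\})\hat f(\{j\})=0$ for all $i\neq j$ and $\hat f(\emptyset)\hat f(\{i\})=0$ for all $i$. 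This is precisely the argument used in the proof of Theorem~\ref{thm:dict_stab}. Consequently at most one Fourier coefficient among $\{\hat f(\emptyset)\}\cup\{\hat f(\{i\})\}$ is nonzero; combined with $\sum_S\hat f(S)^2=1$, that coefficient has absolute value $1$. So either $f\equiv\pm 1$ (constant) or $f(x)=\pm x_i$ (dictator). Conversely both constants and dictators give equality, since a constant has $\Var[f]=0=\sum_i I_i(f)$ and a dictator $f=\pm x_i$ has $\Var[f]=1$ and $\sum_j I_j(f)=I_i(f)=1$.

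I do not expect any serious obstacle here: both halves are short Fourier-analytic computations that have already appeared in the excerpt (Lemma~\ref{lem:inf_sum_rho} for the inequality and Theorem~\ref{thm:dict_stab} for the rigidity step). The only point requiring a little care is the equality analysis: one must note that $|S|\hat f(S)^2=\hat f(S)^2$ for all $S$ simultaneously (not merely in aggregate), which is immediate because every term in $\sum_S(|S|-1)\hat f(S)^2$ is nonnegative, so the sum vanishes iff each term does. After that, the Boolean-square argument finishes the classification, and one should remember to also check the (easy) converse direction so that the ``if and only if'' is genuinely established.
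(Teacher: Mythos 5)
Your proof is correct and uses essentially the same Fourier-analytic approach as the paper, which simply records the identities $\sum_i I_i(f)=\sum_S|S|\hat f^2(S)$ and $\Var[f]=\sum_{S\neq\emptyset}\hat f^2(S)$ and leaves the rest implicit. You spell out the equality case in more detail (reducing to an affine function and invoking the Boolean constraint as in Theorem~\ref{thm:dict_stab}), but this is the same argument the paper has in mind.
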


\begin{proof}
This follows from the fact that 
\[
\sum_i I_i(f) = \sum_S |S| \hat{f}^2(S), \quad Var[f] = \sum_{S \neq \emptyset} \hat{f}^2(S). 
\]
\end{proof} 

Again, in fact the claim is true for every $0 < p < 1$: 
\begin{proposition}
For any Boolean function $f : \{-1,1\}^n \to \{-1,1\}$ it holds that 
\[
\sum I_{p,i}(f) \geq Var_p[f]
\] 
Equality holds if and only if $f$ is a dictator.
\end{proposition}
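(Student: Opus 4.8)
The plan is to run the same Fourier‑analytic argument used for the $p=1/2$ proposition, but in the $p$‑biased orthonormal basis. Fix $0<p<1$, set $\mu = 2p-1 = \E_p[x_i]$ and $\sigma = 2\sqrt{p(1-p)}$, so that $\Var_p[x_i] = \sigma^2$. Define $\chi_i(x) = (x_i-\mu)/\sigma$; then $\E_p[\chi_i]=0$, $\E_p[\chi_i^2]=1$, and since the coordinates are independent, the products $\chi_S = \prod_{i\in S}\chi_i$ over $S\subseteq[n]$ form an orthonormal basis of $L_2(\{-1,1\}^n,\IP_p)$. Write $f = \sum_S \wt f(S)\,\chi_S$ with $\wt f(S) = \E_p[f\,\chi_S]$. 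Because $f$ is Boolean, $f^2\equiv 1$, so Parseval gives $\sum_S \wt f(S)^2 = 1$, and $\Var_p[f] = \sum_{S\neq\emptyset}\wt f(S)^2$.

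\textbf{The influence identity and the inequality.} Next I would compute $I_{p,i}(f) = \E_p\big[\Var_p[f\mid x_{-i}]\big]$. Observe that conditioning on $x_{-i}$ kills exactly the basis elements containing $i$, i.e. $\E_p[f\mid x_{-i}] = \sum_{S\,:\,i\notin S}\wt f(S)\,\chi_S$. By the law of total variance, $\Var_p[f] = I_{p,i}(f) + \Var_p\big[\E_p[f\mid x_{-i}]\big]$, and the last term equals $\sum_{S\neq\emptyset,\, i\notin S}\wt f(S)^2$ by orthonormality; hence $I_{p,i}(f) = \sum_{S\,:\,i\in S}\wt f(S)^2$. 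Summing over $i$,
\[
\sum_{i=1}^n I_{p,i}(f) = \sum_S |S|\,\wt f(S)^2,
\qquad
\sum_{i=1}^n I_{p,i}(f) - \Var_p[f] = \sum_{|S|\geq 1}\big(|S|-1\big)\wt f(S)^2 = \sum_{|S|\geq 2}\big(|S|-1\big)\wt f(S)^2 \geq 0,
\]
which is the desired inequality.

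\textbf{Equality.} Equality forces $\wt f(S)=0$ for all $|S|\geq 2$, so $f = \wt f(\emptyset) + \sum_i \wt f(\{i\})\chi_i$ is affine in the $x_i$. Re‑expanding, $f$ has a multilinear representation $f = a_0 + \sum_i a_i x_i$ of degree at most $1$; since the multilinear representation of a function on $\{-1,1\}^n$ is unique (independent of the measure), one may square it and use $x_i^2=1$ to get $f^2 = \big(a_0^2+\sum_i a_i^2\big) + 2a_0\sum_i a_i x_i + 2\sum_{i<j}a_i a_j\, x_i x_j$. Matching this to the constant $1$ via uniqueness gives $a_i a_j = 0$ for $i\neq j$ and $a_0 a_i = 0$ for all $i$. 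Thus at most one $a_i$ ($i\geq1$) is nonzero: if all vanish then $f\equiv a_0 = \pm1$ is constant, and if $a_1\neq0$ then $a_0=0$, $a_1=\pm1$, so $f = \pm x_1$ is a dictator; conversely dictators and constants plainly attain equality (for a constant both sides are $0$). So equality holds exactly for dictators and constants, matching the $p=\tfrac12$ statement.

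\textbf{Main obstacle.} There is no real difficulty here beyond carefully setting up the $p$‑biased orthonormal basis and confirming the influence identity $I_{p,i}(f) = \sum_{S\ni i}\wt f(S)^2$; once that is in hand the argument is identical to the uniform case. The only point needing a little care is the equality analysis, where one must pass from ``degree $\le 1$ in the $p$‑biased basis'' to the measure‑independent multilinear representation in order to invoke uniqueness of coefficients.
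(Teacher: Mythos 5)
Your proof is correct, and it is the intended one: the paper poses this proposition as an \emph{exercise} (no proof is given), and the model solution is exactly to redo the preceding $p=\tfrac12$ argument — which the paper does spell out via $\sum_i I_i(f) = \sum_S |S|\,\hat f(S)^2$ and $\Var[f] = \sum_{S\neq\emptyset}\hat f(S)^2$ — in the $p$-biased orthonormal basis $\chi_S = \prod_{i\in S}(x_i-\mu)/\sigma$, which you set up correctly. Two small remarks. First, your equality analysis correctly shows that constants also attain equality (both sides vanish), so the paper's phrase ``if and only if $f$ is a dictator'' is, strictly speaking, missing the constant case, just as the $p=\tfrac12$ version records ``dictator or a constant.'' Second, your observation that ``degree $\le 1$'' in the $\chi$-basis is equivalent to the multilinear representation of $f$ having degree $\le 1$, and then matching coefficients of $f^2\equiv 1$, is a clean way to handle the basis-independence issue in the equality case; one could also argue directly that a $\{-1,1\}$-valued function that is affine in $(x_1,\ldots,x_n)$ depends on at most one coordinate, which is what the paper's $p=\tfrac12$ proof implicitly relies on.
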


\begin{exercise}
Prove it.
\end{exercise}

In fact more is true: 

\begin{proposition}
Among all $f : \{-1,1\}^n \to \{-1,1\}$ which are monotone and satisfy $\IE_{1/2}[f] = 0$, dictators are the only minimizers of 
$\IE_p[f]$ for all $0.5 < p < 1$.
\end{proposition}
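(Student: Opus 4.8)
The plan is to phrase everything in terms of the single function $u(p):=\IE_p[f]$ and run an ODE comparison argument powered by Russo's formula and the variable-$p$ Poincar\'e inequality. First I would record the elementary facts: since $f$ is $\{-1,1\}$-valued we have $\Var_p[f]=1-u(p)^2$; since $u(p)=\sum_S \hat f(S)(2p-1)^{|S|}$ is a polynomial in $p$ it is smooth; and $u(1/2)=\IE_{1/2}[f]=0$, $0\le u(p)\le 1$ on $[1/2,1]$, while (as $f$ is non-constant and monotone, which $\IE_{1/2}[f]=0$ forces) $u(1)=f(+,\dots,+)=1$. Russo's formula together with $\sum_i I_{p,i}(f)\ge \Var_p[f]$ then yields the differential inequality
\[
u'(p) \;=\; \frac{1}{2p(1-p)}\sum_{i=1}^n I_{p,i}(f) \;\ge\; \frac{1-u(p)^2}{2p(1-p)}, \qquad p\in(1/2,1).
\]

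Next I would observe that the dictator's profile $u_0(p):=2p-1$ is the \emph{exact} solution of the companion ODE $w'=\frac{1-w^2}{2p(1-p)}$ with $w(1/2)=0$: indeed $1-(2p-1)^2=4p(1-p)$, so $\frac{1-w^2}{2p(1-p)}=2=u_0'$. (Consistently, a dictator attains equality in both Russo's identity and the Poincar\'e inequality.) On any interval $[1/2,p_0]$ with $p_0<1$, the right-hand side $g(p,y)=\frac{1-y^2}{2p(1-p)}$ is $C^1$ on $[1/2,p_0]\times[-1,1]$, hence Lipschitz in $y$, so a standard Gronwall-type comparison principle applies and gives $u(p)\ge u_0(p)=2p-1$ on $[1/2,p_0]$; letting $p_0\uparrow 1$ (and using $u(1)=1=u_0(1)$) gives $\IE_p[f]\ge 2p-1$ for all $p\in(1/2,1)$. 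Since every dictator $f=x_i$ satisfies $\IE_p[x_i]=2p-1$, dictators are minimizers.

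For the uniqueness (``only'') part, suppose $f$ is monotone, balanced, and $\IE_{p^*}[f]=2p^*-1$ for some $p^*\in(1/2,1)$. Set $v:=u-u_0$; by the previous step $v\ge 0$ on $[1/2,p^*]$, and $v(1/2)=v(p^*)=0$. Writing $e(p):=\frac{1}{2p(1-p)}\big(\sum_i I_{p,i}(f)-\Var_p[f]\big)\ge 0$ for the slack in the Poincar\'e inequality, a one-line computation gives
\[
v'(p) + \frac{u(p)+u_0(p)}{2p(1-p)}\,v(p) \;=\; e(p)\;\ge\;0 .
\]
Multiplying by the finite positive integrating factor $\mu(p):=\exp\!\int_{1/2}^p \frac{u(s)+u_0(s)}{2s(1-s)}\,ds$ shows that $\mu v$ is non-decreasing on $[1/2,p^*]$; being $0$ at both endpoints, it is identically $0$, so $v\equiv 0$ and $e\equiv 0$ on $[1/2,p^*]$. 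Hence $\sum_i I_{p,i}(f)=\Var_p[f]$ for $p$ in that interval, and the equality case of the variable-$p$ Poincar\'e inequality forces $f$ to be a dictator.

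The main obstacle is the comparison step together with its equality refinement: because $g(p,y)=(1-y^2)/(2p(1-p))$ is \emph{decreasing} in $y$ for $y>0$, the naive monotone comparison ``larger slope $\Rightarrow$ larger solution'' is not directly available, and one must go through the local-Lipschitz estimate (or, equivalently, the integrating-factor identity above) to obtain both $u\ge u_0$ and the rigidity $v\equiv 0$. A minor additional care is that $p(1-p)$ degenerates at $p=1$, so the comparison must be carried out on closed subintervals $[1/2,p_0]$ with $p_0<1$ and then passed to the limit; all remaining steps are routine, relying only on the already-established Russo formula and the (exercise) variable-$p$ Poincar\'e inequality with its equality case.
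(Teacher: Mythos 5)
Your proof is correct, but it follows a genuinely different route from the paper's, while leaning on the same two ingredients (Russo's formula and the variable-$p$ Poincar\'e inequality with its equality case). The paper runs a \emph{first-contact} argument: it sets $q=\inf\{r>1/2:\IP_r[f=1]=r\}$, observes that the $p=1/2$ Poincar\'e bound forces $q>1/2$ for a non-dictator, and then notes that the crossing-from-above at $q$ gives $\frac{d}{dr}\big|_{r=q}\IP_r[f=1]\le 1$, contradicting the strict Poincar\'e bound at the single point $q$. You instead globalize: you cast the problem as the differential inequality $u'\ge(1-u^2)/(2p(1-p))$, identify the dictator profile $u_0=2p-1$ as the exact solution, and exploit the integrating-factor identity $(\mu v)'=\mu e$ with $v=u-u_0$ and $e\ge 0$ the Poincar\'e slack. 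This is tidy and yields both the comparison and the rigidity in one stroke: since $\mu v$ is nondecreasing and $\mu v(1/2)=0$, you get $v\ge 0$ (so $u\ge u_0$) at once, and if $v$ also vanishes at some $p^*>1/2$ then $\mu v\equiv 0$ on $[1/2,p^*]$, whence $e\equiv 0$ and $f$ is a dictator. What each buys: the paper's version is shorter because it only evaluates Poincar\'e at two points; yours is more systematic, avoids choosing a distinguished crossing point, and makes the source of rigidity (vanishing slack on an interval) transparent.

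Two small streamlining remarks. First, your ``standard Gronwall-type comparison'' paragraph is subsumed by the integrating-factor step --- once $(\mu v)'=\mu e\ge 0$ and $v(1/2)=0$, you already have $v\ge 0$, so the separate Lipschitz/Gronwall appeal (and the passage $p_0\uparrow 1$) can be deleted. Second, once you know $e\equiv 0$ on $[1/2,p^*]$, it suffices to read off equality at $p=1/2$ alone, where $\sum_i I_i(f)=\Var[f]=1$; that invokes only the simpler uniform-measure Poincar\'e proposition rather than its variable-$p$ extension.
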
 

\begin{proof}
Assume by contradiction the statement is false. Then there exists a non-dictator function $f$ and $1>r > 1/2$ such that $\IP_r[f=1] = r$. Let 
$q = \inf(r > 1/2 : \IP_r[f=1] = r)$. Since $\IP_r[f=1]$ is a polynomial in $r$, it follows that 
$\IP_q[f = 1] = q$. Moreover, by Poincare's inequality at $1/2$, it follows that $q > 1/2$. 
Since $\IP_r[f=1] \geq r$ for $1/2 \leq r \leq q$ it follows that
$d/dr_{|r=q} \IP_r[f=1] \leq 1$. However, by Poincare's inequality at $q$ it is strictly greater than $1$. The proof follows.
\end{proof}

\section{The minimal Influence Problem and Coalitions}

Assume that in a binary vote $f : \{-1,1\}^n \to \{-1,1\}$ it is possible to buy a certain number of the votes. 
How much can this influence the outcome of the election. This question and related questions were studied 
in~\cite{BenorLinial:90} which is one of the foundational papers of analysis of Boolean functions. 

Poincare's inequality: 
\[
\sum_{i=1}^n I_i(f) = \sum_{S} |S| \hat{f}^2(S) \geq \Var[f]
\]
implies that any function $f : \{-1,1\}^n \to \IR$ has a variable $i$ with $I_i(f) \geq \Var[f]/n$. 
This in turn implies the following: 

\begin{lemma} \label{lem:coal1} 
If $f : \{-1,1\}^n \to \{-1,1\}$ is a monotone function with $\IE[f] = 0$, then for any $1>\eps > 0$, there exists  a set 
$|S| \leq \eps n$ such that $\IE[f | \forall i \in S, x_i = 1] \geq \eps/4$. 
\end{lemma}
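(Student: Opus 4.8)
The plan is to run the standard greedy coalition argument of Ben-Or and Linial~\cite{BenorLinial:90}: build the set $S$ one voter at a time, each time pinning the chosen voter's bit to $+1$, and stop as soon as the conditional expectation has climbed above $\eps/4$. Two ingredients drive this. The first is the Poincar\'e inequality $\sum_i I_i(g)\ge \Var[g]$ recalled just above, applied not to $f$ but to the restrictions of $f$ obtained by fixing some coordinates to $+1$; these restrictions are again monotone $\bits$-valued functions, and their influences are taken with respect to the uniform measure on the surviving coordinates. The second is an elementary identity for monotone $\bits$-valued functions: if $g$ is monotone then
\[
\IE[g \mid x_i = 1] - \IE[g] = \tfrac12\,\IE\big[g(x_{-i},+1)-g(x_{-i},-1)\big] = \IP[\,i \text{ pivotal for } g\,] = I_i(g),
\]
using that the bracketed difference takes only the values $0$ and $2$.

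Concretely, I would set $g_0=f$ and, inductively, given $g_t$ a restriction of $f$ by $t$ coordinates pinned to $+1$, stop if $\IE[g_t]\ge \eps/4$; otherwise, since $g_t$ is $\bits$-valued and $\IE[g_t]\ge 0$ (the mean only increases along the process), we have $\Var[g_t]=1-\IE[g_t]^2> 1-(\eps/4)^2 > \tfrac12$ because $\eps<1$, so by the Poincar\'e inequality one of the at most $n$ free coordinates $i$ has $I_i(g_t)\ge \tfrac{\Var[g_t]}{n}>\tfrac1{2n}$; pin that coordinate to $+1$ to obtain $g_{t+1}$. By the identity above, $\IE[g_{t+1}]\ge \IE[g_t]+\tfrac1{2n}$, and since $\IE[g_0]=\IE[f]=0$ this gives $\IE[g_t]>\tfrac{t}{2n}$ whenever the process has not yet stopped. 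Hence it must stop at some step $t\le \lceil \eps n/2\rceil\le \eps n$, and the set $S$ of pinned coordinates satisfies $|S|\le\eps n$ and $\IE[f\mid x_i=1 \ \forall i\in S]=\IE[g_{|S|}]\ge\eps/4$, as claimed.

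There is no genuine obstacle here; the only points needing (minor) care are checking that a restriction of a monotone function to a subcube obtained by fixing coordinates to $+1$ is still monotone, so that both the identity and the Poincar\'e inequality keep applying along the iteration, and the trivial bookkeeping in ``$t\le\lceil \eps n/2\rceil\le\eps n$'' (immediate once $\eps n\ge 2$, with the handful of smaller cases checked directly). Equivalently, one can state the argument index-free: at each stage, as long as the conditional mean is below $\eps/4$ there is a free voter whose pinning increases that mean by $\Omega(1/n)$, and this forces the process to terminate after $O(\eps n)$ voters.
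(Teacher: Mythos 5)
Your proof is correct and is essentially the same argument the paper gives: a greedy construction that repeatedly pins the coordinate of maximal influence, using the Poincar\'e inequality $\sum_i I_i \ge \Var$ to guarantee a per-step gain of order $1/n$ and the monotone-Boolean identity $\IE[g\mid x_i=1]-\IE[g]=I_i(g)$ (which the paper records just before the lemma) to convert that influence into an increase in the conditional mean. The only differences are cosmetic bookkeeping: you bound $\Var > 1/2$ to get a per-step gain of $1/(2n)$ rather than the paper's $1/(4n)$, and you flag the trivial small-$\eps n$ edge cases, which the paper also glosses over.
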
 

That is, a small linear fraction of the voters can change the expected value of the function noticeably. 
For the proof the following lemma is useful to note that if $f : \{-1,+1\}^n \to \{-1,+1\}$ is monotone then: 
\[
\hat{f}(i) = \IE[f x_i] = \IE [\IE[f x_i | x_{-i}]] =  \IE_{x_{-i}}[Var[f | x_{-i}]] = I_i(f),
\]
since conditioned on $x_{-i}$, either $f$ is a constant or $f$ is the identity and in both cases, $\IE[f x_i | x_{-i}] = Var[f | x_{-i}]$. 

\begin{proof} 
  Construct $S$ iteratively. Let $f_1 = f$ and let $S_0= \emptyset$. At stage $i$ of the construction, let $j$ be the variable with most influence of the function $f_i$ and let $S_i = S_{i-1} \cup \{ j \}$. Let $f_{i+1}$ be defined as $f_i$ conditioned on $x_j = 1$. If $\IE[f_{i+1}] \geq \eps$ we stop. Otherwise, we continue. 
  Note that at each step we have $\IE[f_{i+1} - f_i] \geq I_j(f_i) \geq 0.25/n$. The proof follows.
\end{proof} 

It is natural to ask if the claim and proof are tight. 
We first ask if one can improve the bound $I_i(f) \geq \Var[f]/n$.
For general functions the answer to the later question is no, as the function $f = n^{-1/2} \sum x_i$ shows. 
For Boolean functions, Ben-Or and Linial~\cite{BenorLinial:90} found an example that is almost tight i.e., the tribes function.

\begin{example}
Let $m = 2^r$ and let $n = m \log_2 m = m r$ and define the tribes function $f : \{0,1\}^n \to \{0,1\}$ by 
\[
f(x) = (x_1 \wedge \ldots \wedge x_r) \vee (x_{r+1} \wedge \ldots \wedge x_{2r}) \vee \cdots \vee (x_{n-r+1} \wedge \ldots \wedge x_n)
\]
This can be described in words as follows. The function is $1$ if there is a decision to go to war and $0$ otherwise. 
In order to go to war there has to be at least one tribe where all members decide to go to war, i.e, there has to exist a $k$ so that 
$x_{k r+1} = \ldots = x_{(k+1) r} = 1$. 
It is easy to see that the number of tribes that go to war is $\mathrm{Bin}(m,1/m)$ which is approximately Poisson. In particular, 
\[
\IE[f] = \IP[\mathrm{Bin}(m,1/m) \geq 1] \in [0.1,0.9],
\]
 for $r \geq 1$. 
 For $\partial_1 f = 1$, i.e., that the first coordinate is influential,
 we need that none of the tribes but the first tribe wants to go to war and that in tribe $1$, everyone but the first voter wants to go to war.
Thus 
\[
I_i(f) = \IP[\partial_1 f = 1] = 2^{-r+1} \IP[\mathrm{Bin}(m-1,1/m) = 0] \leq 2^{-r+1} = \frac{2}{m} \leq \frac{2 \log_2(n)}{n}.
\]
It is also easy to see from the equation above that in fact $I_i(f)$ is of order $\frac{\log_2(n)}{n}$. 
\end{example}

Thus, we already know that the minimal influence is of order at least $\Var[f]/n$ and we have an example where it is of order 
$\Var[f] \log n / n$. The KKL Theorem~\cite{KaKaLi:88} by Kahn, Kalai and Linial which will be proved  in the next section shows that the $\log n$ factor is in fact necessary. 

\begin{theorem} \label{thm:KKL}
There exists a constant $c$ such that for all $f : \{-1,1\}^n \to \{-1,1\}$ it holds that 
\[
\min_i I_i(f) \geq c \frac{\log n}{n} \Var[f]
\]
\end{theorem}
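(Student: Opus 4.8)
The inequality as literally written cannot hold: for the dictator $f(x)=x_1$ one has $\min_i I_i(f)=0$ while $\Var[f]=1$. What is meant — and what the surrounding discussion of voting tacitly assumes — is the KKL lower bound $\max_i I_i(f)\ge c\frac{\log n}{n}\Var[f]$, valid for every Boolean $f$; for a transitive-symmetric $f$ all the $I_i(f)$ coincide, so then $\min_i I_i(f)=\max_i I_i(f)=\frac1n\sum_i I_i(f)$ and the stated inequality is exactly that bound. Since the symmetric case is immediate from the general one, the plan is to prove $\max_i I_i(f)\ge c\frac{\log n}{n}\Var[f]$ for all $f:\{-1,1\}^n\to\{-1,1\}$. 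Write $\mathbf I[f]=\sum_i I_i(f)$ and $\beta=\max_i I_i(f)$. The core estimate to establish is $\mathbf I[f]\ge c_1\Var[f]\log(1/\beta)$, valid once $\beta$ is below an absolute constant; combined with the trivial $\mathbf I[f]\le n\beta$ this gives $n\beta\ge c_1\Var[f]\log(1/\beta)$, and a routine comparison — splitting on whether $n$ is large and whether $\beta$ lies below the threshold, and falling back on Poincaré's inequality $\mathbf I[f]\ge\Var[f]$ (proved in the excerpt) for small $n$ — upgrades this to the claimed bound.

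To prove $\mathbf I[f]\ge c_1\Var[f]\log(1/\beta)$ I would apply hypercontractivity to the discrete derivatives. For each $i$ the function $\partial_i f$ takes values in $\{-1,0,1\}$, so $\|\partial_i f\|_{4/3}^{4/3}=\mathbb E[\mathbf 1(\partial_i f\neq0)]=I_i(f)$, hence $\|\partial_i f\|_{4/3}^2=I_i(f)^{3/2}$. Applying Theorem~\ref{thm:BB} with $q=4/3$, $p=2$, $\rho=1/\sqrt3$ (so $\rho^2=\tfrac{q-1}{p-1}$), and using $\widehat{\partial_i f}(S)=\hat f(S\cup\{i\})$ for $i\notin S$, gives
\[
\sum_{S\ni i}3^{-(|S|-1)}\hat f(S)^2=\|T_{1/\sqrt3}\partial_i f\|_2^2\le\|\partial_i f\|_{4/3}^2=I_i(f)^{3/2}.
\]
Summing over $i$ and discarding the contribution of Fourier levels above a threshold $k$ yields $\sum_{1\le|S|\le k}\hat f(S)^2\le 3^{k-1}\sum_i I_i(f)^{3/2}\le 3^{k-1}\sqrt\beta\,\mathbf I[f]$, using $I_i(f)^{3/2}\le\sqrt\beta\,I_i(f)$. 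The high-degree mass is controlled crudely: $\sum_{|S|>k}\hat f(S)^2\le\frac1k\sum_{|S|>k}|S|\hat f(S)^2\le\frac{\mathbf I[f]}{k}$. Adding the two estimates gives $\Var[f]\le\mathbf I[f]\bigl(3^{k-1}\sqrt\beta+\tfrac1k\bigr)$.

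Finally, take $k=\lfloor\tfrac14\log_3(1/\beta)\rfloor$, so that $3^{k-1}\sqrt\beta\le\beta^{1/4}$ and, whenever $k\ge1$ (i.e. $\beta$ below an absolute constant), $\beta^{1/4}\le\tfrac1k$; then $\Var[f]\le\tfrac2k\mathbf I[f]$, i.e. $\mathbf I[f]\ge\tfrac k2\Var[f]\ge c_1\Var[f]\log(1/\beta)$, as needed. I expect the main obstacle to be conceptual rather than computational: using a single fixed noise rate only recovers Poincaré's bound $\mathbf I[f]\gtrsim\Var[f]$, with no logarithmic gain, so the logarithm must be manufactured by truncating the Fourier expansion at the "right" level $k\asymp\log(1/\beta)$ and balancing the hypercontractive bound on the low-degree mass against the trivial tail bound on the high-degree mass; getting this balance right, and then cleanly propagating $\log(1/\beta)$ to $\log n$ (together with the degenerate cases $\Var[f]=0$, $\beta$ large, and small $n$), is where the care is needed. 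The bound is sharp: the balanced tribes function of the earlier example has $\Var[f]\asymp1$, $\beta\asymp\frac{\log n}{n}$ and $\mathbf I[f]\asymp\log n$, saturating every inequality above up to constants.
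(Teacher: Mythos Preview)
Your proof is correct, including the observation that the statement as printed (with $\min_i$) is false and that what is meant is $\max_i I_i(f)\ge c\frac{\log n}{n}\Var[f]$. Your argument is the classical KKL proof: hypercontractivity applied to each $\partial_i f$ at a fixed noise rate, Fourier truncation at level $k\asymp\log(1/\beta)$, and the trade-off $\Var[f]\le \mathbf I[f]\bigl(3^{k-1}\sqrt\beta+\tfrac1k\bigr)$.

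The paper takes a genuinely different route, the semigroup approach of Cordero-Erausquin and Ledoux. Starting from the integration-by-parts identity $\Cov(f,g)=\int_0^\infty e^{-t}\sum_i\E[\nabla_i f\,P_t\nabla_i g]\,dt$, it applies hypercontractivity \emph{inside the integral} at the time-dependent exponent $1+e^{-t}$, interpolates via H\"older between $\|\nabla_i f\|_1$ and $\|\nabla_i f\|_2$, and integrates in $t$. This yields Talagrand's inequality
\[
\Var[f]\le C\sum_i \frac{I_i(f)}{1+\log\bigl(\|\nabla_i f\|_2/\|\nabla_i f\|_1\bigr)},
\]
from which KKL follows by noting that for Boolean $f$ one has $\|\nabla_i f\|_2/\|\nabla_i f\|_1=I_i(f)^{-1/2}$. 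Compared with your proof, the paper's argument avoids the explicit choice of a truncation level and the associated case analysis, and it produces as a byproduct the strictly stronger Talagrand inequality (which applies to real-valued $f$ and gives sharper information when the influences are unequal). Your proof, on the other hand, is more self-contained and uses hypercontractivity only once at a single fixed parameter, which makes the mechanism more transparent. Both the logarithmic gain and the tribes lower bound fall out equally well from either approach.
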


Repeating the proof of Lemma~\ref{lem:coal1} we obtain the following: 
\begin{lemma} \label{lem:coal2} 
Let $f : \{-1,1\}^n \to \{-1,1\}$ be a monotone function with $\IE[f] = 0$, then for any $\eps > 0$, there exists  a set 
$|S| \leq c(\eps) n / \log n$ such that $\IE[f | \forall i \in S, x_i = 1] \geq 1 - \eps$. 
\end{lemma}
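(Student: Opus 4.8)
The statement to prove is Lemma~\ref{lem:coal2}: if $f:\{-1,1\}^n\to\{-1,1\}$ is monotone with $\IE[f]=0$, then for every $\eps>0$ there is a set $S$ with $|S|\le c(\eps)\,n/\log n$ such that $\IE[f\mid \forall i\in S,\ x_i=1]\ge 1-\eps$. The plan is to run exactly the greedy argument of Lemma~\ref{lem:coal1}, but feeding it the KKL lower bound (Theorem~\ref{thm:KKL}) in place of the Poincar\'e bound $\min_i I_i(f)\ge \Var[f]/n$. The key point is that conditioning a monotone function on a coordinate being $+1$ keeps it monotone and (weakly) increases its mean, so the hypotheses of KKL persist along the whole greedy process until the mean gets close to $1$.

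\textbf{Key steps.} First I would fix $\eps>0$ and build $S$ iteratively: set $f_0=f$, $S_0=\emptyset$; at stage $i$, if $\IE[f_i]\ge 1-\eps$ stop, otherwise let $j$ be the coordinate of maximum influence of $f_i$, set $S_{i+1}=S_i\cup\{j\}$ and $f_{i+1}(x)=f_i(x)|_{x_j=+1}$ (a function of the remaining $n-|S_i|-1$ coordinates). Second, observe the two structural facts: (a) $f_{i+1}$ is monotone because restricting a monotone function to a subcube is monotone; (b) using the monotonicity identity $\IE[f x_j]=I_j(f)\ge 0$ noted in the excerpt just before Lemma~\ref{lem:coal1}, we get $\IE[f_{i+1}]-\IE[f_i]=\IE[f_i x_j\mid x_{-j}]$-type computation giving $\IE[f_{i+1}]-\IE[f_i]\ge I_j(f_i)$, in fact $\IE[f_{i+1}]=\IE[f_i]+I_j(f_i)$ since conditioning on $x_j=+1$ versus averaging differs by exactly $\tfrac12(\IE[f_i|x_j=+1]-\IE[f_i|x_j=-1])=\tfrac12\IE[\partial_j f_i]$, and for monotone Boolean $f_i$ this equals $I_j(f_i)$. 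Third, I would lower-bound the increment. As long as we have not stopped, $\IE[f_i]<1-\eps$, and combined with the trivial bound $\IE[f_i]\ge \IE[f]=0$ (monotonicity of conditioning again), we have $\Var[f_i]=1-\IE[f_i]^2\ge 1-(1-\eps)^2\ge \eps$. Applying Theorem~\ref{thm:KKL} to $f_i$ on its $n-|S_i|\le n$ remaining variables (and using that the number of remaining variables is at most $n$, so $\log(\text{remaining})/(\text{remaining})$ is at least $\sim\log n/n$ up to constants once $|S_i|\le n/2$, which will hold since the total size is $o(n)$): $\max_j I_j(f_i)\ge c\,\frac{\log n'}{n'}\,\Var[f_i]\ge c'\eps\,\frac{\log n}{n}$ for an absolute constant $c'$. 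Hence each step raises the mean by at least $c'\eps\log n/n$, so after at most $(1-\eps)\big/(c'\eps\log n/n)=O_\eps(n/\log n)$ steps we reach $\IE[f_i]\ge 1-\eps$. Setting $c(\eps)=O(1/(c'\eps))$ gives $|S|\le c(\eps)\,n/\log n$, and $\IE[f\mid\forall i\in S,\ x_i=+1]=\IE[f_{|S|}]\ge 1-\eps$, which is the claim.

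\textbf{Main obstacle.} The one genuine subtlety is the bookkeeping on ``the number of remaining variables'': KKL is applied to $f_i$, a function of $n-|S_i|$ variables, and gives influence $\ge c\,\frac{\log(n-|S_i|)}{\,n-|S_i|\,}\Var[f_i]$. To conclude that this is $\gtrsim \log n/n$ I need $|S_i|$ to stay, say, below $n/2$, which is a priori circular. This is resolved by a standard bootstrap: the total number of steps is bounded by $2(1-\eps)/(c'\eps)\cdot n/\log n$, and for $n$ large enough (depending only on $\eps$) this is below $n/2$; for the finitely many small $n$ the statement is trivial by absorbing them into $c(\eps)$ (taking $S=[n]$ gives $\IE[f\mid x\equiv +1]=1$ by monotonicity and $\IE[f]=0<1$, so the conclusion holds and $|S|=n\le c(\eps)n/\log n$ for $n$ bounded). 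A second minor point to state carefully is the identity $\IE[f_{i+1}]-\IE[f_i]=I_j(f_i)$ for monotone Boolean functions, which follows from $I_j(f_i)=\IE[\partial_j f_i]=\tfrac12\big(\IE[f_i\mid x_j=+1]-\IE[f_i\mid x_j=-1]\big)$ together with $\IE[f_i]=\tfrac12\big(\IE[f_i\mid x_j=+1]+\IE[f_i\mid x_j=-1]\big)$; subtracting gives $\IE[f_i\mid x_j=+1]-\IE[f_i]=I_j(f_i)$ exactly. No heavy computation is needed beyond this; the proof is essentially a one-line modification of Lemma~\ref{lem:coal1} with KKL substituted for Poincar\'e, plus the $n$-large-enough caveat.
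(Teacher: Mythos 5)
Your proposal is correct and is exactly the argument the paper intends: the paper's proof consists of the single sentence ``Repeating the proof of Lemma~\ref{lem:coal1} we obtain the following,'' i.e., run the same greedy coalition-building, but with the KKL bound $\max_j I_j(f_i)\ge c\,\Var[f_i]\log n'/n'$ in place of the Poincar\'e bound $\Var[f_i]/n'$. Your extra care with the increment identity $\IE[f_{i+1}]-\IE[f_i]=I_j(f_i)$ for monotone Boolean functions and with the bootstrap on the number of remaining variables fills in exactly the details the paper leaves implicit.
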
 

Interestingly, the tribes example is far from tight for Lemma~\ref{lem:coal2} as it suffices to take $S$ of size $\log n$ (e.g., a  tribe) to fix the value of the function to $1$. There is a famous example by Ajtai and Linial where a coalition of size of order $n/\log^2 n$ is needed to shift the expected value of $f$~\cite{AjtaiLinial:93} by a constant.

\section{Semi-Group proof of the KKL Theorem}

In this section we will write $P_t = T_{e^{-t}}$ so that $P_t$ is a  {\em semi-group}, so $P_{s+t} = P_s P_t$. 

\begin{lemma}
\[
P_t f = \sum_{S} \hat{f}(S) e^{-t |S|} x_S, \quad 
(\grad_i f)(x) = \frac{1}{2} (f(x_{-i},1)-f(x_{-i},-1)) = \sum_{S : i \in S} \hat{f}(S) x_{S \setminus \{i\}}
\]
Note that 
\[
\grad_i (P_t f) = \sum_{S : i \in S} \hat{f}(S) e^{-t|S|} x_{S \setminus \{i\}} = 
e^{-t} P_t \grad f_i.
\]
and that 
\[
\frac{d}{dt} P_t f = -\sum_{S} |S| \hat{f}(S) e^{-t |S|} x_S = 
-\sum_{i=1}^n x_i \grad_i (P_t f) = -e^{-t} \sum_{i=1}^n x_i P_t \grad_i f
\]
so  
\[
\frac{d}{dt} \IE[f P_t g] = -e^{-t} \sum_{i=1}^n \IE[f x_i P_t \grad_i g] = 
-e^{-t} \sum_{i=1}^n \IE[\grad_i f P_t \grad_i g] 
\]
\end{lemma}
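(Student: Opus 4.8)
The plan is to derive all the displayed identities directly from the Fourier expansion $f = \sum_S \hat{f}(S) x_S$ together with the spectral description $T_\rho x_S = \rho^{|S|} x_S$ from the Proposition on $T_\rho$. First I would record that, since $P_t = T_{e^{-t}}$, it acts on the Fourier basis by $P_t x_S = e^{-t|S|} x_S$; by linearity this gives the first identity $P_t f = \sum_S \hat{f}(S) e^{-t|S|} x_S$. For the formula for $\grad_i f$, I would substitute the Fourier expansion into $\tfrac12\big(f(x_{-i},1) - f(x_{-i},-1)\big)$ and split the sum according to whether $i \in S$: the monomials $x_S$ with $i \notin S$ are unchanged when $x_i$ is flipped and hence cancel, while for $i \in S$ we have $x_S = x_i x_{S \setminus \{i\}}$, so the symmetrized difference is $2 x_{S\setminus\{i\}}$; this yields $\grad_i f = \sum_{S \ni i} \hat{f}(S) x_{S\setminus\{i\}}$.

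Applying this last formula to $P_t f$, whose coefficient at $S$ is $e^{-t|S|}\hat{f}(S)$, immediately gives $\grad_i(P_t f) = \sum_{S\ni i} e^{-t|S|}\hat{f}(S) x_{S\setminus\{i\}}$. On the other hand $\grad_i f$ has Fourier support on sets of size $|S|-1$, so $P_t \grad_i f = \sum_{S\ni i} e^{-t(|S|-1)} \hat{f}(S) x_{S\setminus\{i\}}$, and comparing the two expansions gives $\grad_i(P_t f) = e^{-t} P_t \grad_i f$. For the time derivative, differentiating the (finite) series term by term gives $\frac{d}{dt}P_t f = -\sum_S |S| \hat{f}(S) e^{-t|S|} x_S$; to rewrite this I would use the elementary ``Euler-type'' identity $\sum_{i=1}^n x_i \grad_i h = \sum_S |S| \hat{h}(S) x_S$ (each $S$ contributes once for each of its $|S|$ elements, with $x_i \cdot x_{S\setminus\{i\}} = x_S$) applied to $h = P_t f$, and then substitute $\grad_i(P_t f) = e^{-t} P_t \grad_i f$ to obtain the two remaining forms $-\sum_i x_i \grad_i(P_t f)$ and $-e^{-t}\sum_i x_i P_t \grad_i f$.

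Finally, for $\frac{d}{dt}\IE[f P_t g]$ I would pass the derivative inside the expectation (again a finite sum) and insert the formula just obtained for $\frac{d}{dt}P_t g$, giving $-e^{-t}\sum_i \IE[f x_i P_t \grad_i g]$. The one genuinely substantive point, and the step I would flag as the crux, is the ``integration by parts'' identity $\IE[f x_i \, P_t \grad_i g] = \IE[\grad_i f \, P_t \grad_i g]$. This holds because $P_t \grad_i g$ has Fourier support on sets not containing $i$, hence does not depend on the coordinate $x_i$; conditioning on $x_{-i}$ and using $\IE[f x_i \mid x_{-i}] = \tfrac12\big(f(x_{-i},1) - f(x_{-i},-1)\big) = \grad_i f$ replaces $f x_i$ by $\grad_i f$ without changing the expectation. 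Equivalently, one expands both factors in the Fourier basis and uses $\widehat{f x_i}(S) = \hat{f}(S \cup \{i\}) = \widehat{\grad_i f}(S)$ for the sets $S \not\ni i$, which are the only ones contributing to the inner product. Assembling these observations proves the lemma; beyond this mild symmetrization, every step is routine bookkeeping with finite Fourier sums.
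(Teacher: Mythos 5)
Your proposal is correct and fills in precisely the routine Fourier-expansion computations that the paper asserts without proof (the lemma is stated as a chain of displayed identities with no accompanying argument). Every step checks: the action $P_t x_S = e^{-t|S|} x_S$, the cancellation/survival of monomials under the symmetrized difference defining $\grad_i$, the commutation $\grad_i P_t = e^{-t} P_t \grad_i$ by comparing Fourier coefficients, the Euler identity $\sum_i x_i \grad_i h = \sum_S |S|\hat h(S) x_S$ via double-counting each $S$ once per element, and the final ``integration by parts'' $\IE[f x_i\, P_t\grad_i g] = \IE[\grad_i f\, P_t\grad_i g]$, which you correctly justify by conditioning on $x_{-i}$ and noting that $P_t\grad_i g$ is independent of $x_i$ while $\IE[f x_i\mid x_{-i}] = \grad_i f$; your alternative Fourier-coefficient verification of this last identity is also correct, since both $fx_i$ and $\grad_i f$ have the coefficient $\hat f(S\cup\{i\})$ at any $S\not\ni i$, and only such $S$ contribute to the pairing with $P_t\grad_i g$. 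This is the only sensible route, and it matches the paper's implicit computation.
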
 

The computation above allows to prove the following~\cite{KaKeMo:16}:  
\begin{claim}
If $f$ and $g$ are monotone functions then $\IE[f P_t g]$ is monotonically decreasing in $t$. 
\end{claim}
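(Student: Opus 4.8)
The plan is to use the derivative formula just computed, namely
\[
\frac{d}{dt} \IE[f P_t g] = -e^{-t} \sum_{i=1}^n \IE[\grad_i f \, P_t \grad_i g],
\]
and show that the sum on the right-hand side is nonnegative whenever $f$ and $g$ are monotone. Since $e^{-t} > 0$, this will give $\frac{d}{dt}\IE[f P_t g] \le 0$, which is exactly the claim.

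So the main step is to argue that $\IE[\grad_i f \, P_t \grad_i g] \ge 0$ for each $i$, under the monotonicity assumption. First I would observe that if $f$ is monotone (nondecreasing in each coordinate), then its discrete derivative $\grad_i f = \tfrac12(f(x_{-i},1) - f(x_{-i},-1))$ is a nonnegative function of $x_{-i}$ — it does not depend on $x_i$, and monotonicity makes the difference nonnegative. Likewise $\grad_i g \ge 0$. Next, the noise operator $P_t = T_{e^{-t}}$ is positivity-preserving: if $h \ge 0$ then $P_t h \ge 0$, because $(P_t h)(x) = \IE[h(y)\mid x]$ is a conditional expectation (an average with nonnegative weights) of a nonnegative function. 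Hence $P_t \grad_i g \ge 0$. Therefore $\grad_i f \cdot P_t \grad_i g$ is a product of two nonnegative functions, so its expectation is nonnegative, and summing over $i$ keeps it nonnegative. Combining with the sign of $-e^{-t}$ finishes the proof.

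I would also remark on why the derivative formula itself is legitimate: $\IE[f P_t g]$ is a finite sum $\sum_S \hat f(S)\hat g(S) e^{-t|S|}$, a finite linear combination of exponentials, so it is smooth in $t$ and term-by-term differentiation is valid — this is precisely the content of the preceding lemma, which I may cite directly. One small point worth spelling out is that $\grad_i f$ genuinely is a function of $x_{-i}$ alone, so that $\IE[\grad_i f \, P_t \grad_i g]$ is an honest expectation of a product of two functions with the stated signs; no subtlety beyond that is needed.

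There is essentially no hard obstacle here: the only thing to be careful about is making the monotonicity-to-nonnegativity implication precise (monotone in the $\{-1,1\}$ ordering means $f(x_{-i},1) \ge f(x_{-i},-1)$ pointwise), and noting that positivity preservation of $P_t$ is immediate from its representation as a conditional expectation / averaging operator. So the write-up is short: state the derivative identity from the lemma, verify $\grad_i f \ge 0$ and $\grad_i g \ge 0$, invoke positivity of $P_t$, conclude nonnegativity of each summand, and read off the sign of the derivative.
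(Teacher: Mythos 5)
Your proof is correct and follows the same approach as the paper's: differentiate $\IE[f P_t g]$ using the derivative formula, then observe that monotonicity makes $\grad_i f$ and $\grad_i g$ nonnegative, so each summand $\IE[\grad_i f\, P_t \grad_i g]$ is nonnegative. You spell out the intermediate positivity-preservation step for $P_t$ that the paper's terse proof leaves implicit, but the argument is the same.
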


\begin{proof}
Assume $f$ and $g$ are increasing then the proof follows since $\grad_i f$ and $\grad_i g$ are always non-negative.
\end{proof} 

\begin{claim}
Let $f_1,\ldots,f_k$ be positive and monotone increasing then 
$\E[P_t f_1 P_t f_2 \ldots P_t f_k]$ is monotone decreasing. 
\end{claim}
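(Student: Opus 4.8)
The plan is to reduce the $k$-function statement to the two-function claim by a straightforward induction on $k$, using the semigroup property $P_{s+t} = P_s P_t$ and the fact that $P_t$ preserves positivity and monotonicity. The base case $k=1$ is trivial (since $\E[P_t f_1] = \E[f_1]$ is constant in $t$), and $k=2$ is exactly the preceding claim that $\E[f P_t g]$ is monotone decreasing when $f, g$ are monotone increasing. So the real content is entirely in propagating the $k=2$ case.

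First I would record the two structural facts I need about the noise operator: (i) if $h \geq 0$ then $P_t h \geq 0$, and if $h$ is monotone increasing then $P_t h$ is monotone increasing — both are immediate from the representation of $P_t$ as averaging over a noisy version of the input, which is a monotone coupling; (ii) $P_s P_t = P_{s+t}$. Then, for the inductive step, suppose the claim holds for $k-1$ positive monotone increasing functions. Given $f_1, \dots, f_k$, fix $t < t'$ and write $t' = t + \delta$ with $\delta > 0$. The key manipulation is to peel off one factor: I would compare $\E[\prod_{i=1}^k P_t f_i]$ and $\E[\prod_{i=1}^k P_{t'} f_i]$ by interpolating one variable at a time is not quite enough; instead the cleanest route is to apply the $k=2$ claim with the ``first function'' being the product $g := \prod_{i=2}^k P_t f_i$ — note $g$ is positive and monotone increasing since each $P_t f_i$ is — and the ``second function'' being $f_1$: this gives $\E[g \cdot P_\delta f_1] \leq \E[g \cdot f_1]$, which is
\[
\E\Big[ P_\delta f_1 \cdot \prod_{i=2}^k P_t f_i \Big] \leq \E\Big[ f_1 \cdot \prod_{i=2}^k P_t f_i \Big].
\]
Hmm — but I actually want $P_{t+\delta} f_1 = P_\delta P_t f_1$, not $P_\delta f_1$, appearing. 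This is fixed by instead applying the $k=2$ claim to $g := \prod_{i=2}^k P_t f_i$ and $h := P_t f_1$ (still positive, monotone increasing), obtaining $\E[g \cdot P_\delta h] \leq \E[g \cdot h]$, i.e.
\[
\E\Big[ \prod_{i=1}^k P_{t'} f_i \Big] = \E\Big[ P_\delta P_t f_1 \cdot \prod_{i=2}^k P_t f_i \Big] \leq \E\Big[ \prod_{i=1}^k P_t f_i \Big],
\]
where the first equality uses $P_\delta(\prod_{i\geq 2} P_t f_i)$ is what we'd need if we moved $P_\delta$ onto the whole product, but here I only move it onto the first factor and absorb it via $P_{t'} f_i = P_\delta P_t f_i = P_t f_i$ for $i\geq 2$? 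No — that last step is false. The correct bookkeeping: write $\E[\prod_i P_{t'} f_i]$, and I want to go from time $t'$ down to time $t$ in each coordinate. Do it \emph{one factor at a time}: replace $P_{t'} f_1$ by $P_t f_1$ first (applying the $k=2$ claim with $g = \prod_{i\geq 2} P_{t'} f_i$, $h = P_t f_1$, and the increment $\delta$), then $P_{t'} f_2$ by $P_t f_2$, and so on. Each replacement only decreases the expectation by the $k=2$ claim, since at each stage the product of the other $k-1$ factors is positive and monotone increasing. After $k$ steps we arrive at $\E[\prod_i P_t f_i]$, so $\E[\prod_i P_{t'} f_i] \leq \E[\prod_i P_t f_i]$, completing the induction.

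The main obstacle is purely organizational: making sure that at each of the $k$ replacement steps the ``spectator'' product is genuinely of the form required by the $k=2$ claim, i.e. a product of noise-operator images of positive monotone functions (hence itself positive and monotone increasing), and that the increment being applied is a genuine $P_\delta$ with $\delta > 0$. Once fact (i) is in hand this is routine, so I would state (i) as a short lemma and then write the induction in a few lines. An alternative, slicker presentation avoids the telescoping entirely: differentiate $\E[\prod_i P_t f_i]$ in $t$ using $\frac{d}{dt}P_t f_i = -\sum_j x_j \grad_j(P_t f_i) \cdot$(appropriate factor) from the displayed Lemma, expand by the product rule, and show each resulting term is $\leq 0$; but the bilinear terms $\E[\grad_j(P_t f_i)\,\grad_j(P_t f_{i'})\,\prod_{i''\neq i,i'}P_t f_{i''}]$ and cross terms are messier to sign directly than the clean telescoping reduction, so I would go with the induction.
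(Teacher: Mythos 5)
Your argument is correct and is essentially the paper's proof in integrated form: the paper's one-line hint---``write the derivative as a sum of partial derivatives''---means treating $F(t_1,\ldots,t_k)=\E[P_{t_1}f_1\cdots P_{t_k}f_k]$ and showing each $\partial_{t_j}F\le 0$ by the preceding two-function claim (with spectator product $\prod_{i\ne j}P_{t_i}f_i$, which is positive and monotone exactly because the $f_i$ are), and your one-factor-at-a-time replacement from $t'$ down to $t$ is the telescoped version of that same reduction. Your closing worry about ``bilinear cross terms'' in the direct differentiation is a misreading of the hint: fixing all but one $t_j$ produces no such terms, just the clean identity $\partial_{t_j}\E[g\,P_{t_j}f_j]=-e^{-t_j}\sum_\ell\E[\grad_\ell g\,P_{t_j}\grad_\ell f_j]\le 0$ with $g$ the spectator product, which is precisely the mechanism your telescoping invokes at each step.
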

  
\begin{proof}
Write the derivative as a sum of partial derivatives.
\end{proof} 

\begin{remark}
The proofs are so easy that the claims can be generalized to a more general Markov chain setting. 
\end{remark}

We now turn to applications a la Cordero-Ersquin and Ledoux~\cite  {CorderoLedoux:12} of the derivative formula which can be thought of as an integration by parts formula.

\begin{claim}
\[
Cov(f,g) = \int_0^{\infty} e^{-t} \sum_{i=1}^n \IE[\grad_i f P_t \grad_i g] dt 
\] 
\end{claim}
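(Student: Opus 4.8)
The plan is to read this identity as nothing more than the fundamental theorem of calculus applied to the function $t \mapsto \IE[f P_t g]$, combined with the derivative formula established in the lemma immediately preceding the claim.

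First I would pin down the two endpoint values of $t \mapsto \IE[f P_t g]$. Since $P_0 = T_1 = \Id$, the value at $t=0$ is $\IE[f P_0 g] = \IE[fg]$. Since $P_t g = \sum_S \hat g(S) e^{-t|S|} x_S$ and every term with $S \neq \emptyset$ decays to $0$, we get $P_t g \to \hat g(\emptyset) = \IE[g]$ as $t \to \infty$, hence $\IE[f P_t g] \to \IE[f]\,\IE[g]$. All of this is completely safe: on the finite cube everything in sight is a finite sum of exponentials in $t$, so there are no integrability questions and differentiation under $\IE$ (a finite average) is automatic.

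Next I would invoke the derivative formula from the lemma, namely $\frac{d}{dt}\IE[f P_t g] = -e^{-t}\sum_{i=1}^n \IE[f x_i\, P_t \grad_i g]$, together with the elementary rewriting $\IE[f x_i\, P_t \grad_i g] = \IE[\grad_i f\, P_t \grad_i g]$: since $\grad_i g$, and therefore $P_t \grad_i g$, does not depend on the $i$-th coordinate, one conditions on $x_{-i}$ first and uses $\IE[f x_i \mid x_{-i}] = \grad_i f$. Integrating the derivative formula over $t \in (0,\infty)$ then gives
\[
\IE[f]\,\IE[g] - \IE[fg] \;=\; \int_0^\infty \frac{d}{dt}\IE[f P_t g]\, dt \;=\; -\int_0^\infty e^{-t}\sum_{i=1}^n \IE[\grad_i f\, P_t\grad_i g]\,dt,
\]
and rearranging yields $\Cov(f,g) = \IE[fg] - \IE[f]\IE[g] = \int_0^\infty e^{-t}\sum_{i=1}^n \IE[\grad_i f\, P_t\grad_i g]\,dt$, as claimed.

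There is really no hard step; the only point to be careful about is that the $t\to\infty$ boundary term equals $\IE[f]\IE[g]$ and not $0$, because the empty-set Fourier coefficient survives the semigroup. As a consistency check I would also verify the formula on the Fourier side: one computes $\sum_i \IE[\grad_i f\, P_t\grad_i g] = \sum_{S\neq\emptyset}|S|\,\hat f(S)\hat g(S)\,e^{-t(|S|-1)}$, so the right-hand side is $\sum_{S\neq\emptyset}|S|\,\hat f(S)\hat g(S)\int_0^\infty e^{-t|S|}\,dt = \sum_{S\neq\emptyset}\hat f(S)\hat g(S) = \Cov(f,g)$, matching the semigroup derivation.
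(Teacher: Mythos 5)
Your proof is correct and follows exactly the same route as the paper's: express $\Cov(f,g) = \IE[f P_0 g] - \IE[f P_\infty g]$, apply the fundamental theorem of calculus, and substitute the derivative formula $\frac{d}{dt}\IE[f P_t g] = -e^{-t}\sum_i \IE[\grad_i f\, P_t \grad_i g]$ from the preceding lemma. The only difference is that you spell out the endpoint evaluations, the conditioning step for $\IE[f x_i P_t \grad_i g] = \IE[\grad_i f P_t \grad_i g]$ (which the lemma already records), and a Fourier-side sanity check, whereas the paper compresses this to a one-line chain of equalities.
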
 

\begin{proof}
\[
Cov(f,g) = \IE[f P_0 g] - \IE[f P_{\infty} g] = -\int_{0}^{\infty} \frac{d}{dt} \IE[f P_t g] dt  = \int_0^{\infty} e^{-t} \sum_{i=1}^n \IE[\grad_i f P_t \grad_i g] dt 
\]
as needed. 
\end{proof} 
We can now prove some useful corollaries. 
\begin{claim}
\[
|Cov(f,g)| \leq \sum_{i=1}^n \sqrt{I_i(f) I_i(g)}
\]
\end{claim}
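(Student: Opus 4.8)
The plan is to start from the integral representation just established,
\[
\Cov(f,g) = \int_0^{\infty} e^{-t} \sum_{i=1}^n \IE[\grad_i f \cdot P_t \grad_i g]\, dt,
\]
and bound the integrand termwise. The key tool is the fact that $P_t$ is self-adjoint and a contraction on $L_2$: for each fixed $t$ and each $i$, Cauchy--Schwarz gives
\[
\IE[\grad_i f \cdot P_t \grad_i g] \le \| \grad_i f \|_2 \, \| P_t \grad_i g \|_2 \le \| \grad_i f \|_2 \, \| \grad_i g \|_2 = \sqrt{I_i(f)\, I_i(g)},
\]
using $\|P_t h\|_2 \le \|h\|_2$ (immediate from $P_t h = \sum_S e^{-t|S|}\hat h(S) x_S$ and Parseval) and the identity $I_i(f) = \IE[|\grad_i f|^2]$ from the definition of influence. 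Summing over $i$ and integrating,
\[
|\Cov(f,g)| \le \int_0^\infty e^{-t} \sum_{i=1}^n \sqrt{I_i(f) I_i(g)}\, dt = \sum_{i=1}^n \sqrt{I_i(f) I_i(g)},
\]
since $\int_0^\infty e^{-t}\,dt = 1$. (One may also first apply Cauchy--Schwarz in $i$ to get the even simpler bound $\sqrt{\sum_i I_i(f)}\sqrt{\sum_i I_i(g)}$, but the per-coordinate form is what is wanted here.)

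The only subtlety worth spelling out is the interchange of the sum over $i$ and the integral over $t$, and the termwise use of Cauchy--Schwarz before integrating rather than after: since everything is a finite sum of nonnegative continuous functions of $t$ that are dominated by the integrable function $t \mapsto e^{-t}\sum_i\sqrt{I_i(f)I_i(g)}$, this is routine (Tonelli, or just finiteness of the sum). I do not anticipate a real obstacle; the content is entirely in the integration-by-parts/semigroup identity already proved above, and this corollary is a one-line consequence of contractivity plus Cauchy--Schwarz. If desired, the same argument immediately yields the variant $|\Cov(f,g)| \le \int_0^\infty e^{-t}\sum_i \|\grad_i f\|_p \|\grad_i g\|_q\,dt$ for conjugate exponents, and combining with hypercontractivity of $P_t$ (Theorem~\ref{thm:BB}) is exactly what powers the semigroup proof of KKL; but for the present claim plain $L_2$ contractivity suffices.
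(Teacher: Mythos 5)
Your argument is exactly the paper's: apply Cauchy--Schwarz and $L_2$-contractivity of $P_t$ to bound $|\IE[\grad_i f \cdot P_t \grad_i g]| \le \sqrt{I_i(f)I_i(g)}$, then integrate $e^{-t}$ over $[0,\infty)$. The extra remarks on interchanging sum and integral are fine but not needed for a finite sum.
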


\begin{proof}
Note that 
\[
|\E[\grad_i f P_t \grad_i g]| \leq \| \grad_i f \|_2 \| P_t \grad_i g \|_2 \leq \| \grad_i f\|_2 \| \grad_i g \|_2 
= \sqrt{I_i(f) I_i(g)}.
\]
The proof follows by integration. 
\end{proof}  

\begin{exercise}
Provide an alternative proof using directly the Fourier expansion. Show moreover that if 
$\hat{f}(S)\hat{g}(S) \neq 0$ if and only if $S = \emptyset$ of $|S| \geq k$, then the results can be improved to:
\[
|Cov(f,g)| \leq \frac{1}{k} \sum_{i=1}^n \sqrt{I_i(f) I_i(g)}
\]
\end{exercise} 

If instead of the contractive estimate above we use the hyper-contractive inequality: 
\[
\|E[\grad_i f P_t \grad_i g]| \leq \| \grad_i f \|_{1+e^{-t}} \| \grad_i g \|_{1+e^{-t}}
\]
We get the following bound:
\[
|Cov(f,g)| 
\leq 
 \int_0^{\infty} e^{-t} \sum_{i=1}^n \| \grad_i f \|_{1+e^{-t}} \| \grad_i g \|_{1+e^{-t}} dt = 
 \int_{1}^2 \sum_{i=1}^n \| \grad_i f \|_v \| \grad_i g \|_v dv 
 \]
 Now By Holder inequality 
 \[
 \| f \|_v \leq \| f \|_2^{1-\theta(v)} \| f \|_1^{\theta(v)} = \| f \|_2 \left( \frac{\| f\|_1}{\| f \|_2} \right)^{\theta(v)},
 \]
 where $v \theta + 0.5 (1-\theta)v = 1$ so  
 $0.5 \theta v = 1 - 0.5 v$ so $\theta = 2/v - 1$. 
 Writing 
 \[
 r_i = \frac{\|\grad_i f\|_1 \| \| \grad_i g \|_1}{\| \grad_i f \|_2 \| \grad_i g \|_2},
 \]
 we see that 
 \[
 |Cov(f,g)| \leq \sum_{i=1}^n \| \grad_i f \|_2 \| \grad_i g \|_2 \int_1^2 r_i^{\theta(v)} dv 
 \]
 Note that $\theta(v) = 2/v-1 \geq 1-v/2$ so 
 \[
\int_1^2 r_i^{\theta(v)} dv \leq  \int_1^2 r_i^{1-v/2} dv = 2 \int_0^{1/2} r_i^v dv = 
\frac{1}{\ln r_i}(r_i-1) \leq \frac{5}{1+\ln(1/r_i)}
\]
We thus obtain

\begin{claim}
\[
|Cov(f,g)| \leq 5 \sum_{i=1}^n \frac{\sqrt{I_i(f) I_i(g)}}{1 + \ln\left( \| \grad_i f \|_2 \| \grad_i g \|_2/ 
(\| \grad_i f \|_1 \grad \|_i g \|_2) \right)}
\]
\end{claim}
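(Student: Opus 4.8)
The plan is to push further the semigroup identity derived just above,
\[
\Cov(f,g) = \int_0^\infty e^{-t} \sum_{i=1}^n \E[\grad_i f \cdot P_t \grad_i g]\,dt ,
\]
now estimating the inner expectations by hypercontractivity instead of Cauchy--Schwarz. First I would split the semigroup, $P_t = P_{t/2}\circ P_{t/2}$, and use self-adjointness of $P_{t/2}$ to write $\E[\grad_i f \cdot P_t \grad_i g] = \E[(P_{t/2}\grad_i f)(P_{t/2}\grad_i g)]$; Cauchy--Schwarz together with the Bonami--Beckner theorem (Theorem~\ref{thm:BB}) in the form $\|P_{t/2}h\|_2 \le \|h\|_{1+e^{-t}}$ --- valid because $(e^{-t/2})^2 = e^{-t} \le (1+e^{-t})-1$ --- then gives $|\E[\grad_i f \cdot P_t \grad_i g]| \le \|\grad_i f\|_{1+e^{-t}}\|\grad_i g\|_{1+e^{-t}}$. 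Plugging this in and substituting $v = 1+e^{-t}$ (so $dv = -e^{-t}\,dt$ and $v$ ranges over $[1,2]$) yields
\[
|\Cov(f,g)| \le \int_1^2 \sum_{i=1}^n \|\grad_i f\|_v\,\|\grad_i g\|_v\,dv .
\]

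Next I would interpolate each $L^v$ norm between $L^1$ and $L^2$ by the log-convexity (Lyapunov) inequality: with $\theta(v) := 2/v - 1$, chosen so that $1/v = \theta/1 + (1-\theta)/2$, one has $\|h\|_v \le \|h\|_1^{\theta(v)}\|h\|_2^{1-\theta(v)}$. Since the cube carries a probability measure, $\|h\|_1 \le \|h\|_2$, so for each $i$ with $\grad_i f \not\equiv 0 \not\equiv \grad_i g$ the quantity
\[
r_i := \frac{\|\grad_i f\|_1\,\|\grad_i g\|_1}{\|\grad_i f\|_2\,\|\grad_i g\|_2}
\]
lies in $(0,1]$ (if either gradient vanishes the $i$-th term is $0$ on both sides and may be dropped). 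Factoring out $\|\grad_i f\|_2\|\grad_i g\|_2 = \sqrt{I_i(f)I_i(g)}$ reduces the whole statement to the one-variable estimate
\[
\int_1^2 r^{\theta(v)}\,dv \le \frac{5}{1+\ln(1/r)}, \qquad r \in (0,1].
\]

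To prove this I would combine two crude bounds. On one hand $\theta(v)\ge 0$ and $r\le 1$ give $\int_1^2 r^{\theta(v)}\,dv \le 1$. On the other hand the AM--GM inequality $2/v + v/2 \ge 2$ gives $\theta(v) = 2/v - 1 \ge 1 - v/2$ on $[1,2]$, hence $r^{\theta(v)} \le r^{1-v/2}$, and an elementary antiderivative gives $\int_1^2 r^{1-v/2}\,dv = 2(1-\sqrt r)/\ln(1/r) \le 2/\ln(1/r)$. Now split cases: if $\ln(1/r)\le 1$, then $1 \le 5/2 \le 5/(1+\ln(1/r))$; if $\ln(1/r)\ge 1$, then $1+\ln(1/r)\le 2\ln(1/r)$, so $2/\ln(1/r) \le 4/(1+\ln(1/r)) \le 5/(1+\ln(1/r))$. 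Substituting $\ln(1/r_i) = \ln\!\big(\|\grad_i f\|_2\|\grad_i g\|_2/(\|\grad_i f\|_1\|\grad_i g\|_1)\big)$ back in completes the proof. I do not expect a real obstacle: the argument is a routine assembly of the covariance formula, hypercontractivity, and norm interpolation, and the only points needing care are that the hypercontractivity exponent match the substitution $v=1+e^{-t}$ exactly, the sign conventions in the final one-variable inequality, and the bookkeeping of the degenerate terms.
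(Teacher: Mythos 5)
Your proof follows the same route as the paper: the semigroup covariance identity, the hypercontractive estimate $|\E[\grad_i f\, P_t\grad_i g]|\le\|\grad_i f\|_{1+e^{-t}}\|\grad_i g\|_{1+e^{-t}}$, the change of variables $v=1+e^{-t}$, Lyapunov interpolation with $\theta(v)=2/v-1$, and the reduction to $\int_1^2 r^{\theta(v)}\,dv\le 5/(1+\ln(1/r))$. The only difference is that you fill in details the paper leaves tacit or garbled --- you justify the hypercontractive estimate via $P_t=P_{t/2}\circ P_{t/2}$ and self-adjointness, compute the antiderivative $\int_1^2 r^{1-v/2}\,dv=2(1-\sqrt r)/\ln(1/r)$ correctly (the paper's displayed $\frac{1}{\ln r}(r-1)$ is a typo), and supply the clean two-case argument for the final scalar inequality, which the paper asserts without proof.
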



\begin{remark}
This statement is a  slight generalization of the proof by~\cite{CorderoLedoux:12} of Talagrands's result~\cite{Talagrand:94}.
\[
Var[f]  \leq 5 \sum_{i=1}^n \frac{I_i(f)}{1 + 2 \ln\left( \| \grad_i f \|_2/ 
(\| \grad_i f \|_1) \right)}
\]
We claim that furthermore the last inequality implies the KKL Theorem that for Boolean functions 
$f: \{-1,1\}^n \to \{0,1\}$ there exists an $i$ such that $I_i(f) \geq c Var[f] \log n / n$. This is true since for Boolean functions $\grad_i f \in \{1,0,-1\}$ and therefore 
\[
\| \grad_i f \|_2 = \sqrt{I_i(f)}, \quad \| \grad_i f \|_1 = I_i(f).
\]
So for such function we get that 
\[
Var[f] \leq 10 \sum_{i=1}^n \frac{I_i(f)}{1 - \ln I_i(f)}
\]
So the there exist an $i$ such that 
\[
\frac{I_i(f)}{1 - \ln I_i(f)} \geq c Var[f]/n
\]
and therefore $I_i(f) \geq c Var[f] \log n / n$. 
\end{remark}


\section{Larger Domains}
\begin{remark}
The KKL Theorem and its proof immediately extend from the measure
$2^{-n} (\delta_{-1}+\delta_1)^{\otimes n}$ to any measure of the form
$  (p \delta_{-1} + (1-p)\delta_1)^{\otimes n}$ for any fixed $p$, since hyper-contractive estimates are known for such measures. 
\end{remark}

Allowing varying values of $p$ in the KKL Theorem has interesting implications for aggregation of information. 

We call a function $f : \{-1,1\}^n \to \{-1,1\}^n$ {\em symmetric}, if there exists a group $\Pi$ who acts transitively on $[n]$ such that $f(\pi(x)) = f(x)$ for all $\pi \in \Pi$, where
$\pi(x) = (x_{\pi(1)},\ldots,x_{\pi(n)})$. Recall that $\Pi$ acts transitively on $[n]$ if for all $i,j \in [n]$, there exists $\pi \in \Pi$ with $\pi(i) = j$. 

\begin{corollary} 
Let $f : \{-1,1\}^n \to \{-1,1\}$ be monotone and symmetric and assume $f(-x) = -f(x)$ for all $x$. Then for any $\eps > 0$, there exists a $c(\eps) > 0$ such that if 
\[
\IE_{1/2 + c/ \log n}[f] \geq 1-\eps.
\]
\end{corollary}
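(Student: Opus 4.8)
The plan is to combine the biased Russo--Margulis formula with the biased KKL theorem and then integrate a differential inequality.

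\medskip

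\noindent\textbf{Setup and biased Russo.} First I would set $g(p) = \IP_p[f = 1]$, so that $\IE_p[f] = 2g(p) - 1$ and $\Var_p[f] = 4 g(p)(1 - g(p))$. Since $f$ is odd, $g(1/2) = 1/2$; since $f$ is monotone and non-constant, $g$ is a strictly increasing polynomial with $g(p) \in (1/2, 1)$ for all $p \in (1/2, 1)$ (indeed $f(-1,\dots,-1) = -1$ has positive probability under every $p < 1$, so $g(p) < 1$), hence $\log\tfrac{g(p)}{1-g(p)}$ is well defined and smooth on this range. By the biased form of Russo's formula (the displayed identity following Proposition~\ref{prop:russo}),
\[
g'(p) = \frac{1}{4p(1-p)} \sum_{i=1}^n I_{p,i}(f) \ \geq\ \sum_{i=1}^n I_{p,i}(f),
\]
using $4p(1-p) \le 1$.

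\medskip

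\noindent\textbf{Symmetry plus biased KKL.} Next I would exploit the symmetry: since $\Pi$ acts transitively on $[n]$ and the product measure $(p\delta_{-1} + (1-p)\delta_1)^{\otimes n}$ is exchangeable, all influences $I_{p,i}(f)$ coincide; call the common value $I_p$, so $\sum_i I_{p,i}(f) = n I_p$. The biased version of the KKL theorem (the Remark extending Theorem~\ref{thm:KKL} to measures $(p\delta_{-1} + (1-p)\delta_1)^{\otimes n}$) gives $I_p \ge c_0 \tfrac{\log n}{n} \Var_p[f]$, with $c_0$ taken uniform for $p$ in the compact interval $[1/3, 2/3]$ (the hypercontractive constant for these measures depends continuously on $p$ and is bounded away from $0$ there). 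Combining,
\[
g'(p) \ \ge\ c_0 (\log n)\, \Var_p[f] \ =\ 4 c_0 (\log n)\, g(p)\bigl(1 - g(p)\bigr) \qquad \text{for } p \in [1/2, 2/3].
\]

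\medskip

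\noindent\textbf{Integrating the logit.} Finally I would integrate this as a differential inequality for $h(p) := \log\tfrac{g(p)}{1-g(p)}$: the inequality above reads $h'(p) = \tfrac{g'(p)}{g(p)(1-g(p))} \ge 4 c_0 \log n$, and since $h(1/2) = 0$ we get $h(p) \ge 4 c_0 (\log n)(p - 1/2)$, i.e.\ $g(p) \ge 1 - e^{-4 c_0 (\log n)(p-1/2)}$. Taking $p = 1/2 + c(\eps)/\log n$ with $c(\eps) := \tfrac{\log(2/\eps)}{4 c_0}$ yields $g(p) \ge 1 - \eps/2$, hence $\IE_p[f] = 2g(p)-1 \ge 1 - \eps$, as required. (This is valid once $n$ is large enough that $1/2 + c(\eps)/\log n \le 2/3$, so the whole integration path lies in $[1/3,2/3]$; for the finitely many smaller $n$ one reads the statement asymptotically or enlarges the constant.)

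\medskip

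The main obstacle I anticipate is the uniformity of the KKL constant: the bound is applied not at a single fixed bias but along the shrinking interval $p \in [1/2,\,1/2 + c(\eps)/\log n]$, so I need the constant $c_0$ to be uniform over $p \in [1/3,2/3]$ rather than $p$-dependent; this follows because the relevant hypercontractivity (equivalently log-Sobolev) constants for $(p\delta_{-1}+(1-p)\delta_1)^{\otimes n}$ vary continuously with $p$ and degrade only as $p \to 0,1$. A secondary check is that $g(p)$ stays strictly below $1$ along the path so the logit never blows up, which is exactly the oddness-plus-monotonicity observation noted above.
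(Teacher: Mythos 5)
Your proof is correct and follows exactly the route the paper's one-line proof points to (Russo's formula combined with KKL), fleshed out with the necessary details: switching to the $p$-biased setting, using transitivity to replace $\max_i I_{p,i}$ by the common influence, and integrating the resulting differential inequality for the logit over the shrinking interval $[1/2,\,1/2+c/\log n]$. The two side issues you flag — uniformity of the hypercontractive/KKL constant for $p$ in a compact subinterval of $(0,1)$, and $g(p)$ staying strictly in $(1/2,1)$ so the logit is defined — are exactly the right points to check, and your treatment of both is sound.
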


\begin{proof}
Use Russo's formula and the KKL Theorem.
\end{proof}

Of course there are examples, where the theorem is far from tight. For example for the majority function, we know that the statement of the theorem holds when $p = 1/2 + c/\sqrt{n}$. 

\begin{exercise}
Consider the tribes functions and show that the statement of the theorem cannot be improved.
\end{exercise}

\begin{exercise}
Show that for the recursive Majority of threes functions, the statement of the theorem holds for $p = 1/2 + n^{-\delta}$ for some $\delta > 0$. 
\end{exercise} 

These results can be further generalized to scenarios involving multiple candidates following~\cite{KalaiMossel:15}.

\subsection{Monotonicity and Aggregation for multiple alternatives }

Let $A$ be a finite set. Let $X = A^n$. For $\sigma \in S(n)$, a
permutation on $n$ elements and $x
\in A^n$ we denote by $y = x_{\sigma}$ the vector satisfying
$y_i = x_{\sigma(i)}$ for all $i$. For $\sigma \in S(A)$ we write
$y = \sigma(x)$ for the vector satisfying $y_i = \sigma(x_i)$ for all
$i$. For $a \in A$ and $x,y \in X$ we
write $x \leq_a y$ if $\{i : x_i = a\} \subset \{i : y_i = a\}$ and for all $i$ such that $y_i \neq a$ it
holds that $x_i = y_i$. In other words if $x \leq_a y$ then for all $i$ if $x_i \neq y_i$ then $y_i = a$.
It is easy to see that $\leq_a$ defines a partial order on $X$.

\begin{definition} \label{def:basic}
We say that $f : X = A^n \to A$ is {\em monotone} if
for all $a \in A$ and $x,y \in X$ such that $x \leq_a y$ it holds that
$f(x) = a$ implies that $f(y) = a$.
We say that $f$ is {\em symmetric} if there exists a transitive
group $\Sigma \subset S(n)$ such that $f(x_{\sigma}) = f(x)$ for all
$x \in X$ and $\sigma \in \Sigma$.
We say that $f$ is {\em fair} if for all $\sigma \in S(A)$ and all
$x \in X$ it holds that $f(\sigma(x)) = \sigma(f(x))$.

Let $\Delta[A]$ denote the simplex of probability measures on $A$ and
let $\gamma$ denote the standard probability measure on $\Delta[A]$.
For
$\mu \in \Delta[A]$ denote by $\P_{\mu}$ the measure
$\mu^{\otimes n}$ on $X$. We denote by $\E_{\mu}$ the expected value
according to the measure $\P_{\mu}$.
For any measure $\mu \in \Delta[A]$, we write $\mu^{\ast}$ for the minimal probability $\mu$ assigns to any of
the atoms in $A$.
\end{definition}

It is not too hard to generalize the proof of the KKL Theorem to obtain the following theorems, see cite{} 

\begin{theorem} \label{thm:kalai_mossel}
There exists an absolute constant $C = C(|A|)$ such
that if $f$ is symmetric and monotone then for
any $a \in A$ and $1/2 > \eps > 0$ it holds that
\[
\gamma[\mu : \eps \leq P_{\mu}[f = a] \leq 1-\eps] \leq
C (\log(1-\eps) - \log(\eps)) \frac{\log \log n}{\log n}.
\]
\end{theorem}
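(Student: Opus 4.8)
The plan is to reduce Theorem~\ref{thm:kalai_mossel} to the (generalized) KKL theorem via a multi-dimensional Russo-type formula, exactly as the binary case reduces $\IE_{1/2 \pm c/\log n}[f] \geq 1-\eps$ to Theorem~\ref{thm:KKL}. First I would fix an atom $a \in A$ and consider the Boolean ``indicator'' function $g_a(x) = \1(f(x) = a)$. Monotonicity of $f$ with respect to $\leq_a$ translates into the statement that $g_a$ is monotone non-decreasing when we move more coordinates to the value $a$. The symmetry group $\Sigma$ acting transitively on $[n]$ makes $g_a$ a symmetric Boolean function in the relevant sense, so all the coordinate influences $I_i(g_a)$ (computed with respect to the appropriate product measure) are equal. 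The key input is that the KKL bound holds not just for the uniform measure but for any product measure $\P_\mu$ with $\mu$ bounded away from the boundary of $\Delta[A]$: by the Remark following Theorem~\ref{thm:KKL} and the discussion of larger domains, hypercontractive estimates are available for such measures, so there is a constant $c$ depending only on $\mu^\ast$ (and $|A|$) with $\min_i I_i(g_a) \geq c\,\Var_\mu[g_a] \log n / n$, and by symmetry \emph{every} influence is at least this large, hence $\sum_i I_i(g_a) \geq c\, \Var_\mu[g_a] \log n$.

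Next I would set up the one-parameter family of measures along which to differentiate. Fix a ``central'' $\mu_0$ (say the uniform measure on $A$, or more flexibly any $\mu_0$ in the interior) and, for $a \in A$, consider the path $\mu_t = \mu_0 + t(\delta_a - \mu_0)$ for $t \in [0, t_1]$ with $t_1$ small, which increases the mass on $a$ linearly while keeping the measure in the interior of $\Delta[A]$. Along such a path one has a Russo-type identity: $\frac{d}{dt}\P_{\mu_t}[f = a] = \sum_{i=1}^n J_{t,i}(g_a)$, where $J_{t,i}$ is the appropriate ``pivotality'' quantity (the probability that changing coordinate $i$ to/from $a$ flips the event $f = a$), and $J_{t,i}$ is comparable, up to constants depending on $\mu^\ast$, to the influence $I_{\mu_t, i}(g_a)$ — this is the multi-alternative analogue of the formula $\frac{d}{dp}\P_p[f=1] = \frac{1}{4p(1-p)}\sum_i I_{p,i}(f)$ used in Proposition~\ref{prop:russo} and Lemma. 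The monotonicity of $g_a$ with respect to $\leq_a$ is exactly what makes this derivative nonnegative and makes the pivotality/influence comparison clean (just as monotonicity gives $\hat f(i) = I_i(f)$ in the binary case).

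Combining the two ingredients: as long as $\eps \leq \P_{\mu_t}[f = a] \leq 1-\eps$ we have $\Var_{\mu_t}[g_a] \geq \eps(1-\eps) \geq \eps/2$, so $\frac{d}{dt}\P_{\mu_t}[f=a] \geq c'\,\eps \log n$ for a constant $c'$ depending only on $|A|$ (and on the fact that we stay in a fixed compact interior region of $\Delta[A]$, which bounds $\mu^\ast$ below). Therefore the set of $t$ for which $\P_{\mu_t}[f=a]$ lies in the window $[\eps, 1-\eps]$ has length at most $\frac{1-2\eps}{c' \eps \log n} = O\!\big(\frac{1}{\eps \log n}\big)$; sharpening the variance lower bound near the ends of the window to $\Var \asymp \P(1-\P)$ and integrating $d\P / (\P(1-\P))$ instead produces the $\log(1-\eps) - \log(\eps)$ factor in the statement. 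Finally I would integrate this ``thin slab'' estimate over the simplex: Fubini/coarea along the pencil of segments $\{\mu_t\}$ emanating toward each vertex $\delta_a$, together with the observation that $\gamma$ has bounded density, converts the one-dimensional bound into $\gamma[\mu : \eps \leq \P_\mu[f=a] \leq 1-\eps] \leq C(\log(1-\eps)-\log(\eps))\frac{1}{\log n}$; the extra $\log\log n$ factor in the theorem comes from handling the boundary region of the simplex where $\mu^\ast$ is too small for the clean KKL constant — there one truncates at $\mu^\ast \asymp 1/\mathrm{polylog}(n)$, uses that the KKL constant degrades only like $1/\log(1/\mu^\ast) \asymp 1/\log\log n$, and absorbs the small-measure boundary shell separately.

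The main obstacle I expect is not the reduction itself but making the Russo/pivotality identity and the KKL constant uniform as $\mu$ ranges over the (open) simplex: the hypercontractive constants for $\P_\mu$ blow up as $\mu^\ast \to 0$, so one cannot get a single clean bound over all of $\Delta[A]$, and the careful bookkeeping of how the KKL constant depends on $\mu^\ast$ — together with the decomposition of the simplex into an interior region (clean bound) and a thin boundary shell (where one pays the $\log\log n$) — is the delicate technical heart of the argument. A secondary subtlety is defining the right monotone one-parameter family through each $\mu$ so that the derivative of $\P_{\mu_t}[f=a]$ is genuinely governed by influences of $g_a$ and not contaminated by the simultaneous decrease of the other coordinates' masses; choosing the direction $\delta_a - \mu_0$ (rather than an arbitrary direction) is what keeps the monotonicity structure intact.
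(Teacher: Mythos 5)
The paper does not actually supply a proof of Theorem~\ref{thm:kalai_mossel}: the text preceding it says only that ``it is not too hard to generalize the proof of the KKL Theorem to obtain the following theorems'' and points to an (unfortunately blank) citation, then states the theorem and follows it with the remark that for $|A|=2$ this gives a threshold window $O(\log\log n/\log n)$, weaker than the $O(1/\log n)$ of Friedgut--Kalai. So what you are really being asked to do is reconstruct the intended argument, and your reconstruction matches the one the paper gestures at: write $g_a = \1(f=a)$, observe it is monotone with respect to $\leq_a$ and has all influences equal by the transitive symmetry, apply a $\mu$-dependent KKL bound to get $\sum_i I_i(g_a) \gtrsim c(\mu^\ast)\,\Var_\mu[g_a]\log n$, relate the influence sum to $\tfrac{d}{dt}\P_{\mu_t}[f=a]$ along the monotone pencil $\mu_t=(1-t)\mu_0+t\delta_a$ via a Russo-type identity, integrate $dp/(p(1-p))$ to get the $\log(1-\eps)-\log\eps$ factor and a window length $O(\log(1/\eps)\cdot c(\mu^\ast)^{-1}/\log n)$ per segment, then apply Fubini over the base face plus a boundary-shell truncation. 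Your choice of direction $\delta_a-\mu_0$ is exactly right: it corresponds to the monotone coupling ``with probability $t$ overwrite the coordinate by $a$,'' which is what makes $\P_{\mu_t}[f=a]$ non-decreasing and makes the pivotality-for-$a$ quantity comparable to $I_i(g_a)$ up to a factor bounded in terms of $\mu^\ast$.

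Your diagnosis of where the $\log\log n$ comes from is also the right one, and it is worth making completely explicit because it is the only genuinely delicate step: for the product measure $\P_\mu$ the hypercontractive/KKL constant $c(\mu^\ast)$ degrades like $1/\log(1/\mu^\ast)$, so one splits the simplex into a shell $\{\mu^\ast\le \delta\}$ of $\gamma$-measure $O(|A|\,\delta)$ and the interior, where each segment contributes window length $O\big(\log(1/\eps)\,\tfrac{\log(1/\delta)}{\log n}\big)$; balancing $\delta\asymp \log(1/\delta)/\log n$ gives $\delta\asymp\log\log n/\log n$. The remark in the paper confirms that this loss is an artifact of the method: Friedgut--Kalai avoid the extra $\log\log n$ for $|A|=2$ by a more refined argument, and the paper explicitly conjectures $O(1/\log n)$ should hold here too. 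So you should not expect to do better than the stated bound by this route, and your proposal is internally consistent with the theorem as written. The two places I would ask you to tighten before calling this a proof are (i) the precise comparison between the derivative $\tfrac{d}{dt}\P_{\mu_t}[f=a]$, which is naturally a sum of ``$a$-pivotalities,'' and the variance-influences $I_i(g_a)$ appearing in KKL, since these differ by $\mu_t$-dependent factors that must be tracked through the shell truncation; and (ii) the geometry of the Fubini/coarea step, where the Jacobian of the change of variables $(\mu_0,t)\mapsto\mu$ is bounded but not identically $1$, and the segments emanating from $\delta_a$ foliate the simplex only after one checks the bound is uniform over the base $\mu_0$ in the truncated region.
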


\begin{theorem}
\label {t:kalai_aggreagtion}
For every $q$ there exists a constant $C = C(q)$ for which the following holds for every $0 < \eps < 1/3$.
Let $\mu \in \Delta(q)$ and $i \in [q]$ satisfy that
\[
\mu(i) > \max_{j \neq i} \mu(j) + C (\log(1-\eps) - \log(1/q)) \frac{\log \log n}{\log n}.
\]
Then for every fair monotone symmetric function $f : [q]^n \to [q]$ it holds that
\[
\mu[f = i] \geq 1-\eps.
\]
\end{theorem}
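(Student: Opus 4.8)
The goal is to deduce Theorem~\ref{t:kalai_aggreagtion} from the already-available KKL-type statement for symmetric monotone functions (Theorem~\ref{thm:kalai_mossel}) together with a Russo-type derivative identity adapted to the multi-alternative setting. The strategy mirrors the two-alternative argument: show that along a suitable one-parameter family of product measures starting from a measure where alternative $i$ is favored, the probability $\P_\mu[f=i]$ is monotone and has large derivative in the transition region, so it must exit that region quickly.

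\textbf{Step 1: A one-dimensional family of measures.} Fix $i \in [q]$ and a measure $\mu$ as in the hypothesis. I would interpolate between the uniform measure (or a measure where all atoms are roughly equal) and $\mu$, or more precisely set up a path $\mu_t$ in $\Delta(q)$ along which the weight on $i$ increases and all other weights decrease proportionally, parametrized so that $\mu_t^\ast$ stays bounded below. Because $f$ is \emph{fair} (equivariant under $S(A)$), at the symmetric starting point the event $\{f=j\}$ has probability $1/q$ for every $j$; this is the analogue of $\E_{1/2}[f]=0$ in the Boolean case. The key monotonicity input is that $\P_{\mu_t}[f=i]$ is nondecreasing in $t$: this follows from monotonicity of $f$ in the order $\leq_i$, exactly as Russo's formula gives monotonicity in the Boolean case. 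I would record a multi-alternative Russo formula expressing $\frac{d}{dt}\P_{\mu_t}[f=i]$ as a nonnegative combination of ``pivotality'' probabilities --- the probability that changing coordinate $j$ to $i$ changes the outcome to $i$ --- summed over coordinates $j$.

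\textbf{Step 2: Applying the KKL bound in the bad region.} The derivative in Step 1 is, up to constants depending on $\mu_t^\ast$ and $q$, a sum of influence-type quantities $\sum_j I_j^{(i)}(f)$ under $\P_{\mu_t}$. On the region where $\eps \le \P_{\mu_t}[f=i] \le 1-\eps$, Theorem~\ref{thm:kalai_mossel} (applied to the indicator of $\{f=i\}$, whose variance is then $\ge \eps(1-\eps)$, using symmetry of $f$) forces every such influence, hence the whole sum via transitivity, to be at least of order $\eps \log\log n / \log n$ ---wait, in the direction we need, the sum of influences is $\gg 1/\log n$ up to the $\log\log n$ loss; more carefully, Theorem~\ref{thm:kalai_mossel} says the Lebesgue measure of the set of $\mu$ in the transition region is $O\bigl((\log(1-\eps)-\log\eps)\log\log n/\log n\bigr)$. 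So rather than integrating the derivative directly, I would instead argue: if $\P_{\mu}[f=i] < 1-\eps$ while $\mu(i) - \max_{j\ne i}\mu(j)$ is as large as assumed, then by monotonicity every measure on the segment from the symmetric point to $\mu$ with $i$-advantage at least that of the true transition point lies in the transition region, and the length of that segment (in the $i$-advantage coordinate, which is comparable to Lebesgue measure along the path) exceeds the $O(\log\log n/\log n)$ bound of Theorem~\ref{thm:kalai_mossel}. That contradiction yields $\P_\mu[f=i]\ge 1-\eps$.

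\textbf{Main obstacle.} The delicate point is the bookkeeping that translates Theorem~\ref{thm:kalai_mossel}, which is phrased as a bound on the $\gamma$-measure of the transition set in $\Delta(q)$, into a statement along the specific one-parameter path $\mu_t$: one must ensure the path is ``spread out'' relative to $\gamma$ (a lower bound on the density of $\gamma$ restricted to a tube around the path, uniformly in $n$) and that the $i$-advantage parameter $\mu(i)-\max_{j\ne i}\mu(j)$ is Lipschitz-comparable to arclength, so that a long interval of $t$ in the transition region forces large $\gamma$-measure. Handling the proportional rescaling of the non-$i$ atoms so that $\mu_t^\ast$ (which enters the constant $C(q)$) stays bounded below along the whole path, and checking that monotonicity in $\leq_i$ really does give $\frac{d}{dt}\P_{\mu_t}[f=i]\ge 0$ for this particular non-axis-aligned direction, are the places where care is needed; everything else is a routine adaptation of the Boolean Russo/KKL argument.
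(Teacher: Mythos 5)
The paper does not prove Theorem~\ref{t:kalai_aggreagtion}: it remarks that both Theorem~\ref{thm:kalai_mossel} and this statement ``are not too hard to generalize from the proof of the KKL Theorem'' and defers to an external reference, so there is no in-text proof to compare against. Evaluating your proposal on its own merits, there is a genuine gap, and it is exactly at the point you flagged as the ``main obstacle.''

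You try to deduce the result from Theorem~\ref{thm:kalai_mossel} as a black box. But Theorem~\ref{thm:kalai_mossel} is a bound on the $\gamma$-measure (i.e.\ $(q-1)$-dimensional volume) of the transition set $T=\{\nu: \eps' \le \P_\nu[f=a]\le 1-\eps'\}$ in the simplex, whereas what you need is a bound on the \emph{length of a one-dimensional segment} lying inside $T$. A set of tiny $\gamma$-measure can certainly contain a long line segment, so the implication does not follow. Your proposed fix --- a lower bound on the $\gamma$-density near the path --- does not close the gap, because the issue is not the density of $\gamma$ but whether a \emph{tube} around the path lies inside $T$; that is a property of the function $\nu\mapsto \P_\nu[f=i]$ (the geometry of $T$ transverse to the path), not of the reference measure $\gamma$. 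Monotonicity of $\P_\nu[f=i]$ in $\preceq_i$ tells you the path is in $T$, but gives no control of $T$ in the transverse directions. There is also a confusion earlier in Step~2: you first say Theorem~\ref{thm:kalai_mossel} ``forces every such influence \ldots to be at least of order \ldots'' --- but that theorem is a volume statement and does not directly yield an influence lower bound; the influence bound is the KKL-type ingredient that would sit \emph{underneath} Theorem~\ref{thm:kalai_mossel}, not a consequence of it. Finally, the straight segment from the uniform measure to $\mu$ is generally \emph{not} $\preceq_i$-monotone (some $\mu(j)$ may exceed $1/q$), so ``interpolate from the uniform measure'' needs to be replaced by a genuinely monotone path.

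The argument that does work is the direct one, and it reverses the logical order you propose: Theorem~\ref{thm:kalai_mossel} is a corollary of the pathwise estimate, not an input to it. Concretely: (i) choose $\mu_1 \preceq_i \mu$ by lowering $\mu(i)$ and raising $\mu(j_0)$ (where $j_0$ attains $\max_{j\ne i}\mu(j)$) until they tie, so $\mu_1(i)=\mu_1(j_0)$ are both maximal and the segment from $\mu_1$ to $\mu$ is $\preceq_i$-monotone of length $\Theta\bigl(\mu(i)-\max_{j\ne i}\mu(j)\bigr)$; (ii) observe via fairness and the $(i\,j_0)$-swap symmetry that $\P_{\mu_1}[f=i]\ge 1/q$, and by $\preceq_i$-monotonicity this lower bound persists along the segment, so if $\P_\mu[f=i]<1-\eps$ the whole segment has $\P_{\mu_t}[f=i]\in[1/q,\,1-\eps)$; (iii) record the multi-alternative Russo formula expressing $\tfrac{d}{dt}\P_{\mu_t}[f=i]$ as a nonnegative sum of pivotality probabilities; (iv) lower bound that sum by a KKL-type inequality for the indicator $1_{\{f=i\}}$ under the biased product measure $\mu_t^{\otimes n}$, using symmetry/transitivity, where the constant degrades as $\mu_t^*$ shrinks --- this degradation is where the $\log\log n$ loss enters, since one must stop the path when some atom is as small as $1/\mathrm{polylog}(n)$ and handle the tail separately; (v) integrate the differential inequality to bound the segment length by $C(\log(1-\eps)-\log(1/q))\log\log n/\log n$, contradicting the hypothesis. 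This is ``the same proof as KKL'' as the text says, and Theorem~\ref{thm:kalai_mossel} then follows by slicing the simplex into such monotone fibers and integrating the pathwise bound.
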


The results above establishes sharp threshold for symmetric functions
as it shows that for almost all probability measures $f$ takes one
specific value with probability at least $1-\eps$.
\begin{remark}
It is interesting to compare the results established here to those
of~\cite{FriedgutKalai:96}. For $|A|=2$ our results give a threshold
interval of length $O(\log \log n/\log n)$ compared to the results
of~\cite{FriedgutKalai:96} which give threshold interval of length
$O(1/\log n)$. The example of the tribes function shows that a better bound is impossible. 
It is natural to conjecture that the threshold is always of measure $O(1/\log n)$.
\end{remark}

\subsection{Applications to Majority/Plurality Dynamics}
An interesting application of the theorems above is to Majority and Plurality dynamics \cite{MoNeTa:14}. We will concentrate on Majority dynamics.

Consider a graph $G$ where all the degree are odd and an assignment of opinions 
$X_v(0)$ for nodes of the graph. Consider the following dynamics, where
$X_v(t+1) = maj(X_w(t) : w \sim v)$. 

It is natural to ask a few questions:
{\em Does this process converge?} It is easy to see that the answer to this question is no. 
This can be seen for example in the case where $G$ is bi-partite graph where the initial opinions on one side are $+$ and on the other $-$. 

Interestingly a finer notion of convergence is known to hold \cite{GolesOlivos:80} 
\begin{proposition}
For each $v$, the sequence $X_{v}(2t)$ converges. 
\end{proposition}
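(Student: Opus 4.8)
The plan is to prove the classical result of Goles and Olivos that synchronous majority dynamics on a graph with all degrees odd is eventually $2$-periodic, so that in particular $X_v(2t)$ converges for every $v$. The standard approach is via a monovariant (Lyapunov function) argument, and I would follow the original energy-decrease idea. First I would set up the energy functional
\[
E(t) = \sum_{u \sim v} \mathbbm{1}\{X_u(t) \neq X_v(t+1)\},
\]
where the sum is over ordered pairs (or equivalently over edges counted with a natural orientation in time), counting disagreements between the configuration at time $t$ and the configuration at time $t+1$. The key claim is that $E(t)$ is nonincreasing in $t$, and more precisely that $E(t) - E(t+1) \geq 0$ with equality only once the dynamics has entered its eventual cycle.

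The main step is the local computation showing $E(t+1) \leq E(t)$. For a fixed vertex $v$, one compares the contribution to $E(t)$ coming from edges at $v$ — namely $\sum_{u \sim v}\mathbbm{1}\{X_u(t) \neq X_v(t+1)\}$ — with the contribution to $E(t+1)$ — namely $\sum_{u \sim v}\mathbbm{1}\{X_u(t+1) \neq X_v(t+2)\}$. Here one uses that $X_v(t+1) = \maj(X_u(t) : u \sim v)$ and $X_v(t+2) = \maj(X_u(t+1) : u\sim v)$, together with the fact that the degree of $v$ is odd so the majority is always strictly decided (no ties). The crucial inequality is that for an odd-degree vertex, if we write $d = \deg(v)$ and let $m = \#\{u \sim v : X_u(t) \neq X_v(t+1)\} \leq (d-1)/2$, then the number of neighbors disagreeing with $X_v$ at the next time step is controlled; summing the telescoping contributions over all vertices and using the symmetry of the disagreement count (each edge $\{u,v\}$ contributes symmetrically when we sum over both endpoints) yields $E(t+1)\le E(t)$. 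I would carry out this local inequality carefully: it is the heart of the argument and the place where ``all degrees odd'' is essential.

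Since $E(t)$ is a nonnegative integer that is nonincreasing, it is eventually constant, say $E(t) = E^\ast$ for all $t \geq t_0$. The final step is to show that once $E$ is constant, the configuration is $2$-periodic: $X_v(t+2) = X_v(t)$ for all $v$ and all $t \geq t_0$. This follows by inspecting the equality case of the local inequality above — equality forces, at each odd-degree vertex, that the set of neighbors disagreeing with $v$ ``propagates'' consistently, and a short combinatorial argument (again using oddness to rule out ties) shows the only way equality can persist is if $X_v(t+2) = X_v(t)$. Consequently $X_v(\cdot)$ restricted to even times is eventually constant, which is exactly the claim. The main obstacle I anticipate is getting the local energy-decrease inequality and its equality characterization exactly right — in particular handling the bookkeeping of oriented edges so that the per-vertex contributions sum correctly to $E(t)$, and making the ``oddness rules out ties'' step fully rigorous rather than hand-waved. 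Everything after that (integrality, eventual constancy, extraction of $2$-periodicity) is routine.
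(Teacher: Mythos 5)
Your plan uses the same Lyapunov functional as the paper: the paper's $L_t=\sum_{u\sim v}(X_v(t+1)-X_u(t))^2$ is, up to a factor of $4$, exactly your $E(t)=\sum_{u\sim v}\mathbbm{1}\{X_u(t)\ne X_v(t+1)\}$, and both arguments hinge on showing this nonnegative integer quantity decreases and on extracting eventual $2$-periodicity from the equality case. So the idea is right and matches the paper.

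The one place where the plan, as literally written, would misfire is exactly the local step you flagged as the anticipated obstacle. You propose to compare, vertex by vertex,
\[
a_v(t)\;:=\;\sum_{u\sim v}\mathbbm{1}\{X_u(t)\ne X_v(t+1)\}
\quad\text{with}\quad
a_v(t+1)\;:=\;\sum_{u\sim v}\mathbbm{1}\{X_u(t+1)\ne X_v(t+2)\},
\]
but $a_v(t+1)\le a_v(t)$ is false in general, so summing it would not give $E(t+1)\le E(t)$. The correct per-vertex comparison requires first writing $E(t)$ the other way by swapping the roles of $u$ and $v$ in the double sum over ordered pairs:
\[
E(t)\;=\;\sum_v\sum_{u\sim v}\mathbbm{1}\{X_u(t)\ne X_v(t+1)\}\;=\;\sum_v b_v(t),
\qquad
b_v(t)\;:=\;\sum_{u\sim v}\mathbbm{1}\{X_v(t)\ne X_u(t+1)\},
\]
and then comparing $a_v(t+1)$ with $b_v(t)$. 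Here odd degree enters: $X_v(t+2)=\maj(X_u(t+1):u\sim v)$ with no ties forces $a_v(t+1)\le (d_v-1)/2<d_v/2$; if $X_v(t+2)=X_v(t)$ then $a_v(t+1)=b_v(t)$, while if $X_v(t+2)\ne X_v(t)$ then $b_v(t)=d_v-a_v(t+1)>d_v/2>a_v(t+1)$. Summing over $v$ gives $E(t+1)\le E(t)$, with equality iff $X_v(t+2)=X_v(t)$ for every $v$ --- which also turns your somewhat circular ``equality only once the dynamics is in its cycle'' into the clean exit condition you need. This reindexing is the same trick hidden in the paper's computation $L_t-L_{t-1}=-J_t$, where the sign $J_v(t)\ge 0$ again comes from strict majority at an odd-degree vertex. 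In short: same approach, but the per-vertex comparison to carry out is $a_v(t+1)$ vs.\ $b_v(t)$, not $a_v(t+1)$ vs.\ $a_v(t)$.
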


\begin{proof}
Let 
\[
J_v(t) = (X_v(t+1)-X_v(t-1)) \sum_{w \sim v} X_w(t), \quad J(t) = \sum_v J_t(v)
\]
We first note that $J_v(t) \geq 0$ and $J_v(t) = 0$ if and only if $X_v(t+1)=X_v(t-1)$.
From this we conclude that $J_t \geq 0$ and $J_t = 0$ if and only if 
$X_v(t+1)=X_v(t-1)$ for all $v$.
Thus it suffices to show that $J_t =0$ from some point on. Also note that if $J_t = 0$ then 
$X_v(t+2i-1) = X_v(t-1)$ for all $i$. 

For this let $L_t = \sum_{u \sim v} (X_{v}(t+1)-X_u(t))^2$ be a measure of disagreement. 
We claim that $L_t - L_{t-1} = -J_t$. Note that this implies that $L_t$ is decreasing and therefore at some point we will have to have $J_t = 0$ as needed. 
It is straightforward to check that $L_t - L_{t-1} = -J_t$.
\end{proof} 

Let us write $X_v = \lim_{t \to \infty} X_{v}(2t)$. 
Given that this process converges, it is natural to ask if it retains information in the following sense: Suppose that $X_v(0)$ are chosen i.i.d. $p \delta_1 + (1-p) \delta_{-1}$ for $p > 1/2$. What is the probability that the information is retained, i.e.,
what is 
\[
\IP_p[maj(X_v) = +]?
\]
KKL Theorem allows us to answer this question in the special case where the graph is transitive.

 
 \ignore{
 \subsection{A different proof of monotonicity}
 
We consider a product space $A^n$ where is a linearly ordered set. 
We let $T_t$ as before denote the tensor of the simple operators. 
We then have that 
\[
\frac{d}{dt} T_f f = 
\sum_{i=1}^n \frac{d}{dt_i} T_f f = 
- \sum_{i=1}^n T_f f - E[T_t f | x_{-i}]
\]
Therefore 
\[
\frac{d}{dt} E[g T_f f] = 
- \sum_{i=1}^n E[E[g(T_f f - E[T_t f | x_{-i}])]]
\]
If $f$ is monotone then so is $Tf$. Therefore if $f$ and $g$ are both monotone then restricted to the $i$'th coordinate (that is conditioning on $x_{-i}$) we see that the expression in inner expectation is a product of two monotone functions the second of which has expected value $0$. We thus obtain 
\[
E[g(T_f f - E[T_t f | x_{-i}])] ] \geq 0,
\]
and the proof follows. 

The same kind of proof can be generalized further. Consider a monotone spin system and the Glauber dynamics run up to time $t$ on it. We wish to show that $\E[f T_t g]$ is monotone in $t$ if $f$ and $g$ are monotone with respect to the partial order of the lattice. 

To prove the claim consider a more general dynamics denoted by $T_{t1,\ldots,tn}$. 
We will show that $\E[g T_{t1,\ldots,tn} f]$ is monotone with respect to $t1$ and therefore with respect to each of the $ti$'s.

Writing $\bar{t}$ for $(t_1,\ldots,t_n)$, we compute that 
\[
\frac{d}{dt_1} \E[g T_{t1,\ldots,tn} f] = t_1 \int_0^1 \E[g T_{s \bar{t}} L_1 T_{(1-s) \bar{t}} f] ds, 
\]
where $L_1 g = E[g | x_{-1}] - g$. Note in particular, that conditioned on $x_{-1}$, 
$L_1 g$ has expected value $0$ and is monotone in the opposite direction of $g$. 
Since $T$ preserved monotonicity the proof is complete.

{\bf State and Prove the KKL Theorem}

\section{Friedgut and KKL Theorem}
We will first prove the following lemma:
\begin{lemma} \label{lem:concentration}
Let $f : \{-1,1\}^n \to \{-1,1\}$ and $\eps > 0$. Let $d = \eps^{-1} \max(10,2 \sum I_i(f))$ and let 
\[
J = \{ i : I_i(f) \geq {64}^{-d} \}.
\]
Then 
\[
\sum_{S : |S| > d} \hat{f}^2(S) + \sum_{S : |S| \leq d, S \cap J^c \neq \emptyset} \hat{f}^2(S) \leq \eps.
\]
\end{lemma} 

\begin{proof}
We will bound both terms by $\eps/2$. The first one is easy: 
\[
\sum_{S : |S| > d} \hat{f}^2(S) \leq \frac{1}{d} \sum_{S} |S| \hat{f}^2(S) \leq \eps/2. 
\]
For the second term, write $c(d) = 64^{-d}$. 
\begin{eqnarray*}
 \sum_{S : |S| \leq d, S \cap J^c \neq \emptyset} \hat{f}^2(S) &\leq& \sum_{S : |S| \leq d} |S \cap J^c| \hat{f}^2(S) \\ 
 &\leq&  \eta^{-2d+2}  \sum_{S } |S \cap J^c| \hat{f}^2(S) \eta^{2|S|-2} \\ 
 &=&  \eta^{-2d+2} \sum_{i \in J^c} \| T_{\eta} (\partial_i f)\|_2^2
\end{eqnarray*}
We now apply the hyper-contractive inequality to conclude that 
\[
\| T_{\eta} (\partial_i f)\|_2^2 \leq \| \partial_i f \|_{1+\eta^2}^2 = I_i(f)^{2/(1+\eta^2)}
\]
In particular, we may take $\eta = 2^{-1/2}$ to obtain the bound
\[
\leq 2^{d} \sum_{i \in J^c} I_i(f)^{4/3} \leq 2^{d} c(d)^{1/3} \sum_{i \in J^c} I_i(f) \leq d 2^{d} c(d)^{1/3}  \leq d 4^{-d} \leq \eps/2.
\]
\end{proof} 

We can now prove the KKL Theorem. 

\begin{proof}[Proof of Theorem~\ref{thm:KKL}]
Let $c$ be a small constant to be determined later. If $\sum_{i=1}^n I_i(f) \geq c \Var[f] \log n$, then there exists a variable $i$
with $I_i(f) \geq c \Var[f] \log n / n$ and the proof follows. So assume that $\sum_{i=1}^n I_i(f) < c \Var[f] \log n$ and apply the lemma with
$\eps = \Var[f]/4$. 
For sufficiently small $c$, $d = \eps^{-1} \max(10,2 \sum I_i(f)) \leq 0.001 \log n$ and therefore 
\[
|J| \leq 64^d \sum_{i=1}^n I_i(f) \leq \sqrt{n}.
\]
the set $J$ is of size at most $n^{1/2}$. On the other hand:
\[
\sum_{i \in J} I_i(f) \geq \sum_{S \subset J} |S| \hat{f}^2(S) \geq \sum_{S \subset J} \hat{f}^2(S) \geq \Var[f]/2.
\]
But this implies there exists $i \in J$ with $I_i(f) \geq \Var[f]/2n^{1/2}$ so the proof follows. 
\end{proof}

\begin{theorem} \label{thm:Fri}[\cite{Friedgut:98}]
There exists a constant $C$ such that for every $f : \{-1,1\}^n \to \{-1,1\}$ there exists a $g : \{-1,1\}^m \to \{-1,1\}$ with 
$\IP[f \neq g] \leq \eps$ and $m \leq \exp(\frac{C}{\eps} \sum I_i(f))$.
\end{theorem}

\begin{proof}
Take $J$ from Lemma~\ref{lem:concentration}. Then again
\[
|J| \leq 64^d \sum_i {I_i(f)} \leq \exp(C \sum_{i=1}^n I_i(f) / \eps)
\]
Moreover, if $h$ is defined by 
\[
h(x) = \sum_{S \subset J} \hat{f}(S) x_S
\]
Then $ \IE[(f-h)^2] \leq \eps$ and $h$ depends only on the coordinates of $J$. Note that if $g$ is the function that minimizes 
$\IE[(f-g)^2]$ then $g = \sgn(\IE[f | (x_i : i \in J)])$ is Boolean and therefore we obtain that $ \IE[(f-g)^2] \leq  \IE[(f-h)^2] \leq \eps$.
\end{proof}
}
 
\section{Aggregation for non-product measures}

In this section we ask the following question: can aggregation results such as the Jury Theorem and the KKL theorem be extended to non-product measures. This section provides some answers to this question. The section is based on ~\cite{HaKaMo:06} 
 
To describe a more general settings consider the following
framework. Let $f:\{0,1\}^n \to \{0,1\}$ be a Boolean function.
We will assume that $f$ is 
\begin{itemize}
\item
monotone non-decreasing, i.e., 
\[
\left( \forall i: \,\,x_i \geq y_i \right) 
\implies f(x_1,\ldots,x_n) \geq f(y_1,\ldots,y_n), 
\]
\item
and anti-symmetric, i.e., 
$$f(1-x_1,1-x_2,\dots,1-x_n)=1-f(x_1,x_2,\dots,x_n).$$
\end{itemize} 

The purpose of this section is to study extensions of the 
weak law of large numbers 
in the context of general probability distributions. 
Let $\mu$ be a probability distribution on $\cube$.
When $\mu$ is not a product measure
the notion of influence can be extended in different way compared to
the above.
Define the {\it effect} of the $k$'th variable on the Boolean function
$f$ as the difference between the expected value of 
$f(x_1,\dots,x_n)$ conditioned on $x_k=1$ 
and the expected value of $f(x_1,\dots,x_n)$ conditioned on $x_k=0$, and 
denote by $e_k^{\mu}(f)$ the effect of the $k$'th variable for
the Boolean function $f$, w.r.t. the distribution $\mu$. 
More precisely,
\begin{equation} \label{eq:efdef}
e_k^{\mu}(f) =  
\mu[f(X_1,\ldots,X_n) | X_k = 1] - \mu[f(X_1,\ldots,X_n) | X_k = 0].
\end{equation}
The effect is undefined if the probability for $X_k=1$ is $1$ or $0$.
Writing $\mu[X_k] = p$ and $Y_k = X_k - p$, we get
\begin{eqnarray*}
\Cov_{\mu}[f(X_1,\ldots,X_n),X_k] &=& \mu[f(X_1,\ldots,X_n) Y_k] \\ &=&
p \mu[(1-p) f | X_k = 1] + (1-p) \mu[-p f | X_k = 0] \\ &=& 
p(1-p) e_k^{\mu}(f)
\end{eqnarray*} 
so that the effect may be interpreted as a normalized form of 
the correlation between the individual vote and the election's outcome.

When $\mu$ represent a product probability measure, the effect (\ref{eq:efdef}) and the 
influence  coincide, but in general this is not the case.
For instance, for general $\mu$ the effect may be negative (an example will appear later) while the influence is of course always non-negative. 

It is not true that for general probability distributions
and general $f$, small influences
implies aggregation of information. Our main result is that small effects
implies aggregation of information for 
the particular case of weighted majority functions.
Moreover, the bounds for weighted majorities are rather realistic. 

We call monotone antisymmetric function $f$ a {\em weighted majority} function  
if there exists  non-negative weights $w_1,\dots,w_n$, not all zero such 
that $f(x_1,\dots,x_n)=1$ if $\sum_{i=1}^n w_i (2 x_i - 1) > 0$ and 
and $f(x_1,\dots,x_n)= 0$ if $\sum_{i=1}^n w_i (2 x_i - 1) < 0$.
If $n$ is odd and $w_i=1$ for every $i$, $f$ is called the majority 
function (or simple majority). 

Note that in our definition of a weighted majority function, 
if $\sum w_i (2 x_i - 1) = 0$ then 
the value of $f(x)$ may be either $0$ or $1$ as long as $f$ is monotone and 
anti-symmetric. This is different from the traditional definition of a 
weighted 
majority (or threshold) 
function where $f(x)=1$ iff $\sum w_i (2 x_i - 1) > 0$ 
and $f(x) = 0$ iff $\sum w_i (2x_i - 1) < 0$.

Thus for example, any monotone anti-symmetric 
function $f : \cube \to \{0,1\}$ 
satisfying $f(x) = 1$ when $x_1 = x_2 = 1$ and 
$f(x) = 0$ when $x_1 = x_2 = 0$ is a weighted majority function (taking
$w_1 = w_2 = 1$ and $w_3 = \cdots = w_n  = 0$) according to our definition.

The above example demonstrates that under our definition of weighted majority 
functions, there are at least $2^{2^{n-2}}$ weighted majority functions. 
Under the traditional definition the number of weighted majority 
functions is at most $2^{n^2}$ \cite{CoatesLewis:64,RSOK:91}.   

Of particular interest are voting schemes 
where all the voters have the same power. 
One such case is when $f$ is invariant under a transitive group of 
permutations. In other words there exists a group of permutation 
$\Gamma \subset S_n$ such that 
$f(x_1,\ldots,x_n) = f(x_{\sigma(1)},\ldots,x_{\sigma(n)})$ for all 
$\sigma \in \Gamma$ and for all $1 \leq i,j \leq n$ there exists 
$\sigma \in \Gamma$ such that $\sigma(i) = j$; here $S_n$ denotes
the full permutation group on $n$ elements. 
One instructive example is 
the simple majority function when $n$ is odd which is invariant under $S_n$;
another is the recursive majority function $RM_{k,\ell}$
which is defined for $n=k^{\ell}$ where $k$ is odd. The definition is by 
induction. $RM_{k,1}$ is just the majority function on $k$ bits and 
\[
RM_{k,\ell+1}(x_1,\ldots,x_{k^{\ell+1}}) = 
RM_{k,1} \left(RM_{k,\ell}(x_1,\ldots,x_{k^{\ell}}),\ldots,
RM_{k,\ell}(x_{k^{\ell} - k^{\ell-1}+1},\ldots,x_{k^{\ell}}) \right).
\] 

\begin {theorem}
\label {t:hkm}
$\mbox{ }$
\begin{description}
\item{\rm(a)}
For every $p > \frac{1}{2}, \epsilon >0$ 
there is $\delta = \delta (p,\epsilon) > 0$ 
such that for every weighted 
majority function $f$ and any distribution 
$\mu$ on $\cube$, if
$e_k^{\mu}[f] \leq \delta$ and $\mu[X_k = 1] \geq p$ for all $k$ 
then $\mu[f] \geq 1 - \eps$. 

In other words, if the effect of each
variable is at most $\delta$ and the probability that each variable is
$1$ is at least $p$, then $f=1$ with
$\mu$-probability at least $1-\eps$.  
\item{\rm(b)}
If $f$ is a monotone anti-symmetric function but not a 
weighted majority function, 
then there exists
a probability distribution $\mu$ such that $\mu[X_k = 1]>1/2$ 
for all $k$, yet $\mu[f] = 0$ and $e_k^{\mu}(f)=0$ for all $k$.

In other words, if $f$ is not a weighted majority function, then there
is a probability measure $\mu$ for which $f=0$ with $\mu$-probability
$1$, yet $\mu[X_k = 1] > \frac{1}{2}$ for all $k$. (Since $f$ is constant
according to the measure $\mu$, all the effects are $0$ in this case.)
\item{\rm(c)}
If $f$ is monotone anti-symmetric and invariant under a transitive group,
but is not the (simple) majority function, then then there exists 
a probability distribution $\mu$ such that $\mu[X_k = 1]>1/2$ 
for all $k$, yet $\mu[f] = 0$ and $e_k^{\mu}(f)=0$ for all $k$.
\end{description}
\end {theorem}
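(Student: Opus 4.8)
\emph{Part (a).} The plan is to normalize the weights of $f$ so that $\sum_i w_i = 1$ and to set $S = \sum_i w_i(2X_i-1)$, $p_i = \mu[X_i=1]$ (so $p \le p_i < 1$, the upper bound being forced by the effect being defined), and $b_i = \mu[f = 1 \mid X_i = 0]$. The argument then rests on two elementary facts. Since $f$ is the weighted majority with weights $(w_i)$, $f(x) = 0$ forces $S(x) \le 0$, i.e.\ $\sum_i w_i(1-x_i) \ge \tfrac12$; taking $\mu$-expectations in $\mathbf 1_{\{f=0\}} \le 2\sum_i w_i (1-X_i)\mathbf 1_{\{f = 0\}}$ gives $\mu[f=0] \le 2\sum_i w_i \,\mu[X_i = 0,\, f = 0] = 2\sum_i w_i(1-p_i)(1-b_i) \le 2\sum_i w_i(1-b_i)$. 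On the other hand $f(x) = 0 \Rightarrow S(x)\le 0$ while $f(x) = 1 \Rightarrow 1-f(x) = 0$, so $S(1-f)\le 0$ pointwise and hence $\mu[S(1-f)] \le 0$; conditioning on $X_i$ and substituting $\mu[f=1\mid X_i = 1] = b_i + e_i^{\mu}(f)$ yields the identity $\mu[S(1-f)] = \sum_i w_i\big[(2p_i-1)(1-b_i) - p_i\, e_i^{\mu}(f)\big]$. Combining, $\sum_i w_i(2p_i-1)(1-b_i) \le \sum_i w_i p_i\, e_i^{\mu}(f) \le \delta$, and since $2p_i - 1 \ge 2p-1 > 0$ this gives $\sum_i w_i(1-b_i) \le \delta/(2p-1)$, whence $\mu[f=0]\le 2\delta/(2p-1)$. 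Choosing $\delta = \tfrac12(2p-1)\epsilon$ proves (a); only the effects and the value $p$ enter, and the weighted-majority structure is used exactly once, to pass from $f=0$ to $S\le 0$.

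\emph{Part (b).} Let $L = f^{-1}(0)$ and consider the polytope $K = \mathrm{conv}\{\,2x - \mathbf 1 : x \in L\,\} \subset \mathbb{R}^n$. A probability measure supported on $L$ satisfies $\mu[f] = 0$, and the condition ``$\mu[X_k = 1] > \tfrac12$ for all $k$'' says precisely that $\mathbb{E}_\mu[2X - \mathbf 1]$ lies in the open orthant $\mathbb{R}^n_{>0}$; since that expectation is an arbitrary point of $K$, it suffices to show $K \cap \mathbb{R}^n_{>0} \ne \emptyset$. If it were empty, I would separate the polytope $K$ from the open convex set $\mathbb{R}^n_{>0}$ by a hyperplane; boundedness of the separating functional over $\mathbb{R}^n_{>0}$ forces its normal $w$ to be coordinatewise nonnegative (and nonzero), and then $\langle w, 2x - \mathbf 1\rangle \le 0$ for every $x \in L$. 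By anti-symmetry of $f$ this is the same as $\langle w, 2x - \mathbf 1\rangle \ge 0$ for every $x \in f^{-1}(1)$, so $f$ is the weighted majority with weight vector $w$ in the boundary-permissive sense of the paper, contradicting the hypothesis. Hence pick $v \in K \cap \mathbb{R}^n_{>0}$, write $v = \sum_j \lambda_j (2x^{(j)} - \mathbf 1)$ with $x^{(j)} \in L$, $\lambda_j \ge 0$, $\sum_j \lambda_j = 1$, and set $\mu = \sum_j \lambda_j \delta_{x^{(j)}}$; moving $v$ slightly into the relative interior of $K$ also makes each $v_k < 1$ (a coordinate pinned at $1$ would mean every $x\in L$ has $x_k = 1$, forcing $f = \neg x_k$, which is not monotone), so all effects are defined and equal $0$ because $f \equiv 0$ $\mu$-a.s.

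\emph{Part (c).} I would reduce this to (b). Suppose $f$ is invariant under a transitive $\Gamma$, monotone and anti-symmetric, and suppose further $f$ is a weighted majority, say with weights $w \ge 0$, $w \ne 0$. Invariance means each permuted vector $(w_{\sigma(i)})_i$ also represents $f$, and one checks that the average $\bar w = |\Gamma|^{-1}\sum_{\sigma \in \Gamma}(w_{\sigma(i)})_i$ still represents $f$ in the permissive sense; transitivity makes $\bar w$ a positive constant vector, so $f$ is the equal-weights (``simple'') weighted majority. When $n$ is odd this is exactly the simple majority function, excluded by hypothesis. When $n$ is even, $f$ coincides with majority off the tie set $E = \{x : \#\{i : x_i = 1\} = n/2\}$ and restricts there to a $\Gamma$-invariant anti-symmetric $\{0,1\}$-valued function; this is impossible, because for a transitive $\Gamma$ on an even set some $x \in E$ has its complement $\mathbf 1 - x$ in its own $\Gamma$-orbit, forcing $f(x) = f(\mathbf 1 - x) = 1 - f(x)$. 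Therefore a $\Gamma$-invariant monotone anti-symmetric $f$ that is not the simple majority is not a weighted majority at all, and part (b) furnishes the required $\mu$.

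\emph{Main obstacle.} The computation in (a) — the identity for $\mu[S(1-f)]$ — is routine, and (b) is a clean separation argument. The two delicate points are, in (b), making the separating hyperplane produce a genuine weighted-majority representation in the paper's boundary-permissive sense (and securing a point of $K$ with every coordinate strictly inside $(0,1)$), and, above all in (c), the finite group-theoretic fact that a transitive permutation group on an even number of points always contains some middle-layer point together with its complement inside a single orbit; establishing that lemma — or finding a route around it — is the step I expect to demand the most care.
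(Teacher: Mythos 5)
Your treatment of parts (a) and (b) is correct and is essentially the paper's argument in different clothing. For (a), the paper (Lemma~\ref{lem:prob}) packages the same two pointwise facts (that $f=0$ forces the weighted sum below its threshold, and that effects are normalized covariances) into a single inequality for $\mu\big[(\sum_i w_i(p_i - X_i))(1-f)\big]$, which both lower-bounds the quantity via $\mu[f=0]$ and upper-bounds it via the effects in one stroke; you split the same content into two steps and your final $\delta$ matches. For (b), the paper runs the argument through fractional covers of the hypergraph $H = \{S : f(x_S)=0\}$ and LP duality between $\tau^*(H)$ and the maximal $\sum w_i$; your convex-separation argument with $K = \mathrm{conv}\{2x-\mathbf 1 : x \in f^{-1}(0)\}$ and the open orthant is exactly the geometric form of that duality (the condition $K \cap \mathbb R^n_{>0} \neq \emptyset$ is equivalent to $\tau^* < 2$), and your care with the boundary ($v_k<1$, ruling out $f=\neg x_k$ via monotonicity and anti-symmetry) is needed and correctly handled.

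Part (c) is where there is a genuine gap. Your reduction to (b) is fine for odd $n$: averaging the weight vector over a transitive $\Gamma$ gives constant weights, and with no middle layer this forces $f$ to be the simple majority. But the even-$n$ group-theoretic claim — ``for a transitive $\Gamma$ on an even set some $x$ in the middle layer has $\mathbf 1 - x$ in its own orbit'' — is \emph{false}. Equivalently (as you can check on each cycle), $\gamma(S) = S^c$ for some middle-layer $S$ iff all cycles of $\gamma$ have even length, so your claim is that a transitive group of even degree always contains a nonidentity element all of whose cycles are even. Take $\Gamma = PSL(2,5) \cong A_5$ acting on $\mathbb P^1(\mathbb F_5)$, so $n=6$: its involutions have cycle type $(2,2,1,1)$ (every involution of $PSL(2,5)$ is diagonalizable over $\mathbb F_5$ since $-1$ is a square, hence fixes two points of $\mathbb P^1$), its order-$3$ elements have type $(3,3)$, and its order-$5$ elements have type $(5,1)$ — in no case are all cycles even. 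Consequently a $\Gamma$-invariant anti-symmetric tie-breaking rule on the middle layer \emph{does} exist, and the resulting $\Gamma$-invariant monotone anti-symmetric $f$ (equal-weight majority with that tie-break) even makes the conclusion of (c) literally fail: any $\mu$ supported on $f^{-1}(0) \subseteq \{\sum x_i \le 3\}$ has $\sum_k \mu[X_k=1] \le 3$, so one cannot have $\mu[X_k=1] > 1/2$ for all six $k$. The correct reading is that (c) is a statement about odd $n$ (or, equivalently, that ``the simple majority function'' should be read to include every tie-broken equal-weight majority for even $n$). The paper also sidesteps the whole reduction with a two-line direct argument worth noting: if $f$ is not the (tie-broken) majority then by anti-symmetry there is an $x$ with $f(x)=0$ and $\sum_i x_i > n/2$; take $\mu$ uniform on the $\Gamma$-orbit of $x$, and transitivity gives $\mu[X_k=1] = \sum_i x_i/n > 1/2$ for every $k$ with $\mu[f]=0$.
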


The rest of this section is organized as follows.
In subsection \ref {subsec:hkm_d} we will discuss the notions of aggregation of 
information, 
influences and effects for general probability distributions on $\cube$. 
We will try to examine what aggregation of information means 
when we do not suppose that the probability distribution for the 
voter's behavior is a product distribution. We also examine to 
what extent our technical notion of ``effects'' represent real 
influence in the non-technical sense of the words. 
Subsection \ref {subsec:hkm_p} contains the proof of our theorem  
and in subsection \ref {subsec:hkm_e} we present 
several natural problems as well as an 
example showing lack of aggregation for 
monotone functions even for the restricted class of FKG-distributions.

\subsection {Voting games, information aggregation and notions of influence}
\label {subsec:hkm_d}

Consider 
the following scenario. Every agent $k$ receives a single bit of
information $s_i$ which is either `Vote for Alice' or `Vote for Bob' and
these signals are independent. When Alice is the better candidate 
the probability of receiving  the signal `Vote for Alice' is
$p>1/2$. 
Condorcet's Jury Theorem deals
with the case that the voters vote precisely as the signal dictates and the
decision is made according to  the simple majority rule. It asserts that
for every $p>1/2$  the better candidate will be elected with 
probability tending to one. Thus the majority rule allows to reveal 
the actual state of the world from rather weak individual signals.

A major problem in the economic and political
interpretation of Condorcet's Jury Theorem and its extensions
arises from the fact that
the basic assumption of probability
independence among voters
is quite unrealistic.
Without the assumption of independence, Condorcet's Jury Theorem as stated
is no longer true, and it will no longer be the case that
when each individual votes for Alice with probability $p>1/2$,
Alice will win with a high probability.

To see this, consider the following
example. As before, we have an election between Alice and Bob and
Alice is the superior candidate. The distribution of signals
$s_1,s_2, \dots , s_n$ will be biased towards Alice as follows:
Let $p=1/2+\epsilon/2$, where $\epsilon$ is small.
First choose at random a number $t$ uniformly
in the interval $[\epsilon,1]$.
Then, independently for each $i$, 
choose  the $i$'th voter  signal $s_i$ to be `1' with
probability $t$ and `0' with probability $1-t$. Voters with $s_i=1$
will vote for Alice.
In this case, the probability
for each individual signal $s_i$ being `1' is $p$
but the individual signals are not independent. The probability that
Alice will win is below $\frac{1}{2(1-\eps)}$  for any
number of voters. This is because we can think of $t$ being chosen in two 
stages. First we toss a coin which is 'H' with probability $\epsilon/(1-\epsilon)$. If the 
coin is 'H', then $t$ is chosen uniformly in the interval $[1-\epsilon,1]$. 
This contributes to the probability that Alice wins at most 
$\epsilon/(1-\epsilon)$. If the coin is 'T' then 
$t$ is chosen uniformly in the 
interval $[\epsilon,1-\epsilon]$. Here by symmetry, Alice and Bob have the 
same chance of winning. Thus the contribution to the probability that Alice 
will win from this case is $\frac{1-2\epsilon}{2(1-\epsilon)}$. Thus the 
overall probability that Alice will win is at most 
$\frac{1-2\epsilon}{2(1-\epsilon)} + \frac{\epsilon}{1-\epsilon} = 
\frac{1}{2(1-\eps)}$.

An even more extreme example is the case in which all voters vote in the same 
way: With probability $p$ they all vote for Alice and with probability $1-p$ 
they all vote for Bob. Alice
will be elected with probability $p$ regardless of the number of
voters when the election is based on simple majority and for every
other simple game.

These simple examples will help us to
examine the notions of information aggregation and influence 
in the case when the assumption of probability independence is dropped.
The problem in these examples is not
in the way information aggregates but in
the quality of the information to start with.
This assertion can be formalized as follows.
Suppose that Alice and Bob are given an a-priori probability $1/2$ of being 
the superior candidate. We assume that the distribution of voters for Bob 
given Bob is the superior candidate is the same as the distribution of 
voters for Alice given that Alice is the superior candidate. Thus in the 
first example above if Bob is superior then we choose $t$ uniformly in 
$[\eps,1]$ and then each voter votes for Bob independently with 
probability $t$. In the second example if Bob is superior, then all voters
will vote for Bob with probability $p$.
 
We now wish to decide between the hypothesis that Alice is the superior 
and the hypothesis that Bob is the superior candidate  
given the entire vector of individual signals. 
It is intuitively clear and easy to prove using the Neyman-Pearson Lemma 
that in both cases described above 
one should guess that Alice is superior to Bob exactly when the majority 
of voters voted for Alice. However, in both examples above the probability 
that the majority will vote Alice when Alice is superior 
is bounded away from $1$ and
tends to $1/2$ as $p$ does.

When we consider general distributions, the issue is to
understand what information we can derive on the superior alternative
from knowing the signals of all individuals and how the voting mechanism
extracts this information. Note that in the examples we considered 
above the individual effects are large while the individual 
influences are small. This is most transparent in the second example where 
if $f$ is the majority function and $n \geq 3$, 
then all of the influences are $0$, while the effect of 
all voters are $1$. 
Theorem \ref {t:hkm} asserts that for the weighted 
majority voting rule (and only for these rules) 
for every probability measure on $\cube$,
small individual effects 
implies asymptotically complete aggregation of information. 

Let us next consider the notion of influence without probability independence.
The notion of pivotal variables (or players) and 
influence is of important technical importance in various 
areas of mathematics, computer science and economics. 
This notion is also of a considerable conceptual importance. 
The voting power index of Banzhaf is 
based on measuring the influence with respect to the uniform 
distribution. The Shapley--Shubik power index can also be based on 
the influence with respect to another distribution.
Conceptual understanding of voting power in situations
where the voters' behavior is not independent is of great interest.
In \cite {GeKaTu:02}  the authors propose to define the voting power
as the probability to be pivotal 
based on realistic assumptions on individual voting distributions.
We make the following remarks on the notion of individual effects which is 
quite a different extension of influence and voting power measures
to arbitrary probability distributions. 




\begin{description}
\item{(i)} 
For general distributions, the effect of an agent can be negative. 
This will be the case for a voter who always votes  for
the candidate who is the  underdog in the election polls and also for a 
committee member who antagonizes the other members of the committee.
(On the other hand, the influence of an agent is always nonegative,
because it is defined as a represents a probability.) 
\item{(ii)} 
A dummy (a voter $k$ which is never pivotal)
has zero influence (with respect to every probability distribution). 
He may nevertheless have a large effect, such as if 
he always votes  for the candidate who is expected
to win  according to election polls. 
In real life, this will also be the case
for an observer on a committee
without the right to vote but who is likely to convince the
committee of  his opinion. Note that in
the first case we do  not attribute to that player real
``influence'' in the (non-technical) English sense of the word,
while  in the second case we would consider him ``influential''.
The uncertainty in  interpreting  effects as real
``influences'' is genuine. 
\item{(iii)} 
What is the 
motivation for a voter to vote, given the small probability for 
him to be pivotal? This is a social dilemma, related to, e.g., 
the so-called {\em tragedy of the commons}, and has been extensively 
discussed in the political science and philosophy literature. (Sometimes
the term {\em voting paradox} has been used for this dilemma, but
may cause some confusion as
the same term is used also for Condorcet's famous observation that
when three or more choices are available, the majority preference
between them need not be transitive.)
A possible solution to the dilemma
may lie in the fact that in real-life elections, individual effects 
tend to be large, namely bounded away from zero regardless of the 
size of the society. 
The uncertainty in 
regarding effects as real ``influence'' may suggest that it is 
the effect of an agent rather than his influence which is related
to his ``satisfaction'' with  the social decision process and his
ability to identify with the collective choice. 
\end{description}

\subsection{Proof of Theorem \ref {t:hkm}}
\label {subsec:hkm_p}

\subsubsection{Part (a) of the theorem}

We begin this section by providing a probabilistic proof of the
following result, which clearly implies 
Theorem \ref{t:hkm} (a). 

\begin{lemma} \label{lem:prob}
Let $(w_i)_{i=1}^n$ be non-negative weights which are not all $0$, 
let $0 < q < 1$, and let $f : \cube \to \bits$ be a function which satisfies 
\[
f = \left\{
\begin{array}{ll}
1 & \mbox{if } \sum_{i=1}^n (2 x_i - 2 q) w_i > 0 \\
0 & \mbox{if } \sum_{i=1}^n (2 x_i - 2 q) w_i < 0.
\end{array}  \right. 
\]
Write $W = \sum_{i=1}^n w_i$.
Suppose furthermore that $p > q$ and that $\mu$ is a 
probability measure satisfying 
$\mu[X_i] = p_i$ and
\begin{equation}  \label{eq:first_cond}
\sum_{i=1}^n w_i p_i = p W
\end{equation}
as well as
\begin{equation}  \label{eq:second_cond}
\sum_{i=1}^n w_i p_i (1-p_i) e_i^{\mu}[f] \leq p (1-p) \delta W . 
\end{equation}
Then 
\[
\mu[f] \geq 1 - \frac{\delta p(1-p)}{p-q}.
\]
\end{lemma}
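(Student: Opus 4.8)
The plan is to funnel the two hypotheses through the single random variable $S := \sum_{i=1}^n w_i X_i$ and then use the threshold structure of $f$. Write $W = \sum_i w_i > 0$. The first ingredient is the identity already recorded in the text, $\Cov_\mu[f, X_i] = p_i(1-p_i)\, e_i^\mu(f)$, valid for every coordinate with $p_i \in (0,1)$; for a degenerate coordinate with $p_i \in \{0,1\}$ the variable $X_i$ is $\mu$-a.s.\ constant, both sides vanish, and one reads the product $p_i(1-p_i)\,e_i^\mu(f)$ as $\Cov_\mu[f,X_i] = 0$. Multiplying by $w_i$, summing over $i$, and using bilinearity of the covariance gives
\[
\sum_{i=1}^n w_i p_i(1-p_i)\, e_i^\mu(f) = \Cov_\mu[f, S].
\]
Thus hypothesis~\eqref{eq:second_cond} is exactly the statement $\Cov_\mu[f,S] \le p(1-p)\delta W$, while hypothesis~\eqref{eq:first_cond} says $\mu[S] = \sum_i w_i p_i = pW$.

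Next I would exploit the definition of $f$. Since $\sum_i (2x_i - 2q) w_i = 2(S - qW)$, the constraints on $f$ read $f = 1$ on $\{S > qW\}$ and $f = 0$ on $\{S < qW\}$. Hence the pointwise inequality $(1 - f)(S - qW) \le 0$ holds everywhere on $\cube$: on $\{S > qW\}$ the first factor vanishes, on $\{S < qW\}$ the second factor is negative, and on the boundary $\{S = qW\}$, where $f$ is unconstrained, the second factor vanishes. Taking $\mu$-expectations, $\mu[S] - qW \le \mu[f(S - qW)] = \mu[fS] - qW\,\mu[f]$. Substituting $\mu[fS] = \Cov_\mu[f,S] + \mu[f]\,\mu[S] = \Cov_\mu[f,S] + pW\,\mu[f]$, the right-hand side becomes $\Cov_\mu[f,S] + (p - q)W\,\mu[f]$, so
\[
(p - q)W = \mu[S] - qW \le \Cov_\mu[f,S] + (p-q)W\,\mu[f] \le p(1-p)\delta W + (p-q)W\,\mu[f].
\]
Dividing by $(p - q)W > 0$ yields $\mu[f] \ge 1 - \dfrac{p(1-p)\delta}{p-q}$, which is the assertion.

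There is essentially no hard step here; the argument is a one-line covariance computation in disguise. The only two points demanding a moment's care are the boundary set $\{S = qW\}$ (neutralized by the vanishing of $S - qW$ there) and the degenerate coordinates with $p_i \in \{0,1\}$ (for which $e_i^\mu(f)$ is literally undefined but contributes nothing). To then derive Theorem~\ref{t:hkm}(a), the one remaining routine step is to reduce its hypotheses to~\eqref{eq:first_cond}--\eqref{eq:second_cond} with $q = \frac{1}{2}$: replace $p$ by the weighted mean $p' = W^{-1}\sum_i w_i p_i \ge p > \frac{1}{2}$, observe that $\sum_i w_i p_i(1-p_i)\, e_i^\mu(f) \le \frac{1}{4}\delta W$ since each effect is at most $\delta$, and then choose the resulting $\delta(p,\epsilon) > 0$ small enough that $\frac{p'(1-p')\cdot \delta/(4 p'(1-p'))}{p'-1/2} = \frac{\delta}{4(p'-1/2)} \le \epsilon$; I expect this to be straightforward bookkeeping.
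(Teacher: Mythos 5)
Your proof is correct and follows essentially the same route as the paper: both hinge on the identity $\Cov_\mu[f,X_i]=p_i(1-p_i)e_i^\mu(f)$ and on lower-bounding $\mu[(pW - S)(1-f)]$ by $(p-q)W\,\mu[1-f]$ using the threshold structure of $f$, then upper-bounding the same quantity by hypothesis~(\ref{eq:second_cond}). The only cosmetic difference is that you phrase the threshold step as the pointwise inequality $(1-f)(S-qW)\le 0$ whereas the paper conditions on $\{g=1\}$, which is the same observation.
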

(Note that (\ref{eq:first_cond}) holds if $\mu[X_i] = p$ for all $i$,
and that (\ref{eq:second_cond}) holds if 
$\mu[X_i] = p$ and $e_i^{\mu}[f] \leq \delta$ for all
$i$, so that indeed Theorem \ref{t:hkm} (a) follows.)

\begin{proof}
Let $X = \sum_{i=1}^n (2 X_i - 2 q) w_i$.
We start by noting that $\mu[X] = (2p-2q)W$.

We let $g = 1 - f$ and $Y_i = p_i - X_i$, so
that 
\[
\mu[Y_i g] = \Cov_{\mu}[f,X_i] = p_i(1-p_i) e_k^{\mu}[f]. 
\]

Note that conditioned on $g = 1$, 
$\sum_{i=1}^n (2 X_i - 2 q) w_i \leq 0$ and therefore 
$\sum_{i=1}^n w_i Y_i \geq  (p-q) W$. 
It follows that 
\begin{eqnarray}
\nonumber
\mu\left[\left(\sum_{i=1}^n w_i Y_i\right) g(X_1,\ldots,X_n)\right] 
& \geq & (p-q) W \mu[g] \\
& = & (p-q) W (1 - \mu[f]).   \label{eq:g1}
\end{eqnarray}
On the other hand,
\begin{eqnarray}
\nonumber
\mu\left[\left(\sum_{i=1}^n w_i Y_i\right) g(X_1,\ldots,X_n)\right] 
& = & \sum_{i=1}^n w_i \mu[Y_i g(X_1,\ldots,X_n)] \\
\nonumber
& = & \sum_{i=1}^n w_i p_i (1-p_i) e_i^{\mu}[f] \\
& \leq  & p(1-p) \delta W. 
 \label{eq:g2}
\end{eqnarray}
Combining (\ref{eq:g1}) and (\ref{eq:g2}), we get that  
\begin{eqnarray*}
\mu[f] & \geq & 1 - \frac{\sum_{i=1}^n w_i p_i (1-p_i) e_i^{\mu}[f]}{(p-q) W} 
\\ & \geq &  1 - \frac{\delta p(1-p)}{p-q}.
\end{eqnarray*}
\end{proof}

\subsubsection{Parts (b) and (c) of the theorem}

We note that part (c) of Theorem \ref{t:hkm} follows immediately from part
(b), because the only weighted majority function that is
invariant under a transitive group, is simple majority. 
Let us nevertheless begin by giving an 
independent and simple proof of part (c).  
Note that if $f$ is not the majority function 
then there is a vector $(x_1,x_2,\dots,x_n) \in \{0,1\}^n$ such that 
$f(x)=0$ and $x_1+x_2+\dots+x_n>n/2$. Then we can simply take $\mu$ to be 
uniform probability distribution on the orbit of $x$ under $\Gamma$.
It is then easy to see that $\mu[X_k] > 1/2$ for all $k$ and that 
$\mu[f=0]=1$. 

We now turn to the proof of Theorem \ref{t:hkm} (b).
We will show that if $f$ is not a weighted majority function, then 
there exists a measure $\mu$ satisfying $\mu[X_k] > 1/2$ for all $k$ 
and $\mu[f=0]=1$.  


Define $[n]=\{1,2,\dots,n\}$.
For $S \subset [n]$ put $x_S=(x_1,x_2\dots,x_n)$ 
where $x_i=1$ if and only if $i \in S$. 
Let $H$ be a hypergraph whose set of vertices is 
$[n]$ and whose edges are subsets $S$ of $[n]$ such that $f(x_S)=0$. 
Let $\tau^* = \tau^*(H)$ be the fractional cover number of $H$, i.e., 
the infimum over all $\nu:\cube \to \R$ of 
$\sum_{S \in H} \nu[x_S]$, under the condition that 
$\nu(x_S) \ge 0$ for every $S \in H$ and 
$\sum_{S \in H, k \in S} \nu[x_S] \ge 1$ for all $k$. 
We get $\tau^{\ast} = \infty$ 
if there are no $\nu$ satisfying the two conditions above (note that this is 
the case if $f(x) = x_1$, say).


If $\tau^* < 2$, then we can define $\mu(S)=0$ if $f(S)=1$ and
$\mu(S)=\nu(S)/\tau^*$ when $f(S)=0$. The probability measure $\mu$ satisfies
that 
\[
\sum_{S:k \in S, f(S)=0} \mu (x_S)\geq 1/\tau^* > 1/2
\]
for every $k$ and $\mu[f=0]=1$ as stated in the theorem. 
Therefore, in order to prove part (b) of the theorem, 
it only remains to analyze the case $\tau^* \geq 2$. 
  
A well known equivalent (by linear programming duality) 
definition of of $\tau^*$ is as the supremum of $\sum_{i=1}^n w_i$ under 
the condition that $w_k \ge 0$ for $k=1,2,\dots,n$ and 
$\sum \{w_i: i \in S \} \le 1$ for every $S \in H$. 

Assume first that $\tau^{\ast} > 2$. In this case we can find $w_i$'s such 
that $\sum_i w_i > 2$ and $f(x_1,\dots,x_n) = 1$ if $\sum w_i x_i > 1$. 
By slightly perturbing the $w_i$ we may assume that for all $x \in \cube$ 
it holds that $\sum_i w_i x_i \neq \frac{1}{2}\sum_i w_i$ in addition to the 
properties that $\sum_i w_i > 2$ and $f(x_1,\dots,x_n) = 1$ if 
$\sum w_i x_i > 1$. 
Let $g(x) = 1$ if $\sum_i w_i x_i > \frac{1}{2}\sum_i w_i$ and 
    $g(x) = 0$ if $\sum_i w_i x_i < \frac{1}{2}\sum_i w_i$.
Then $g$ is 
anti-symmetric and $f=0 \implies g = 0$. It follows that $f=g$ so that 
$f$ is a weighted majority function as needed. 

The remaining case is where $\tau^{\ast} = \sum w_i=2$. 
We obtain that $f(x_1,\dots,x_n) = 1$ if $\sum w_i x_i > 1$.
Since $f$ is anti-symmetric it follows that  
$f(x_1,\dots,x_n) = 0$ if $\sum w_i x_i < 1$. 
The result follows. 
{\hfill $\square$}

\subsection {Problems and an additional example}
\label {subsec:hkm_e}

The following problems naturally suggest themselves at this point:

\begin{description}
\item{(1)} 
For which class of distributions is it the case that for simple majority 
small voting power implies asymptotically complete aggregation of information?
\item{(2)} 
For which class of distributions is it the case that for every monotone 
Boolean function small voting power implies 
asymptotically complete aggregation of information?
\item{(3)} 
For which class of distributions is it the case that for every monotone 
Boolean function small individual effects 
implies asymptotically complete aggregation of information?
\end{description}


A natural condition to impose on the 
distribution $\mu$ which is realistic in various economic 
situations is the FKG
condition (see \cite {Liggett:85}).
For $x=(x_1,\dots,x_n)$ and $y=(y_1,\dots,y_n)$, define 
\[
\max (x,y)=(\max (x_1,y_1),\dots, \max (x_n,y_n))
\]
and
\[
\min (x,y)=(\min (x_1,y_1),\dots,\min (x_n,y_n)) . 
\]
One definition of FKG measure on $\{0,1\}^n$
goes as follows:
A distribution $\mu$ on $\{0,1\}^n$ (or on $\R^n$)
is called an FKG measure if
for every $x,y \in \{0,1\}^n$ we have
\[
\mu(x)\mu(y) \leq \mu(\max (x,y)) \mu (\min (x,y)).
\]
The FKG property is a profound notion
of non-negative correlations between agents' signals. It implies
(but is strictly stronger than) the following condition
(known as {\it non-negative association}, see \cite {MilgromWeber:82}):
For all increasing
real functions $f$ and $g$, it is the case that $E[fg] \geq E[f] E[g]$.
This is equivalent to the condition that for all increasing events $A$ and
$B$ we have that $P[AB] \ge P[A]P[B]$. Under the FKG property
if the simple game is monotone,  all effects are
non-negative. This form of non-negative correlation is a plausible
assumption to make in various contexts of collective choice.
It is easy to see that under the condition of 
non-negative association all individual effects are non-negative.

\begin{description}
\item{(4)} 
For which class of monotone Boolean functions does small individual effects 
imply asymptotically complete aggregation of information?
\end{description}

In the following subsection, 
we present an example of an FKG measure and a 
monotone Boolean function such that the 
individual effects are small and yet there is no asymptotically 
complete aggregation of information. In this example both the voting scheme 
and the measure $\mu$ are invariant under a transitive group of permutations. 

\subsection{Example: FKG without aggregation}
{\bf The measure $\mu$.}
We start by describing the measure $\mu$. The measure is given 
by a Gibbs measure for the Ising model on the $3$-regular tree.
See e.g. \cite{Georgii:88}.  
The measure is defined as follows. Let $T_r = (V_r,E_r)$ 
be the $r$-level $3$-regular tree.  
This is a rooted tree where each internal nodes has exactly $3$ children 
and all the leaves are at distance exactly $r$ from the root $\rho$. 
Let $L_r$ be the set of leaves of that tree. Note that $|L_r| = 3^r$.  

We first define a measure $\nu$ on the tree $\{0,1\}^{V_r}$. 
In this measure the probability of $x$ is given by 
\[
\nu[x] = \frac{1}{2} 
\prod_{(u,v) \in E_r} \left(
(1-\epsilon) 1_{\{x_u = x_v\}} + \epsilon 1_{\{x_u \neq x_v\}}
\right).
\]
In words, this means
that in the measure $\nu$ the sign of the root $x_{\rho}$ is chosen 
to be $0$ or $1$ with probability $1/2$. Then each vertex
inherits its parents label with probability $\theta=1-2\eps$ and is
chosen independently otherwise. 

Our measure $\mu$ is defined on $\{0,1\}^{L_r}$ (so that
the voters are the leaves of the tree) 
as follows. 
\[
\mu[x] = \sum_{y : y | L_r \leq x} 
\nu[y] \delta^{|\{i : x_i = 1, y_i = 0\}|}.
\]   
In other words, a configuration of votes according to $\mu$ may be obtained 
by drawing a configuration $x$ according to $\nu$ and looking at $x | L_r$. 
Then for each of the coordinates of $i \in L_r$ independently, the vote at 
$x$ re-sampled to have the value $1$ with probability $\delta$. Below we will 
sometime abuse notation and write $\mu$ for the joint probability 
distribution of $x$ and $y$. 

Standard results for the Ising model (see, e.g., \cite{Georgii:88}) imply 
that $\mu$ is an FKG measure. 
Moreover it is easy to see that the measure is 
invariant under a transitive group and  
that $\mu[x_i] = (1+\delta)/2$ for all $i$.

{\bf The function $m$.} 
The function $m$ is given by the recursive majority function 
$m=RM_{3,r}$. Clearly, $m$ is monotone, anti-symmetric and invariant under 
a transitive group. 

\begin{claim}
If $\eps = \delta \leq 0.01$ then  
$\mu[m] \leq 1/2 + \delta/2$ for $m=RM_{3,r}$ and all $r$. 
\end{claim}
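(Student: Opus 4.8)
The claim is that for the Ising model on the $3$-regular tree with $\eps=\delta\le 0.01$, the recursive majority $m=RM_{3,r}$ applied to the (noisy) leaf votes satisfies $\mu[m]\le 1/2+\delta/2$. The idea is to exploit the recursive/self-similar structure of both the measure and the function. The key quantity to track is the bias that recursive majority at level $j$ (applied to the leaves of a depth-$j$ subtree) inherits from the \emph{spin at the root of that subtree} — i.e., the conditional expectation $\E_\mu[\text{(value of }RM_{3,j}\text{ on that subtree})\mid x_v=1]$ where $v$ is the subtree root. Call this $q_j$. I would first set up a recursion for $q_j$: conditioning on $x_v=1$, the three children $u_1,u_2,u_3$ of $v$ independently have spin $1$ with probability $(1+\theta)/2$ and $0$ with probability $(1-\theta)/2$ (where $\theta=1-2\eps$), and conditioned on a child's spin the corresponding $RM_{3,j-1}$ output has the same law as at level $j-1$ conditioned on that spin. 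Because $RM$ and the measure are spin-flip symmetric, the conditional law given $x_{u}=0$ is the reflection of that given $x_u=1$, so everything is controlled by $q_{j-1}$. Plugging in the majority-of-three rule gives $q_j = F(q_{j-1})$ for an explicit cubic-type map $F$, with base case $q_1$ = (bias of a single noisy leaf given the leaf's true spin) $= \theta(1-\delta)/?$ — one has to be careful here: the leaf's own spin is resampled with prob $\delta$, and $q_0$ should be taken as the appropriate conditional expectation of the resampled leaf vote given the tree-spin, namely $q_0 = \delta + (1-\delta)\cdot\frac{1+\theta}{2}\cdot(\text{sign adjustments})$; I would compute it cleanly in $\{-1,1\}$ notation where it becomes $q_0 = (1-\delta)\theta$ as a correlation.

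Once the recursion $q_j=F(q_{j-1})$ is in hand, the main step is to show $q_j$ stays small for all $j$ — concretely, that $F$ maps a small interval $[0,c\delta]$ into itself (or even contracts toward $0$) when $\eps=\delta\le 0.01$. For majority-of-three the relevant map behaves like $q\mapsto \frac{3}{2}q - \frac{1}{2}q^3$ composed with the noise contraction by factor $\theta=1-2\delta$, i.e. roughly $F(q)\approx \theta\cdot\frac{3q}{2}$ near $0$; since $\theta\cdot\frac32 = \frac32(1-2\delta) = \frac32 - 3\delta < \frac32$ but still $>1$, the linearization is \emph{expanding}, not contracting. This is the main obstacle: naively the bias could grow. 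The resolution is that we don't actually need $q_j\to 0$; we need $\mu[m]$ bounded by $1/2+\delta/2$, and $\mu[m]$ in correlation form is $\frac12(1+r_r)$ where $r_r=\E_\mu[(2m-1)\cdot(2x_\rho-1)]\cdot\frac12\cdot$(prior bias) — wait, more precisely, since $\mu[x_i]=(1+\delta)/2$ comes \emph{only} from the independent resampling (the Ising part is unbiased), the bias of $m$ toward $1$ must be traced back to the resampling noise, not to any global order parameter. I would therefore decompose: $\mu[m=1] = \E_\nu\big[\mu[m=1\mid x|_{L_r}\text{ before resampling}]\big]$, and bound the effect of resampling directly. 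Because resampling flips each leaf to $1$ independently with probability $\delta$, a first-moment / linearity argument shows the resampling can increase $\P[m=1]$ by at most $\delta\cdot\sum_i I_i(RM_{3,r})\cdot(\text{something})$, and $RM_{3,r}$ with $\eps=\delta$ small has total influence sum that is \emph{sub-linear in a controlled way} — in fact the key point (this is exactly the content of the recursive-majority exercises earlier in the excerpt, where $\rho=1-n^{-c}$ still gives vanishing stability) is that the noise $\eps=\delta$ is far above the threshold $n^{-c}$, so the Ising-decorated input is essentially noise at the relevant scale and $RM_{3,r}$ outputs an almost-unbiased bit regardless.

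Concretely, here is the cleanest route I'd take. Work in $\{-1,1\}$. Let $s=2x_\rho-1$ be the root spin, uniform in $\{-1,1\}$. Let $M=2m-1$. Show by the recursion that $\E[M\mid s] = s\cdot a_r$ where $a_r$ satisfies $a_j = \theta\big(\tfrac32 a_{j-1} - \tfrac12 a_{j-1}^3\big)$, $a_0=(1-\delta)\theta = (1-\delta)(1-2\delta)$. Then $\mu[m=1]=\P[M=1]$; and crucially $\E[M] = \E[\E[M\mid s]] = a_r\,\E[s] = 0$ from the Ising part alone — so the \emph{only} source of bias is that $M$ is not actually a deterministic function of $s$ but of the resampled leaves, whose marginal is tilted by $\delta$. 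So I'd instead directly write $\P[M=1] = \frac12 + \frac12\E[M]$ where now $\E[M]$ is over the \emph{full} $\mu$ (including resampling), and bound $\E[M] = \E[M - M^{\text{no-resample}}] \le \P[\text{resampling changes }m]$, and then bound that probability by $\delta\cdot 3^r\cdot\max_i I_i(RM_{3,r}\text{ at noise }\delta)$ — no wait, the right bound is $\P[\text{some resampled coordinate is pivotal}] \le \sum_i \delta\cdot\P[i\text{ pivotal}] = \delta\cdot\sum_i I_i^{\mu}$, and one shows $\sum_i I_i^{\mu}(RM_{3,r}) \le$ const, because under the $\eps$-noisy Ising measure each leaf's influence decays geometrically down the tree (majority-of-three at each level multiplies pivotality bias by roughly $\frac34\theta^2<1$). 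Assembling: $\E[M]\le \delta\cdot C$ with $C\le 1$ when $\delta\le 0.01$, giving $\mu[m]\le \frac12+\frac{\delta}{2}$. The hard part, and where I'd spend the most care, is verifying that the influence sum $\sum_i I_i^{\mu}$ is genuinely $\le 1$ (not just $O(1)$) under the $\eps=\delta\le 0.01$ Ising measure — this requires the per-level geometric decay estimate $I^{(j)} \le \big(\frac34\theta^2\big) I^{(j-1)}$ summed against the $3^j$ growth in the number of vertices, i.e. $3\cdot\frac34\theta^2 = \frac94\theta^2 = \frac94(1-2\delta)^2 < \frac94$, which is $>1$, so again one must be careful — the correct bookkeeping is that total influence at level $j$ relates to level $j-1$ by the factor $3\cdot(\text{single-variable pivotality contribution})$, and for recursive majority-of-three at correlation $\theta$ this factor is $\le 1$ precisely in the regime $\theta \le \theta_c$ corresponding to non-reconstruction on the tree; since $\theta=1-2\delta$ with $\delta\le 0.01$ means $\theta=0.98$ which is \emph{above} the Ising reconstruction threshold $\theta_c=1/\sqrt3$ on the $3$-regular tree... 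I would need to re-examine whether the constant is really $\le 1$ or whether the statement instead relies on the resampling-noise argument alone with a crude bound. I'd resolve this by computing $a_r$ explicitly and showing the fixed point of $a\mapsto\theta(\frac32 a-\frac12 a^3)$ in $(0,1]$ together with the $\delta$-tilt gives exactly the $1/2+\delta/2$ bound — this is the step I expect to be the real obstacle and the one I'd work out by hand first.
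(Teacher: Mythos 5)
Your high-level instinct — set up a recursion for a conditional bias quantity indexed by the level of the tree — is the right one, and it is in fact what the paper does. But you then spend the bulk of the proposal talking yourself out of it, and the places where you see obstacles are actually the places where you are looking at the wrong object.

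The main confusion is which fixed point of the recursion matters. You write $a_j = \theta\bigl(\tfrac32 a_{j-1} - \tfrac12 a_{j-1}^3\bigr)$, linearize near $a=0$, observe the multiplier $\tfrac32\theta>1$, and conclude the recursion is ``expanding'' and hence problematic. But the relevant fixed point is not $a=0$; it is the stable fixed point near $a=1$. (In the paper's $\{0,1\}$ notation, the quantity tracked is $p(k)=\mu[m_v=0\mid y_v=0]$ and the inductive claim is $p(k)\ge 1-\delta$; that is, the iterates stay in a small interval below $1$.) The expansion away from $0$ is a \emph{feature}, not a bug: it is exactly what makes recursive majority reconstruct the root spin. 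Similarly, your concern that $\theta=0.98$ is above the reconstruction threshold $1/\sqrt3$ has the sign backwards — being above threshold is what you want here, since the whole argument is that $m$ is so strongly aligned with $y_\rho$ that, conditioned on $y_\rho=0$, $m=1$ has probability at most $\delta$.

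The other gaps: you never convert a conditional estimate into the claimed bound on $\mu[m]$. The paper simply writes $\mu[m=1]=\tfrac12\mu[m=1\mid y_\rho=1]+\tfrac12\mu[m=1\mid y_\rho=0]\le\tfrac12+\tfrac12\delta$ using the trivial bound on one conditioning; you never do this step, and instead pivot to bounding the effect of resampling by $\delta\cdot\sum_iI_i^\mu$, a quantity you admit you cannot control and which would require a separate, harder argument. Also, your signed parametrization $a_0 = (1-\delta)\theta$ loses the one-sidedness of the resampling (flips to $1$ only), which is genuinely asymmetric: $\E[M_v\mid y_v=1]=1$ exactly, while $\E[M_v\mid y_v=0]=-(1-2\delta)$. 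The paper sidesteps this entirely by conditioning on the unfavorable side $y_\rho=0$ and using only the monotonicity $\mu[m_w=0\mid y_v=0]\ge(1-\eps)\,p(k)$, which drops the (nonnegative) contribution from $y_w=1$; the resulting one-line algebraic verification $(1-\eps)^3\bigl(3-2(1-\eps)^2\bigr)\ge1$ for $\eps\le0.01$ is the whole content of the induction. So: same opening move, but you did not push it through, and the influence-sum detour you propose instead is not completed and not needed.
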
  

\begin{proof}
The proof below is similar to arguments in \cite{Mossel:98,Mossel:01}. 
Let $(y_v : v \in V_r)$ 
be chosen according to the measure $\nu$. Let $(x_v : v \in L_r)$ be 
obtained from $y_v$ by re-sampling each of the coordinates of 
$(y_v : v \in L_r)$ to $1$ with probability $\delta$. Let $(m_v : v \in V_r)$
denote the value of the recursive majority of all $(x_w : w \in L_r(v))$, 
where $L_r(v)$ are all the leaves of $T$ below $v$.
We will show that $\mu[m = m_{\rho} = 0 |  y_{\rho}=0] \geq 1-\delta$. 
Since $\mu[y_{\rho}=0] = 1/2$,  
we conclude that 
$\mu[m] \leq 1/2 + \mu[m | x_{\rho} = 0]/2 \leq (1+\delta)/2$, as needed. 

We are interested in the probability that $m_v = 0$ conditioned 
on $y_v = 0$. It is easy to 
see that this probability only depends on the height of $v$, i.e., on the
distance between $v$ and the set of leaves. We let $p(k)$ denote the 
probability that $m_v = 0$ conditioned on $y_v = 0$ 
for a vertex $v$ of height $k$. 

Clearly, $p(0) = 1-\delta$. We want to prove by induction that 
$p(k) \geq 1-\delta$ for all $k$. Let $v$ be a node of height $k+1$ and $w$ a 
child of $v$. Note that conditioned on $x_v = 0$ the probability that 
$m_w = 0$ is at least $(1-\eps)p(k)$ which is at least 
$t = (1-\eps)(1-\delta)$ by the induction hypothesis. 
Moreover, noting        
that the values of the
majorities of the children of the node $v$ are conditionally independent
given that $m_v = 0$, we conclude that 
\[
p(k) \geq t^3 + 3t^2(1-t) = 3t^2 - 2t^3 = t^2(3-2t).
\]
We need that $t^2(3 - 2t) \geq 1-\delta$ or recalling that $\eps=\delta$: 
$(1-\eps)^4 (3 - 2(1-\eps)^2) \geq (1-\eps)$. This in turn is equivalent to 
$(1-\eps)^3(3-2(1-\eps)^2) \geq 1$. The function $h(\eps) = 
(1-\eps)^3(3-2(1-\eps)^2)$ has $h'(\eps) = 10(1-\eps)^4 - 9(1-\eps)^2 = 
(1-\eps)^2(10(1-\eps)^2 - 9)$. Therefore $h$ is increasing in the interval 
$[0,0.01]$. Since $h(0)=1$ it follows that $h(\eps) \geq 1$ for all 
$\eps \leq 0.01$ as needed.
\end{proof}

Our next objective is to bound the effect of a voter at level $r$. 
We will prove the following: 

\begin{claim} \label{cl:1}
The measure $\mu$ on $T_r$ and the function $m=RM_{3,r}$ 
satisfy that the effect of each voter is at most 
$(1-\eps/2)^{(r-1)/2} + 2^{-(r-1)/2}$.
\end{claim}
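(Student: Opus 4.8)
The plan is to reduce the effect of a single leaf to a quantity that decays geometrically in the number of levels, using the Markov (broadcast) structure of $\nu$ together with the flatness of the ternary majority at a strongly biased input. First I would pass from $x_\ell$ to the pre-noise value $y_\ell$. Writing $p=\mu[x_i]=(1+\delta)/2$, the effect is $\Cov_\mu(m,x_\ell)/\Var_\mu(x_\ell)$ with $\Var_\mu(x_\ell)=(1-\delta^2)/4$; since $x_\ell$ depends on the rest of the configuration only through $y_\ell$ (it equals $y_\ell$ with probability $1-\delta$ and is resampled to $1$ otherwise), conditioning on $y_\ell$ gives $e_\ell^\mu(m)=\frac{1}{1+\delta}\bigl(\mu[m\mid y_\ell=1]-\mu[m\mid y_\ell=0]\bigr)$, and the centering term is negligible because the preceding claim gives $\mu[m]\le\tfrac12+\tfrac{\delta}{2}$, so $\E_\mu[2m-1]\,\E_\mu[2x_\ell-1]=O(\delta^2)$. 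Hence it suffices to bound $\Delta_\ell(r):=\mu[m\mid y_\ell=1]-\mu[m\mid y_\ell=0]$ for $m=RM_{3,r}$.

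The heart of the argument is a recursion along the ancestral path $\ell=v_0,v_1,\dots,v_r=\rho$. Cutting at the ancestor $v_j$, the global Markov property says that conditionally on $y_{v_j}$ the configuration inside the subtree $S_j$ rooted at $v_j$ is independent of the configuration outside it; writing $m_\rho=F(m_{v_j},Z)$ with $Z$ the leaves outside $S_j$ and $m_{v_j}=RM_{3,j}$ on the leaves of $S_j$, the effect of $y_\ell$ on $m_\rho$ splits into (i) the part transmitted through $m_{v_j}$, which is a smaller instance of the same problem at depth $j$ with the sub-root $y_{v_j}$ pinned, and (ii) the part transmitted through $Z$, which must pass through the single bit $y_{v_j}$ at tree-distance $j$ from $\ell$ and so is controlled by the broadcast correlation $|\E_\nu[\,(2y_\ell-1)\mid(2y_{v_j}-1)]|=\theta^{\,j}=(1-2\eps)^{\,j}\le(1-\tfrac{\eps}{2})^{\,j}$. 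For the part transmitted through $m_{v_j}$ I would peel levels one at a time: at each gate $m_v=\maj(m_{w_1},m_{w_2},m_{w_3})$, conditionally on $y_v$ the three sub-majorities are independent, a perturbation of the bias of $m_{w_1}$ changes the output only when $m_{w_2}\ne m_{w_3}$, and by the preceding claim (which forces each $m_{w_i}$ to agree with $y_{w_i}$ up to probability $\delta$) together with $\Pr[y_{w_2}\ne y_{w_3}\mid y_v]=2\eps(1-\eps)$, this event has probability $O(\eps)$, in particular at most $\tfrac12$, so this route loses a factor $\le\tfrac12$ per level. Combining the two routes, after $j$ levels the effect is $\le 2^{-j}$ along the majority route and $\le(1-\tfrac{\eps}{2})^{\,j}$ along the correlation route; taking $j=\lfloor(r-1)/2\rfloor$ and bounding the remaining $\approx(r-1)/2$ levels of the complementary route trivially by $1$ yields $\Delta_\ell(r)\le(1-\tfrac{\eps}{2})^{(r-1)/2}+2^{-(r-1)/2}$, hence the same bound for $e_\ell^\mu(m)$ after the harmless factor $\tfrac1{1+\delta}\le1$. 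For the small values of $r$ for which $(r-1)/2$ is too small to make the right-hand side below $1$, the inequality is anyway trivial.

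The main obstacle is the bookkeeping in the recursion: peeling a level replaces $\mu[m_v\mid y_v=s,x_\ell=b]$ by an average of $\mu[m_{w_1}\mid y_{w_1}=t,x_\ell=b]$ against the law $\Pr[y_{w_1}=t\mid y_v=s,x_\ell=b]$, and this weighting law itself depends on $b$; so the effect does not recurse onto itself verbatim but onto a mixture of it with a broadcast-correlation term, and one has to choose an inductive invariant — an upper bound on the effect that is uniform over the pinned boundary value $s$ — under which the majority-gate contraction and the extra correlation terms stay additive rather than compounding over the $\Theta(r)$ levels. A secondary nuisance is that the resampling noise is asymmetric (it pushes only toward $1$), which breaks the $\pm1$ symmetry and must be tracked through the conditional probabilities $\Pr[y_{w}=t\mid\cdots,x_\ell=b]$; and since $\mu$ is genuinely non-product, no Fourier or hypercontractive tool is available, so each step must be carried out by hand via the tree's conditional-independence structure, exactly in the spirit of the recursive-reconstruction estimates of~\cite{Mossel:98,Mossel:01} that the preceding claim already invoked.
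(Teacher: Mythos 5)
Your proposal is correct and arrives at the same quantitative bound as the paper, but by a genuinely different route. The paper couples $\mu(\cdot\mid y_\ell=0)$ and $\mu(\cdot\mid y_\ell=1)$ via the random-cluster (FK) representation: with probability at least $1-\theta^t$ (where $\theta=1-2\eps$) the path from $\ell$ to its ancestor $v_t$ contains a closed FK edge, in which case the coupled configurations agree outside the subtree below $v_t$; on this event a disagreement in $m_\rho$ forces the sibling-disagreement events $m_{u_j}\neq m_{w_j}$ at every level above $v_t$, each with probability at most $1/2$ given the ancestral $y$-path, yielding $\theta^t+2^{-s}$. You instead condition once on $y_{v_j}$ and use the global Markov property to write $m_\rho$ as a function of $m_{v_j}$ and the conditionally independent outside leaves $Z$; this splits $\Delta_\ell$ algebraically into a broadcast-correlation piece bounded by $|\Pr[y_{v_j}=1\mid y_\ell=1]-\Pr[y_{v_j}=1\mid y_\ell=0]|=\theta^j$ and a pivotal piece bounded by $\Pr[m_{v_j}\text{ pivotal for }m_\rho\mid y_{v_j}]\le 2^{-(r-j)}$. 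Both proofs rest on the identical two facts (tree correlation decay $\theta^j\le(1-\eps/2)^j$, and per-gate sibling-disagreement probability $\le 1/2$) and balance by taking $j\approx r/2$; what the coupling buys is precisely the ``inductive invariant'' you worry about in your last paragraph, since it packages the confinement of disagreements to a subtree into a single probabilistic event with no level-by-level bookkeeping, whereas your decomposition needs to be done as a single cut (not a recursive peel) with the pivotal probability of $m_{v_j}$ bounded in one shot by conditioning on the whole ancestral $y$-path to decouple the sibling events. One small inaccuracy to flag: $e_\ell^\mu(m)=\frac{1}{1+\delta}\Delta_\ell$ is not an identity; the one-sided resampling contributes an extra term $\frac{2\delta}{1+\delta}\Pr[\ell\text{ is pivotal}\mid y_\ell=0]\le\frac{2\delta}{1+\delta}2^{-r}$, which is negligible here (and the paper is in fact no more careful on this point, simply estimating $\Delta_\ell$ and calling it the effect).
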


\begin{proof}
The argument here is similar in spirit to an argument in \cite{BeKeMoPe:05}. 
Let $t+s=r$ where $t \geq (r-1)/2$ and $s \geq (r-1)/2$. 
Fix a leaf voter $i$. We want to estimate 
$\mu[m = 1 | y_i = 1] - \mu[m = 1 | y_i = 0]$. Let's denote 
by $\mu_0$ the measure $\mu$ conditioned on $y_i=0$ and by $\mu_1$ the
measure $\mu$ conditioned on $y_i=1$. 

Let $i=v_0,v_1,\ldots,v_r = \rho$ denote the path from $i$ to the root. 
We first claim that the measures $\mu_0,\mu_1$ and $\mu$  
may be coupled in such a way 
that except with probability $(1-2\eps)^t$ the only disagreements between 
$\mu_0, \mu_1$ and $\mu$ are on vertices below $v_t$.

The follows immediately from the random cluster representation of the model.
In this representation we declare and edge $(u,v)$ open with probability 
$(1-2\eps)$ and closed with probability $2\eps$. If the edge $(u,v)$ 
is open then $y_u = y_v$, otherwise, 
the two labels are independent. It is then clear that 
we may couple the two measure $\mu_0,\mu_1$ and $\mu$ 
below $v_t$ as long as the 
path from $i$ to $v_t$ contains at least one closed edge. The probability that 
such an edge does not exist is at most $(1-2\eps)^t$. The proof of the first 
claim follows. 

For each $j$ denote by $u_j$ and $w_j$ the siblings of $v_j$. We assume  
that the measures $\mu_0,\mu_1$ and $\mu$ are coupled in such a way 
that the only disagreements between them are on vertices below $v_t$. 
Note that if this is the case, then if the values of $m$ under 
$\mu_0$ and $\mu_1$ are different 
then for all $r \geq j \geq t$ it holds that $m_{u_j} \neq m_{w_j}$. 
We wish to bound the $\mu$ probability that $m_{u_j} \neq m_{w_j}$ 
for $r \geq j \geq t$. We will bounds this probability conditioned 
on the values $(y_{v_j})_{j=t}^r$. Conditioned on $(y_{v_j})_{j=t}^r$ 
the event $m_{u_j} \neq m_{w_j}$ are independent for different $j$'s. 
Moreover, by the Markov property, 
$\mu[m_{u_j} \neq m_{w_j} | (y_{v_h})_{h=t}^r] = 
\mu[m_{u_j} \neq m_{w_j} | y_{v_{j-1}}]$. Finally note that conditioned 
on $y_{v_{j-1}}$ the random variables $m_{u_j},m_{w_j}$ are identically 
distributed and independent. Therefore 
\[
\mu[m_{u_j} \neq m_{w_j} | y_{v_{j-1}}] \leq 
\max_{p \in [0,1]} 2p(1-p) \leq 1/2.
\]
We thus obtain that the $\mu$ probability that $m_{u_j} \neq m_{w_j}$ 
for $r \geq j \geq t$ is at most $2^{-s}$.
\end{proof}

\chapter[Very Recent Progress]{Recent Progress}
Some exciting developments have taken place since the course these lecture notes are based upon was taught. 
We briefly discuss some of these. 

\section{Plurality is Stablest}
The Majority is Stablest Theorem played a major role in these lectures, see theorems~\ref{thm:MISTsimple}, \ref{thm:MIST}, \ref{thm:MIST_res} etc. 
It is natural to ask if a similar statement holds for low-influence or resilient functions $f : [q]^n \to [q]$ for $q \geq 3$, where the conjectured most-stable function is now a plurality function. This was first asked in~\cite{KKMO:07}. Similarly to the case of $q=2$, there is an equivalent Gaussian question~\cite{IsakssonMossel:12}, which was conjectured to be true in~\cite{IsakssonMossel:12}. The shape of the conjectured optimal partition has a number of names, including the Gaussian Double-Bubble, the Standard $Y$, and the Peace-Sign. 

For the isoperimetric problem, corresponding to $\rho \to 1$, the optimality of such partition for $q=3$ was obtained in~\cite{CCHSXADLV:08} under mid conditions and for all $q$ and no additional condition in recent work~\cite{MilmanNeeman:18a,MilmanNeeman:18b}. 

The case of noise-stability, i.e., constant $\rho$, turned out to be much more subtle. 
Recall that in the binary case, low influence functions $f : \{0,1\}^n \to \{0,1\}$ cannot be asymptotically more stable than majorities with the same expectation.
~\cite{HeMoNe:15a} showed that for any probability measure $\mu  = (\mu_1,\ldots,\mu_q) \neq (1/q,\ldots,1/q)$, that has full support 
there exists low-influence functions that are more stable than the pluralities functions with the same expected values. Thus low influence functions $f : [q]^n \to [q]$ that are not balanced can be asymptotically more stable all pluralities with the same expectation. On the other hand, in the balanced case, for $q=3$,  
and for small positive values of $\rho$, the conjectured Gaussian Double-Bubble was very recently shown to be optimal~\cite{HeilmanTarter:20}. 

\section{Judgment Aggregation} 
In the legal literature, Kornhauser and Sager~\cite{KornhauserSager:86} discusses
a situation where three cases $A,B,C$ are considered in court, and by law, one should rule against $C$ if and only if 
there is a ruling against both $A$ and $B$.
When several judges are involved, their opinions should be aggregated using a function $f : \{0,1\}^n \to \{0,1\}$ that preserves this law, that is, satisfies 
\begin{equation} \label{eq:and}
f(x \land y) = f(x) \land f(y), 
\end{equation} 
where $(x_1,\ldots,x_n) \land (y_1,\ldots,y_n) = (x_1 \land y_1,\ldots, x_n \land y_n)$.
List and Pettit~\cite{ListPettit:02,ListPettit:04} showed that the only non-constant aggregation functions that satisfy (\ref{eq:and}) are the AND functions, known in the social choice literature as \emph{oligarchies}, i.e. functions of the form $f(x_1,\ldots,x_n) = x_{i_1} \land \ldots \land x_{i_r}$ for some $1 = i_1 < \ldots i_r \leq n$. 
Recent work~\cite{FLMM:20} established quantitative versions of the result by List and Pettit by showing that if $f$ is $\eps$-close to satisfying judgment aggregation, then it is $\delta(\eps)$-close to an oligarchy, improving on prior work by Nehama~\cite{Nehama:2013}
in which $\delta$ decays polynomially with $n$. These results are based on the analysis of a variant of the noise-operator, named the one-sided noise operator.

\bibliographystyle{abbrv}
\bibliography{all,my}

\begin{thebibliography}{10}

\bibitem{AjtaiLinial:93}
M.~Ajtai and N.~Linial.
\newblock The influence of large coalitions.
\newblock {\em Combinatorica}, 13(2):129--145, 1993.

\bibitem{Arrow:50}
K.~Arrow.
\newblock A difficulty in the theory of social welfare.
\newblock {\em J. of Political Economy}, 58:328--346, 1950.

\bibitem{Arrow:63}
K.~Arrow.
\newblock {\em Social choice and individual values}.
\newblock John Wiley and Sons, 1963.

\bibitem{Barbera:80}
S.~Barbera.
\newblock Pivotal voters: A new proof of arrow's theorem.
\newblock {\em Economics Letter}, 6:13--16, 1980.

\bibitem{BaToTr:89b}
J.~J. Bartholdi, C.~A. Tovey, and M.~A. Trick.
\newblock The computational difficulty of manipulating an election.
\newblock {\em Social Choice and Welfare}, 6(3):227--241, 1989.

\bibitem{Beckner:75}
W.~Beckner.
\newblock Inequalities in {F}ourier analysis.
\newblock {\em Ann. of Math. (2)}, 102(1):159--182, 1975.

\bibitem{BenorLinial:90}
M.~Ben-Or and N.~Linial.
\newblock Collective coin flipping.
\newblock In S.~Micali, editor, {\em Randomness and Computation}. Academic
  Press, New York, 1990.

\bibitem{BeKaSc:99}
I.~Benjamini, G.~Kalai, and O.~Schramm.
\newblock Noise sensitivity of boolean functions and applications to
  percolation.
\newblock {\em Inst. Hautes \'{E}tudes Sci. Publ. Math.}, 90:5--43, 1999.

\bibitem{BeKeMoPe:05}
N.~Berger, C.~Kenyon, E.~Mossel, and Y.~Peres.
\newblock Glauber dynamics on prob and hyperbolic graphs.
\newblock {\em Probab. Theory Related Fields}, 131(3):311--340, 2005.

\bibitem{Bobkov:97b}
S.~G. Bobkov.
\newblock An isoperimetric inequality on the discrete cube, and an elementary
  proof of the isoperimetric inequality in {G}auss space.
\newblock {\em Ann. Probab.}, 25(1):206--214, 1997.

\bibitem{Bonami:70}
A.~Bonami.
\newblock \'{E}tude des coefficients de {F}ourier des fonctions de
  {$L\sp{p}(G)$}.
\newblock {\em Ann. Inst. Fourier (Grenoble)}, 20(fasc. 2):335--402 (1971),
  1970.

\bibitem{Borell:82}
C.~Borell.
\newblock Positivity improving operators and hypercontractivity.
\newblock {\em Math. Zeitschrift}, 180(2):225--234, 1982.

\bibitem{Borell:85}
C.~Borell.
\newblock Geometric bounds on the {O}rnstein-{U}hlenbeck velocity process.
\newblock {\em Z. Wahrsch. Verw. Gebiete}, 70(1):1--13, 1985.

\bibitem{CampbellTullock:66}
C.~D. Campbell and G.~Tullock.
\newblock The paradox of voting?a possible method of calculation.
\newblock {\em American Political Science Review}, 60(3):684--685, 1966.

\bibitem{Chor_etal:85}
B.~Chor, O.~Goldreich, J.~Hasted, J.~Freidmann, S.~Rudich, and R.~Smolensky.
\newblock The bit extraction problem or t-resilient functions.
\newblock In {\em | 26th Annual Symposium on Foundations of Computer Science},
  pages 396--407. IEEE, 1985.

\bibitem{CoatesLewis:64}
C.~L. Coates and P.~Lewis.
\newblock Donut: A threshold gate computer.
\newblock {\em IEEE Transactions on Electronic Computers}, (3):240--247, 1964.

\bibitem{Condorcet:85}
J.-A.-N. Condorcet.
\newblock {\em Essai sur l'application de l'analyse \`a la probabilit\'e des
  d\'ecisions rendues \`a la pluralit\'e des voix}.
\newblock De l'Imprimerie Royale, 1785.

\bibitem{CorderoLedoux:12}
D.~Cordero-Erausquin and M.~Ledoux.
\newblock Hypercontractive measures, talagrand's inequality, and influences.
\newblock In {\em Geometric Aspects of Functional Analysis}, volume 2050 of
  {\em Lecture Notes in Mathematics}. 2012.

\bibitem{CCHSXADLV:08}
J.~Corneli, I.~Corwin, S.~Hurder, V.~Sesum, Y.~Xu, E.~Adams, D.~Davis, M.~Lee,
  R.~Visocchi, N.~Hoffman, and R.~Hardt.
\newblock Double bubbles in {G}auss space and spheres.
\newblock {\em Houston J. Math}, 34(1):181--204, 2008.

\bibitem{DeMoNe:13}
A.~De, E.~Mossel, and J.~Neeman.
\newblock Majority is stablest : Discrete and sos.
\newblock In {\em STOC (Symposium on Theory of Computing)}, pages 477--486,
  2013.

\bibitem{DeMoNe:16}
A.~De, E.~Mossel, and J.~Neeman.
\newblock Majority is stablest: Discrete and sos.
\newblock {\em Theory of Computing}, 12(4):1--50, 2016.

\bibitem{DKN:10}
I.~Diakonikolas, D.~M. Kane, and J.~Nelson.
\newblock Bounded independence fools degree-2 threshold functions.
\newblock In {\em Foundations of Computer Science (FOCS), 2010 51st Annual IEEE
  Symposium on}, pages 11--20. IEEE, 2010.

\bibitem{Eldan:15}
R.~Eldan.
\newblock A two-sided estimate for the gaussian noise stability deficit.
\newblock {\em Inventiones mathematicae}, 201(2):561--624, 2015.

\bibitem{FaliszewskiProcaccia:10}
P.~Faliszewski and A.~D. Procaccia.
\newblock Ai?s war on manipulation: Are we winning?
\newblock {\em AI Magazine}, 31(4):53--64, 2010.

\bibitem{Feller:68}
W.~Feller.
\newblock {\em An introduction to probability theory and its applications.
  {V}ol. {I}}.
\newblock Third edition. John Wiley \& Sons Inc., New York, 1968.

\bibitem{FLMM:20}
Y.~Filmus, N.~Lifshitz, D.~Minzer, and E.~Mossel.
\newblock And testing and robust judgement aggregation.
\newblock In {\em Proceedings of the 52nd Annual ACM SIGACT Symposium on Theory
  of Computing}, pages 222--233, 2020.

\bibitem{FriedgutKalai:96}
E.~Friedgut and G.~Kalai.
\newblock Every monotone graph property has a sharp threshold.
\newblock {\em Proc.\ Amer.\ Math.\ Soc.}, 124:2993--3002, 1996.

\bibitem{FKKN:11}
E.~Friedgut, G.~Kalai, N.~Keller, and N.~Nisan.
\newblock A quantitative version of the gibbard--satterthwaite theorem for
  three alternatives.
\newblock {\em SIAM Journal on Computing}, 40(3):934--952, 2011.

\bibitem{FrKaNa:02}
E.~Friedgut, G.~Kalai, and A.~Naor.
\newblock Boolean functions whose fourier transform is concentrated on the
  first two levels.
\newblock {\em Advances in Applied Mathematics}, 29(3):427--437, 2002.

\bibitem{FrKaNi:08}
E.~Friedgut, G.~Kalai, and N.~Nisan.
\newblock Elections can be manipulated often.
\newblock In {\em Proceedings of the 49th Annual IEEE Symposium on Foundations
  of Computer Science (FOCS)}, pages 243--249, 2009.

\bibitem{GeKaTu:02}
A.~Gelman, J.~N. Katz, and F.~Tuerlinckx.
\newblock The mathematics and statistics of voting power.
\newblock {\em Statistical Science}, pages 420--435, 2002.

\bibitem{Georgii:88}
H.~O. Georgii.
\newblock {\em Gibbs measures and phase transitions}, volume~9 of {\em de
  Gruyter Studies in Mathematics}.
\newblock Walter de Gruyter \& Co., Berlin, 1988.

\bibitem{Gibbard:73}
A.~Gibbard.
\newblock Manipulation of voting schemes: a general result.
\newblock {\em Econometrica}, 41(4):587---601, 1973.

\bibitem{GGLRS:00}
O.~Goldreich, S.~Goldwasser, E.~Lehman, D.~Ron, and A.~Samorodnitsky.
\newblock Testing monotonicity.
\newblock {\em Combinatorica}, 20(3):301--337, 2000.

\bibitem{GolesOlivos:80}
E.~Goles and J.~Olivos.
\newblock Periodic behaviour of generalized threshold functions.
\newblock {\em Discrete mathematics}, 30(2):187--189, 1980.

\bibitem{Gross:75}
L.~Gross.
\newblock Logarithmic {S}obolev inequalities.
\newblock {\em Amer. J. Math.}, 97(4):1061--1083, 1975.

\bibitem{HaKaMo:06}
O.~H{\"a}ggstr{\"o}m, G.~Kalai, and E.~Mossel.
\newblock A law of large numbers for weighted majority.
\newblock {\em Advances in Applied Mathematics}, 37(1):112--123, 2006.

\bibitem{HeMoNe:15a}
S.~Heilman, E.~Mossel, and J.~Neeman.
\newblock Standard simplices and pluralities are not the most noise stable
  (abstract).
\newblock In {\em Proceedings of the 2015 Conference on Innovations in
  Theoretical Computer Science}, pages 255--255. ACM, 2015.

\bibitem{HeilmanTarter:20}
S.~Heilman and A.~Tarter.
\newblock Three candidate plurality is stablest for small correlations.
\newblock {\em arXiv preprint arXiv:2011.05583}, 2020.

\bibitem{IsKiMo:10}
M.~Isaksson, G.~Kindler, and E.~Mossel.
\newblock The geometry of manipulation - a quantitative proof of the gibbard
  satterthwaite theorem.
\newblock In {\em Foundations of Computer Science (FOCS)}, pages 319--328,
  2010.

\bibitem{IsKiMo:12}
M.~Isaksson, G.~Kindler, and E.~Mossel.
\newblock The geometry of manipulation - a quantitative proof of the gibbard
  satterthwaite theorem.
\newblock {\em Combinatorica}, 32(2):221--250, 2012.

\bibitem{IsakssonMossel:12}
M.~Isaksson and E.~Mossel.
\newblock New maximally stable {G}aussian partitions with discrete
  applications.
\newblock {\em Israel Journal of Mathematics}, 189:347--396, 2012.

\bibitem{JeOlWo:15}
J.~Jendrej, K.~Oleszkiewicz, and J.~O. Wojtaszczyk.
\newblock On some extensions of the fkn theorem.
\newblock {\em Theory of Computing}, 11(1):445--469, 2015.

\bibitem{Jones:16}
C.~Jones.
\newblock A noisy-influence regularity lemma for boolean functions, 2016.
\newblock arXiv preprint arXiv:1610.06950.

\bibitem{KaKaLi:88}
J.~Kahn, G.~Kalai, and N.~Linial.
\newblock The influence of variables on boolean functions.
\newblock In {\em Proceedings of the 29th Annual Symposium on Foundations of
  Computer Science}, pages 68--80, 1988.

\bibitem{Kalai:02}
G.~Kalai.
\newblock {A Fourier-theoretic perspective on the Condorcet paradox and Arrow's
  theorem}.
\newblock {\em {Adv.\ in Appl.\ Math.}}, 29(3):412--426, 2002.

\bibitem{Kalai:04}
G.~Kalai.
\newblock {Social Indeterminacy}.
\newblock {\em {Econometrica}}, 72:1565--1581, 2004.

\bibitem{KaKeMo:16}
G.~Kalai, N.~Keller, and E.~Mossel.
\newblock {On the Correlation of Increasing Families}.
\newblock {\em Journal of Combinatorial Theory A}, 144:250--276, 2016.

\bibitem{KalaiMossel:15}
G.~Kalai and E.~Mossel.
\newblock Sharp thresholds for monotone non boolean functions and social choice
  theory.
\newblock {\em Mathematics of Operation Research}, 40(4):915--925, 2015.

\bibitem{Keller:12}
N.~Keller.
\newblock A tight quantitative version of arrow?s impossibility theorem.
\newblock 2012.

\bibitem{KKMO:07}
S.~Khot, G.~Kindler, E.~Mossel, and R.~O'Donnell.
\newblock Optimal inapproximability results for {MAX-CUT} and other 2-variable
  {CSP}s?
\newblock {\em SIAM J. Comput.}, 37:319--357, 2007.

\bibitem{KiKiOd:18}
G.~Kindler, N.~Kirshner, and R.~O?Donnell.
\newblock Gaussian noise sensitivity and fourier tails.
\newblock {\em Israel Journal of Mathematics}, 225(1):71--109, 2018.

\bibitem{KornhauserSager:86}
L.~A. Kornhauser and L.~G. Sager.
\newblock Unpacking the court.
\newblock {\em Yale LJ}, 96:82, 1986.

\bibitem{Ledoux:14}
M.~Ledoux.
\newblock Remarks on gaussian noise stability, brascamp-lieb and slepian
  inequalities.
\newblock In {\em Geometric aspects of functional analysis}, pages 309--333.
  Springer, 2014.

\bibitem{Liggett:85}
T.~M. Liggett.
\newblock {\em Interacting particle systems}, volume 276 of {\em Grundlehren
  der Mathematischen Wissenschaften [Fundamental Principles of Mathematical
  Sciences]}.
\newblock Springer-Verlag, New York, 1985.

\bibitem{Lindsey:64}
J.~H. Lindsey.
\newblock Assignment of numbers to vertices.
\newblock {\em The American Mathematical Monthly}, 71(5):508--516, 1964.

\bibitem{ListPettit:02}
C.~List and P.~Pettit.
\newblock Aggregating sets of judgments: An impossibility result.
\newblock {\em Economics and Philosophy}, 18(1):89--110, 2002.

\bibitem{ListPettit:04}
C.~List and P.~Pettit.
\newblock Aggregating sets of judgments: Two impossibility results compared.
\newblock {\em Synthese}, 140(1--2):207--235, 2004.

\bibitem{Margulis:74}
G.~A. Margulis.
\newblock Probabilistic characteristics of graphs with large connectivity.
\newblock {\em Problemy Peredav ci Informacii}, 10(2):101--108, 1974.

\bibitem{MilgromWeber:82}
P.~R. Milgrom and R.~J. Weber.
\newblock A theory of auctions and competitive bidding.
\newblock {\em Econometrica: Journal of the Econometric Society}, pages
  1089--1122, 1982.

\bibitem{MilmanNeeman:18a}
E.~Milman and J.~Neeman.
\newblock The {G}aussian double-bubble conjecture.
\newblock {\em arXiv preprint arXiv:1801.09296}, 2018.

\bibitem{MilmanNeeman:18b}
E.~Milman and J.~Neeman.
\newblock The {G}aussian multi-bubble conjecture.
\newblock {\em arXiv preprint arXiv:1805.10961}, 2018.

\bibitem{Mossel:98}
E.~Mossel.
\newblock Recursive reconstruction on periodic trees.
\newblock {\em Random Structures Algorithms}, 13(1):81--97, 1998.

\bibitem{Mossel:01}
E.~Mossel.
\newblock Reconstruction on trees: beating the second eigenvalue.
\newblock {\em Ann. Appl. Probab.}, 11(1):285--300, 2001.

\bibitem{Mossel:08}
E.~Mossel.
\newblock {G}aussian bounds for noise correlation of functions and tight
  analysis of long codes.
\newblock In {\em Foundations of Computer Science, 2008 (FOCS 08)}, pages
  156--165. IEEE, 2008.

\bibitem{Mossel:09a}
E.~Mossel.
\newblock Arrow's impossibility theorem without unanimity.
\newblock Posted on Arxiv 0901.4727, 2009.

\bibitem{Mossel:10}
E.~Mossel.
\newblock {G}aussian bounds for noise correlation of functions.
\newblock {\em GAFA}, 19:1713--1756, 2010.

\bibitem{Mossel:12}
E.~Mossel.
\newblock A quantitative arrow theorem.
\newblock {\em Probability Theory and Related Fields}, 154(1):49--88, 2012.

\bibitem{Mossel:20resilient}
E.~{Mossel}.
\newblock {{G}aussian Bounds for Noise Correlation of Resilient Functions}.
\newblock {\em Israel Journal of Mathematics}, 235:111--137, 2020.

\bibitem{MosselNeeman:15b}
E.~Mossel and J.~Neeman.
\newblock Robust optimality of {G}aussian noise stability.
\newblock {\em J. Eur. Math. Soc. (JEMS)}, 17(2):433--482, 2015.

\bibitem{MoNeTa:14}
E.~Mossel, J.~Neeman, and O.~Tamuz.
\newblock Majority dynamics and aggregation of information in social networks.
\newblock {\em Journal of Autonomous Agents and Multi-Agent Systems},
  (3):408--429, 2014.

\bibitem{MosselOdonnell:03}
E.~Mossel and R.~O'Donnell.
\newblock On the noise sensitivity of monotone functions.
\newblock {\em Random Structures Algorithms}, 23(3):333--350, 2003.

\bibitem{MoOdOl:05}
E.~Mossel, R.~O'Donnell, and K.~Oleszkiewicz.
\newblock Noise stability of functions with low influences: invariance and
  optimality (extended abstract).
\newblock In {\em 46th Annual IEEE Symposium on Foundations of Computer Science
  (FOCS 2005), 23-25 October 2005, Pittsburgh, PA, USA, Proceedings}, pages
  21--30. IEEE Computer Society, 2005.

\bibitem{MoOdOl:10}
E.~Mossel, R.~O'Donnell, and K.~Oleszkiewicz.
\newblock Noise stability of functions with low influences: invariance and
  optimality.
\newblock {\em Annals of Mathematics}, 171(1):295--341, 2010.

\bibitem{MORSS:06}
E.~Mossel, R.~O'Donnell, O.~Regev, J.~E. Steif, and B.~Sudakov.
\newblock Non-interactive correlation distillation, inhomogeneous {M}arkov
  chains, and the reverse {B}onami-{B}eckner inequality.
\newblock {\em Israel J. Math.}, 154:299--336, 2006.

\bibitem{MoOlSe:13}
E.~Mossel, K.~Oleszkiewicz, and A.~Sen.
\newblock On reverse hypercontractivity.
\newblock {\em Geometric and Functional Analysis}, 23(3):1062--1097, 2013.

\bibitem{MosselRacz:12}
E.~Mossel and M.~Z. Racz.
\newblock A quantitative gibbard-satterthwaite theorem without neutrality.
\newblock In H.~J. Karloff and T.~Pitassi, editors, {\em STOC. Proceedings of
  the 44th Symposium on Theory of Computing Conference, STOC 2012, New York,
  NY, USA, May 19 - 22, 2012}, pages 1041--1060. ACM, 2012.

\bibitem{MosselRacz:15}
E.~Mossel and M.~Z. Racz.
\newblock A quantitative gibbard-satterthwaite theorem without neutrality.
\newblock {\em Combinatorica}, 35(3):317--387, 2015.

\bibitem{MosselSchramm:08}
E.~Mossel and O.~Schramm.
\newblock Representations of general functions using smooth functions, 2008.
\newblock Unpublished manuscript.

\bibitem{Neeman:14}
J.~Neeman et~al.
\newblock A multidimensional version of noise stability.
\newblock {\em Electronic Communications in Probability}, 19:1--10, 2014.

\bibitem{Nehama:2013}
I.~Nehama.
\newblock Approximately classic judgement aggregation.
\newblock {\em Annals of Mathematics and Artificial Intelligence},
  68(1-3):91--134, 2013.

\bibitem{Nelson:73}
E.~Nelson.
\newblock The free {M}arkoff field.
\newblock {\em J. Functional Analysis}, 12:211--227, 1973.

\bibitem{NeymanPearson:33}
J.~Neyman and E.~S. Pearson.
\newblock Ix. on the problem of the most efficient tests of statistical
  hypotheses.
\newblock {\em Philosophical Transactions of the Royal Society of London.
  Series A, Containing Papers of a Mathematical or Physical Character},
  231(694-706):289--337, 1933.

\bibitem{NiemiWeisberg:68}
R.~G. Niemi and H.~F. Weisberg.
\newblock A mathematical solution for the probability of paradox of voting.
\newblock {\em Behavioral Science}, 13:317--323, 1968.

\bibitem{Nisan:FromArrowtoFourier}
N.~Nisan.
\newblock Algorithmic game theory / economics. postname: From arrow to fourier,
  2009.

\bibitem{Odonnell:02}
R.~O'Donnell.
\newblock Hardness amplification within np.
\newblock In {\em Proceedings of the 34th ACM Symposium on Theory of
  Computing}, 2002.

\bibitem{ODonnell:14}
R.~O'Donnell.
\newblock {\em Analysis of boolean functions}.
\newblock Cambridge University Press, 2014.

\bibitem{OSTW:10}
R.~O'Donnell, R.~Servedio, L.-Y. Tan, and A.~Wan.
\newblock A regularity lemma for low noisy-influences, 2010.
\newblock Unpublished manuscript.

\bibitem{RSOK:91}
V.~Roychowdhury, K.-Y. Siu, A.~Orlitsky, and T.~Kailath.
\newblock A geometric approach to threshold circuit complexity.
\newblock In {\em Proceedings of the fourth annual workshop on Computational
  learning theory}, pages 97--111. Morgan Kaufmann Publishers Inc., 1991.

\bibitem{Russo:82}
L.~Russo.
\newblock An approximate zero-one law.
\newblock {\em Zeitschrift f{\"u}r Wahrscheinlichkeitstheorie und verwandte
  Gebiete}, 61(1):129--139, 1982.

\bibitem{Satterthwaite:75}
M.~A. Satterthwaite.
\newblock {Strategy-proofness and Arrow's Conditions: Existence and
  Correspondence Theorems for Voting Procedures and Social Welfare Functions}.
\newblock {\em J. of Economic Theory}, 10:187--217, 1975.

\bibitem{Sheppard:99}
W.~Sheppard.
\newblock On the application of the theory of error to cases of normal
  distribution and normal correlation.
\newblock {\em Phil.\ Trans.\ Royal Soc.\ London}, 192:101--168, 1899.

\bibitem{Talagrand:94}
M.~Talagrand.
\newblock {On Russo's approximate 0-1 law}.
\newblock {\em Annals of Probability}, 22:1576--1587, 1994.

\bibitem{wiki:Condorcet}
{Wikipedia contributors}.
\newblock Marquis de condorcet --- {W}ikipedia{,} the free encyclopedia, 2019.
\newblock [Online; accessed 2019].

\bibitem{Wilson:72}
R.~Wilson.
\newblock Social choice theory without the pareto principle.
\newblock {\em Journal of Economic Theory}, 5(3):478--486, 1972.

\end{thebibliography}

\end{document}